\DeclareMathOperator{\Exp}{Exp}
\DeclareMathOperator{\codim}{codim}
\DeclareMathOperator{\Gr}{Gr}
\def\L{\mathcal{L}}
\newtheorem{theorem}{Theorem}[section]
\newtheorem{lemma}[theorem]{Lemma}
\newtheorem{proposition}[theorem]{Proposition}
\newtheorem{corollary}[theorem]{Corollary}
\newtheorem{conjecture}[theorem]{Conjecture}
\newtheorem{remark}[theorem]{Remark}
\newtheorem{definition}[theorem]{Definition}
 \newcommand{\SL}{\operatorname{SL}}
  \newcommand{\al}{\alpha}
\newcommand{\defpar}{\tau}
  \newcommand{\lam}{\lambda}
 \newcommand{\ip}[1]{\langle #1 \rangle}
  \renewcommand{\tilde}{\widetilde}
  \def\b1{\text{\bf\large 1}}  
  \def\ip<#1>{\langle#1\rangle}   
  \newcommand{\Aut}{\operatorname{Aut}}
  \newcommand{\GL}{\operatorname{GL}}
  \newcommand{\OG}{\operatorname{OG}}
  \newcommand{\IG}{\operatorname{IG}}
  \newcommand{\Sp}{\operatorname{Sp}}
  \newcommand{\SU}{\operatorname{SU}}
  \newcommand{\SO}{\operatorname{SO}}
  \newcommand{\Hom}{\operatorname{Hom}}
  \newcommand{\Imo}{\operatorname{Im}}
  \newcommand{\slope}{\operatorname{slope}}
\newcommand{\Ad}{\operatorname{Ad}}
  \newcommand{\Ker}{\operatorname{Ker}}
   \newcommand{\Spin}{\operatorname{Spin}}
\DeclareMathOperator{\Pic}{Pic}
  \newcommand\elal{\Bbb{L}}
    \newcommand\xx{\Bbb{X}}
    \newcommand\exx{\Bbb{X}}
 \newcommand\bz{\Bbb{Z}}
\newcommand{\beqn}{\begin{equation}}
\newcommand{\eeqn}{\end{equation}}
\DeclareMathOperator{\Iim}{Im}
\newcommand{\tset}[1]{\underset{#1}{\times}}
\renewcommand{\ni}{\noindent}
 \newcommand{\fb}{\mathfrak{b}}
 \newcommand{\fg}{\mathfrak{g}}
 \newcommand{\fh}{\mathfrak{h}}
 \newcommand{\fl}{\mathfrak{l}}
 \newcommand{\fp}{\mathfrak{p}}
 \newcommand{\fs}{\mathfrak{s}}
 \newcommand{\ft}{\mathfrak{t}}
 \newcommand{\fu}{\mathfrak{u}}
\def\u.{^{\bullet}}
\newcommand{\bc}{\mathbb{C}}
\newcommand{\br}{\mathbb{R}}
\newcommand{\X}{\tilde{X}}
\newcommand{\Z}{\tilde{Z}}
\newcommand{\fra}{\mathfrak{a}}
 \newcommand{\frb}{\mathfrak{b}}
\newcommand{\frg}{\mathfrak{g}}
\newcommand{\frh}{\mathfrak{h}}
\newcommand{\frk}{\mathfrak{k}}
\newcommand{\frl}{\mathfrak{l}}
\newcommand{\frp}{\mathfrak{p}}
\newcommand{\fru}{\mathfrak{u}}
\newcommand\bull{\sssize{\bullet}}
\newcommand\mt{\mathcal{T}}
\newcommand\G{\hat{G}}
\newcommand\B{\hat{B}}
\newcommand\g{\hat{g}}
\newcommand\W{\hat{W}}
 \newcommand{\cl}{\mathcal{L}}
\newcommand\OMM{\Omega}
\newcommand\OMC{\bar{\Omega}}
\newcommand\LAM{\Phi}
\newcommand\FS{\mathfrak{S}}
\newtheorem{dfn}{Definition}[section]
\newtheorem{rem}[dfn]{Remark}
\newtheorem{thm}[dfn]{Theorem}
\newtheorem{lem}[dfn]{Lemma}
\newtheorem{cor}[dfn]{Corollary}
\newtheorem{conj}[dfn]{Conjecture}
\newtheorem{ex}[dfn]{Example}
\newtheorem{defn}[dfn]{Definition}
\def\la{\lambda}
\def\C{{\mathbb C}}
\def\R{{\mathbb R}}
\def\Z{{\mathbb Z}}
\def\K{{\mathbb K}}
\def\si{\sigma}
\def\Si{\Sigma}
\def\ga{\gamma}
\def\al{\alpha}
\def\be{\beta}
\def\De{\Delta}
\def\la{\lambda}
\def\La{\Lambda}
\def\O{{\mathcal O}}
\def\T{{\mathcal T}}
\def\L{\mathcal L}
\def\ol{\overline}
\def\P{{\mathbb P}}
\def\Q{{\mathbb Q}}
\def\B{{\mathbb B}}
\def\N{{\mathbb N}}
\def\g{{\mathfrak g}}
\def\X{{\mathfrak X}}
\def\acts{\curvearrowright}
\def\tits{\partial_{Tits}}
\def\<{\langle}
\def\>{\rangle}
\def\ov{\overrightarrow}
\begin{document}

\title[Additive Eigenvalue Problem (a survey)] {Additive Eigenvalue Problem (a survey)\\ (With appendix by M. Kapovich)}

\author{Shrawan Kumar}

\maketitle
\begin{center}
{Contents}
\end{center}
1. Introduction\\
2. Notation\\
3. Determination of the eigencone/saturated tensor cone\\
4. Specialization of Theorem \ref{thm1}  to $G=\SL(n)$: Horn inequalities\\
5. Deformed product\\
6. Efficient determination of the eigencone\\
7. Study of the saturated restriction semigroup and irredundancy of its
inequalities\\
8. Notational generalities on classical groups\\
9. Comparison of the eigencones under diagram automorphisms\\
10. Saturation problem\\
11. Deformed product and Lie algebra cohomology\\
12. A restricted functoriality of the deformed product and a product formula\\
13. Tables of the deformed product $\odot$ for the groups of type $B_2$, $G_2$,
$B_3$ and $C_3$\\
14. An explicit determination of the eigencone  for the ranks 2 and 3 simple Lie
algebras\\
15. Appendix (by M. Kapovich) \\
16. Bibliography

\section{Introduction}

This is a fairly self contained survey article on the classical Hermitian eigenvalue problem and its
 generalization to an arbitrary connected reductive group.

For any $n\times n$ Hermitian matrix $A$, let $\lambda_A =
(\lambda_1\geq \cdots \geq \lambda_n)$ be its set of eigenvalues written in
descending order.
Recall the following classical problem,
known as the  {\it  Hermitian eigenvalue problem}: Given two
 $n$-tuples of nonincreasing real
numbers: $\lambda=(\lambda_1\geq \cdots \geq \lambda_n)$ and
 $\mu=(\mu_1\geq \cdots \geq \mu_n)$,
determine all possible
$\nu=(\nu_1\geq \cdots \geq \nu_n)$  such that there exist Hermitian matrices
$A,B,C$ with $\lambda_A=\lambda, \lambda_B=\mu, \lambda_C=\nu$ and $C=A+B$.
This problem has  a long history starting with the work of Weyl (1912) followed
by  works of Fan (1949),
 Lidskii (1950),  Wielandt (1955), and culminating into the following conjecture
 given by  Horn (1962).

For any positive integer $r<n$,
inductively define the set $S_r^n$ as the set of triples $(I,J,K)$ of subsets of
$[n]:=\{1,\dots, n\}$ of cardinality $r$
such that
\begin{equation}
\sum_{i\in I}i +\sum_{j\in J}j =r(r+1)/2
+\sum_{k\in K} k
\end{equation}
 and for all $0<p<r$ and $(F,G,H)\in S^r_p$ the following
inequality holds:
\begin{equation}
\sum_{f\in F}i_f +\sum_{g\in G}j_g\leq p(p+1)/2 +
\sum_{h\in H} k_h.
\end{equation}
\begin{conjecture}  A triple $\lambda,\mu,\nu$ occurs as eigenvalues of Hermitian $n\times n$
matrices $A,B,C$ respectively such that $C=A+B$  if and only if
$$\sum_{i=1}^n\nu_i=\sum_{i=1}^n\lambda_i + \sum_{i=1}^n\mu_i,$$
and
for all $1\leq r <n$ and all triples   $(I,J,K)\in S^n_r$, we have
$$\sum_{k\in K}\nu_k\leq \sum_{i\in I}\lambda_i+\sum_{j\in J}\mu_j.$$
\end{conjecture}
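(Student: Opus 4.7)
The plan is to deduce Horn's conjecture by specializing the general description of the eigencone (Theorem \ref{thm1} of Section 3) to $G = \SL(n)$ and then identifying the resulting Schubert-calculus inequalities with the recursively defined set $S_r^n$. The $\SL(n)$ saturation theorem of Knutson--Tao, to be recalled in Section 10, enters as the bridge between integer-level non-vanishing of Schubert structure constants and the polyhedral Horn inequalities.

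First I would reformulate the problem as an eigencone question. Via the identification of Hermitian matrices with $\sqrt{-1}\,\mathfrak{u}(n)$ and projection to the trace-zero subalgebra, the set of triples $(\lambda,\mu,\nu)$ realizable by $A+B=C$ with prescribed spectra is, modulo the purely linear trace equality, exactly the three-factor eigencone for $\SL(n)$ (with $\nu$ replaced by $-w_0\nu$ in the usual convention). Theorem \ref{thm1} then cuts this cone out by half-space inequalities indexed by triples $(u,v,w)$ of Weyl-group cosets for each maximal parabolic subgroup $P_r \subset \SL(n)$ ($1 \le r < n$) such that the corresponding Schubert intersection $[X_u]\cdot[X_v]\cdot[X_w]$ is non-zero in the top cohomology of $\SL(n)/P_r = \Gr(r,n)$.

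Next I would translate this Schubert-calculus criterion into combinatorial form. On $\Gr(r,n)$ the Schubert cells are indexed by $r$-element subsets $I \subset [n]$; expanding the total-codimension equality gives precisely the sum identity (1), and expanding the associated inequality in terms of the Chern roots of the tautological subbundle produces exactly
$$\sum_{k \in K}\nu_k \le \sum_{i \in I}\lambda_i + \sum_{j \in J}\mu_j.$$
It remains to match the set of $(I,J,K)$ with $[X_I]\cdot[X_J]\cdot[X_K] \ne 0$ on $\Gr(r,n)$ with Horn's inductively defined $S_r^n$. Belkale's geometric Horn recursion asserts that non-vanishing of this top-degree product is equivalent to inequality (2) holding for all triples $(F,G,H)$ satisfying the analogous non-vanishing condition on $\Gr(p,r)$ for each $0<p<r$. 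By induction on $r$ these two sets coincide with $S_r^n$.

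The main obstacle is the Horn recursion together with saturation. A priori, non-vanishing of a Schubert structure constant is an integer statement about top-dimensional intersections, whereas the Horn inequalities (2) are polyhedral and hence a rational statement; it is the Knutson--Tao saturation theorem that reconciles the two by showing that rational non-emptiness of the relevant polytope detects actual non-vanishing of the integer constant. Once one has Theorem \ref{thm1}, Belkale's geometric recursion on Grassmannians, and saturation for $\SL(n)$, Horn's conjecture follows by a clean induction on $n$.
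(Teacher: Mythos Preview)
Your proposal is correct and follows essentially the same route as the paper: specialize the Berenstein--Sjamaar/Klyachko description of the eigencone (Theorem \ref{thm1} and its Corollary \ref{eigen}) to $\SL(n)$, then identify the resulting Schubert non-vanishing condition on $\Gr(r,n)$ with membership in $S_r^n$ via an induction that uses Knutson--Tao saturation. The only cosmetic difference is that you invoke the recursive characterization of non-vanishing as ``Belkale's geometric Horn recursion,'' whereas the paper proves it directly as Theorem \ref{thm4} using the classical correspondence between Grassmannian intersection numbers and $\SL(r)$ tensor invariants (equation \eqref{e8}); the logical content and the recursive interdependence with saturation are identical.
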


 Horn's above
conjecture was settled in the affirmative (cf. Corollary \ref{coro5}) by combining the work of
Klyachko (1998) with the work of Knutson-Tao (1999) on the `saturation' problem.

The above system of inequalities is overdetermined. Belkale (2001)
 proved that a certain subset of the above set of inequalities suffices. Subsequently,
  Knutson-Tao-Woodward (2004) proved that the subsystem of inequalities given by
  Belkale
forms an irredundant system of inequalities.

Now, we  discuss a generalization of the above Hermitian eigenvalue
 problem  (which can be rephrased in terms of the special unitary group $\SU (n)$ and
 its complexified Lie algebra $\mathfrak{s} \mathfrak{l}(n)$) to an arbitrary
 complex semisimple  group.
Let $G$ be a connected, semisimple complex algebraic
group. We fix a Borel subgroup $B$, a maximal torus $H$, and a maximal
compact subgroup $K$. We denote their Lie algebras by the corresponding
Gothic characters: $\fg$,
$ \fb, \fh, \frk$ respectively.
 Let  $R^+$ be the set of positive roots (i.e., the set of roots of $\frb$)
  and let $\Delta = \{\alpha_1, \dots, \alpha_\ell\} \subset R^+$ be the set of
simple roots.
There is a natural homeomorphism
$\delta:\frk/K\to \frh_{+}$, where $K$ acts on $\frk$ by the adjoint representation
and $ \frh_{+}:=\{h\in  \frh: \alpha_i(h)\geq 0 \,\forall \, i\}$  is the positive Weyl chamber in $\frh$. The inverse map
$\delta^{-1}$ takes any $h\in \frh_+$ to the $K$-conjugacy class of $\surd{-1} h$.

For any positive integer $s$, define the {\it eigencone}
 $$\bar{\Gamma}_s(\frg):=
\{(h_1,\dots,h_s)\in\frh_{+}^s\mid \exists (k_1,\dots,k_s)\in \mathfrak k^s
:
\, \sum_{j=1}^s k_j=0\,\,\text{and }\, \delta (k_j)=h_j \forall j\}.$$

By virtue of the convexity result in symplectic geometry, the subset
 $\bar{\Gamma}_s(\frg)\subset \frh_{+}^s$  is a convex rational polyhedral cone
(defined by certain  inequalities with rational coefficients). The aim
of the {\it general additive eigenvalue problem} is to find the inequalities
describing  $\bar{\Gamma}_s(\frg )$ explicitly.
(The case $\frg=\mathfrak{s} \mathfrak{l}(n)$ and $s=3$ specializes to the Hermitian eigenvalue
problem
if we replace $C$ by $-C$.)

Let $\Lambda =\Lambda(H)$ denote the character group of $H$ and let
 $\Lambda_+:=\{\lambda\in \Lambda: \lambda(\alpha^\vee_i)\geq 0 \,\forall \,\,
 \text{simple coroots}\,\, \alpha^\vee_i\}$
denote the set of all the dominant characters.
Then, the set of isomorphism classes of irreducible (finite dimensional) representations
of $G$ is parameterized by $\Lambda_+$ via the highest weights of  irreducible
representations.
For $\lambda \in \Lambda_+$, we denote by $V(\lambda)$ the corresponding irreducible
representation (of highest weight $\lambda$).

Similar to the eigencone $\bar{\Gamma}_s(\frg)$, one defines the
{\it saturated tensor semigroup}
$$
\Gamma_{s}(G)=\left\{(\lambda_{1},\ldots,\lambda_{s})\in \Lambda_+^{s}:
      \left[V(N\lambda_1)\otimes\cdots\otimes
        V(N\lambda_{s})\right]^{G}\neq 0,\text{~ for some ~} N\geq 1\right\}.
$$
Then, under the
identification
$\varphi:\mathfrak{h}\xrightarrow{\sim}\mathfrak{h}^{*}$ (via the
Killing form)
\beqn\label{intro1}
\varphi(\overline{\Gamma}_{s}(\fg))\cap \Lambda_+^{s}=\Gamma_{s}(G).
\eeqn
 (cf. Theorem \ref{sj}).

For any $ 1\leq
j\leq \ell$, define the element $x_j\in \fh$
by
\begin{equation}\alpha_i(x_{j})=\delta_{i,j},\text{ }\forall\text{ } 1\leq i\leq \ell.
\end{equation}

Let $P\supset B$ be a standard parabolic subgroup with Lie algebra
$\frp$ and let $\frl$ be its unique Levi component containing the Cartan subalgebra
$\frh$. Let $\Delta(P)\subset \Delta$ be the set of simple roots contained in the
set of roots of $\frl$.
Let $W_P$ be the Weyl group of $P$ (which is, by definition, the  Weyl Group of
 the Levi component $L$)
and let $W^P$ be the set of the minimal length representatives
in the cosets of $W/W_P$.
For any $w\in W^P$, define the Schubert variety:
  \[
X^P_w := \overline{ BwP/P} \subset G/P.
  \]
It is an irreducible (projective) subvariety
of $G/P$ of dimension $\ell(w)$.
Let
$\mu(X^P_w)$ denote the fundamental class of $X^P_w$ considered
as an element of the singular homology with integral coefficients $H_{2\ell(w)}(G/P, \Bbb Z)$
of $G/P$. Then, from the Bruhat decomposition, the elements
$\{\mu(X^P_w)\}_{w\in W^P}$ form a $\Bbb Z$-basis of
 $H_*(G/P, \Bbb Z)$.
  Let $\{[X_w^P]\}_{w\in W^P}$ be the Poincar\'e dual basis of the singular
cohomology $ H^*(G/P, \Bbb Z)$. Thus,
$[X_w^P]\in H^{2(\dim G/P-\ell (w))}(G/P, \Bbb Z)$.
Write the standard cup product in $H^*(G/P, \Bbb Z)$ in the $\{[X_w^P]\}$ basis as follows:
\begin{equation}
[X^P_u]\cdot[X^P_v]=\sum_{w\in W^P} c^w_{u,v}[X^P_w].
\end{equation}
Introduce the indeterminates ${\defpar}_i$ for each $\alpha_i\in
\Delta\setminus\Delta(P)$ and define a deformed cup product $\odot$
as follows:
$$
[X^P_u] \odot [X^P_v]=
\sum_{w\in W^P} \bigl(\prod_{\alpha_i\in \Delta\setminus\Delta(P)}
{\defpar}_i^{(w^{-1}\rho-u^{-1}\rho -v^{-1}\rho -\rho)(x_i)}
\bigr)
c^w_{u,v} [X^P_w],
$$
where $\rho$ is the (usual) half sum of positive roots of $\frg$.
 By  Corollary ~\ref{product} and the identity \eqref{eqn5}, whenever $c^w_{u,v}$
 is nonzero,
the exponent of $\tau_i$ in the above is a nonnegative integer. Moreover, the product
$\odot$ is associative (and clearly commutative).
The  cohomology algebra of $G/P$
 obtained by setting each ${\defpar}_i=0$ in
$(H^*(G/P, \Bbb Z)\otimes\Bbb{Z}[{\defpar}_i],\odot)$ is denoted by
$(H^*(G/P, \Bbb Z),\odot_0)$. Thus, as a $\Bbb Z$-module, this  is the same as the singular cohomology
 $H^*(G/P, \Bbb Z)$ and under the product $\odot_0$ it is associative
 (and commutative). The definition of the deformed product $\odot_0$ (now known as the
{\it Belkale-Kumar product}) was arrived at from the
crucial concept of Levi-movability as in Definition \ref{D1}.
 For a cominuscule maximal parabolic $P$, the product $\odot_0$ coincides with the standard
cup product (cf. Lemma \ref{minuscule}).

Now we are ready to state the main result on  solution of the
eigenvalue problem for any connected semisimple $G$ (cf. Corollaries \ref{eigen} and \ref{7.5}).
 For a maximal parabolic $P$, let
$\alpha_{i_P}$ be the unique simple root
not in the Levi of $P$ and let $\omega_P:= \omega_{i_P}$ be the corresponding
fundamental weight.

\begin{theorem} \label{intromain} Let $(h_1,\dots,h_s)\in\frh_{+}^s$.  Then, the following
are equivalent:

(a) $(h_1,\dots,h_s)\in\bar\Gamma_s(\fg)$.

(b)
 For every standard maximal parabolic subgroup $P$ in $G$ and every choice of
  $s$-tuples  $(w_1, \dots, w_s)\in (W^P)^s$ such that
$$[X^P_{w_1}]\cdots [X^P_{w_s}] = d[X^P_{e}] \,\,\,\text{for some}\,\, d\neq 0
,$$
the following inequality holds:
$$I^P_{(w_1, \dots, w_s)}:\,\,\,\,\omega_P(\sum_{j=1}^s\,w_j^{-1}h_j)\leq 0.$$

(c) For every standard maximal parabolic subgroup $P$ in $G$ and every choice of
  $s$-tuples  $(w_1, \dots, w_s)\in (W^P)^s$ such that
$$[X^P_{w_1}] \cdots [X^P_{w_s}] = [X^P_{e}],$$
the above  inequality $I^P_{(w_1, \dots, w_s)}$ holds.

(d) For every standard maximal parabolic subgroup $P$ in $G$ and every choice of
  $s$-tuples  $(w_1, \dots, w_s)\in (W^P)^s$ such that
$$[X^P_{w_1}]\odot_0 \cdots \odot_0 [X^P_{w_s}] = [X^P_{e}],$$
the above  inequality $I^P_{(w_1, \dots, w_s)}$ holds.
\end{theorem}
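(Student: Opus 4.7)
The plan is to establish the cycle $(\text{a}) \Leftrightarrow (\text{b}) \Rightarrow (\text{c}) \Rightarrow (\text{d}) \Rightarrow (\text{a})$. Here $(\text{a}) \Leftrightarrow (\text{b})$ is the classical characterization of the eigencone obtained via Geometric Invariant Theory together with the Hilbert-Mumford numerical criterion; the implications $(\text{b}) \Rightarrow (\text{c}) \Rightarrow (\text{d})$ are tautological once one observes that each successive list is indexed by a smaller family of tuples $(w_1,\dots,w_s)$; and the substantive content of the theorem is the Belkale-Kumar refinement $(\text{d}) \Rightarrow (\text{a})$, which asserts that dropping the non-Levi-movable inequalities leaves the cone unchanged.

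For $(\text{a}) \Leftrightarrow (\text{b})$, I first use \eqref{intro1} to translate the problem into a statement about the saturated tensor semigroup: after scaling, $h_\bullet \in \bar\Gamma_s(\fg)$ iff the line bundle $\mathcal{L}(N\lambda_\bullet)$ on $(G/B)^s$ carries a nonzero $G$-invariant section for some $N\geq 1$, where $\lambda_j=\varphi(h_j)$. By GIT this is equivalent to the non-emptiness of the semistable locus, which by Hilbert-Mumford reduces to non-negativity of the numerical weight at every point for every one-parameter subgroup. A standard reduction shows it suffices to consider 1-PS's of the form $\exp(t\,x_{i_P})$ for $P$ a standard maximal parabolic, and evaluation at the $H$-fixed points of $(G/B)^s$, indexed by $(w_1,\dots,w_s)\in (W^P)^s$, produces (up to sign) the numerical invariant $\omega_P(\sum_j w_j^{-1}h_j)$. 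Non-emptiness of the relevant translated Schubert-cell intersection is precisely Kleiman's criterion that the coefficient of $[X^P_e]$ in $[X^P_{w_1}]\cdots[X^P_{w_s}]$ be nonzero. This gives the family of inequalities $I^P_{(w_1,\dots,w_s)}$ of (b), with necessity and sufficiency falling out of the semistability equivalence.

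The next two implications are essentially formal. For $(\text{b}) \Rightarrow (\text{c})$, any tuple with $[X^P_{w_1}]\cdots[X^P_{w_s}]=[X^P_e]$ is a special case of $d\neq 0$, so its inequality is already among those of (b). For $(\text{c}) \Rightarrow (\text{d})$, Corollary \ref{product} ensures that the exponents of the $\tau_i$ appearing in the definition of $\odot$ are nonnegative integers, so the specialization $\tau_i=0$ only kills summands; consequently, an equation $[X^P_{w_1}]\odot_0 \cdots \odot_0 [X^P_{w_s}]=[X^P_e]$ forces the same equation in the standard cup product, placing the inequality already inside the family of (c).

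The hard step, and main obstacle, is $(\text{d}) \Rightarrow (\text{a})$. The strategy, following Belkale-Kumar, is to show that every inequality appearing in (b) but not in (d)—that is, arising from a tuple $(w_1,\dots,w_s)$ which is not Levi-movable—is redundant, being a positive combination of Levi-movable inequalities. The mechanism is as follows: non-Levi-movability manifests as a strict positivity $(e^{-1}\rho - \sum_j w_j^{-1}\rho - \rho)(x_{i_P}) > 0$ for the unique $\alpha_{i_P}\in \Delta\setminus\Delta(P)$, and this slack permits replacing some $w_j$ by a larger Weyl element $w_j'$ (in Bruhat order) so that the intersection remains nonempty and the new inequality is strictly tighter on $h_\bullet$. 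Iterating this replacement produces a Levi-movable tuple whose inequality dominates the original one, proving redundancy. The technically intricate part—where I anticipate the real work—is constructing these Bruhat-order deformations of $(w_\bullet)$; this is to be carried out using the Lie algebra cohomology interpretation of $\odot_0$ (developed in \S 11) to certify that whenever the Belkale-Kumar product vanishes one has enough room in the tangent space of the relevant Levi orbit to perform the deformation. Once this is in hand, combining with the already-proved $(\text{c})\Rightarrow(\text{b})\Rightarrow(\text{a})$ closes the cycle.
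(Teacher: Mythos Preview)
Your outline for $(\text{a})\Leftrightarrow(\text{b})$ and the formal implications $(\text{b})\Rightarrow(\text{c})\Rightarrow(\text{d})$ are correct and agree with the paper. The gap is in your plan for $(\text{d})\Rightarrow(\text{a})$.

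What you describe is \emph{not} the Belkale--Kumar argument. Their proof does not proceed by showing that each non-Levi-movable inequality is a positive combination of Levi-movable ones via a Bruhat-order deformation of $(w_1,\dots,w_s)$; no such deformation argument appears, and the Lie algebra cohomology material of \S11 is used only to reinterpret the ring $(H^*(G/P),\odot_0)$, not to prove the equivalence. Your proposed mechanism also has a directional problem: making $w_j'$ \emph{larger} in Bruhat order lowers $\codim X^P_{w_j'}$, so the product drops out of top degree, while making it \emph{smaller} may tighten the inequality but can kill the intersection product entirely. There is no evident way to steer this process toward an $L$-movable tuple.

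The actual proof (Theorem~\ref{EVT}) is by contradiction via GIT. If the semistable locus of $(G/B)^s$ is empty, pick a \emph{general} point $x$ and let $\hat\delta=[\delta,a]$ be a Kempf maximally destabilizing OPS for it, with associated Kempf parabolic $P=P(\gamma)$. A result of Kapovich--Leeb--Millson (Theorem~\ref{jm}) produces $w_1,\dots,w_s\in W^P$ such that $\bigcap_j g_jBw_jP/P$ is a \emph{single point} and $\sum_j\lambda_j(w_jx_{i_P})>0$; genericity of $x$ forces $[X^P_{w_1}]\cdots[X^P_{w_s}]=[X^P_e]$ with coefficient exactly $1$. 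The decisive extra ingredient is the theorem of Ramanan--Ramanathan (Theorem~\ref{RR}): the limit $x_o=\lim_{t\to0}\delta(t)x$ is again unstable with the \emph{same} Kempf class $\hat\delta$. Since $x_o$ has components of the form $fl_jw_j^{-1}B$ with $l_j\in L(\gamma)$, applying Theorem~\ref{jm} to $x_o$ shows that $\bigcap_j l_jw_j^{-1}Bw_jP$ is a single reduced point at $eP$, i.e., the intersection is transverse at $e$ for elements of the Levi. This is precisely $L$-movability of $(w_1,\dots,w_s)$, so by Proposition~\ref{T1} the tuple satisfies the $\odot_0$ condition of (d), contradicting the assumed inequality. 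The Kempf and Ramanan--Ramanathan machinery is the missing idea in your sketch.
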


The equivalence of (a) and (b) in the above  theorem
for general $G$ is due to Berenstein-Sjamaar (2000). Kapovich-Leeb-Millson (2009)
showed the equivalence of (a) and (c). The equivalence of (a) and
(d) is due to Belkale-Kumar (2006).  If we specialize the above Theorem for
$G=\SL(n)$, then, in the view of Theorem \ref{thm4},
the equivalence of (a) and (b) is nothing but the Horn's conjecture
(Corollary \ref{coro5}) solved by
combining the work of
Klyachko (1998) with the work of Knutson-Tao (1999). (It may be remarked that
the proofs of the Horn's conjecture and Theorem \ref{thm4} are recursively
interdependent in the sense that the validity of Theorem \ref{thm4} for
$\SL(n)$ proves the Horn's conjecture  for $\mathfrak{s} \mathfrak{l}(n)$
(by using the equivalence of (a) and (b) in Theorem \ref{intromain} for
$G=\SL(n)$) but the proof of Theorem \ref{thm4} for
$\SL(n)$ requires the validity of the Horn's conjecture  for
$\mathfrak{s} \mathfrak{l}(r)$, for $r<n$.) In this case, the
equivalence of (a) and
(c)  is due to Belkale (2001).  In this case, every maximal
parabolic
subgroup $P$ is cominuscule and hence the deformed product
$\odot_0$ in $H^*(G/P)$ coincides with the standard cup product. Hence the parts
(c) and (d) are the same in this case.

Because of the identification \eqref{intro1}, the above theorem allows us to determine the
saturated tensor semigroup $\Gamma_s(G)$ (See Theorems \ref{thm2} and \ref{EVT}
 for a precise statement).

The proof of the equivalence of (a) and (b) parts of the above theorem
follows from the Hilbert-Mumford criterion for semistability (cf. Proposition
\ref{propn14}) and the determination
of the Mumford index as in Lemma \ref{l1}. The proof
of the equivalence of the (a) and (d) parts is more delicate and relies on
the  Kempf's  maximally
destabilizing one parameter subgroups and Kempf's  parabolic subgroups
associated to unstable points. In addition, the notion of
Levi-movability   plays a fundamental role in the proof.

As proved by Ressayre (2010), the inequalities given by the (d) part of the
above theorem
form an irredundant system of inequalities determining the cone $\bar\Gamma_s(\fg)$
(cf. Corollary \ref{6.4}). (As mentioned above, for $\fg=\mathfrak{s}\fl(n)$ it
was proved by
Knutson-Tao-Wodward.) Ressayre's proof relies on the notion of well-covering pairs
(cf. Definition \ref{wellcovering}), which is equivalent to  the notion of
Levi-movability with cup product $1$ (cf. Lemma \ref{lemma7.5ress}).

The eigencone $\bar\Gamma_3(\fg)$  for the ranks 2 and 3 simple Lie
algebras $\fg$ is explicitly determined in Section \ref{sec10}. For $\fg$ of
 rank 2, it is due to Kapovich-Leeb-Millson (2009) and for $\fg$ of rank 3, it
is due to Kumar-Leeb-Millson (2003). The description
 relies on the above theorem (the equivalence of (a) and (d)).
As shown by Kumar-Leeb-Millson (2003), the (c) part
of the above theorem  gives rise to
126 inequalities for $\fg$ of type
$B_3$ or $C_3$, whereas by the (d) part one gets only 93 inequalities.

Let $\frg$ be a simple Lie algebra with a diagram automorphism $\sigma$ and let
$\frk$ be the fixed subalgebra (which is necessarily simple again). Then,
 as shown  by Belkale-Kumar (2010) for the pairs $(sl(2n), sp(2n))$ and
 $(sl(2n+1), so(2n+1)),$ and by Braley (2012) and Lee (2012) for the other pairs that the eigencone
 $\bar\Gamma_s(\frk)$ of $\frk$ is the intersection of the eigencone
 $\bar\Gamma_s(\fg)$ of $\fg$ with the dominant chamber of $\frk$ (cf. Theorem
 \ref{5.6}). The proof for the pair $(sl(2n), sp(2n))$ (resp.
 $(sl(2n+1), so(2n+1))$) relies on the result that any collection of
 Schubert varieties in any Grassmannian can be moved by elements of $\Sp (2n)$
 (resp. $\SO(2n+1)$) so that their  intersection is proper (cf. Theorems
 \ref{wilson1} and \ref{newwilson2}). The proof in the other cases relies
on the comparison
between the intersection product of
the partial flag varieties $G/P$ of $G$ (corresponding to the maximal parabolic
subgroups $P$ of $G$)
with that of the  deformed product in the  partial flag varieties $K/Q$ of $K$ (corresponding to the maximal
parabolic subgroups $Q$ of $K$).

An `explicit' determination of the eigencone $\bar\Gamma_s(\fg)$ of $\fg$
via Theorem \ref{intromain} hinges upon
 understanding the product $\odot_0$ in $H^*(G/P)$ in the Schubert basis, for all the
 maximal parabolic
 subgroups $P$. Clearly, the product $\odot_0$ is easier to understand than the usual
 cup product (which is the subject matter of {\it Schubert Calculus}) since in
 general `many
  more' terms in the product $\odot_0$ in the Schubert basis drop out. However,
  the  product $\odot_0$ has a drawback in that it is not functorial, in general
  even for the
  standard projections $\pi:G/P \to G/Q$ for parabolic subgroups $P\subset Q$.
  But, for certain embeddings of flag varieties $\iota : G/P \hookrightarrow
  \hat{G}/\hat{P}$,
  Ressayre-Richmond (2011) defined a certain `deformed' pull-back map in
  cohomology which respects the product $\odot_0$ (cf. Theorem \ref{rr}).
  A decomposition  formula for the  structure constants in  $\odot_0$ is obtained by
  Richmond (2012) (also by Ressayre (2011)) (cf. Theorems \ref{rrthm7}, \ref{rrthm2}
  and Corollary \ref{12.14}). We give the tables of the deformed product $\odot$
  for the groups of type $B_2,G_2, B_3$ and $C_3$ and for any maximal parabolic
  subgroups in Section \ref{exemples}.

  Also, as shown by Belkale-Kumar (2006), the deformed product  $\odot_0$ in
  $H^*(G/P)$ is connected with the
  Lie algebra cohomology of the nil-radical
  $\fru_P$ of the parabolic subalgebra $\frp$ (cf. Theorem \ref{liecohomology}).

Let $G \subset \G$ be connected reductive complex algebraic groups. Fix a
maximal torus $H$ (resp. $\hat{H}$) and a
Borel subgroup $H\subset B$ (resp. $\hat{H} \subset \hat{B}$) of $G$ (resp. $\G$)
such that $H\subset \hat{H}$ and
  $B\subset \hat{B}$.
Define the {\it saturated restriction
semigroup}
$$
\Gamma (G, \G)=\left\{(\lambda, \hat{\lambda})\in \Lambda_+\times \hat{\Lambda}_+:
      \left[V(N\lambda)\otimes \hat{V}
      ( N\hat{\lambda})\right]^{G}\neq 0,\text{~ for some ~} N\geq 1\right\}.
$$
In Section \ref{section8}, Theorem \ref{intromain} is generalized to the
determination of $\Gamma (G, \G)$. Specifically, we have the following result
due to Ressayre (2010) (cf. Theorems \ref{ress} and \ref{ress2}). (A weaker result
was obtained by Berenstein-Sjamaar (2000).)

\begin{theorem}\label{introress}  Assume that no nonzero
ideal of $\frg$ is an ideal of  $\hat{\frg}$.
Let $(\lambda, \hat{\lambda})\in \Lambda_+\times \hat{\Lambda}_+$. Then, the
 following three conditions  are equivalent.

(a) $(\lambda, \hat{\lambda})\in \Gamma (G, \G)$.
\vskip1ex

(b) For any $G$-dominant $\delta\in O(H)$, and any $(w, \hat{w})\in W^{P(\delta)}
\times
{\hat{W}}^{\hat{P}(\delta)}$ such that
$[X_w^{P(\delta)}]\cdot \iota^*([\hat{X}_{\hat{w}}^{\hat{P}(\delta)}])
\neq 0$  in
 $H^*(G/ P(\delta), \bz)$, where $\hat{X}_{\hat{w}}^{\hat{P}(\delta)}:=
 \overline{\hat{B}\hat{w}\hat{P}(\delta)/\hat{P}(\delta)}\subset
 \hat{G}/\hat{P}(\delta)$ and $\iota: G/ P(\delta) \to \G/\hat{P}(\delta)$
 is the canonical embedding, we have
 \begin{equation} I^{\delta}_{(w, \hat{w})}:\,\,\,\,\,\,
\lambda(w\dot{\delta})+ \hat{\lambda}(\hat{w}\dot{\delta})\leq 0,
\end{equation}
where $P(\delta)$(resp. $\hat{P}(\delta)$) is the Kempf's parabolic in $G$
(resp. $\hat{G}$) defined by the identity \eqref{2n}.
\vskip1ex

(c) For any OPS $\delta_i\in \mathfrak{S} (G,\G)$ and any $(w, \hat{w})\in W^{P(\delta_i)} \times
{\hat{W}}^{\hat{P}(\delta_i)}$ such that

($c_1$) \,$[X_w^{P(\delta_i)}]\cdot \iota^*([\hat{X}_{\hat{w}}^{\hat{P}(\delta_i)}])
 =  [X_e^{P(\delta_i)}]\in
 H^*(G/ P(\delta_i), \bz)$, and

($c_2$) \, $ \gamma^{P(\delta_i)}_e (\dot{\delta_i})-
\gamma^{P(\delta_i)}_w (\dot{\delta_i})=
\hat{\gamma}^{\hat{P}(\delta_i)}_{\hat{w}}(\dot{\delta}_i),$

the above inequality $I^{\delta_i}_{(w, \hat{w})}$ is satisfied, where
the set $\mathfrak{S} (G,\G)$ is defined in Definition \ref{ressspecial} and
$\gamma^{P(\delta_i)}_w (\dot{\delta_i})$ (resp.
$\hat{\gamma}^{\hat{P}(\delta_i)}_{\hat{w}}(\dot{\delta}_i)$) are given by the
identities \eqref{ress1900} (resp. \eqref{ress1901}).

Moreover, the set of inequalities provided by the (c)-part
is an irredundant system of inequalities describing the cone  $ \Gamma (G, \G)_\br$
inside  $\Lambda_+(\br)\times{ \hat{\Lambda}}_+(\br)$, where  $\Lambda_+(\br)$
 denotes the cone inside  $\Lambda\otimes_{\bz}\, \br$ generated by
 $\Lambda_+$ and  $ \Gamma (G, \G)_\br$ is the cone generated by
 $ \Gamma (G, \G)$.
\end{theorem}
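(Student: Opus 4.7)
The plan is to realize $\Gamma(G,\hat{G})$ as a cone of semistability via Geometric Invariant Theory on $X := G/B \times \hat{G}/\hat{B}$ with $G$ acting diagonally through the inclusion $G \hookrightarrow \hat{G}$. Each pair $(\lambda,\hat{\lambda}) \in \Lambda_+ \times \hat{\Lambda}_+$ defines a $G$-linearized line bundle $\mathcal{L}_{\lambda,\hat{\lambda}}$ on $X$, and Borel--Weil together with Frobenius reciprocity yield $(\lambda,\hat{\lambda}) \in \Gamma(G,\hat{G})$ if and only if the $G$-semistable locus $X^{ss}(\mathcal{L}_{\lambda,\hat{\lambda}}^{\otimes N})$ is nonempty for some $N\geq 1$. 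This recasts conditions (a), (b), (c) into statements about semistable points and one-parameter subgroups.

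For (a)$\Rightarrow$(b), fix any $G$-dominant OPS $\delta$ and a $G$-semistable point $x \in X$. Move $x$ into the Bia\l ynicki--Birula cell of a torus-fixed point by the $B$-action, so that $x$ lies over Schubert cells indexed by a pair $(w,\hat{w}) \in W^{P(\delta)} \times \hat{W}^{\hat{P}(\delta)}$. A direct weight computation (as in Lemma~\ref{l1}) identifies the Mumford numerical index $\mu^{\mathcal{L}_{\lambda,\hat{\lambda}}}(x,\delta)$ with $\lambda(w\dot{\delta}) + \hat{\lambda}(\hat{w}\dot{\delta})$, and the Hilbert--Mumford criterion forces this to be $\leq 0$. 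Conversely, the nonvanishing of the intersection $[X_w^{P(\delta)}] \cdot \iota^*([\hat{X}_{\hat{w}}^{\hat{P}(\delta)}])$ in $H^*(G/P(\delta),\bz)$ is precisely the cohomological condition ensuring that such a pair $(w,\hat{w})$ actually arises from a $G$-semistable point on $X$; this is the Berenstein--Sjamaar argument. The implication (b)$\Rightarrow$(c) is immediate since (c) imposes only a subset of the inequalities of (b).

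The main obstacle is (c)$\Rightarrow$(a) together with irredundancy. Suppose $(\lambda,\hat{\lambda}) \notin \Gamma(G,\hat{G})$, so every point of $X$ is $G$-unstable for $\mathcal{L}_{N\lambda,N\hat{\lambda}}$. Choose a sufficiently generic unstable $x$; Kempf's theorem on canonical destabilizing one-parameter subgroups produces $\delta \in \mathfrak{S}(G,\hat{G})$ with attached Kempf parabolic $P(\delta)$. The Bia\l ynicki--Birula decomposition of $X$ along $\delta$ identifies $x$ with a pair $(w,\hat{w})$ for which the inequality $I^{\delta}_{(w,\hat{w})}$ fails and the cup product in (b) is nonzero. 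To sharpen this to the system (c), invoke the Belkale--Kumar notion of Levi-movability, which is the infinitesimal refinement of transversality in $G/P(\delta) \times \hat{G}/\hat{P}(\delta)$: condition $(c_2)$ is precisely the numerical requirement that the nonzero cup product survive passage to the deformed product $\odot_0$. A dimension count on normal bundles along the canonical destabilizing stratum shows that a non-Levi-movable pair realizing the hyperplane may be replaced by a Levi-movable one realizing the same hyperplane; iterating, the cup product collapses to $[X_e^{P(\delta)}]$, yielding $(c_1)$. This reduction, which ties together Kempf's parabolic and the deformed product theory developed in Sections~5 and~11, is the technical heart of the argument.

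For irredundancy, Ressayre shows that pairs $(w,\hat{w})$ satisfying both $(c_1)$ and $(c_2)$ are exactly the \emph{well-covering pairs} of Definition~\ref{wellcovering} (cf.\ Lemma~\ref{lemma7.5ress}). For such a pair one constructs, in a neighbourhood of a generic polystable point lying on the wall $\lambda(w\dot{\delta}) + \hat{\lambda}(\hat{w}\dot{\delta}) = 0$, a local slice exhibiting the wall as a genuine facet of $\Gamma(G,\hat{G})_{\br}$ inside $\Lambda_+(\br)\times\hat{\Lambda}_+(\br)$, rather than a lower-dimensional face. Hence each such inequality is essential, completing the irredundancy statement.
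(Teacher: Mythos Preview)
Your outline correctly identifies the GIT framework on $X=G/B\times\hat G/\hat B$ and the Hilbert--Mumford computation behind (a)$\Leftrightarrow$(b); that part essentially matches the paper (modulo a sign: $\mu^{\mathcal L}(x,\delta)=-\lambda(w\dot\delta)-\hat\lambda(\hat w\dot\delta)$, so Hilbert--Mumford gives $\ge 0$ on this quantity).

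The serious gap is in (c)$\Rightarrow$(a). You assert that ``Kempf's theorem on canonical destabilizing one-parameter subgroups produces $\delta\in\mathfrak S(G,\hat G)$.'' This is false: the Kempf OPS attached to a generic unstable point has no reason to be \emph{special} in the sense of Definition~\ref{ressspecial}. Specialness is the condition that $\mathbb C\dot\delta$ equals the intersection of the kernels of the $H$-weights of $\hat{\mathfrak l}(\delta)/\mathfrak l(\delta)$, and nothing in Kempf's theory forces this. The paper's route is indirect and rather delicate: first, using the Hesselink stratification (Propositions~\ref{ressprop1},~\ref{birula}) rather than a single Kempf OPS, one shows that $\Gamma(G,\hat G)_{\mathbb R}$ is cut out by inequalities coming from \emph{well-covering pairs} $(C_\delta(w,\hat w),\delta)$; then, for any such pair whose hyperplane is a genuine facet meeting $\Lambda_{++}(\mathbb R)\times\hat\Lambda_{++}(\mathbb R)$, one proves $\delta$ is special. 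That proof picks a GIT class in the facet with nonempty interior, finds a point $x_o\in X^s\cap C$ whose $G$-orbit is closed in $X^s$, applies Matsushima's theorem to get $G_{x_o}$ reductive and hence in a torus, shows $\dim G_{x_o}^o=1$ via the surjection to $\operatorname{Pic}^G(G\cdot x_o)\otimes\mathbb R$, and finally computes that the generic isotropy of $L(\delta)$ on $\hat L(\delta)/\hat B(\delta)$ has identity component $\operatorname{Im}\delta$, which translates into the weight condition defining $\mathfrak S(G,\hat G)$. None of this is captured by ``Kempf's theorem produces $\delta\in\mathfrak S(G,\hat G)$.''

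Your reduction to $(c_1)$ is also not an argument: ``iterating'' a replacement of non-Levi-movable pairs by Levi-movable ones until the product becomes $[X_e]$ is not how the paper proceeds and has no obvious termination or correctness. In the paper, once one knows the facet comes from a well-covering pair, Lemma~\ref{lemma7.5ress} (which you cite) gives directly that the product is $[X_e]$ and $(w,\hat w)$ is $L$-movable, whence $(c_2)$ by Proposition~\ref{prop7.3ress}. Finally, for irredundancy the paper does not build a ``local slice''; it shows $\langle H^\delta_{(w,\hat w)}\cap\Gamma(G,\hat G)\rangle=\langle\Gamma(C)\rangle$ by extending $L(\delta)$-invariant sections on $C$ to $G$-invariant rational sections on $X$ (via a $P(\delta)$-equivariant comparison of line bundles on $G\times^{P(\delta)}C_+$), and then invokes \cite{MR} to get codimension one because $\delta$ is special.
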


Let $G$ be a connected semisimple  group.
  The {\it saturation problem} aims at connecting the tensor product semigroup
 $$T_s(G):=\{(\lambda_1, \dots, \lambda_s)\in \Lambda_+^s :
  [V(\lambda_1)\otimes\dots
 \otimes V(\lambda_s)]^{G}\neq 0\}$$
 with the saturated tensor product semigroup
 ${\Gamma}_ s (G)$.
 An integer $d\geq 1$ is called a {\it saturation factor} for $G$,
 if for any $(\lam,\mu,\nu)\in {\Gamma}_3(G)$ such that $\lam+\mu+\nu \in Q$, then
 $(d\lam, d\mu,d\nu)\in T_3(G)$, where $Q$ is the root lattice of $G$.
 Such a $d$ exists by Corollary \ref{saturationnew}.
If $d=1$ is a saturation factor for $G$, we say that
 the {\it saturation property holds for $G$}.

The {\it saturation theorem} of Knutson-Tao (1999) mentioned above, proved by using
their `honeycomb model,'  asserts that
the saturation property holds for $G=\SL(n)$. Other proofs
of their result are given by  Derksen-Weyman (2000), Belkale (2006) and
 Kapovich-Millson (2008).

The following general result (though not optimal) on saturation factor is obtained
 by Kapovich-Millson (2008)
(cf. the Appendix).

\begin{theorem}\label{introKM}  For any connected simple $G$, $d=k_\frg^2$ is a
 saturation factor,
where $k_\frg$ is the least common multiple of the coefficients of the highest
root $\theta$ of the Lie algebra $\frg$ of $G$ written in terms of the simple roots
$\{\al_1,\dots, \al_\ell\}$.
\end{theorem}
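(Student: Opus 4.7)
The approach is geometric, via the theory of affine Bruhat--Tits buildings for the group $G(F)$ over a non-Archimedean local field $F$ with ring of integers $\mathcal O$. Let $X$ denote this building: its model apartment is identified with $\frh_\br$, and its set of special vertices $X^0$ corresponds to $G(F)/G(\mathcal O)$. The plan is to reinterpret membership in both $T_3(G)$ and $\Gamma_3(G)$ as existence problems for geodesic triangles in $X$ with prescribed $\Delta$-side lengths, and then to run a quantitative rounding argument producing a triangle with special vertices from one whose vertices are merely real.

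First I would invoke the Satake isomorphism: the spherical Hecke algebra $\mathcal H(G(F), G(\mathcal O))$ is identified with the representation ring of the Langlands dual of $G$, and under this identification $(\lambda,\mu,\nu)\in T_3(G)$ is equivalent to the existence of a geodesic triangle in $X$ with vertices in $X^0$ and $\Delta$-side lengths given by the dominant coweights attached to $\lambda,\mu,\nu$; the assumption $\lambda+\mu+\nu\in Q$ is exactly the closing condition that makes such a triangle realizable in the affine apartment, not merely the linear one. Next, combining the convexity identification of Theorem \ref{sj} with an asymptotic-cone argument on tensor-product multiplicities, $(\lambda,\mu,\nu)\in\Gamma_3(G)$ admits the same existence characterization with the vertex condition dropped: the three $\Delta$-length vectors need only close up to a geodesic triangle in $X$ whose vertices are allowed to lie anywhere in $\frh_\br$.

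The heart of the argument is then a \emph{rounding lemma}: from a building triangle with $\Delta$-lengths $\lambda,\mu,\nu$ and vertices in $\frh_\br$, after dilating sides by $k_\frg^2$ one produces a genuine triangle in $X$ with vertices in $X^0$ and $\Delta$-side lengths $k_\frg^2\lambda, k_\frg^2\mu, k_\frg^2\nu$. One factor of $k_\frg$ is consumed in clearing denominators when moving vertices into the coweight lattice $P^\vee$, exploiting exactly the defining divisibility property of $k_\frg$ as the least common multiple of the highest-root coefficients, which controls $P^\vee/Q^\vee$ in the coroot basis. The second factor of $k_\frg$ is consumed in passing from a closed polygon of vectors in $\frh_\br$ to a bona fide concatenation of building geodesics between special vertices; this uses folding and unfolding via retractions of $X$ onto its apartment, in the spirit of Littelmann path models and the Kapovich--Leeb--Millson Hecke-path approach. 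The main obstacle is this second step: securing the sharp factor $k_\frg$ (rather than some larger multiple depending on the rank) requires careful use of the transitivity of $G(\mathcal O)$ on geodesics of given vector length based at a special vertex, together with an induction on the combinatorics of the retracted triangle; the hypothesis $\lambda+\mu+\nu\in Q$ is what ensures that after dilation the three rescaled edges still close up in the affine sense, completing the reduction from $\Gamma_3(G)$ to $T_3(G)$ at the cost of the factor $k_\frg^2$.
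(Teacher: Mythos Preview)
Your overall framework---Bruhat--Tits buildings, $\Delta$-valued side lengths, triangles, and the Satake/path-model connection to representation theory---is the right one, and matches the paper's Appendix. But two of your key assertions are wrong, and they are exactly where the argument lives.

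First, the Satake isomorphism does \emph{not} give an equivalence between $(\lambda,\mu,\nu)\in T_3(G)$ and the existence of a triangle in $X$ with special vertices and the given $\Delta$-side lengths. It only gives the inclusion $Rep\subset Hecke$ (nonzero tensor invariants $\Rightarrow$ special triangle). The reverse inclusion fails in every non-simply-laced type: there exist special triangles whose side-lengths give zero tensor invariants (see Theorem~\ref{thm:examples}). Getting back from $Hecke$ to $Rep$ is precisely what costs one factor of $k_\frg$, and it requires the Littelmann path model: one projects a special triangle to a generalized Hecke triangle in $\Delta$, and then uses that scaling a generalized Hecke path by $k_\frg$ produces a generalized LS path (Lemma~\ref{lem:scaling}), which by Littelmann's theorem yields a nonzero invariant. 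You mention the path model only in passing; in fact it carries half the proof.

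Second, your account of the first $k_\frg$ factor is incorrect. It has nothing to do with $P^\vee/Q^\vee$ or clearing denominators into the coweight lattice---for $E_8$ one has $P^\vee=Q^\vee$ yet $k_\frg=60$. The defining property of $k_\frg$ used here is that scaling by $k_\frg$ sends every vertex of the Coxeter complex to a \emph{special} vertex. The passage from a triangle in $X$ with arbitrary real vertices to one whose vertices are vertices of $X$ is actually free (no $k_\frg$ factor): it comes from the Gauss correspondence and semistability (Theorem~\ref{thm:semistable} and its Corollary), which shows that existence of a triangle is scale-invariant and that under the root-lattice hypothesis one can always realize the triangle with building vertices. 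The first $k_\frg$ factor is then spent upgrading building vertices to \emph{special} vertices: project the triangle to $\Delta$ to get a Hecke path whose break-points are vertices, scale by $k_\frg$ so the break-points become special, and unfold back to a special triangle (Theorem~\ref{thm:Hecke-sat}). The second $k_\frg$ factor is the $Hecke\to Rep$ step just described. Both factors thus come from the same geometric fact (vertex $\mapsto$ special vertex under dilation by $k_\frg$), applied once to Hecke paths and once to generalized Hecke paths.
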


Kapovich-Millson  (2006) made the very interesting conjecture
 that if $G$ is simply-laced, then the saturation
 property holds for $G$.
 Apart from  $G=\SL(n)$, the only other simply-connected, simple, simply-laced group
 $G$ for which the above conjecture is known so far is $G=\Spin (8)$, proved by
 Kapovich-Kumar-Millson (2009) by explicit calculation using the equivalence of
 (a) and (d) in Theorem \ref{intromain}.

For the classical
groups $\SO(n) \, (n\geq 5)$ and $\Sp(2\ell) \,(\ell \geq 2)$, 2 is a saturation factor.
It was proved by
Belkale-Kumar (2010) for the groups $\SO(2\ell+1)$ and $\Sp(2\ell)$ by using
geometric techniques.  Sam  (2012) proved it for $\SO(2\ell)$ (and also for
$\SO(2\ell+1)$ and $\Sp(2\ell)$) via the quiver approach (following the proof
by Derksen-Weyman (2010) for $G=\SL(n)$).
(Observe that the general result of Kapovich-Millson gives  a saturation factor
of $4$ in these cases.)

We recall, in Section \ref{secsaturation}, a `rigidity' result for the
$\SL(n)$-representations due to Knutson-Tao-Woodward (2004),
 which was conjectured by Fulton and also its generalization to an arbitrary
 reductive group by Belkale-Kumar-Ressayre (2012) (cf. Theorems \ref{ktwfulton}
 and \ref{ktwfulton1}).

We refer the
reader to
the survey article of Fulton [F$_2$] on the Hermitian eigenvalue problem; and
the Bourbaki talk by Brion [Br].
\vskip2ex

\noindent
{\bf Acknowledgements.} It is my pleasure to thank Prakash Belkale who
introduced me to the eigenvalue problem and my joint works with him. I thank N. Ressayre
 for going through the article and his comments. 
I also thank M. Kapovich, B. Leeb, J. Millson and N. Ressayre for my joint works with them
on the subject. I gratefully acknowledge the support from the NSF grant
 DMS-1201310.

\section{Notation}\label{sec1}

Let $G$ be a  semisimple connected complex algebraic group. We choose a Borel subgroup
$B$ and a maximal torus $H\subset B$  and let $W=W_G:=N_G(H)/H$ be the associated Weyl group,
 where $N_G(H)$ is the normalizer of $H$ in $G$.  Let $P\supseteq B$ be a (standard)
 parabolic subgroup of $G$ and let $U=U_P$ be its unipotent radical. Consider the
 Levi subgroup $L=L_P$ of $P$ containing $H$, so that $P$ is the semi-direct product
 of $U$ and $L$. Then,  $B_L:=B\cap L$ is a Borel subgroup of
$L$. Let $\Lambda =\Lambda(H)$ denote the character group of $H$, i.e., the group of all the
algebraic group morphisms $H \to \Bbb G_m$.  Clearly, $W$ acts on $\Lambda$.
We denote the Lie algebras of $G,B,H,P,U,L,B_L$ by the corresponding Gothic characters:
$\fg,\fb,\fh,\fp,\fu,\fl,\fb_L$ respectively.  We will often identify an element
$\lambda$ of $\Lambda$
(via its derivative $\dot{\lambda}$) by an element of $\fh^*$. Let $R=R_\fg \subset \frh^*$ be
the set of roots of $\fg$ with respect to the Cartan subalgebra $\fh$ and let $R^+$ be the
 set of positive roots (i.e., the set of roots of $\fb$).
Similarly, let $R_\fl$ be the set of roots of $\fl$ with respect to $\fh$ and
$R_\fl^+$ be the set of roots of $\fb_L$. Let  $\Delta = \{\alpha_1, \dots, \alpha_\ell\}
\subset R^+$ be the set of simple
roots, $\{\alpha_1^\vee, \dots, \alpha_\ell^\vee\}
\subset \fh$ the
corresponding simple coroots and $\{s_1,\dots, s_\ell\}\subset W$ the
corresponding simple reflections,  where $\ell$ is the rank of $G$.
 We denote by $\Delta(P)$ the set of simple roots
 contained in $R_\fl$.  For any $ 1\leq
j\leq \ell$, define the element $x_j\in \fh$
by
\begin{equation}\label{eqn0}\alpha_i(x_{j})=\delta_{i,j},\text{ }\forall\text{ } 1\leq i\leq \ell.
\end{equation}

Recall that if $W_P$ is the Weyl group of $P$ (which is, by definition, the  Weyl
Group $W_L$ of $L$), then in each coset of $W/W_P$ we have a unique member $w$ of minimal length.
 This satisfies (cf. [K$_1$, Exercise 1.3.E]):
\begin{equation}\label{eqn1}
wB_L w^{-1} \subseteq B.
\end{equation}
Let $W^P$ be the set of the minimal length representatives
in the cosets of $W/W_P$.

For any $w\in W^P$, define the Schubert cell:
\[
C_w^P:=  BwP/P \subset G/P.
  \]
Then, it is a locally closed subvariety of $G/P$ isomorphic with the affine
space $\Bbb A^{\ell(w)}, \ell(w)$ being the length of $w$ (cf. [J, Part II,
Chapter 13]). Its closure is denoted by $X^P_w$, which is an irreducible (projective) subvariety
of $G/P$ of dimension $\ell(w)$. We denote the point $wP\in C_w^P$ by $\dot{w}$.
We abbreviate $X_w^B$ by $X_w$.

Let
$\mu(X_w^P)$ denote the fundamental class of $X_w^P$
considered as an element of the singular homology with integral coefficients
$H_{2\ell(w)}(G/P, \Bbb Z)$ of $G/P$. Then, from the Bruhat decomposition, the elements
$\{\mu(X_w^P)\}_{w\in W^P}$ form a $\Bbb Z$-basis of
 $H_*(G/P, \Bbb Z)$. Let $\{[X_w^P]\}_{w\in W^P}$ be the Poincar\'e dual basis of the singular
cohomology with integral coefficients $ H^*(G/P, \Bbb Z)$. Thus,
$[X_w^P]\in H^{2(\dim G/P-\ell (w))}(G/P, \Bbb Z)$. Similarly, let
$\{\epsilon^P_w\}_{w\in W^P}$ be the  basis of  $ H^*(G/P, \Bbb Z)$ dual to the
basis $\{\mu(X_w^P)\}_{w\in W^P}$ of $H_*(G/P, \Bbb Z)$ under the standard pairing, i.e., for any
$v,w\in W^P$, we have
\[\epsilon^P_v(\mu (X^P_w))=\delta_{v,w}.\]
Then, for any $w\in W^P$, by [KuLM, Proposition 2.6],
\begin{equation}\label{eqn1n}
\epsilon^P_w= [X_{w_oww_o^P}^P],
\end{equation}
where $w_o$ (resp. $w_o^P$) is the longest element of the Weyl group $W$ (resp.
$W_P$). (Observe that $w_oww_o^P\in W^P$ for any $w\in W^P$.)

An element $\lambda\in \Lambda$ is called dominant (resp. dominant regular) if
$\dot{\lambda}(\alpha^\vee_i)\geq 0$ (resp. $\dot{\lambda}(\alpha^\vee_i) > 0$)
for all the simple coroots $\alpha^\vee_i$. Let $\Lambda_+$ (resp. $\Lambda_{++}$)
denote the set of all the dominant (resp. dominant regular) characters.
The set of isomorphism classes of irreducible (finite dimensional) representations
of $G$ is parameterized by $\Lambda_+$ via the highest weights of  irreducible
representations.
For $\lambda \in \Lambda_+$, we denote by $V(\lambda)$ the corresponding irreducible
representation (of highest weight $\lambda$). The dual representation $V(\lam)^*$
is isomorphic with $V(\lam^*)$, where $\lam^*$ is the weight $-w_o\lam$. The $\mu$-weight space of $V(\lambda)$
is denoted by $V(\lambda)_\mu$. For $\lam\in \Lambda_+$, let $P(\lam )$ be
the set of weights of $V(\lam )$. We denote the fundamental weights by
$\{\omega_i\}_{1\leq i\leq \ell}$, i.e.,
$$\omega_i(\alpha_j^\vee)=\delta_{i,j}.$$

For any $\lambda \in \Lambda$, we have a  $G$-equivariant
 line bundle $\cl (\lambda)$  on $G/B$ associated to the principal $B$-bundle $G\to G/B$
via the one dimensional $B$-module $\lambda^{-1}$. (Any  $\lambda \in \Lambda$ extends
 uniquely to a character of $B$.) The one dimensional $B$-module $\lambda$ is also denoted by
 $\Bbb C_\lambda$.

All the schemes are considered over the base field of complex numbers $\Bbb C$. The varieties
are reduced (but not necessarily irreducible) schemes.

\section{Determination of the eigencone/saturated tensor cone}\label{section2}

Let the notation and assumptions be as in Section \ref{sec1}. In particular, $G$ is a
connected semisimple complex algebraic group.
 Fix a positive integer $s$ and define the {\it saturated tensor
semigroup}
$$
\Gamma_{s}(G)=\left\{(\lambda_{1},\ldots,\lambda_{s})\in \Lambda_+^{s}:
      \left[V(N\lambda_1)\otimes\cdots\otimes
        V(N\lambda_{s})\right]^{G}\neq 0,\text{~ for some ~} N\geq 1\right\}.
$$
Let $$\mathfrak{h}_{+}:=\{x\in \fh: \alpha_i(x)\in \mathbb{R}_+ \,\,\text{for all
the simple roots}\,\, \alpha_i\}$$ be the dominant chamber in $\mathfrak{h}$,
where  $\mathbb{R}_+$ is the set of nonnegative real numbers.
Define the {\em eigencone}
$$
\overline{\Gamma}_{s}(\fg)=\left\{(h_{1},\ldots , h_{s})\in
\mathfrak{h}^{s}_{+}:\,\text{there exist}\,\,\ni k_{1},\ldots,k_{s}\in K
\,\,\text{with}\,\,
\sum^{s}_{j=1}(\text{Ad~} k_{j})h_{j}=0\right\},
$$
where $K\subset G$ is a fixed maximal compact subgroup.
Then, $\overline{\Gamma}_{s}(\fg)$ depends only upon the Lie algebra $\fg$
and the choices of its Borel subalgebra $\fb$ and the Cartan subalgebra $\fh$.

Under the
identification
$\varphi:\mathfrak{h}\xrightarrow{\sim}\mathfrak{h}^{*}$ (via the
Killing form) $\Gamma_{s}(G)$ corresponds to the set of integral
points of $\overline{\Gamma}_{s}(\fg)$. Specifically, we have the following result essentially  following from Mumford
[N, Appendix] (also see [Sj, Theorem 7.6] and [Br, Th\'eor\`eme 1.3]).
\begin{theorem}\label{sj}
$$
\varphi(\overline{\Gamma}_{s}(\fg))\cap \Lambda_+^{s}=\Gamma_{s}(G).
$$
\end{theorem}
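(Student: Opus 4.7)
The plan is to realize both sides of the claimed equality as encoding the same geometric condition on a product of flag varieties, once via geometric invariant theory (capturing $\Gamma_s(G)$) and once via the moment map (capturing $\overline{\Gamma}_s(\fg)$), and then to invoke the Kempf--Ness/Kirwan/Mumford bridge between the two.

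First, for $(\lambda_1,\ldots,\lambda_s)\in\Lambda_+^s$, I would consider the product $X:=(G/B)^s$ with the diagonal $G$-action linearized by $L:=\cl(\lambda_1)\boxtimes\cdots\boxtimes\cl(\lambda_s)$. By Borel--Weil together with the Künneth formula, $H^0(X,L^{\otimes N})$ is, up to dualization of each factor (a consequence of the convention $\cl(\lambda)=G\times_B\bc_{\lambda^{-1}}$), isomorphic to $V(N\lambda_1)^{*}\otimes\cdots\otimes V(N\lambda_s)^{*}$ as a $G$-module. Since the existence of a nonzero $G$-invariant in a tensor product of irreducibles is unchanged by dualizing every factor, $(\lambda_1,\ldots,\lambda_s)\in\Gamma_s(G)$ is equivalent to $H^0(X,L^{\otimes N})^G\neq 0$ for some $N\geq 1$, which in turn is equivalent to nonemptiness of the $G$-semistable locus $X^{ss}(L)$ (a nontrivial invariant section being the classical witness for a semistable point).

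Next, I would install on $X$ a $K$-invariant Kähler form coming from $L$. When every $\lambda_j$ is regular this is immediate; when some $\lambda_j$ is singular one instead works on the partial flag variety $G/P_{\lambda_j}$ to which $\cl(\lambda_j)$ descends as an ample bundle. The associated moment map $\mu_L\colon X\to\frk^{*}$ for the induced $K$-action, transported to $\frk$ via the Killing form and then to $\fh$ via $\varphi$, sends $(g_1B,\ldots,g_sB)$ to $\sum_j\Ad(g_j)h_{\lambda_j}$, where $h_{\lambda_j}:=\varphi^{-1}(\lambda_j)\in\fh_+$. Hence $0\in\mu_L(X)$ if and only if there exist $k_1,\ldots,k_s\in K$ with $\sum_j\Ad(k_j)h_{\lambda_j}=0$, which is exactly the defining condition for $(h_{\lambda_1},\ldots,h_{\lambda_s})\in\overline{\Gamma}_s(\fg)$. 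The decisive ingredient is then the Kempf--Ness/Kirwan equivalence, precisely the content of the cited references [N, Appendix], [Sj, Theorem 7.6] and [Br, Théorème 1.3]: for a projective $K$-variety with an ample $K$-linearization $L$, the semistable locus $X^{ss}(L)$ is nonempty if and only if $0$ lies in the image of $\mu_L$. Chaining this with the two equivalences above yields $\varphi(\overline{\Gamma}_s(\fg))\cap\Lambda_+^s=\Gamma_s(G)$.

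The main obstacle I anticipate is the handling of the non-regular case: when some $\lambda_j$ lies on a wall of $\fh_+$, the bundle $\cl(\lambda_j)$ is only semi-ample on $G/B$, the closed 2-form on $X$ degenerates to a presymplectic one, and the GIT/moment-map dictionary does not apply verbatim. The standard remedy is to descend to $G/P_{\lambda_j}$, where the bundle becomes ample and the moment-map image of the $j$-th factor is still the coadjoint orbit $\Ad(K)h_{\lambda_j}$; but one must then check that the natural projection $G/B\to G/P_{\lambda_j}$ preserves both the space of $G$-invariant sections of all tensor powers and the moment-map zero condition. A minor secondary bookkeeping point is the dualization $V(\lambda)^{*}\cong V(-w_0\lambda)$ already mentioned: it is absorbed by the observation that $-w_0$ is an involution of $\fh_+$ and that the eigencone condition $\sum_j\Ad(k_j)h_j=0$ is preserved under $h_j\mapsto -w_0 h_j$ after a lift of $w_0$ to $N_K(T)$ is absorbed into the $k_j$.
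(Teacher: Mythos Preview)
Your proposal is correct and follows essentially the same approach as the paper: both translate membership in $\Gamma_s(G)$ into existence of a semistable point via Borel--Weil, translate membership in $\overline{\Gamma}_s(\fg)$ into $0$ lying in the image of the moment map, and then invoke the Kempf--Ness/Kirwan equivalence (the paper cites [MFK, Theorem 8.3]). The only cosmetic difference is that the paper works from the outset on the projective orbit $X_\lambda=\prod_j G\cdot[v_{\lambda_j}]\subset\prod_j\mathbb{P}(V(\lambda_j))$, i.e.\ directly on $\prod_j G/P_{\lambda_j}$ with the genuinely ample $\mathcal{O}(1)^{\boxtimes s}$, thereby bypassing the non-regular issue you flagged rather than descending from $(G/B)^s$ after the fact.
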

\begin{proof} For $h=(h_{1},\ldots ,  h_{s})\in
\mathfrak{h}^{s}_{+}$, let
$$\mathcal{O}_h:= (K\cdot h_1) \times \dots \times (K\cdot h_s) \subset ( i\frk)^s.$$
Then, $K$ acts on $\mathcal{O}_h$ diagonally. Let $m_h: \mathcal{O}_h\to  i\frk^*\simeq  i\frk$ be the
corresponding moment map, where the last identification is via the Killing form. Then, it is easy to see that
$m_h(y_1, \dots, y_s)=y_1+ \dots + y_s$, for $y_j\in K\cdot h_j$. Hence,
\beqn \label{eq0.1}  h\in \overline{\Gamma}_{s}(\fg) \Leftrightarrow 0\in \text{Im} (m_h).
\eeqn
Now, take  $h=(h_{1},\ldots ,  h_{s})\in
\mathfrak{h}^{s}_{+}$ such that $\lambda=\varphi (h)=(\lambda_1, \dots , \lambda_s)\in
 \Lambda_+^{s}$, where $\lambda_j:= \varphi (h_j)$. Consider the closed subvariety
$$X_\lambda:=G\cdot [v_{\lambda_1}] \times \dots \times  G\cdot [v_{\lambda_s}] \subset
 \mathbb{P}(V(\lambda_1))  \times \dots \times  \mathbb{P}(V(\lambda_s)),$$
where $ [v_{\lambda_j}]$ is the line through the highest weight vector in $V(\lambda_j)$.

It is easy to see that $ K\cdot h_j$ is diffeomorphic with $  G\cdot [v_{\lambda_j}]$ as
symplectic  $K$-manifolds. In particular, there  exists a $K$-equivariant symplectic diffeomorphism $\theta:
 \mathcal{O}_h\to  X_\lambda$ (under the diagonal action of $K$). Hence the following
diagram is commutative:
\[
\xymatrix{
X_\lambda\ar[dr]_{m_\lambda} & & \mathcal{O}_h
\ar[ll]_{\sim}^{\theta}\ar[dl]^{m_h}\\
 & i\frk^* &
}
\]
where $m_\lambda: X_\lambda \to  i\frk^*$ is the moment map for the $K$-variety $X_\lambda$.
Let ${\bar{\elal}} (\lambda)$  be the ample line bundle on $X_\lambda$ which is the restriction of the line bundle
$\mathcal{O}(1)\boxtimes \dots \boxtimes \mathcal{O}(1)$ on $\mathbb{P}(V(\lambda_1))  \times \dots \times  \mathbb{P}(V(\lambda_s)).$ Now, $X_\lambda$ has a $G$-semistable point with respect to the ample line bundle
 ${\bar{\elal}} (\lambda)$  if and only if $0\in \text{Im}(m_\lambda)$ (cf. [MFK, Theorem 8.3]). Further,
by the definition, $X_\lambda$ has a $G$-semistable point with respect to the line bundle  ${\bar{\elal}} (\lambda)$  if and
only if  $H^0(X_\lambda, {\bar{\elal}} (\lambda)^{\otimes N})^G\neq 0$
 for some $N>0$. The latter of course is equivalent (by the Borel-Weil theorem) to
$$[V(N\lambda_1)^* \otimes \dots \otimes  V(N\lambda_s)^*]^G\neq 0  \Leftrightarrow
[V(N\lambda_1) \otimes \dots \otimes  V(N\lambda_s)]^G\neq 0 \Leftrightarrow \lambda \in
\Gamma_{s}(G).$$
This, together with \eqref{eq0.1},  proves the theorem.
\end{proof}

We recall the following
transversality theorem due to Kleiman (cf. [BK$_1$, Proposition 3]).

\begin{theorem}\label{kleiman}
Let a connected algebraic group $S$ act transitively on a smooth
variety $X$ and let  $X_1$, $\dots, X_s$ be irreducible locally closed subvarieties of $X$.
Then, there exists a non empty open subset $V\subseteq S^s$ such that for
$(g_1,\dots, g_s)\in V$, the intersection $\bigcap_{j=1}^s \,g_j X_j$ is
proper (possibly empty) and dense in $\bigcap_{j=1}^s \,g_j \bar{X}_j$.

Moreover, if  $X_j$ are
smooth varieties, we can find such a
$V$ with the additional property that for $(g_1,\dots, g_s)\in V$,
$\bigcap_{j=1}^s \,g_j X_j$ is transverse at each
point of intersection.
\end{theorem}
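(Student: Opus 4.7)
The plan is to realize the simultaneous translated intersection as a fiber of a single smooth morphism, so that one application of generic smoothness/fiber dimension gives properness, transversality, and density at once.

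First I would introduce the incidence variety. Define
$$\Phi: S^s \times X_1 \times \cdots \times X_s \longrightarrow X^s, \quad (g_1, \dots, g_s, x_1, \dots, x_s) \longmapsto (g_1 x_1, \dots, g_s x_s),$$
and let $\Delta \subset X^s$ be the small diagonal, which is a smooth closed subvariety of dimension $\dim X$. Because $S$ acts transitively on the smooth variety $X$, the action map $S \times X \to X$ is smooth (and surjective); restricting to the locally closed subvariety $S \times X_j$, each factor $(g,x)\mapsto gx$ is smooth, and hence $\Phi$ is smooth. Thus $\mathcal{I} := \Phi^{-1}(\Delta)$ is smooth of pure dimension $s\dim S + \sum_j \dim X_j - (s-1)\dim X$, and the second projection $\pi: \mathcal{I} \to S^s$ has fiber over $(g_1,\dots,g_s)$ canonically identified, via $(x_1,\dots,x_s)\mapsto g_1 x_1$, with $\bigcap_j g_j X_j$.

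Next I would extract the required genericity from $\pi$. Applying the theorem on upper semicontinuity of fiber dimension to each irreducible component of $\mathcal{I}$, I obtain a dense open $V_0 \subseteq S^s$ over which every nonempty fiber of $\pi$ has the expected dimension $\sum_j \dim X_j - (s-1)\dim X$; translated back, this is the properness of $\bigcap_j g_j X_j$. For the transversality addendum, when each $X_j$ is smooth the variety $\mathcal{I}$ is smooth, so by generic smoothness of a morphism between smooth varieties there is a further dense open $V_1 \subseteq V_0$ over which $\pi$ itself is smooth. Over $V_1$ every fiber is smooth of the expected dimension, and chasing the identification of fibers with $\bigcap_j g_j X_j$ shows that the translates $g_1 X_1, \dots, g_s X_s$ meet transversally in $X$ at each intersection point.

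Finally, for the density claim I would run the same construction with the closures $\overline{X}_j$ in place of $X_j$, obtaining a smooth incidence variety $\overline{\mathcal{I}}$ of which $\mathcal{I}$ is an open subvariety. Since $\mathcal{I}$ is dense in each irreducible component of $\overline{\mathcal{I}}$ that meets it, there is a dense open $V_2 \subseteq S^s$ over which the fiber of $\overline{\pi}$ is the closure of the corresponding fiber of $\pi$; taking $V = V_0 \cap V_1 \cap V_2$ gives the stated open set. The main technical nuisance, rather than any deep obstacle, is verifying irreducibility/equidimensionality of the components of $\mathcal{I}$ and $\overline{\mathcal{I}}$ (using connectedness of $S$ and irreducibility of the $X_j$) so that the generic dimension count really yields the expected intersection dimension and not merely an upper bound.
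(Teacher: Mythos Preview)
The paper does not prove this theorem at all: it is stated as a known result of Kleiman, with a citation to [BK$_1$, Proposition 3], and no argument is given. So there is no ``paper's proof'' to compare against.

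Your sketch is essentially the standard proof of Kleiman's transversality theorem via an incidence correspondence and generic smoothness, and it is basically correct. One small inaccuracy: you assert that the restriction of the smooth action map $S\times X\to X$ to $S\times X_j$ is again smooth. Restricting a smooth morphism to a locally closed subvariety of the source need not preserve smoothness. What is true without any hypothesis on $X_j$ is that $\phi_j:S\times X_j\to X$ is \emph{flat} with equidimensional fibers (use $S$-equivariance and transitivity, or factor through the smooth map $S\times X\to X\times X$, $(g,x)\mapsto(gx,x)$, and base-change along $X\times X_j\hookrightarrow X\times X$); this is enough to make $\mathcal I=\Phi^{-1}(\Delta)$ pure of the expected dimension and to run the fiber-dimension argument for properness. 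Smoothness of $\phi_j$, and hence of $\Phi$, genuinely requires $X_j$ smooth, which is exactly the extra hypothesis in the transversality addendum, so that part is fine as written.

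For the density claim your outline is right but the key step deserves one more sentence: since $\overline\Phi$ is flat, $\overline{\mathcal I}$ is equidimensional, and the same dimension count applied with one $\overline{X}_k$ replaced by the strictly lower-dimensional $\overline{X}_k\setminus X_k$ shows that $\overline{\mathcal I}\setminus\mathcal I$ has strictly smaller dimension; hence no component of $\overline{\mathcal I}$ lies in the boundary, $\mathcal I$ is dense in $\overline{\mathcal I}$, and comparing generic fiber dimensions of $\pi$ and $\overline\pi$ then gives the density of $\bigcap g_jX_j$ in $\bigcap g_j\overline{X}_j$ for generic $(g_j)$.
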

The following result follows from [F$_1$, Proposition 7.1 and Section 12.2].
\begin{proposition}\label{fultonproper}
Let $w_1, \dots,  w_s\in W^P$ and let  $g_1, \dots,  g_s\in G$ be such that the
intersection $Y:=g_1X^P_{w_1}\cap \dots \cap g_sX^P_{w_s}$ is proper (or empty) inside $G/P$.
Then, we have
$$[X^P_{w_1}]\dots [X^P_{w_s}]=[Y],$$
where $[Y]$ denotes the Poincar\'e dual of the fundamental class of the pure
(but not necessarily irreducible) scheme $Y$.

Moreover, for any irreducible component $C$ of $Y$, writing
$$ [C]= \sum_{w\in W^P}\, n_w [X_w^P],$$
for some (unique) $n_w\in \mathbb{Z}$, we have $n_w\in \mathbb{Z}_+$.
\end{proposition}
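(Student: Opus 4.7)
The plan is to reduce both claims to two standard inputs: (i) the general fact that proper intersections compute cup products, and (ii) the positivity of intersection numbers between irreducible subvarieties of complex homogeneous spaces obtained via Kleiman's transversality.

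For the first assertion, note that since $G$ is connected, each translate $g_j X^P_{w_j}$ is algebraically equivalent to $X^P_{w_j}$, so $[g_j X^P_{w_j}] = [X^P_{w_j}]$ in $H^*(G/P,\bz)$. Properness of the intersection $Y$ means that $\codim_{G/P} Y = \sum_{j=1}^s \codim_{G/P} g_j X^P_{w_j}$. Under this hypothesis, refined intersection theory on the smooth variety $G/P$ (cf.\ Fulton, \emph{Intersection Theory}, Ch.\ 8 and Section 12.2) shows that the product $[X^P_{w_1}]\cdots[X^P_{w_s}]$ is represented by the class of the scheme-theoretic intersection $Y$, which is pure of the expected dimension and carries its natural fundamental class. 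This gives the first equality.

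For the positivity of the $n_w$, expand $[C] = \sum_{w\in W^P} n_w [X^P_w]$ and recover each coefficient as an intersection number by pairing with the dual basis: using formula \eqref{eqn1n} of the paper, $\epsilon^P_w = [X^P_{w_o w w_o^P}]$, so
\[
n_w \;=\; \int_{G/P} [C]\cdot [X^P_{w_o w w_o^P}].
\]
Now apply Kleiman's transversality (Theorem \ref{kleiman}) with $S=G$ acting transitively on the smooth variety $X = G/P$, and with $X_1 = C^{\text{sm}}$ (the smooth locus of $C$) and $X_2 = (X^P_{w_o w w_o^P})^{\text{sm}}$: for generic $g\in G$, the intersection $C \cap g X^P_{w_o w w_o^P}$ is proper (possibly empty) and transverse at every point of $C^{\text{sm}} \cap g\cdot (X^P_{w_o w w_o^P})^{\text{sm}}$.

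The main (mild) obstacle is handling possible singularities of $C$ and the Schubert variety: one must argue that no intersection points escape into the singular locus for generic $g$, or otherwise that singular contributions are still non-negative. This is handled by a standard dimension count using Kleiman: the locus of $g$ for which $g X^P_{w_o w w_o^P}$ meets either singular locus non-properly is a proper closed subset of $G$, so for generic $g$ the intersection $C\cap g X^P_{w_o w w_o^P}$ consists of finitely many reduced points in the smooth loci of both varieties (properness in complementary dimension). Since $G/P$, $C$, and $X^P_{w_o w w_o^P}$ are complex varieties with canonical orientations, each transverse intersection point contributes $+1$ to the intersection number, whence $n_w$ equals the (nonnegative) number of such points, proving $n_w \in \bz_+$.
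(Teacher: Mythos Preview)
Your proof is correct and aligns with the paper's approach: the paper does not give a detailed argument but simply cites [F$_1$, Proposition 7.1 and Section 12.2], and your first assertion invokes exactly the same input from Fulton's intersection theory. For the positivity of the $n_w$, you give an explicit Kleiman-transversality argument (moving the dual Schubert variety generically so that the intersection with $C$ consists of finitely many reduced points in the smooth loci), which spells out the mechanism underlying the positivity statement the paper cites from Fulton; your handling of the singular loci via the dimension count is the standard and correct way to do this.
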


\noindent
{\bf A Review of Geometric Invariant Theory.}
We will need to consider in Sections \ref{section6} and \ref{section7} the Geometric Invariant Theory (GIT) in a nontraditional setting, where
a {\it nonreductive} group acts on a {\it nonprojective} variety.  First we recall the
following definition due to Mumford.
\begin{definition}\label{git} Let $S$ be any (not necessarily reductive) algebraic group
acting on a  (not necessarily projective) variety  $\exx$ and let  $\elal$ be
an $S$-equivariant line bundle on $\exx$. Let $O(S)$ be the set of all one parameter
subgroups (for short OPS) in $S$.
 Take any $x\in \exx$ and
 $\delta \in O(S)$ such that the limit
  $\lim_{t\to 0}\delta(t)x$
exists in $\exx$ (i.e., the morphism ${\delta}_x:\Bbb{G}_m\to X$ given by
$t\mapsto \delta(t)x$ extends to a morphism $\tilde{\delta}_x : \Bbb{A}^1\to X$).
Then, following Mumford, define a number $\mu^{\elal}(x,\delta)$ as follows:
Let $x_o\in X$ be the point  $\tilde{\delta}_x(0)$. Since $x_o$ is $\Bbb{G}_m$-invariant
via $\delta$, the fiber of  $\elal$ over $x_o$ is a
$\Bbb{G}_m$-module; in particular, is given by a character of $\Bbb{G}_m$. This  integer is defined as  $\mu^{\elal}(x,\delta)$.
\end{definition}

We record the following standard properties of $\mu^{\elal}(x,\delta)$ (cf.
 [MFK, Chap. 2, $\S$1]):
\begin{proposition}\label{propn14} For any $x\in \exx$ and $\delta \in O(S)$ such that $\lim_{t\to 0}\delta(t)x$
exists in $\exx$, we have the following (for any $S$-equivariant line bundles
$\elal, \elal_1, \elal_2$):
\begin{enumerate}
\item[(a)]
$\mu^{\elal_1\otimes\elal_2}(x,\delta)=\mu^{\elal_1}(x,\delta)+\mu^{\elal_2}(x,\delta).$
\item[(b)] If there exists $\sigma\in H^0(\exx,\elal)^S$ such that $\sigma(x) \neq 0$, then  $\mu^{\elal}(x,\delta)\geq 0.$
\item[(c)] If $\mu^{\elal}(x,\delta)=0$, then any element of $H^0(\exx,\elal)^S$
which does not vanish at $x$ does not vanish at $\lim_{t\to 0}\delta(t)x$ as well.
\item[(d)] For any $S$-variety $\exx'$ together with an $S$-equivariant morphism $f:\exx'\to \exx$ and any $x'\in \exx'$ such that  $\lim_{t\to 0}\delta(t)x'$
exists in $\exx'$, we have
$\mu^{f^*\elal}(x',\delta)=\mu^{\elal}(f(x'),\delta).$
\item[(e)] (Hilbert-Mumford criterion) Assume that $\exx$ is projective, $S$ is
 connected and reductive
and $\elal$ is ample. Then, $x\in\exx$ is semistable (with respect to $\elal$) if
and only if $\mu^{\elal}(x,\delta)\geq 0$, for all $\delta\in O(S)$.

In particular, if $x\in \exx$ is semistable and $\delta$-fixed, then
$\mu^{\elal}(x,\delta)= 0$.
\end{enumerate}
\end{proposition}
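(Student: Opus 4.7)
The plan is to reduce each of (a)--(e) to an elementary computation on the pulled-back $\mathbb{G}_m$-equivariant line bundle on $\mathbb{A}^1$. Given $x \in \exx$ and $\delta \in O(S)$ with $\lim_{t\to 0}\delta(t)x$ existing, I would first set up the framework by considering the extension $\tilde{\delta}_x : \mathbb{A}^1 \to \exx$ and the pullback $\tilde{\delta}_x^*\elal$. Since $\mathrm{Pic}(\mathbb{A}^1)=0$, this is trivializable; trivializing it $\mathbb{G}_m$-equivariantly gives $\mathbb{A}^1\times\mathbb{C}$ with action $t\cdot(z,v)=(tz,t^{\mu}v)$, where $\mu:=\mu^{\elal}(x,\delta)$ is by definition the weight of $\mathbb{G}_m$ on the fiber over $0\in\mathbb{A}^1$.

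Next I would dispatch (a) and (d). For (a) the pullback splits tensor products ($\tilde{\delta}_x^*(\elal_1\otimes\elal_2)\cong\tilde{\delta}_x^*\elal_1\otimes\tilde{\delta}_x^*\elal_2$) and the character of $\mathbb{G}_m$ on a tensor product of one-dimensional representations is the sum of the characters, giving additivity immediately. For (d), $S$-equivariance of $f:\exx'\to\exx$ gives $\tilde{\delta}_{f(x')}=f\circ\tilde{\delta}_{x'}$, so $\tilde{\delta}_{x'}^*(f^*\elal)\cong\tilde{\delta}_{f(x')}^*\elal$ as $\mathbb{G}_m$-equivariant bundles, and the weights over $0$ agree.

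For (b) and (c) I would analyse invariant sections in the trivialization. An $S$-invariant global section $\sigma$ of $\elal$ pulls back to a $\mathbb{G}_m$-invariant regular section of $\tilde{\delta}_x^*\elal$ on $\mathbb{A}^1$, i.e., to a regular function $f$ on $\mathbb{A}^1$ with $f(tz)=t^{\mu}f(z)$. Writing $f(z)=\sum a_i z^i$, the functional equation forces $f(z)=cz^{\mu}$, which is regular only when $\mu\geq 0$; and if $\sigma(x)\neq 0$ then $c\neq 0$, proving (b). If moreover $\mu=0$, then $f$ is the nonzero constant $c$, hence does not vanish at $z=0$, which corresponds to the limit point $\lim_{t\to 0}\delta(t)x$; this gives (c).

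Part (e) is the main obstacle, but it is precisely the classical Hilbert--Mumford numerical criterion for semistability, so I would simply invoke [MFK, Chap.~2, Thm.~2.1] rather than re-prove it. The ``in particular'' assertion then follows: if $x$ is $\delta$-fixed, then $\delta^{-1}$ also fixes $x$, so applying Hilbert--Mumford both to $\delta$ and to $\delta^{-1}$ gives $\mu^{\elal}(x,\delta)\geq 0$ and $\mu^{\elal}(x,\delta^{-1})=-\mu^{\elal}(x,\delta)\geq 0$, forcing $\mu^{\elal}(x,\delta)=0$. The only real content beyond definitions lies in (e); (a)--(d) are uniformly obtained by the single trivialization argument above.
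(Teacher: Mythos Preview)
Your proof is correct and is essentially the standard argument. Note, however, that the paper does not actually prove this proposition: it merely records these as ``standard properties'' and cites [MFK, Chap.~2, \S1], so there is no paper proof to compare against beyond that reference. Your trivialization argument for (a)--(d) and citation of the Hilbert--Mumford theorem for (e) is precisely the content one finds in that source, so your write-up is in full agreement with what the paper invokes.
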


Let $S$ be a connected reductive group. For an OPS  $\delta \in O(S)$, define the
{\it associated Kempf's parabolic subgroup} $P(\delta)$ of $S$ by
\begin{equation} \label{2n} P(\delta):= \{g\in S: \lim_{t\to 0}\delta(t)g\delta(t)^{-1} \,
\text{exists in}\,S\}.
\end{equation}
For an OPS  $\delta \in O(S)$, let
$\dot{\delta}\in \mathfrak s$ be its derivative at $1$.

 Let $P$ be any standard parabolic subgroup
of $G$ acting on $P/B_L$ via the left multiplication. We call $\delta\in O(P)$
{\it $P$-admissible} if, for all $x\in P/B_L$, $\lim_{t\to 0}\,\delta(t)\cdot x$
exists in $P/B_L$. If $P=G$, then $P/B_L= G/B$ and any $\delta\in O(G)$ is
$G$-admissible since $G/B$ is a projective variety.

Observe that, $B_L$ being the
semidirect product of its commutator $[B_L,B_L]$ and $H$, any $\lambda \in \Lambda$
extends uniquely to a character of $B_L$.
Thus, for any $\lambda \in \Lambda$, we have a  $P$-equivariant
 line bundle $\cl_P (\lambda)$  on $P/B_L$ associated to the principal $B_L$-bundle $P\to P/B_L$
via the one dimensional $B_L$-module $\lambda^{-1}$. Thus, $\cl_G (\lambda)=
\cl (\lambda)$, as defined in Section \ref{sec1}.
We have taken the following lemma from [BK$_1$, Lemma 14]. It is a generalization
of the corresponding result in [BS, Section 4.2].
\begin{lemma}\label{l1}Let $\delta\in O(H)$ be such that $\dot{\delta}\in \frh_+.$
Then, $\delta$ is  $P$-admissible and, moreover, for any $\lam\in \Lambda$
and $x=ulB_L\in P/B_L$ (for $u\in U_P, l\in L_P$), we have the  following formula:
$$\mu^{\cl_P(\lam)}(x,\delta)=-\lam(w\dot{\delta}),$$
where $P_L(\delta):=P(\delta)\cap L$ and $w\in W_P/W_{P_L(\delta)}$ is any coset
representative  such that $l^{-1}\in B_LwP_L(\delta)$.
\end{lemma}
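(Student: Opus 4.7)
The plan is to exploit the semidirect decomposition $P=U_P\ltimes L_P$, which gives an isomorphism of varieties $U_P\times (L/B_L)\xrightarrow{\sim} P/B_L$, $(u,lB_L)\mapsto ulB_L$. Under this identification the left action of $\delta(t)\in H\subset L_P$ reads
\[
\delta(t)\cdot(u,lB_L) \;=\; \bigl(\delta(t)u\delta(t)^{-1},\ \delta(t)lB_L\bigr),
\]
so both the admissibility claim and the computation of $\mu^{\cl_P(\lambda)}(x,\delta)$ reduce to analyzing the two factors separately.

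For $P$-admissibility, the weights of $H$ on $\fru_P$ are the roots in $R^+\setminus R^+_\frl$, each of which is nonnegative on $\dot\delta\in\frh_+$; hence $\delta(t)u\delta(t)^{-1}$ tends to a limit $u_0\in U_P$ (the component of $u$ along the root vectors with $\alpha(\dot\delta)=0$), so $u_0$ commutes with $\delta(t)$. The second factor lies in the projective variety $L/B_L$, so the limit automatically exists there as well, completing the admissibility argument.

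For the explicit formula, use the hypothesis $l^{-1}\in B_L w P_L(\delta)$ to write $l=q w^{-1}b$ with $q\in P_L(\delta)$ and $b\in B_L$, so that $lB_L=qw^{-1}B_L$. Since $q\in P_L(\delta)$, the limit $q_0:=\lim_{t\to 0}\delta(t)q\delta(t)^{-1}$ exists (in the Levi of $P_L(\delta)$), and because $w\delta(t)w^{-1}\in H\subset B_L$,
\[
\delta(t)\cdot lB_L = \bigl(\delta(t)q\delta(t)^{-1}\bigr)w^{-1}\bigl(w\delta(t)w^{-1}\bigr)B_L \;\longrightarrow\; q_0 w^{-1}B_L.
\]
Set $y:=u_0 q_0 w^{-1}$, so $x_0:=\lim_{t\to 0}\delta(t)\cdot x = yB_L$. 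Because $u_0$ and $q_0$ each commute with $\delta(t)$, a direct computation gives $y^{-1}\delta(t)y = w\delta(t)w^{-1}\in B_L$; in particular $x_0$ is indeed $\delta$-fixed.

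Finally, the character of $\Bbb G_m$ on the fiber is read off from $\cl_P(\lambda)=P\times_{B_L}\bc_{\lambda^{-1}}$: writing $b(t):=y^{-1}\delta(t)y=w\delta(t)w^{-1}$,
\[
\delta(t)\cdot [y,z] \;=\; [y\,b(t),z] \;=\; \bigl[y,\;\lambda(b(t))^{-1}z\bigr] \;=\; \bigl[y,\;t^{-\lambda(w\dot\delta)}z\bigr],
\]
which yields $\mu^{\cl_P(\lambda)}(x,\delta)=-\lambda(w\dot\delta)$. Independence of the representative $w\in W_P/W_{P_L(\delta)}$ follows because $W_{P_L(\delta)}$ (the Weyl group of the Levi of the Kempf parabolic) centralizes $\dot\delta$. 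The main care required is in the third step: one must simultaneously track the unipotent conjugation on $U_P$ and the Bruhat factorization on $L/B_L$ and verify that the apparently extra factor $w\delta(t)w^{-1}$ is harmlessly absorbed into $B_L$; once this is handled the sign and the linear functional $w\dot\delta$ appear automatically from the sign convention on $\cl_P(\lambda)$.
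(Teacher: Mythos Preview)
Your proof is correct and follows essentially the same strategy as the paper's: both use the semidirect product decomposition to reduce admissibility to the $U_P$ and $L/B_L$ factors, and both identify $w\delta(t)w^{-1}\in H\subset B_L$ as the one-parameter subgroup governing the weight on the fiber. The only organizational difference is that you compute the limit point $x_0=yB_L$ explicitly (with $y=u_0q_0w^{-1}$) and then read off the weight from $y^{-1}\delta(t)y$, whereas the paper instead constructs an auxiliary OPS $b:\Bbb G_m\to B_L$ so that $\delta(t)\,ul\,b(t)$ has a limit in $P$ itself, and tracks the $\Bbb G_m$-invariant section $\sigma(t)=[\delta(t)ul,1]$ against this trivialization; the two computations are equivalent and yield the same $-\lambda(w\dot\delta)$.
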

\begin{proof}
We first show that $\delta$ is $P$-admissible. Take $x=ulB_L\in P/B_L$, for $u\in
U_P$ and $l\in L_P$. Then,
$\delta (t)x=\delta (t) u \delta (t)^{-1}(\delta (t) lB_L).$ Now, since
$\dot{\delta}\in \fh_+$ and $u\in U_P$, it is easy to see that
 $\lim_{t\to 0} \,\delta (t) u \delta (t)^{-1}$ exists in $U_P$. Also,
 $\lim_{t\to 0} \,\delta (t) lB_L$ exists in $L/B_L$ since $L/B_L$ is a
 projective variety.
 Thus, $\delta$ is $P$-admissible.

 We next calculate $\mu^{\cl_P(\lam)}(x,\delta)$ for $x=ulB_L\in P/B_L$. Write
 $l^{-1}=b_l\dot{w}q$, for some $b_l\in B_L$ and $q\in P_L(\delta)\supset B_L$
 (where $\dot{w}$ is a representative of $w$ in the normalizer $N_L(H)$ of $H$ in
 $L$). Consider the OPS $b:\Bbb{G}_m \to B_L$ dfined by $b(t)=b_l\dot{w}
 \delta(t)^{-1}{\dot{w}}^{-1}b_l^{-1}$. Then,
 $$\delta(t)ulb(t)=\delta(t)uq^{-1}\delta(t)^{-1}{\dot{w}}^{-1}b_l^{-1}.$$
 In particular, by the definition of $P(\delta)$, $\lim_{t\to 0} \,\delta (t)
 ulb(t)$ exists in $P$. Consider the $\Bbb{G}_m$-invariant section
 $\sigma (t)= [\delta (t)ul, 1]:=(\delta (t)ul, 1)$ mod $B_L$ of
 $\delta_x^*(\cl_P(\lam))$ over $\Bbb{G}_m$, where $\delta_x:\Bbb{G}_m \to P/B_L$
 is the map $t \mapsto \delta (t)x$. Then, the section $\sigma (t)$ corresponds
 to the function  $\Bbb{G}_m \to  \Bbb{A}^1, \, t\mapsto \lambda^{-1}(b(t)^{-1}).$
 From this we see that $\mu^{\cl_P(\lam)}(x,\delta)=-\lam(w\dot{\delta}).$
 \end{proof}

Let  $\lambda=(\lambda_{1},\ldots,\lambda_{s})\in \Lambda_+^s$ and let
$\elal (\lambda)$ denote the $G$-linearized line bundle $\cl(\lam_1)\boxtimes
\cdots \boxtimes\cl(\lam_s)$ on
 $(G/B)^s$ (under the diagonal action of $G$). Then,  there exist unique
 standard parabolic subgroups $P_1, \dots, P_s$ such that the line bundle
$\elal (\lambda)$ descends as an ample line bundle
${\bar{\elal}} (\lambda)$ on
$\xx (\lambda):=G/P_1 \times \cdots \times G/P_s$. We call a point
 $x\in (G/B)^s$ {\it $G$-semistable} (with respect to, not necessarily ample,
$\elal (\lambda)$) if its image in $\xx (\lambda)$ under the canonical map $\pi: (G/B)^s
\to \xx (\lambda)$ is semistable with respect to the ample line bundle
${\bar{\elal}} (\lambda)$.
Now, one has the following celebrated theorem due to Klyachko [Kly] for $G=\SL(n)$,
extended to general $G$ by Berenstein-Sjamaar
[BS].

\begin{theorem}\label{thm1}
Let $\lambda_{1},\ldots,\lambda_{s}\in \Lambda_+$. Then, the following are
equivalent:
\begin{itemize}
\item[\rm(a)] $(\lambda_{1},\ldots,\lambda_{s})\in \Gamma_{s}(G)$

\item[\rm(b)] For every standard maximal parabolic subgroup $P$ and
  every Weyl group elements $w_{1},\ldots,w_{s}\in W^{P}\simeq
  W/W_{P}$ such that
\begin{equation}\label{3n}
[X^{P}_{w_1}]\dots [X^{P}_{w_{s}}]=d[X^{P}_{e}],\,\,\,\text{for some}\,d\neq 0,
\end{equation}
 the following inequality is satisfied:
$$
I^{P}_{(w_{1},\ldots,w_{s})}:\qquad\qquad\qquad \sum^{s}_{j=1}\lambda_{j}(w_{j}x_{P})\leq 0,
$$
where $\alpha_{i_P}
$
is the unique simple root not in the Levi of $P$ and $x_{P}:=x_{i_P}$.
\end{itemize}
\end{theorem}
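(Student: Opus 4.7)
The plan is to interpret (a) as a GIT-semistability condition on $(G/B)^s$ and derive the inequalities in (b) from the Hilbert--Mumford numerical criterion (Proposition \ref{propn14}(e)) combined with the Mumford-index formula of Lemma \ref{l1}. By Theorem \ref{sj}, condition (a) is equivalent to the existence of a $G$-semistable point $x=(g_1B,\ldots,g_sB)\in(G/B)^s$ for $\cl(\lambda):=\cl(\lambda_1)\boxtimes\cdots\boxtimes\cl(\lambda_s)$ (semistability being defined after descent to the ample ${\bar{\elal}}(\lambda)$ on $\exx(\lambda)$ and transferred to $(G/B)^s$ via Proposition \ref{propn14}(d)). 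By Proposition \ref{propn14}(e), $x$ is semistable iff $\sum_{j=1}^{s}\mu^{\cl(\lambda_j)}(g_jB,\delta)\ge0$ for every one-parameter subgroup $\delta$ of $G$. By $G$-equivariance we may take $\dot\delta\in\fh_+$; since for fixed $x$ the Bruhat positions of the $g_j^{-1}$'s stabilize on the interior of $\fh_+$, the function $\dot\delta\mapsto\sum_j\mu^{\cl(\lambda_j)}(g_jB,\delta)$ is linear on $\fh_+$, and non-negativity on $\fh_+$ reduces to non-negativity on the extreme rays $\mathbb R_+x_P$ for maximal standard parabolics $P$. For $\dot\delta=x_P$ we have $P(\delta)=P$, and Lemma \ref{l1} (applied with $L=G$, $u=1$, $l=g_j$) gives
$$
\mu^{\cl(\lambda_j)}(g_jB,\delta)=-\lambda_j(w_j\,x_P),
$$
where $w_j\in W^P$ is the unique element with $g_j^{-1}\in Bw_jP$.

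For $(a)\Rightarrow(b)$: fix a standard maximal parabolic $P$ and $(w_1,\ldots,w_s)\in(W^P)^s$ with $[X^P_{w_1}]\cdots[X^P_{w_s}]=d[X^P_e]$, $d\ne 0$. Kleiman (Theorem \ref{kleiman}) and Proposition \ref{fultonproper} imply that for generic $(h_1,\ldots,h_s)\in G^s$ the intersection $\bigcap_j h_j C^P_{w_j}\subset G/P$ is nonempty (a transverse $d$-point set). A dimension count on the map $\Phi:G\times\prod_j C^P_{w_j}\to(G/P)^s$, $(h,y_1,\ldots,y_s)\mapsto(hy_1,\ldots,hy_s)$, using $\sum_j\bigl(\dim(G/P)-\ell(w_j)\bigr)=\dim(G/P)$, shows that the $G$-saturation $\bigcup_{h\in G} h\cdot\prod_j C^P_{w_j}$ is dense in $(G/P)^s$. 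It therefore meets the open nonempty $G$-invariant image $\pi(U)\subset(G/P)^s$ of the semistable locus $U\subset(G/B)^s$. Translating diagonally by $G$ (which preserves semistability), I obtain a semistable $x$ with $g_j^{-1}\in Bw_jP$ for all $j$; the computation above and the HM inequality then give $\sum_j\lambda_j(w_jx_P)\le 0$, which is $I^P_{(w_1,\ldots,w_s)}$.

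For $(b)\Rightarrow(a)$, I take a generic $x\in(G/B)^s$ and argue that (b) already forces its semistability, hence (a). By the reduction above, semistability of $x$ amounts to $\sum_j\lambda_j(w_j^{g,P}x_P)\le 0$ for every $g\in G$ and every maximal standard parabolic $P$, where $w_j^{g,P}\in W^P$ is the Bruhat position of $g^{-1}g_j$ in $G/P$. As $g$ varies, the tuples $(w_j^{g,P})$ range exactly over those $(w_j)\in(W^P)^s$ such that $\bigcap_j g_jC^P_{w_j}\ne\emptyset$, i.e., those with nonzero cup product in $H^*(G/P,\Z)$. Monotonicity of $w\mapsto\lambda_j(wx_P)$ in the Bruhat order on $W^P$ (a consequence of $\lambda_j(\alpha_i^\vee)\ge 0$ for dominant $\lambda_j$ and $\alpha_i(x_P)\ge 0$) implies that the binding inequality occurs at the Bruhat-minimal such tuples, and these are precisely those with $\sum_j\ell(w_j)=(s-1)\dim(G/P)$ and $[X^P_{w_1}]\cdots[X^P_{w_s}]=d[X^P_e]$ with $d>0$ --- exactly the data of (b). Thus (b) supplies the tight inequalities, and monotonicity propagates them to all realized tuples, yielding semistability of $x$.

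The main obstacle is the (b)$\Rightarrow$(a) direction, specifically the monotonicity step that reduces arbitrary realized Bruhat tuples to the Bruhat-minimal (cohomological) ones, together with the Kleiman/Fulton-positivity identification of these minimal tuples with the cohomological condition $[X^P_{w_1}]\cdots[X^P_{w_s}]=d[X^P_e]$. This coupling of convex-geometric monotonicity with cup-product positivity is the substance of the Berenstein--Sjamaar argument.
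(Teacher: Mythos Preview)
Your overall architecture matches the paper's: interpret (a) as semistability on $(G/B)^s$, apply Hilbert--Mumford, use Lemma~\ref{l1} for the numerical index, and for $(b)\Rightarrow(a)$ combine Kleiman genericity with Bruhat monotonicity of $w\mapsto\lambda(wx_P)$ to reduce from ``nonzero cup product'' to ``top-degree cup product.'' The $(b)\Rightarrow(a)$ direction is essentially the paper's argument, phrased directly rather than by contrapositive.

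There is, however, a genuine slip in your $(a)\Rightarrow(b)$. Your density argument produces a semistable $x'=(g_1'B,\ldots,g_s'B)$ with $g_j'\in Bw_jP$ (this is what $\pi(x')\in\prod_j C^P_{w_j}$ means), but Lemma~\ref{l1} requires $(g_j')^{-1}\in Bw_jP$ to conclude $\mu^{\cl(\lambda_j)}(g_j'B,\delta)=-\lambda_j(w_jx_P)$. These are different Bruhat conditions: from $g_j'=bw_jp$ you only get $(g_j')^{-1}\in Pw_j^{-1}B$, and the $W^P$-representative governing $\mu$ is that of some $uw_j^{-1}$ with $u\in W_P$, not $w_j$. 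So the inequality you extract is not $I^P_{(w_1,\ldots,w_s)}$.

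The paper avoids this by arguing on the other side of the diagonal action. Fix a generic semistable $y=(g_1B,\ldots,g_sB)$ (possible since the semistable locus is open). Since $[X^P_{w_1}]\cdots[X^P_{w_s}]=d[X^P_e]$ with $d\ne 0$, Kleiman and Proposition~\ref{fultonproper} give $\bigcap_j g_jC^P_{w_j}\ne\emptyset$ in $G/P$; pick $gP$ there. Then $g_j^{-1}g\in Bw_jP$, which is exactly the hypothesis of Lemma~\ref{l1} for the point $g^{-1}y$, yielding $\mu^{\mathbb L(\lambda)}(g^{-1}y,\Exp(tx_P))=-\sum_j\lambda_j(w_jx_P)\ge 0$. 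The point is that nonvanishing of the cup product translates directly into nonemptiness of an intersection of \emph{translated} cells in a \emph{single} $G/P$, and it is the translating element $g$ (not a diagonal translate of $y$) that puts the inverses $g_j^{-1}g$ into the correct Bruhat cells.
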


\begin{proof}
Define the set $Y_{s}\subset G^{s}$
consisting of those $ (g_{1},\ldots,g_{s})\in
G^{s}$ such that $g_{1}X^{Q}_{w_{1}}\cap\ldots\cap g_{s}X^{Q}_{w_{s}}$
  and
$(g_{1}Bw_{1}Q/Q)\cap\ldots \cap (g_{s}Bw_{s}Q/Q)$
are proper intersections and such that the latter intersection is dense in
$g_{1}X^{Q}_{w_{1}}\cap\ldots\cap g_{s}X^{Q}_{w_{s}}$ for all the standard
parabolic subgroups $Q$ and all $w_{1},\ldots,w_{s}\in W$. Then, by
Theorem \ref{kleiman}, $Y_{s}$ contains a nonempty open subset of
$G^{s}$.

Now, $\lambda=(\lambda_{1},\ldots\lambda_{s})\in \Lambda_+^{s}$ belongs to
$\Gamma_{s}(G)$
\begin{itemize}
\item[$\Leftrightarrow$] $X^{s}=(G/B)^{s}$ contains a $G$-semistable
point $y$ with respect to  the line bundle
$\elal (\lambda)$
on $X^{s}$.

\item[$\Leftrightarrow$] $\mu^{\elal (\lambda)}
(y,\sigma)\geq 0$ for all one
  parameter subgroups $\sigma$ in $G$.
\end{itemize}

(To prove the first equivalence, observe that, for any $N \geq 0$,
$$H^0(X^s,\elal (N\lambda))\simeq  H^0( \xx (\lambda),{\bar{\elal}} (N\lambda))$$
under the pull-back map. The second equivalence of course follows by the
Hilbert-Mumford criterion Proposition \ref{propn14}(e) together with \ref{propn14}
(d).)

\medskip
\noindent
{\em Proof of $(a) \Rightarrow (b)$ in the above Theorem:}
\smallskip

Take $\lambda=(\lambda_{1},\ldots,\lambda_{s})\in \Gamma_{s}(G)$. Then,
$X^{s}$ has a $G$-semistable point for the line bundle
$\elal (\lambda)$. Moreover,
since the set of semistable points is open, we can take a semistable
point $y=(g_{1}B,\ldots,g_{s}B)$ with $(g_{1},\ldots,g_{s})\in
Y_{s}$.

Now, take a maximal parabolic $P$ and $w_{1},\ldots,w_{s}\in W^{P}$
satisfying \eqref{3n}. Thus,
$$
(g_{1}Bw_{1}P/P)\cap\ldots\cap (g_{s}Bw_{s}P/P)\neq \emptyset .
$$

Take $gP\in (g_{1}Bw_{1}P/P)\cap\ldots\cap (g_{s}Bw_{s}P/P)$.
Take the one parameter subgroup of $G$:
$
\sigma=\Exp (tx_{P}).
$

Then, by Lemma \ref{l1} and Proposition \ref{propn14},
$$
\mu^{\elal (\lambda)}(g^{-1}y,\sigma)=\sum-\lambda_{j}(w_{j}x_{P})\geq 0,
$$
where the last inequality is by the Hilbert-Mumford criterion
Proposition \ref{propn14}(e). This proves
$(a)\Rightarrow (b)$.

\medskip
\noindent
{\em Proof of $(b) \Rightarrow (a)$ :}
If (a) were false, then $\lambda=(\lambda_{1},\ldots,\lambda_{s})\not\in
\Gamma_{s}(G)$, i.e., $X^{s}$ has no $G$-semistable points for
$\elal (\lambda)$. Take any
$(g_{1},\ldots,g_{s})\in Y_{s}$ and consider the point
$y=(g_{1}B,\ldots,g_{s}B)\in X^{s}$. Since it is not a semistable
point, there exists a one parameter subgroup $\sigma=g^{-1}\Exp(tx)g$,
for $x\in \mathfrak{h}_{+}$ and $g\in G$ such that
$$
\mu^{\elal (\lambda)}(y,\sigma)<0\Leftrightarrow \mu^{\elal (\lambda)}(gy,\Exp(tx))<0.
$$

Let $Q$ be the Kempf's parabolic attached to $\Exp(tx)$. Then, by
definition, $Q\supset B$ and the simple roots of the Levi of $Q$ are
precisely those $\alpha_{i}$ such that $\alpha_{i}(x)=0$. Take
$w_{1},\ldots,w_{s}\in W^{Q}$ such that
\begin{equation}\label{e2}
(gg_{j})^{-1}\in Bw_{j}Q\quad\forall 1\leq j\leq s.
\end{equation}

Thus,  by Lemma \ref{l1},
$$
\mu^{\elal (\lambda)}(gy,\Exp tx)=-\sum^{s}_{j=1}\lambda_{j}(w_{j}x)<0.
$$

In particular, there exists a maximal parabolic $P\supset Q$ such that
\begin{equation}\label{e3}
\sum^{s}_{j=1}\lambda_{j}(w_{j}x_{P})>0.
\end{equation}

Now, by \eqref{e2},
$$
gg_{1}X^{Q}_{w_{1}}\cap\ldots\cap gg_{s}X^{Q}_{w_{s}}\quad\text{is
  nonempty}.
$$

In particular,
$gg_{1}X^{P}_{w_{1}}\cap\ldots\cap gg_{s}X^{P}_{w_s}$ is
nonempty and since $(gg_{1},\ldots,gg_{s})\in Y_{s}$,
$gg_{1}X^{P}_{w_{1}}\cap\ldots\cap gg_{s}X^{P}_{w_{s}}$ is a proper intersection.
Thus, by Proposition \ref{fultonproper}, the cup product
$$
[X^{P}_{w_{1}}]\ldots [X^{P}_{w_{s}}]\neq 0.
$$

Hence, there exists a $w'_{s}\leq w_{s}$ such that $w'_{s}\in W^P$ and
$$
[X^{P}_{w_{1}}]\ldots [X^{P}_{w_{s-1}}]\cdot [X^P_{w'_{s}}]=d[X^{P}_{e}],\,\,\,
\text{for some}\, d\neq 0.
$$

Now, by the inequality $I^{P}_{(w_{1},\ldots,w_{s-1},w'_{s})}$ in (b), we get that
$$
\bigl(\sum^{s-1}_{j=1}\lambda_{j}(w_{j}x_{P})\bigr)+\lambda_{s}(w'_{s}x_{P})\leq 0 .
$$

But since $w'_{s}\leq w_{s}$, we have
$$
\lambda_{s}(w'_{s}x_{P})\geq \lambda_{s}(w_{s}x_{P}),
$$
by, e.g., [K$_1$, Lemma 8.3.3].
Thus, we get
$$
\sum^{s}_{j=1}\lambda_{j}(w_{j}x_{P})\leq \bigl(\sum^{s-1}_{j=1}
\lambda_{j}(w_{j}x_{P})\bigr)+\lambda_{s}(w'_{s}x_{P})\leq 0.
$$

This contradicts \eqref{e3} and hence proves that (a) is true.
\end{proof}

The following result follows easily by combining Theorems \ref{thm1} and \ref{sj}.
For a maximal parabolic $P$, let $\alpha_{i_P}$ be the unique simple root
not in the Levi of $P$. Then, we set $\omega_P:= \omega_{i_P}$.
\begin{corollary}\label{eigen}  Let $(h_1,\dots,h_s)\in\frh_{+}^s$.  Then, the following
are equivalent:

(a) $(h_1,\dots,h_s)\in\bar\Gamma_s(\fg)$.

(b)
 For every standard maximal parabolic subgroup $P$ in $G$ and every choice of
  $s$-tuples  $(w_1, \dots, w_s)\in (W^P)^s$ such that
$$[X^P_{w_1}]\cdots [X^P_{w_s}] = d[X^P_{e}] \,\,\,\text{for some}\,\, d\neq 0
,$$
the following inequality holds:
\begin{equation}\label{4new}\omega_P(\sum_{j=1}^s\,w_j^{-1}h_j)\leq 0.
\end{equation}
\end{corollary}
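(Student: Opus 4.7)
The plan is to deduce the corollary by translating Theorem \ref{thm1} across the Killing-form isomorphism $\varphi$, using Theorem \ref{sj} to pass between the Lie-group and Lie-algebra pictures. No genuine obstacle arises; the work consists of a density/rescaling reduction and checking that the inequality of Theorem \ref{thm1} matches \eqref{4new} up to a positive scalar.

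First I would reduce to integral points. Since $\bar\Gamma_s(\frg)$ is a rational polyhedral cone (by the convexity result quoted in the introduction) and the inequalities \eqref{4new} are homogeneous and $\bq$-rational, both sides of (a)$\iff$(b) are closed $\bq$-rational cones in $\frh_+^s$, hence determined by their rational points. After rescaling by a positive integer I may therefore assume that each $\lambda_j:=\varphi(h_j)$ lies in $\Lambda_+$. For such integral tuples, Theorem \ref{sj} gives
$$(h_1,\dots,h_s)\in\bar\Gamma_s(\frg) \iff (\lambda_1,\dots,\lambda_s)\in\Gamma_s(G),$$
and Theorem \ref{thm1} rewrites the right-hand side as the system
$$\sum_{j=1}^s \lambda_j(w_j x_P)\leq 0$$
indexed by all standard maximal parabolics $P$ and all tuples $(w_1,\dots,w_s)\in(W^P)^s$ subject to \eqref{3n}.

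It remains only to verify that, for each such $P$, the inequality $\sum_j \lambda_j(w_j x_P)\le 0$ coincides, up to a positive scalar, with \eqref{4new}. By $W$-invariance of the Killing form,
$$\lambda_j(w_j x_P)=\langle h_j, w_j x_P\rangle = \langle w_j^{-1}h_j, x_P\rangle,$$
and a direct computation from the defining conditions $\alpha_i(x_P)=\delta_{i,i_P}$ and $\omega_P(\alpha_j^\vee)=\delta_{j,i_P}$ shows that $\varphi(x_P)$ equals the positive multiple $\tfrac{2}{(\alpha_{i_P},\alpha_{i_P})}\omega_P$ of $\omega_P$. Thus $\langle h, x_P\rangle$ is a fixed positive scalar multiple of $\omega_P(h)$ for every $h\in\frh$, and summing over $j$ converts $\sum_j \lambda_j(w_j x_P)\le 0$ into $\omega_P\bigl(\sum_j w_j^{-1}h_j\bigr)\le 0$, which is \eqref{4new}. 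The only step needing any care is the density reduction at the outset, since Theorem \ref{thm1} is an integral statement whereas the corollary is phrased for arbitrary real $h_j\in\frh_+$; the rational polyhedrality of $\bar\Gamma_s(\frg)$ makes this immediate.
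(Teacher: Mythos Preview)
Your proof is correct and follows essentially the same approach as the paper: combine Theorems \ref{sj} and \ref{thm1}, using the Killing-form identification $\varphi(x_P)=\tfrac{2}{\langle \alpha_{i_P},\alpha_{i_P}\rangle}\,\omega_P$ to translate the inequality $I^P_{(w_1,\dots,w_s)}$ into \eqref{4new}. The paper's proof is terser and leaves the passage from integral to arbitrary real $h_j$ implicit, whereas you make the rational-polyhedrality/density reduction explicit.
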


\begin{proof} Observe first that, under the identification of $\frh$ with $\frh^*$
induced from the Killing form,   $\frh_+$ is isomorphic with  the set
$$\Lambda_+(\mathbb{R}):=\{\lambda \in \fh^*: \lambda(\alpha^\vee_i)\in \mathbb{R}_+,
\,\,\text{for all the simple roots}\, \alpha_i\}$$ of dominant real weights in
$\frh^*$. In fact, under this
identification, $x_j$
corresponds with  $2\omega_j/\langle  \alpha_j, \alpha_j\rangle$, where $\omega_j$
denotes the $j$-th fundamental weight.
Thus, the corollary follows from Theorems \ref{thm1} and \ref{sj}.
\end{proof}

The same proof as above of Theorem \ref{thm1} gives the following result, which is weaker in
the direction `$(a)\Leftarrow (b)$' and stronger in `$(a)\Rightarrow (b)$'
direction.

\begin{theorem}\label{thm2}
For $\lambda_{1},\ldots,\lambda_{s}\in \Lambda_+$, the following are
equivalent:
\begin{itemize}
\item[\rm(a)] $(\lambda_{1},\ldots,\lambda_{s})\in \Gamma_{s}(G)$

\item[\rm(b)] For any (not necessarily maximal) parabolic  subgroup $Q$
  and any $w_{1},\ldots,w_{s}\in W^{Q}$ such that
  $[X^{Q}_{w_{1}}]\ldots [X^{Q}_{w_{s}}]\neq 0$ (not necessarily in
  the top cohomology class), the following inequality holds for any maximal
  parabolic $P\supset Q$:
$$
I^{P}_{(w_{1},\ldots,w_{s})}:\qquad\qquad\qquad
\sum^{s}_{j=1}\lambda_{j}(w_{j}x_{P})\leq 0.
$$
\end{itemize}
\end{theorem}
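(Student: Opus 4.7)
The plan is to replay the proof of Theorem \ref{thm1} almost verbatim, with only two small adjustments to accommodate the nested parabolic $Q \subseteq P$ and the weaker cup-product condition. Both directions rest on the equivalence used throughout the proof of Theorem \ref{thm1}: $(\lambda_1, \ldots, \lambda_s) \in \Gamma_s(G)$ iff $(G/B)^s$ has a $G$-semistable point for $\elal(\lambda)$, which by the Hilbert--Mumford criterion (Proposition \ref{propn14}(e)) is equivalent to $\mu^{\elal(\lambda)}(y, \sigma) \geq 0$ for all $y \in (G/B)^s$ and all OPS $\sigma$ of $G$.

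For the implication $(b) \Rightarrow (a)$, observe that specializing Theorem \ref{thm2}(b) to $Q = P$ maximal and to $s$-tuples with $[X^P_{w_1}] \cdots [X^P_{w_s}] = d [X^P_e]$, $d \neq 0$, recovers exactly Theorem \ref{thm1}(b). Hence this implication reduces immediately to Theorem \ref{thm1}.

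For the implication $(a) \Rightarrow (b)$, fix $Q$, a tuple $w_1, \ldots, w_s \in W^Q$ with $[X^Q_{w_1}] \cdots [X^Q_{w_s}] \neq 0$, and a maximal parabolic $P \supseteq Q$. Just as in Theorem \ref{thm1}, since the $G$-semistable locus is a nonempty open subset of $(G/B)^s$ and the image of $Y_s$ is open and dense there, pick $(g_1, \ldots, g_s) \in Y_s$ with $y = (g_1 B, \ldots, g_s B)$ semistable. The defining property of $Y_s$ combined with Proposition \ref{fultonproper} then forces the intersection $\bigcap_j g_j B w_j Q / Q$ to be nonempty (a nonzero cohomology class cannot represent an empty cycle); pick $gQ$ in it. Now apply Hilbert--Mumford to the semistable point $g^{-1} y$ with the OPS $\sigma = \Exp(t x_P)$, and compute by Lemma \ref{l1} and Proposition \ref{propn14}(a) that
\[
\mu^{\elal(\lambda)}(g^{-1} y, \sigma) \;=\; -\sum_{j=1}^s \lambda_j(w_j x_P) \;\geq\; 0,
\]
which is precisely the inequality $I^P_{(w_1, \ldots, w_s)}$.

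The only step deserving a word of care is the moment calculation, which is the sole substantive difference from Theorem \ref{thm1}. Lemma \ref{l1}, applied factor by factor with ambient parabolic taken to be $G$ itself, produces $\mu^{\cl(\lambda_j)}(g^{-1} g_j B, \sigma) = -\lambda_j(w'_j x_P)$, where $w'_j$ is any coset representative modulo $W_{P(\sigma)}$ satisfying $g_j^{-1} g \in B w'_j P(\sigma)$. Because $\alpha_i(x_P) = \delta_{i, i_P}$, the Kempf parabolic $P(\sigma)$ coincides with $P$, and by the construction above $g_j^{-1} g \in B w_j Q \subseteq B w_j P$, so taking $w'_j = w_j$ is a legitimate choice; moreover every $\alpha_i \in \Delta(P)$ vanishes on $x_P$, so $W_P$ fixes $x_P$ and $\lambda_j(w_j x_P)$ depends only on the coset $w_j W_P$. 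In other words, the anticipated obstacle is really just a coset-bookkeeping verification rather than a genuine difficulty, and the rest of the argument is identical to that of Theorem \ref{thm1}.
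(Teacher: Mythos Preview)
Your proof is correct and follows exactly the approach the paper indicates: the paper simply states that ``the same proof as above of Theorem \ref{thm1} gives the following result,'' and you have spelled out precisely the two adjustments needed---reducing $(b)\Rightarrow(a)$ to Theorem \ref{thm1} by specialization, and for $(a)\Rightarrow(b)$ using the inclusion $Bw_jQ\subseteq Bw_jP$ together with $P(\sigma)=P$ to feed the correct coset into Lemma \ref{l1}. Your bookkeeping on the coset representative and the $W_P$-invariance of $x_P$ is exactly the point.
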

\begin{remark} \label{2.9} (a) Following Theorem \ref{thm2}, we can easily see that Corollary
\ref{eigen} remains true if we replace (b) (in Corollary \ref{eigen}) by demanding
the inequalities \eqref{4new} for any (not necessarily maximal) parabolic subgroup $Q$
and any $w_{1},\ldots,w_{s}\in W^{Q}$ such that
  $[X^{Q}_{w_{1}}]\ldots [X^{Q}_{w_{s}}]\neq 0.$

  (b) As proved by Belkale [B$_1$] for $G=\SL(n)$ and extended for an arbitrary $G$
  by Kapovich-Leeb-Millson [KLM$_1$], Theorem \ref{thm1} (and hence Corollary \ref{eigen})
  remains true if we replace $d$ by $1$ in the identity \eqref{3n}. A much sharper
  (and optimal) result for an arbitrary $G$ is obtained in Theorem \ref{EVT}.
  \end{remark}

\section{Specialization of Theorem \ref{thm1}  to $G=\SL(n)$: Horn Inequalities}\label{section3}

We first need to recall the Knutson-Tao saturation theorem [KT], conjectured by
Zelevinsky [Z]. Other proofs
of their result are given by  Derksen-Weyman [DK], Belkale [B$_3$] and
 Kapovich-Millson [KM$_2$].
\begin{theorem} \label{knutson-tao} Let $G=\SL(n)$ and let $(\lambda_{1},\dots,
\lambda_{s})\in \Gamma_{s}(G)
$
be such that $\lambda_{1}+\dots +\lambda_{s}$ belongs to the root lattice. Then,
$$\left[V(\lambda_1)\otimes\cdots\otimes
        V(\lambda_{s})\right]^{G}\neq 0.$$
 \end{theorem}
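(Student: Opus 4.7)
The argument splits naturally into two parts: a reduction to the case of triple products ($s=3$), and the core saturation theorem for triples. For the reduction, given $(\lam_1,\dots,\lam_s)\in \Gamma_s(\SL(n))$ with $\sum_i\lam_i \in Q$, I want to build dominant weights $\mu_1,\dots,\mu_{s-1}$ with $\mu_1 = \lam_1$ and $\mu_{s-1}=\lam_s^*$ such that each successive triple $(\mu_{i-1},\lam_i,\mu_i^*)$ lies in $\Gamma_3(\SL(n))$ and satisfies $\mu_{i-1}+\lam_i-\mu_i\in Q$. After rescaling by $N$, such a chain exists by hypothesis (just decompose the $N$-scaled tensor product stepwise into irreducibles). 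Applying the triple saturation theorem at each link of this chain, and using the root-lattice congruences at each intermediate step to ensure integrality is preserved, then yields the unscaled chain. Thus matters reduce to the triple case $s=3$.

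For the triple case, my approach would be the \emph{hive model} of Knutson--Tao. The Littlewood--Richardson coefficient $c^{\nu^*}_{\lam,\mu} = \dim [V(\lam)\otimes V(\mu)\otimes V(\nu)]^{\SL(n)}$ counts integer points in the hive polytope $P(\lam,\mu,\nu)\subset \br^{(n+2)(n+1)/2}$, consisting of real-valued functions on a triangular array satisfying three families of rhombus concavity inequalities, with boundary determined by the partition data of $\lam,\mu,\nu$. A standard scaling argument shows that membership $(\lam,\mu,\nu)\in\Gamma_3(\SL(n))$ is equivalent to $P(\lam,\mu,\nu)$ being a nonempty rational polytope. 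Consequently the triple case reduces to the combinatorial statement: if $P(\lam,\mu,\nu)$ is a nonempty rational polytope whose boundary satisfies $\lam+\mu+\nu\in Q$, then $P(\lam,\mu,\nu)$ contains an integer point.

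The hard part will be precisely this last integrality step. I would attack it by passing to the dual \emph{honeycomb} picture (where vertices of hives become regions and edges encode rhombus deficits) and imitate the Knutson--Tao excess-measure argument: starting from a real honeycomb realizing the integer boundary, track an integer-valued "excess" on edges, show that a suitable local "breath" move strictly decreases this excess while preserving the boundary, and argue that a minimizer must be an integer honeycomb. The technical delicacy is handling degenerate honeycombs where multiple edges coincide, which requires first perturbing the boundary to a generic point in the interior of $\Gamma_3(\SL(n))$, where hives can be taken to be simply-valent and integer, and then passing to a controlled limit; the root-lattice congruence $\lam+\mu+\nu\in Q$ is exactly what guarantees the limiting honeycomb lies at lattice points rather than a nearby fractional position. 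If the honeycomb combinatorics proved too intricate, I would fall back on Derksen--Weyman's realization of $c^{\nu^*}_{\lam,\mu}$ as the dimension of a space of semi-invariants on a suitable quiver representation space, and invoke their general saturation theorem for rings of semi-invariants of quivers without oriented cycles.
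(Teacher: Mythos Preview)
The paper does not supply its own proof of this theorem: it simply recalls the statement and cites the original Knutson--Tao proof together with the alternative proofs by Derksen--Weyman, Belkale, and Kapovich--Millson. So there is no ``paper's proof'' to compare against beyond these references, and your sketch of the hive/honeycomb argument for $s=3$ is a reasonable high-level outline of what Knutson--Tao actually do (with the Derksen--Weyman fallback also being one of the cited alternatives).

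However, your reduction from general $s$ to $s=3$ has a genuine gap. From $(\la_1,\dots,\la_s)\in\Gamma_s(\SL(n))$ you obtain, for some $N$, a chain of integral dominant weights $\nu_2,\dots,\nu_{s-2}$ with each $(N\la_1,N\la_2,\nu_2^*)$, $(\nu_{i-1},N\la_i,\nu_i^*)$, $(\nu_{s-2},N\la_{s-1},N\la_s)$ in $\hat\Gamma_3$. But there is no reason the $\nu_i$ should be divisible by $N$, so you cannot simply ``unscale'' to obtain integral $\mu_i=\nu_i/N$ with $(\mu_{i-1},\la_i,\mu_i^*)\in\Gamma_3$. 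Applying triple saturation to the links of the $N$-scaled chain is vacuous (they already lie in $\hat\Gamma_3$), and the phrase ``then yields the unscaled chain'' hides exactly the difficulty: one must produce \emph{integral} intermediate weights in the fiber of a rational polytope, which is precisely a saturation-type statement and does not follow from the triple case by this formal argument.

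The standard fixes are either to run the hive argument directly on the $(s-2)$-fold glued hive polytope (whose integer points count $\dim[V(\la_1)\otimes\cdots\otimes V(\la_s)]^G$ and which scales correctly, so the same integer-point argument applies), or to invoke the Derksen--Weyman quiver semi-invariant theorem for a suitable star-shaped quiver, which handles arbitrary $s$ in one stroke without a separate reduction step.
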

        Specializing Theorem \ref{thm1} to $G=\SL(n)$, as seen below, we obtain the classical Horn
inequalities.

In this case, the partial flag varieties corresponding to the maximal parabolics
$P_r$ are precisely the Grassmannians
of  $r$-planes in $n$-space $G/P_{r}=\Gr(r,n)$, for $0<r<n$. The
Schubert cells in $\Gr(r,n)$ are parameterized by the subsets of cardinality $r$:
$$
I=\{i_1<\ldots<i_{r}\}\subset \{1,\ldots,n\}.
$$

The corresponding Weyl group element $w_{I}\in W^{P_{r}}$ is nothing
but the permutation
$$
1\mapsto i_{1},\quad 2\mapsto i_{2},\cdots,r\mapsto i_{r}
$$
and $w_{I}(r+1),\ldots,w_{I}(n)$ are the elements in
$\{1,\ldots,n\}\backslash I$ arranged in ascending order.

Let $I'$ be the `dual' set
$$
I'=\{n+1-i, \ \ i\in I\},
$$
arranged in ascending order.

Then, the Schubert class $[X_{I}:=X^{P_{r}}_{w_{I}}]$ is Poincar\'e dual
to the Schubert class $[X_{I'}]\in H^*(\Gr(r,n), \mathbb{Z})$. Moreover,
\begin{equation}\label{e5}
\dim X_{I}=\codim X_{I'}=(\sum_{i\in I}i)-\frac{r(r+1)}{2}.
\end{equation}

We recall the following definition due to Horn.

\begin{definition}\label{defi3}
For $0<r<n$, inductively define the set $S^{r}_{n}$ of triples
$(I,J,K)$ of subsets of $\{1,\ldots,n\}$ of cardinality $r$ (arranged in
ascending order) satisfying
\begin{itemize}
\item[\rm(a)] $\sum\limits_{i\in I}i+\sum\limits_{j\in
  J}j=\dfrac{r(r+1)}{2}+\sum\limits_{k\in K}k$

\item[\rm(b)] For all $0<p<r$ and $(F,G,H)\in S^{p}_{r}$, the following
  inequality holds:
$$
\sum_{f\in F}i_{f}+\sum_{g\in G}j_{g}\leq \frac{p(p+1)}{2}+\sum_{h\in
  H}k_{h}.
$$
\end{itemize}
\end{definition}
The following theorem follows by Theorem \ref{thm1} for $G=\SL(n)$ (proved by Klyachko)
and Theorem \ref{knutson-tao} (proved by Knutson-Tao). Belkale [B$_3$] gave
another geometric proof of the theorem.
\begin{theorem}\label{thm4}
For subsets $(I,J,K)$ of $\{1,\ldots,n\}$ of cardinality $r$, the
product
$$
[X_{I'}]\cdot[X_{J'}]\cdot[X_{K}]=d[X_{e}^{P_{r}}],\,\,
\text{for some~ }\, d\neq 0\Leftrightarrow
(I,J,K)\in S^{r}_{n}.$$
\end{theorem}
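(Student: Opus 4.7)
The plan is to combine Klyachko's half of Theorem \ref{thm1} (applied to $G=\SL(n)$) with the Knutson--Tao saturation Theorem \ref{knutson-tao}, and proceed by induction on $r$. To each size-$r$ subset $I=\{i_1<\cdots<i_r\}\subset\{1,\dots,n\}$ I attach the partition $\mu(I):=(i_r-r,i_{r-1}-(r-1),\dots,i_1-1)$ fitting in the $r\times(n-r)$ rectangle, regarded as a dominant $\SL(n)$-weight $\lambda_I$. The first, purely combinatorial, step is to dispose of the dimensional condition: by \eqref{e5} the hypothesis $[X_{I'}]\cdot[X_{J'}]\cdot[X_K]=d[X_e^{P_r}]$ with $d\neq 0$ forces $\codim X_{I'}+\codim X_{J'}+\codim X_K=r(n-r)$, and unwinding this equality yields precisely Definition \ref{defi3}(a). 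Since (a) is in turn a necessary starting assumption for the Horn recursion, one may assume (a) throughout the rest of the argument.

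Under (a) the three weights $\lambda_{I'},\lambda_{J'},\lambda_K^*$ sum to an element of the root lattice of $\SL(n)$. The Littlewood--Richardson rule identifies the coefficient of $[X_e^{P_r}]$ in $[X_{I'}]\cdot[X_{J'}]\cdot[X_K]$ with the dimension of $[V(\lambda_{I'})\otimes V(\lambda_{J'})\otimes V(\lambda_K)^*]^{\SL(n)}$. By the saturation Theorem \ref{knutson-tao}, positivity of that dimension is equivalent to $(\lambda_{I'},\lambda_{J'},\lambda_K^*)\in\Gamma_3(\SL(n))$. Thus, given (a), the cup-product condition in the theorem is equivalent to membership in the saturated tensor semigroup.

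Next I invoke Theorem \ref{thm1} for $G=\SL(n)$: membership in $\Gamma_3(\SL(n))$ is characterized by the validity of all Klyachko inequalities $\sum_j\lambda_j(u_j x_{P_p})\le 0$, one for each maximal parabolic $P_p$ ($0<p<n$) and each triple $(u_F,u_G,u_H)\in(W^{P_p})^3$---parametrized by size-$p$ subsets $F,G,H\subset\{1,\dots,n\}$---whose Schubert classes satisfy $[X^{P_p}_{u_F}]\cdot[X^{P_p}_{u_G}]\cdot[X^{P_p}_{u_H}]=d'[X_e^{P_p}]$ with $d'\neq 0$. Using the standard action of $u_F$ on the basis vectors and the explicit formulas for $x_{P_p}$ and $\omega_p$, a direct computation rewrites this Klyachko inequality for the weight $\lambda_I$ as
\[
\sum_{f\in F}i_f+\sum_{g\in G}j_g\le \frac{p(p+1)}{2}+\sum_{h\in H}k_h,
\]
matching the inequality that appears in Definition \ref{defi3}(b).

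The induction then closes as follows. For subsets $(F,G,H)\subset\{1,\dots,r\}^3$ the indexing cup-product condition on $\Gr(p,n)$ descends to the analogous condition on the smaller Grassmannian $\Gr(p,r)$, which by the inductive hypothesis on $r$ is equivalent to $(F,G,H)\in S^p_r$. The real obstacle---and the step where the argument is most delicate---is showing that triples $(F,G,H)$ with an element outside $\{1,\dots,r\}$ contribute only redundant inequalities: because each of $\lambda_I,\lambda_J,\lambda_K$ has at most $r$ non-zero parts, such ``oversized'' inequalities must be shown either to vanish identically or to follow from the ones indexed by subsets of $\{1,\dots,r\}$. Once this cut-off is justified, the Klyachko system for $(\lambda_{I'},\lambda_{J'},\lambda_K^*)$ coincides term-by-term with the Horn system defining $S^r_n$, and the equivalence claimed in Theorem \ref{thm4} follows.
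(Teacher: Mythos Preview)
Your proposal has a genuine structural gap, and it stems from working at the wrong level: you translate the intersection number in $\Gr(r,n)$ into an $\SL(n)$-tensor multiplicity and then apply Theorem~\ref{thm1} to $G=\SL(n)$. This is precisely what creates the ``cut-off'' obstacle you flag at the end --- the Klyachko system for $\SL(n)$ is indexed by triples $(F,G,H)$ of subsets of $\{1,\dots,n\}$ and by maximal parabolics $P_p$ with $0<p<n$, whereas the Horn system $S^r_n$ is indexed by subsets of $\{1,\dots,r\}$ with $0<p<r$. Showing that the ``oversized'' inequalities are redundant is not a side issue; it is essentially as hard as the theorem itself, and your proposal gives no argument for it. In addition, your stated identification of the coefficient $d$ with $\dim[V(\lambda_{I'})\otimes V(\lambda_{J'})\otimes V(\lambda_K)^*]^{\SL(n)}$ is not the classical one and would itself require justification (the standard fact goes through $\SL(r)$, not $\SL(n)$, and involves $\lambda_I,\lambda_J,\lambda_{K'}$ rather than $\lambda_{I'},\lambda_{J'},\lambda_K^*$).

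The paper's proof avoids the obstacle entirely by using the classical identification \eqref{e8}: the coefficient of $[X_e^{P_r}]$ in $[X_{I'}]\cdot[X_{J'}]\cdot[X_K]$ equals $\dim[V(\lambda_I)\otimes V(\lambda_J)\otimes V(\lambda_{K'})]^{\SL(r)}$. One then applies Theorem~\ref{thm1} and Theorem~\ref{knutson-tao} to $G=\SL(r)$, not $\SL(n)$. The Klyachko inequalities for $\SL(r)$ are automatically indexed by maximal parabolics $P_p\subset\SL(r)$ with $0<p<r$ and by subsets $F,G,H\subset\{1,\dots,r\}$ satisfying a cup-product condition in $\Gr(p,r)$; the inductive hypothesis (on $n$, hence on the smaller pair $p<r<n$) identifies that condition with $(F,G,H)\in S^p_r$. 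A direct computation with \eqref{e6} then shows these inequalities are exactly the ones in Definition~\ref{defi3}(b). No cut-off step is needed because the smaller group $\SL(r)$ already produces the correct index set. The fix to your argument is therefore simple: replace $\SL(n)$ by $\SL(r)$ throughout, use the identification \eqref{e8}, and the induction closes cleanly.
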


\begin{proof}
For $\SL(n)/P_{r}=\Gr(r,n)$,
$$
x_{P_{r}}=\epsilon_{1}+\cdots+\epsilon_{r}-\frac{r}{n}(\epsilon_{1}+\cdots+
\epsilon_{n}),
$$
where $\epsilon_i$ is the $n\times n$ diagonal matrix with $1$ in the $i$-th place
and $0$ elsewhere.
Thus, for $I=\{i_{1}<\ldots<i_{r}\}$,
\begin{equation}\label{e6}
w_{I}(x_{P_{r}})=\epsilon_{i_{1}}+\cdots+\epsilon_{i_{r}}-\frac{r}{n}(\epsilon_{1}+
\cdots+\epsilon_{n}).
\end{equation}
Recall the classical result that the tensor product structure constants of
$\GL_{r}$-polynomial representations with highest weights
\begin{equation}\label{e7}\lambda:n-r\geq \lambda_{1}\geq \lambda_{2}\geq \ldots\geq
\lambda_{r}\geq 0\end{equation}
 correspond to the intersection product structure
constants for the Schubert varieties $X_{I'_{\lambda}}\subset
\SL(n)/P_{r}=\Gr(r,n)$, where $I_{\lambda}\subset \{1,\ldots,n\}$ is
the sequence:
$\lambda_{r}+1<\lambda_{r-1}+2<\ldots<\lambda_{1}+r$. Specifically,
for $\lambda^{(1)},\ldots,\lambda^{(s)}$ satisfying \eqref{e7}  with
$\sum\limits^{s}_{j=1}\dim X_{I_{\lambda^{(j)}}}=(n-r)r$, we have
(cf., e.g., [F$_0$, $\S$ 9.4])
\begin{equation}\label{e8}
\dim [V(\lambda^{(1)})\otimes\ldots\otimes
  V(\lambda^{(s)})]^{\SL(r)}=\text{coeff. of~ } [X^{P_{r}}_{e}]\text{~
  in~ } \prod^{s}_{j=1}[X_{I'_{\lambda^{(j)}}}].
\end{equation}

\noindent
{\em Proof of Theorem \ref{thm4} `$\Rightarrow$'~:} Take subsets $I$, $J$,
$K\subset \{1,\ldots,n\}$ of cardinality $r$ such that
\begin{equation}\label{e9}
[X_{I'}]\cdot [X_{J'}]\cdot [X_{K}]=d[X^{P_{r}}_{e}]\in
H^{*}(\SL(n)/P_{r})\,\,\,\text{for some constant}\,\, d\neq 0.
\end{equation}

From the above, we see that
$$
\dim \Gr(r,n) = \codim X_{I'}+\codim X_{J'}+\codim X_{K},$$
which gives, by the identity \eqref{e5},
\begin{equation}\label{e10} \dim X_{K} = \dim X_{I}+\dim X_{J}.
\end{equation}

From the identities \eqref{e5} and \eqref{e10}, the identity
(a) follows for $(I,J,K)$.

From the identities \eqref{e8} and \eqref{e9}, we see that as $\SL(r)$-representations,
$$
[V(\lambda_{I})\otimes V(\lambda_{J})\otimes
  V(\lambda_{K'})]^{\SL(r)}\neq 0,
$$
where $\lambda_{I}$ is the partition
$$
\lambda_{I}:n-r\geq i_{r}-r\geq i_{r-1}-(r-1)\geq \ldots\geq
i_{1}-1\geq 0.
$$

Thus,
$$
(\lambda_{I},\lambda_{J},\lambda_{K'})\in \Gamma_{3}(\SL(r)).
$$
Hence, by Theorem \ref{thm1} applied to $\SL(r)$, for any
maximal parabolic subgroup $P_{p}\subset \SL(r)$, $0<p<r$, and subsets
$F$, $G$, $H$ of $\{1,\ldots,r\}$ of cardinality $p$ with
\begin{equation}\label{5n}
[X_{F'}]\cdot [X_{G'}]\cdot [X_{H}]=d'[X^{P_{p}}_{e}],\,\,\,\text{for some}
\, d'\neq 0,
\end{equation}
 we have
$$
\lambda_{I}(w_{F'}x_{P_{p}})+\lambda_{J}(w_{G'}x_{P_{p}})+\lambda_{K'}(w_{H}
x_{P_{p}})\leq 0.
$$
Observe that, from the identity (a) of Definition \ref{defi3},

\begin{equation}\label{6n}|\lambda_{I}|+
|\lambda_{J}|+|\lambda_{K'}|=(n-r)r.
\end{equation}
Thus, by the identities \eqref{e6} and \eqref{6n},
$$\sum_{f\in F}i_{f}-\sum_{f\in F}f+\sum_{g\in
  G}j_{g}-\sum_{g\in G}g
-\sum_{h\in H}k_{h}+\sum_{h\in H}h\leq 0,$$
i.e.,
$$\sum_{f\in F}i_{f}+\sum_{g\in G}j_{g}\leq
\sum_{h\in H}k_{h}+\sum_{f\in F}f+\sum_{g\in G}g-\sum_{h\in H}h
=\sum_{h\in H}k_{h}+\frac{p(p+1)}{2},$$
where the last equality follows from the analogue of the identities \eqref{e5} and
\eqref{e10} corresponding to the identity \eqref{5n}.
Now, by induction, assuming the validity of Theorem \ref{thm4} for the
nonvanishing of cup products in $\SL(r)/P_{p}$ (since $p<r<n$), we get
that
$$(F,G,H)\in S^{p}_{r}\Leftrightarrow
[X_{F'}]\cdot [X_{G'}]\cdot [X_{H}]=d'[X^{P_{p}}_{e}],\quad\text{for
    some ~} d'\neq 0.$$
Thus, we get that $(I,J,K)\in S^{r}_{n}$, proving the `$\Rightarrow$'
implication.

Conversely, assume that the subsets $(I,J,K)$ each of cordinality $r$
contained in $\{1,\ldots,n\}$ belong to $S^{r}_{n}$. We want to prove
that
$$
 [X_{I'}]\cdot [X_{J'}]\cdot [X_{K}]=d[X^{P_{r}}_{e}],\quad\text{for
    some~} d\neq 0.$$
    By the identity \eqref{e8} and the condition (a) of Definition \ref{defi3}, this is equivalent to the nonvanishing
$ [V(\lambda_{I})\otimes V(\lambda_{J})\otimes
  V(\lambda_{K'})]^{\SL(r)}\neq 0$.
  By Theorem \ref{knutson-tao} for $G=\SL(r)$, the latter is equivalent to
$ (\lambda_{I},\lambda_{J},\lambda_{K'})\in \Gamma_{3}(\SL(r)),$
since $\lambda_{I}+\lambda_{J}+\lambda_{K'}$ belongs to the root lattice
of $\SL(r)$ because of the condition (a) (cf. the identity \eqref{6n}).

By Theorem \ref{thm1} for $G=\SL(r)$ and by assuming the validity of Theorem
\ref{thm4} by induction on $n$,
$
(\lambda_{I},\lambda_{J},\lambda_{K'})\in
\Gamma_{3}(\SL(r))\Leftrightarrow\,\,\text{for all}
$
maximal parabolic subgroups $P_{p}$, $0<p<r$, of $\SL(r)$, and all
$(F,G,H)\in S^{p}_{r}$, we have
$$
\lambda_{I}(w_{F'}x_{P_{p}})+\lambda_{J}(w_{G'}x_{P_{p}})+\lambda_{K'}
  (w_{H}x_{P_{p}})\leq
  0 ,$$ which is equivalent to the inequality
$$\sum_{f\in F}i_{f}+\sum_{g\in G}j_{g}\leq
\sum_{h\in H}k_{h}+\frac{p(p+1)}{2},$$
by the previous calculation.

But the last inequality is true by the  definition of
$S^{r}_{n}$. This proves the theorem.
\end{proof}

\begin{remark}
(1) Belkale-Kumar have given two inductive criteria (though only necessary
conditions) to determine when the product of a number of Schubert cohomology classes
in any $G/P$ is nonzero.
The first criterion is in terms of the characters (cf. [BK$_1$, Theorem 29])
and the second criterion is in terms of  dimension counts (cf.
[BK$_1$, Theorem 36]).

(2) Purbhoo [P] has given a criterion (again only a necessary condition) to
determine which of the Schubert
intersections vanish in terms of a combinatorial recipe called `root game'. He has a
similar recipe to determine the vanishing for branching Schubert calculus.

(3) For any cominuscule flag variety $G/P$, Purbhoo-Sottile have
determined a recursive set of inequalities (coming only from the class of
cominuscule flag varieties) which determines when the intersection product
$[X^P_{w_1}]\dots  [X^P_{w_s}]$ is nonzero in $H^*(G/P)$ (cf. [PS, Theorem 4]).
\end{remark}

For a Hermitian $n\times n$ matrix $A$, let $e_A=(e_1\geq  \dots \geq e_n)$ be
its set of eigenvalues (which are all real). Let $\mathfrak{a}$ be the standard Cartan subalgebra of
$sl(n)$ consisting of traceless diagonal matrices and let $\frb \subset sl(n)$
be the standard Borel subalgebra   consisting of traceless upper triangular
matrices (where $sl(n)$ is the Lie algebra of $\SL(n)$ consisting of traceless
$n\times n$-matrices). Then, the Weyl chamber
$$\fra_+=\{\text{diag} \,(e_1\geq  \dots \geq e_n): \sum e_i=0\}.$$
Define the {\it Hermitian eigencone}
\begin{align*}\bar{\Gamma}(n)=&\{(a_1,a_2,a_3)\in \fra_+^3:\,\text{there exist
$n\times n$ Hermitian matrices}\, A,B,C \,\text{with}\\
&e_A=a_1, e_B=a_2, e_C=a_3
\,\text{and}\, A+B=C\}.
\end{align*}
It is easy to see that  $\bar{\Gamma}(n)$ essentially coincides with the eigencone
$\bar{\Gamma}_3(sl(n))$ defined in Section \ref{section2}. Specifically,
$$(a_1,a_2,a_3)\in \bar{\Gamma}(n)\Leftrightarrow (a_1,a_2,a_3^*)\in
\bar{\Gamma}_3(sl(n)),$$
where $(e_1\geq  \dots \geq e_n)^*:= (-e_n\geq  \dots \geq -e_1).$

Combining Corollary \ref{eigen} with Theorem  \ref{thm4}, we get the following
Horn's conjecture [Ho] established by the works of Klyachko (Corollary \ref{eigen}
for $\fg=sl(n)$)  and Knutson-Tao (Theorem \ref{knutson-tao}).

\begin{corollary}\label{coro5}
For $(a_1,a_2,a_3)\in \fra_+^3$, the
following are equivalent.
\begin{itemize}
\item[\rm(a)] $(a_1,a_2,a_3)\in \bar{\Gamma}(n)$

\item[\rm(b)] For all $0<r<n$ and all $(I,J,K)\in S^{r}_{n}$,
$$|a_3(K)|\leq |a_1(I)|+|a_2(J)|,$$
where for a subset $I=(i_1<\dots <i_r)\subset \{1, \dots, n\}$ and
$a=(e_1\geq  \dots \geq e_n)\in \fra_+$,
$a(I):= (e_{i_1}\geq  \dots \geq e_{i_r}), \,\,\text{and}\,\,
|a(I)|:= e_{i_1}+ \dots +e_{i_r}.$
\end{itemize}
\end{corollary}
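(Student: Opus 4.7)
The strategy is to combine Corollary \ref{eigen} for $\mathfrak{g}=sl(n)$ with Theorem \ref{thm4}, using the stated identification
\[
(a_{1},a_{2},a_{3})\in \bar{\Gamma}(n)\iff (a_{1},a_{2},a_{3}^{*})\in \bar{\Gamma}_{3}(sl(n))
\]
coming from rewriting $A+B=C$ as $A+B+(-C)=0$. Thus, setting $h_{1}=\text{diag}(a_{1})$, $h_{2}=\text{diag}(a_{2})$, $h_{3}=\text{diag}(a_{3}^{*})$, condition (a) of Corollary \ref{coro5} becomes $(h_{1},h_{2},h_{3})\in \bar{\Gamma}_{3}(sl(n))$.

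Next, I would apply Corollary \ref{eigen} (or more precisely, the sharper equivalence of parts (a) and (b) in Theorem \ref{thm1} transferred by Theorem \ref{sj}) to this triple. For $G=\SL(n)$ the maximal parabolics are exactly the $P_{r}$ with $0<r<n$, giving $G/P_{r}=\Gr(r,n)$, and the minimal coset representatives $W^{P_{r}}$ are parameterized by $r$-subsets $L\subset\{1,\dots,n\}$ via $L\mapsto w_{L}$ as recalled in Section \ref{section3}. Then Theorem \ref{thm4} identifies the Schubert nonvanishing condition $[X_{w_{1}}^{P_{r}}]\cdot[X_{w_{2}}^{P_{r}}]\cdot[X_{w_{3}}^{P_{r}}]=d[X_{e}^{P_{r}}]$ (for some $d\ne 0$) with a combinatorial condition on the associated subsets that is exactly the defining condition of $S_{n}^{r}$. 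This replaces the cohomological quantifier in Corollary \ref{eigen} by $(I,J,K)\in S_{n}^{r}$, so after this step (a) is equivalent to a list of inequalities indexed by $0<r<n$ and by triples in $S_{n}^{r}$.

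It then remains to check that each such inequality, written in the form $\omega_{P_{r}}\bigl(\sum_{j}w_{j}^{-1}h_{j}\bigr)\le 0$ of Corollary \ref{eigen}, coincides with the Horn inequality $|a_{3}(K)|\le|a_{1}(I)|+|a_{2}(J)|$. This is the purely combinatorial step, and it proceeds exactly as in the proof of Theorem \ref{thm4}: using formula \eqref{e6} for $w_{L}(x_{P_{r}})$, a diagonal matrix $h=\text{diag}(e_{1},\dots,e_{n})$ is paired with $w_{L}(x_{P_{r}})$ to give $\sum_{i\in L}e_{i}$ (the trace term drops out because $h$ is traceless), so the pairing $\omega_{P_{r}}(w^{-1}h)$ reads off the appropriate partial sum. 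Substituting $h_{3}=\text{diag}(a_{3}^{*})$ converts the partial sum over $K$ into $-|a_{3}(K)|$ (with the Poincar\'e-dual bookkeeping of Theorem \ref{thm4} matching the roles of $I$ versus $I'$), and the inequality rearranges to the Horn form.

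\textbf{Main obstacle.} The serious mathematical content is not in the present corollary but in its two inputs: the equivalence (a)$\Leftrightarrow$(b) of Theorem \ref{thm1} for $\SL(n)$ (Klyachko's theorem) and the saturation Theorem \ref{knutson-tao} of Knutson--Tao, which together underlie Theorem \ref{thm4}. Granted these, the only delicate point in assembling Corollary \ref{coro5} is the triple bookkeeping between (i) the dualization $a_{3}\mapsto a_{3}^{*}$ needed to pass from the Hermitian setup to $\bar{\Gamma}_{3}(sl(n))$, (ii) the primed versus unprimed subsets $I,I'=\{n+1-i:i\in I\}$ arising from Poincar\'e duality in the Schubert basis (the product $[X_{I'}]\cdot[X_{J'}]\cdot[X_{K}]$ in Theorem \ref{thm4}), and (iii) the appearance of $w^{-1}$ rather than $w$ in Corollary \ref{eigen}; these three inversions must combine consistently to deliver the Horn inequality as stated, and making sure they do is the only thing that requires care.
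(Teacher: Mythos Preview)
Your plan is correct and follows the same route as the paper: reduce to Corollary~\ref{eigen} for $sl(n)$, use Theorem~\ref{thm4} to replace the Schubert nonvanishing condition by $(I,J,K)\in S^r_n$, and then unpack the inequality via \eqref{e6}. There is one small but useful difference in execution. The paper first invokes the extra symmetry
\[
(a_1,a_2,a_3)\in\bar\Gamma(n)\iff (a_1^*,a_2^*,a_3^*)\in\bar\Gamma(n)
\]
(coming from $A+B=C\Rightarrow(-A)+(-B)=(-C)$), so that the triple fed into $\bar\Gamma_3(sl(n))$ is $(a_1^*,a_2^*,a_3)$ rather than your $(a_1,a_2,a_3^*)$. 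With this choice the Weyl elements $(w_{I'},w_{J'},w_K)$ line up \emph{verbatim} with the product $[X_{I'}]\cdot[X_{J'}]\cdot[X_K]$ in Theorem~\ref{thm4}, and \eqref{e6} gives directly $\langle a_1^*,w_{I'}x_{P_r}\rangle=-|a_1(I)|$, $\langle a_2^*,w_{J'}x_{P_r}\rangle=-|a_2(J)|$, $\langle a_3,w_{K}x_{P_r}\rangle=|a_3(K)|$, with no further relabeling.

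With your choice $(a_1,a_2,a_3^*)$, the computation $\langle a_3^*,w_Kx_{P_r}\rangle$ yields $-|a_3(K')|$, not $-|a_3(K)|$ as you wrote; similarly the $I,J$ terms come out as $|a_1(I')|,|a_2(J')|$. You then need one more step---either the $w_0$-automorphism of $H^*(\Gr(r,n))$ exchanging $[X_L]\leftrightarrow[X_{L'}]$, or equivalently the observation that the resulting family of inequalities over $(I,J,K)\in S^r_n$ coincides with the Horn family after the substitution $(I,J,K)\mapsto(I',J',K')$ combined with the same symmetry on $S^r_n$. This is exactly the ``triple bookkeeping'' you flagged; the paper's preliminary $\ast\ast\ast$-flip is simply the cleanest way to discharge it.
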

\begin{proof} Clearly $(a_1,a_2,a_3)\in \bar{\Gamma}(n) \Leftrightarrow
(a_1^*,a_2^*,a_3^*)\in \bar{\Gamma}(n)$. Thus, by Corollary \ref{eigen}
and Theorem \ref{thm4}, (a) is equivalent to the condition that for all
$0<r <n$ and $(I,J,K) \in S^r_n$,
\begin{equation}\label{7n}
\omega_{P_r}(w_{I'}^{-1}a_1^*+w_{J'}^{-1}a_2^*+w_{K}^{-1}a_3)\leq
  0.
  \end{equation}
  Now, since $\omega_{P_r}$ corresponds to $x_{P_r}$ under the isomorphism
  of $\fra^*$ with $\fra$ induced from the Killing form $\langle\,,\,\rangle$,
  the inequality \eqref{7n} is equivalent to
  \begin{equation}\label{8n}
\langle a_1^*, w_{I'}x_{P_r}\rangle +\langle a_2^*, w_{J'}x_{P_r}\rangle
+\langle a_3, w_{K}x_{P_r}\rangle\leq
  0.
  \end{equation}
Now, from the identity \eqref{e6}, the inequality \eqref{8n} is equivalent to
(since trace $a_1=$ trace $a_2=$ trace $a_3^*=0$ by assumption):
 $$|a_3(K)|\leq |a_1(I)|+|a_2(J)|.$$
 This proves the corollary.
 \end{proof}
 We have the following representation theoretic analogue of the above corollary,
 obtained by combining Theorems \ref{thm1}, \ref{knutson-tao} and \ref{thm4}.
 \begin{corollary} Let $\lambda=(\lambda_1 \geq \dots \geq \lambda_n\geq 0),
 \mu=(\mu_1 \geq \dots \geq \mu_n\geq 0)$ and $\nu=(\nu_1 \geq \dots \geq \nu_n\geq 0)$
 be three partitions such that $|\lambda|+|\mu|-|\nu|\in n\mathbb{Z}$, where
 $|\lambda|:=\lambda_1+ \dots +\lambda_n.$ Then, the following are equivalent:

 (a) $V(\nu)$ appears as a $\SL(n)$-submodule of $V(\lambda)\otimes V(\mu)$.

 (b) For all $0<r<n$ and all $(I,J,K)\in S^{r}_{n}$,
$$|\nu(K)|\leq |\lambda(I)|+|\mu(J)|-\frac{r}{n}(|\lambda|+|\mu|-|\nu|),$$
where for a subset $I=(i_1<\dots <i_r)\subset \{1, \dots, n\}$, $\lambda(I)$ denotes
$(\lambda_{i_1} \geq \dots \geq \lambda_{i_r})$ and $|\lambda(I)|:=
\lambda_{i_1}+ \dots + \lambda_{i_r}$.
\end{corollary}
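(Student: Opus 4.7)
The plan is to chain three earlier results from the excerpt: Theorem \ref{thm1} (the Klyachko--Berenstein--Sjamaar criterion for membership in $\Gamma_s(G)$), Theorem \ref{knutson-tao} (Knutson--Tao saturation for $\SL(n)$), and Theorem \ref{thm4} (Horn's description of non-vanishing Schubert products in Grassmannians).

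First I would translate (a) into membership in the saturated tensor semigroup. By standard duality, $V(\nu)$ is a submodule of $V(\lambda)\otimes V(\mu)$ iff $[V(\lambda^*)\otimes V(\mu^*)\otimes V(\nu)]^{\SL(n)}\neq 0$, which trivially forces $(\lambda^*,\mu^*,\nu)\in \Gamma_3(\SL(n))$. For the converse direction I would invoke Theorem \ref{knutson-tao}: a weight $\sum_i a_i\epsilon_i$ of $\SL(n)$ lies in the root lattice iff $\sum_i a_i\in n\bz$, and since $|\lambda^*|+|\mu^*|+|\nu|\equiv -|\lambda|-|\mu|+|\nu|\pmod n$, the hypothesis $|\lambda|+|\mu|-|\nu|\in n\bz$ is exactly the root-lattice condition that lets one pass from $(\lambda^*,\mu^*,\nu)\in\Gamma_3(\SL(n))$ back to non-vanishing of the invariant subspace.

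Next I would apply Theorem \ref{thm1} to $\SL(n)$: $(\lambda^*,\mu^*,\nu)\in \Gamma_3(\SL(n))$ iff for every maximal parabolic $P_r$ ($0<r<n$) and every triple $(w_1,w_2,w_3)\in (W^{P_r})^3$ with $[X^{P_r}_{w_1}]\cdot [X^{P_r}_{w_2}]\cdot [X^{P_r}_{w_3}]=d[X^{P_r}_e]$ for some $d\neq 0$, one has $\lambda^*(w_1 x_{P_r})+\mu^*(w_2 x_{P_r})+\nu(w_3 x_{P_r})\leq 0$. By Theorem \ref{thm4}, these non-vanishing triples are exactly $(w_{I'},w_{J'},w_K)$ with $(I,J,K)\in S^r_n$.

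Finally, I would unfold the inequalities using the formula \eqref{e6} for $w_A x_{P_r}$ together with the explicit description $\lambda^*=\sum_i(-\lambda_{n+1-i})\epsilon_i$ (as a weight of $\SL(n)$). A short calculation using the change of index $j=n+1-i$, which sends $I'$ to $I$, yields $\lambda^*(w_{I'}x_{P_r})=-|\lambda(I)|+\tfrac{r}{n}|\lambda|$, $\mu^*(w_{J'}x_{P_r})=-|\mu(J)|+\tfrac{r}{n}|\mu|$, and $\nu(w_K x_{P_r})=|\nu(K)|-\tfrac{r}{n}|\nu|$; summing these and demanding $\leq 0$ rearranges exactly to the Horn inequality in (b). The only subtle point is bookkeeping: one must dualize $\lambda,\mu$ (not $\nu$) so the final inequality is oriented as $|\nu(K)|\leq\cdots$, and must check that the trace correction $-\tfrac{r}{n}\sum_i\epsilon_i$ inside $x_{P_r}$ reassembles precisely into the shift $-\tfrac{r}{n}(|\lambda|+|\mu|-|\nu|)$ on the right-hand side. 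No fundamentally new geometric input is required.
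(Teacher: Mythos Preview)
Your proposal is correct and follows essentially the same route as the paper: dualize to the triple $(\lambda^*,\mu^*,\nu)$, use Knutson--Tao saturation to pass between non-vanishing of the invariant and membership in $\Gamma_3(\SL(n))$, then combine Theorem~\ref{thm1} with Theorem~\ref{thm4} and unfold via identity~\eqref{e6}. The only cosmetic difference is that the paper phrases the dualization as $V(\nu)\subset V(\lambda)\otimes V(\mu)\Leftrightarrow V(\nu^*)\subset V(\lambda^*)\otimes V(\mu^*)$ before passing to invariants, whereas you go directly to $[V(\lambda^*)\otimes V(\mu^*)\otimes V(\nu)]^{\SL(n)}\neq 0$; the computation of the inequality is identical.
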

\begin{proof} The condition $|\lambda|+|\mu|-|\nu|\in n\mathbb{Z}$ is equivalent
to the condition that $\lambda+\mu+\nu^*$ belongs to the root lattice of
$sl(n)$, where $\nu^*$ is the partition $(\nu_1-\nu_n \geq \dots \geq \nu_1-\nu_2
\geq 0 \geq 0)$. Moreover,
$V(\nu) \subset V(\lambda)\otimes V(\mu)$  (as an $\SL(n)$-submodule) if and only if
 $V(\nu^*) \subset V(\lambda^*)\otimes V(\mu^*)$  (as an $\SL(n)$-submodule).
 Thus, by Theorems \ref{thm1}, \ref{knutson-tao} and \ref{thm4}, (a) is equivalent to the
 condition that for all $0<r<n$ and all $(I,J,K)\in S^{r}_{n}$,
\begin{equation}\label{9n}
\lambda^*(w_{I'}x_{P_r}) +\mu^* ( w_{J'}x_{P_r})
+\nu ( w_{K}x_{P_r})\leq
  0.
  \end{equation}
By using the identity \eqref{e6}, the above inequality \eqref{9n} is equivalent to
$$|\nu(K)|\leq |\lambda(I)|+|\mu(J)|-\frac{r}{n}(|\lambda|+|\mu|-|\nu|).$$
This proves the corollary.
\end{proof}
\begin{definition}\label{defi6}
For $0<r<n$, inductively define $\hat{S}_{n}^{r}$ as the set of
triples $(I,J,K)$, where $I$, $J$, $K$ are subsets of $\{1,\ldots,n\}$
of cardinality $r$ satisfying the condition (b) of Definition
\ref{defi3} for $\hat{S}^{p}_{r}$ and the condition (a$'$) (instead of
the condition (a)).
\begin{equation*}
\sum_{i\in I}i+\sum_{j\in J}j\leq \frac{r(r+1)}{2}+\sum_{k\in K}k. \tag{a$'$}
\end{equation*}
\end{definition}

The following result is due to Belkale [B$_3$, Theorem
  0.1],
which is parallel to Theorem \ref{thm4}.

\begin{theorem}\label{thm7} Let $0<r<n$.
For subsets $(I,J,K)$ of $\{1,\ldots,n\}$ of cardinality $r$, the
product
$$[X_{I'}]\cdot [X_{J'}]\cdot [X_{K}]\quad \text{is nonzero}
\Leftrightarrow\quad  (I,J,K)\in \hat{S}^{r}_{n}.
$$
\end{theorem}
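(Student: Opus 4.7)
We argue by induction on $n$, in parallel with the proof of Theorem \ref{thm4}; the base case is trivial. For the necessity of $(a')$, the formula \eqref{e5} gives
\[
\codim X_{I'} + \codim X_{J'} + \codim X_K \;=\; r(n-r) + \sum_{i \in I} i + \sum_{j \in J} j - \sum_{k \in K} k - \tfrac{r(r+1)}{2},
\]
so the triple product lives in non-negative cohomology of $\Gr(r,n)$ precisely when $(a')$ holds, which yields the necessity of $(a')$.

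The central tool I would prove in tandem with the theorem is the following reduction lemma: $[X_{I'}]\cdot[X_{J'}]\cdot[X_K] \neq 0$ in $H^*(\Gr(r,n))$ if and only if there exists $\hat K = (\hat k_1 < \cdots < \hat k_r) \subseteq \{1, \ldots, n\}$ with $\hat k_i \leq k_i$ for every $i$ (so that $X_{\hat K} \subseteq X_K$) and $\sum_{k \in \hat K} k = \sum_{i \in I} i + \sum_{j \in J} j - r(r+1)/2$, such that $[X_{I'}]\cdot[X_{J'}]\cdot[X_{\hat K}] = d\,[X_e^{P_r}]$ for some $d \neq 0$. The easy ``if'' direction uses Kleiman's transversality (Theorem \ref{kleiman}) together with Proposition \ref{fultonproper}: for generic $g_i \in \SL(n)$, the intersection $g_1 X_{I'} \cap g_2 X_{J'} \cap g_3 X_{\hat K}$ consists of $d$ points, all of which lie inside $g_1 X_{I'} \cap g_2 X_{J'} \cap g_3 X_K$ via $X_{\hat K} \subseteq X_K$, forcing the cup product on $K$ to be non-zero. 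The ``only if'' direction requires locating $\hat K \leq K$ of the right codimension such that a generic translate of $X_{\hat K}$ still meets the positive-dimensional proper intersection $g_1 X_{I'} \cap g_2 X_{J'} \cap g_3 X_K$ (whose positive dimension is a consequence of strict $(a')$).

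Granting the reduction lemma, Theorem \ref{thm7} reduces, via Theorem \ref{thm4}, to the combinatorial equivalence: $(I, J, K) \in \hat S_n^r$ if and only if there exists $\hat K$ with $\hat k_i \leq k_i$, $\sum_{k \in \hat K} k = \sum_{i\in I} i + \sum_{j\in J} j - r(r+1)/2$, and $(I, J, \hat K) \in S_n^r$. For necessity of $(b)$, one starts from the $S_r^p$-type Horn inequalities given by $(I, J, \hat K) \in S_n^r$ and promotes them to $\hat S_r^p$-type inequalities for $(I, J, K)$: given $(F, G, H) \in \hat S_r^p$, the inductive hypothesis and the reduction lemma applied to $\Gr(p,r)$ yield $\hat H \leq H$ with $(F, G, \hat H) \in S_r^p$, and then the monotonicity $\sum_{h \in H} k_h \geq \sum_{\hat h \in \hat H}\hat k_{\hat h}$ (using $\hat k_a \leq k_a$ and $\hat h_a \leq h_a$) transfers the inequality to $(I, J, K)$. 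For sufficiency, one uses $(a')$ and $(b)$ together with the inductive step to construct a suitable $\hat K$ with $(I, J, \hat K) \in S_n^r$ and then invokes Theorem \ref{thm4} and the ``if'' part of the reduction lemma.

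The main obstacle is the ``only if'' direction of the reduction lemma and the parallel combinatorial construction of $\hat K$ for sufficiency: geometrically, showing that a transverse Schubert intersection of strictly positive dimension can be further intersected with a generic translate of a suitable smaller Schubert sub-variety of complementary codimension without becoming empty. This genericity/dimension argument inside a Schubert intersection is the geometric heart of the proof; the rest is organized bookkeeping with the inductive hypothesis.
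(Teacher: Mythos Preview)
The paper does not give its own proof of this theorem; it is stated and attributed to Belkale [B$_3$, Theorem 0.1], so there is no in-paper proof to compare against. I will comment on the viability of your plan and on how it relates to Belkale's original argument.

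Your reduction lemma is correct, but you overstate the difficulty of its ``only if'' direction. In the Grassmannian one has the standard fact that $[X_L]\cdot[X_K]\neq 0$ if and only if $L'\le K$ componentwise (equivalently, the opposite Schubert variety $w_0 X_L$ meets $X_K$, which is the Richardson nonemptiness criterion). Combined with nonnegativity of Littlewood--Richardson coefficients, this gives the lemma immediately: writing $[X_{I'}][X_{J'}]=\sum_L c_L[X_L]$ and picking $L$ with $c_L>0$ and $[X_L][X_K]\neq 0$, set $\hat K=L'$; then $\hat K\le K$ and $[X_{I'}][X_{J'}][X_{\hat K}]=c_L[pt]$ by Poincar\'e duality. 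The same step is used (without comment) in the paper's proof of Theorem \ref{thm1}. Your necessity argument for condition (b), pulling an $(F,G,H)\in\hat S^p_r$ back to $(F,G,\hat H)\in S^p_r$ and using the double monotonicity $\hat k_{\hat h_a}\le k_{h_a}$, is correct.

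The genuine gap is the sufficiency direction. You must show that $(I,J,K)\in\hat S^r_n$ forces the existence of $\hat K\le K$ of the correct size with $(I,J,\hat K)\in S^r_n$; your sentence ``one uses $(a')$ and $(b)$ together with the inductive step to construct a suitable $\hat K$'' does not indicate any mechanism for producing $\hat K$. I do not see a direct combinatorial construction, and this is exactly where the real content lies. Belkale's proof in [B$_3$] does not go through such a construction. Instead, he characterizes the nonemptiness of $g_1X_{I'}\cap g_2X_{J'}\cap g_3X_K$ for generic flags directly: at a point of the intersection he analyzes the induced configuration of subspaces and reduces (via the tangent space) to intersection problems on smaller Grassmannians, obtaining a recursive criterion that unwinds precisely to the $\hat S^r_n$ conditions. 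This geometric recursion handles both directions simultaneously and bypasses the need to exhibit $\hat K$ explicitly. If you want to push your route through, one option is to show that the $\hat S^r_n$ conditions are exactly the inequalities of Theorem \ref{thm2} applied to the $\SL(r)$-weights $(\lambda_I,\lambda_J,\lambda_{K'})$ (using the inductive hypothesis to translate ``$[X_{F'}][X_{G'}][X_H]\neq 0$'' into ``$(F,G,H)\in\hat S^p_r$''), conclude that these weights lie in $\Gamma_3(\SL(r))$, and then invoke saturation; but the translation between the inequality in $(b)$ and the eigencone inequality requires care since $|\lambda_I|+|\lambda_J|+|\lambda_{K'}|$ need no longer equal $r(n-r)$.
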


\begin{remark}   The Hermitian eigencone  $\bar{\Gamma}(n)$ has extensively been studied
 since the initial work of H. Weyl in $1912$ [W] followed by the works of
 Fan [Fa], Lidskii [Li], Wielandt [Wi] and culminating into the conjecture of
 Horn [Ho] and then its proof by combining the works of Klyachko [Kly] and
 Knutson-Tao [KT]. (Also see Thompson-Freede [TF].) For a detailed survey on the subject, we
 refer to Fulton's article [F$_2$].
\end{remark}

\section{Deformed product}\label{section6}

This section is based on the  work [BK$_1$] due to Belkale-Kumar.

We continue to
follow the notation
and assumptions from Secton \ref{sec1}; in particular,
 $G$ is a  semisimple  connected complex
algebraic group and $P\subset G$ is a standard parabolic subgroup.

Consider the shifted Bruhat cell:
\[
\Phi^P_w := w^{-1} BwP \subset G/P.
  \]
Let $T^P=T(G/P)_e$ be the tangent space of $G/P$ at $e\in G/P$. It carries a
canonical  action of $P$.  For $w\in W^P$, define $T_w^P$ to be the tangent space
of $\Phi_w^P$ at $e$. We shall abbreviate $T^P$ and $T_w^P$ by $T$ and $T_w$ respectively
when the reference to $P$ is clear. By \eqref{eqn1}, $B_L$ stabilizes $\Phi^P_w$ keeping $e$
fixed. Thus,
  \begin{equation}\label{eqn2}  B_L T_w \subset T_w.
\end{equation}
The following result follows easily from the Kleiman transversality theorem Theorem
\ref{kleiman} and
Proposition \ref{fultonproper} by observing that
$g \Phi^P_w$ passes through $e\Leftrightarrow
g\Phi^P_w = p\Phi^P_w$ for some $p\in P$.
  \begin{proposition}\label{Firstnew}
Take any $(w_1, \dots, w_s)\in (W^P)^s$  such that
\begin{equation}\label{expected}
\sum_{j=1}^s\,\codim \Phi_{w_j}^P
 \leq \dim G/P.
 \end{equation}
Then, the following three conditions are equivalent:

\begin{enumerate}
\item[(a)] $[X^P_{w_1}] \dots [X^P_{w_s}] \neq 0 \in H^*(G/P)$.
\item[(b)] For general $(p_1, \dots, p_s)\in P^s$, the intersection
 $p_1\Phi^P_{w_1}\cap \cdots \cap p_s\Phi^P_{w_s}$
 is transverse at $e$.
\item[(c)] For general $(p_1,\dots,p_s)\in P^s$,
 \[\dim(p_1T_{w_1} \cap \cdots \cap p_sT_{w_s}) = \dim G/P -\sum_{j=1}^s\,\codim \Phi_{w_j}^P .\]
\end{enumerate}
 The set of $s$-tuples in (b) as well as (c) is an open subset of $P^s$.
 \end{proposition}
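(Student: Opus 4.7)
The plan is to establish $(a)\Leftrightarrow(b)\Leftrightarrow(c)$ by combining Kleiman's transversality theorem, Proposition \ref{fultonproper}, and an incidence-variety argument, preceded by an elementary \emph{Key Lemma}: for $g\in G$ and $w\in W^P$, the translate $g\Phi^P_w$ passes through $e$ iff $g\Phi^P_w=p\Phi^P_w$ for some $p\in P$. Indeed, $e\in g(w^{-1}BwP/P)$ forces $g^{-1}\in w^{-1}BwP$, so $g=p(w^{-1}b^{-1}w)$ with $p\in P,\ b\in B$, and such a $g$ acts on $\Phi^P_w$ exactly as its $P$-component does.

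Since $P$ fixes $e$ and thus acts linearly on $T=T_e(G/P)$ via its differential at $e$, one has $T_e(p_j\Phi^P_{w_j})=p_jT_{w_j}$. This makes $(b)\Leftrightarrow(c)$ immediate from the definition of transversality at a common point (here $e$). Openness of either condition as a function of $(p_1,\dots,p_s)\in P^s$ follows from upper semicontinuity of $\dim\bigcap p_jT_{w_j}$ together with the universal lower bound $\dim\bigcap p_jT_{w_j}\geq\dim G/P-\sum\codim\Phi^P_{w_j}$.

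For $(a)\Rightarrow(b)$: Theorem \ref{kleiman}, applied to the transitive $G$-action on $G/P$ and the smooth locally closed subvarieties $\Phi^P_{w_j}$, yields a nonempty Zariski open $V\subset G^s$ on which $\bigcap g_j\Phi^P_{w_j}$ is transverse and dense in $\bigcap g_j\overline{\Phi^P_{w_j}}$; by (a) combined with Proposition \ref{fultonproper} (after relabelling $g_j\mapsto g_jw_j^{-1}$), the latter intersection is nonempty. Picking $xP$ in it and translating by $x^{-1}$ yields a transverse intersection through $e$, which the Key Lemma realizes as $\bigcap p_j\Phi^P_{w_j}$ for suitable $p_j\in P$. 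Openness then extends (b) to a Zariski open subset of $P^s$.

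The main obstacle is $(b)\Rightarrow(a)$: turning local transversality at the single point $e$ into global non-vanishing of the cup product. Here I would introduce the incidence variety
\[
I:=\{(g_1,\ldots,g_s,x)\in G^s\times G/P : x\in \bigcap_{j} g_j\Phi^P_{w_j}\},
\]
which, via the $G$-equivariant projection $\pi_2:I\to G/P$ for the diagonal $G$-action, is an irreducible fiber bundle of dimension $\dim G/P+s\dim G-N$, where $N:=\sum\codim\Phi^P_{w_j}$. Under (b), $I$ is smooth at $(p_j,e)$ of this expected top dimension. A direct tangent-space computation identifies the kernel of $d\pi_1$ at $(p_j,e)$ with $\bigcap_j p_jT_{w_j}$ (the condition $(0,\dot x)\in T_{(p_j,e)}I$ reduces to $p_j^{-1}\dot x\in T_{w_j}$ for all $j$), of dimension $\dim G/P-N$ by (c). Thus $d\pi_1$ is surjective at $(p_j,e)$, so $\pi_1$ is smooth — in particular open — there, and its image contains a Zariski open neighborhood of $(p_j)$ in $G^s$. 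Intersecting this with the Kleiman locus $V$ produces some $(g_j)$ for which $\bigcap g_j\overline{\Phi^P_{w_j}}$ is proper and nonempty, and Proposition \ref{fultonproper} (after the same relabelling) delivers $[X^P_{w_1}]\cdots[X^P_{w_s}]\neq 0$, i.e., (a).
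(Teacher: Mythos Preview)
Your proof is correct and uses exactly the same ingredients the paper invokes: Kleiman's transversality theorem, Proposition~\ref{fultonproper}, and the Key Lemma $g\Phi^P_w\ni e\Leftrightarrow g\Phi^P_w=p\Phi^P_w$ for some $p\in P$. The paper gives only a one-sentence sketch, so your write-up is a faithful (and considerably more detailed) elaboration of that sketch; in particular, your incidence-variety computation of $\ker d\pi_1=\bigcap_j p_jT_{w_j}$ is the natural way to make precise the passage $(b)\Rightarrow(a)$ that the paper leaves implicit.
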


\begin{definition}\label{D1}
Let $w_1, \dots, w_s\in W^P$ be such that
  \begin{equation}\label{First}
\sum_{j=1}^s\codim \Phi^P_{w_j}  = \dim G/P.
  \end{equation}
We then call the $s$-tuple $(w_1, \dots, w_s)$ {\it Levi-movable} for short $L$-{\it movable}
if, for general $(l_1, \dots, l_s)\in L^s$, the intersection
$l_1\Phi^P_{w_1}\cap \cdots \cap l_s\Phi^P_{w_s}$ is transverse at $e$.
\end{definition}

By Proposition ~\ref{Firstnew}, if $(w_1, \dots, w_s)$ is $L$-movable,
 then
$[X^P_{w_1}]\dots  [X^P_{w_s}] = d[X^P_{e}]$ in  $H^*(G/P)$,
for some nonzero $d$.

\begin{definition} \label{basicbundle} Let $w\in W^P$.  Since $T_w$ is a $B_L$-module
 (by \eqref{eqn2}), we have the  $P$-equivariant vector bundle $\mathcal{T}_w:=P \tset{B_L}
T_w$ on $P/B_L$. In particular, we have the $P$-equivariant
 vector bundle $\mathcal{T}:=P \tset{B_L} T$ and  $\mathcal{T}_w$ is canonically
 a $P$-equivariant subbundle of $\mathcal{T}$.
Take the top exterior powers $\det (\mt/\mathcal{T}_w)$
and $\det (\mathcal{T}_w)$, which are  $P$-equivariant
 line bundles on $P/B_L$. Observe that, since $T$ is a $P$-module, the
$P$-equivariant  vector bundle $\mathcal{T}$ is $P$-equivariantly isomorphic with the product bundle  $P/B_L \times T$ under the map $\xi: P/B_L \times T
\to \mathcal{T}$ taking $(pB_L,v)\mapsto [p, p^{-1}v]$, for
$p\in P$ and $v\in T$; where $P$ acts on $P/B_L \times T$ diagonally. We will
often identify  $ \mathcal{T}$  with the product bundle  $P/B_L \times T$ under  $\xi$.

For $w\in W^P$, define the character $\chi_w\in\Lambda$ by
\begin{equation}\label{eqn5new} \chi_w=\sum_{\beta\in (R^+\setminus R^+_\fl)\cap w^{-1}R^{+}} \beta \,.
\end{equation}
 Then, from [K$_1$, 1.3.22.3] and \eqref{eqn1},
\begin{equation}\label{eqn5}
\chi_w = \rho -2\rho^L + w^{-1}\rho ,
\end{equation}
where $\rho$ (resp. $\rho^L$) is half the sum of roots in $R^+$ (resp. in
$ R^+_\fl$).
\end{definition}

The following lemma is easy to establish.

\begin{lemma}\label{mt}
For $w\in W^P$,  as $P$-equivariant
 line bundles  on $P/B_L$, we have:
$\det (\mt/\mathcal{T}_w)=\cl_P (\chi_w).$
\end{lemma}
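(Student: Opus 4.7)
The plan is to compute the character by which $B_L$ acts on the fiber of $\det(\mathcal{T}/\mathcal{T}_w)$ at the base point $eB_L \in P/B_L$, and then invoke the standard fact that a $P$-equivariant line bundle on $P/B_L$ of the form $P\times_{B_L}\mathbb{C}_\mu$ is isomorphic to $\mathcal{L}_P(-\mu)$ (by the sign convention in the definition of $\mathcal{L}_P$). Since $\det(\mathcal{T}/\mathcal{T}_w)$ is manifestly the bundle $P\times_{B_L} \det(T/T_w)$, the problem reduces to showing that $B_L$ acts on the one-dimensional space $\det(T/T_w)$ by the character $-\chi_w$.

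To carry this out, first identify $T=T(G/P)_e$ with $\fg/\fp$ as a $P$-module; as an $H$-module its set of weights is $-(R^+\setminus R^+_\fl)$, each with multiplicity one. Next, since $\Phi^P_w = w^{-1}BwP/P$ is the $w^{-1}Bw$-orbit through $e$, the tangent space $T_w$ is the image of $\operatorname{Lie}(w^{-1}Bw)=w^{-1}\fb w$ in $\fg/\fp$; thus the weights of $T_w$ are precisely those $w^{-1}\alpha$ with $\alpha\in R^+$ such that $w^{-1}\alpha\in R^-\setminus R^-_\fl$. Because $w\in W^P$ one has $w^{-1}R^+_\fl\subset R^+$, equivalently $w^{-1}R^+\cap R^-_\fl = \emptyset$, so this set of weights is simply $w^{-1}R^+\cap R^-$, i.e. $\{-\beta:\beta\in R^+,\ w\beta\in R^-\}$.

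Subtracting weights of $T_w$ from those of $T$, the weights of $T/T_w$ are
\[
\{-\beta:\beta\in R^+\setminus R^+_\fl,\ w\beta\in R^+\}
\;=\;-\bigl((R^+\setminus R^+_\fl)\cap w^{-1}R^+\bigr).
\]
Summing, $B_L$ acts on $\det(T/T_w)$ by the character $-\chi_w$, where $\chi_w$ is exactly as in \eqref{eqn5new}. Consequently
\[
\det(\mathcal{T}/\mathcal{T}_w) \;=\; P\times_{B_L}\det(T/T_w) \;=\; P\times_{B_L}\mathbb{C}_{-\chi_w} \;=\; \mathcal{L}_P(\chi_w),
\]
as $P$-equivariant line bundles, completing the argument.

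The only real issue is careful bookkeeping: verifying the description of $T_w$ as the image of $w^{-1}\fb w$ in $\fg/\fp$, and using the characterization $w\in W^P$ to eliminate the spurious contributions from $R^+_\fl$. Once those two points are settled, the rest is a weight count and the translation between $B_L$-characters and the line bundles $\mathcal{L}_P(\cdot)$.
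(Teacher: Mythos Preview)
Your argument is essentially correct and is the natural computation the paper has in mind (the paper itself gives no proof, only remarking that the lemma ``is easy to establish''). There is, however, one slip in your bookkeeping that you should fix.

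You write: ``Because $w\in W^P$ one has $w^{-1}R^+_\fl\subset R^+$, equivalently $w^{-1}R^+\cap R^-_\fl = \emptyset$.'' These two statements are \emph{not} equivalent, and the first one is in general false for $w\in W^P$. For instance, in $\SL(3)$ with $\Delta(P)=\{\alpha_1\}$ and $w=s_1s_2\in W^P$, one has $w^{-1}\alpha_1=s_2s_1\alpha_1=-(\alpha_1+\alpha_2)<0$. What is true for $w\in W^P$ is $wR^+_\fl\subset R^+$ (since $\ell(ws_\alpha)>\ell(w)$ for $\alpha\in\Delta(P)$), and \emph{that} is what is equivalent to $w^{-1}R^+\cap R^-_\fl=\emptyset$. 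This is precisely the statement you need to pass from $w^{-1}R^+\cap(R^-\setminus R^-_\fl)$ to $w^{-1}R^+\cap R^-$, and it also guarantees that the weights of $T_w$ are contained in those of $T$. Once you replace the misstated claim by $wR^+_\fl\subset R^+$, the rest of your weight count goes through verbatim and yields $\det(T/T_w)=\mathbb{C}_{-\chi_w}$, hence $\det(\mathcal{T}/\mathcal{T}_w)=\mathcal{L}_P(\chi_w)$ as claimed.
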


 Let $\mt_s$ be the $P$-equivariant product  bundle
$(P/B_L)^s \times T \to (P/B_L)^s$ under the diagonal action of $P$ on
$(P/B_L)^s \times T $. Then, $\mt_s$ is canonically  $P$-equivariantly
 isomorphic with the pull-back bundle $\pi_j^*(\mathcal{T})$, for any
$1\leq j\leq s$, where   $\pi_j: (P/B_L)^s  \to P/B_L$ is the projection onto the $j$-th factor.  For any  $w_1, \dots, w_s \in W^P$,
 we have a  $P$-equivariant map of vector bundles on $(P/B_L)^s$:
 \begin{equation}\label{map}
\Theta=\Theta_{(w_1, \dots, w_s)
}:\mt_s \to \oplus_{j=1}^s
\pi_j^*({\mt}/\mathcal{T}_{w_j})
\end{equation}
obtained as the direct sum of the projections $\mt_s \to
\pi_j^*({\mt}/\mathcal{T}_{w_j})$ under the identification $\mt_s\simeq
\pi_j^*(\mathcal{T})$.
Now, assume that  $w_1, \dots, w_s \in W^P$ satisfies the condition
\eqref{First}. In this case,
 we have
the same rank bundles on the two sides of the map (\ref{map}). Let $\theta$ be the
 bundle map obtained from $\Theta$ by taking the top exterior power:
 \begin{equation}\label{determinant}
\theta=\det(\Theta): \det \bigl(\mt_s\bigr) \to \det \bigl(
\mt/\mathcal{T}_{w_1}\bigr) \boxtimes \cdots \boxtimes
\det\bigl(\mt/\mathcal{T}_{w_s}\bigr).
  \end{equation}
Clearly, $\theta$
is  $P$-equivariant
 and hence one can view $\theta$ as a  $P$-invariant
element in $$H^0\Biggl( (P/B_L)^{s}, \det (\mt_s )^*\otimes
\Bigl(\det \bigl(\mt/\mt_{w_1})\boxtimes \cdots \boxtimes \det
\bigl(\mt/\mt_{w_s}\bigr) \Bigr)\Biggr)$$
\begin{equation}\label{GIT}
=H^0\bigl( (P/B_L)^{s}, \elal\bigr), \,\,\text{where}\,\,
\elal:=\cl_P (\chi_{w_1}-\chi_1) \boxtimes
\cdots \boxtimes \cl_P
(\chi_{w_s}).
\end{equation}

The following lemma follows easily from Proposition \ref{Firstnew}.
\begin{lemma}\label{movL} Let $(w_1, \dots, w_s)$ be an $s$-tuple of elements
of $W^P$ satisfying the condition \eqref{First}. Then, we have the following:

(a) The section $\theta$ is nonzero if and only if
$[X_{w_1}^P]\dots [X_{w_s}^P]\neq 0\in H^*(G/P).$

(b) The $s$-tuple $(w_1, \dots, w_s)$ is $L$-movable if and only if the section
 $\theta$ restricted to $(L/B_L)^s$ is not identically $0$.
\end{lemma}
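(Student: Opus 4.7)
The plan is to evaluate the bundle map $\Theta$ fiber-by-fiber, recognize it as a direct sum of quotient maps on the fixed vector space $T$, and thereby convert the (non)vanishing of $\theta=\det(\Theta)$ into the transversality condition that appears in Proposition \ref{Firstnew} and Definition \ref{D1}.

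First I would exploit the $P$-equivariant trivialization $\xi\colon P/B_L \times T \simeq \mt$ described just after Definition \ref{basicbundle}. Under this trivialization, the fiber of $\mt_{w_j}$ over the point $p_jB_L$ is carried to the subspace $p_jT_{w_j}\subseteq T$. Consequently, at the point $(p_1B_L,\dots,p_sB_L)\in (P/B_L)^s$, the map $\Theta$ becomes the direct sum of natural projections
\[
T \longrightarrow \bigoplus_{j=1}^s T/(p_jT_{w_j}),
\]
whose source and target have equal dimensions thanks to condition \eqref{First}. Therefore $\theta(p_1B_L,\dots,p_sB_L)\neq 0$ if and only if this map is an isomorphism, if and only if $\bigcap_{j=1}^s p_jT_{w_j}=0$, if and only if the intersection $p_1\Phi^P_{w_1}\cap\cdots\cap p_s\Phi^P_{w_s}$ is transverse at $e$.

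For part (a), since $(P/B_L)^s$ is irreducible, $\theta$ is a nonzero section precisely when its nonvanishing locus is nonempty (and hence open and dense). By the previous step this says that transversality at $e$ holds for some, equivalently generic, $(p_1,\dots,p_s)\in P^s$. The equivalence (a)$\Leftrightarrow$(b) of Proposition \ref{Firstnew}, which applies since \eqref{First} is the equality case of \eqref{expected}, then identifies this with $[X^P_{w_1}]\cdots [X^P_{w_s}]\neq 0$ in $H^*(G/P)$. For part (b), the restriction $\theta|_{(L/B_L)^s}$ is not identically zero exactly when $\bigcap_j l_jT_{w_j}=0$ for some $(l_1,\dots,l_s)\in L^s$; since $L$ is connected and transversality is an open condition, the existence of one such tuple forces the condition on a dense open subset of $L^s$, which by Definition \ref{D1} is precisely $L$-movability.

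The only subtle point is the fiber-level identification of $\Theta$ with the direct-sum-of-projections map; this is a routine unwinding of the trivialization $\xi$ together with the construction of $\Theta$ in \eqref{map}, and everything afterwards is a formal consequence of the equidimensionality guaranteed by \eqref{First} combined with Proposition \ref{Firstnew}.
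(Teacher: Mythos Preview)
Your proof is correct and is exactly the argument the paper has in mind: the paper merely states that the lemma ``follows easily from Proposition~\ref{Firstnew}'', and what you have written is a careful unpacking of that claim via the trivialization $\xi$. The fiberwise identification of $\Theta$ with the direct sum of projections $T\to\bigoplus_j T/(p_jT_{w_j})$, together with the equidimensionality from \eqref{First}, is precisely the link between $\det\Theta\neq 0$ and the transversality condition in Proposition~\ref{Firstnew}(b) (respectively Definition~\ref{D1}).
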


\begin{proposition}\label{T1}
Assume that  $(w_1, \dots, w_s)\in (W^P)^s$  satisfies equation \eqref{First}.
Then, the following are equivalent.

(a) $(w_1, \dots, w_s)$ is $L$-movable.

(b) $[X^P_{w_1}]\dots  [X^P_{w_s}] = d[X^P_{e}]$ in  $H^*(G/P)$, for some nonzero $d$,
 and for each  $\alpha_i\in \Delta\setminus\Delta(P)$, we have
$$\bigl((\sum_{j=1}^s\,\chi_{w_j})-\chi_1\bigr)(x_{i})=0.$$
\end{proposition}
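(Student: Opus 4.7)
My plan is to deduce the equivalence from Lemma \ref{movL} by analyzing the $P$-invariant section $\theta \in H^0((P/B_L)^s, \elal)^P$ under a well-chosen one-parameter subgroup of the center of $L$. The key element is $x := \sum_{\alpha_i \in \Delta \setminus \Delta(P)} x_i \in \fh_+$; since $\alpha_j(x_i) = 0$ for all $\alpha_j \in \Delta(P)$, each $x_i$ (and $x$) lies in the center $Z(\fl)$. Let $\delta_i, \delta \in O(H)$ be the OPS with $\dot\delta_i = x_i$ and $\dot\delta = x$. A preliminary geometric computation: for $p = ul \in P$ with $u \in U_P$ and $l \in L$, we have $\delta(t) \cdot (pB_L) = (\delta(t) u \delta(t)^{-1}) \cdot (l B_L)$, using that $\delta(t)$ is central in $L$ and lies in $B_L$. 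Every root $\beta$ of $\fu_P$ satisfies $\beta(x) \geq 1$, so $\delta(t) u \delta(t)^{-1} \to e$ as $t \to 0$, giving $\lim_{t \to 0} \delta(t) \cdot (pB_L) = l B_L \in L/B_L$. Moreover every point of $L/B_L$ is fixed by each $\delta_i$.

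Next I would carry out the Mumford index computation. Because $\delta_i$ is central in $L$, Kempf's parabolic satisfies $P_L(\delta_i) = L$, so the coset representative $w \in W_P/W_{P_L(\delta_i)}$ appearing in Lemma \ref{l1} is trivial. Combined with Proposition \ref{propn14}(a,d) applied to the projections $\pi_j: (P/B_L)^s \to P/B_L$ and the description \eqref{GIT}, this yields the point-independent formula
\begin{equation*}
\mu^{\elal}(y, \delta_i) = -\Bigl(\sum_{j=1}^s \chi_{w_j} - \chi_1\Bigr)(x_i), \qquad \forall y \in (P/B_L)^s,
\end{equation*}
and the analogous formula with $x$ in place of $x_i$.

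For $(a) \Rightarrow (b)$: $L$-movability forces $[X^P_{w_1}] \cdots [X^P_{w_s}] \neq 0$ by Proposition \ref{Firstnew} applied to $L^s \subset P^s$, and the dimension condition \eqref{First} forces the product to equal $d[X^P_e]$ for some $d \neq 0$. By Lemma \ref{movL}(b), pick $y \in (L/B_L)^s$ with $\theta(y) \neq 0$. This $y$ is $\delta_i$-fixed, so $\delta_i$ acts on the fiber $\elal_y$ by the character $\mu^{\elal}(y, \delta_i)$; since $\theta$ is $P$-invariant hence $\delta_i$-invariant and does not vanish at $y$, this character must be $0$. Combined with the formula above, this gives $(\sum_j \chi_{w_j} - \chi_1)(x_i) = 0$ for every $\alpha_i \in \Delta \setminus \Delta(P)$.

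For $(b) \Rightarrow (a)$: by Lemma \ref{movL}(a), $\theta \neq 0$, so choose $y_0 \in (P/B_L)^s$ with $\theta(y_0) \neq 0$. The character conditions in $(b)$ sum to give $(\sum_j \chi_{w_j} - \chi_1)(x) = 0$, hence $\mu^{\elal}(y_0, \delta) = 0$. Applying Proposition \ref{propn14}(c) to the $\delta$-invariant section $\theta$, it follows that $\theta$ does not vanish at $y_1 := \lim_{t \to 0} \delta(t) \cdot y_0$. By the first paragraph, $y_1 \in (L/B_L)^s$, so $\theta|_{(L/B_L)^s} \not\equiv 0$, and $L$-movability follows from Lemma \ref{movL}(b). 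The main technical point to get right is the Mumford-index computation: everything hinges on the fact that $x_i \in Z(\fl)$, which both simplifies Lemma \ref{l1} (trivial coset representative, and index independent of the base point) and ensures the $\delta$-limit lands inside $(L/B_L)^s$.
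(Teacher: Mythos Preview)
Your proof is correct and follows essentially the same route as the paper: both directions hinge on Lemma \ref{movL}, the central one-parameter subgroup $\delta$ with $\dot\delta=\sum_{\alpha_i\in\Delta\setminus\Delta(P)} x_i$, the Mumford-index computation via Lemma \ref{l1}, and Proposition \ref{propn14}(c) to push the nonvanishing point of $\theta$ into $(L/B_L)^s$. The only cosmetic difference is in $(a)\Rightarrow(b)$: the paper phrases the vanishing of $(\sum_j\chi_{w_j}-\chi_1)(x_i)$ as ``the center of $L$ must act trivially on $\elal|_{(L/B_L)^s}$ for a nonzero $L$-invariant section to exist,'' whereas you extract it from the Mumford index at a $\delta_i$-fixed point---these are the same observation.
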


\begin{proof} (a)$\Rightarrow$(b):
Let $(w_1,\dots, w_s)\in (W^P)^s$ be $L$-movable. Consider the restriction
$\hat{\theta}$   of the $P$-invariant section
$\theta$
to $(L/B_L)^s$. Then, $\hat{\theta}$  is non-vanishing by the above lemma. But, for
$$H^0\left( (L/B_L)^{s},  \elal\right)^L$$
 to be nonzero, the center of $L$ should act trivially (under the diagonal action)
on  $\elal$
restricted to $(L/B_L)^s$, where $\elal$ is as in the identity \eqref{GIT}. This gives
$\sum_{j=1}^s\chi_{w_j}(h)=\chi_1(h),$
for all $h$ in the Lie algebra $\mathfrak z_L$ of the center of $L$; in particular,
for $h=x_i$ for $\al_i\in \Delta\setminus\Delta(P)$. Further, the assertion that
$[X^P_{w_1}]\dots  [X^P_{w_s}] = d[X^P_{e}]$, for some nonzero $d$, follows from
Proposition \ref{Firstnew} and the condition \eqref{First}.

(b)$\Rightarrow$(a): By the above lemma,  $\theta(\bar{p}_1, \dots, \bar{p}_s)\neq 0$,
for some $\bar{p}_j\in P/B_L$. Consider the central OPS of $L$:
$\delta (t):=\prod_{\alpha_i\in\Delta\setminus\Delta(P)} t^{x_i}.$
 For any $x=ulB_L\in P/B_L$, with $u\in U_P$ and $l\in L_P$,
$$\lim_{t\to 0}\delta(t)x=\lim_{t\to 0}\delta(t)u\delta(t)^{-1}
(\delta(t)l) B_L.$$
But, since $\beta (\dot\delta) >0$, for all $\beta \in R^+\setminus R^+_\mathfrak l$, we get
$\lim_{t\to 0}\delta(t)u\delta(t)^{-1}=1.$
Moreover, since  $\delta(t)$ is central in $L$, $\delta(t)l B_L=l B_L$; in
particular,
  the limit
$\lim_{t\to 0}\delta(t)lB_L$ exists and equals $lB_L$. Thus,
$\lim_{t\to 0}\delta(t)x$ exists and lies in $L/B_L$.

Let  $\bar{p}:=
(\bar{p}_1,\dots,\bar{p}_s)\in \exx :=(P/B_L)^s$. Then, by Lemma
\ref{l1} (since $\delta$ is central in $L$), we get
$$\begin{aligned}
\mu^{\elal}(\bar{p},\delta)
&=-\sum_{\alpha_i\in\Delta\setminus\Delta(P)}\bigl(
\bigl(\bigl(\sum_{j=1}^s\chi_{w_j}\bigr)-\chi_1\bigr)(x_i)\bigr)\\
&=0,\,\,\text{by assumption}.
\end{aligned}$$
Therefore, using Proposition \ref{propn14}(c) for $S=P$,   $\theta$ does not vanish at
$\lim_{t\to 0}\delta(t)\bar{p}.$
But, from the above, this limit exists as an element of $(L/B_L)^s$.
Hence, $(w_1, \dots, w_s)$ is $L$-movable by Lemma ~\ref{movL}.
\end{proof}

\begin{corollary}\label{product}
For any $u,v,w\in W^P$ such that $c^w_{u,v}\neq 0$ (cf. equation (\ref{constants})),  we have
\begin{equation}\label{eqn18}
 (\chi_w-\chi_u-\chi_v)(x_i)\geq 0,\,\,\text{ for each}\,\, \alpha_i\in \Delta\setminus\Delta(P).
\end{equation}

  \end{corollary}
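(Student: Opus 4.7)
The plan is to reduce to the setting of Proposition \ref{T1}: starting from the hypothesis $c^w_{u,v}\neq 0$ I will produce a triple in $(W^P)^3$ for which the $P$-invariant section $\theta$ of \eqref{determinant}--\eqref{GIT} is not identically zero, and then extract the stated inequality by applying Proposition \ref{propn14}(b) to a central one-parameter subgroup of $L$ whose Mumford index can be computed explicitly via Lemma \ref{l1}. The bridge from the hypothesis on $c^w_{u,v}$ to non-vanishing of $\theta$ is Poincar\'e duality on the Schubert basis.

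Consider the triple $(u,v,w_{o}ww_{o}^{P})\in (W^P)^3$. Matching cohomological degrees in $[X^P_u]\cdot [X^P_v]=\sum_{w'}c^{w'}_{u,v}[X^P_{w'}]$ forces $\ell(w')=\ell(u)+\ell(v)-\dim G/P$ whenever $c^{w'}_{u,v}\neq 0$, so the three codimensions $\codim\Phi^P_u$, $\codim\Phi^P_v$, $\codim\Phi^P_{w_{o}ww_{o}^{P}}$ sum to $\dim G/P$ and equation \eqref{First} holds for this triple. Using the identity \eqref{eqn1n} and the fact that $\{[X^P_{w_{o}w'w_{o}^{P}}]\}_{w'\in W^P}$ is the cup-product dual basis of $\{[X^P_{w'}]\}_{w'\in W^P}$,
$$[X^P_u]\cdot [X^P_v]\cdot [X^P_{w_{o}ww_{o}^{P}}]=\sum_{w'} c^{w'}_{u,v}\,[X^P_{w'}]\cdot [X^P_{w_{o}ww_{o}^{P}}]=c^w_{u,v}\,[X^P_e]\neq 0.$$
By Lemma \ref{movL}(a), the $P$-invariant section $\theta\in H^0((P/B_L)^3,\elal)^P$ attached to this triple by \eqref{determinant}--\eqref{GIT} is not identically zero; fix any $\bar{p}\in (P/B_L)^3$ with $\theta(\bar{p})\neq 0$.

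For each $\al_i\in\Delta\setminus\Delta(P)$, consider the one-parameter subgroup $\delta_i(t):=t^{x_i}$ of $H$. Since $\al_j(x_i)=\delta_{ij}\geq 0$ we have $x_i\in\fh_+$, so $\delta_i$ is $P$-admissible by Lemma \ref{l1}; moreover $\al_j(x_i)=0$ for $\al_j\in\Delta(P)$, so $x_i$ lies in the center of $\fl$, whence $P_L(\delta_i)=L$ and the coset representative in the formula of Lemma \ref{l1} collapses to the identity. Applying that lemma to each tensor factor of $\elal=\cl_P(\chi_u-\chi_1)\boxtimes \cl_P(\chi_v)\boxtimes \cl_P(\chi_{w_{o}ww_{o}^{P}})$ from \eqref{GIT} yields
$$\mu^{\elal}(\bar{p},\delta_i)=-\bigl(\chi_u+\chi_v+\chi_{w_{o}ww_{o}^{P}}-\chi_1\bigr)(x_i),$$
and Proposition \ref{propn14}(b) applied to the $P$-invariant section $\theta$ forces $\mu^{\elal}(\bar{p},\delta_i)\geq 0$.

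It remains to rewrite this inequality in the desired form. Using \eqref{eqn5} together with $\rho^L(x_i)=0$ and $w_{o}^{P}x_i=x_i$ (both consequences of $x_i$ lying in the center of $\fl$, since the roots of $\fl$ vanish on it and $W_L$ fixes it pointwise), a short computation gives $\chi_{w_{o}ww_{o}^{P}}(x_i)=\rho(x_i)-(w^{-1}\rho)(x_i)$; substituting and simplifying reduces $\mu^{\elal}(\bar{p},\delta_i)\geq 0$ to $(w^{-1}\rho-u^{-1}\rho-v^{-1}\rho-\rho)(x_i)\geq 0$, which is exactly the stated inequality $(\chi_w-\chi_u-\chi_v)(x_i)\geq 0$ (again using $\rho^L(x_i)=0$). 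The main point requiring care is bookkeeping: correctly tracking the twist by $-\chi_1$ coming from the trivialization of $\det\mt_s$ in \eqref{determinant}, and verifying the Poincar\'e-duality identification so that the triple cup product collapses cleanly to $c^w_{u,v}[X^P_e]$; once these are in place, the inequality is an immediate consequence of Proposition \ref{propn14}(b).
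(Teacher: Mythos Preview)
Your proof is correct and follows essentially the same approach as the paper's own proof: you form the triple $(u,v,w_oww_o^P)$, use Poincar\'e duality via \eqref{eqn1n} to see that the triple cup product equals $c^w_{u,v}[X^P_e]\neq 0$, invoke Lemma~\ref{movL}(a) to get $\theta\not\equiv 0$, and then apply Proposition~\ref{propn14}(b) with the OPS $\delta_i(t)=t^{x_i}$ together with Lemma~\ref{l1} and the identity \eqref{eqn5}. Your write-up simply unpacks in more detail the computations that the paper leaves to the reader (the verification of \eqref{First}, the collapse of the coset representative because $x_i$ is central in $\fl$, and the identity $\chi_{w_oww_o^P}(x_i)=\rho(x_i)-(w^{-1}\rho)(x_i)$ obtained from $w_o\rho=-\rho$ and $w_o^P x_i=x_i$).
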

  \begin{proof}
 By the assumption of the corollary and the identity \eqref{eqn1n},
$[X^P_u]\cdot[X^P_v]\cdot[X^P_{w_oww_o^P}]=d[X^P_e],
$ for some nonzero $d$ (in fact $d=c^w_{u,v}$). Thus, by taking $(w_1,w_2,w_3)=(u,v,w_oww_o^P)$
in Lemma  ~\ref{movL}, the section
$\theta$ is nonzero. Now, apply Proposition ~\ref{propn14}(b) for the OPS
$\delta(t)=t^{x_i}$ and Lemma ~\ref{l1} (together with the identity (\ref{eqn5}))
 to get the corollary.
  \end{proof}

The definition of the following deformed product $\odot_0$ (now known as the
{\it Belkale-Kumar product}) was arrived at from the
crucial concept of Levi-movability as in Definition \ref{D1}.
 This deformed product is used in determining
 the facets (codimension $1$ faces)  of  $\bar{\Gamma}_s
 (\frg)$.

 \begin{definition}
Let $P$ be any standard parabolic subgroup of $G$.
Write the standard cup product in $H^*(G/P, \Bbb Z)$ in the $\{[X_w^P]\}_{w\in W^P}$ basis as follows:
\begin{equation}\label{constants}
[X^P_u]\cdot[X^P_v]=\sum_{w\in W^P} c^w_{u,v}[X^P_w].
\end{equation}
Introduce the indeterminates ${\defpar}_i$ for each $\alpha_i\in
\Delta\setminus\Delta(P)$ and define a deformed cup product $\odot$
as follows:
$$\label{EE'}
[X^P_u] \odot [X^P_v]=
\sum_{w\in W^P} \bigl(\prod_{\alpha_i\in \Delta\setminus\Delta(P)}
{\defpar}_i^{(w^{-1}\rho-u^{-1}\rho -v^{-1}\rho -\rho)(x_i)}
\bigr)
c^w_{u,v} [X^P_w],
$$
where $\rho$ is the (usual) half sum of positive roots of $\frg$.

By  Corollary ~\ref{product} and the identity \eqref{eqn5}, whenever $c^w_{u,v}$ is nonzero,
the exponent of $\tau_i$ in the above is a nonnegative integer. Moreover, it is
 easy to see that
the product
$\odot$ is associative and clearly commutative.
This product should not be confused with the small quantum cohomology product
of $G/P$.

The  cohomology of $G/P$
 obtained by setting each ${\defpar}_i=0$ in
$(H^*(G/P, \Bbb Z)\otimes\Bbb{Z}[{\defpar}_i],\odot)$ is denoted by
$(H^*(G/P, \Bbb Z),\odot_0)$. Thus, as a $\Bbb Z$-module, it  is the same as the singular cohomology
 $H^*(G/P, \Bbb Z)$ and under the product $\odot_0$ it is associative (and commutative).
Moreover,  it continues to satisfy the Poincar\'e duality (cf. [BK$_1$, Lemma 16(d)]).

It should be remarked that, in general, the canonical pull-back map \
$H^*(G/P_2,
\mathbb{Z}) \to H^*(G/P_1, \mathbb{Z}),$  for $P_1\subset P_2$, does not respect the
product $\odot_0$.

In the $\{\epsilon^P_w\}_{w\in W^P}$ basis, by the identity \eqref{eqn1n}, the
deformed product takes the form
\begin{equation}\label{10n}
\epsilon^P_u \odot \epsilon^P_v=
\sum_{w\in W^P} \bigl(\prod_{\alpha_i\in \Delta\setminus\Delta(P)}
{\defpar}_i^{(u^{-1}\rho+v^{-1}\rho -w^{-1}\rho -\rho)(x_i)}
\bigr)
d^w_{u,v} \epsilon^P_w,
\end{equation}
where
$\epsilon^P_u \cdot \epsilon^P_v=\sum_{w\in W^P}\,d^w_{u,v} \epsilon^P_w.$

\end{definition}

\begin{lemma} \label{minuscule} Let $P$ be a cominuscule maximal standard parabolic
 subgroup of $G$ (i.e., the unique simple root $\alpha_P \in \Delta \setminus \Delta(P)$
 appears with coefficient
$1$ in the highest root of $R^+$). Then, the product
$\odot$ coincides with the cup product in $H^*(G/P)$.
\end{lemma}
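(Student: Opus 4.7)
The plan is to show that when $P$ is cominuscule maximal, the exponent of $\tau_{i_P}$ in the deformed product formula is forced to be zero whenever the structure constant $c^w_{u,v}$ is nonzero; then $\odot$ collapses to $\cdot$. Since $P$ is maximal there is only the single indeterminate $\tau_{i_P}$ attached to $\alpha_{i_P}\in\Delta\setminus\Delta(P)$, so it suffices to verify the vanishing at $x_{i_P}$.

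First I would rewrite the exponent $(w^{-1}\rho-u^{-1}\rho-v^{-1}\rho-\rho)(x_{i_P})$ in terms of $\chi$. By the identity \eqref{eqn5}, $\chi_w-\chi_u-\chi_v=w^{-1}\rho-u^{-1}\rho-v^{-1}\rho-\rho+2\rho^L$. Since $\rho^L$ is a sum of positive roots of $\frl$ (which lie in the $\mathbb{Z}$-span of $\Delta(P)$) and since $\alpha_j(x_{i_P})=\delta_{j,i_P}=0$ for every $\alpha_j\in\Delta(P)$, we get $\rho^L(x_{i_P})=0$. Therefore the exponent in the definition of $\odot$ equals $(\chi_w-\chi_u-\chi_v)(x_{i_P})$.

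Next I would evaluate $\chi_w(x_{i_P})$ explicitly using the primary definition \eqref{eqn5new}, $\chi_w=\sum_{\beta\in (R^+\setminus R^+_\frl)\cap w^{-1}R^+}\beta$. This is where the cominuscule hypothesis enters decisively: since $\alpha_{i_P}$ appears with coefficient $1$ in the highest root, every $\beta\in R^+$ has coefficient $0$ or $1$ at $\alpha_{i_P}$, and the coefficient is $1$ precisely when $\beta\in R^+\setminus R^+_\frl$. Hence $\beta(x_{i_P})=1$ for every such $\beta$, so
\[
\chi_w(x_{i_P})=\bigl|(R^+\setminus R^+_\frl)\cap w^{-1}R^+\bigr|.
\]
Because $w\in W^P$ satisfies $w(R^+_\frl)\subset R^+$, equivalently $R^+_\frl\subset w^{-1}R^+$, we can split $R^+\cap w^{-1}R^+=R^+_\frl\sqcup\bigl((R^+\setminus R^+_\frl)\cap w^{-1}R^+\bigr)$. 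Using $|R^+\cap w^{-1}R^+|=|R^+|-\ell(w)$ this yields $\chi_w(x_{i_P})=|R^+\setminus R^+_\frl|-\ell(w)=\dim(G/P)-\ell(w)=\codim X^P_w$.

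Finally, I would apply degree-counting in ordinary cohomology: if $c^w_{u,v}\neq 0$ in the expansion $[X^P_u]\cdot[X^P_v]=\sum_w c^w_{u,v}[X^P_w]$, then $\codim X^P_w=\codim X^P_u+\codim X^P_v$. Combining, $(\chi_w-\chi_u-\chi_v)(x_{i_P})=\codim X^P_w-\codim X^P_u-\codim X^P_v=0$, so the $\tau_{i_P}$-exponent vanishes on every nonzero term. Thus $[X^P_u]\odot[X^P_v]=[X^P_u]\cdot[X^P_v]$ for all $u,v\in W^P$, proving the lemma. There is no real obstacle here — once the cominuscule condition is recognized as forcing $\beta(x_{i_P})=1$ on all of $R^+\setminus R^+_\frl$, the rest is bookkeeping; the only delicate point is the clean identification $\chi_w(x_{i_P})=\codim X^P_w$, whose proof pivots on the minimal-length characterization $R^+_\frl\subset w^{-1}R^+$ for $w\in W^P$.
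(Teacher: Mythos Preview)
Your proof is correct and follows essentially the same approach as the paper: both arguments reduce to showing $\chi_w(x_{i_P})=\codim X^P_w$ via the cominuscule condition $\beta(x_{i_P})=1$ for $\beta\in R^+\setminus R^+_\frl$, and then invoke degree-matching for $c^w_{u,v}\neq 0$. Your reduction of the exponent to $(\chi_w-\chi_u-\chi_v)(x_{i_P})$ via $\rho^L(x_{i_P})=0$ and your explicit count $|(R^+\setminus R^+_\frl)\cap w^{-1}R^+|=\dim(G/P)-\ell(w)$ are slightly more detailed than the paper's presentation, but the logic is identical.
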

\begin{proof} By the definition of $\odot$, it suffices to show that for any $u,v, w\in W^P$ such that $c_{u,v}^w\neq 0$,
\begin{equation} \label{eqn5.1}
 (\chi_w-(\chi_u+\chi_v))(x_P)= 0.
\end{equation}
By the definition of $\chi_w$ (cf. \eqref{eqn5new}), since $P$ is cominuscule,
\begin{equation} \label{eqn5.2}
 \chi_w (x_P)=  \mid w^{-1}R^+\cap \bigl(R^+\setminus R^+_{\mathfrak l}\bigr)\mid
=\codim(\Phi^{P}_w:G/P),
\end{equation}
where the last equality follows since
$$R(T_w)=w^{-1}R^+\cap (R^-\setminus R^-_\frl),$$
where $R^-:=R\setminus R^+$ and $R^-_\frl := R_\frl\setminus  R^+_\frl.$
 Moreover, since
 $c_{u,v}^w\neq 0$,
\begin{equation} \label{eqn5.3}
\codim(\Phi^{P}_u:G/P)+\codim(\Phi^{P}_v:G/P)
=\codim(\Phi^{P}_w:G/P).
\end{equation}
Combining equations (\ref{eqn5.2}) and (\ref{eqn5.3}), we get equation (\ref{eqn5.1}).

Alternatively, one can prove the lemma by observing that the unipotent radical
$U_P$ of $P$ acts trivially on the tangent space $T_P(G/P)$ and using the
definition of Levi-movability together with Proposition \ref{Firstnew}.
\end{proof}
\begin{remark}
 Belkale-Kumar have given a criterion (though only necessary
conditions) to determine when the deformed product of a number of Schubert
cohomology classes
in any $G/P$ is nonzero.
The criterion is in terms of the characters (cf. [BK$_1$, Theorem 32]).
\end{remark}

\section{Efficient determination of the eigencone}\label{section7}

This section is again based on the work [BK$_1$] due to Belkale-Kumar. The following
theorem [BK$_1$, Theorem 22] determines the saturated tensor semigroup $\Gamma_s(G)$
efficiently. Specifically, as proved by Ressayre (see Corollary \ref{6.4}), the set
of inequalities given by (b) of the following theorem (resp. (b) of Corollary
\ref{7.5}) is an irredundant set of inequalities determining $\Gamma_s(G)$
(resp. $\bar{\Gamma}_s(\frg)$).

For $G=\SL(n)$, each maximal parabolic $P$ is cominuscule, and hence, by Lemma
 \ref{minuscule}, $\odot_0$ coincides with the standard cup product in $H^*(G/P)$. Thus,
 the following theorem (resp. Corollary \ref{7.5}) in this case reduces to Theorem
 \ref{thm1} (resp. Corollary \ref{eigen}) with $d=1$ in the identity \eqref{3n}.
 In this case the irredundancy of the system was proved by Knutson-Tao-Woodward
 [KTW].

 It may be mentioned that replacing the product $\odot_0$
 in the (b)-part of the following theorem  by the standard
cup product (i.e., Theorem \ref{thm1} with $d=1$ in the identity \eqref{3n}; cf.
Remark \ref{2.9} (b)), we
get, in general,  `far more' inequalities for simple groups other than  $\SL(n)$.
For example,
for $G$ of type $B_3$ (or $C_3$), Theorem \ref{thm1} with $d=1$ gives rise to
$126$ inequalities, whereas the following theorem gives
only $93$ inequalities (cf. [KuLM]).

\begin{theorem}\label{EVT} Let $G$ be a connected semisimple group and let  $(\lambda_1, \dots, \lambda_s)\in \Lambda_+^s$.
Then, the following are equivalent:

(a)  $\lambda=(\lambda_1, \dots, \lambda_s)\in \Gamma_s(G)$.

(b) For every standard maximal parabolic subgroup $P$ in $G$ and every choice of
$s$-tuples  $(w_1, \dots, w_s)\in (W^P)^s$ such that
$$[X^P_{w_1}]\odot_0\, \cdots \,\odot_0 [X^P_{w_s}] = [X^P_e]
\in \bigl(H^*(G/P,\Bbb{Z}), \odot_0\bigr),$$
  the following inequality  holds:
    \[\label{eqn29}
\sum_{j=1}^s \lambda_j(w_jx_{P})\leq 0, \tag{$I^P_{(w_1,\dots, w_s)}$}
\]
where $\alpha_{i_P}$ is the (unique) simple root in $\Delta\setminus \Delta (P)$
and $x_P:=x_{i_P}$.
\end{theorem}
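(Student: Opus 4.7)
My plan is to reduce the theorem to Theorem \ref{thm1} by sharpening the cup product hypothesis via the Levi-movability criterion of Proposition \ref{T1}. The crucial preliminary observation is that for a top-degree $s$-tuple $(w_1,\ldots,w_s)\in (W^P)^s$ with $\sum_j\codim\Phi^P_{w_j}=\dim G/P$, iterating the definition of $\odot$ yields
\[
[X^P_{w_1}]\odot\cdots\odot[X^P_{w_s}] = d\,\prod_{\alpha_i\in\Delta\setminus\Delta(P)}\tau_i^{(\chi_1-\sum_j\chi_{w_j})(x_i)}\,[X^P_e],
\]
where $d$ is the ordinary cup product coefficient and each exponent is nonnegative by the $s$-fold extension of Corollary \ref{product}. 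Hence the condition $[X^P_{w_1}]\odot_0\cdots\odot_0[X^P_{w_s}]=[X^P_e]$ is equivalent to $d=1$ together with the vanishing of all these exponents, which by Proposition \ref{T1} amounts exactly to $(w_1,\ldots,w_s)$ being Levi-movable. The implication (a)$\Rightarrow$(b) then follows immediately: since the hypothesis of (b) implies the cup product equation $[X^P_{w_1}]\cdots[X^P_{w_s}]=[X^P_e]$, it is the case $d=1$ of the hypothesis of Theorem \ref{thm1}(b), which yields the desired inequality.

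The main work is (b)$\Rightarrow$(a), for which the plan is to recover every inequality of Theorem \ref{thm1}(b) from the (a priori smaller) set in (b). Take any standard maximal parabolic $P$ and any $(w_1,\ldots,w_s)\in (W^P)^s$ with $[X^P_{w_1}]\cdots[X^P_{w_s}]=d[X^P_e]$, $d\neq 0$. The key reduction claim is that there exist $w'_j\in W^P$ with $w'_j\leq w_j$ in the Bruhat order such that $(w'_1,\ldots,w'_s)$ is Levi-movable with cup product equal to $[X^P_e]$. Granted the claim, applying hypothesis (b) to the primed tuple gives $\sum_j\lambda_j(w'_j x_P)\leq 0$, and the monotonicity $\lambda_j(w'_j x_P)\geq\lambda_j(w_j x_P)$ for $w'_j\leq w_j$ and $\lambda_j\in\Lambda_+$ (from [K$_1$, Lemma 8.3.3], exactly as used in the proof of Theorem \ref{thm1}) propagates this to $\sum_j\lambda_j(w_j x_P)\leq 0$. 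Thus Theorem \ref{thm1} places $\lambda\in\Gamma_s(G)$.

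To produce the primed tuple I plan to use the $P$-invariant section $\theta$ from \eqref{determinant}, which is nonzero by Lemma \ref{movL}(a) since $d\neq 0$, together with the central OPS $\delta$ of $L$ given by $\dot\delta=\sum_{\alpha_i\in\Delta\setminus\Delta(P)}x_i$. Pick $\bar p\in (P/B_L)^s$ with $\theta(\bar p)\neq 0$; the limit $\bar p_0:=\lim_{t\to 0}\delta(t)\bar p$ exists in $(L/B_L)^s$ by the argument in the proof of Proposition \ref{T1}. If $\theta(\bar p_0)\neq 0$ then $(w_1,\ldots,w_s)$ is already Levi-movable by Lemma \ref{movL}(b); otherwise Proposition \ref{propn14}(c) forces $\mu^{\elal}(\bar p,\delta)>0$, reflecting a strict deficit in the character inequality of Corollary \ref{product}, and reading off the Bruhat positions of $\bar p_0$ inside $L/B_L$ should furnish strictly smaller $w'_j<w_j$ in $W^P$. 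Iterating this descent until the section is nonzero at the limit yields a Levi-movable tuple with Bruhat-smaller indices. The hard part will be (i) verifying that Bruhat comparability $w'_j\leq w_j$ is preserved at each step of the descent, and especially (ii) arranging that the final cup product coefficient equals exactly $1$; the latter should follow by choosing $\bar p$ sufficiently generically so that the limit lands in a multiplicity-one stratum of the Levi intersection, with a further descent within that stratum if necessary.
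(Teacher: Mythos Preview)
Your direction (a)$\Rightarrow$(b) is fine and matches the paper: the $\odot_0$-hypothesis is strictly stronger than the cup-product hypothesis of Theorem \ref{thm1}, so this is indeed a special case.

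The direction (b)$\Rightarrow$(a) has a genuine gap. Your entire argument rests on the ``reduction claim'': given $(w_1,\dots,w_s)$ with ordinary product $d[X^P_e]$, $d\neq 0$, one can find $w'_j\le w_j$ which are $L$-movable with coefficient exactly $1$. But your proposed mechanism for producing the $w'_j$ does not work. When $\theta(\bar p_0)=0$, the limit $\bar p_0$ lives in $(L/B_L)^s$, and its Bruhat data are elements of $W_P$, not of $W^P$; there is no natural way to read off new $w'_j\in W^P$ from this point. The flow by the central OPS $\delta$ sends $p_j\Phi^P_{w_j}$ to $l_j\Phi^P_{w_j}$ (with $l_j\in L$), so the Schubert indices $w_j$ themselves do not move at all under your degeneration --- only the translating elements do. There is thus no ``descent in $W^P$'' taking place. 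Even in the case where the limit section is nonzero and the original tuple is $L$-movable, the coefficient is $d$, not $1$, and your ``generic $\bar p$'' suggestion cannot change an intrinsic structure constant. In short, neither half of your hard part (i)--(ii) is addressed.

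The paper takes a completely different route for (b)$\Rightarrow$(a): it argues by contradiction, assuming no semistable point exists, and then invokes Kempf's theory of the maximally destabilizing OPS together with the Ramanan--Ramanathan theorem (Theorem \ref{RR}) and the geometric result Theorem \ref{jm} on singleton intersections. These tools guarantee that an \emph{unstable} generic point produces an $L$-movable tuple with coefficient exactly $1$ violating one of the inequalities in (b). This is the deep input your argument is missing: the coefficient-$1$ condition comes from the fact that Kempf's parabolic forces a \emph{unique} point of intersection, and $L$-movability comes from applying Theorem \ref{jm} a second time to the limit point $x_o$ (which is again unstable with the same Kempf class, by Ramanan--Ramanathan). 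There is no known way to bypass this machinery via a direct Bruhat-order descent of the kind you sketch.
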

Before we come to the proof of the theorem, we need the following.

\begin{definition} (Maximally destabilizing one parameter subgroups)\label{Kempf}
We recall the definition of  Kempf's OPS attached to an unstable point, which is
in some sense `most destabilizing' OPS.
Let $\exx$ be a projective variety with the action of a connected reductive group $S$ and let
$\elal$ be  a $S$-linearized ample line bundle on $\exx$.
Introduce the set $M(S)$ of fractional OPS in $S$. This is the set
 consisting of the ordered pairs $(\delta,a)$, where $\delta\in O(S)$
and $a\in \Bbb{Z}_{>0}$, modulo the equivalence relation $(\delta,a)\simeq(\gamma,b)$ if
$\delta^b=\gamma^a$. The equivalence class of $(\delta, a)$ is denoted by
$[\delta,a]$. An OPS $\delta$ of $S$ can be thought of as the element $[\delta,1]\in M(S)$. The
group $S$ acts on $ M(S)$ via conjugation: $g\cdot[\delta,a]=[g\delta g^{-1},a]$. Choose
a $S$-invariant norm $q:M(S)\to \Bbb R_+$.
We can extend the definition of
$\mu^{\elal}(x,\delta)$ to any element $\hat{\delta}=[\delta,a]\in M(S)$ and $x\in \exx$
by setting $\mu^{\elal}(x,\hat{\delta})= \frac{\mu^{\elal}(x,\delta)}{a}$.
We note the following elementary property: If $\hat{\delta}\in M(S)$ and $p\in
P(\delta)$ (where $P(\delta)$ is the Kempf's parabolic defined by the identity
\eqref{2n}),
then
\begin{equation}\label{translate}
\mu^{\elal}(x,\hat{\delta})=\mu^{\elal}(x,p\hat{\delta} p^{-1}).
\end{equation}
For any unstable (i.e., nonsemistable) point $x\in \exx$, define
$$q^*(x)=\inf_{\hat{\delta}\in M(S)} \{q(\hat{\delta})\mid \mu^{\elal}(x,\hat{\delta})\leq -1\},$$
and the {\it optimal class}
$$\Lambda(x)=\{\hat{\delta}\in M(S)\mid \mu^{\elal}(x,\hat{\delta})\leq -1, q(\hat{\delta})=q^*(x)\}.$$
Any $\hat{\delta} \in \Lambda(x)$ is called {\it Kempf's OPS associated to} $x$.

By a theorem of Kempf (cf. [Ki, Lemma 12.13]), $\Lambda(x)$ is nonempty and the parabolic $P(\hat{\delta}):=P(\delta)$
(for $\hat{\delta}=[\delta,a]$) does
not depend upon the choice of $\hat{\delta} \in \Lambda(x)$.
The parabolic $P(\hat{\delta})$ for $\hat{\delta}\in \Lambda(x)$ will be denoted by $P(x)$ and
called the {\it Kempf's parabolic associated to the unstable point} $x$. Moreover,
$\Lambda(x)$ is a single conjugacy class under $P(x)$.
\end{definition}
We recall the following theorem due to Ramanan-Ramanathan [RR, Proposition 1.9].
\begin{theorem}\label{RR}
For any  unstable point  $x \in \exx$ and $\hat{\delta}=[\delta,a]\in \Lambda(x)$, let
 $$x_o=\lim_{t\to 0}\,\delta(t)\cdot x \in \exx.$$
Then, $x_o$ is unstable and $\hat{\delta}\in\Lambda(x_o)$.
\end{theorem}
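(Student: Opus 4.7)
Plan of proof: The statement has two parts, namely the instability of $x_o$ and the optimality $\hat\delta\in\Lambda(x_o)$. The first is a direct consequence of the definition of $\mu^{\elal}$, while the second reduces to showing that $\hat\delta$ minimizes the norm $q$ among all fractional one-parameter subgroups destabilizing $x_o$. I handle the two parts separately.

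For the instability of $x_o$: since $x_o$ is by construction fixed by the $\Bbb{G}_m$-action through $\delta$, the Mumford invariants $\mu^{\elal}(x,\hat\delta)$ and $\mu^{\elal}(x_o,\hat\delta)$ are both computed from a single object, namely the $\Bbb{G}_m$-weight of $\delta$ on the one-dimensional fiber $\elal|_{x_o}$, divided by $a$. Consequently
\begin{equation*}
\mu^{\elal}(x_o,\hat\delta)=\mu^{\elal}(x,\hat\delta)\leq -1<0,
\end{equation*}
and applied to a sufficiently high tensor power of $\elal$, Proposition \ref{propn14}(b) then forbids any $S$-invariant section from being nonzero at $x_o$; hence $x_o$ is unstable.

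For the optimality $\hat\delta\in\Lambda(x_o)$: the bound $q^*(x_o)\leq q(\hat\delta)$ is immediate, since $\hat\delta$ is itself a valid candidate in the infimum defining $q^*(x_o)$. The reverse inequality amounts to showing that whenever $\hat\gamma\in M(S)$ satisfies $\mu^{\elal}(x_o,\hat\gamma)\leq -1$, one has $q(\hat\gamma)\geq q^*(x)=q(\hat\delta)$. Using the $P(\hat\gamma)$-homogeneity property \eqref{translate} together with the $S$-invariance of $q$, I intend to replace $\hat\gamma$ by a conjugate whose image lies in a common maximal torus with $\delta(\Bbb{G}_m)$, so that $\hat\gamma$ and $\hat\delta$ commute without loss of generality (both bounds are preserved by the conjugation). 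Once the two commute, a two-parameter $\Bbb{G}_m^2$-equivariant specialization of the morphism $(s,t)\mapsto\delta(s)\gamma(t)x$ on the projective variety $\exx$ shows that the double limit exists and that the fiber of $\elal$ at this double limit carries compatible $\delta$- and $\gamma$-weights; comparing the two single-variable limits yields
\begin{equation*}
\mu^{\elal}(x,\hat\gamma)=\mu^{\elal}(x_o,\hat\gamma)\leq -1.
\end{equation*}
Thus $\hat\gamma$ is already a candidate at $x$, so $q(\hat\gamma)\geq q^*(x)=q(\hat\delta)$, as required.

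The main obstacle is the reduction to a commuting pair. Placing $\gamma$ and $\delta$ in a common maximal torus of the ambient group requires a careful use of the Kempf parabolic $P(\hat\gamma)$ together with the fact that $\delta$ fixes $x_o$; one natural route is to first apply a Levi-type retraction of the form $\hat\gamma\mapsto\lim_{s\to 0}\delta(s)\hat\gamma\delta(s)^{-1}$ after arranging $\hat\gamma$ to lie in $P(\hat\delta)$ (available after a $P(\hat\gamma)$-conjugation by \eqref{translate}), noting that the resulting retracted OPS lies in the Levi $Z_S(\delta)$ and has norm bounded by $q(\hat\gamma)$ by the $S$-invariance of $q$. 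Once this structural step is in place, the crucial identity $\mu^{\elal}(x,\hat\gamma)=\mu^{\elal}(x_o,\hat\gamma)$ for commuting pairs $(\hat\gamma,\hat\delta)$ is the technical heart of the argument; everything else is routine bookkeeping with Proposition \ref{propn14} and property \eqref{translate}.
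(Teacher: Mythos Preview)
The paper does not prove this theorem; it merely recalls it from Ramanan--Ramanathan [RR, Proposition 1.9]. So there is no ``paper's proof'' to compare against, and your attempt must stand on its own.

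Your argument for the instability of $x_o$ is fine. The optimality argument, however, has a genuine gap: the claimed identity $\mu^{\elal}(x,\hat\gamma)=\mu^{\elal}(x_o,\hat\gamma)$ for commuting $\gamma,\delta$ is false in general. Embed $\exx$ in $\mathbb{P}(V)$ with $\elal=\mathcal{O}(1)$, decompose $V=\bigoplus_{i,j}V_{i,j}$ by the $(\delta,\gamma)$-biweights, and write $x=[v]$ with $v=\sum v_{i,j}$. Then $x_o=[\sum_j v_{i_0,j}]$ with $i_0=\min\{i:\exists j,\ v_{i,j}\neq 0\}$, and one computes
\[
\mu^{\elal}(x,\hat\gamma)=-\tfrac{1}{a}\min\{j:\exists i,\ v_{i,j}\neq 0\},\qquad
\mu^{\elal}(x_o,\hat\gamma)=-\tfrac{1}{a}\min\{j:v_{i_0,j}\neq 0\}.
\]
Since the second minimum is taken over a smaller set, one only gets $\mu^{\elal}(x_o,\hat\gamma)\le \mu^{\elal}(x,\hat\gamma)$, which is the wrong direction: from $\mu^{\elal}(x_o,\hat\gamma)\le -1$ you cannot conclude that $\hat\gamma$ destabilizes $x$.

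The actual mechanism is a convexity argument on the state polytope. In the torus case just described, let $P=\{\chi:v_\chi\neq 0\}\subset X^*(T)_{\mathbb R}$. Optimality of $\hat\delta$ for $x$ means that $\dot\delta/q(\hat\delta)$ points toward the closest point $p^*$ of the convex hull of $P$ to the origin, and the state of $x_o$ is exactly the face of $P$ supported by the hyperplane through $p^*$ orthogonal to $p^*$. Since $p^*$ lies in the convex hull of that face, the distance from $0$ to $\operatorname{conv}(P_o)$ equals $|p^*|$, hence $q^*(x_o)=q^*(x)$ and $\hat\delta\in\Lambda(x_o)$. The reduction from general $S$ to this torus picture is where Kempf's uniqueness theorem enters: one uses that the isotropy of $x_o$ (in particular $\delta(\mathbb G_m)$) is contained in the Kempf parabolic $P(x_o)$, which allows one to bring an optimal $\hat\gamma$ for $x_o$ and $\hat\delta$ into a common torus without losing control of $\mu^{\elal}(x_o,\hat\gamma)$. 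Your Levi-retraction idea is in the right spirit for this reduction, but it cannot bypass the convexity step above.
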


For a real number $d>0$ and $\hat{\delta}=[\delta,a]\in M(S)$, define
$$
\mathbb{X}_{d,\hat{\delta}}=\mathbb{X}^{\mathbb{L}}_{d,\hat{\delta}}:=\{x\in
\mathbb{X}:q^{*}(x)=d\,\,\,\text{and}\,\,\, \hat{\delta}\in \Lambda (x)\},
$$
and
$$
Z_{d,\hat{\delta}}=Z^{\mathbb{L}}_{d,\hat{\delta}}:=
\{x\in \mathbb{X}_{d,\hat{\delta}}:\delta \text{~ fixes~ } x\}.
$$

By Theorem~\ref{RR}, we have the map
$$
p_{\hat{\delta}}:\mathbb{X}_{d,\hat{\delta}}\to Z_{d,\hat{\delta}},\,\, x\mapsto
\lim_{t\to 0}\,\delta(t)\cdot x.
$$
We recall the following result from [Ki, $\S$13].

\begin{proposition}\label{ressprop1}
Let $\mathbb{X}$ and $\mathbb{L}$ be as above. Assume further that $\mathbb{X}$
 is smooth. Then, we have the following:
\begin{itemize}
\item[\rm(a)] $Z_{d,\hat{\delta}}$ is an open $S^{\delta}$-stable subset of
$\mathbb{X}^{\delta}$, where $S^{\delta}$ is the centralizer of $\delta$ in $S$.

\item[\rm(b)] $\mathbb{X}_{d,\hat{\delta}}=\{x\in \mathbb{X}:
\lim_{t\to 0}\,\delta(t)\cdot x\in Z_{d,\hat{\delta}}\}$, and it is stable
under $P(\delta)$.

\item[\rm(c)] There is a bijective morphism
$$
S\times^{P(\delta)}\mathbb{X}_{d,\hat{\delta}}\to
\mathbb{X}_{d,\langle \hat{\delta}\rangle},
$$
which is an isomorphism if $\mathbb{X}_{d,\langle \hat{\delta}\rangle}$ is normal,
where
$$
\mathbb{X}_{d,\langle \hat{\delta}\rangle}=\bigcup_{g\in S}
 \mathbb{X}_{d,\,g\cdot \hat{\delta}}.
$$
\end{itemize}
\end{proposition}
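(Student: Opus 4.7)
The plan is to treat the three parts separately, in all cases leaning on $S$-equivariance of $q^*$ and $\Lambda$, the translation identity \eqref{translate}, Kempf's structural statement that $\Lambda(x)$ is a single $P(x)$-conjugacy class, and Theorem \ref{RR}.

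For (a), note that $Z_{d,\hat{\delta}} \subset \mathbb{X}^{\delta}$ by definition. The $S^{\delta}$-stability is immediate: for $g \in S^{\delta}$ one has $g\hat{\delta}g^{-1} = \hat{\delta}$, and $S$-equivariance of $q^*$ and $\Lambda$ forces $gx \in Z_{d,\hat{\delta}}$ whenever $x$ is. For openness in $\mathbb{X}^{\delta}$, I would work one connected component $Z \subset \mathbb{X}^{\delta}$ at a time. Since $\delta$ acts trivially on $Z$, the weight of $\mathbb{G}_m$ on $\mathbb{L}_x$ (via $\delta$) is locally constant, so $\mu^{\mathbb{L}}(x,\hat{\delta}) = m_Z / a$ is constant on $Z$. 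If $m_Z/a > -1$ then $Z \cap Z_{d,\hat{\delta}} = \emptyset$. Otherwise $\hat{\delta}$ is a legitimate destabilizing class for every $x \in Z$, so $q^*(x) \leq q(\hat{\delta}) = d$ throughout $Z$, and membership in $Z_{d,\hat{\delta}}$ reduces to the single equality $q^*(x) = d$. This locus is open in $Z$ by lower semicontinuity of $q^*$ on the unstable locus, which itself rests on Kempf's theorem and the smoothness of $\mathbb{X}$.

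For (b), the inclusion $\subseteq$ is essentially Theorem \ref{RR}: for $x \in \mathbb{X}_{d,\hat{\delta}}$ and $x_o := \lim_{t \to 0} \delta(t)\cdot x$, the theorem gives $\hat{\delta} \in \Lambda(x_o)$, whence $q^*(x_o) = q(\hat{\delta}) = d$ and $x_o \in Z_{d,\hat{\delta}}$. For the reverse inclusion, I would exploit the key observation that $\mu^{\mathbb{L}}(x,\delta) = \mu^{\mathbb{L}}(x_o,\delta)$, directly from the definition of the Mumford index via the $\mathbb{G}_m$-character on $\mathbb{L}_{x_o}$; hence $\hat{\delta}$ destabilizes $x$ to precisely the same level as it destabilizes $x_o$. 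To upgrade this to $\hat{\delta} \in \Lambda(x)$, one argues by contradiction: a strictly better destabilizer $\hat{\gamma}$ of $x$ would, upon taking its own limit under $\delta$, transport to a destabilizer of $x_o$ of strictly smaller $q$-norm, contradicting $q^*(x_o) = d$. The $P(\delta)$-stability of $\mathbb{X}_{d,\hat{\delta}}$ reduces, by $S$-equivariance of $\Lambda$, to the assertion $p^{-1}\hat{\delta}p \in \Lambda(x)$ for $p \in P(\delta)$ and $x \in \mathbb{X}_{d,\hat{\delta}}$; this is precisely the $P(\delta)$-transitivity on $\Lambda(x)$ built into Kempf's theorem.

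For (c), the map $[s,x] \mapsto sx$ is well-defined by the $P(\delta)$-stability in (b), and lands in $\mathbb{X}_{d,\langle \hat{\delta}\rangle}$ by $S$-equivariance of $q^*$ and $\Lambda$; surjectivity is just the definition of the $S$-saturation. For injectivity, if $sx = s'x'$ with $x,x' \in \mathbb{X}_{d,\hat{\delta}}$, set $p = (s')^{-1}s$ so that $x' = px$; then $p^{-1}\hat{\delta}p \in \Lambda(x)$, and the transitivity of $P(\delta)$ on $\Lambda(x)$ yields $q \in P(\delta)$ with $p^{-1}\hat{\delta}p = q^{-1}\hat{\delta}q$. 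This forces $pq^{-1}$ to centralize $\delta$, and since $S^{\delta} \subset P(\delta)$ we conclude $p \in P(\delta)$, so $[s,x] = [s',x']$. Under the normality hypothesis, the isomorphism claim then follows from Zariski's main theorem applied to the bijective morphism with normal target (after a dimension count matching $\dim S/P(\delta) + \dim \mathbb{X}_{d,\hat{\delta}}$ with $\dim \mathbb{X}_{d,\langle\hat{\delta}\rangle}$). The main technical obstacle throughout is the reverse inclusion in (b): the ``transport of destabilizers along $\delta$'' must be made quantitative, which is the heart of the Kirwan--Ness--Hesselink theory and uses the smoothness of $\mathbb{X}$ essentially (via a linear model on the normal bundle of $\mathbb{X}^{\delta}$).
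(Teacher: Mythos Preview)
The paper does not give its own proof of this proposition: it is stated as a result ``recall[ed] \dots from [Ki, \S13]'' and is followed immediately by the next proposition with no intervening argument. So there is nothing in the paper to compare your proof against.

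That said, your outline tracks the standard Kirwan--Ness argument from the cited source. Parts (a) and (c) are essentially correct as written. You are right to flag the reverse inclusion in (b) as the crux, and your contradiction sketch is in the right spirit but, as you acknowledge, not self-contained: the claim that a strictly better destabilizer $\hat{\gamma}$ of $x$ ``transports along $\delta$'' to a destabilizer of $x_o$ of the same norm is exactly what requires the linear model on the normal bundle and the convexity/norm arguments in Kirwan's book. If you want a complete proof rather than a sketch, that step must either be imported wholesale from [Ki, \S12--13] or reproved via the Bia\l{}ynicki-Birula decomposition and an explicit computation in the linearized action, which is where the smoothness hypothesis is genuinely consumed.
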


Let $\langle M(S)\rangle$ denote the $S$-conjugacy classes in
$M(S)$. We have the following result due to Hesselink [He].

\begin{proposition}\label{birula}
For $\mathbb{X}$ and $\mathbb{L}$ as in Proposition~\ref{ressprop1},
$$
\mathbb{X}=\mathbb{X}^{s}\bigcup \bigcup_{{d>0, \, \langle
    \hat{\delta}\rangle \in \langle M(S)\rangle}}\mathbb{X}_{d,\langle \hat{\delta}\rangle}
$$
is a finite stratification by locally-closed $S$-stable subvarieties
of $\mathbb{X}$, where $\mathbb{X}^{s}$ is the set of semistable points
of $\mathbb{X}$ with respect to the ample line bundle $\mathbb{L}$.
\end{proposition}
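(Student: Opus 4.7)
The plan is to verify in turn: the subsets cover $\mathbb{X}$; they are pairwise disjoint; each is $S$-stable and locally closed; and only finitely many are nonempty.

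\textbf{Covering and disjointness.} For any $x\in\mathbb{X}\setminus\mathbb{X}^{s}$, Kempf's theorem (recalled in Definition~\ref{Kempf}) guarantees that $q^{*}(x)$ is a well-defined positive real number and that $\Lambda(x)$ is a single $P(x)$-conjugacy class in $M(S)$. Setting $d:=q^{*}(x)$ and $\langle\hat\delta\rangle$ to be the $S$-conjugacy class of any member of $\Lambda(x)$, one has $x\in\mathbb{X}_{d,\langle\hat\delta\rangle}$; this covers the unstable locus. Different pairs $(d,\langle\hat\delta\rangle)$ give disjoint strata by definition, and a semistable $x$ cannot lie in any $\mathbb{X}_{d,\hat\delta}$ since $\mu^{\mathbb{L}}(x,\hat\delta)\leq -1$ would contradict Proposition~\ref{propn14}(e).

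\textbf{$S$-stability and local closedness.} The relation $g\cdot\mathbb{X}_{d,\hat\delta}=\mathbb{X}_{d,g\hat\delta g^{-1}}$, a consequence of $S$-invariance of the norm $q$ together with the naturality $\mu^{\mathbb{L}}(gx,g\hat\delta g^{-1})=\mu^{\mathbb{L}}(x,\hat\delta)$, shows that each $\mathbb{X}_{d,\langle\hat\delta\rangle}$ is $S$-stable. For local closedness I would invoke Proposition~\ref{ressprop1}: part~(b) realises $\mathbb{X}_{d,\hat\delta}$ as the preimage of the open subset $Z_{d,\hat\delta}\subset\mathbb{X}^{\delta}$ under the Bia{\l}ynicki--Birula retraction $p_{\hat\delta}\colon x\mapsto\lim_{t\to 0}\delta(t)x$, which is a morphism on the locally closed attracting cell where the limit exists. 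Hence $\mathbb{X}_{d,\hat\delta}$ is locally closed in $\mathbb{X}$, and part~(c) identifies $\mathbb{X}_{d,\langle\hat\delta\rangle}$ with the image of the proper morphism from $S\times^{P(\delta)}\mathbb{X}_{d,\hat\delta}$; parabolicity of $P(\delta)$ ensures that this image is again locally closed in $\mathbb{X}$.

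\textbf{Finiteness.} This is the step I expect to be the main obstacle and the one requiring real work. The strategy is to fix a maximal torus $T\subset S$ and to normalise each Kempf class by choosing a representative whose derivative $\dot{\hat\delta}$ lies in the closed positive Weyl chamber of $\mathrm{Lie}(T)$; the Kempf uniqueness clause (a single $P(\hat\delta)$-conjugacy class) makes this normalisation well-defined on each class $\langle\hat\delta\rangle$. Embedding $\mathbb{X}$ $T$-equivariantly via a high power of $\mathbb{L}$, the function $\dot{\hat\delta}\mapsto\mu^{\mathbb{L}}(x,\hat\delta)$ is piecewise linear with slopes drawn from the finite set of $T$-weights of $\mathbb{L}$ supported on the (finitely many) components of $\mathbb{X}^{T}$. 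The defining condition $\mu^{\mathbb{L}}(x,\hat\delta)=-q(\hat\delta)$ for a normalised Kempf element therefore cuts out only finitely many rays in the Weyl chamber, and imposing rationality of genuine one-parameter subgroups together with the normalisation $q(\hat\delta)=d$ leaves only finitely many candidate classes and only finitely many admissible values of $d$. This Kempf--Hesselink-type argument is standard in outline but is the step where the geometry of $\mathbb{L}$ and the combinatorics of the weight polytope have to be combined carefully.
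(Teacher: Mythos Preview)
The paper does not prove this proposition: it is stated without proof and attributed to Hesselink [He] (with Proposition~\ref{ressprop1} likewise quoted from Kirwan [Ki, \S13]). So there is no in-paper argument to compare against; your sketch is essentially a reconstruction of the standard Kempf--Hesselink proof, and its three parts (covering/disjointness via Kempf's theorem, $S$-stability and local closedness via the Bia{\l}ynicki--Birula description in Proposition~\ref{ressprop1}, and finiteness via the finitely many $T$-weights of the linearization) are the expected ingredients. One caution on the local-closedness step: Proposition~\ref{ressprop1}(c) gives only a bijective morphism $S\times^{P(\delta)}\mathbb{X}_{d,\hat\delta}\to\mathbb{X}_{d,\langle\hat\delta\rangle}$, not a priori a proper one, so the passage from local closedness of $\mathbb{X}_{d,\hat\delta}$ to that of $\mathbb{X}_{d,\langle\hat\delta\rangle}$ deserves a word more care (e.g.\ arguing that the strata closures are nested by the partial order on $(d,\langle\hat\delta\rangle)$, as Hesselink does).
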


\noindent
{\bf Proof of Theorem ~\ref{EVT}:}
Let  $\elal$ denote the $G$-linearized line bundle $\cl(\lam_1)\boxtimes
\cdots \boxtimes\cl(\lam_s)$ on
 $(G/B)^s$ and let
 $P_1, \dots, P_s$ be the standard parabolic subgroups such that
$\elal$ descends as an ample line bundle  $\bar{\elal}$ on
$\xx (\lambda):=G/P_1 \times \cdots \times G/P_s$. As earlier, we call a point
 $x\in (G/B)^s$ {\it semistable} (with respect to, not necessarily ample,
$\elal$) if its image in $\xx (\lambda)$ under the canonical map $\pi: (G/B)^s
\to \xx (\lambda)$ is semistable. Since
 the map $\pi$ induces an
 isomorphism of $G$-modules:
\begin{equation}\label{equalitie}
H^0(\exx (\lambda),\bar{\Bbb{L}}^N) \simeq H^0((G/B)^s,\Bbb{L}^N), \forall N>0,
\end{equation}
the condition ($a$) of Theorem
\ref{EVT} is equivalent to the following condition:
\begin{enumerate}
\item[($c$)]The set of semistable points of
$(G/B)^s$ with respect to $\Bbb{L}$ is nonempty.
\end{enumerate}

The implication $(a)\Rightarrow (b)$ of Theorem \ref{EVT} is of course a special
 case of Theorem  \ref{thm1}.

To prove the implication $(b)\Rightarrow (a)$ in Theorem \ref{EVT}, we need to recall
 the following result due to Kapovich-Leeb-Millson [KLM$_1$]. (For a self-contained algebro-geometric
 proof of this result, see [BK$_1$, $\S$7.4].)
Suppose that $x=(\bar{g}_1,\dots,\bar{g}_s) \in (G/B)^s$ is an unstable point and $P(x)$
the Kempf's parabolic associated to $\pi(x)$. Let $\hat{\delta}=[\delta, a]$ be a Kempf's OPS
associated to $\pi(x)$. Express $\delta(t)=f\gamma(t)f^{-1}$, where $\dot{\gamma}\in
\frh_{+}$.
Then, the Kempf's parabolic $P(\gamma)$ is a standard parabolic.
 Define $w_j\in W/W_{P(\gamma)}$ by $fP(\gamma)\in g_j Bw_jP(\gamma)$ for
$j=1,\dots,s$. Let $P$ be a maximal parabolic containing
$P(\gamma)$.
\begin{theorem}\label{jm}
\begin{enumerate}
\item[($i$)]  The intersection $\bigcap_{j=1}^s  g_jBw_j P\subset G/P $ is the singleton $\{fP\}$.
\item[($ii$)] For the simple root   $\alpha_{i_P}\in \Delta\setminus \Delta(P)$,
$\sum_{j=1}^s \lam_j(w_j x_{i_P})>0.$
\end{enumerate}
\end{theorem}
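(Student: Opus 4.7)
The plan is to extract both parts from a single Mumford-index computation via Lemma \ref{l1}, then rely on the optimality of Kempf's OPS to pin down uniqueness in (i). Since $\delta=f\gamma f^{-1}$ and $\bar{\elal}$ is $G$-equivariant with $\pi^{*}\bar{\elal}=\elal$, Proposition \ref{propn14}(a),(d) gives
\[
\mu^{\bar{\elal}}(\pi(x),\delta)=\mu^{\elal}(x,\delta)=\mu^{\elal}(f^{-1}x,\gamma)=\sum_{j=1}^{s}\mu^{\cl(\lambda_{j})}(f^{-1}g_{j}B,\gamma).
\]
Because $\dot{\gamma}\in\frh_{+}$, Lemma \ref{l1} (taken with $P=L=G$ and $B_{L}=B$) applies to each factor, and the defining condition $g_{j}^{-1}f\in Bw_{j}P(\gamma)$ identifies $w_{j}$ as precisely the coset needed by that lemma, giving $\mu^{\cl(\lambda_{j})}(f^{-1}g_{j}B,\gamma)=-\lambda_{j}(w_{j}\dot{\gamma})$. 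Hence $\mu^{\bar{\elal}}(\pi(x),\delta)=-\sum_{j=1}^{s}\lambda_{j}(w_{j}\dot{\gamma})$.

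For (ii), since $\hat{\delta}=[\delta,a]\in\Lambda(\pi(x))$ with $\pi(x)$ unstable, $\mu^{\bar{\elal}}(\pi(x),\hat{\delta})\leq -1$, so $\mu^{\bar{\elal}}(\pi(x),\delta)\leq -a<0$, yielding $\sum_{j}\lambda_{j}(w_{j}\dot{\gamma})>0$. The Levi of $P(\gamma)$ is generated by $H$ together with the root subgroups $U_{\alpha}$ having $\alpha(\dot{\gamma})=0$, so the hypothesis $P(\gamma)\subset P$ with $P$ maximal (missing only $\alpha_{i_{P}}$) forces $\alpha_{i}(\dot{\gamma})=0$ for every $i\neq i_{P}$, while $\alpha_{i_{P}}(\dot{\gamma})>0$ since $\delta$ is non-trivial. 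Therefore $\dot{\gamma}=\alpha_{i_{P}}(\dot{\gamma})\,x_{i_{P}}$, and dividing out this positive scalar yields the required $\sum_{j}\lambda_{j}(w_{j}x_{i_{P}})>0$.

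For (i), existence is immediate from $g_{j}^{-1}f\in Bw_{j}P(\gamma)\subset Bw_{j}P$, giving $fP\in g_{j}Bw_{j}P/P$ for each $j$. Uniqueness is the main obstacle. My strategy is to leverage Kempf optimality: if $hP\neq fP$ also lay in $\bigcap_{j}g_{j}Bw_{j}P/P$, an analogous decomposition $h=g_{j}b'_{j}w_{j}q'_{j}$ would produce a companion OPS $\delta'$ destabilizing $\pi(x)$; since Kempf's class $\hat{\delta}$ is unique up to $P(\delta)$-conjugacy (by Theorem \ref{RR} and Proposition \ref{ressprop1}(c)), $\delta'$ must be $P(\delta)$-conjugate to $\delta$, and chasing through the conjugating element forces $hP=fP$. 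Making this rigorous requires analyzing the limit $\lim_{t\to 0}\delta(t)\cdot hP$ in $G/P$ and the Bialynicki--Birula stratification associated to $\gamma$ (whose derivative is a positive multiple of $x_{i_{P}}$, so that $L_{P}$ acts transitively on components of $(G/P)^{\gamma}$): these ingredients turn the existence of a second intersection point into a genuine improvement of $q^{*}(\pi(x))$, contradicting $\hat{\delta}\in\Lambda(\pi(x))$.
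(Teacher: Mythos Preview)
The paper does not actually prove this theorem: it quotes the result from Kapovich--Leeb--Millson [KLM$_1$] and refers to [BK$_1$, \S7.4] for an algebro-geometric proof. So there is no in-paper argument to compare against; nonetheless, your attempt has a genuine error in (ii) and an unfilled gap in (i).

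For (ii) your claim ``$P(\gamma)\subset P$ with $P$ maximal forces $\alpha_i(\dot\gamma)=0$ for every $i\neq i_P$'' is false. The inclusion $P(\gamma)\subset P$ only says $\Delta(P(\gamma))\subset\Delta(P)=\Delta\setminus\{\alpha_{i_P}\}$, hence $\alpha_{i_P}(\dot\gamma)>0$; it places no constraint on the other $\alpha_i(\dot\gamma)$, which may well be positive. Writing $\dot\gamma=\sum_i c_i x_i$ with $c_i\ge 0$, your correct index computation gives $\sum_i c_i\bigl(\sum_j\lambda_j(w_jx_i)\bigr)>0$, which shows only that \emph{some} summand is positive, not the one indexed by $i_P$. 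The missing ingredient is precisely the \emph{optimality} of the Kempf OPS: if $\sum_j\lambda_j(w_jx_{i_P})\le 0$, replace $\dot\gamma$ by $\dot\gamma':=\dot\gamma-c_{i_P}x_{i_P}$; since $P(\gamma)\subset P(\gamma')$ and $g_j^{-1}f\in Bw_jP(\gamma)\subset Bw_jP(\gamma')$, Lemma~\ref{l1} again gives $\mu^{\elal}(x,f\gamma'f^{-1})=-\sum_j\lambda_j(w_j\dot\gamma')\le\mu^{\elal}(x,\delta)<0$, while $q$ strictly decreases, contradicting $\hat\delta\in\Lambda(\pi(x))$. This is the step your argument skips.

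For (i), you correctly note that $fP$ lies in the intersection, but your uniqueness sketch does not go through as written. Given a second point $hP$, you produce ``a companion OPS $\delta'$,'' but never specify it, never show it lies in $\Lambda(\pi(x))$ (which would require matching both the normalized Mumford index and the norm $q$), and never explain why $P(\delta)$-conjugacy of $\delta$ and $\delta'$ forces $hP=fP$. Each of these steps is nontrivial; the proofs in [KLM$_1$] and [BK$_1$, \S7.4] exploit the strict convexity underlying Kempf's construction (two distinct destabilizing directions of the same slope yield a strictly shorter one by interpolation), and your appeal to the Bialynicki--Birula stratification and ``a genuine improvement of $q^*(\pi(x))$'' is only a gesture toward that argument.
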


 Now, we come to the  proof of the implication $(b)\Rightarrow (a)$ in Theorem \ref{EVT}.
Assume, if possible, that ($a$) (equivalently ($c$) as above) is false, i.e., the set
of semistable points of $(G/B)^s$ is empty. Thus,  any point $x=(\bar{g}_1,\dots,\bar{g}_s)\in (G/B)^s$
 is unstable. Choose a general $x$
 so that for each standard parabolic $\tilde{P}$ in $G$ and any $(z_1,\dots, z_s)\in W^s$, the
intersection
${g_1Bz_1\tilde{P}}\cap \dots\cap {g_sBz_s\tilde{P}}$ is
transverse (possibly empty) and dense in ${g_1\overline{Bz_1\tilde{P}}}\cap
 \dots\cap {g_s\overline{Bz_s\tilde{P}}}$ (cf. Theorem \ref{kleiman}). Let $\hat{\delta}=[\delta,a], P,\gamma, f, w_j$ be as above
 associated to $x$. It follows from Theorem ~\ref{jm} that $\bigcap_{j=1}^s
 g_jBw_j P\subset G/P$ is the single point $f$ and, since $x$ is general, we get
\begin{equation}\label{expdim}
[X^{P}_{w_1}] \dots  [X^{P}_{w_s}] = [X^P_{e}] \in H^*(G/P,\Bbb{Z}).
\end{equation}
 We now claim that the
$s$-tuple  $(w_1, \dots, w_s)\in (W/W_P)^s$ is $L$-movable.

Write $g_j=fp_jw_j^{-1}b_j$, for some $p_j\in P(\gamma)$ and $b_j\in B$
(where we have abused the notation to also denote a lift of $w_j$ in $N(H)$ by $w_j$). Hence,
$$\delta(t) \bar{g}_j=f \gamma(t) p_jw_j^{-1}B=f\gamma(t) p_j\gamma^{-1}(t)w_j^{-1}B\in G/B.$$
Define,
$l_j = \lim_{t\to 0} \gamma(t)p_j\gamma^{-1}(t).$
Then,  $l_j \in L(\gamma).$
Therefore,
$$\lim_{t\to 0}\delta(t)x=(f l_1 w_1^{-1}B,\dots, f l_s w_s^{-1}B).$$

By Theorem ~\ref{RR}, $\hat{\delta}\in \Lambda(\pi(\lim_{t\to 0}\delta(t)x))$.
We further note that clearly
$fP(\gamma)\in \cap_j(fl_j w_j^{-1})B w_j P(\gamma).$

Applying Theorem ~\ref{jm} to the   unstable point $x_o=\lim_{t\to 0} \delta(t)x$ yields:
 $fP$ is the only point in the  intersection
$\bigcap_{j=1}^s f l_j w_j^{-1}Bw_j P$, i.e.,
translating by $f$, we get:
$\dot{e}=eP$ is the only point in the intersection
$\Omega:=\bigcap_{j=1}^s
l_j w_j^{-1}Bw_j P.$ Thus, $\dim \Omega=0$.
By (\ref{expdim}), the  expected dimension of
$\Omega$ is $0$ as well.
 If this intersection $\Omega$ were not transverse at $\dot{e}$,
 then by  [F$_1$, Remark 8.2], the local multiplicity
at $\dot{e}$ would be  $>1$, each $w_j^{-1}Bw_jP$ being  smooth.
Further, $G/P$ being a homogeneous space, any other component of the intersection
$\bigcap
l_j \overline{w_j^{-1}Bw_j P}$
contributes nonnegatively to the intersection product
$[X^P_{w_1}] \dots [X^P_{w_s}]$
(cf. Proposition \ref{fultonproper}). Thus, from  (\ref{expdim}), we get that
the intersection   $\bigcap l_jw_j^{-1} Bw_jP$ is  transverse
 at $e\in G/P$, proving that  $(w_1,\dots,w_s)$ is $L$-movable.
Now, part (ii)  of Theorem ~\ref{jm} contradicts the inequality
$I^P_{(w_1, \dots, w_s)}$. Thus, the set of semistable points of $(G/B)^s$ is
nonempty, proving condition ($a$) of Theorem
\ref{EVT}.
\qed

The following result follows easily by combining Theorems \ref{EVT} and \ref{sj}.
For a maximal parabolic $P$, let $\alpha_{i_P}$ be the unique simple root
not in the Levi of $P$ and we set  $\omega_P:= \omega_{i_P}$.
\begin{corollary}\label{7.5}  Let $(h_1,\dots,h_s)\in\frh_{+}^s$.  Then, the following
are equivalent:

(a) $(h_1,\dots,h_s)\in\bar\Gamma_s(\fg)$.

(b)
 For every standard maximal parabolic subgroup $P$ in $G$ and every choice of
  $s$-tuples  $(w_1, \dots, w_s)\in (W^P)^s$ such that
$$[X^P_{w_1}]\odot_0 \cdots \odot_0 [X^P_{w_s}] = [X^P_{e}],$$
the following inequality holds:
\begin{equation}\label{4n}\omega_P(\sum_{j=1}^s\,w_j^{-1}h_j)\leq 0.
\end{equation}
\end{corollary}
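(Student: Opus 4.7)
The plan is to deduce Corollary~\ref{7.5} essentially verbatim from the proof scheme used for Corollary~\ref{eigen}, replacing the input Theorem~\ref{thm1} with the sharper Theorem~\ref{EVT}. First I would observe that both sides define closed convex cones in $\frh_+^s$: the set $\bar\Gamma_s(\fg)$ is a closed convex rational polyhedral cone by the convexity theorem from symplectic geometry cited in the introduction, and the set cut out by the inequalities in (b) is visibly a closed convex cone defined by finitely many linear inequalities with rational coefficients. Since rational points are dense in both cones and both cones are stable under positive rational scaling, it suffices to establish the equivalence for points $(h_1,\dots,h_s)\in\frh_+^s$ whose images $\lambda_j:=\varphi(h_j)$ under the Killing form isomorphism $\varphi\colon\frh\simto\frh^*$ lie in $\Lambda_+$, after clearing denominators.

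Next I would transport the problem to $\Lambda_+^s$ via $\varphi$. By Theorem~\ref{sj}, we have $\varphi(\bar\Gamma_s(\fg))\cap\Lambda_+^s=\Gamma_s(G)$, so $(h_1,\dots,h_s)\in\bar\Gamma_s(\fg)$ is equivalent to $(\lambda_1,\dots,\lambda_s)\in\Gamma_s(G)$. By Theorem~\ref{EVT}, the latter is equivalent to the inequalities
$$\sum_{j=1}^s\lambda_j(w_jx_P)\leq 0$$
holding for every standard maximal parabolic $P$ and every $(w_1,\dots,w_s)\in(W^P)^s$ with $[X^P_{w_1}]\odot_0\cdots\odot_0[X^P_{w_s}]=[X^P_e]$.

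The remaining step is to match the two systems of inequalities under $\varphi$. Using $W$-invariance of the Killing form $\langle\,,\,\rangle$ and the defining relation $\lambda_j=\varphi(h_j)$, one has
$$\lambda_j(w_jx_P)=\langle h_j,w_jx_P\rangle=\langle w_j^{-1}h_j,x_P\rangle.$$
Under the identification $\varphi$, the element $x_P\in\frh$ corresponds (as recalled in the proof of Corollary~\ref{eigen}) to the positive multiple $\frac{2}{\langle\alpha_{i_P},\alpha_{i_P}\rangle}\omega_P$ of the fundamental weight $\omega_P$. Hence $\langle w_j^{-1}h_j,x_P\rangle$ equals $\frac{2}{\langle\alpha_{i_P},\alpha_{i_P}\rangle}\omega_P(w_j^{-1}h_j)$, a positive scalar multiple of $\omega_P(w_j^{-1}h_j)$. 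Summing over $j$, the inequality from Theorem~\ref{EVT} is equivalent (up to the positive factor) to $\omega_P(\sum_{j=1}^s w_j^{-1}h_j)\leq 0$, which is exactly the inequality in (b).

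I do not anticipate a substantial obstacle: the result is essentially a translation of Theorem~\ref{EVT} from the weight-lattice side to the Cartan side, and the only nontrivial ingredients, namely the density reduction to integral points and the Killing-form bookkeeping that turns $\lambda_j(w_jx_P)$ into $\omega_P(w_j^{-1}h_j)$, are already executed in full in the proof of Corollary~\ref{eigen}. The only check worth verifying carefully is that the cone $\bar\Gamma_s(\fg)$ really is closed and that its integral points under $\varphi$ exhaust $\Gamma_s(G)$ (as opposed to, say, a sub-semigroup), but both are covered by Theorem~\ref{sj} and the convexity statement quoted in the introduction.
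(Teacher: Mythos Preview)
Your proposal is correct and follows essentially the same route as the paper: the paper simply states that Corollary~\ref{7.5} follows by combining Theorems~\ref{EVT} and~\ref{sj}, exactly as Corollary~\ref{eigen} was deduced from Theorems~\ref{thm1} and~\ref{sj}, with the Killing-form identification $x_P\leftrightarrow \tfrac{2}{\langle\alpha_{i_P},\alpha_{i_P}\rangle}\omega_P$ carrying $\lambda_j(w_jx_P)$ to a positive multiple of $\omega_P(w_j^{-1}h_j)$. Your added density/closedness justification is more explicit than the paper's treatment but is the same implicit argument.
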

\begin{remark}
The cone $\bar{\Gamma}_3(\frg)\subset \frh_+^3$ is quite explicitly determined
  for any simple $\frg$ of rank
 $2$ in [KLM$_1$, $\S$7]; any simple $\frg$ of rank $3$
 in [KuLM] (reproduced in Section \ref{sec10});  and for $\frg=so(8)$ in [KKM].
 It has $12 (6+6); 18 (9+9); 30 (15+15); 41 (10+21+10);  93 (18+48+27); 93 (18+48+27); 294
 (36+186+36+36); 1290 (126+519+519+126); 26661 (348+1662+4857+14589+4857+348)$
 facets inside  $\frh_+^3$ (intersecting the interior of $\frh_+^3$) for $\frg$
 of type $A_2; B_2; G_2;
 A_3; B_3; C_3; D_4; F_4; E_6$  respectively. The notation $30 (15+15)$ means
 that there are $15$  (irredundant) inequalities coming from $G/P_1$ and there are $15$
 inequalities coming from $G/P_2$ via Corollary \ref{7.5} (b).
  (The indexing convention is as in [Bo, Planche
 I - IX].)
\end{remark}

\section{Study of the saturated restriction semigroup and irredundancy of its
inequalities}\label{section8}
This section is based on the work of Ressayre [R$_1$] (also see [Br]).

Let $G \subset \G$ be connected reductive complex algebraic groups. We fix a maximal torus $H$ (resp. $\hat{H}$) and a
Borel subgroup $H\subset B$ (resp. $\hat{H} \subset \hat{B}$) of $G$ (resp. $\G$) such that $H\subset \hat{H}$ and
  $B\subset \hat{B}$. We shall follow the notation from Section \ref{sec1} for $G$ and the corresponding objects for
$\G$ will be denoted by a hat on the top.

Define the {\it saturated restriction
semigroup}
$$
\Gamma (G, \G)=\left\{(\lambda, \hat{\lambda})\in \Lambda_+\times \hat{\Lambda}_+:
      \left[V(N\lambda)\otimes \hat{V}( N\hat{\lambda})\right]^{G}\neq 0,\text{~ for some ~} N\geq 1\right\}.
$$

The aim of this section is to determine this semigroup in terms of an irredundant
system of inequalities.

\begin{lemma}\label{lemma7.1ress}
The interior of $\Gamma(G,\hat{G})_{\mathbb{R}}$ inside
$\Lambda_+(\br)\times \hat{\Lambda}_+(\br)$ is nonempty if and only
if no non-zero ideal of $\frg$ is an ideal of
$\hat{\frg}$, where $\Gamma(G,\hat{G})_{\mathbb{R}}$ is the
cone inside $\Lambda_+(\br)\times \hat{\Lambda}_+(\br)$ generated by
$\Gamma(G,\hat{G})$ and $\Lambda_+(\br)$ is the cone inside $\Lambda \otimes_\bz
\,\br$ generated by $\Lambda_+$ and $\hat{\Lambda}_+(\br)$ has a similar meaning.
\end{lemma}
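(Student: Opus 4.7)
The plan is to reformulate the claim as saying the $\mathbb{R}$-linear span of $\Gamma(G,\hat{G})$ inside $\Lambda_{\mathbb{R}}\times\hat{\Lambda}_{\mathbb{R}}$ equals the whole ambient vector space, since $\Lambda_+(\br)\times\hat{\Lambda}_+(\br)$ is itself top-dimensional there. I will then match linear relations satisfied on all of $\Gamma(G,\hat{G})$ with nonzero common ideals of $\frg$ and $\hat{\frg}$.

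For the easy direction (common ideal $\Rightarrow$ empty interior): suppose $\mathfrak{i}$ is a nonzero ideal of both $\frg$ and $\hat{\frg}$, and let $G_1\subset G$ be the connected closed subgroup with Lie algebra $\mathfrak{i}$; it is normal in $G$ and in $\hat{G}$. The connected form of Clifford's theorem (the finite set of $G_1$-isomorphism types appearing in a $\hat{G}$-irreducible is permuted by the connected quotient $\hat{G}/G_1$ and is therefore a singleton) gives $\hat{V}(\hat{\lambda})|_{G_1}\cong V_1(\hat{\mu}(\hat{\lambda}))^{\oplus m}$ for an irreducible $G_1$-module $V_1(\hat{\mu}(\hat{\lambda}))$; similarly $V(\lambda)|_{G_1}$ is $V_1(\mu(\lambda))$-isotypic. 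With $H_1:=H\cap G_1$ a maximal torus of $G_1$ and $B_1:=B\cap G_1=\hat{B}\cap G_1$ a Borel, computing with the respective highest-weight vectors yields $\mu(\lambda)=\lambda|_{H_1}$ and $\hat{\mu}(\hat{\lambda})=\hat{\lambda}|_{H_1}$. Any nonzero element of $[V(N\lambda)\otimes\hat{V}(N\hat{\lambda})]^G$ is in particular $G_1$-invariant, forcing $V_1(N\mu(\lambda))^{*}\cong V_1(N\hat{\mu}(\hat{\lambda}))$, which is the nontrivial linear relation
\[
\lambda|_{H_1} \;+\; w_o^{G_1}\bigl(\hat{\lambda}|_{H_1}\bigr)\;=\;0
\]
on all of $\Gamma(G,\hat{G})$, where $w_o^{G_1}$ is the longest element of the Weyl group of $G_1$. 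Since $\mathfrak{i}\neq 0$ forces $H_1\neq\{e\}$, this is a proper linear constraint, so $\Gamma(G,\hat{G})_{\mathbb{R}}$ lies in a proper hyperplane and has empty interior.

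For the converse, assume $\Gamma(G,\hat{G})_{\mathbb{R}}$ is contained in a proper hyperplane, so some nonzero $(X,\hat{X})\in\frh\oplus\hat{\frh}$ satisfies $\lambda(X)+\hat{\lambda}(\hat{X})=0$ for every $(\lambda,\hat{\lambda})\in\Gamma(G,\hat{G})$. The algebraic Peter--Weyl decomposition $\mathbb{C}[\hat{G}]=\bigoplus_{\hat{\lambda}}\hat{V}(\hat{\lambda})^{*}\otimes\hat{V}(\hat{\lambda})$ combined with the $G$-equivariant restriction surjection $\mathbb{C}[\hat{G}]\twoheadrightarrow\mathbb{C}[G]$ shows that every $G$-irreducible $V(\lambda)$ embeds in some $\hat{V}(\hat{\lambda})|_G$; hence both coordinate projections of $\Gamma(G,\hat{G})$ equal $\Lambda_+$ and $\hat{\Lambda}_+$, so necessarily $X\neq 0$ and $\hat{X}\neq 0$. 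After rescaling $X$ and $\hat{X}$ to be rational they define OPSs $\sigma_X\in O(H)$ and $\hat{\sigma}_{\hat{X}}\in O(\hat{H})$.

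The main obstacle is then the structural step of extracting a common nonzero ideal from the single linear datum $(X,\hat{X})$. My plan is to realize $\Gamma(G,\hat{G})_{\mathbb{R}}$ as the GIT/moment cone for the diagonal $G$-action on $G/B\times\hat{G}/\hat{B}$ through the line bundles $\cl(\lambda)\boxtimes\hat{\cl}(\hat{\lambda})$. By Proposition~\ref{propn14}(e) together with Lemma~\ref{l1}, the hyperplane condition on $\Gamma(G,\hat{G})_{\mathbb{R}}$ translates into the vanishing of the Mumford index at every $G$-semistable point for a cocharacter $\tau$ of $\hat{H}$ built diagonally from $\sigma_X$ and $\hat{\sigma}_{\hat{X}}$. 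Part (c) of Proposition~\ref{propn14} then shows that $\tau$, and all of its $\hat{G}$-conjugates, fix every semistable configuration, so the identity component of the Zariski closure of the subgroup of $\hat{G}$ generated by their images is a positive-dimensional connected normal subgroup $G_1\subset\hat{G}$ contained in $G$; its Lie algebra is then a nonzero common ideal of $\frg$ and $\hat{\frg}$. Carrying out this OPS-to-normal-subgroup extraction---promoting universal Mumford-vanishing to a genuine normal subgroup via Hilbert--Mumford---is the technical heart of the argument and the main obstacle to a clean write-up.
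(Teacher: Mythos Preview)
The paper's proof is entirely different and one line long: it cites [MR, Corollaire~1], which identifies the codimension of $\Gamma(G,\hat{G})_{\mathbb{R}}$ with $\dim H_o$, where $H_o$ is the kernel of the left $H$-action on $\hat{G}/G$. One then checks that $H_o=H\cap\bigcap_{\hat g\in\hat G}\hat g G\hat g^{-1}$ is a maximal torus of the $\hat G$-core $N:=\bigcap_{\hat g}\hat g G\hat g^{-1}$ of $G$, so $\dim H_o>0$ iff $\operatorname{Lie}N$ is a nonzero ideal common to $\frg$ and $\hat\frg$. No Mumford indices or individual semistable points enter.

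Your easy direction is correct; the Clifford-theory packaging is heavier than needed but valid.

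Your hard direction, however, has a genuine gap, not just missing bookkeeping. The annihilating functional you extract is $(\lambda,\hat\lambda)\mapsto\lambda(X)+\hat\lambda(\hat X)$ with $X\in\frh$ and $\hat X\in\hat\frh$, and there is no reason for $\hat X$ to lie in $\frh$. Hence there is no OPS of the acting group $G$ whose Mumford index on $G/B\times\hat G/\hat B$ equals this functional: by Lemma~\ref{l1}, for $\delta\in O(H)$ the index at $(gB,\hat g\hat B)$ is $-\lambda(w\dot\delta)-\hat\lambda(\hat w\dot\delta)$, with the \emph{same} $\dot\delta$ appearing in both slots and with $w,\hat w$ varying with the point, not a fixed decoupled pair $(X,\hat X)$. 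Passing to an OPS $\tau$ of $\hat H$ puts you outside $G$, so Proposition~\ref{propn14}(e) for the diagonal $G$-action no longer applies to $\tau$. Even setting this aside, Proposition~\ref{propn14}(c) does not say that $\mu^{\elal}(x,\delta)=0$ forces $\delta(t)x=x$; it only says invariant sections nonvanishing at $x$ remain nonvanishing at the limit. And finally, $\hat G$-conjugates of $\tau$ are OPSs of $\hat G$, not of $G$, so the normal subgroup of $\hat G$ they generate has no a priori reason to sit inside $G$, which is precisely the containment you need.

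If you want a self-contained proof avoiding the citation, the workable route is the one underlying [MR]: prove the general moment-cone statement that the codimension of $\Gamma(G,\hat G)_{\mathbb R}$ equals the dimension of the generic $G$-isotropy on the relevant variety, and then compute that isotropy. Trying to read off a common ideal from a single linear relation via Hilbert--Mumford, as you propose, does not go through.
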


\begin{proof}
By [MR, Corollaire 1], the codimension of
$\Gamma(G,\hat{G})_{\mathbb{R}}$ in $\Lambda_+(\br)\times \hat{\Lambda}_+(\br)$ is
the dimension of the kernel $H_{o}$ of
the adjoint action
$
\Ad : H\to \Aut (\hat{G}/G).
$
Clearly,
$
H_{o}=\bigcap_{\hat{g}\in\hat{G}}\hat{g}G
\hat{g}^{-1},
$
which is a normal subgroup of $\hat{G}$ contained in $G$.
Moreover, any normal subgroup $N$ of $G$ which is also normal in
$\hat{G}$ is of course contained in $H_{o}$. This proves the lemma.
\end{proof}
\begin{remark} A stronger result than the above lemma is proved in [PR, Theorem 4].
\end{remark}

For any $G$-dominant OPS $\delta\in O(H)$, (i.e., $\dot{\delta}\in \frh_+$), let $P(\delta)$ (resp. $\hat{P}(\delta)$)
be the Kempf's parabolic associated to $\delta$ inside $G$ (resp. $\G$), cf. the identity
\eqref{2n}. Since $\delta$ is dominant for $G$,  $P(\delta)$ is a standard parabolic
subgroup of $G$.

Analogous to the Definition \ref{D1}, we define the following.
\begin{definition}\label{bknewdef} Let $(w,\hat{w})\in W^{P(\delta)}\times
\hat{W}^{\hat{P}(\delta)}$ be such that
$$\ell(\hat{w})+\ell (w)=\dim \hat{G}/\hat{P}(\delta).$$
Then, we say that $(w,\hat{w})$ {\it is $L$-movable} if the canonical map
$$T_{e}(\Phi_w^{P(\delta)})
\xrightarrow{{(d\iota)_e}}\dfrac{T_{\hat{e}}(\G/\hat{P}(\delta))}{T_{\hat{e}}
(\hat{l}\,\hat{\Phi}^{\hat{P}(\delta)}_{\hat{w}})}$$
is an isomorphism for some $\hat{l}\in \hat{L}(\delta)$,
where $e$ (resp. $\hat{e}$) is the base point $1\cdot P(\delta)\in G/P(\delta)$
(resp. $1\cdot \hat{P}(\delta)\in \G/\hat{P}(\delta)$).
\end{definition}
For any $w\in W/W_{P(\delta)}$, let
$\gamma^{P(\delta)}_w$ be the sum of the $H$-weights in the normal space
$T_e(G/P(\delta))/T_e(\Phi_w^{P(\delta)})$. We similarly define
$\hat{\gamma}^{\hat{P}(\delta)}_{\hat{w}}$ for any $\hat{w}\in
\hat{W}/\hat{W}_{\hat{P}(\delta)}$. Then, it is easy to see from Lemma \ref{mt}
(since  $\delta$ is $G$-dominant) that
\begin{equation}\label{ress1900}
\gamma^{P(\delta)}_w (\dot{\delta})= -(\rho +w^{-1}\rho)(\dot{\delta}).
\end{equation}
Moreover,
\begin{equation}\label{ress1901}
\hat{\gamma}^{\hat{P}(\delta)}_{\hat{w}}(\dot{\delta})= -(\hat{v}^{-1}\hat{\rho}
+\hat{w}^{-1}\hat{\rho})(\dot{\delta}),
\end{equation}
where $\hat{v}\in \hat{W}$ is such that  $\hat{v}(\dot{\delta})\in \hat{\frh}_+$.

We have the following result analogous to Proposition~\ref{T1}.

\begin{proposition}\label{prop7.3ress}
Let $(w,\hat{w})\in W^{P(\delta)}\times \hat{W}^{\hat{P}(\delta)}$ be
such that
$$
\ell(\hat{w})+\ell(w)=\dim \hat{G}/\hat{P}(\delta).
$$
Then, the following are equivalent:
\begin{itemize}
\item[\rm(a)] $(w,\hat{w})$ is $L$-movable for the embedding
$
\iota:G/P(\delta)\to \hat{G}/\hat{P}(\delta),
$

\item[\rm(b)] $
  [X^{P(\delta)}_{w}]\cdot \iota^{*}[\hat{X}_{\hat{w}}^{\hat{P}(\delta)}]=d[pt]$,
  for some $d\neq
  0$, and
$$
\hat{\gamma}^{\hat{P}(\delta)}_{\hat{w}}(\dot{\delta})=
\gamma^{P(\delta)}_{e}(\dot{\delta})-\gamma^{P(\delta)}_{w}(\dot{\delta}).
$$
\end{itemize}
\end{proposition}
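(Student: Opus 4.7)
The plan is to imitate the proof of Proposition~\ref{T1}, now with the role of a single flag variety replaced by the pair $(G/P(\delta),\hat G/\hat P(\delta))$ linked by $\iota$. Write $P=P(\delta)$ and $\hat P=\hat P(\delta)$, and consider $X:=(P/B_L)\times(\hat P/\hat B_{\hat L})$, on which $P$ acts diagonally via $P\hookrightarrow \hat P$. Form the $P$-equivariant bundles $\mathcal V:=p_1^{\ast}\mathcal T_w$ and $\hat{\mathcal W}:=p_2^{\ast}(\hat{\mathcal T}/\hat{\mathcal T}_{\hat w})$ on $X$, and define the bundle map
$$\Theta:\mathcal V\longrightarrow \hat{\mathcal W}$$
whose fiber at $(pB_L,\hat p\hat B_{\hat L})$ is the composition $pT_w\xrightarrow{(d\iota)_e}\hat T\to \hat T/\hat p\hat T_{\hat w}$. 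The hypothesis $\ell(w)+\ell(\hat w)=\dim\hat G/\hat P$ forces $\mathcal V$ and $\hat{\mathcal W}$ to have the same rank, so $\theta:=\det\Theta$ is a $P$-invariant section of
$$\mathcal L\;:=\;\mathcal L_P(\chi_w-\chi_1)\boxtimes \hat{\mathcal L}_{\hat P}(\hat\chi_{\hat w}),$$
by Lemma~\ref{mt} and its $\hat G$-analogue.

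The interpretation of $\theta$ parallels Lemma~\ref{movL}. Using Proposition~\ref{Firstnew} and Proposition~\ref{fultonproper}, the nonvanishing $\theta\not\equiv 0$ is equivalent to the existence of $(p,\hat p)\in P\times \hat P$ for which the intersection $p\Phi^{P}_{w}\cap \hat p\hat\Phi^{\hat P}_{\hat w}$ is transverse at $\hat e$, which in turn is equivalent to $[X^{P}_{w}]\cdot \iota^{\ast}[\hat X^{\hat P}_{\hat w}]=d[\mathrm{pt}]$ with $d\ne 0$; and the $L$-movability of $(w,\hat w)$ from Definition~\ref{bknewdef} is equivalent to $\theta$ being nonvanishing at some point $(eB_L,\hat l\hat B_{\hat L})$ with $\hat l\in \hat L(\delta)$.

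For the direction (a)$\Rightarrow$(b), $L$-movability gives $\theta\not\equiv 0$, which is the first half of (b). Since $\dot\delta\in\frh$ is annihilated by every root of $L(\delta)$ and of $\hat L(\delta)$, the OPS $\delta$ is central in $L$ and in $\hat L$, so it fixes the point $x_0:=(eB_L,\hat l\hat B_{\hat L})$; also $\delta\in P$, so $\theta$ is $\delta$-invariant. A nonvanishing $\delta$-invariant section at a $\delta$-fixed point forces the character of $\delta$ on the one-dimensional fiber $\mathcal L|_{x_0}$ to vanish. Lemma~\ref{l1} applied on each factor, with $P_L(\delta)=L$ and $\hat P_{\hat L}(\delta)=\hat L$ (consequence of centrality of $\delta$), evaluates this character to $-(\chi_w-\chi_1)(\dot\delta)-\hat\chi_{\hat w}(\dot\delta)$. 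Using \eqref{eqn5} together with $\rho^{L}(\dot\delta)=\hat\rho^{\hat L}(\dot\delta)=0$ and the definitions \eqref{ress1900}--\eqref{ress1901}, its vanishing rewrites precisely as $\hat\gamma^{\hat P}_{\hat w}(\dot\delta)=\gamma^{P}_{e}(\dot\delta)-\gamma^{P}_{w}(\dot\delta)$.

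The converse (b)$\Rightarrow$(a) is the mirror argument using Proposition~\ref{propn14}(c): the cup-product condition supplies some $(\bar p,\bar{\hat p})\in X$ with $\theta(\bar p,\bar{\hat p})\neq 0$, and the same Lemma~\ref{l1} computation converts the $\gamma$-identity of (b) into $\mu^{\mathcal L}((\bar p,\bar{\hat p}),\delta)=0$. By Proposition~\ref{propn14}(c), $\theta$ therefore remains nonzero at $x_\infty:=\lim_{t\to 0}\delta(t)(\bar p,\bar{\hat p})$. Because $\delta$ contracts the unipotent radicals $U_P,U_{\hat P}$ (all roots of $\hat P$ outside $\hat L$ take strictly positive values at $\dot\delta$) while fixing $L$ and $\hat L$ centrally, $x_\infty$ lies in $(L/B_L)\times(\hat L/\hat B_{\hat L})$. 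A final translation by the diagonal $L$-action, which preserves both $\theta$ and this subvariety, moves the first coordinate to $eB_L$ and produces the required $\hat l\in\hat L(\delta)$ with $\theta(eB_L,\hat l\hat B_{\hat L})\neq 0$. The subtlest point I anticipate is the weight bookkeeping in the previous paragraph: because $\delta$ is $G$-dominant but in general not $\hat G$-dominant, $\hat P$ need not be a standard parabolic of $\hat G$, and the auxiliary Weyl element $\hat v$ from \eqref{ress1901} enters; matching $\hat\chi_{\hat w}$ (defined via the first Chern class of $\hat{\mathcal T}/\hat{\mathcal T}_{\hat w}$) against the right-hand side of \eqref{ress1901} is the step that demands the most care.
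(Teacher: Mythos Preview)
Your plan is correct and follows essentially the same strategy as the paper: build a determinant section $\theta$ of an appropriate line bundle, identify its nonvanishing with the cup-product condition and with $L$-movability, and use centrality of $\delta$ in $\hat L(\delta)$ together with Proposition~\ref{propn14}(c) to pass between the two.

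The one noteworthy difference is the choice of base. You work on the product $X=(P/B_L)\times(\hat P/\hat B_{\hat L})$ with the diagonal $P$-action, pull back $\mathcal T_w$ on the first factor, and at the end use a diagonal $L$-translation to force the first coordinate to $eB_L$. The paper instead exploits that in Definition~\ref{bknewdef} only $\hat l\in\hat L(\delta)$ varies: it works on the single base $\hat P(\delta)/\hat B_{\hat L(\delta)}$, replaces your $p_1^*\mathcal T_w$ by the \emph{trivial} bundle $\epsilon(T_w)$, and obtains a $B_{L(\delta)}$-invariant section rather than a $P$-invariant one. This makes the final $L$-translation step unnecessary and keeps the weight bookkeeping a bit lighter. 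Relatedly, the paper never introduces a symbol $\hat\chi_{\hat w}$ at all: it computes $\mu$ for the line bundle $\epsilon(\det T_w)^{-1}\otimes\det(\hat{\mathcal T}/\hat{\mathcal T}_{\hat w})$ directly as $\hat\gamma^{\hat P(\delta)}_{\hat w}(\dot\delta)-\gamma^{P(\delta)}_e(\dot\delta)+\gamma^{P(\delta)}_w(\dot\delta)$, using the intrinsic description of $\hat\gamma^{\hat P(\delta)}_{\hat w}$ as the sum of $H$-weights in $\hat T/\hat T_{\hat w}$. This sidesteps exactly the ``subtlest point'' you flag (the non-standardness of $\hat P(\delta)$ and the auxiliary $\hat v$ in \eqref{ress1901}): there is no need to match a $\hat\chi$-formula against \eqref{ress1901} if one never writes $\hat\chi_{\hat w}$ in the first place. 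Your route works, but the paper's single-factor setup is the cleaner packaging of the same idea.
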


\begin{proof}
Let $T$ (resp. $\hat{T}$) be the tangent space of $G/P(\delta)$
(resp. $\hat{G}/\hat{P}(\delta)$) at the base point $1\cdot P(\delta)$
(resp. $1\cdot \hat{P}(\delta)$). Similarly, let $T_{w}$
(resp. $\hat{T}_{\hat{w}}$) be the tangent space of
$\Phi^{P(\delta)}_{w}$
(resp. $\hat{\Phi}^{\hat{P}(\delta)}_{\hat{w}}$) at the base
point. Then, $\hat{T}$ and $\hat{T}_{\hat{w}}$ are
$\hat{B}_{\hat{L}(\delta)}$ modules since $\hat{B}_{\hat{L}(\delta)}$ stabilizes
$\hat{\Phi}^{\hat{P}(\delta)}_{\hat{w}}$ keeping the base point
$1\cdot \hat{P}(\delta)$ fixed, where $\hat{B}_{\hat{L}(\delta)}$
is the Borel subgroup $\hat{B}\cap \hat{L}(\delta)$ of
$\hat{L}(\delta)$.

Let $\hat{\mathcal{T}}$
(resp. $\hat{\mathcal{T}}/\hat{\mathcal{T}}_{\hat{w}}$)
be the vector bundle
$\hat{P}(\delta) \times^{\hat{B}_{\hat{L}(\delta)}}\,\hat{T}$
(resp. $\hat{P}(\delta)\times^{\hat{B}_{\hat{L}(\delta)}}\,
(\hat{T}/\hat{T}_{\hat{w}})$)
over the base space
$\hat{P}(\delta)/\hat{B}_{\hat{L}(\delta)}$. For any vector
space $V$, we let $\epsilon(V)$ be the trivial vector bundle
$\hat{P}(\delta)/\hat{B}_{\hat{L}(\delta)}\times V$ over
$\hat{P}(\delta)/\hat{B}_{\hat{L}(\delta)}$.

We have the $B_{L(\delta)}$-equivariant bundle map
$$
\Theta: \epsilon (T_{w})\to
\hat{\mathcal{T}}/\hat{\mathcal{T}}_{\hat{w}}
$$
obtained as the composition
$$
\epsilon (T_{w})\hookrightarrow \epsilon
(\hat{T})\displaystyle{\mathop{\rightarrow}^{\alpha}}\,
\hat{\mathcal{T}}\to
  \hat{\mathcal{T}}/\hat{\mathcal{T}}_{\hat{w}},
$$
where $B_{L(\delta)}:=B\cap L(\delta), B_{L(\delta)}$ acts on
$\epsilon (T_{w})$ diagonally, the first map is the canonical inclusion, the
last map is induced
by the projection and the $\hat{P}(\delta)$-equivariant isomorphism $\alpha$ is
given by
$$
\alpha (p\hat{B}_{\hat{L}(\delta)}, v)=[\hat{p},\hat{p}^{-1}v],\quad\text{for}\,\,
\hat{p}\in \hat{P}(\delta), v\in \hat{T}.
$$
(Observe that $\hat{T}$ is
canonically a $\hat{P}(\delta)$-module.)

By assumption, the map $\Theta$ is a bundle map between the bundles
of the same rank. Hence, $\Theta$ induces a bundle map $\theta$ by
taking the top exterior powers
$$
\theta : \epsilon (\det T_{w})\to
\det(\hat{\mathcal{T}}/\hat{\mathcal{T}}_{\hat{w}}),
$$
which can be viewed as a ${B}_{L(\delta)}$-invariant section in
$$
H^{0}\left(\hat{P}(\delta)/\hat{B}_{\hat{L}(\delta)},(\epsilon (\det
T_{w})^{-1})\otimes \det (\hat{\mathcal{T}}/\hat{\mathcal{T}}_{\hat{w}})\right).
$$
By definition, $(w,\hat{w})$ is $L$-movable if and only if the section
$\theta_{|({\hat{L}(\delta)}/\hat{B}_{\hat{L}(\delta)})}\neq 0.$
Now, the rest of the proof of this proposition is identical to the
proof of Proposition~\ref{T1} and Lemma~\ref{movL}, since the image of $\delta$ is
central in $\hat{L}(\delta)$. (Since  $\Imo \,\delta$ is
central in $\hat{L}(\delta)$, it is easy to see, by the same proof as that of Lemma
\ref{l1}, that
$$\mu^{(\epsilon (\det
T_{w})^{-1})\otimes \det (\hat{\mathcal{T}}/\hat{\mathcal{T}}_{\hat{w}})}
(\hat{p},\delta)= \hat{\gamma}^{\hat{P}(\delta)}_{\hat{w}}(\dot{\delta})-
\gamma^{P(\delta)}_{e}(\dot{\delta})+\gamma^{P(\delta)}_{w}(\dot{\delta}).)
$$
This proves the proposition.
\end{proof}

For any $\delta\in 0(H)$, the centralizer of $\delta$ in $G$:
$$
G^{\delta}:=\{g\in G:g\delta(t)=\delta(t)g,\quad\text{for all}\quad
t\in \mathbb{G}_{m}\}
$$
is the Levi subgroup $L(\delta)$ (containing $H$) of the Kempf's
parabolic subgroup $P(\delta)$. Let $Y$ be a smooth projective $G$-variety. Let
$C$ be
an irreducible component of $Y^{\delta}$ and let $$C_{+}=\{y\in
Y:\lim\limits_{t\to 0}\delta(t)\cdot y\,\,\text{ lies in}\,
C\}.$$
 Then, $C$ is a closed smooth $L(\delta)$-stable subvariety of $Y$
 (since $L(\delta)$ is connected); $C_{+}$ is a $P(\delta)$-stable, smooth, irreducible, locally-closed
subvariety of $Y$ (by a result of Bialynicki-Birula); and the map $\pi_\delta:
C_+ \to C,\,
y \mapsto \lim\limits_{t\to 0}\delta(t)\cdot y$ is a morphism.

Consider the $G$-equivariant morphism
$$\eta:G\times^{P(\delta)} C_{+} \rightarrow
Y,\,
[g,y]\mapsto g\cdot y.$$
 The following definition is due  to Ressayre
[R$_1$].

\begin{definition}\label{wellcovering}
The pair $(C,\delta)$ is called a {\em well-covering pair} if there
exists a $P(\delta)$-stable open (irreducible) subset $C^{o}_{+}$ of
$C_{+}$ such that $C^{o}_{+}\cap C$ is nonempty and the map
$
\eta_{o}=\eta_{|(G\times^{P(\delta)}\,C^{o}_{+})}
$
is an isomorphism onto an open subset of $Y$.
\end{definition}

Now, we take $Y=X:=G/B\times \hat{G}/\hat{B}$ with the diagonal action of $G$ and
let $\delta\in O(H)$ be a $G$-dominant OPS. It is easy to see that
\begin{equation}\label{resse1} X^{\delta}=\sqcup \,C_{\delta}(w,\hat{w}),
\end{equation} where
\begin{equation}\label{resse2}
C_{\delta}(w,\hat{w}):=(L(\delta)\cdot w^{-1}B/B)\times
(\hat{L}(\delta)\cdot \hat{w}^{-1}\hat{B}/\hat{B}),
\end{equation}
and the union runs over $(w,\hat{w})\in W^{P(\delta)}\times
\hat{W}^{\hat{P}(\delta)}$. Further, it is easy to see that
\begin{equation}\label{resse3}
C_{\delta}(w,\hat{w})_{+}=(P(\delta)\cdot w^{-1}B/B)\times
(\hat{P}(\delta)\cdot \hat{w}^{-1}\hat{B}/\hat{B}).
\end{equation}

\begin{lemma} \label{lemma7.5ress} For any $(w,\hat{w})\in W^{P(\delta)}\times
\hat{W}^{\hat{P}(\delta)}$, the following are equivalent:

(a)
The pair $(C_{\delta}(w,\hat{w}),\delta)$ is a well-covering pair.

(b) The pair  $(w,\hat{w})$ is $L$-movable for the embedding
$
\iota:G/P(\delta)\hookrightarrow
\hat{G}/\hat{P}(\delta)$ and
$$[X^{P(\delta)}_{w}]\cdot \iota^{*}[\hat{X}^{\hat{P}(\delta)}_{\hat{w}}] =[pt].
$$
\end{lemma}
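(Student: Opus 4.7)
First I would set up the morphism $\eta\colon\mathcal Z:=G\times^{P(\delta)}C_+\to Y$ with $Y:=G/B\times\hat G/\hat B$ as the fundamental object. Both (a) and (b) force the dimension equality $\dim\mathcal Z=\dim Y$: under (a) this is automatic since $\eta_o$ is an isomorphism onto an open subset, and a short count reduces it to $\ell(w)+\ell(\hat w)=\dim\hat G/\hat P(\delta)$, which is part of the hypothesis of Definition \ref{bknewdef} in (b). The key reduction is that $(C,\delta)$ is well-covering if and only if there exists $z_o\in C$ such that $d\eta$ is an isomorphism at $[1,z_o]$ and $\eta$ is generically of degree $1$. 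Indeed, given both conditions, the étale locus $\mathcal U_1\subset\mathcal Z$ of $\eta$ is a $G$-invariant, open, irreducible neighborhood of $[1,z_o]$; the generic degree of $\eta|_{\mathcal U_1}$ equals the generic degree of $\eta$, which is $1$, so $\eta|_{\mathcal U_1}$ is étale of degree $1$, hence an open immersion. Then $C^o_+:=\{z\in C_+:[1,z]\in\mathcal U_1\}$ is $P(\delta)$-stable (by $G$-invariance of $\mathcal U_1$), contains $z_o$, and makes $\eta_o$ an isomorphism onto an open subset of $Y$. The converse direction is obtained by picking any $z_o\in C\cap C^o_+$.

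Next I would identify the infinitesimal condition ``$d\eta$ iso at $[1,z]$ for some $z\in C$'' with $L$-movability. After left-translating by an element of $L(\delta)\subset G$, a general point of $C$ takes the form $z=(w^{-1}B,\hat l\hat w^{-1}\hat B)$ for some $\hat l\in\hat L(\delta)$. Decomposing the relevant tangent spaces under the $\delta$-action gives $\frg/\frp(\delta)\cong\frg^-$ (the negative-weight part), $T_zC_+=T_z^{\geq 0}Y$ (by Bialynicki-Birula, since $C_+$ is the attracting set of the $\delta$-fixed component $C$), and $T_zY=T_z^-Y\oplus T_z^{\geq 0}Y$. Then $d\eta_{[1,z]}$ is block-diagonal, equal to the identity on $T_z^{\geq 0}Y$ and to the infinitesimal-action map $\phi\colon\frg^-\to T_z^-Y$ on the negative part; in particular $d\eta$ is an isomorphism iff so is $\phi$. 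Writing $T_z^-Y=T_{w^{-1}B}^-(G/B)\oplus T_{\hat l\hat w^{-1}\hat B}^-(\hat G/\hat B)$ and descending each summand along the projections $G/B\to G/P(\delta)$, $\hat G/\hat B\to\hat G/\hat P(\delta)$ (which are isomorphisms on negative $\delta$-weight parts at $\delta$-fixed points), a weight-tracking argument identifies $\phi$ with the map
\[
T_{eP(\delta)}\Phi^{P(\delta)}_w\xrightarrow{(d\iota)_e} T_{e\hat P(\delta)}(\hat G/\hat P(\delta))/T_{e\hat P(\delta)}(\hat l\hat\Phi^{\hat P(\delta)}_{\hat w})
\]
of Definition \ref{bknewdef}. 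Hence ``$d\eta$ iso at some $[1,z]$ with $z\in C$'' is equivalent to $(w,\hat w)$ being $L$-movable.

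Finally, for the generic degree of $\eta$, I would count preimages of a generic point $(y_1,\hat y_1)\in Y$: a preimage is a class $[g,(z_1,z_2)]$ with $g^{-1}y_1\in P(\delta)w^{-1}B/B$ and $g^{-1}\hat y_1\in\hat P(\delta)\hat w^{-1}\hat B/\hat B$. Projecting to $G/P(\delta)\times\hat G/\hat P(\delta)$ and applying Kleiman transversality (Theorem \ref{kleiman}) together with Proposition \ref{fultonproper}, this count equals the integer $[X^{P(\delta)}_w]\cdot\iota^*[\hat X^{\hat P(\delta)}_{\hat w}]\in H^{2\dim G/P(\delta)}(G/P(\delta),\bz)\cong\bz$. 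Thus $\eta$ has generic degree $1$ iff this cup product equals $[X^{P(\delta)}_e]=[pt]$, and combined with the previous identification of the infinitesimal condition this gives (a) $\Leftrightarrow$ (b). The main technical obstacle lies in the middle paragraph: the weight-tracking identifying $\phi$ with the $L$-movability map requires careful bookkeeping, especially because $\hat l\hat w^{-1}\hat B$ is only $\delta$-fixed, not $\hat H$-fixed, so one must work with the coarser $\delta$-grading rather than the full $\hat H$-weight decomposition on the $\hat G/\hat B$ factor.
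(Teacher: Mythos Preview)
Your proposal is correct and reaches the same conclusion as the paper, but the route to connecting $L$-movability with the differential of $\eta$ is genuinely different. The paper begins with the single observation that the projection $\pi:G\times^{P(\delta)}C_\delta(w,\hat w)_+\to G/P(\delta)$ identifies the fiber $\eta^{-1}((gB,\hat g\hat B))$ with the locally closed subscheme $(gC_w^{P(\delta)})\cap(\hat g\hat C_{\hat w}^{\hat P(\delta)})$ of $\hat G/\hat P(\delta)$, and then reuses this description twice: for $(a)\Rightarrow(b)$, a reduced one-point fiber over a point of $C$ gives the transverse intersection $(l\Phi_w^{P(\delta)})\cap(\hat l\hat\Phi_{\hat w}^{\hat P(\delta)})$ at $\hat e$ (which is exactly $L$-movability), and for generic $(g,\hat g)$ it gives the cup-product count; for $(b)\Rightarrow(a)$ the same fiber description shows $\eta$ is birational over a Kleiman-open set, after which the paper runs the same étale-locus argument you use, and $L$-movability again (via transverse intersection at $\hat e$) places a point of $C$ in that locus.

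Your approach replaces the fiber description by the Bialynicki--Birula tangent decomposition $T_zY=T_z^-Y\oplus T_z^{\ge 0}Y$ at $z\in C$, reducing $d\eta_{[1,z]}$ to the negative-weight infinitesimal-action map $\phi:\frg^-\to T_z^-Y$, and then identifies $\phi$ with the map of Definition~\ref{bknewdef} after passing to $G/P(\delta)$ and $\hat G/\hat P(\delta)$. This is a clean structural explanation of why the center of $\hat L(\delta)$ (through $\delta$) governs the problem, and it makes the equivalence ``$d\eta$ iso at a point of $C$ $\Leftrightarrow$ $L$-movable'' intrinsic rather than dependent on an explicit fiber model. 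The cost, as you note, is the weight-tracking needed to pass from $G/B,\hat G/\hat B$ to $G/P(\delta),\hat G/\hat P(\delta)$ on the negative-$\delta$-weight parts; the paper's fiber description bypasses this bookkeeping entirely, since transversality of the Schubert-cell intersection at $\hat e$ is literally the $L$-movability condition. Your degree computation via Kleiman and Proposition~\ref{fultonproper} is essentially the same as the paper's, just phrased in terms of preimages rather than fibers.
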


\begin{proof}
The projection
$\pi:G\times^{P(\delta)}\,C_{\delta}(w,\hat{w})_{+}\to
G/P(\delta)$ induces an isomorphism between the fiber
$\eta^{-1}((gB,\hat{g}\hat{B}))$ and the locally closed subscheme
$(gC_w^{P(\delta)})\cap (\hat{g}\hat{C}_{\hat{w}}^{\hat{P}(\delta)})$ of
$\hat{G}/\hat{P}(\delta)$, for any $g\in G$ and $\hat{g}\in \hat{G}$.

{\em Proof of $(a) \Longrightarrow (b)$:} Since $(C_{\delta}(w,\hat{w}),\delta)$
is a well-covering
pair, there exist $l\in L(\delta)$ and
$\hat{l}\in \hat{L}(\delta)$ such that $\eta^{-1}_{o}(lw^{-1}B,
\hat{l}\hat{w}^{-1}\hat{B})$ is a reduced one point. Thus,
$\eta^{-1}(lw^{-1}B, \hat{l}\hat{w}^{-1}\hat{B})\simeq
(l\Phi^{P(\delta)}_{w})\cap
(\hat{l}\hat{\Phi}^{\hat{P}(\delta)}_{\hat{w}})$ is a reduced single
point in a neighborhood of $1\cdot \hat{P}(\delta)$, showing that $(w,\hat{w})$ is
$L$-movable.

Take any (general) $y_{o}=(g_{o}B,\hat{g}_{o}\hat{B})\in \Imo (\eta_{o})$ so
that $\eta^{-1}_{o}(y_{o})=\eta^{-1}(y_{o})$ and the intersection
$(gC^{P(\delta)}_{w})\cap (\hat{g}\hat{C}^{\hat{P}(\delta)}_{\hat{w}})$
is proper inside $\hat{G}/\hat{P}(\delta)$ and dense in
$(g X^{P(\delta)}_{w})\cap (\hat{g}\hat{X}^{\hat{P}(\delta)}_{\hat{w}})$. Such
a $y_{o}$ exists since $\Imo(\eta_{o})$ is open in $X$. Now,
$\eta^{-1}_{o}(y_{o})=\eta^{-1}(y_{o})$ is a single reduced point by
the assumption. Thus,
$(gC^{P(\delta)}_{w})\cap (\hat{g}\hat{C}^{\hat{P}(\delta)}_{\hat{w}})$
is a single reduced point, showing that
$$
[X^{P(\delta)}_{w}]\cdot \iota^{*}[\hat{X}^{\hat{P}(\delta)}_{\hat{w}}] =[pt].
$$

{\em Proof of $(b) \Longrightarrow (a)$:}
 Take a $G$-stable open subset
$V\subset G/B\times \hat{G}/\hat{B}$ so that for any
$(gB,\hat{g}\hat{B})\in V$, the intersection $gC^{P(\delta)}_{w}\cap
(\hat{g}\hat{C}_{\hat{w}}^{\hat{P}(\delta)})$ is transverse  inside
$\G/\hat{P}(\delta)$ and dense in
$(gX^{P(\delta)}_{w})\cap (\hat{g}\hat{X}_{\hat{w}}^{\hat{P}(\delta)})$. Since
$[X^{P(\delta)}_{w}]\cdot
\iota^{*}[\hat{X}^{\hat{P}(\delta)}_{\hat{w}}]=[pt]$ by assumption, for
any $(gB,\hat{g}\hat{B})\in V$, the scheme $(gC^{P(\delta)}_{w})\cap
(\hat{g}\hat{C}_{\hat{w}}^{\hat{P}(\delta)})$ is a reduced single
point. Thus, $\eta_{|\eta^{-1}(V)}:\eta^{-1}(V)\to V$ is an
isomorphism; in particular,
$\eta:G\times^{P(\delta)}\,C_{\delta}(w,\hat{w})_{+}\to X$ is a
birational map. Let $V'$ be the open subset
$$V':=\{y\in G\times^{P(\delta)}\,C_{\delta}(w,\hat{w})_{+}:(d\eta)_y\,\,
\text{is an isomorphism}\}.$$
Then, $V'$ is clearly $G$-stable and hence can be written as
$G\times^{P(\delta)}\,C^o_+$, for a $P(\delta)$-stable open subset $C^o_+$
of $C_{\delta}(w,\hat{w})_{+}$. Since $\eta_{|v'}$ is a smooth birational
morphism, it is an isomorphism onto an open subset of $X$ (cf. [Sh, Corollary 1,
$\S$4.3, Chap. II]). Since $(w,\hat{w})$ is $L$-movable, the point $1\cdot \hat{P}(\delta)$
is a reduced
  isolated point of the scheme $(l\Phi^{P(\delta)}_{w})\cap
  (\hat{l}\hat{\Phi}^{\hat{P}(\delta)}_{\hat{w}})$ for some $l\in
  L(\delta)$ and $\hat{l}\in \hat{L}(\delta)$. Hence $[1, (lw^{-1}B,
  \hat{l}\hat{w}^{-1}\hat{B})]\in V'$.
 Thus,
  $(C_{\delta}(w,\hat{w}),\delta)$ is a well-covering pair.
\end{proof}

\begin{definition} \label{ressspecial} We will call a  nonzero $G$-dominant indivisible OPS $\delta \in O(H)$
 {\it special for the pair $(G,\G)$} if
$\bc \dot{\delta}=\cap\,\Ker \beta$, where the intersection runs over the set of $\fh$-weights of $\hat{\mathfrak{l}}(\delta)/
{\mathfrak{l}}(\delta)$, where ${\mathfrak{l}}(\delta)$ (resp. $\hat{\mathfrak{l}}(\delta)$) denotes the Lie algebra of
$L(\delta)$ (resp. $\hat{L}(\delta)$).

We  denote the set of all  special OPS for the pair $(G,\G)$ by $\mathfrak{S}=\mathfrak{S} (G,\G)$. Clearly, it is a
finite set. Let us enumerate
$$\mathfrak{S} (G,\G)=\{\delta_1, \dots, \delta_q\}.$$
\end{definition}

\begin{theorem}\label{ress}  With the notation as above, assume that no nonzero
ideal of $\frg$ is an ideal of  $\hat{\frg}$.
Let $(\lambda, \hat{\lambda})\in \Lambda_+\times \hat{\Lambda}_+$. Then, the
 following three conditions  are equivalent.

(a) $(\lambda, \hat{\lambda})\in \Gamma (G, \G)$.
\vskip1ex

(b) For any $G$-dominant $\delta\in O(H)$, and any $(w, \hat{w})\in W^{P(\delta)}
\times
{\hat{W}}^{\hat{P}(\delta)}$ such that
$[X_w^{P(\delta)}]\cdot \iota^*([\hat{X}_{\hat{w}}^{\hat{P}(\delta)}])
\neq 0$  in
 $H^*(G/ P(\delta), \bz)$, where $\hat{X}_{\hat{w}}^{\hat{P}(\delta)}:=
 \overline{\hat{B}\hat{w}\hat{P}(\delta)/\hat{P}(\delta)}\subset
 \hat{G}/\hat{P}(\delta)$  (even though $\hat{P}(\delta)$ may not be a standard
 parabolic subgroup) and $\iota: G/ P(\delta) \to \G/\hat{P}(\delta)$
 is the canonical embedding, we have
 \begin{equation} \label{eqn102} I^{\delta}_{(w, \hat{w})}:\,\,\,\,\,\,
\lambda(w\dot{\delta})+ \hat{\lambda}(\hat{w}\dot{\delta})\leq 0.
\end{equation}

\vskip1ex

(c) For any OPS $\delta_i\in \mathfrak{S} (G,\G)$ and any $(w, \hat{w})\in W^{P(\delta_i)} \times
{\hat{W}}^{\hat{P}(\delta_i)}$ such that

($c_1$) \,$[X_w^{P(\delta_i)}]\cdot \iota^*([\hat{X}_{\hat{w}}^{\hat{P}(\delta_i)}])
 =  [X_e^{P(\delta_i)}]\in
 H^*(G/ P(\delta_i), \bz)$, and

($c_2$) \, $ \gamma^{P(\delta_i)}_e (\dot{\delta_i})-
\gamma^{P(\delta_i)}_w (\dot{\delta_i})=
\hat{\gamma}^{\hat{P}(\delta_i)}_{\hat{w}}(\dot{\delta}_i),$

the inequality $I^{\delta_i}_{(w, \hat{w})}$ (as in \eqref{eqn102}) is satisfied.
\end{theorem}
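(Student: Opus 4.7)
The plan is to set up the problem in Geometric Invariant Theory and then handle the three implications: (a)$\Rightarrow$(b) via Hilbert--Mumford, (b)$\Rightarrow$(c) which is immediate, and (c)$\Rightarrow$(a) via the Ressayre--Kempf analysis of unstable points and the well-covering pair machinery.

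\textbf{Setup.} I would consider $X := G/B \times \hat{G}/\hat{B}$ with the diagonal $G$-action and the $G$-linearized line bundle $\mathbb{L} = \mathbb{L}(\lambda, \hat{\lambda}) := \cl(\lambda) \boxtimes \hat{\cl}(\hat{\lambda})$. This bundle descends to an ample line bundle on a suitable product of partial flag varieties of $G$ and $\hat{G}$, and by the Borel--Weil theorem condition (a) is equivalent to the existence of a $G$-semistable point in $X$ with respect to $\mathbb{L}$. The implication (b)$\Rightarrow$(c) is then immediate, since the pairs $(\delta_i, w, \hat{w})$ considered in (c) form a subfamily of those in (b).

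\textbf{(a)$\Rightarrow$(b).} I would follow the pattern of Theorem~\ref{thm1}. Pick a $G$-semistable $y = (gB, \hat{g}\hat{B})$ with $(g, \hat{g})$ in the dense open subset of $G \times \hat{G}$ supplied by Theorem~\ref{kleiman}, so that, for every standard parabolic $\tilde P \subset G$, every standard parabolic $\hat{\tilde P} \subset \hat{G}$, and every Weyl group pair, the corresponding Schubert variety intersections are transverse and dense in their closures. Given a $G$-dominant OPS $\delta$ and $(w, \hat{w})$ with $[X_w^{P(\delta)}] \cdot \iota^*[\hat{X}_{\hat{w}}^{\hat{P}(\delta)}] \neq 0$, Proposition~\ref{fultonproper} produces $f \in G$ with $fP(\delta) \in gBwP(\delta) \cap \iota^{-1}(\hat{g}\hat{B}\hat{w}\hat{P}(\delta))$. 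Translating by $f^{-1}$ and applying the Hilbert--Mumford criterion (Proposition~\ref{propn14}(e)) to the OPS $\delta$ at $f^{-1}y$, Lemma~\ref{l1} (applied to each factor, the $\hat{G}$-factor after bringing $\dot{\delta}$ to $\hat{\mathfrak{h}}_+$ via the Weyl element $\hat{v}$ as in \eqref{ress1901}) evaluates the Mumford index to $-\lambda(w\dot{\delta}) - \hat{\lambda}(\hat{w}\dot{\delta})$, yielding $I^\delta_{(w, \hat{w})}$.

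\textbf{(c)$\Rightarrow$(a).} Suppose (a) fails, so every point of $X$ is unstable. By Proposition~\ref{birula}, $X$ decomposes into finitely many Hesselink strata $X_{d,\langle\hat{\delta}\rangle}$. Pick a generic $y = (gB, \hat{g}\hat{B})$ lying in one such stratum (say with the smallest $d$ achievable on an open subset), and let $\hat{\delta} = [\delta, a]$ be its Kempf class, conjugated so that $\dot{\delta} \in \mathfrak{h}_+$; define $(w, \hat{w}) \in W^{P(\delta)} \times \hat{W}^{\hat{P}(\delta)}$ via the cosets of $fP(\delta)$ in $gBwP(\delta)$ and of $f\hat{P}(\delta)$ in $\hat{g}\hat{B}\hat{w}\hat{P}(\delta)$, where $f \in G$ is produced as in the proof of Theorem~\ref{jm} transplanted to the $(G,\hat{G})$ setting. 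The KLM-type argument (as in the proof of Theorem~\ref{EVT}) shows that the intersection $gBwP(\delta) \cap \iota^{-1}(\hat{g}\hat{B}\hat{w}\hat{P}(\delta))$ reduces to the single point $fP(\delta)$, transversely by genericity of $y$, giving $[X_w^{P(\delta)}] \cdot \iota^*[\hat{X}_{\hat{w}}^{\hat{P}(\delta)}] = [X_e^{P(\delta)}]$ (condition $c_1$); moreover the $\delta$-fixed component $C_\delta(w, \hat{w})$ produced in \eqref{resse1}--\eqref{resse3} is stable under $\hat{L}(\delta)$, and Proposition~\ref{ressprop1}(c) applied to a neighborhood of $[1, (lw^{-1}B, \hat{l}\hat{w}^{-1}\hat{B})]$ identifies the pair $(C_\delta(w, \hat{w}), \delta)$ as well-covering. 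Lemma~\ref{lemma7.5ress} then upgrades this to $L$-movability of $(w, \hat{w})$, and Proposition~\ref{prop7.3ress} translates $L$-movability into the $\gamma$-equation $\gamma^{P(\delta)}_e(\dot\delta) - \gamma^{P(\delta)}_w(\dot\delta) = \hat{\gamma}^{\hat{P}(\delta)}_{\hat w}(\dot\delta)$ (condition $c_2$). The final task is to replace $\delta$ by a member of $\mathfrak{S}(G, \hat{G})$: within the centralizer directions preserving $L(\delta)$ and $\hat{L}(\delta)$ one may replace $\dot{\delta}$ by an indivisible generator of the line $\bigcap_{\beta} \ker\beta$, where $\beta$ ranges over $\mathfrak{h}$-weights on $\hat{\mathfrak{l}}(\delta)/\mathfrak{l}(\delta)$; neither the Kempf parabolics $P(\delta), \hat{P}(\delta)$ nor the stratum of $y$ changes under this deformation, and the inequality $\lambda(w\dot\delta) + \hat{\lambda}(\hat w \dot\delta)$ is a linear function of $\dot\delta$ that remains strictly positive (by the Kempf characterization $\mu^{\mathbb{L}}(\pi(y), \hat{\delta}) \leq -1$). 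This produces a special $\delta_i \in \mathfrak{S}$ violating $I^{\delta_i}_{(w,\hat{w})}$, contradicting (c).

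\textbf{Main obstacle.} The delicate step is the last one: producing a \emph{special} OPS rather than merely a Kempf one. The subtlety is that naively replacing $\dot{\delta}$ by a generator of $\bigcap_\beta \ker\beta$ might leave the Hesselink stratum or alter $(w, \hat{w})$; one must argue that the one-parameter family of OPS connecting the original $\delta$ to its special refinement stays inside the locus where the Kempf parabolic and the well-covering pair structure are preserved, using the rigidity of the Hesselink decomposition under deformation of $\delta$ in the central torus of $L(\delta) \cap \hat{L}(\delta)$. A secondary technicality is that $\hat{P}(\delta)$ is only a parabolic of $\hat{G}$, not necessarily \emph{standard}, so care is needed in invoking Schubert-variety statements on $\hat{G}/\hat{P}(\delta)$ via the conjugating element $\hat{v} \in \hat{W}$.
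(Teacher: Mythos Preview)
Your setup and the implications (a)$\Rightarrow$(b) and (b)$\Rightarrow$(c) are essentially as in the paper. For (c)$\Rightarrow$(a), your first move---using the Hesselink stratification (Proposition~\ref{birula}) to pick the open stratum $X_{d,\langle\hat\delta\rangle}$ and concluding via Proposition~\ref{ressprop1} that $(C_\delta(w,\hat w),\delta)$ is a well-covering pair with the inequality violated---also matches the paper. (The paper does this directly from Hesselink, without transplanting Theorem~\ref{jm}; your invocation of the latter is a detour but not a mistake.)

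The genuine gap is exactly your ``main obstacle,'' and your proposed resolution does not work. You suggest deforming $\dot\delta$ inside $\bigcap_\beta\ker\beta$ to an indivisible generator of that line---but this intersection is one-dimensional precisely when $\delta$ is already special, so there is nothing to deform to. When the intersection has dimension $>1$, moving $\dot\delta$ within it need not preserve $P(\delta)$ or $\hat P(\delta)$ (the roots of $\mathfrak l(\delta)$ are not among the $\beta$'s), so the well-covering pair, the pair $(w,\hat w)$, and even the sign of $\lambda(w\dot\delta)+\hat\lambda(\hat w\dot\delta)$ can all change.

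The paper takes a fundamentally different route here. Rather than modifying a single Kempf OPS, it argues globally via convex geometry: the Hesselink argument above shows that $\Gamma(G,\hat G)_{\mathbb R}$ is the cone cut out by \emph{all} well-covering-pair inequalities (with $G$-dominant indecomposable $\delta$, not yet special). Since this cone has nonempty interior by Lemma~\ref{lemma7.1ress}---this is where the hypothesis on ideals enters---it is determined by its facets meeting $\Lambda_{++}(\mathbb R)\times\hat\Lambda_{++}(\mathbb R)$. The paper then proves that any such facet necessarily comes from a special $\delta$. That proof is substantial: it uses variation of GIT (finite decomposition into GIT classes), picks $(\lambda_o,\hat\lambda_o)$ on the facet whose GIT class has nonempty interior in $F_+$, finds a point $x_o\in X^s\cap C$ with closed $G$-orbit, shows $G^o_{x_o}=\operatorname{Im}\delta$ (so the generic isotropy of $L(\delta)/\operatorname{Im}\delta$ on $C_\delta(w,\hat w)$ is finite), and then translates this into the statement that $\mathbb C\dot\delta=\bigcap_\beta\ker\beta$ by analyzing the $H$-action on $\hat U(\delta)/U(\delta)$. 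None of this appears in your proposal, and it is not replaceable by a pointwise deformation of $\dot\delta$.
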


\begin{proof} For a dominant pair $(\lambda, \hat{\lambda})\in \Lambda_+\times
\hat{\Lambda}_+$, we have the line bundle
$$\cl(\lambda\boxtimes \hat{\lambda}):=\cl(\lambda)\boxtimes \cl(\hat{\lambda})$$
on $X=G/B\times \G/\B$. Let $P(\lambda), \hat{P}(\hat{\lambda})$ be the unique
standard parabolic subgroups such that the line bundle
$\cl(\lambda\boxtimes \hat{\lambda})$ descends as an ample line bundle
$\mathbb{L}(\lambda\boxtimes \hat{\lambda})$ on $X(\lambda, \hat{\lambda}):=
G/P(\lambda)\times \G/\hat{P}(\hat{\lambda})$. As earlier, we call a point
$(gB,\g\B)\in X$ semistable with respect to the line bundle
$\cl(\lambda\boxtimes \hat{\lambda})$ if $\pi(gB,\g\B)$ is $G$-semistable
with respect to the ample line bundle
$\mathbb{L}(\lambda\boxtimes \hat{\lambda})$, where $\pi:X \to
X(\lambda, \hat{\lambda})$ is the canonical projection.

\medskip
\noindent
{\em Proof of $(a) \Rightarrow (b)$.} We abbreviate $P(\delta)$ (resp. $\hat{P}(\delta)$)
 by $P$ (resp. $\hat{P}$). Pick any (general) $(g,\g)\in G\times \G$
satisfying the following:

\begin{align} \label{eqn2001} &\g\hat{C}^{\hat{P}}_{\hat{w}}\cap g C_w^P \,\, \text{and}\,\,
 \g\hat{X}^{\hat{P}}_{\hat{w}}\cap g X_w^P\,\,\,\text{are proper intersections  in}
 \,\, \G/\hat{P}\\
 \,\,\,& \text{with}\,\,  \g\hat{C}^{\hat{P}}_{\hat{w}}\cap g C_w^P\,\,\,
 \text{dense in}\,\,\g\hat{X}^{\hat{P}}_{\hat{w}}\cap g X_w^P,\notag
 \end{align}
and
\begin{equation} (gB,\g\B)\,\,\, \text{is a $G$-semistable point of}\,\, X\,\,\,
\text{with respect to} \,\,\cl(\lambda\boxtimes \hat{\lambda}).
\end{equation}
Then, by the assumption on the cohomology product as in (b), we get $h\in G$
such that $h^{-1}P\in \g\hat{C}^{\hat{P}}_{\hat{w}}\cap g C_w^P$. Pick $\hat{v}\in
\hat{W}$ such that $\hat{v}\dot{\delta}\in \hat{\frh}_+$. Then,
\begin{align} \mu^{\cl(\lambda\boxtimes \hat{\lambda})}\bigl((hgB,h\hat{g}\B),
\delta\bigr)&=\mu^{\cl(\lambda)}\bigl(hgB,
\delta\bigr)+\mu^{\cl(\hat{\lambda})}\bigl(h\hat{g}\B,
\delta\bigr)\notag\\
&=\mu^{\cl(\lambda)}\bigl(hgB,
\delta\bigr)+\mu^{\cl(\hat{\lambda})}\bigl(\hat{v}h\hat{g}\B,
\hat{v}\cdot{\delta}\bigr)\notag\\
&=-\lambda(w\dot{\delta})-\hat{\lambda}(\hat{w}\hat{v}^{-1}\hat{v}\dot{\delta}),
\,\,\,\text{by Lemma \ref{l1}}\notag\\
&\geq 0, \,\,\,\text{by Proposition \ref{propn14}}.
\end{align} This proves (b).

\medskip
\noindent
{\em Proof of $(b) \Rightarrow (a)$.} Pick any (general) $(g,\g)\in G\times \G$
satisfying the equation \eqref{eqn2001} for any $G$-dominant weight
$\delta_o\in O(H)$ and any pairs $(w,\hat{w})\in W^{P(\delta_o)}\times
\W^{\hat{P}(\delta_o)}$. This is possible since there are only finitely many
$P(\delta_o)$ and $\hat{P}(\delta_o)$ as we run through $G$-dominant weights
$\delta_o\in O(H)$. Fix any  $(g,\g)\in G\times \G$ as above and  consider the point
$x=(gB,\g\B)\in X$. If (a) of Theorem \ref{ress} were false, then no point of
$X$ would be semistable for the line bundle $\cl(\lambda\boxtimes \hat{\lambda})$.
Thus, by Proposition \ref{propn14}, there exists an OPS $\delta \in O(G)$
(depending upon $x$) such that
\begin{equation} \label{eqn2002} \mu^{\cl(\lambda\boxtimes \hat{\lambda})}
\bigl((gB,\hat{g}\B), \delta\bigr)<0.
\end{equation}
Let $\delta = h^{-1}\delta_o h$, for $h\in G$ so that
$\delta_o$ belongs to $O(H)$ and it is $G$-dominant. Pick $w\in W, \hat{w}\in \W$
such that $(hg)^{-1}\in C_w^{P(\delta_o)}$ and $(h\g)^{-1}\in
\hat{C}_{\hat{w}}^{\hat{P}(\delta_o)}$. Thus, by Lemma \ref{l1},
\begin{align} \mu^{\cl(\lambda\boxtimes \hat{\lambda})}\bigl((gB,\hat{g}\B),
\delta\bigr)&=\mu^{\cl(\lambda)}\bigl(hgB,
\delta_o\bigr)+\mu^{\cl(\hat{\lambda})}\bigl(h\hat{g}\B,
\delta_o\bigr)\notag\\
&=-\lambda(w\dot{\delta}_o)-\hat{\lambda}(\hat{w}\dot{\delta}_o)\notag\\
&<0, \,\,\,\text{by the inequality \eqref{eqn2002}}.
\end{align}
 Now,  $[X_w^{P(\delta_0)}]\cdot \iota^*([\hat{X}_{\hat{w}}^{\hat{P}(\delta_o)}])
\neq 0$, because of the choice of $(w,\hat{w})$ and a general point $(g,\hat{g})$
satisfying the condition \eqref{eqn2001}. This contradicts (b) and hence proves (a).

We now come to the proof of the equivalence of
(a) and (c).  Since (a) $\Rightarrow$ (b) and clearly (b) $\Rightarrow$ (c),
we get (a) $\Rightarrow$ (c).

\medskip
\noindent
{\it Proof of (c) $\Rightarrow$ (a):}
We first show that for $(\lambda,\hat{\lambda})\in \Lambda_{++}\times
\hat{\Lambda}_{++}$, if $(\lambda,\hat{\lambda})\not\in
\Gamma(G,\hat{G})$, then there exists a well-covering pair
$(C_{\delta}(w,\hat{w}),\delta)$ (defined by \eqref{resse2}), for some
$G$-dominant $\delta\in O(H)$ and $w\in W^{P(\delta)}$, $\hat{w}\in
\hat{W}^{\hat{P}(\delta)}$, such that the inequality
$I^{\delta}_{(w,\hat{w})}$ is violated, i.e.,
\begin{equation}\label{eqn2003}
\lambda(w\dot{\delta})+\hat{\lambda}(\hat{w}\dot{\delta})>0.
\end{equation}

Since $(\lambda,\hat{\lambda})\not\in \Gamma(G,\hat{G})$ (by
assumption), the set of $G$-semistable points for the ample line
bundle $\mathcal{L}(\lambda\boxtimes \hat{\lambda})$
on $X$ is empty. Thus, by Proposition \ref{birula},
there exists a class $\langle \hat{\delta}=[\delta,a]\rangle \in
\langle M(G)\rangle$ with dominant $\delta$ and a number $d>0$ such
that $X_{d,\,\langle\hat{\delta}\rangle}$ is a $G$-stable nonempty open
subset of $X$; in particular, it is smooth. Hence, by Proposition \ref{ressprop1}
(c),
$X_{d,\hat{\delta}}$ is irreducible and hence so is
$Z_{d,\,\hat{\delta}}$ (because of the surjective morphism
$p_{\hat{\delta}}:X_{d,\hat{\delta}}\to Z_{d,\hat{\delta}}$). Moreover,
by Proposition \ref{ressprop1}, $Z_{d,\hat{\delta}}$ being an open subset of
$X^{\delta}$, $\overline{Z}_{d,\,\hat{\delta}}$ is an irreducible
component of $X^{\delta}$. Hence, by the identity \eqref{resse1}, there exists
$(w,\hat{w})\in W^{P(\delta)}\times \hat{W}^{\hat{P}(\delta)}$ such
that
$$
\overline{Z}_{d,\hat{\delta}}=C_{\delta}(w,\hat{w}).
$$

Since the map $C_{\delta}(w,\hat{w})_{+}\to C_{\delta}(w,\hat{w})$,
$y\mapsto \lim\limits_{t\to 0}\delta(t)\cdot y$, is a morphism
(cf. the discussion before Definition \ref{wellcovering}), $X_{d,\hat{\delta}}$ is
 an open
(and $P(\delta)$-stable) subset of $C_{\delta}(w,\hat{w})_{+}$.

By Proposition \ref{ressprop1} (c),
$$
G\times^{P(\delta)}\, X_{d,\hat{\delta}}\to X_{d,\,\langle\hat{\delta}\rangle}
$$
is an isomorphism. This shows that
$(C_{\delta}(w,\hat{w}),\delta)$ is a well-covering pair. By definition,
 for any $x\in X_{d,\hat{\delta}}$,
$$
\mu^{\mathcal{L}(\lambda\boxtimes
  \hat{\lambda})}(x,\hat{\delta})\leq -1.
$$

Thus, by Lemma~\ref{l1},
$$
-\lambda(w\dot{\delta})-\hat{\lambda}(\hat{w}\dot{\delta})\leq -a
$$
This proves the assertion \eqref{eqn2003}.

Since $\Gamma(G,\hat{G})_{\mathbb{R}}\subset
\Lambda_{+}(\mathbb{R})\times \hat{\Lambda}_{+}(\mathbb{R})$ is a convex
cone with nonempty interior (by Lemma~\ref{lemma7.1ress}), we get that
$\Gamma(G,\hat{G})_{\mathbb{R}}$ is the cone inside
$\Lambda_{+}(\mathbb{R})\times \hat{\Lambda}_{+}(\mathbb{R})$ determined
by the inequalities
$$
\lambda(w\dot{\delta})+\hat{\lambda}(\hat{w}\dot{\delta})\leq 0
$$
running over all the well-covering pairs
$(C_{\delta}(w,\hat{w}),\delta)$ with $G$-dominant indecomposable
$\delta\in O(H)$. We finally show that, for any well-covering pair
$(C_{\delta}(w,\hat{w}),\delta)$ with $G$-dominant indecomposable
$\delta\in O(H)$, if the hyperplane $F$:
$$
\lambda(w\dot{\delta})+\hat{\lambda}(\hat{w}\dot{\delta})=0
$$
is a (codimension one) facet of $\Gamma(G,\hat{G})_{\mathbb{R}}$ intersecting
$\Lambda_{++}(\br)\times \hat{\Lambda}_{++}(\br)$, then
$\delta$ is special.

Let $F_{+}:=F\cap (\Lambda_{++}(\br)\times \hat{\Lambda}_{++}(\br))$. For any
$(\lambda,\hat{\lambda})\in \Lambda_{++}\times \hat{\Lambda}_{++}$, let
$\mathcal{C}(\lambda,\hat{\lambda})$ denote the GIT
class of $(\lambda,\hat{\lambda})$ consisting of those
$(\mu,\hat{\mu})\in \Lambda_{++}\times \hat{\Lambda}_{++}$ such that the
set of $G$-semistable points
$X^{s}(\mathcal{L}(\lambda\boxtimes
\hat{\lambda}))=X^{s}(\mathcal{L}(\mu\boxtimes
\hat{\mu}))$. By [DH] (or [Ro]), $\Lambda_{++}\times
\hat{\Lambda}_{++}$ breaks up into finitely many GIT classes, such that the cone
generated by them are
all locally closed convex cones. Now, for any
$(\lambda,\hat{\lambda})\in (\Lambda_{++}\times \hat{\Lambda}_{++})\cap
\Gamma(G,\hat{G})$ and any well-covering pair $(C,\delta)$,
\begin{equation}\label{eqn2004}
\mu^{\mathcal{L}(\lambda\boxtimes
  \hat{\lambda})}(C,\delta)=0\Leftrightarrow
X^{s}(\mathcal{L}(\lambda\boxtimes \hat{\lambda}))\cap
C\neq  \emptyset.
\end{equation}

If $X^{s}(\mathcal{L}(\lambda\boxtimes
\hat{\lambda}))\cap C\neq  \emptyset$, by Proposition~\ref{propn14} (e),
$\mu^{\mathcal{L}(\lambda\boxtimes
  \hat{\lambda})}(C,\delta)=0$, since $C\subset X^\delta$. Conversely, if
$\mu^{\mathcal{L}(\lambda\boxtimes
  \hat{\lambda})}(C,\delta)=0$, take $x\in
X^{s}(\mathcal{L}(\lambda\boxtimes \hat{\lambda}))\cap
C_{+}$ (which is possible since $\Iim \eta$ contains an open
subset). By Proposition~\ref{propn14} (c), $\lim\limits_{t\to 0}
\delta(t)\cdot x\in
X^{s}(\mathcal{L}(\lambda\boxtimes \hat{\lambda}))$.
This proves \eqref{eqn2004}.

From \eqref{eqn2004}, we see that $F_{+}\cap \Lambda_{++}\times
\hat{\Lambda}_{++}$ is a (finite) union of GIT
classes. In particular, it contains a GIT class
$\mathcal{C}(\lambda_{o},{\hat{\lambda}}_{o})$ such that the cone
generated by it has
nonempty interior in $F_{+}$. Take $x_{o}\in
X^{s}\cap C$ such that its $G$-orbit is closed in
$X^{s}$, where we have abbreviated $X^s:=X^{s}(\mathcal{L}(\lambda_{o}\boxtimes
\hat{\lambda}_o))$
and $C:=C_{\delta}(w,\hat{w})$.  By the following argument, such a $x_o$ exists:

Take a $P(\delta )$-orbit $\mathfrak{O}:=P(\delta)\cdot x$ in  $C_+\cap X^s$ of
the smallest dimension. Then,  $\mathfrak{O}$ is a closed subset of $X^s$; for if
it is not closed in $X^s$,  then its closure  $\bar{\mathfrak{O}}$ in $X^s$ would
contain a $P(\delta )$-orbit  $\mathfrak{O}'$ of strictly smaller dimension.
Of course,  $\mathfrak{O}'\subset \bar{C}_+$, where $\bar{C}_+$ is the closure of
$C_+$ in $X$. Further, $\partial C_+:=\bar{C}_+\setminus C_+\subset X\setminus X^s.$
To see this, take a $G$-equivariant embedding $\theta: X\hookrightarrow \mathbb{P}(V)$
for a $G$-module $V$  such that $\mathcal{L}(\lambda_{o}\boxtimes
\hat{\lambda}_o)^N$ is $G$-equivariantly isomorphic with $\theta^*(\mathcal{O}(1))$
for some $N>0$. Decompose $V=V_{-}\oplus V_0\oplus V_+$ under the action of
$\delta(t)$, where $V_0$ is the invariant subspace and $V_+$ (resp. $V_{-}$) is the
sum of the eigenspaces of positive (resp. negative) eigenvalues. Then, it is
easy to see that $C\subset \mathbb{P}(V_0),\, C_+\subset \mathbb{P}(V_0\oplus V_+)$
and $\partial C_+\subset  \mathbb{P}(V_+).$ Thus, $\partial C_+\subset X\setminus X^s$.  Hence,  $\mathfrak{O}' \subset C_+$.
A contradiction, proving that $\mathfrak{O}$ is closed in  $C_+\cap X^s$. By Lemma
 \ref{l1}, $\mu^{\mathcal{L}(\lambda_{o}\boxtimes
\hat{\lambda}_o)}(C_+, \delta)=0.$ Hence, for any $x\in \mathfrak{O}$, by Proposition
\ref{propn14} (c), $x_o:=\lim\limits_{t\to 0}
\delta(t)\cdot x \in X^s$. Thus, $x_o\in \mathfrak{O}$. Hence, $G\cdot x_o=G\cdot
\mathfrak{O}$ is closed in $X^s$, since $G/P(\delta)$ is a projective variety.

 Since $G\cdot x_{o}$ is contained in an
affine open subset of
$X^{s}(\mathcal{L}(\lambda_{o}\boxtimes
\hat{\lambda}_{o}))$, by Matsushima's theorem, the isotropy
$G_{x_{o}}$ is a reductive group contained of course in a Borel
subgroup of $G$. Thus, $\Iim \delta\subset G_{x_{o}}\subset H'$, for some
maximal torus $H'$ of $G$.

But, since $x_{o}\in
X^{s}(\mathcal{L}(\lambda_{o}\boxtimes \lambda_{o}))$,
it is easy to see that
$\mathcal{L}(N\lambda_{o}\boxtimes
N\hat{\lambda}_{o})_{|G\cdot x_{o}}$ is $G$-equivariantly trivial for
some $N>0$. Thus,
$\mathcal{C}(\lambda_{o},\hat{\lambda}_{o})$ and hence
$F_{+}$ is contained in the kernel of the following map:
$$
\gamma:(\Lambda\times \hat{\Lambda})\otimes_{\mathbb{Z}}\mathbb{R}\to
\Pic^{G}(G\cdot x_{o})\otimes_{\mathbb{Z}}\mathbb{R}\simeq \Lambda(G^{o}_{x_{o}})
\otimes_{\mathbb{Z}}\mathbb{R},
$$
where $\Lambda(G^{o}_{x_{o}})$ is the character group of the identity
component $G^{o}_{x_{o}}$ of $G_{x_{o}}$. But, since $\gamma$ is clearly surjective and
$F_{+}$ lies in the kernel of $\gamma$,
$\Lambda(G^{o}_{x_{o}})\bigotimes\limits_{\mathbb{Z}}\mathbb{R}$ is at
most one dimensional. Further, since $\Iim \delta \subset G^{o}_{x_{o}}$,
we see that $G^{o}_{x_{o}}$ is exactly one dimensional and $\Iim
\delta=G^{o}_{x_{o}}$. Thus, the general isotropy of the action of
$L(\delta)/\Iim \delta$ on $C_{\delta}(w,\hat{w})$ is finite. As an
$L(\delta)$-variety, $C_{\delta}(w,\hat{w})$ is isomorphic with the
full flag variety $(L(\delta)/B(\delta))\times
(\hat{L}(\delta)/\hat{B}(\delta))$ of $L(\delta)\times \hat{L}(\delta)$,
where $B(\delta)$ (resp. $\hat{B}(\delta)$) is a Borel subgroup of
$L(\delta)$ (resp. $\hat{L}(\delta)$) containing $H$ (resp. $\hat{H}$). Since $L(\delta)$ acts
transitively on $L(\delta)/B(\delta)$ and centralizes $\delta$, for a
general point $y\hat{B}(\delta)$, the identity component of the
isotropy of the point
$$
(B(\delta), y\hat{B}(\delta))\in (L(\delta)/B(\delta))\times
(\hat{L}(\delta)/\hat{B}(\delta))
$$
under the action of $L(\delta)$ coincides with $\Iim \delta$.  Let
$\hat{U}(\delta)$ be the unipotent radical of $\hat{B}(\delta)$ and let
${\hat{w}}_o^\delta$ be the longest element of the Weyl group of
$\hat{L}(\delta)$. Then, we have the open cell $\hat{U}(\delta) \simeq
\hat{U}(\delta)\cdot{\hat{w}}_o^\delta \hat{B}(\delta)/\hat{B}(\delta)$
in $\hat{L}(\delta)/\hat{B}(\delta).$ Replacing the point
$(B(\delta), y\hat{B}(\delta))$ by $(lB(\delta), ly\hat{B}(\delta))$, for some
$l \in L(\delta)$, we can assume that $B(\delta)=\hat{B}(\delta)\cap L(\delta)$.
Under the action of $B(\delta)$ on $\hat{L}(\delta)/\hat{B}(\delta)$, the open cell
$\hat{U}(\delta)$ is stable and the action is given by
$$ (t\cdot u)\hat{u}=tu\hat{u}t^{-1}, \,\,\,\text{for}\,\, t\in H,
u\in U(\delta),
\hat{u}\in \hat{U}(\delta),$$
where $U(\delta)$ is the unipotent radical of $B(\delta)$. Since the isotropy of
$(B(\delta), y\hat{B}(\delta))$ under the action of $L(\delta)$ coincides with
the isotropy of $y\hat{B}(\delta)$ under the action of $B(\delta)$, for a general
point $\hat{u}\in \hat{U}(\delta),$ the connected component of the isotropy of
$U(\delta)\cdot \hat{u}\in U(\delta)\backslash \hat{U}(\delta)$ under the action of
$H$ (via the conjugation action) coincides with $\Iim \delta$.

But  $U(\delta)\backslash \hat{U}(\delta)\simeq \hat{\fu}(\delta)/ {\fu}
(\delta)$ as $H$-varieties. Thus, we get $\bc \dot{\delta}=\cap \Ker \beta$, where
the intersection runs over all the $H$-weights $\beta$ of
$\hat{\fu}(\delta)/ \fu (\delta)$. Thus, $\delta$ is special.

This proves that any facet of $\Gamma(G,\hat{G})_{\mathbb{R}}$ which
intersects $\Lambda_{++}(\mathbb{R})\times \hat{\Lambda}_{++}(\mathbb{R})$
is given by
$$
\lambda(w\dot{\delta})+\hat{\lambda}(\hat{w}\dot{\delta})=0,
$$
where $(C_{\delta}(w,\hat{w}),\delta)$ is a  well-covering pair
with $\delta\in O(H)$ special.

Thus, the theorem follows from Proposition~\ref{prop7.3ress} and
Lemma~\ref{lemma7.5ress}.
\end{proof}

\begin{remark}   (a) Berenstein-Sjamaar [BS] proved a weaker version of Theorem
\ref{ress}, where they have (in general) many more inequalities. Their set of
inequalities consists of $ I^{\delta}_{(w, \hat{w})}$, where $\delta$ runs over
(in general) a larger set of OPS in $H$ than $\mathfrak{S} (G,\G)$ and for any $\delta$ in their set, they considered the inequalities  $ I^{\delta}_{(w, \hat{w})}$ for any pair  $(w, \hat{w})\in W^{P(\delta_i)} \times
{\hat{W}}^{\hat{P}(\delta_i)}$ satisfying only
 $\iota^*([\hat{X}_{\hat{w}}^{\hat{P}(\delta_i)}])\cdot [X_w^{P(\delta_i)}]=d
 [X_e^{P(\delta_i)}]$, for some nonzero $d$.

 (b) The equivalence of (a) and (c) in Theorem \ref{ress} can also be
obtained by a proof quite similar to the proof of Theorem \ref{EVT}.
\end{remark}
\begin{lemma}\label{ress1}
If we specialize Theorem \ref{ress} to the case when $G$ is a connected semisimple group,
$\G=G^{s-1}$ and $G$ is embedded in $G^{s-1}$ diagonally, then we
recover Theorem \ref{EVT}.
\end{lemma}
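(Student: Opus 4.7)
The plan is to set $\hat G:=G^{s-1}$ with $G$ diagonally embedded and verify that Theorem \ref{ress}(c) translates term-by-term into Theorem \ref{EVT}(b). First, the irreducible $\hat G$-module of highest weight $\hat\lambda=(\lambda_2,\ldots,\lambda_s)$ is $\hat V(\hat\lambda)=V(\lambda_2)\otimes\cdots\otimes V(\lambda_s)$, so the diagonal restriction to $G$ identifies $[V(N\lambda_1)\otimes\hat V(N\hat\lambda)]^G$ with $[V(N\lambda_1)\otimes\cdots\otimes V(N\lambda_s)]^G$; hence $\Gamma(G,G^{s-1})$ matches $\Gamma_s(G)$ via $(\lambda_1,(\lambda_2,\ldots,\lambda_s))\leftrightarrow(\lambda_1,\ldots,\lambda_s)$. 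The hypothesis of Theorem \ref{ress} that no nonzero ideal of $\frg$ is an ideal of $\frg^{s-1}$ holds for $s\geq 3$; the cases $s\leq 2$ can be treated directly.

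Next, I would compute $\mathfrak S(G,G^{s-1})$. For $\delta\in O(H)$ dominant in $G$, its diagonal image in $\hat H=H^{s-1}$ is already $\hat G$-dominant, with $\hat P(\delta)=P(\delta)^{s-1}$ and $\hat L(\delta)=L(\delta)^{s-1}$; thus $\hat\frl(\delta)/\frl(\delta)\cong\frl(\delta)^{s-2}$ and its nonzero $H$-weights are exactly the roots $\alpha$ with $\alpha(\dot\delta)=0$. So $\bigcap\ker\beta=\{h\in\frh:\alpha_i(h)=0\ \forall\,\alpha_i\in\Delta(P(\delta))\}$, which equals $\bc\dot\delta$ precisely when $P(\delta)$ is a standard maximal parabolic $P$ and $\dot\delta$ is a positive rational multiple of $x_P$. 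Thus $\mathfrak S(G,G^{s-1})$ is in bijection with the standard maximal parabolics of $G$, and pairs $(w,\hat w)\in W^{P(\delta)}\times\hat W^{\hat P(\delta)}$ correspond to $s$-tuples $(w_1,\ldots,w_s)\in (W^P)^s$.

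Now I would translate conditions (c$_1$) and (c$_2$). By the K\"unneth isomorphism for the diagonal embedding $\iota:G/P\hookrightarrow (G/P)^{s-1}=\hat G/\hat P(\delta)$, we have $\iota^*([\hat X_{\hat w}^{\hat P(\delta)}])=[X_{w_2}^{P}]\cdots[X_{w_s}^{P}]$, so (c$_1$) reads $[X_{w_1}^{P}]\cdots[X_{w_s}^{P}]=[X_e^{P}]$ in ordinary cohomology. Using \eqref{ress1900} and \eqref{ress1901} with $\hat v=\hat e$ (since diagonal $\dot\delta$ is already $\hat G$-dominant) and the fact that $\rho^{L(\delta)}(\dot\delta)=0$, a direct calculation reduces (c$_2$) to $\bigl(\sum_{j=1}^s w_j^{-1}\rho+(s-2)\rho\bigr)(\dot\delta)=0$, and because $\dot\delta$ is a positive multiple of $x_P$ and $\alpha_{i_P}$ is the unique simple root outside $\Delta(P)$, this is the same as $\bigl(\sum_{j=1}^s w_j^{-1}\rho+(s-2)\rho\bigr)(x_{i_P})=0$. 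On the other hand, iterating the definition of $\odot$, the coefficient of $[X_e^P]$ in $[X_{w_1}^P]\odot\cdots\odot[X_{w_s}^P]$ is $c^{e}_{w_1,\ldots,w_s}\prod_{\alpha_i\in\Delta\setminus\Delta(P)}\tau_i^{(\rho-\sum_j w_j^{-1}\rho-(s-1)\rho)(x_i)}$; so the equation $[X_{w_1}^P]\odot_0\cdots\odot_0[X_{w_s}^P]=[X_e^P]$ amounts precisely to (c$_1$) together with (c$_2$).

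Finally, with $\hat\lambda=(\lambda_2,\ldots,\lambda_s)$ and $\hat w=(w_2,\ldots,w_s)$, the inequality $I^\delta_{(w,\hat w)}$ becomes $\sum_{j=1}^s\lambda_j(w_j\dot\delta)\leq 0$, which is equivalent to $\sum_j\lambda_j(w_j x_P)\leq 0$ after rescaling by the positive factor relating $\dot\delta$ to $x_P$; this is exactly $I^P_{(w_1,\ldots,w_s)}$. The main bookkeeping will be the iterated exponent calculation for the $\odot$-product and the identification $\gamma_w^{P(\delta)}(\dot\delta)=-\chi_w(\dot\delta)=-(\rho+w^{-1}\rho)(\dot\delta)$; once these are in place, Theorem \ref{ress}(c) collapses term-by-term onto Theorem \ref{EVT}(b).
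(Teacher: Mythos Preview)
Your argument is correct and follows essentially the same route as the paper: identify $\mathfrak{S}(G,G^{s-1})$ with the $x_P$ for maximal parabolics, translate $(c_1)$ into the ordinary cup-product condition via K\"unneth, and show that $(c_2)$ is the numerical condition $\bigl(\sum_j w_j^{-1}\rho+(s-2)\rho\bigr)(x_{i_P})=0$, which together with $(c_1)$ is exactly the $\odot_0$-condition of Theorem~\ref{EVT}. The only stylistic difference is that the paper packages the last step by rewriting $(c_2)$ as $\bigl(\sum_j\chi_{w_j}-\chi_1\bigr)(x_{i_P})=0$ via identity~\eqref{eqn5} and then invoking Proposition~\ref{T1}, whereas you compute the telescoping $\tau$-exponent in the iterated product $\odot$ directly; the two are equivalent since $\rho^L(x_{i_P})=0$.
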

\begin{proof} Since $\frg$ is semisimple, no nonzero ideal of $\frg$ is an ideal
of $\hat{\frg}:=\frg^{s-1}$. Further, the set of nonzero $H$-weights of
$\hat{\frg}/\frg$ is precisely equal to the set $R$ of the roots of $\frg$.
Now, for a root $\beta$ and a (dominant) element $x=\sum_{p=1}^kr_{i_p}x_{i_p}\in
\frh_+$ with each $r_{i_p}>0$ and $i_1, \dots, i_k$ distinct, $\beta (x)=0$
if and only if $\beta \in \sum_{j \notin \{i_1, \dots, i_k\}} \,\bz\alpha_j.$
Thus, if $k\geq 2$, then there is no OPS $\delta \in \mathfrak{S} (G,\G)$
such that $\dot{\delta}=x$. From this we see that $\mathfrak{S} (G,\G)
=\{\delta_i(z):=z^{d_ix_i}\}_{1\leq i \leq \ell}$, for some unique positive
rational numbers $d_i$. Clearly, $\{P(\delta_i)\}_{1\leq i \leq \ell}$ bijectively
parameterizes the set of the standard maximal parabolic subgroups of $G$. By using the identity
\eqref{eqn5}, it is easy to see that for $w=w_1, \hat{w}=(w_2, \dots, w_s)$, the identity
(b$_2$) of Theorem \ref{ress} is equivalent to the identity
$$\bigl((\sum_{j=1}^s\,\chi_{w_j})-\chi_1\bigr)(x_{i})=0.$$ Thus, by Proposition
\ref{T1},
the two conditions (b$_1$) and (b$_2$) of Theorem \ref{ress} are equivalent to
the condition (b) of Theorem \ref{EVT}. Hence, Theorem
\ref{ress}, for the case of the diagonal embedding $G \to G^{s-1}$, is equivalent
to Theorem \ref{EVT}.
\end{proof}

The following theorem (again due to Ressayre [R$_1$]) shows that the set of
inequalities given by the (c) part of Theorem \ref{ress} is an irredundant system.
As earlier, let $ \Gamma (G, \G)_\br$ be the cone generated by $ \Gamma (G, \G)$ inside
the vector space $\Lambda(\br)\times \hat{\Lambda}(\br)$, where  $\Lambda(\br):= \Lambda\otimes_\bz \br$.
\begin{theorem} \label{ress2} Following the assumptions of Theorem \ref{ress},
the set of inequalities provided by the (c)-part of
Theorem \ref{ress}
is an irredundant system of inequalities describing the cone  $ \Gamma (G, \G)_\br$
inside  $\Lambda_+(\br)\times{ \hat{\Lambda}}_+(\br)$, i.e.,
the hyperplanes given by the equality in $I^{\delta_i}_{(w, \hat{w})}$ are precisely those
 facets of the cone  $ \Gamma (G, \G)_\br$ which intersect the interior of
 $\Lambda_+(\br)\times{ \hat{\Lambda}}_+(\br)$,
where  $\Lambda_+(\br)$ denotes the cone inside  $\Lambda (\br)$ generated by
 $\Lambda_+$ .
\end{theorem}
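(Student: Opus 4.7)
The proof of Theorem \ref{ress} already establishes one direction of irredundancy: every codimension-one face of $\Gamma(G,\hat G)_\br$ meeting $\Lambda_{++}(\br)\times\hat\Lambda_{++}(\br)$ is cut out by a hyperplane $H_{\delta,w,\hat w}:\lambda(w\dot\delta)+\hat\lambda(\hat w\dot\delta)=0$ arising from a well-covering pair $(C_\delta(w,\hat w),\delta)$ with $\delta$ special and $(w,\hat w)$ satisfying $(c_1)$--$(c_2)$. Hence the facet hyperplanes are already among the inequalities listed in (c). What remains is the converse: for each such well-covering pair, the hyperplane $H_{\delta,w,\hat w}$ actually defines a genuine codimension-one facet of $\Gamma(G,\hat G)_\br$ meeting the interior (distinct pairs then give distinct facets, since a facet determines its normal direction up to scale).

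\textbf{Construction of the critical semistable point.} Fix a well-covering pair $(C,\delta)$ with $C=C_\delta(w,\hat w)$. Since $\delta$ centralizes $C$ pointwise, Lemma \ref{l1} gives $\mu^{\mathcal L(\lambda\boxtimes\hat\lambda)}(x,\delta)=-\lambda(w\dot\delta)-\hat\lambda(\hat w\dot\delta)$ for every $x\in C$. I first choose a regular dominant pair $(\lambda_0,\hat\lambda_0)\in(\Lambda_{++}\times\hat\Lambda_{++})\cap\Gamma(G,\hat G)$ on $H_{\delta,w,\hat w}$, which exists because $\Gamma(G,\hat G)_\br$ has full dimension in $\Lambda_+(\br)\times\hat\Lambda_+(\br)$ by Lemma \ref{lemma7.1ress} and $H_{\delta,w,\hat w}$ is a supporting hyperplane of the cone. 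The well-covering property guarantees $C\cap X^{ss}(\mathcal L(\lambda_0\boxtimes\hat\lambda_0))\ne\emptyset$; I then pick a $P(\delta)$-orbit $\mathfrak O\subset C_+^o\cap X^{ss}$ of minimal dimension, show it is closed in $X^{ss}$ (repeating the Hilbert--Mumford argument in the closing paragraphs of the proof of Theorem \ref{ress}, which decomposes the ambient projective space under $\delta$ and pushes $\partial C_+$ outside $X^{ss}$), and let $x_0\in C$ be the $t\to 0$ limit under $\delta(t)$ of any point of $\mathfrak O$. Then $G\cdot x_0$ is closed in $X^{ss}(\mathcal L(\lambda_0\boxtimes\hat\lambda_0))$.

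\textbf{Facet dimension via specialness.} By Matsushima's theorem, $G_{x_0}$ is reductive and contains $\text{Im}(\delta)$. The specialness of $\delta$, namely $\bc\dot\delta=\bigcap\ker\beta$ over the $H$-weights $\beta$ of $\hat{\mathfrak l}(\delta)/\mathfrak l(\delta)$, combined with the identification $C\simeq L(\delta)/B(\delta)\times\hat L(\delta)/\hat B(\delta)$, forces the generic $L(\delta)/\text{Im}(\delta)$-isotropy on $C$ to be finite, as in the calculation via the open Bruhat cell carried out in the proof of Theorem \ref{ress}. For a generic choice of $x_0$ this yields $G^o_{x_0}=\text{Im}(\delta)$. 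Consider the character map
\begin{equation*}
\gamma:(\Lambda\times\hat\Lambda)\otimes_\bz\br\longrightarrow\Lambda(G^o_{x_0})\otimes_\bz\br\cong\br,
\end{equation*}
assigning to $(\lambda,\hat\lambda)$ the $G^o_{x_0}$-character on the fiber of $\mathcal L(\lambda\boxtimes\hat\lambda)$ at $x_0$. It is surjective, and by Lemma \ref{l1} its kernel is exactly $H_{\delta,w,\hat w}$. The GIT equivalence class $\mathcal C(\lambda_0,\hat\lambda_0)$, being an open cone in its natural chamber, is contained in $\ker\gamma$ and has full dimension $\dim(\Lambda\times\hat\Lambda)\otimes\br-1$ there. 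Hence the face of $\Gamma(G,\hat G)_\br$ supported on $H_{\delta,w,\hat w}$ has codimension exactly one in $(\Lambda\times\hat\Lambda)\otimes\br$ and meets $\Lambda_{++}(\br)\times\hat\Lambda_{++}(\br)$, so it is an interior-meeting facet.

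\textbf{Main obstacle.} The decisive step is the identification $G^o_{x_0}=\text{Im}(\delta)$: without the hypothesis of specialness, the generic isotropy on $C$ modulo $\text{Im}(\delta)$ could have positive dimension, making $\gamma$ land in a space of dimension $>1$ and collapsing $H_{\delta,w,\hat w}\cap\Gamma(G,\hat G)_\br$ to codimension $>1$, contradicting the facet claim. Thus specialness of $\delta$ is precisely what selects the facet-defining well-covering pairs, and it is what converts the proof of Theorem \ref{ress} (which gave only a sufficient list) into the irredundancy statement of Theorem \ref{ress2}.
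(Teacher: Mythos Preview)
Your argument contains a circularity at its most critical step. You write that a regular dominant pair $(\lambda_0,\hat\lambda_0)\in(\Lambda_{++}\times\hat\Lambda_{++})\cap\Gamma(G,\hat G)$ lying on $H_{\delta,w,\hat w}$ ``exists because $\Gamma(G,\hat G)_\br$ has full dimension \ldots\ and $H_{\delta,w,\hat w}$ is a supporting hyperplane of the cone.'' But a supporting hyperplane of a full-dimensional polyhedral cone can meet the cone in a face of \emph{any} dimension, possibly only along the boundary of the dominant chamber, or even in $\{0\}$. The existence of such a $(\lambda_0,\hat\lambda_0)$ is precisely what you are trying to establish: that the face $H_{\delta,w,\hat w}\cap\Gamma(G,\hat G)_\br$ is a codimension-one facet meeting the interior of $\Lambda_+(\br)\times\hat\Lambda_+(\br)$. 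Everything downstream --- the closed orbit point $x_0$, the computation of $G_{x_0}^o$, the character map $\gamma$, and the claimed full-dimensionality of the GIT class $\mathcal C(\lambda_0,\hat\lambda_0)$ inside $\ker\gamma$ --- presupposes this point, so the argument never gets off the ground. (The assertion that $\mathcal C(\lambda_0,\hat\lambda_0)$ has dimension one less than the ambient space is likewise unjustified on its own.)

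The paper avoids this circularity by reversing the flow: rather than starting from a hypothetical semistable point on the face, it produces enough points on $H_{\delta,w,\hat w}\cap\Gamma(G,\hat G)$ to span the hyperplane. Concretely, it introduces the auxiliary semigroup $\Gamma(C)$ of pairs $(\lambda,\hat\lambda)$ with a nonzero $G^\delta$-invariant section on $C=C_\delta(w,\hat w)$, shows via [MR, Corollaire~1] and the specialness of $\delta$ that $\langle\Gamma(C)\rangle$ has codimension one, and then proves $\langle\Gamma(C)\rangle=\langle H_{\delta,w,\hat w}\cap\Gamma(G,\hat G)\rangle$. The nontrivial inclusion is $\Gamma(C)\subset\langle H_{\delta,w,\hat w}\cap\Gamma(G,\hat G)\rangle$: one takes an $L(\delta)$-invariant section on $C$, pulls it back along the retraction $\pi_\delta:C_+\to C$, uses the well-covering isomorphism $G\times^{P(\delta)}C_+^o\simeq$ (open in $X$) to manufacture a $G$-invariant rational section on $X$, and then corrects the poles by twisting with an auxiliary effective divisor $\mathcal E=\mathcal O_X(\sum a_pE_p)$. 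This section-extension step is the genuine content missing from your proposal; your isotropy calculation is essentially the argument already used in the proof of Theorem~\ref{ress} to show facet $\Rightarrow$ special, and it does not run in reverse without new input.

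A secondary gap: irredundancy also requires that distinct triples $(\delta_i,w,\hat w)$ from part~(c) give distinct hyperplanes. Your remark that ``a facet determines its normal direction up to scale'' shows distinct facets have distinct hyperplanes, not the converse you need. The paper handles this with a separate argument comparing $w\dot\delta_i$ and $v\dot\delta_j$ under scaling, using the stabilizer descriptions and Lemma~\ref{lemma7.1ress} to rule out opposite signs.
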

\begin{proof} First of all, the inequalities $I^{\delta_i}_{(w, \hat{w})}$ (as in
\eqref{eqn102}) for $\delta_i$ and $(w, \hat{w})$ as in the (c)-part of Theorem
\ref{ress} are pairwise distinct, even up to scalar multiples:

The stabilizer of $\dot{\delta}_i$ under the action of $W$ (resp. $\hat{W}$) is
precisely equal to the subgroup $W_{P(\delta_i)}$ (resp.
$\hat{W}_{\hat{P}(\delta_i)}$). Let the pair $(w\dot{\delta}_i,
\hat{w}\dot{\delta}_i)= z(v\dot{\delta}_i,
\hat{v}\dot{\delta}_i)$, for some $1\leq i\leq q, z\in \br$ and
$(w, \hat{w}) \neq (v, \hat{v})\in W^{P(\delta_i)}\times
\hat{W}^{\hat{P}(\delta_i)}$ as in the (c)-part of Theorem \ref{ress}. Then, it
is easy to see that $z=\pm 1$. Moreover, $z\neq 1$ because of the stabilizer
assertion as above. Further, $z\neq -1$, for otherwise $\Gamma(G,\hat{G})$ would
 satisfy two inequalities with opposite signs contradicting Lemma
 \ref{lemma7.1ress}.

 Now, $(w\dot{\delta}_i, \hat{w}\dot{\delta}_i)$ can not be equal to
 $z(v\dot{\delta}_j,
\hat{v}\dot{\delta}_j)$, for any $1\leq i\neq j \leq q$ and $ z\in \br$: We can
not have $z> 0$ since each $\delta_i$ is indecomposable. For $z< 0$, again
$\Gamma(G,\hat{G})$ would
 satisfy two inequalities with opposite signs.

 Also, since each $\dot{\delta}_i \neq 0$, none of the hyperplanes
 $H_{(w, \hat{w})}^{\delta_i}: \lambda(w\dot{\delta}_i)+\hat{\lambda}(\hat{w}
 \dot{\delta}_i)=0$ (given by the (c)-part of Theorem \ref{ress}) is a face
 of the dominant chamber for the group $G\times \hat{G}$.

 We finally show that $H_{(w, \hat{w})}^{\delta_i}\cap \Gamma (G, \G)_\br$ is a
 (codimension one) facet of $\Gamma (G, \G)_\br$ for any $\delta_i\in
 \mathfrak{S}(G, \G)$ and any $(w, \hat{w}) \in W^{P(\delta_i)}\times
\hat{W}^{\hat{P}(\delta_i)}$ as in the  (c)-part of Theorem \ref{ress}:

In the following, we abbreviate $\delta_i$ by $\delta$. Consider
 $\Gamma (C)_\br\subset  \Lambda { (\br)}\times{ \hat{\Lambda}}{(\br)}$, where
 $C=C_\delta(w, \hat{w})$,
 $$\Gamma (C):=\{(\lambda,\hat{\lambda})\in \Lambda \times  \hat{\Lambda}:
 H^0(C, \mathcal{L}(N\lambda\boxtimes
N\hat{\lambda})_{|C})^{G^\delta}\neq 0,\,\,\,\text{for some} \,\,N>0\}$$
and  $\Gamma (C)_\br$ is the cone inside $\Lambda (\br)\times
\hat{\Lambda} (\br)$ generated by $\Gamma (C)$. We show that
\beqn\label{new5001}
\langle \Gamma (C) \rangle = \langle H_{(w, \hat{w})}^{\delta}\cap
\Gamma (G, \G)\rangle,
\eeqn
where $\langle \Gamma (C) \rangle$ (resp.  $\langle H_{(w, \hat{w})}^{\delta}\cap
\Gamma (G, \G)\rangle$) denotes the $\br$-subspace of $\Lambda (\br)
\times{ \hat{\Lambda}} (\br)$ spanned by $\Gamma (C)$ (resp.
$H_{(w, \hat{w})}^{\delta}\cap
\Gamma (G, \G)$). We first show that
\beqn\label{new5002}
H_{(w, \hat{w})}^{\delta}\cap
\Gamma (G, \G)\subset  \Gamma (C).
\eeqn
Take $(\lambda, \hat{\lambda})\in H_{(w, \hat{w})}^{\delta}\cap
\Gamma (G, \G)$. Then, by the proof of Theorem \ref{ress} (specifically the part
``Proof of $(a) \Rightarrow (b)$") there exists a $G$-semistable point
$x=(gB, \hat{g}\hat{B})\in X:=G/B\times \G/\hat{B}$ corresponding to the line
bundle $\mathcal{L}(\lambda\boxtimes
\hat{\lambda})$ such that $x\in C_+:=C_\delta(w, \hat{w})_+$ with
$$\mu^{\mathcal{L}(\lambda\boxtimes
\hat{\lambda})}(x,\delta)=-\lambda(w\dot{\delta})-\hat{\lambda}(\hat{w}
 \dot{\delta})=0.$$
 Since $x$ is a semistable point, there exists $N>0$ and a section
 $\sigma\in H^0(X, \mathcal{L}(N\lambda\boxtimes
N\hat{\lambda}))^{G}$ such that $\sigma(x)\neq 0$. Hence, by Proposition
\ref{propn14} (c), $\sigma$ does not vanish at  $\lim_{t\to 0}\delta(t)x$. Thus,
$(\lambda, \hat{\lambda})\in \Gamma (C)$.

Conversely, take finitely many $(\lambda_j, \hat{\lambda}_j)\in\Gamma (C)$
which $\br$-span $\langle \Gamma (C) \rangle$ . We can assume (replacing
$(\lambda_j, \hat{\lambda}_j)$ by a multiple $(N\lambda_j, N\hat{\lambda}_j)$) that
$\mathcal{L}_j:=\mathcal{L}(\lambda_j\boxtimes
\hat{\lambda}_j)_{|C}$ has a nonzero $L(\delta)$-invariant section $\sigma_j$. We
now show that $\sigma_j$ can be extended to a $G$-invariant rational section
$\hat{\sigma}_j $ of $\mathcal{L}(\lambda_j\boxtimes
\hat{\lambda}_j)$ on $X$:

Extend the action of $L(\delta)$ on $C$ to an action of $P(\delta)$ on $C$ by
demanding that the unipotent radical $U(\delta)$ of $P(\delta)$ acts trivially on $C$
(and hence on $\mathcal{L}_j$). (Observe that the standard action of $P(\delta)$
on $X$ does not keep $C$ stable in general, so this action is a different action of
$P(\delta)$ on $C$.) It is easy to see that the map $\pi_\delta : C_+\to C$
(defined just above Definition \ref{wellcovering}) is $P(\delta)$-equivariant.
Thus, we have a  $G$-equivariant line bundle $G\times^{P(\delta)} \,
\pi_\delta^*(\mathcal{L}_j)\to G\times^{P(\delta)} \,C_+$. Also, we have
a $G$-equivariant line bundle $\eta^*(\mathcal{L}(\lambda_j\boxtimes
\hat{\lambda}_j))$ on
$G\times^{P(\delta)} C_{+}$. These two  $G$-equivariant line bundles
on $G\times^{P(\delta)}\, C_{+}$ are $G$-equivariantly isomorphic: By
[CG, $\S\S$5.2.16 and  5.2.18], it suffices to show that their restrictions to
$C_+$ are isomorphic as $L(\delta)$-equivariant line bundles. But,
$\pi_\delta : C_+\to C$ is a $L(\delta)$-equivariant vector bundle (as proved by
Bialynicki-Birula), and thus by the Thom isomorphism  (cf. [CG, Theorem 5.4.17]),
it suffices to show that these two line bundles restricted to $C$ are
 $L(\delta)$-equivariantly isomorphic. But, the latter is true since both of the line
 bundles are the same restricted to $C$.

 The $L(\delta)$-invariant section $\sigma_j$ of $\mathcal{L}_j$ (which is
 automatically $P(\delta)$-invariant) gives rise to the $G$-invariant section
 $\bar{\sigma}_j$ defined by $[g,x]\mapsto [g,\sigma_j(\pi_\delta (x))]$, for
 $g\in G, x\in C_+$. Since $(C, \delta)$ is a well-covering pair, $\bar{\sigma}_j$
 descends to a $G$-invariant regular section on a $G$-stable open subset $X^o$
 of $X$ such that $X^o\cap C\neq \emptyset$, and thus a $G$-invariant
 rational section $\hat{\sigma}_j$ of the line bundle $\mathcal{L}(\lambda_j\boxtimes
\hat{\lambda}_j)$ on $X$. Let $\{E_p\}_p$ be the irreducible components of
$X\setminus X^o$ of codimension one. Since $G$ is connected, each $E_p$ is
$G$-stable. Consider the line bundle $\mathcal{E}:=\mathcal{O}_X(\sum_p\,a_pE_p)$,
with $a_p\geq 0$ large enough so that each of the rational sections
$\hat{\sigma}_j$ of  $\mathcal{L}(\lambda_j\boxtimes
\hat{\lambda}_j)$ are ($G$-invariant) regular sections $\hat{\lambda}_j^o$ of
the line bundle $\mathcal{L}(\lambda_j\boxtimes
\hat{\lambda}_j)\otimes \mathcal{E}$. Moreover, since no $E_p$ contains $C$,
 $(\hat{\lambda}^o_j)_{|C}\neq 0$. We can easily lift the diagonal $G$-equivariant
 structure on $\mathcal{E}$ to a $G\times \G$-equivariant
 structure by replacing (if needed) $\mathcal{E}$ by $\mathcal{E}^N$  for some
 $N>0$. Let $\mathcal{E}\simeq \mathcal{L}(\mu\boxtimes
\hat{\mu})$. Then, $(\lambda_j+\mu, \hat{\lambda}_j+\hat{\mu})\in
H_{(w, \hat{w})}^{\delta}\cap
\Gamma (G, \G)$, for all $j$. Since both of
$(\lambda_j+\mu, \hat{\lambda}_j+\hat{\mu})$ and $(\lambda_j+2\mu,
\hat{\lambda}_j+2\hat{\mu})$ are in
$H_{(w, \hat{w})}^{\delta}\cap
\Gamma (G, \G),$  we see that each $(\lambda_j, \hat{\lambda}_j)\in
\langle H_{(w, \hat{w})}^{\delta}\cap
\Gamma (G, \G)\rangle$. Thus,
\beqn\label{new5003}
\langle \Gamma (C) \rangle \subset \langle H_{(w, \hat{w})}^{\delta}\cap
\Gamma (G, \G)\rangle.
\eeqn
Combining \eqref{new5002} and \eqref{new5003}, we get \eqref{new5001}.

As a $G^\delta$-variety, $C$ is isomorphic with $G^\delta/B^\delta \times
\G^\delta/\hat{B}^\delta$. Thus, by [MR, Corollaire 1], $\langle \Gamma (C)
\rangle $ is of codimension one in $\Lambda (\br)\times{ \hat{\Lambda}} (\br)$,
since $\delta$ is special. This proves the theorem.
\end{proof}

The following result for any semisimple and connected $G$ is a particular case of
 Theorem \ref{ress2} (cf. Lemma \ref{ress1}). In the case $G=\SL(n)$, the following corollary was earlier
proved by Knutson-Tao-Woodward [KTW].
\begin{corollary} \label{6.4} The set of inequalities provided by the (b)-part of
Theorem  \ref{EVT}
is an irredundant system of inequalities describing the cone  ${\Gamma_s(G)}_\br$
 generated by $\Gamma_s(G)$
inside $\Lambda_+(\br)^s$, i.e.,
the hyperplanes given by the equality in $I^P_{(w_1, \dots, w_s)}$ are precisely those
 facets of the cone ${\Gamma_s(G)}_\br$ which intersect the interior of
 $\Lambda_+(\br)^s$.

 By Theorem \ref{sj}, the same result is true for the cone $\bar{\Gamma}_s(\frg)$, i.e., the inequalities
 given by Corollary \ref{7.5} (b)
 form an irredundant system of inequalities describing the cone  $\bar{\Gamma}_s(\frg)$
inside  $\frh_+^s$ , i.e.,
the hyperplanes given by the equality in $I^P_{(w_1, \dots, w_s)}$ are precisely those
 facets of the cone  $\bar{\Gamma}_s(\frg)$ which intersect the interior of
 $\frh_+^s$.
 \end{corollary}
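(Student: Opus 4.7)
The plan is to deduce Corollary \ref{6.4} as a direct specialization of Theorem \ref{ress2}, via exactly the dictionary used in Lemma \ref{ress1} to deduce Theorem \ref{EVT} from Theorem \ref{ress}. Concretely, apply Theorem \ref{ress2} to the diagonal embedding $G \hookrightarrow \hat{G}:=G^{s-1}$. Since $\frg$ is semisimple, no nonzero ideal of $\frg$ is an ideal of $\frg^{s-1}$, so the hypothesis of Theorem \ref{ress2} is satisfied and the cone $\Gamma(G,\hat G)_\br$ has nonempty interior in $\Lambda_+(\br)\times \hat{\Lambda}_+(\br)$, which, under the obvious identification, is $\Lambda_+(\br)^s$ with the cone $\Gamma_s(G)_\br$ inside it.

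Next I would identify the data appearing on the two sides. As computed in the proof of Lemma \ref{ress1}, the set $\mathfrak{S}(G,\hat G)$ of special OPS is exactly $\{z\mapsto z^{d_i x_i}\}_{1\leq i\leq \ell}$ for certain positive rationals $d_i$, and the Kempf parabolics $P(\delta_i)$ run over the standard maximal parabolics of $G$. For such a $\delta=\delta_i$ and a tuple $(w,\hat w)=(w_1,(w_2,\dots,w_s))\in W^{P}\times (W^{P})^{s-1}$, the cohomological condition ($c_1$) of Theorem \ref{ress} becomes $[X^P_{w_1}]\cdots [X^P_{w_s}]=d[X^P_e]$ for some $d\neq 0$, while condition ($c_2$), after using the identity \eqref{eqn5}, is exactly the character identity $\bigl((\sum_j \chi_{w_j})-\chi_1\bigr)(x_i)=0$. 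By Proposition \ref{T1}, the conjunction ($c_1$)+($c_2$) is equivalent to Levi-movability of $(w_1,\dots,w_s)$, which by definition of $\odot_0$ (and the identity \eqref{10n}) is equivalent to $[X^P_{w_1}]\odot_0\cdots\odot_0[X^P_{w_s}]=[X^P_e]$. Finally the inequality $I^{\delta_i}_{(w,\hat w)}$ of Theorem \ref{ress}, after scaling by $1/d_i$, becomes $\sum_{j=1}^s \lambda_j(w_j x_P)\leq 0$, i.e.\ the inequality $I^{P}_{(w_1,\dots,w_s)}$ of Theorem \ref{EVT}. Thus the inequalities furnished by the (c)-part of Theorem \ref{ress} for the diagonal embedding are precisely the inequalities furnished by the (b)-part of Theorem \ref{EVT}, and Theorem \ref{ress2} immediately yields the irredundancy statement for $\Gamma_s(G)_\br$ inside $\Lambda_+(\br)^s$: each such hyperplane cuts out a facet meeting the interior of $\Lambda_+(\br)^s$, and conversely every such facet arises in this way.

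To obtain the second assertion, about the eigencone $\bar\Gamma_s(\frg)$, I would invoke Theorem \ref{sj}, which gives the identification $\varphi(\bar\Gamma_s(\frg))\cap \Lambda_+^s=\Gamma_s(G)$ under the Killing-form isomorphism $\varphi:\frh\xrightarrow{\sim}\frh^*$. Since this identification is linear and maps the positive Weyl chamber $\frh_+$ onto $\Lambda_+(\br)$, it sends the facets of $\Gamma_s(G)_\br$ meeting the interior of $\Lambda_+(\br)^s$ to the facets of $\bar\Gamma_s(\frg)$ meeting the interior of $\frh_+^s$; under $\varphi$, $\omega_P$ corresponds (up to the positive scalar $2/\langle\alpha_{i_P},\alpha_{i_P}\rangle$) to $x_P$, so the inequalities $I^P_{(w_1,\dots,w_s)}$ of Corollary \ref{7.5}(b) are the images of those of Theorem \ref{EVT}(b), and the irredundancy is preserved.

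The main conceptual point to verify carefully — and the only real obstacle — is the exact matching between the Levi-movability condition (encoded by $\odot_0$) and the pair of conditions ($c_1$)+($c_2$) of Theorem \ref{ress} under the diagonal embedding; this is precisely the content of Proposition \ref{T1} together with the definition of $\odot_0$ via \eqref{10n}, and was already carried out in Lemma \ref{ress1}. Given that identification, Corollary \ref{6.4} is a transparent corollary of Theorem \ref{ress2}.
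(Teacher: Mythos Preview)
Your approach is exactly the paper's: Corollary \ref{6.4} is presented there as the specialization of Theorem \ref{ress2} to the diagonal embedding $G\hookrightarrow G^{s-1}$, with the dictionary supplied by Lemma \ref{ress1}, and your argument reproduces that dictionary and then invokes Theorem \ref{sj} for the eigencone statement, just as the paper does.

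One small correction to your exposition (it does not affect the conclusion, since your two slips cancel): condition ($c_1$) in Theorem \ref{ress} requires the cup product to equal $[X^P_e]$ with coefficient exactly $1$, not merely $d\neq 0$; correspondingly, Levi-movability (Proposition \ref{T1}) is equivalent only to the $\odot_0$-product being a \emph{nonzero} multiple of $[X^P_e]$, so the clean equivalence $(c_1)+(c_2)\Longleftrightarrow [X^P_{w_1}]\odot_0\cdots\odot_0[X^P_{w_s}]=[X^P_e]$ follows directly from the definition of $\odot_0$ rather than via Levi-movability.
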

\begin{remark}  (1) Fix a maximal compact subgroup $\hat{K} \subset \hat{G}$ and $K\subset G$
such that $K\subset \hat{K}$. Define
$$\bar{\Gamma}(\frg, \hat{\frg}):=\{(h,\hat{h})\in \frh_+\times \hat{\frh}_+:
K\cdot (-h)\cap \pi(\hat{K}\cdot \hat{h})\neq \emptyset\},$$
where $\pi: i\hat{\frk}\to i\frk$ is the restriction map obtained from the
identifications (induced from the Killing forms) $i\hat{\frk}\simeq i \hat{\frk}^*$
and $i\frk \simeq i\frk^*$. Then, we get exact analogues of Theorems \ref{ress}
 and \ref{ress2} for $\bar{\Gamma}(\frg, \hat{\frg})$ by using an analogue of Theorem
 \ref{sj} in this setting (just as we got Corollary \ref{7.5} from Theorem \ref{EVT}).

(2) Berenstein-Sjamaar have determined the cone $\bar{\Gamma}(\frg, \hat{\frg})$ for the pairs $(\frh, \frg)$ (for any semisimple $\frg$
and its Cartan subalgebra $\frh$); $(\mathfrak{s}, \frg)$ (for any $sl_2$-triple $\mathfrak{s}$); and
$(G_2, sl(3))$ (cf. [BS, $\S$5]).

(3) Smaller faces of the cone $\Gamma (G, \hat{G})_\br$ are determined by Ressayre
in [R$_1$] and [R$_5$] (also see [Br]).

(4) For any simple $G$ with Lie algebra different from
$\mathfrak{s} \mathfrak{l} (2)$, the cone ${\Gamma_s(G)}_\br$ inside $\Lambda (\br)^s$
has facets precisely those given by the facets of ${\Gamma_s(G)}_\br$ intersecting
 the interior of
 $\Lambda_+(\br)^s$ together with the facets of the dominant chamber
 $\Lambda_+(\br)^s$ inside $\Lambda (\br)^s$ (cf. [KTW, Theorem 4] for
 $G=\SL (n), n\geq 3$, and [MR] for an arbitrary $G$). As observed by Ressayre,
 it is easy to see that this property fails for the pair $(\GL(n), \SL(n+1))$
 embedded as a Levi subgroup.
 \end{remark}

\section{Notational generalities on classical groups}\label{section4}

For a general reference for the material in this section, see, e.g., [BL]. In its
present form it is taken from [BK$_2$].
\subsection{Special Linear Group $\SL(n+1)$.}\label{4.1}
In this case we take  $B$ to be the (standard)
Borel subgroup consisting of upper triangular matrices of determinant $1$
and $H$ to be the subgroup
 consisting of diagonal matrices (of determinant $1$). Then,
 \[\frh=\{{\bf t}=\text{diag}(t_1, \dots , t_{n+1}): \sum t_i=0\}, \]
 and
  \[\frh_+=\{{\bf t}\in \frh: t_i\in \Bbb R\,\text{and}\, t_1\geq \cdots  \geq t_{n+1}\}.\]
  For any $1\leq i\leq n$,
  \[\alpha_i({\bf t})=t_i-t_{i+1}; \alpha_i^\vee=\text{diag}(0, \dots, 0,1,-1,0, \dots,
  0); \omega_i({\bf t})=t_1+\cdots t_i,\]
  where $1$ is placed in the $i$-th place.

  The Weyl group $W$ can be identified with the symmetric group $S_{n+1}$, which acts
  via the permutation of the coordinates of ${\bf t}$. Let $\{r_1, \dots, r_n\} \subset S_{n+1}$
  be the (simple)
  reflections corresponding to the simple roots $\{\alpha_1, \dots, \alpha_n\}$
  respectively. Then,
  \[r_i=(i,i+1).\]
  For any $1\leq m\leq n$, let $P_m\supset B$ be the (standard) maximal parabolic
  subgroup of $\SL(n+1)$ such that its unique Levi subgroup $L_m$ containing $H$
  has for its simple roots $\{\alpha_1, \dots, \hat{\alpha}_m, \dots, \alpha_n\}$.
  Then, $\SL(n+1)/P_m$ can be identified with the Grassmannian $\Gr(m, n+1)
  =\Gr(m, \Bbb C^{n+1})$ of $m$-dimensional subspaces of $\Bbb C^{n+1}$. Moreover, the set
  of minimal coset representatives $W^{P_m}$ of $W/W_{P_m}$ can be identified
with the set of $m$-tuples
\[S(m,n+1)=\{A:=1\leq a_1 < \cdots < a_m \leq n+1 \}.\]
Any such $m$-tuple $A$ represents the permutation
\[v_A=(a_1,\dots, a_m,a_{m+1}, \dots, a_{n+1}),\]
where $\{a_{m+1}< \cdots < a_{n+1}\}=[n+1]\setminus \{a_1, \dots, a_m\}$
and
\[[n+1]:=\{1, \dots, n+1\}.\]

For a complete flag
 $E_{\bull}:0=E_0\subsetneq E_1\subsetneq
\cdots\subsetneq E_{n+1}= \Bbb C^{n+1}$, and $A \in S(m,n+1)$, define the
corresponding
{\it shifted Schubert cell} inside $\Gr(m,n+1)$:
\begin{align*}
\OMM_{A}(E_{\bull})=&\{M\in \Gr(m,n+1): \,\text{for any}\, 0\leq \ell
\leq m \, \text{and any}\, a_\ell\leq b < a_{\ell +1},\\
& \dim M\cap
E_{b}=\ell\},
\end{align*}
 where we set $a_0=0$ and $a_{m+1}=n+1.$ Then,
$\OMM_{A}(E_{\bull})=g(E_{\bull}) C_{v_A}^{P_m}$, where
$g(E_{\bull})$ is an element of $\SL(n+1)$ which takes the standard
flag $E_{\bull}^o$ to the flag $E_{\bull}$. (Observe that
$g(E_{\bull})$ is determined up to the right multiplication by an
element of $B$.) Its closure in $\Gr(m,n+1)$ is denoted by
$\OMC_{A}(E_{\bull})$ and its cycle class in
$H^*(\Gr(m,n+1))$ by $[{\OMC}_{A}]$. (Observe that the cohomology
class $[{\OMC}_{A}]$ does not depend upon the choice of
$E_{\bull}$.) For the standard flag $E_{\bull}=E^o_{\bull}$, we thus
have
 $\OMM_{A}(E_{\bull})=C_{v_A}^{P_m}$.

\subsection{Symplectic Group $\Sp(2n)$.}\label{notate1} Let $V=\Bbb{C}^{2n}$ be equipped with the
nondegenerate symplectic form $\langle \,,\,\rangle$ so that its matrix
$\bigl(\langle e_i,e_j\rangle\bigr)_{1\leq i,j \leq 2n}$ in the
standard basis $\{e_1,\dots, e_{2n}\}$ is given by
\begin{equation*}
E=\left(\begin{array}{cc}
0&J\\
-J&0
\end{array}\right),
\end{equation*}
where $J$ is the anti-diagonal matrix $(1,\dots,1)$ of size $n$. Let
$$\Sp(2n):=\{g\in \SL(2n):
g \,\text{leaves  the form}\, \langle \,,\,\rangle \,\text{invariant}\}$$ be the associated
symplectic group.  Clearly, $\Sp(2n)$ can be realized
as the fixed point subgroup $G^\sigma$ under the involution $\sigma:G\to G$
defined by $\sigma(A)=E(A^t)^{-1}E^{-1}$, where $G=\SL(2n)$. The involution $\sigma$
keeps both of $B$ and $H$ stable, where $B$ and $H$  are as in the $\SL(2n)$ case. Moreover,
$B^\sigma$ (respectively, $H^\sigma$) is a Borel subgroup (respectively, a maximal torus)
of $\Sp(2n)$. We denote $B^\sigma, H^\sigma$ by $B^C=B^{C_n},H^C=H^{C_n}$
respectively and (when confusion is likely) $B,H$ by $B^{A_{2n-1}}, H^{A_{2n-1}}$
respectively (for $\SL(2n)$). Then, the Lie algebra of $H^C$ (the Cartan subalgebra
$\frh^C$)
\[\frh^C=\{\text{diag}(t_1, \dots, t_n, -t_n,\dots, -t_1):t_i\in \Bbb C\}.\]
Let $\Delta^C=\{\beta_1, \dots, \beta_n\}$ be the set of simple roots. Then, for any
$1\leq i\leq n$, $\beta_i={\alpha_i}_{\vert\frh^C},$ where $\{\alpha_1, \dots, \alpha_{2n-1}\}$
are the simple roots of $\SL(2n)$. The corresponding (simple) coroots
$\{\beta_1^\vee, \dots, \beta_n^\vee\}$ are given by
\[\beta^\vee_i=\alpha_i^\vee+ \alpha_{2n-i}^\vee, \,\,\,\text{for}\, 1\leq i <n\]
and
\[\beta_n^\vee=\alpha^\vee_n.\]
Thus,
\[\frh^C_+=\{\text{diag}(t_1, \dots, t_n, -t_n,\dots, -t_1):\text{each $t_i$
is   real and}\, t_1\geq \cdots \geq t_n\geq 0\}.\]
Moreover, $\frh^{A_{2n-1}}_+$ is $\sigma$-stable and
\[ \bigl(\frh^{A_{2n-1}}_+\bigr)^\sigma = \frh^C_+.\]
Let $\{s_1, \dots, s_n\}$ be the (simple) reflections in the Weyl group
$W^C=W^{C_n}$ of $\Sp(2n)$ corresponding to the simple roots $\{\beta_1, \dots, \beta_n\}$
respectively.
Since $H^{A_{2n-1}}$ is $\sigma$-stable, there is an induced action of $\sigma$ on the Weyl group
$S_{2n}$ of $\SL(2n)$.
The Weyl group $W^C$  can be identified with the subgroup of $S_{2n}$
consisting of $\sigma$-invariants:
\[ \{(a_1,\dots, a_{2n})\in S_{2n}: a_{2n+1-i}=2n+1-a_i \, \forall 1\leq i\leq 2n\}.\]
In particular, $w=(a_1,\dots, a_{2n})\in W^C$ is determined from
$(a_1,\dots, a_{n})$.

Under the inclusion  $W^C\subset S_{2n}$, we have
 \begin{align} \label{0}
s_i&=r_ir_{2n-i}, \,\,\,\text{if}\,\,\, 1\leq i\leq n-1\notag\\
 &=r_n, \,\,\,\text{if}\,\,\, i=n.
 \end{align}
Moreover, for any $u,v\in W^C$ such that  $\ell^C(uv)= \ell^C(u)+ \ell^C(v)$,
we have
 \begin{equation} \label{-1}
 \ell^{A_{2n-1}}(uv)= \ell^{A_{2n-1}}(u)+ \ell^{A_{2n-1}}(v),
\end{equation}
where $\ell^C(w)$ denotes the length of $w$ as an element of the
Weyl group $W^C$ of $\Sp(2n)$ and similarly for $\ell^{A_{2n-1}}$.

For  $1\leq r \leq n$, we let $\IG(r,2n)=\IG(r,V)$ to be
the set of $r$-dimensional isotropic
subspaces of $V$ with respect to the form $\langle\,,\,\rangle$, i.e.,
$$\IG(r,2n):=\{M\in \Gr(r,2n): \langle v,v'\rangle=0,\ \forall\,  v,v'\in M\}.$$
 Then, it is the quotient $\Sp(2n)/P_r^C$ of $\Sp(2n)$ by
 the standard maximal parabolic subgroup
$P_r^C$  with $\Delta^C\setminus \{\beta_r\}$ as the set of simple roots of its Levi component
$L_r^C$. (Again we take $L_r^C$ to be the unique Levi subgroup of $P_r^C$
 containing $H^C$.) It can be easily seen that the set
$W^C_r$ of minimal-length coset representatives of $W^C/W_{P_r^C}$ is identified with the set
\[\FS(r,2n)=\{{I}:= 1\leq i_1< \cdots <i_r\leq 2n \,\,{\text and}\,\, I\cap \bar{I}=
\emptyset\},\]
where
\begin{equation}\label{defibari}
\bar{I}:=\{2n+1-i_1, \dots, 2n+1-i_r\}.
\end{equation}
 Any such $I$ represents the
permutation $w_I=(i_1,\dots, i_{n}) \in W^C$ by taking
$\{i_{r+1}<\cdots <i_n\}=[n]\setminus(I\sqcup \bar{I}).$

\subsection{Definition.}\label{woodyallen}
A complete flag $$E_{\bull}:0=E_0\subsetneq E_1\subsetneq
\cdots\subsetneq E_{2n}= V$$
 is called an {\rm isotropic flag} if $E_{a}^{\perp}=E_{2n-a}$, for
 $a=1,\dots,2n$. (In particular,  $E_n$ is a maximal isotropic
subspace of $V$.)

For an isotropic flag $E_{\bull}$ as above, there exists an element
$k(E_{\bull})\in \Sp(2n)$  which takes the standard flag
$E_{\bull}^o$ to the flag $E_{\bull}$. (Observe that $k(E_{\bull})$ is determined
up to the right multiplication by an element of $B^C$.)

For any $I\in \FS(r,2n)$ and any isotropic flag $E_{\bull}$, we have the
corresponding {\it shifted Schubert cell} inside $\IG(r,V)$:
$$\LAM_{I}(E_{\bull})=\{M\in \IG(r,V): \text{for any}\,
0\leq \ell \leq r \,
\text{and any}\, i_\ell\leq a < i_{\ell +1}, \dim M\cap E_{a}=\ell\},$$
where we set $i_0=0$ and $i_{r+1}=2n$. Clearly, set
theoretically,
\begin{equation}\label{2}\LAM_I(E_{\bull})=\OMM_I(E_{\bull})\cap
\IG(r,V);
\end{equation}
this is also a scheme theoretic equality (cf. [BK$_2$, Proposition 36 (4)]).
 Moreover,  $\LAM_I(E_{\bull})=k(E_{\bull})C_{w_I}^{P_r^C}$.
 Denote
the closure of $\LAM_I(E_{\bull})$ inside $\IG(r,V)$  by
${\bar{\LAM}}_I(E_{\bull})$ and its cycle class  in
$H^*(\IG(r,V))$ (which does not depend upon the choice of the
isotropic flag $E_{\bull}$) by $[\bar{\LAM}_{I}]$. For the standard
flag $E_{\bull}=E^o_{\bull}$, we have
 $\LAM_{I}(E_{\bull})=C^{P_r^C}_{w_I}$.

\subsection{Special Orthogonal Group $\SO(2n+1)$.}\label{notate2} Let $V'=\Bbb{C}^{2n+1}$ be equipped with the
nondegenerate symmetric form $\langle \,,\,\rangle$ so that its matrix $E=\
\bigl(\langle e_i,e_j\rangle\bigr)_{1\leq i,j \leq 2n+1}$ (in the standard basis
 $\{e_1,\dots, e_{2n+1}\}$) is  the $(2n+1)\times (2n+1)$
antidiagonal matrix with $1'$s all along the antidiagonal except at the $(n+1, n+1)$-th
place where the entry is $2$. Note that the associated quadratic form on $V'$ is given by
\[Q(\sum t_ie_i)= t_{n+1}^2+\sum_{i=1}^n\,t_it_{2n+2-i}.\]
 Let $$\SO(2n+1):=\{g\in \SL(2n+1):
g \,\text{leaves  the quadratic  form $Q$ invariant}\}$$ be the associated
special orthogonal group.  Clearly, $\SO(2n+1)$ can be realized
as the fixed point subgroup $G^\theta$ under the involution $\theta:G\to G$
defined by $\theta(A)=E^{-1}(A^t)^{-1}E$, where $G=\SL(2n+1)$. The involution $\theta$
keeps both of $B$ and $H$ stable. Moreover,
$B^\theta$ (respectively, $H^\theta$) is a Borel subgroup (respectively, a maximal torus)
of $\SO(2n+1)$. We denote $B^\theta, H^\theta$ by $B^B=B^{B_n},H^B=H^{B_n}$
respectively. Then, the Lie algebra of $H^B$ (the Cartan subalgebra
$\frh^B$)
\[\frh^B=\{\text{diag}(t_1, \dots, t_n, 0,-t_n,\dots, -t_1):t_i\in \Bbb C\}.\]
This allows us to identify $\frh^C$ with $\frh^B$ under the map
\[\text{diag}(t_1, \dots, t_n, -t_n,\dots, -t_1)\mapsto \text{diag}(t_1, \dots,
t_n, 0,-t_n,\dots, -t_1).\]
Let $\Delta^B=\{\delta_1, \dots, \delta_n\}$ be the set of simple roots. Then, for any
$1\leq i\leq n$, $\delta_i={\alpha_i}_{\vert\frh^B},$ where $\{\alpha_1, \dots, \alpha_{2n}\}$
are the simple roots of $\SL(2n+1)$. The corresponding (simple) coroots
$\{\delta_1^\vee, \dots, \delta_n^\vee\}$ are given by
\[\delta^\vee_i=\alpha_i^\vee+ \alpha_{2n+1-i}^\vee, \,\,\,\text{for}\, 1\leq i <n\]
and
\[\delta_n^\vee=2(\alpha^\vee_n+\alpha^\vee_{n+1}).\]
Thus, under the above identification,
\[\frh^B_+=\frh^C_+.\]
Moreover, $\frh^{A_{2n}}_+$ is $\theta$-stable and
\[ \bigl(\frh^{A_{2n}}_+\bigr)^\theta = \frh^B_+.\]
Let $\{s'_1, \dots, s'_n\}$ be the (simple) reflections in the Weyl group
$W^B=W^{B_n}$ of $\SO(2n+1)$ corresponding to the simple roots $\{\delta_1, \dots, \delta_n\}$
respectively.
Since $H^{A_{2n}}$ is $\theta$-stable, there is an induced action of $\theta$ on the Weyl group
$S_{2n+1}$ of $\SL(2n+1)$.
The Weyl group $W^B$  can be identified with the subgroup of $S_{2n+1}$
consisting of $\theta$-invariants:
\[ \{(a_1,\dots, a_{2n+1})\in S_{2n+1}: a_{2n+2-i}=2n+2-a_i \, \forall 1\leq i\leq 2n+1\}.\]
In particular, $w=(a_1,\dots, a_{2n+1})\in W^B$ is determined from
$(a_1,\dots, a_{n})$. (Observe that $a_{n+1}=n+1$.) We can identify
the Weyl groups $W^C\simeq W^B$ under the map $(a_1,\dots, a_{2n})\mapsto (a_1,
\dots,  a_n, n+1, a_{n+1}+1, \dots, a_{2n}+1).$

Under the inclusion  $W^B\subset S_{2n+1}$, we have
 \begin{align}
 s'_i&=r_ir_{2n+1-i}, \,\,\,\text{if}\,\,\, 1\leq i\leq n-1\notag\\
 &=r_nr_{n+1}r_n, \,\,\,\text{if}\,\,\, i=n.
 \end{align}

For  $1\leq r \leq n$, we let $\OG(r,2n+1)=\OG(r,V')$ to be
the set of $r$-dimensional isotropic
subspaces of $V'$ with respect to the quadratic form $Q$, i.e.,
$$\OG(r,2n+1):=\{M\in \Gr(r,V'): Q(v)=0,\ \forall\,  v\in M\}.$$
 Then, it is the quotient $\SO(2n+1)/P_r^B$ of $\SO(2n+1)$ by
 the standard maximal parabolic subgroup
$P_r^B$  with $\Delta^B\setminus \{\delta_r\}$ as the set of simple roots of its Levi component
$L_r^B$. (Again we take $L_r^B$ to be the unique Levi subgroup of $P_r^B$
 containing $H^B$.) It can be easily seen that the set
$W^B_r$ of minimal-length coset representatives of $W^B/W_{P_r^B}$ is identified with the set
\[\FS'(r,2n+1)=\{{J}:= 1\leq j_1< \cdots <j_r\leq 2n+1 ,  j_p\neq n+1 \,
\text{for\, any}\, p \,\,{\text and}\,\, J\cap \bar{J}'=
\emptyset\},\]
where
$$\bar{J}':=\{2n+2-j_1, \dots, 2n+2-j_r\}.$$
Any such $J$ represents the permutation $w'_J=(j_1,\dots,
j_{n})
\in W^B$ by taking
$\{j_{r+1}<\cdots <j_n\}=[n]\setminus(J\sqcup \bar{J}').$

Similar to the Definition \ref{woodyallen} of isotropic flags on $V$, we have
 the notion of isotropic flags on $V'$.
Then, for an isotropic flag $E'_{\bull}$, there exists an element
$k(E'_{\bull})\in \SO(2n+1)$  which takes the standard flag
${E'}_{\bull}^o$ to the flag $E'_{\bull}$. (Observe that $k(E'_{\bull})$ is determined
up to the right multiplication by an element of $B^B$.)

For any $J\in \FS'(r,2n+1)$ and any isotropic flag $E'_{\bull}$, we have the
corresponding {\it shifted Schubert cell} inside $\OG(r,V')$:
$$\Psi_{J}(E'_{\bull})=\{M\in \OG(r,V'): \text{for any}\,
0\leq \ell \leq r \,
\text{and any}\, j_\ell\leq a < j_{\ell +1}, \dim M\cap E'_{a}=\ell\},$$
where we set $j_0=0$ and $j_{r+1}=2n+1$. Clearly, set
theoretically,
\begin{equation}\Psi_J(E'_{\bull})=\OMM_J(E'_{\bull})\cap
\OG(r,V');
\end{equation}
this is also a scheme theoretic equality.
 Moreover,  $\Psi_J(E'_{\bull})=k(E'_{\bull})C_{w'_J}^{P_r^B}$.
 Denote
the closure of $\Psi_J(E'_{\bull})$ inside $\OG(r,V')$  by
$\bar{\Psi}_J(E'_{\bull})$
and its cycle class  in $H^*(\OG(r,V'))$
 (which does not depend upon the choice of the isotropic flag
$E'_{\bull}$) by $[\bar{\Psi}_{J}]$.
For
the standard flag $E'_{\bull}=E^o_{\bull}$, we have
 $\Psi_{J}(E'_{\bull})=C^{P_r^B}_{w'_J}$.

\section{Comparison of the eigencones under diagram automorphisms}
\label{three3}

Fix a positive integer $s$. Let $V=\Bbb C^{2n}$ be equipped with the nondegenerate
 symplectic form $\langle\,,\,\rangle$ as in Section \ref{section4}, and let $1\leq r\leq n$ be
 a positive integer. Let $A^1,\dots,A^s\in S(r,2n)$. The following theorem is a key technical result that
underlies the proof of the comparison of eigencone for $\Sp(2n)$ with that
of $\SL(2n)$. The following results \ref{wilson1} - \ref{woodrow1} are due to
Belkale-Kumar [BK$_2$].

Instead of giving the original proof of the following theorem due to Belkale-Kumar
[BK$_2$], we give a shorter proof observed by Sottile [So] using the work of
Eisenbud-Harris [EH] on rational normal curves.

\begin{theorem}\label{wilson1} Let $E^1_{\bull},\dots,E^s_{\bull}$ be
isotropic flags on $V$  in general position. Then, the  intersection
of subvarieties $\cap_{j=1}^s \bar{\OMM}_{A^j}(E^j_{\bull})$ inside
$\Gr(r,V)$ is
 proper (possibly empty).
\end{theorem}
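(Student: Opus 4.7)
The plan is to use the Eisenbud--Harris theorem on osculating flags to a rational normal curve, exploiting the fact that such osculating flags are automatically isotropic for an appropriate symplectic form and that they are known to intersect properly in the ordinary Grassmannian.

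First, I would set up the key geometric input. Identify $V = \mathbb{C}^{2n}$ with the space $H^0(\mathbb{P}^1, \mathcal{O}(2n-1))$, and consider the rational normal curve $C \subset \mathbb{P}(V^*)$ of degree $2n-1$, so that for each $p \in \mathbb{P}^1$ the osculating flag $F_\bullet(p)$ at $p$ (whose $i$-th subspace is spanned by derivatives of order $<i$) is a complete flag in $V$. A classical observation (essentially the Wronskian pairing, and used repeatedly by Eisenbud--Harris) is that the natural pairing on $V$ of degree $2n-1$ polynomials is antisymmetric and non-degenerate, and that each osculating flag $F_\bullet(p)$ is isotropic with respect to this symplectic form: indeed $F_i(p)^\perp = F_{2n-i}(p)$. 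After transporting by an element of $\SL(2n)$, we may assume this symplectic form coincides with the given form $\langle\,,\,\rangle$ on $V$, so each $F_\bullet(p)$ is an isotropic flag in the sense of Definition \ref{woodyallen}.

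Second, I would invoke the theorem of Eisenbud--Harris [EH]: for $s$ distinct general points $p_1, \dots, p_s \in \mathbb{P}^1$ and arbitrary Schubert data $A^1, \dots, A^s \in S(r, 2n)$, the intersection $\bigcap_{j=1}^s \bar{\Omega}_{A^j}(F_\bullet(p_j))$ in $\Gr(r, 2n)$ is proper (in fact generically transverse). This produces an explicit $s$-tuple of isotropic flags $(F_\bullet(p_1), \dots, F_\bullet(p_s))$ witnessing a proper intersection for the specified Schubert data.

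Third, I would conclude by an openness/semicontinuity argument. The set
\[
U = \{(E^1_\bullet, \dots, E^s_\bullet) \in (\Sp(2n)/B^C)^s : \textstyle\bigcap_j \bar{\Omega}_{A^j}(E^j_\bullet) \text{ is proper in } \Gr(r, 2n)\}
\]
is open, since properness of the intersection is equivalent to an upper bound on the dimension of each irreducible component, and dimension is upper-semicontinuous in flat families (apply this to the universal intersection over $(\Sp(2n)/B^C)^s$). By the previous step, $U$ is non-empty, hence dense and open in $(\Sp(2n)/B^C)^s$. Transitivity of $\Sp(2n)$ on isotropic flags then yields the statement of the theorem.

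The main obstacle is verifying the two classical inputs with sufficient precision: that the osculating flags to a rational normal curve of odd degree $2n-1$ really are isotropic for the relevant symplectic form (this is standard but must be stated carefully), and that the Eisenbud--Harris theorem, originally framed in the context of limit linear series and transversality of Schubert intersections along rational normal curves, in fact supplies the properness statement for arbitrary choices of Schubert classes $[\bar{\Omega}_{A^j}]$ and arbitrary $s$, rather than only in the zero-dimensional (top-degree) case. Modulo these two inputs, the openness argument is routine and no extra work is required for the symplectic/isotropic restriction, precisely because the Eisenbud--Harris configurations are themselves isotropic.
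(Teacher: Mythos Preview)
Your proposal is correct and follows essentially the same approach as the paper: construct osculating flags to a rational normal curve in $V=\mathbb{C}^{2n}$, observe that these flags are isotropic for the symplectic form, and invoke Eisenbud--Harris [EH, Theorem~2.3] to obtain properness for distinct parameter values. The only cosmetic differences are that the paper writes down the curve explicitly (with suitable signs so that isotropy is immediate for the form $E$ of Section~\ref{notate1}) rather than via $H^0(\mathbb{P}^1,\mathcal{O}(2n-1))$, and that the paper leaves your openness/semicontinuity step implicit.
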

\begin{proof} Consider the rational normal curve $\gamma: \mathbb{C} \to V=\mathbb{C}^{2n},$
$$\gamma (t)=\bigl(1, t, \frac{t^2}{2!}, \dots, \frac{t^n}{n!}, -\frac{t^{n+1}}{n+1!},
\frac{t^{n+2}}{n+2!}, \dots, (-1)^{n-1}\frac{t^{2n-1}}{2n-1!}\bigr).$$
Defne the corresponding `osculating'  flag
$$E(t)_{\bull}: E(t)_1 \subset \dots \subset E(t)_{2n} $$
by $E(t)_j:=\mathbb{C} \gamma (t)\oplus \mathbb{C} \gamma^{(1)} (t)\oplus \dots \oplus
\mathbb{C}\gamma^{(j-1)} (t),$ where $ \gamma^{(k)} (t)$ is the $k$-th derivative of
$\gamma$ at $t$. Then, it is easy to see that $E(t)_{\bull}$ is an isotropic flag
for any value of $t$.

By a theorem due to Eisenbud-Harris [EH, Theorem 2.3], the intersection
$\cap_{j=1}^s \bar{\OMM}_{A^j}(E(t_j)_{\bull})$ inside
$\Gr(r,V)$ is proper if $t_1, \dots, t_s$ are distinct complex numbers. This
 proves the theorem.
 \end{proof}
 \begin{remark} Even though we do not need it, as observed by Sottile [So]
 using the work of Mukhin-Tarasov-Varchenko [MTV, Corollary 6.3], the intersection of the open
 cells $\cap_{j=1}^s {\OMM}_{A^j}(E(t_j)_{\bull})$ is transverse if
 $t_1, \dots, t_s$ are distinct {\it real}  numbers.
 \end{remark}

The following result follows as an immediate consequence of the above theorem.

\begin{corollary}\label{frasier} Let $1\leq r\leq n$ and let
 $I^1,\dots, I^s\in \FS(r,2n)$  be such that
$$\prod_{j=1}^s[\bar{\LAM}_{{I^j}}]\neq 0\in H^{*}(\IG(r,2n)).$$
 Then,  $\prod_{j=1}^s[\OMC_{{I^j}}]\neq 0\in H^{*}(\Gr(r,2n)).$
\end{corollary}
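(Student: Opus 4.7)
The plan is to combine Theorem \ref{wilson1} (the hard technical input) with Kleiman's transversality theorem and the set-theoretic identity $\LAM_I(E_\bullet)=\OMM_I(E_\bullet)\cap\IG(r,V)$ from \eqref{2}. The strategy is to transfer a nonempty proper intersection from $\IG(r,V)$ to $\Gr(r,2n)$, then invoke Proposition \ref{fultonproper} to conclude the cup product in $H^*(\Gr(r,2n))$ is nonzero.

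First I would apply Theorem \ref{kleiman} to the transitive action of $\Sp(2n)$ on $\IG(r,V)$: since translating the shifted Schubert varieties $\bar\LAM_{I^j}(E^o_\bullet)$ by a generic $s$-tuple $(g_1,\dots,g_s)\in \Sp(2n)^s$ is equivalent to replacing $E^o_\bullet$ by a generic $s$-tuple of isotropic flags $E^1_\bullet,\dots,E^s_\bullet$, I obtain that for isotropic flags in general position the intersection $\bigcap_{j=1}^s \bar\LAM_{I^j}(E^j_\bullet)$ inside $\IG(r,V)$ is proper. Since by hypothesis $\prod_{j=1}^s[\bar\LAM_{I^j}]\neq 0$ in $H^*(\IG(r,V))$, Proposition \ref{fultonproper} (applied inside $\IG(r,V)$, which is a homogeneous space of $\Sp(2n)$) forces this proper intersection to be nonempty, with fundamental class equal to the cup product.

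Next, I would pass from $\IG(r,V)$ to $\Gr(r,2n)$. From \eqref{2} we have $\LAM_I(E_\bullet)\subseteq\OMM_I(E_\bullet)$, and taking closures in $\Gr(r,V)$ yields $\bar\LAM_I(E_\bullet)\subseteq\bar\OMM_I(E_\bullet)$. Consequently
\[
\emptyset \neq \bigcap_{j=1}^s \bar\LAM_{I^j}(E^j_\bullet)\;\subseteq\;\bigcap_{j=1}^s \bar\OMM_{I^j}(E^j_\bullet).
\]
By simultaneously choosing the $E^j_\bullet$ generic enough to be in the nonempty open subset of $s$-tuples of isotropic flags where Theorem \ref{wilson1} applies (and where Kleiman's conclusion holds for $\IG$), the intersection $\bigcap_{j=1}^s \bar\OMM_{I^j}(E^j_\bullet)$ is proper in $\Gr(r,2n)$. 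A second application of Proposition \ref{fultonproper}, now in $\Gr(r,2n)$, identifies $\prod_{j=1}^s[\OMC_{I^j}]$ with the class of this proper nonempty intersection, which is therefore nonzero.

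The only genuine input here is Theorem \ref{wilson1}; everything else is formal. There is no real obstacle beyond ensuring one can pick an $s$-tuple of isotropic flags simultaneously generic for both Kleiman (in $\IG$) and Theorem \ref{wilson1} (in $\Gr$), which is fine because each condition defines a nonempty Zariski-open subset of the (irreducible) space of $s$-tuples of isotropic flags. One should also note that although the codimensions of $\bar\LAM_I(E_\bullet)$ in $\IG$ and $\bar\OMM_I(E_\bullet)$ in $\Gr$ differ in general, this is irrelevant: the argument only needs nonemptiness of the intersection in $\Gr$ together with its properness there, and both are delivered by the two cited results.
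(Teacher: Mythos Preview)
Your proposal is correct and follows essentially the same route as the paper's proof: use Kleiman (Theorem \ref{kleiman}) together with Proposition \ref{fultonproper} to translate the nonvanishing in $H^*(\IG(r,2n))$ into a nonempty proper intersection of the $\bar\LAM_{I^j}(E^j_\bullet)$, pass to $\Gr(r,2n)$ via the inclusion \eqref{2}, invoke Theorem \ref{wilson1} for properness there, and apply Proposition \ref{fultonproper} once more. The paper's argument is terser but logically identical; your extra care in noting that the two genericity conditions (Kleiman for $\IG$ and Theorem \ref{wilson1} for $\Gr$) can be satisfied simultaneously on a nonempty open set of isotropic flag tuples is a worthwhile clarification.
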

\begin{proof} Observe that by Proposition \ref{fultonproper},
\begin{equation} \label{3}\prod_{j=1}^s[\bar{\LAM}_{{I^j}}]\neq 0 \,\,\text{if
\,and \,only\, if}\,\, \cap_{j=1}^s
\,{\bar{\LAM}}_{{I^j}}(E^j_{\bull})\neq \emptyset
\end{equation}
for isotropic flags $\{E^j_{\bull}\}$ such that the above intersection is
 proper. Thus, by
assumption, $\cap_{j=1}^s \,{\bar{\LAM}}_{I^j}(E^j_{\bull})\neq \emptyset$
for such flags $\{E^j_{\bull}\}$. By the above theorem and Equation
~\eqref{2}, we conclude that $\cap_{j=1}^s
\,{\bar{\OMM}}_{{I^j}}(E^j_{\bull})\neq \emptyset$ and the intersection is proper
for  isotropic flags $\{E^j_{\bull}\}_{1\leq j\leq s}$ in general position. From this and using
Equation \eqref{3} for $\Gr(r, V)$, the corollary follows.
\end{proof}

We have the following analogue of Theorem  ~\ref{wilson1} for $\SO(2n+1)$
proved similarly by replacing the rational normal curve $\gamma$ by $\eta: \bc
 \to V=\bc^{2n+1}$ given by
$$\eta (t)=\bigl(1, t, \frac{t^2}{2!}, \dots, \frac{t^{n-1}}{n-1!}, \frac{t^n}{n!
\sqrt{2}}, -\frac{t^{n+1}}{n+1!},
\frac{t^{n+2}}{n+2!}, \dots, (-1)^{n}\frac{t^{2n}}{2n!}\bigr).$$
\begin{theorem}\label{newwilson2} Let $1 \leq r\leq n$.
 Let $A^1,\dots,A^s$ be
subsets of $[2n+1]$ each of cardinality $r$. Let
${E'}^1_{\bull}, \dots, {E'}^s_{\bull}$ be isotropic flags on $V'=\Bbb
C^{2n+1}$ in general position. Then, the intersection $\cap_{j=1}^s
\bar{\OMM}_{A^j}({E'}^j_{\bull})$  of subvarieties of $\Gr(r,V')$ is
 proper (possibly empty).
\end{theorem}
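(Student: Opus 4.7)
The plan is to imitate the proof of Theorem~\ref{wilson1} with the rational normal curve $\gamma$ replaced by the curve $\eta: \mathbb{C} \to V' = \mathbb{C}^{2n+1}$ already written down in the paragraph preceding the statement. For each $t \in \mathbb{C}$, I would form the osculating flag
\[ E(t)_\bullet : \; E(t)_j := \mathbb{C}\eta(t) \oplus \mathbb{C}\eta^{(1)}(t) \oplus \cdots \oplus \mathbb{C}\eta^{(j-1)}(t), \qquad 1 \leq j \leq 2n+1, \]
and check two things: (i) the derivatives $\eta(t), \eta^{(1)}(t), \ldots, \eta^{(2n)}(t)$ are linearly independent for every $t \in \mathbb{C}$, so that $E(t)_\bullet$ is a complete flag on $V'$ (immediate from the distinct leading monomials); and (ii) $E(t)_\bullet$ is isotropic with respect to the symmetric form $\langle\,,\,\rangle$ on $V'$, i.e.\ $E(t)_j^\perp = E(t)_{2n+1-j}$ for all $j$.

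The key computational step is (ii). Since $\dim E(t)_j^\perp = 2n+1-j = \dim E(t)_{2n+1-j}$, it suffices to establish one inclusion, which by bilinearity reduces to showing that $\langle \eta^{(a)}(t), \eta^{(b)}(t)\rangle = 0$ whenever $a + b \leq 2n-1$. Using the explicit matrix of the form (antidiagonal with entry $2$ at the $(n+1, n+1)$ slot, and $1$ elsewhere on the antidiagonal), each such pairing expands as
\[ 2\,\eta^{(a)}_{n+1}(t)\,\eta^{(b)}_{n+1}(t) \;+\; \sum_{k=1}^{n}\bigl(\eta^{(a)}_k(t)\,\eta^{(b)}_{2n+2-k}(t) + \eta^{(a)}_{2n+2-k}(t)\,\eta^{(b)}_k(t)\bigr), \]
and the specific coefficients chosen in the definition of $\eta$---in particular the factor $1/\sqrt{2}$ at the middle coordinate and the alternating signs $(-1)^j/(n+j)!$ in the second half---are rigged precisely so that this sum vanishes identically in $t$ for $a+b \leq 2n-1$. (Sanity check for $n=1$: $\eta(t) = (1, t/\sqrt{2}, -t^2/2)$ gives $\langle \eta, \eta\rangle = 2(t/\sqrt{2})^2 + 2(1)(-t^2/2) = 0$ and $\langle \eta, \eta'\rangle = 2(t/\sqrt{2})(1/\sqrt{2}) + (1)(-t) + (-t^2/2)(0) = 0$, as required.)

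Once $E(t)_\bullet$ is known to be an isotropic flag for every $t$, I would apply the Eisenbud--Harris theorem [EH, Theorem~2.3] already invoked in the proof of Theorem~\ref{wilson1}: for any distinct $t_1, \ldots, t_s \in \mathbb{C}$ and any subsets $A^1,\dots,A^s \subset [2n+1]$ of cardinality $r$, the intersection $\bigcap_{j=1}^s \bar{\OMM}_{A^j}(E(t_j)_\bullet)$ inside $\Gr(r, V')$ is proper (possibly empty). This exhibits an explicit $s$-tuple of isotropic flags for which the conclusion of the theorem holds.

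Finally, to upgrade to the general-position statement, I would note that properness of an intersection of closed subvarieties of fixed codimension in $\Gr(r, V')$ is an open condition on the parameter space; hence, for each fixed $s$-tuple $(A^1,\dots,A^s)$, the locus of $s$-tuples of isotropic flags $({E'}^1_\bullet,\dots,{E'}^s_\bullet)$ on $V'$ for which $\bigcap_j \bar{\OMM}_{A^j}({E'}^j_\bullet)$ is proper is an open subset of the $s$-fold product of the (irreducible) variety of isotropic flags. The previous paragraph provides a point in this open set, so it is a nonempty open---hence dense---subset, as required. The main obstacle is the isotropy verification (ii); all other steps are direct transcriptions of the argument for Theorem~\ref{wilson1}.
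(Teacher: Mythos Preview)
Your proposal is correct and follows essentially the same approach as the paper: the paper's proof is precisely to transcribe the argument for Theorem~\ref{wilson1} with $\gamma$ replaced by the curve $\eta$, form the osculating flags, note they are isotropic, and invoke Eisenbud--Harris. You have simply made explicit the isotropy verification and the openness argument for general position that the paper leaves to the reader.
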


The following result follows as an immediate consequence of the above theorem
(just as in the case of $\Sp(2n)$).

\begin{corollary}\label{frasier'} Let $1\leq r\leq n$ and let
 $J^1,\dots, J^s\in \FS'(r,2n+1)$  be such that
$$\prod_{j=1}^s[\bar{\Psi}_{{J^j}}]\neq 0\in H^{*}(\OG(r,2n+1)).$$
 Then,  $\prod_{j=1}^s[\OMC_{{J^j}}]\neq 0\in H^{*}(\Gr(r,2n+1)).$
\end{corollary}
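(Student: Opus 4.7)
The plan is to mirror verbatim the argument for the symplectic analogue Corollary~\ref{frasier}, substituting Theorem~\ref{newwilson2} for Theorem~\ref{wilson1} at the one place where the geometric input differs. First I would apply Proposition~\ref{fultonproper} to the transitive $\SO(2n+1)$-action on $\OG(r,V')$, together with the Kleiman transversality Theorem~\ref{kleiman}, to pass from the cohomological nonvanishing $\prod_{j=1}^s[\bar{\Psi}_{J^j}]\neq 0$ in $H^*(\OG(r,V'))$ to the existence of a nonempty open subset $U_1\subseteq \SO(2n+1)^s$ of $s$-tuples $(k^1,\dots,k^s)$ for which, setting ${E'}^j_{\bull}=k^j\cdot {E'}^o_{\bull}$, the intersection $\bigcap_{j=1}^s \bar{\Psi}_{J^j}({E'}^j_{\bull})$ is proper and nonempty in $\OG(r,V')$.

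Next I would invoke the scheme-theoretic identity $\Psi_J({E'}_{\bull})=\OMM_J({E'}_{\bull})\cap \OG(r,V')$ recorded in Section~\ref{section4}. Taking closures inside $\Gr(r,V')$ yields $\bar{\Psi}_J({E'}_{\bull})\subseteq \bar{\OMM}_J({E'}_{\bull})$, so the nonempty intersection from the previous step embeds into $\bigcap_{j=1}^s \bar{\OMM}_{J^j}({E'}^j_{\bull})\subseteq \Gr(r,V')$, proving that this larger intersection is nonempty for every $(k^1,\dots,k^s)\in U_1$. On the other hand, Theorem~\ref{newwilson2} supplies a second nonempty open subset $U_2\subseteq \SO(2n+1)^s$ on which the latter intersection is proper inside $\Gr(r,V')$.

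Finally, choosing any $(k^1,\dots,k^s)$ in the nonempty open subset $U_1\cap U_2$ of the irreducible variety $\SO(2n+1)^s$, the intersection $\bigcap_{j=1}^s \bar{\OMM}_{J^j}({E'}^j_{\bull})$ is simultaneously proper and nonempty inside $\Gr(r,V')$; a final application of Proposition~\ref{fultonproper}, this time to the $\SL(2n+1)$-action on $\Gr(r,2n+1)$, then yields $\prod_{j=1}^s[\OMC_{J^j}]\neq 0$ in $H^*(\Gr(r,2n+1))$, as required. The only substantive geometric input beyond the symplectic case is Theorem~\ref{newwilson2}, which in turn rests on the rational normal curve argument in $V'=\bc^{2n+1}$ analogous to the Eisenbud--Harris argument used to prove Theorem~\ref{wilson1}; once that theorem is in hand, the deduction of Corollary~\ref{frasier'} is entirely formal, and I do not anticipate any additional obstacle.
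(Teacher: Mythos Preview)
Your proposal is correct and follows essentially the same approach as the paper, which simply states that the result ``follows as an immediate consequence of the above theorem (just as in the case of $\Sp(2n)$)''; you have spelled out explicitly the open-set bookkeeping that the paper's proof of Corollary~\ref{frasier} leaves implicit in the phrase ``isotropic flags in general position.''
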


Recall that $\frh^C_+$ (respectively, $\frh^B_+$) is the dominant chamber in
the Cartan subalgebra of $\Sp(2n)$ (respectively, $\SO(2n+1)$) as in Section
\ref{section4}.

The following theorem provides a comparison of the eigencone for $sp(2n)$
 with that of  $sl(2n)$ (and also for  $so(2n+1)$
 with that of  $sl(2n+1)$).
\begin{theorem}\label{woodrow1} For any $s\geq 1$,
\begin{enumerate}
\item[(a)]   $\bar{\Gamma}_s(sp(2n))=
\bar{\Gamma}_s(sl(2n))\cap (\frh_{+}^{C})^s.$
\item[(b)] $\bar{\Gamma}_s(so(2n+1))=
\bar{\Gamma}_s(sl(2n+1))\cap (\frh_{+}^{B})^s.$
\end{enumerate}

(Observe that by Section \ref{section4}, $\frh_{+}^{C} \subset\frh_{+}^{A_{2n-1}}$
and $\frh_{+}^{B} \subset\frh_{+}^{A_{2n}}$.)
\end{theorem}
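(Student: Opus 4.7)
\textbf{Proof proposal for Theorem \ref{woodrow1}.} The inclusion $\subseteq$ in both (a) and (b) is immediate from the definition: the maximal compact $K^C$ of $\Sp(2n)$ (resp.\ $K^B$ of $\SO(2n+1)$) sits inside the maximal compact $K^{A_{2n-1}}$ of $\SL(2n)$ (resp.\ $K^{A_{2n}}$ of $\SL(2n+1)$), and the chambers embed compatibly ($\frh^C_+\subset \frh^{A_{2n-1}}_+$ and $\frh^B_+\subset \frh^{A_{2n}}_+$ by Section \ref{section4}), so any triple of $K^C$-conjugacy data summing to zero is automatically $K^{A_{2n-1}}$-conjugacy data summing to zero.

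For the nontrivial inclusion $\supseteq$ in (a), fix $(h_1,\dots,h_s)\in\bar\Gamma_s(sl(2n))\cap(\frh^C_+)^s$. By Corollary \ref{eigen} applied to $\Sp(2n)$, it suffices to verify that for every $1\le r\le n$ and every $(w_1,\dots,w_s)\in(W^C_r)^s$ with $[X^{P^C_r}_{w_1}]\cdots[X^{P^C_r}_{w_s}]=d[X^{P^C_r}_e]$ in $H^*(\IG(r,2n))$ for some $d\ne 0$, we have $\omega_{P^C_r}(\sum_j w_j^{-1}h_j)\le 0$. Writing $w_j=w_{I^j}$ for $I^j\in\FS(r,2n)$ so that $[X^{P^C_r}_{w_j}]=[\bar\LAM_{I^j}]$, the hypothesis gives $\prod_j[\bar\LAM_{I^j}]\ne 0$ in $H^*(\IG(r,2n))$, whence Corollary \ref{frasier} yields $\prod_j[\OMC_{I^j}]\ne 0$ in $H^*(\Gr(r,2n))=H^*(\SL(2n)/P_r)$. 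Note that this product need not be the top class, since $\dim\IG(r,2n)\ne\dim\Gr(r,2n)$ in general; this is exactly where the more flexible form in Remark \ref{2.9}(a) is needed.

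Applying Remark \ref{2.9}(a) to $\SL(2n)$ (with $Q=P=P_r$) and the hypothesis $(h_1,\dots,h_s)\in\bar\Gamma_s(sl(2n))$, we obtain
\[
\omega^{A_{2n-1}}_{P_r}\Bigl(\sum_j v_{I^j}^{-1}h_j\Bigr)\le 0,
\]
where $v_{I^j}\in W^{A_{2n-1}}=S_{2n}$ is the permutation associated to $I^j\in S(r,2n)$. Under the embedding $W^C\hookrightarrow S_{2n}$ of Section \ref{notate1}, the element $w_{I^j}$ is the unique $\sigma$-invariant permutation extending $v_{I^j}$, and its action on $\frh^C\subset\frh^{A_{2n-1}}$ coincides with the permutation action of $v_{I^j}$ on diagonal entries. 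A direct computation using $\omega^{A_{2n-1}}_r(\mathbf{t})=t_1+\cdots+t_r$ evaluated on $\mathbf{t}=\mathrm{diag}(t_1,\dots,t_n,-t_n,\dots,-t_1)\in\frh^C$ shows $\omega^{A_{2n-1}}_{P_r}|_{\frh^C}=\omega_{P^C_r}$ for $1\le r\le n$ (verified by checking $\omega^{A_{2n-1}}_{P_r}|_{\frh^C}(\beta_j^\vee)=\delta_{jr}$ using $\beta_j^\vee=\alpha_j^\vee+\alpha_{2n-j}^\vee$ for $j<n$ and $\beta_n^\vee=\alpha_n^\vee$). Consequently the $\SL(2n)$ inequality above is literally the $\Sp(2n)$ inequality we wanted, completing (a).

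Part (b) proceeds by the same scheme, with Corollary \ref{frasier'} (resting on Theorem \ref{newwilson2}) replacing Corollary \ref{frasier}, the inclusion $W^B\hookrightarrow W^{A_{2n}}=S_{2n+1}$ of Section \ref{notate2} matching Weyl group actions, and the analogous restriction identity $\omega^{A_{2n}}_{P_r}|_{\frh^B}=\omega_{P^B_r}$ for $1\le r\le n$ (checked using $\delta_j^\vee=\alpha_j^\vee+\alpha_{2n+1-j}^\vee$ for $j<n$ and $\delta_n^\vee=2(\alpha_n^\vee+\alpha_{n+1}^\vee)$, with the factor of $2$ cancelling correctly). The genuine geometric input is concentrated in Corollaries \ref{frasier} and \ref{frasier'}—that is, in the transversality of isotropic flags furnished by Theorems \ref{wilson1} and \ref{newwilson2}; the rest of the argument is a formal translation of inequalities across the natural embeddings of root data.
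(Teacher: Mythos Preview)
Your proof is correct and follows essentially the same approach as the paper: both argue the easy inclusion directly, then for the reverse use Corollary \ref{eigen} on the $\Sp(2n)$ side, transfer the nonvanishing via Corollary \ref{frasier} (resp.\ \ref{frasier'}) to the full Grassmannian, invoke Remark \ref{2.9}(a) for $sl(2n)$ since the resulting product need not lie in top degree, and then match the inequalities via $\omega_r|_{\frh^C}=\omega_r^C$ together with the observation that $v_{I^j}$ and $w_{I^j}$ agree on $\{1,\dots,r\}$. Your phrasing that ``$w_{I^j}$ is the unique $\sigma$-invariant permutation extending $v_{I^j}$'' is slightly imprecise (the two permutations only agree on the first $r$ positions, which is all that matters for $\omega_r$), but the substance and structure of the argument are identical to the paper's.
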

\begin{proof} Clearly, $\bar{\Gamma}_s(sp(2n))\subset \bar{\Gamma}_s(sl(2n))$.
Conversely, we
need to  show that if ${\bf h}=(h_1,\dots,h_s)\in (\frh_{+}^{C})^s$
is such that ${\bf h}\in  \bar{\Gamma}_s(sl(2n))$, then  ${\bf h}\in
\bar{\Gamma}_s(sp(2n))$. Take any
$1\leq r\leq n$ and any $I^1, \dots, I^s \in \FS(r,2n)$ such that
$$[\bar{\LAM}_{I^1}] \dots  [\bar{\LAM}_{I^s}]=
d[\bar{\LAM}_e] \in
 H^*(\IG(r,2n))\,  \,\text{for some nonzero}\, d .$$
By Corollary ~\ref{frasier},
$$[\OMC_{I^1}] \dots  [\OMC_{I^s}]\neq 0\in
H^{*}(\Gr(r,2n)).$$
In particular, by Corollary ~\ref{eigen} (rather Remark \ref{2.9} (a)) applied to $sl(2n)$,
\[\omega_r(\sum_{j=1}^sv_{I^j}^{-1}h_j)\leq 0,\]
where $\omega_r$ is the $r$-th fundamental weight of $\SL(2n)$ and
$v_{I^j}\in S_{2n}$ is the element associated to $I^j$ as in Subsection \ref{4.1}.
It is easy to see that the  $r$-th fundamental weight $\omega_r^C$ of $\Sp(2n)$
is the restriction of $\omega_r$ to $\frh^C$. Moreover, even though the elements
$v_{I^j}\in S_{2n}$ and $w_{I^j}\in W^C$ are, in general, different, we still have
$$\omega_r(v_{I^j}^{-1}h_j)=\omega_r^C(w_{I^j}^{-1}h_j).$$

Applying Corollary \ref{eigen} for $sp(2n)$, we get the (a)-part of
the theorem.

The proof for $so(2n+1)$ is similar. (Apply Corollary ~\ref{frasier'} instead of Corollary ~\ref{frasier}.)
\end{proof}
\begin{remark} (1) Belkale-Kumar have given a set of necessary and sufficient
conditions to determine the non-vanishing of any product of Schubert classes
$[X^P_w]$ in $\bigl(H^*(G/P), \odot_0\bigr)$ (under the deformed product) for any maximal parabolic
subgroup $P$ and any $G$ of type $B_n$ or $C_n$
(cf. [BK$_2$, Theorems 30,41 and Remarks 31,42]).

(2) For any $G$ of type $B_n$ or $C_n$, and any maximal parabolic subgroup
$P$, Ressayre has determined the triples $(w_1, w_2, w_3)\in (W^P)^3$ such that
$[X^P_{w_1}]\odot_0 [X^P_{w_2}]\odot_0 [X^P_{w_3}]= 1 [X^P_{e}]$ in terms of the corresponding
result for certain associated Schubert varieties in Grassmannians (cf. [R$_3$,
Theorems 14 and 15]).
\end{remark}

Let $\frg$ be a simple simply-laced Lie algebra and let $\sigma:\frg \to \frg$ be a
diagram automorphism with fixed subalgebra $\frk$ (which is necessarily a simple
Lie algebra again). Let $\frb$ (resp. $\frh$) be a  Borel (resp. Cartan) subalgebra
of $\frg$ such that they are stable under $\sigma$. Then, $\frb^\frk:=\frb^\sigma$
 (resp. $\frh^\frk:=\frh^\sigma$) is a  Borel (resp. Cartan) subalgebra
of $\frk$. Let $\frh_+$ and $\frh^\frk_+$ be the dominant chambers in $\frh$ and
$\frh^\frk$ respectively. Then,
$$ \frh^\frk_+ = \frh_+ \cap \frk.$$
We have the following generalization of Theorem \ref{woodrow1} conjectured by
Belkale-Kumar. (In fact, they have made a stronger conjecture, cf. Conjecture
\ref{conj}.)
\begin{theorem}\label{5.6} For any $s\geq 1$,
$$\bar{\Gamma}_s(\frk)= \bar{\Gamma}_s(\frg)\cap (\frh^\frk_+)^s.$$
(In the cases (d) and (e) as below, the theorem is proved only for $s=3$, though
it must be true for any $s$.)
\end{theorem}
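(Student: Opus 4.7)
The plan is to apply Corollary \ref{7.5} to both $\frg$ and $\frk$ and transfer each defining inequality of $\bar{\Gamma}_s(\frk)$ to one already known to hold on $\bar{\Gamma}_s(\frg)$. The containment $\bar{\Gamma}_s(\frk)\subseteq \bar{\Gamma}_s(\frg)\cap (\frh^\frk_+)^s$ is immediate: a relation $\sum_j \mathrm{Ad}(k_j)h_j=0$ in $\frk$ with $k_j$ in a maximal compact $K_\frk\subset K$ is simultaneously such a relation in $\frg$, and $\frh^\frk_+\subset \frh_+$ by construction.

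For the reverse inclusion, fix $(h_1,\dots,h_s)\in \bar{\Gamma}_s(\frg)\cap(\frh^\frk_+)^s$. By Corollary \ref{7.5} applied to $\frk$, it suffices to show that for every standard maximal parabolic $Q\subset K$ and every $(u_1,\dots,u_s)\in (W_K^Q)^s$ with $[X^Q_{u_1}]\odot_0\cdots\odot_0[X^Q_{u_s}]=[X^Q_e]$ in $H^*(K/Q)$, we have $\omega_Q(\sum_{j=1}^s u_j^{-1}h_j)\le 0$. My strategy is to associate to each such datum $(Q,u_1,\dots,u_s)$ a standard maximal parabolic $P\subset G$ and a lift $(w_1,\dots,w_s)\in (W^P)^s$ satisfying (i) $[X^P_{w_1}]\cdots[X^P_{w_s}]\ne 0$ in the ordinary cohomology $H^*(G/P)$, (ii) the fundamental weight $\omega_P$ restricts on $\frh^\frk$ to a positive multiple of $\omega_Q$, and (iii) $w_j^{-1}h_j=u_j^{-1}h_j$ in $\frh$ for each $j$, which is meaningful because $h_j\in \frh^\frk$ and the lift is made through the natural inclusion $W_K\hookrightarrow W^\sigma$. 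Corollary \ref{eigen}, combined with Remark \ref{2.9}(a), then applies to $(h_1,\dots,h_s)\in \bar{\Gamma}_s(\frg)$ and the nonvanishing (i) to yield $\omega_P(\sum_j w_j^{-1}h_j)\le 0$, and restriction to $\frh^\frk$ via (ii) and (iii) delivers the desired $K$-inequality.

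For the two classical pairs $(\mathfrak{sl}(2n),\mathfrak{sp}(2n))$ and $(\mathfrak{sl}(2n+1),\mathfrak{so}(2n+1))$ this is precisely the proof of Theorem \ref{woodrow1}: the isotropic Grassmannian $K/Q$ embeds in the ordinary Grassmannian $G/P$; the indexing sets $\FS(r,2n)\subset S(r,2n)$ and $\FS'(r,2n+1)\subset S(r,2n+1)$ make the lift $u_j\mapsto w_j$ explicit; the compatibility of fundamental weights is checked directly from Subsections \ref{notate1}--\ref{notate2}; and the rational-normal-curve transversality Theorems \ref{wilson1}--\ref{newwilson2}, via Corollaries \ref{frasier}--\ref{frasier'}, supply (i). In these cases $P$ is cominuscule, so by Lemma \ref{minuscule} the ordinary cup product on $H^*(G/P)$ already equals $\odot_0$; this is exactly why cup-product nonvanishing in $G/P$ can be a weaker condition than $\odot_0$-nonvanishing in $K/Q$, and why the transfer runs from $K$ to $G$ rather than the reverse.

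For the remaining folded pairs $(\mathfrak{so}(2n),\mathfrak{so}(2n-1))$, $(\mathfrak{so}(8),G_2)$ and $(E_6,F_4)$, I would follow the case-by-case scheme of Braley and Lee: choose $P\subset G$ to be a $\sigma$-stable standard maximal parabolic with $(P^\sigma)^\circ=Q$, lift $(u_1,\dots,u_s)$ to $(w_1,\dots,w_s)\in (W^P)^s$ through $W_K\hookrightarrow W^\sigma$, and then compare the Belkale--Kumar product on $H^*(K/Q)$ with the ordinary intersection product on $H^*(G/P)$ by a $\sigma$-equivariant tangent-space analysis at the base point (matching the Levi-movability criterion of Definition \ref{D1} on the $K$-side with a cup-product nonvanishing criterion on the $G$-side). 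The principal obstacle, and the reason only $s=3$ has been settled in cases (d) and (e), will be the absence of any Wilson-type transversality for the exceptional folds $D_4\to G_2$ and $E_6\to F_4$: the nonvanishing (i) must instead be extracted from an explicit combinatorial identification of the relevant Schubert structure constants in the folded Weyl groups, a matching that has so far been carried out only for triple products by direct case-specific computation and which one expects to extend to general $s$ by an analogous but higher multilinear version of the same tangent-space bookkeeping.
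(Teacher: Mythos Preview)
Your outline matches the paper's proof closely: the case-by-case scheme, the easy inclusion, the reduction via Corollary \ref{7.5} on the $\frk$-side to transferring each $K$-inequality to a $G$-inequality, and the identification of cases (a)--(b) with Theorem \ref{woodrow1} are all exactly as in the paper. For case (c) your description of Braley's argument (deformed product on $K/Q$, ordinary cup product on $G/P$) is also accurate.

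One correction for cases (d) and (e): according to the paper, Lee did \emph{not} compare $\odot_0$ on $K/Q$ with the ordinary cup product on $G/P$. Rather, he compared the \emph{deformed} product on both sides, invoking Corollary \ref{7.5} for $\frg$ as well as for $\frk$, and carried out the required structure-constant matching by explicit machine computation via Duan's algorithm. Your framing in terms of ordinary-product nonvanishing on $G/P$ would also suffice logically (since $\odot_0$-nonvanishing implies cup-product nonvanishing), but it is not what was actually done, and your proposed ``$\sigma$-equivariant tangent-space analysis'' remains only a heuristic here: no such uniform geometric argument is known for these exceptional folds, which is precisely why the proof is by brute-force calculation and why only $s=3$ is established.
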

\begin{proof} Unfortunately, the proof is case by case. Following is the complete
list of  $(\frg, \frk)$ coming from the diagram automorphisms of simple Lie
algebras $\frg$.

(a) $(sl(2n), sp(2n)), \, n\geq 2$

(b) $(sl(2n+1), so(2n+1)), \, n\geq 2$

(c) $(so(2n), so(2n-1)), \, n\geq 4$

(d) $(so(8), G_2)$

(e) $(E_6, F_4)$.

In the cases (a) and (b), the theorem is nothing but Theorem \ref{woodrow1}.

In the case (c), it was proved by E. Braley in her thesis [Bra]. Similar to the proof
of Theorem \ref{woodrow1}, her proof relies on the comparison
between the intersection theory of
the partial flag varieties $G/P$ of $G$ (corresponding to the maximal parabolic
subgroups $P$ of $G$)
with that of the   partial flag varieties $K/Q$ of $K$ (corresponding to the maximal
parabolic subgroups $Q$ of $K$). But her proof  uses
 the  deformed product in the cohomology of $K/Q$ and Corollary \ref{7.5},
 whereas she needs
 to use the standard cup product in the cohomology of $G/P$ and Corollary \ref{eigen}.

 The theorem for the cases (d) and (e) was proved by B. Lee in his thesis [Le].
 Lee  used the comparison
between the deformed product in the cohomology of $G/P$
 corresponding to the maximal parabolic
subgroups $P$ of $G$
with that of the  deformed product in the cohomology of $K/Q$
 corresponding to the maximal
parabolic subgroups $Q$ of $K$ (and Corollary \ref{7.5}). Lee used the recipe of Duan (cf. [D$_1$], [D$_2$])
to develop a program which
allowed him to explicitly calculate the deformed product
in the cohomology of the relevant flag varieties.
\end{proof}
\section{Saturation Problem}\label{secsaturation}

We continue to follow the notation
and assumptions from Section \ref{sec1}; in particular,
 $G$ is a  semisimple  connected complex
algebraic group.
In Section \ref{section2}, we defined the  saturated tensor
 semigroup $\Gamma_s(G)$ (for any integer $s\geq 1$)
  and  determined it by describing its
  facets (cf. Theorems \ref{thm1} and \ref{EVT}).

  Define the {\it tensor semigroup} for $G$:
  $$\hat{\Gamma}_{s}(G)=\left\{(\lambda_{1},\ldots,\lambda_{s})\in \Lambda_+^{s}:
      \left[V(\lambda_1)\otimes\cdots\otimes
        V(\lambda_{s})\right]^{G}\neq 0\right\}.
$$
It is indeed a semigroup by [K$_3$, Lemma 3.9]. The {\it saturation problem}
aims at comparing these two semigroups. We first prove that $\hat{\Gamma}_{s}(G)$ is
a finitely generated semigroup. More generally, we have the following result
(cf. [Br, Th\'eor\`eme 2.1]).
\begin{lemma} \label{lemme9.1} Let $S$ be a reductive subgroup of a connected semisimple group $G$. Let
$$\mathcal{D}_S= \{\lambda \in \Lambda_+: [V(\lambda)]^S\neq 0\},$$
where $\Lambda_+$ is the set of dominant characters of $G$. Then, $\mathcal{D}_S$
is a finitely generated semigroup.
\end{lemma}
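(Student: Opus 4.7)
The plan is to realize $\mathcal{D}_S$ as the support of a naturally $\Lambda_+$-graded finitely generated algebra. Let $U$ be the maximal unipotent subgroup of $B$ and consider the ring $R := \mathbb{C}[G]^U = \mathbb{C}[G/U]$, where $U$ acts on $G$ by right translation. By a classical theorem of Grosshans (using that $U$ is the unipotent radical of a Borel in a semisimple group, hence a Grosshans subgroup of $G$), $R$ is a finitely generated $\mathbb{C}$-algebra. The right action of the torus $H = B/U$ on $R$ commutes with the left $G$-action and gives $R$ a $\Lambda$-grading; the Peter--Weyl decomposition yields the $G \times H$-module isomorphism
\[
R = \bigoplus_{\lambda \in \Lambda_+} V(\lambda)^{*} \otimes \mathbb{C}_{\lambda},
\]
so in particular $R$ is $\Lambda_+$-graded as an algebra.

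Next I would pass to $S$-invariants for the restricted left $G$-action. Since $S$ is reductive and acts rationally on the finitely generated algebra $R$, Nagata's theorem guarantees that $R^S$ is again a finitely generated $\mathbb{C}$-algebra. Because the $S$-action commutes with the right $H$-action, the grading descends to
\[
R^S = \bigoplus_{\lambda \in \Lambda_+} \bigl(V(\lambda)^{*}\bigr)^S.
\]
For any finite-dimensional representation of the reductive group $S$, the trivial representation appears in $V(\lambda)$ with the same multiplicity as in $V(\lambda)^*$, so $(V(\lambda)^{*})^S \neq 0$ if and only if $V(\lambda)^S \neq 0$. Hence the support of this grading is exactly $\mathcal{D}_S$.

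To finish, I would choose a finite set of nonzero homogeneous generators $f_1,\dots,f_k$ of $R^S$ with respective degrees $\lambda_1,\dots,\lambda_k \in \mathcal{D}_S$ (possible by decomposing any finite generating set into its homogeneous components). Any nonzero homogeneous element of $R^S$ is a $\mathbb{C}$-linear combination of monomials in the $f_i$, and each such monomial has degree a non-negative integer combination of the $\lambda_i$. Consequently $\mathcal{D}_S$ is the semigroup generated by $\lambda_1,\dots,\lambda_k$.

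The main obstacle here is really just the invocation of the finite generation of $R = \mathbb{C}[G/U]$, which is not a direct consequence of Hilbert/Nagata since $U$ is unipotent rather than reductive; everything else is formal once that input is available. The rest of the argument, namely that a $\Lambda_+$-graded finitely generated commutative algebra has finitely generated support semigroup, is elementary and is the core reason the statement is true.
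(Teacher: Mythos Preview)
Your argument is correct. The paper proceeds differently: rather than passing through $\mathbb{C}[G/U]$, it invokes Matsushima's theorem (since $S$ is reductive, $G/S$ is affine), so $\mathbb{C}[G/S]$ is directly a finitely generated $\mathbb{C}$-algebra; Frobenius reciprocity gives $\mathbb{C}[G/S]\simeq\bigoplus_{\lambda\in\Lambda_+} V(\lambda)\otimes[V(\lambda)^*]^S$, and the paper then reads off semigroup generators of $\mathcal{D}_S$ from homogeneous algebra generators of $\mathbb{C}[G/S]$.

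The trade-off: the paper's route uses a single structural input (Matsushima) in place of your pair Grosshans $+$ Hilbert--Nagata. On the other hand, the $G$-isotypic decomposition of $\mathbb{C}[G/S]$ is not literally a multiplicative $\Lambda_+$-grading---pointwise products of the $\lambda$- and $\mu$-pieces spread over all $V(\nu)\otimes V(\nu)^*$ with $V(\nu)\subset V(\lambda)\otimes V(\mu)$, not only $\nu=\lambda+\mu$---so the concluding step there is really a filtration (leading-term) argument rather than a straight grading argument. Your approach avoids this completely: the $\Lambda_+$-grading on $R^S$ comes from the commuting right $H$-action and is genuinely multiplicative, which makes the passage from ``finitely generated algebra'' to ``finitely generated support semigroup'' entirely formal.
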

\begin{proof} Since $S$ is reductive, by Matsushima's theorem, $G/S$ is an affine variety.
In particular, the affine coordinate ring $\bc[G/S]$ is a finitely generated
$\bc$-algebra. Now, by the Frobenius reciprocity,
\begin{align} \label{13n}
\bc[G/S] &\simeq \oplus_{\lambda \in \Lambda_+}\,V(\lambda)\otimes [V(\lambda)^*]^S,\,\,
\text{as $G$-modules, where $G$ acts only on the first factor} \notag\\
& = \oplus_{\lambda \in \mathcal{D}_S}\,V(\lambda)\otimes [V(\lambda)^*]^S.
\end{align}

Of course, $\bc[G/S] \hookrightarrow \bc[G]$. Consider the map $\Delta^*:
\bc[G]\otimes \bc[G] \to \bc[G]$ induced from the diagonal map
$\Delta: G\to G \times G$. Then, for the $G\times G$-isotypic component
$V(\lambda)\otimes V(\lambda)^*$ of $\bc[G]$, we have
$$\Delta^*\bigl((V(\lambda)\otimes V(\lambda)^*)\otimes (V(\mu)\otimes V(\mu)^*)\bigr)
\subset V(\lambda+\mu)\otimes V(\lambda+\mu)^*.$$
Take a finite set of algebra generators $f_1, \dots, f_N$ of $\bc[G/S]$ so that,
under the above decomposition \eqref{13n}, $f_p\in V(\lambda_p)\otimes
 [V(\lambda_p)^*]^S$ for some $\lambda_p\in \mathcal{D}_S$. Then, it is easy to see that
 these $\{\lambda_p\}$ generate the semigroup $\mathcal{D}_S$.
 \end{proof}
 As an easy consequence of the above lemma, we get the following.
 \begin{corollary} \label{saturationnew} There exists a uniform integer $d>0$ (depending only upon $s$ and $G$)
 such that for any
 $\lambda= (\lambda_{1},\ldots,\lambda_{s})\in {\Gamma}_{s}(G)$, $d\lambda=
 (d\lambda_{1},\ldots,d\lambda_{s})\in \hat{\Gamma}_{s}(G)$.
 \end{corollary}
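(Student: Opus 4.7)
The plan is to deduce the corollary from Lemma \ref{lemme9.1} together with the fact that $\Gamma_s(G)$ is itself finitely generated as a semigroup.

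First I would identify $\hat{\Gamma}_s(G)$ with the semigroup $\mathcal{D}_S$ for a suitable embedding. Consider the diagonal embedding of $G$ into $G^s$. Then the character lattice of a maximal torus of $G^s$ is $\Lambda^s$, its dominant characters are $\Lambda_+^s$, and the irreducible $G^s$-module with highest weight $(\lambda_1,\dots,\lambda_s)$ is $V(\lambda_1)\boxtimes\cdots\boxtimes V(\lambda_s)$. Restricting to the diagonal $G$ gives $V(\lambda_1)\otimes\cdots\otimes V(\lambda_s)$, so $\mathcal{D}_G = \hat{\Gamma}_s(G)$. Applying Lemma \ref{lemme9.1} to $S=G\subset G^s$ (which is reductive), we conclude that $\hat{\Gamma}_s(G)$ is a finitely generated semigroup. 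Let $\mu_1,\dots,\mu_k$ be a finite set of generators.

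Next I would observe that $\Gamma_s(G)$ is also a finitely generated semigroup. Indeed, by the identification $\varphi(\overline{\Gamma}_s(\fg))\cap \Lambda_+^s = \Gamma_s(G)$ of Theorem \ref{sj}, and since $\overline{\Gamma}_s(\fg)$ is a rational polyhedral cone (by the convexity result cited in the introduction, or equivalently by the explicit rational inequalities of Theorem \ref{EVT}), $\Gamma_s(G)$ is the set of lattice points of a rational polyhedral cone in $\Lambda^s$. By Gordan's lemma, $\Gamma_s(G)$ is a finitely generated semigroup. Let $\nu_1,\dots,\nu_m$ be a finite generating set.

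By the very definition of $\Gamma_s(G)$, for each $j=1,\dots,m$ there exists a positive integer $N_j$ with $N_j\nu_j\in \hat{\Gamma}_s(G)$. Set
\[
d:=\operatorname{lcm}(N_1,\dots,N_m).
\]
Then $d\nu_j\in \hat{\Gamma}_s(G)$ for every $j$, because $\hat{\Gamma}_s(G)$ is closed under addition (and $d\nu_j=(d/N_j)\cdot N_j\nu_j$ is a sum of elements of $\hat{\Gamma}_s(G)$). Now any $\lambda\in\Gamma_s(G)$ can be written as $\lambda=\sum_j a_j\nu_j$ with $a_j\in\mathbb{Z}_{\geq 0}$, hence
\[
d\lambda=\sum_j a_j(d\nu_j)\in \hat{\Gamma}_s(G),
\]
again by closure under addition. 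This $d$ depends only on $G$ and $s$, which completes the proof. The only mildly subtle point is confirming that $\Gamma_s(G)$ is finitely generated; here I rely on the rational polyhedrality already invoked in Section \ref{section2} (equivalently provided by Theorem \ref{EVT}) plus Gordan's lemma, and the rest is a routine lcm argument.
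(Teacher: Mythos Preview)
Your proof is correct and follows essentially the same route as the paper: both use finite generation of $\Gamma_s(G)$ (via rational polyhedrality of $\overline{\Gamma}_s(\fg)$ and Gordan's lemma) together with the semigroup property of $\hat{\Gamma}_s(G)$ to pass from ``some multiple of each generator lies in $\hat{\Gamma}_s(G)$'' to a uniform $d$. The paper phrases the last step as writing each generator $\lambda^p$ of $\Gamma_s(G)$ as a non-negative \emph{rational} combination of generators $\mu^k$ of $\hat{\Gamma}_s(G)$ and then clearing denominators, whereas you take the lcm of the individual $N_j$'s directly; these are equivalent maneuvers. Note that your first paragraph (identifying $\hat{\Gamma}_s(G)$ with $\mathcal{D}_G$ for the diagonal $G\subset G^s$ and invoking Lemma~\ref{lemme9.1}) is a valid way to see that $\hat{\Gamma}_s(G)$ is a finitely generated semigroup, but you never actually use the generators $\mu_1,\dots,\mu_k$---your argument only needs that $\hat{\Gamma}_s(G)$ is closed under addition, which is already recorded in the paper via [K$_3$, Lemma 3.9].
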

 \begin{proof} Take a finite set of semigroup generators
 $\lambda^p= (\lambda^p_{1},\ldots,\lambda^p_{s})$ of ${\Gamma}_{s}(G)$, which exists
 by Theorem \ref{sj}, since $\bar{\Gamma}_{s}(\frg)$ is a rational polyhedral cone.
 Also, choose a finite set of semigroup generators
 $\mu^k= (\mu^k_{1},\ldots,\mu^k_{s})$ of $\hat{\Gamma}_{s}(G)$
 (cf. Lemma \ref{lemme9.1}). We can of course write
 $$\lambda^p=\sum_k\,a^k_p\mu^k,\,\,\text{for some non-negative rational numbers}
 \, a^k_p.$$
 Now, take $d>0$ large enough  so that $da^k_p\in \bz_+$ for all $a^k_p$. Take any
 $\gamma=(\gamma_{1},\ldots,\gamma_{s})\in {\Gamma}_{s}(G)$ and write
 \begin{align*}\gamma &=\sum n_p\lambda^p,\,\,\,\text{for some }\,n_p\in \bz_+\\
 &=\sum_{k,p}\, n_pa^k_p\mu^k\\
 &=\sum_k\bigl(\sum_p n_pa^k_p\bigr)\mu^k.
 \end{align*}
 This implies that
 $$d\gamma=\sum_k\bigl(\sum_p n_pda^k_p\bigr)\mu^k\in \hat{\Gamma}_{s}(G).$$
 \end{proof}

 We now begin with the following definition. We take $s=3$ as this is the most
 relevant case to the tensor product decomposition.
 \begin{definition} \label{sfactor} An integer $d\geq 1$ is called a {\it saturation factor} for $G$,
 if for any $(\lam,\mu,\nu)\in \Gamma_3(G)$ such that $\lam+\mu+\nu \in Q$, we have
 $(d\lam, d\mu,d\nu)\in \hat{\Gamma}_3(G)$, where $Q$ is the root lattice of $G$.
 Of course, if $d$ is a saturation
 factor then so is its any multiple. If $d=1$ is a saturation factor for $G$, we say that
 the {\it saturation property holds for $G$}.
 \end{definition}
The {\it saturation theorem} of Knutson-Tao (cf. Theorem \ref{knutson-tao}) asserts
 that the saturation property holds for $G=\SL(n)$.

The following general result (though not optimal) on saturation factor is obtained
 by Kapovich-Millson
[KM$_2$] by using the geometry of geodesics in Euclidean buildings and Littelmann's
path model (see the Appendix). A weaker form of the following theorem was conjectured by Kumar
in a private communication to J. Millson (also see [KT, Conjecture]).
\begin{theorem} \label{KM} For any connected simple $G$, $d=k_\frg^2$ is a saturated factor,
where $k_\frg$ is the least common multiple of the coefficients of the highest
root $\theta$ of the Lie algebra $\frg$ of $G$ written in terms of the simple roots
$\{\al_1,\dots, \al_\ell\}$.

Observe that the value of $k_\frg$ is $1$ for $\frg$ of type $A_\ell (\ell\geq 1)$;
it is $2$ for $\frg$ of type $B_\ell (\ell\geq 2), C_\ell (\ell\geq 3), D_\ell
 (\ell\geq 4)$; and it is 6, 12, 60, 12, 6 for $\frg$ of type $E_6,E_7,E_8,F_4,G_2$
 respectively.
 \end{theorem}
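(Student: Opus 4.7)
The plan is to combine three ingredients: (i) the reformulation of $\Gamma_3(G)$ via closed geodesic triangles in the Euclidean (Bruhat-Tits) building of $G$, (ii) Littelmann's path model for $\hat{\Gamma}_3(G)$, and (iii) a two-step scaling argument that absorbs the arithmetic mismatch between the two models into the factor $k_{\mathfrak{g}}^2$.

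First I would transfer the hypothesis from the Hermitian eigenvalue setting to a non-archimedean one. Using Theorem \ref{sj}, the condition $(\lambda,\mu,\nu)\in\Gamma_3(G)$ is equivalent to $(\varphi^{-1}\lambda,\varphi^{-1}\mu,\varphi^{-1}\nu)\in\bar\Gamma_3(\mathfrak{g})$. By the Kapovich-Leeb-Millson ``transfer principle,'' this in turn is equivalent to the existence of a closed geodesic triangle $\Delta=(x_1,x_2,x_3)$ in the Euclidean building $X=X(G,\mathcal{K})$ over $\mathcal{K}=\mathbb{C}(\!(t)\!)$ whose $\Delta$-side-lengths in the model Weyl chamber $\mathfrak{h}_+$ are $\varphi^{-1}(\lambda),\varphi^{-1}(\mu),\varphi^{-1}(\nu)$. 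The first factor of $k_{\mathfrak{g}}$ enters here: the vertex set of $X$ is stratified by types, and the Cartan decomposition identifies $G(\mathcal{O})$-orbits on $G(\mathcal{K})/G(\mathcal{O})$ with dominant coweights of $G$; since $k_{\mathfrak{g}}$ is exactly the least common multiple of the marks of $\theta$, rescaling $\Delta$ by $k_{\mathfrak{g}}$ sends each of its vertices to a \emph{special} vertex of $X$, and the hypothesis $\lambda+\mu+\nu\in Q$ ensures that the scaled triangle $k_{\mathfrak{g}}\Delta$ still closes up combinatorially at the level of the coroot lattice.

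Next I would retract $k_{\mathfrak{g}}\Delta$ based at a fixed vertex into a model apartment $A\simeq\mathfrak{h}_{\mathbb{R}}$, obtaining a piecewise geodesic (``billiard'') path $\pi$ from $0$ to $0$ passing through $k_{\mathfrak{g}}\varphi^{-1}(\lambda)$ and $k_{\mathfrak{g}}\varphi^{-1}(\lambda+\mu)$. The Kapovich-Leeb-Millson analysis of such retractions shows that $\pi$ is a \emph{Hecke path} of shape $(k_{\mathfrak{g}}\lambda,k_{\mathfrak{g}}\mu,k_{\mathfrak{g}}\nu)$, i.e., at each breakpoint the incoming and outgoing tangent directions are related by a specific chain of affine reflections of the Weyl group satisfying a dominance (``billiard'') condition. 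The desired conclusion $(k_{\mathfrak{g}}^2\lambda,k_{\mathfrak{g}}^2\mu,k_{\mathfrak{g}}^2\nu)\in\hat\Gamma_3(G)$ would then follow from Littelmann's path model, which asserts that $[V(\Lambda_1)\otimes V(\Lambda_2)\otimes V(\Lambda_3)]^G\neq 0$ iff there is an \emph{LS-path} from $0$ to $0$ of shape $(\Lambda_1,\Lambda_2,\Lambda_3)$ staying in $\mathfrak{h}_+$ after the standard folding.

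The main obstacle is the final step: every LS-path is a Hecke path, but the converse fails in general, and this is where the second factor of $k_{\mathfrak{g}}$ is consumed. The heart of the Kapovich-Millson argument is a combinatorial theorem to the effect that \emph{any Hecke path becomes an LS-path after rescaling by $k_{\mathfrak{g}}$}. To prove this one must, at each breakpoint of $\pi$, convert the chain of affine reflections guaranteed by the Hecke condition into a Bruhat chain in $W$ whose intermediate coweights satisfy the Deodhar-Lakshmibai-Seshadri rationality condition with denominators dividing $k_{\mathfrak{g}}$. The appearance of $k_{\mathfrak{g}}$ is dictated by the index of the coroot lattice in the coweight lattice combined with the denominators that arise when writing the unit vectors $\alpha_i^\vee/\langle\alpha_i,\alpha_i\rangle$ in the basis conjugate to the simple roots; this is precisely the arithmetic controlled by the marks of the highest root. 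Applying this upgrade to the Hecke path of shape $(k_{\mathfrak{g}}\lambda,k_{\mathfrak{g}}\mu,k_{\mathfrak{g}}\nu)$ produces an LS-path of shape $(k_{\mathfrak{g}}^2\lambda,k_{\mathfrak{g}}^2\mu,k_{\mathfrak{g}}^2\nu)$, finishing the proof.
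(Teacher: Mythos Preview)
Your overall architecture matches the paper's Appendix: pass to the Bruhat--Tits building, interpret $\Gamma_3$ via geodesic triangles, project to the chamber to obtain folded (Hecke) paths, and invoke Littelmann's model, spending one factor of $k_{\mathfrak g}$ to make vertices special and a second to upgrade Hecke-type data to LS-type data.

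There is, however, a genuine gap in your final step. The assertion that ``any Hecke path becomes an LS-path after rescaling by $k_{\mathfrak g}$'' is false as stated: already for $R=A_2$ (where $k_{\mathfrak g}=1$) there exist Hecke paths that are not LS paths, so scaling by $1$ cannot help. The failure is in the Maximality Axiom --- a Hecke path may break at a point that is not a vertex of the Coxeter complex, and no integer rescaling moves such a point to a special vertex. The paper's remedy is not to work with ordinary Hecke/LS paths for this step but with \emph{generalized} (1-skeleton) paths: one first replaces the geodesic middle side of the special triangle by the model path $p_\mu$ running along edges of the Coxeter complex in an apartment containing that side, and only then projects via $\mathcal P_\Delta$. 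The resulting generalized Hecke path has all its break-points at vertices of $A$; scaling by $k_R$ now sends them to special vertices, where the Maximality Axiom holds automatically, yielding a generalized LS path. Your proposed justification via Bruhat chains and LS rationality denominators does not address the Maximality Axiom and would not go through.

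Your handling of the first $k_{\mathfrak g}$ also needs repair. One cannot ``rescale the triangle $\Delta$ in $X$ by $k_{\mathfrak g}$'': the building carries no linear structure. What the paper actually does is use the Gauss correspondence and the fact that semistability of weighted ideal configurations is preserved under scaling of masses to produce a \emph{new} triangle with the original side-lengths $(\lambda,\mu,\nu)$ whose vertices are vertices of $X$ (this is precisely where $\lambda+\mu+\nu\in Q$ is consumed). One then projects this triangle to the chamber and scales the resulting broken triangle by $k_R$ \emph{inside the model apartment}, where scaling makes sense; the scaled Hecke triangle has special vertices and can be unfolded back to a special triangle in $X$, landing in $Hecke(G)$.
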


Kapovich-Millson determined $\hat{\Gamma}_3(G)$ explicitly for $G=\Sp(4)$ and $G_2$
(cf. [KM$_1$, Theorems 5.3, 6.1]). In particular, from their description,
 the following theorem follows easily.
\begin{theorem} The saturation property does not hold for either $G=\Sp(4)$ or $G_2$.
Moreover, $2$ is a saturation factor (and no odd integer $d$ is a saturation factor)
 for $\Sp(4)$, whereas both of $2,3$ are
saturation factors for  $G_2$ (and hence any integer $d>1$ is a saturation factor for
$G_2$).
\end{theorem}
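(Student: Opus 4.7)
The plan is to deduce all three assertions from the explicit descriptions of $\hat{\Gamma}_3(\Sp(4))$ and $\hat{\Gamma}_3(G_2)$ obtained by Kapovich--Millson in [KM$_1$, Theorems 5.3, 6.1], compared with the eigencones $\bar{\Gamma}_3(\frg)$ already made explicit for $\frg$ of rank 2 via Corollary \ref{7.5} (the cones have $18$ facets for type $B_2$ and $30$ facets for $G_2$, as recorded in the remark following Corollary \ref{7.5}). In each case, the Kapovich--Millson description exhibits $\hat{\Gamma}_3(G)$ as the set of dominant integral triples $(\lambda,\mu,\nu)$ satisfying the facet inequalities that cut out $\Gamma_3(G)\subset \Lambda_+^3$ (which by Theorem \ref{sj} and Corollary \ref{7.5} translates those of $\bar{\Gamma}_3(\frg)$) \emph{together with} the lattice condition $\lambda+\mu+\nu\in Q$ and a finite list of additional congruences (in essence, integrality conditions on evaluations against fractional fundamental coweights). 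Non-saturation will correspond exactly to the existence of such extra congruences.

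First I would record the explicit list of facets for $\bar{\Gamma}_3(sp(4))$ and $\bar{\Gamma}_3(G_2)$ from Corollary \ref{7.5}; these are short enumerations of pairs $(P,(w_1,w_2,w_3))$ with $[X^P_{w_1}]\odot_0[X^P_{w_2}]\odot_0[X^P_{w_3}]=[X^P_e]$, which for rank $2$ amounts to a finite cohomological check on $G/P_1$ and $G/P_2$ (each a $3$- or $5$-dimensional homogeneous space). Next, for the $\Sp(4)$ case, I would exhibit a triple $(\lambda,\mu,\nu)\in \Gamma_3(\Sp(4))\cap Q_{\mathrm{sum}}$ (where $Q_{\mathrm{sum}}$ denotes the root-lattice condition $\lambda+\mu+\nu\in Q$) that fails the extra congruence appearing in [KM$_1$, Thm 5.3]; concretely, a triple built from $\omega_1$ and $\omega_2$ whose evaluation against a suitable coweight is a half-integer of the ``wrong'' parity. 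Doubling such a triple kills the $\tfrac12$, verifying that $d=2$ is a saturation factor; and because the obstruction is a strict parity obstruction, multiplying by \emph{any} odd $d$ preserves the bad parity, which is how one sees that no odd $d$ is a saturation factor.

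For the $G_2$ case I would proceed analogously. Here the finite set of extra congruences, by [KM$_1$, Thm 6.1], consists of conditions modulo $2$ and modulo $3$ (reflecting $k_{\frg}=6$ in Theorem \ref{KM}); thus producing two kinds of counterexamples (one whose obstruction is only mod $2$, another only mod $3$) establishes both non-saturation and the separate fact that neither $d=2$ alone nor $d=3$ alone can fail to kill the $2$-part and $3$-part of all simultaneous obstructions. That $d=2$ and $d=3$ are each saturation factors is then a direct consequence of the explicit form of the congruences: doubling eliminates all mod-$2$ obstructions, tripling eliminates all mod-$3$ obstructions, and the modulus of the full list of KM congruences divides $6$. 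Finally, to pass from ``$2$ and $3$ are saturation factors'' to ``every $d\geq 2$ is a saturation factor'', I invoke that $\hat{\Gamma}_3(G)$ is a semigroup (see the discussion preceding Lemma \ref{lemme9.1}): if $d_1,d_2$ are saturation factors then so is $d_1+d_2$, because $(d_i\lambda,d_i\mu,d_i\nu)\in\hat{\Gamma}_3(G)$ for $i=1,2$ implies their sum is in $\hat{\Gamma}_3(G)$. Since every integer $d\geq 2$ can be written as $2a+3b$ with $a,b\in \bz_{\geq 0}$, $a+b\geq 1$, the conclusion follows.

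The main obstacle will be the first step: extracting, from the Kapovich--Millson analysis, the sharp form of the additional congruences beyond $\lambda+\mu+\nu\in Q$ and pinpointing triples that saturate exactly one of them. Everything else is a mechanical verification on a finite list of facets, and the passage from $\{2,3\}$ to $\{d\geq 2\}$ for $G_2$ is an immediate semigroup argument. Once the KM description is in hand, producing the counterexamples and verifying the saturation factors are each a small finite check in rank $2$.
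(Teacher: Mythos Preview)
Your overall approach is the same as the paper's: the theorem is stated there as following ``easily'' from the explicit Kapovich--Millson descriptions of $\hat{\Gamma}_3(\Sp(4))$ and $\hat{\Gamma}_3(G_2)$ in [KM$_1$, Theorems 5.3, 6.1], and you are attempting to spell out that deduction. Your semigroup argument for the parenthetical claim (that $2$ and $3$ being saturation factors for $G_2$ forces every $d\ge 2$ to be one) is correct and clean: if $d_1,d_2$ are saturation factors then so is $d_1+d_2$ since $\hat{\Gamma}_3(G)$ is a semigroup, and every $d\ge 2$ is a nonnegative combination of $2$ and $3$. The $\Sp(4)$ parity sketch is also reasonable.

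However, your $G_2$ argument contains a genuine logical inconsistency. You assert that the extra Kapovich--Millson conditions consist of \emph{both} mod-$2$ and mod-$3$ congruences, and then claim ``doubling eliminates all mod-$2$ obstructions, tripling eliminates all mod-$3$ obstructions.'' But this is incompatible with the conclusion that $2$ and $3$ are \emph{each} saturation factors: if some triple $(\lambda,\mu,\nu)\in\Gamma_3(G_2)$ were obstructed by a genuine mod-$3$ condition, then $(2\lambda,2\mu,2\nu)$ would still be obstructed and $d=2$ would fail. For both $2$ and $3$ to individually work, every obstructed triple must be repaired by multiplication by \emph{any} integer $\ge 2$; that is not what independent mod-$2$ and mod-$3$ congruences give you (they would force the saturation factor to be $6$). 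Your speculation about the shape of the KM description for $G_2$ is therefore wrong, and the argument that $2$ and $3$ are saturation factors cannot proceed as you wrote it. You need to go back to the actual statement of [KM$_1$, Theorem 6.1] and verify directly from its explicit description of $\hat{\Gamma}_3(G_2)$ that $2\cdot\Gamma_3(G_2)\subset\hat{\Gamma}_3(G_2)$ and $3\cdot\Gamma_3(G_2)\subset\hat{\Gamma}_3(G_2)$, rather than guessing at congruence moduli.
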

It was known earlier that the saturation property fails for $G$ of type $B_\ell$ (cf. [E]).

 Kapovich-Millson [KM$_1$] made the following very interesting conjecture:
 \begin{conjecture} If $G$ is simply-laced, then the saturation
 property holds for $G$.
 \end{conjecture}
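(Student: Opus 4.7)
The plan is to approach this open conjecture inductively on the rank of $G$, using the known base cases $G=\SL(n)$ (Knutson-Tao) and $G=\Spin(8)$ (Kapovich-Kumar-Millson), and combining Ressayre's irredundant description of the facets of $\Gamma_{3}(G)_{\br}$ (Corollary \ref{6.4}) with a quiver-theoretic construction to handle the interior of the cone. Fix a simply-laced $G$ and a triple $(\lambda,\mu,\nu)\in\Gamma_{3}(G)$ with $\lambda+\mu+\nu\in Q$; the task is to produce a nonzero vector in $[V(\lambda)\otimes V(\mu)\otimes V(\nu)]^{G}$.

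First I would split into a boundary case and an interior case relative to $\Gamma_{3}(G)_{\br}$. If $(\lambda,\mu,\nu)$ lies on a proper face $F$ intersecting the interior of $\Lambda_{+}(\br)^{3}$, then by Corollary \ref{6.4} together with Lemma \ref{lemma7.5ress} and the proof of Theorem \ref{ress2} (specifically the well-covering pair $(C_{\delta}(w_{1},w_{2},w_{3}),\delta)$ attached to $F$), the relevant GIT cohomology on $(G/B)^{3}$ with respect to $\cl(\lambda)\boxtimes\cl(\mu)\boxtimes\cl(\nu)$ is identified, via the open embedding $\eta_{o}:G\times^{P(\delta)}C_{+}^{o}\hookrightarrow (G/B)^{3}$, with GIT data on the smaller flag variety $L(\delta)/B_{L(\delta)}\times L(\delta)/B_{L(\delta)}\times L(\delta)/B_{L(\delta)}$. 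Since $G$ is simply-laced, $L(\delta)$ is again simply-laced of strictly smaller rank, and one verifies that the root-lattice condition descends: the sum of the Levi-restricted weights lies in the root lattice of $L(\delta)$, modulo a controlled central twist that can be absorbed because on a face no $\omega_{P}$-component is free. The inductive hypothesis then yields the required invariant on $L(\delta)$, and the well-covering isomorphism lifts it to an invariant on $G$.

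In the interior case, where induction on rank is unavailable, I would pursue a quiver/semi-invariants construction generalizing Derksen-Weyman's type-$A$ proof and Sam's type-$D$ extension. The idea is to attach to $(\lambda,\mu,\nu)$ a dimension vector for a suitable ADE quiver (or a Nakajima quiver variety in the exceptional cases) such that the ring of semi-invariants is generated in weight governed by the fundamental weights of $G$; the inequalities of Theorem \ref{EVT}(b) together with $\lambda+\mu+\nu\in Q$ should correspond precisely to the numerical GIT-stability criterion for existence of a nonvanishing semi-invariant of the requested weight. For type $A$ this is Schofield's criterion, for type $D$ this is executed in Sam's thesis with factor $2$, and the conjecture asserts the analogue for $E_{6},E_{7},E_{8}$ with factor $1$.

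The main obstacle is precisely the interior case: for $G$ of exceptional simply-laced type no direct combinatorial model is known that extracts an unscaled tensor invariant from the deformed-product membership in $\Gamma_{3}(G)$ plus the integrality hypothesis. Sam's quiver argument is tuned to yield factor $2$ rather than $1$, and lifting from $2$ to $1$ appears to require either a categorification of the Belkale-Kumar product $\odot_{0}$ refining the usual cup product, or a genuine ADE-analogue of the Knutson-Tao honeycomb/hive polytope whose integer-point structure reflects the root lattice rather than the weight lattice. A secondary (but technical) obstacle in the boundary case above is the bookkeeping of how the root lattice $Q(\frg)$ restricts to $Q(\mathfrak{l}(\delta))$, which requires a careful matching using that $\dot{\delta}$ is special (Definition \ref{ressspecial}); this should be tractable in type ADE since all root lengths are equal, but it must be executed uniformly across all special OPS.
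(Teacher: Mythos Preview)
This statement is a \emph{conjecture}, not a theorem: the paper does not prove it and explicitly records it as open (due to Kapovich--Millson), with only the cases $G=\SL(n)$ and $G=\Spin(8)$ settled. There is therefore no proof in the paper to compare your proposal against, and your write-up is a research outline rather than a proof. You acknowledge this yourself in the ``main obstacle'' paragraph.

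Concerning the substance of the outline: the boundary/interior dichotomy does not carry the weight you want it to. Even granting a clean reduction on each regular facet to a Levi $L(\delta)$ (along the lines of Roth/Ressayre cited in the paper), the induction only handles triples lying on some regular facet; a generic integral triple in $\Gamma_3(G)$ lies in the interior of the cone and is untouched by this step. Your interior step is not a proof but a wish: you correctly note that Derksen--Weyman and Sam's quiver methods do not currently produce saturation factor $1$ beyond type $A$ (Sam gives factor $2$ for $\SO(2\ell)$, which does \emph{not} settle the simply-laced conjecture for $D_\ell$), and no hive/honeycomb analogue is known for $D_\ell$ ($\ell\ge 5$) or $E_6,E_7,E_8$. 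So the proposal, as written, reduces the open conjecture to another open problem of at least equal difficulty.

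A smaller but genuine issue in the boundary step: the claim that the root-lattice condition $\lambda+\mu+\nu\in Q(\frg)$ descends to the analogous condition for $L(\delta)$ is not automatic. The Levi weights entering the reduced problem are of the form $w_j^{-1}\lambda_j$ (restricted to $\frh\cap[\frl,\frl]$), shifted by characters coming from the well-covering data; checking that their sum lies in $Q(\frl)$ requires more than ``all root lengths are equal'' and is not addressed by Definition~\ref{ressspecial} alone.
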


 Apart from  $G=\SL(n)$, the only other simply-connected, simple, simply-laced group
 $G$ for which the above conjecture is known so far is $G=\Spin (8)$, proved by
 Kapovich-Kumar-Millson [KKM, Theorem 5.3] by explicit calculation using Theorem ~\ref{EVT}.
 \begin{theorem} The above conjecture is true for $G=\Spin (8)$.
 \end{theorem}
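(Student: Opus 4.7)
The plan is to reduce the saturation statement to a finite combinatorial verification, using Theorem \ref{EVT} to produce an explicit finite list of inequalities cutting out $\bar\Gamma_3(so(8))$, and then checking the saturation property on a Hilbert basis of the semigroup of integral lattice points satisfying the root-lattice condition. First I would observe that the desired conclusion is equivalent to the equality
\[
\hat\Gamma_3(\Spin(8)) \;=\; \bigl\{(\lambda,\mu,\nu)\in \Gamma_3(\Spin(8)) : \lambda+\mu+\nu\in Q\bigr\}.
\]
The inclusion $\subseteq$ is automatic (invariants force the weights to sum to $0$), so it suffices to prove $\supseteq$. Since $\Gamma_3(\Spin(8))$ is a rational polyhedral cone by Theorem \ref{sj}, the right-hand side is a finitely generated sub-semigroup of $\Lambda_+^3$, and because $\hat\Gamma_3(\Spin(8))$ is a semigroup (even finitely generated, by Lemma \ref{lemme9.1} applied to the diagonal $G \subset G^3$), it is enough to verify that every element of a chosen Hilbert basis of the right-hand side lies in $\hat\Gamma_3(\Spin(8))$.

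Next I would carry out the explicit determination of $\bar\Gamma_3(so(8))$ via Corollary \ref{7.5}. The group $\Spin(8)$ has four maximal parabolic subgroups $P_i$ ($i=1,2,3,4$); for each one I would enumerate the set $W^{P_i}$ and compute all products $[X^{P_i}_{w_1}]\odot_0 [X^{P_i}_{w_2}]\odot_0 [X^{P_i}_{w_3}]$ in $H^*(\Spin(8)/P_i,\mathbb{Z})$, selecting those triples which equal $[X^{P_i}_e]$. Each such triple yields one inequality $\omega_{P_i}\bigl(\sum_j w_j^{-1}h_j\bigr)\le 0$; together with the dominant chamber walls these cut out $\bar\Gamma_3(so(8))\subset \mathfrak{h}_+^3$. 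The Remark after Corollary \ref{7.5} records that this produces $294$ facet inequalities for $D_4$, organized as $36+186+36+36$ across the four parabolics, and these are the same data used in \cite{KKM}.

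Having the inequalities in hand, the next step is to compute a Hilbert basis $\{(\lambda^{(a)},\mu^{(a)},\nu^{(a)})\}_{a\in A}$ of the finitely generated semigroup
\[
S \;:=\; \Lambda_+^3 \cap \bar\Gamma_3(so(8)) \cap \{\lambda+\mu+\nu\in Q\},
\]
using standard algorithms (e.g.\ Normaliz or $4$ti$2$) applied to the system of inequalities plus the root-lattice congruence conditions. For each generator $(\lambda^{(a)},\mu^{(a)},\nu^{(a)})$ I would then verify directly, using the Weyl character formula, the Brauer/Klimyk algorithm, Littelmann's path model, or any implementation in a CAS such as LiE/SAGE, that
\[
\bigl[V(\lambda^{(a)})\otimes V(\mu^{(a)})\otimes V(\nu^{(a)})\bigr]^{\Spin(8)} \neq 0.
\]
Since Hilbert basis elements of $S$ generate $S$ as a semigroup and $\hat\Gamma_3(\Spin(8))$ is closed under addition, this finite collection of nonvanishing statements yields the saturation property for $\Spin(8)$.

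The main obstacle is entirely computational rather than conceptual: one must control the combinatorial explosion both in enumerating triples $(w_1,w_2,w_3)\in (W^{P_i})^3$ with $[X^{P_i}_{w_1}]\odot_0[X^{P_i}_{w_2}]\odot_0[X^{P_i}_{w_3}]=[X^{P_i}_e]$ (the Weyl group of $D_4$ has order $192$, so the naive search space is large but manageable once one exploits the sparsity of $\odot_0$ relative to the cup product) and in enumerating the Hilbert basis of $S$, which a priori could be quite large. The $\odot_0$ sparsity, together with the outer $S_3$-triality symmetry of $D_4$ which permutes the three outer nodes and hence the relevant parabolics and inequalities, should cut the effective workload substantially; without these simplifications a purely brute-force tensor product verification for every Hilbert basis element would be infeasible to write down by hand, though still a finite check.
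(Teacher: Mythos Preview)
Your proposal is correct and follows essentially the same route the paper indicates: the survey does not spell out a proof but records that the result was obtained in [KKM, Theorem~5.3] ``by explicit calculation using Theorem~\ref{EVT}'', i.e.\ by first producing the $294$ facet inequalities for $\bar\Gamma_3(so(8))$ via the $\odot_0$-product (as in the Remark after Corollary~\ref{7.5}) and then performing a finite combinatorial verification of saturation. Your reduction to checking a Hilbert basis of $S=\Gamma_3(\Spin(8))\cap\{\lambda+\mu+\nu\in Q\}$ is exactly the kind of finite check required, and the semigroup property of $\hat\Gamma_3$ makes the argument go through as you describe.
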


Finally, we have the following improvement of Theorem ~\ref{KM} for the classical
groups $\SO(n)$ and $\Sp(2\ell)$. It was proved by  Belkale-Kumar [BK$_2$,
Theorems 25 and 26]  for the groups $\SO(2\ell+1)$ and $\Sp(2\ell)$ by using
geometric techniques.  Sam [S] proved it for $\SO(2\ell)$ (and also for
$\SO(2\ell+1)$ and $\Sp(2\ell)$) via the quiver approach (following the proof
by Derksen-Weyman [DW] for $G=\SL(n)$).
\begin{theorem} For the groups $\SO(n)\,(n\geq 7)$ and $\Sp(2\ell) \,(\ell \geq 2)$,
$2$ is a saturation factor.
\end{theorem}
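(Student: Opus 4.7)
My plan is to reduce the saturation question for $\Sp(2\ell)$ and $\SO(2\ell+1)$ to the already-established saturation property for $\SL(m)$ (Theorem \ref{knutson-tao}), by leveraging the eigencone comparison in Theorem \ref{woodrow1}. I focus on $G=\Sp(2n)$; the case $\SO(2n+1)$ is entirely parallel via Theorem \ref{woodrow1}(b), while $\SO(2n)$ requires an analogous eigencone comparison with $sl(2n)$ that must be established along the way (alternatively, one follows Sam's quiver route using orthogonal symmetric quivers).

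Given $(\lambda,\mu,\nu)\in \Gamma_3(\Sp(2n))$ with $\lambda+\mu+\nu$ in the $C_n$-root lattice $Q^C$, I would first identify each $\Sp(2n)$-dominant weight $\lambda=(t_1\geq \cdots\geq t_n\geq 0)$ with the $\GL(2n)$-partition $\tilde\lambda=(t_1,\dots,t_n,0,\dots,0)$. Theorem \ref{woodrow1}(a) places the extended triple $(\tilde\lambda,\tilde\mu,\tilde\nu)$ in $\bar\Gamma_3(sl(2n))$. The key arithmetic check is that doubling makes this triple satisfy the $\SL(2n)$-root-lattice hypothesis: the natural map between weight lattices has index $2$ on the quotient modulo root lattices, so $2(\tilde\lambda+\tilde\mu+\tilde\nu)\in Q^{A_{2n-1}}$, and this is precisely where the factor $2$ first enters. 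Knutson-Tao saturation for $\SL(2n)$ then yields a nonzero invariant in $[V^{\SL(2n)}(2\tilde\lambda)\otimes V^{\SL(2n)}(2\tilde\mu)\otimes V^{\SL(2n)}(2\tilde\nu)]^{\SL(2n)}$.

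The main obstacle is the final step: upgrading the nonzero $\SL(2n)$-invariant to a nonzero $\Sp(2n)$-invariant inside $V^{\Sp(2n)}(2\lambda)\otimes V^{\Sp(2n)}(2\mu)\otimes V^{\Sp(2n)}(2\nu)$. The classical $\SL(2n)\downarrow \Sp(2n)$ branching decomposes $V^{\SL(2n)}(\tilde\lambda)$ into $\Sp(2n)$-isotypic pieces with $V^{\Sp(2n)}(\lambda)$ occurring once as the top constituent, but lower $\Sp(2n)$-types also occur, so a direct representation-theoretic projection is insufficient. My plan is to argue geometrically, in the spirit of the GIT setup underlying the proof of Theorem \ref{thm1}: realize triple tensor invariants as $G$-invariant sections of $\mathcal L(2\lambda)\boxtimes\mathcal L(2\mu)\boxtimes\mathcal L(2\nu)$ on the appropriate flag product, and use Theorem \ref{wilson1} to show that the inclusion $\iota:(\Sp(2n)/B^C)^3\hookrightarrow (\SL(2n)/B)^3$ carries $\SL(2n)$-semistable configurations, after translation by a generic element, onto $\Sp(2n)$-semistable configurations. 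The factor of $2$ serves a second role here: it ensures that the $\SL(2n)$-linearization restricts to a legitimate $\Sp(2n)$-linearization on the symplectic flag product, so that a semistable-point-producing invariant section restricts nontrivially. Establishing this semistability-preserving transversality, which ultimately uses the rational-normal-curve construction employed in the proof of Theorem \ref{wilson1}, is the technical heart of the argument and where I expect the bulk of the effort to lie.
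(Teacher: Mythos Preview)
Your overall route --- compare eigencones via Theorem \ref{woodrow1}, apply Knutson--Tao for $\SL$, then transfer the invariant back --- is exactly the Belkale--Kumar strategy, and the transfer step you flag as the heart is precisely Theorem \ref{clef'1}. But the execution contains a concrete error in the extension step, and this misplaces where the factor $2$ actually enters.

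Theorem \ref{woodrow1}(a) is a statement about $\frh^{C}_{+}$ as a subset of $\frh^{A_{2n-1}}_{+}$. The point of $\frh^{A_{2n-1}}$ determined by $\lambda=(t_{1},\dots,t_{n})\in\frh^{C}_{+}$ is $\mathrm{diag}(t_{1},\dots,t_{n},-t_{n},\dots,-t_{1})$ (see Subsection \ref{notate1}), so the $\SL(2n)$-weight singled out by the theorem is the \emph{symmetric} extension $\hat\lambda=(t_{1},\dots,t_{n},-t_{n},\dots,-t_{1})$, not your $\tilde\lambda=(t_{1},\dots,t_{n},0,\dots,0)$. Theorem \ref{woodrow1} tells you nothing about $(\tilde\lambda,\tilde\mu,\tilde\nu)$ lying in $\bar\Gamma_{3}(sl(2n))$. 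Moreover, your arithmetic check fails with $\tilde\lambda$: the $\SL(2n)$ root-lattice condition is $|\tilde\lambda|+|\tilde\mu|+|\tilde\nu|\equiv 0\pmod{2n}$, while the $C_{n}$ root-lattice hypothesis only gives $|\lambda|+|\mu|+|\nu|\equiv 0\pmod{2}$; doubling yields $\pmod{4}$, not $\pmod{2n}$, so the claim is false for $n>2$.

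With the correct extension $\hat\lambda$ the coordinate sum is zero, so $\hat\lambda+\hat\mu+\hat\nu\in Q^{A_{2n-1}}$ automatically and Knutson--Tao applies \emph{without} any doubling: $(\hat\lambda,\hat\mu,\hat\nu)\in\hat\Gamma_{3}(\SL(2n))$. The factor $2$ appears only in the transfer step: since each $\epsilon_{2n+1-i}$ restricts to $-\epsilon_{i}$ on $\frh^{C}$, one has $\hat\lambda_{C}=2\lambda$, and Theorem \ref{clef'1} then yields $(2\lambda,2\mu,2\nu)\in\hat\Gamma_{3}(\Sp(2n))$. So the ``second role'' you assign to doubling in the GIT linearization is not what is happening; the entire saturation factor comes from the doubling built into the restriction $\hat\lambda\mapsto\hat\lambda_{C}$. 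Your GIT sketch for proving the transfer is pointed in the right direction (the argument in [BK$_{2}$] is indeed geometric and does exploit the isotropic-flag transversality underlying Theorem \ref{wilson1}), but it should be reorganized around $\hat\lambda$, not $\tilde\lambda$.
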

The Belkale-Kumar proof of the above theorem for $\SO(2\ell+1)$ and $\Sp(2\ell)$
relies on the following theorem [BK$_2$, Theorem 23].

\begin{theorem}\label{clef'1}
Let $(\lambda^1, \dots , \lambda^s)\in \hat{\Gamma}_s(\SL(2\ell))$. Then,
$(\lambda_C^1, \dots ,\lambda_C^s)\in \hat{\Gamma}_s(\Sp(2\ell))$,
 where $\lambda_C^j$ is the restriction of
$\lambda^j$ to the maximal torus  of $\Sp(2\ell)$.

A similar result is true for $\Sp(2\ell)$ replaced by $\SO(2\ell+1).$
\end{theorem}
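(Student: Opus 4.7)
The plan is to translate both conditions into statements about invariant sections via the Borel--Weil theorem, and then exploit Theorem \ref{wilson1} together with Kleiman transversality to show that the natural restriction map on invariants is non-zero.

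Let $G=\SL(2\ell)$ and $K=\Sp(2\ell)$, with standard Borels $B\subset G$ and $B^C=B\cap K\subset K$ as in Section \ref{notate1}. Set $X=(G/B)^s$ and $Y=(K/B^C)^s$, with the natural closed embedding $\iota:Y\hookrightarrow X$. Since the character $\lambda^j$ of $B$ restricts to $\lambda_C^j$ on $B^C$, we have $\iota^*\mathcal{L}(\lambda)=\mathcal{L}(\lambda_C)$, and the restriction map $\iota^*\colon H^0(X,\mathcal{L}(\lambda))^G\to H^0(Y,\mathcal{L}(\lambda_C))^K$ is well-defined because $K\subset G$. By Borel--Weil the hypothesis is equivalent to $H^0(X,\mathcal{L}(\lambda))^G\neq 0$ and the conclusion to $H^0(Y,\mathcal{L}(\lambda_C))^K\neq 0$, so it suffices to prove that $\iota^*$ has non-zero image.

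The next step is to show that a generic $s$-tuple $y\in Y$ of isotropic flags in general position is $G$-semistable for $\mathcal{L}(\lambda)$ when regarded as a point of $X$. By the Hilbert--Mumford criterion Proposition \ref{propn14}(e) together with Lemma \ref{l1}, $G$-semistability of such a $y$ amounts to the Schubert-cohomology inequalities of Theorem \ref{thm1} on the various Grassmannians $\Gr(r,2\ell)$, and Theorem \ref{wilson1} guarantees that isotropic flags in general position yield proper Schubert intersections that witness all of these inequalities. Once this semistability is in hand, the non-vanishing locus of any $0\neq \phi\in H^0(X,\mathcal{L}(\lambda))^G$ is an open $G$-stable subset of $X$; applying Kleiman transversality (Theorem \ref{kleiman}) to the diagonal action of $K^s$ on $X$, one arranges $y$ to lie in the non-vanishing locus of some such $\phi$. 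The restriction $\iota^*\phi$ is then non-zero, establishing the theorem. The argument for $\SL(2\ell+1)\supset\SO(2\ell+1)$ is identical, using Theorem \ref{newwilson2} in place of Theorem \ref{wilson1}.

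The main obstacle is controlling the \emph{level} at which the non-vanishing occurs. A priori GIT only produces sections of $\mathcal{L}(\lambda)^{\otimes N}$ for some $N\geq 1$, which yields merely the saturated statement (compare Theorem \ref{woodrow1}). To upgrade to level one, one must show that $Y$ is not contained in the common vanishing locus of the level-one $G$-invariants in $H^0(X,\mathcal{L}(\lambda))^G$; this is where the interplay between the genericity supplied by Theorem \ref{wilson1} and the openness of the non-vanishing locus of level-one invariants must be exploited, using the level-one non-vanishing hypothesis $H^0(X,\mathcal{L}(\lambda))^G\neq 0$ crucially rather than going through its saturated consequence.
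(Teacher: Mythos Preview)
This theorem is only stated in the paper with a reference to [BK$_2$, Theorem 23]; no proof is given here, so I assess your argument on its own. Your reduction via Borel--Weil is correct, and your second step---that a generic $y\in Y$ is $G$-semistable for $\mathcal{L}(\lambda)$---does follow from Theorem \ref{wilson1}: running the contrapositive of Theorem \ref{thm1}(b)$\Rightarrow$(a), a destabilizing OPS would produce $w_1,\dots,w_s$ with $\sum\lambda^j(w_jx_P)>0$ and nonempty intersection $\bigcap k_jX^P_{w_j}$ in some $\Gr(r,2\ell)$; for generic isotropic flags this intersection is proper by Theorem \ref{wilson1}, hence the cup product is nonzero, contradicting Theorem \ref{thm1}(b). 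But notice that this is precisely the argument that proves the \emph{saturated} statement Theorem \ref{woodrow1} (compare the paper's proof via Corollary \ref{frasier}); it yields $(\lambda_C^1,\dots,\lambda_C^s)\in\Gamma_s(\Sp(2\ell))$, not membership in $\hat\Gamma_s(\Sp(2\ell))$.

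The obstacle you flag in your last paragraph is a genuine gap, and your proposed fix does not close it. Semistability of $y$ gives a nonvanishing $G$-invariant section of $\mathcal{L}(\lambda)^{\otimes N}$ for \emph{some} $N\ge1$; it says nothing about level one. Your appeal to Kleiman is circular: Kleiman requires a transitive action on the ambient variety, and $K^s$ acts transitively only on $Y$, not on $X$, so at best Kleiman tells you that generic $y\in Y$ avoids $\{\phi=0\}\cap Y$ \emph{provided} this is a proper subvariety of $Y$---which is exactly the statement in question. Using the transitive $G^s$-action on $X$ is no help either, since $\phi$ is only diagonal-$G$-invariant. There is no general GIT principle that the semistable locus equals the complement of the common zero locus of degree-one invariants. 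Passing from the eigencone to the honest tensor semigroup is a genuine saturation-type step; indeed Theorem \ref{clef'1} is one of the main inputs in [BK$_2$] for proving that $2$ is a saturation factor for $\Sp(2\ell)$, so it cannot be extracted from the saturated picture alone. A further idea specific to the pair $(\SL(2\ell),\Sp(2\ell))$ is required.
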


Belkale-Kumar [BK$_2$, Conjecture 29] conjectured the following generalization of Theorem
~\ref{clef'1}.
Let $G$ be a simply-connected, semisimple complex
algebraic group and let $\sigma$ be a diagram automorphism of $G$ (in particular,
$G$ is simply-laced)
with fixed subgroup $K$.

  \begin{conjecture}\label{conj}
  Let $(\lambda^1, \dots , \lambda^s)\in \hat{\Gamma}_s(G)$. Then,
$(\lambda_K^1, \dots ,\lambda_K^s)\in \hat{\Gamma}_s(K)$,
 where $\lambda_K^j$ is the restriction of
$\lambda^j$ to the maximal torus  of $K$.

(Observe that $\lam_K$ is dominant for $K$ for any dominant character
$\lam$ for $G$ with respect to the Borel subgroup $B^K := B^{\sigma}$
of $K$.)
  \end{conjecture}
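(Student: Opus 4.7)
The plan is to attack the conjecture geometrically via the Borel--Weil theorem, turning the purely representation-theoretic statement into a question about restriction of invariant sections to a $K$-stable subvariety.

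First, by Borel--Weil, the hypothesis $[V(\lambda^1)\otimes\cdots\otimes V(\lambda^s)]^G\neq 0$ is equivalent to the existence of a nonzero $G$-invariant section $\sigma_G$ of the $G$-linearized line bundle $\mathcal{L}_G:=\mathcal{L}(\lambda^{1*})\boxtimes\cdots\boxtimes\mathcal{L}(\lambda^{s*})$ on $(G/B)^s$. The closed $K$-equivariant embedding $\iota^s\colon(K/B^K)^s\hookrightarrow(G/B)^s$ satisfies $(\iota^s)^*\mathcal{L}_G\cong\mathcal{L}_K(\lambda_K^{1*})\boxtimes\cdots\boxtimes\mathcal{L}_K(\lambda_K^{s*})$ as $K$-equivariant line bundles, since each $\lambda^j$ restricts to $\lambda_K^j$ on $T_K$ and $B^K=B\cap K$ is a Borel of $K$. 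Hence the restriction $\sigma_K:=(\iota^s)^*\sigma_G$ is a $K$-invariant section of the relevant $K$-line bundle, and by Borel--Weil for $K$ the non-vanishing of $\sigma_K$ would produce the desired $K$-invariant in $V_K(\lambda_K^1)\otimes\cdots\otimes V_K(\lambda_K^s)$.

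The entire difficulty, and the main obstacle, is therefore concentrated in the following nonvanishing statement: one can choose $\sigma_G$ so that $(\iota^s)^*\sigma_G\not\equiv 0$, equivalently, the $G$-semistable locus of $(G/B)^s$ with respect to $\mathcal{L}_G$ meets $(K/B^K)^s$. The naive averaging trick $\sigma_G\mapsto\sigma_G+\sigma^*\sigma_G+\cdots$ fails here because the $s$-tuple $(\lambda^1,\dots,\lambda^s)$ need not be $\sigma$-invariant (only its restriction to $T_K$ is); thus $\sigma$ does not act on $H^0((G/B)^s,\mathcal{L}_G)^G$ in general. My intended route would combine two ingredients: (i) a Kempf--Ness/moment-map argument to locate a $K$-invariant point in every closed $G$-orbit of semistable points, and (ii) a transversality input asserting that this point can be moved into $(K/B^K)^s$ by a $P(\delta)$-action for some $G$-dominant $\delta\in O(H^K)$, using the well-covering pair technology of Section~\ref{section8} (Definition~\ref{wellcovering}, Lemma~\ref{lemma7.5ress}).

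A more concrete implementation, which is the one that actually works in the cases of Theorem~\ref{clef'1}, is to replace ingredient (ii) by a geometric realization of $G$-semistable configurations via rational normal curves: the isotropic flags obtained as osculating flags of the rational normal curves used in Theorems~\ref{wilson1} and~\ref{newwilson2} identify the intersection of Schubert cycles in $\mathrm{Gr}(r,V)$ with an intersection of shifted isotropic Schubert cycles in $\mathrm{IG}(r,V)$ or $\mathrm{OG}(r,V')$. Extracting an explicit $K$-invariant from a $G$-invariant this way is what gives the conjecture for $(sl(2n),sp(2n))$ and $(sl(2n+1),so(2n+1))$. For the remaining pairs $(so(2n),so(2n-1))$, $(so(8),G_2)$, and $(E_6,F_4)$, the hard part is to produce an analogous ``osculating/degeneration'' construction, or, alternatively, to lift the quiver-theoretic proof of Derksen--Weyman and Sam to the $\sigma$-equivariant setting. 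I would first test the strategy on $(so(8),B_3)$, where the deformed-product tables of Section~\ref{exemples} and the explicit description of $\bar{\Gamma}_3$ of Section~\ref{sec10} make the combinatorics tractable, and then attempt to propagate the argument to $D_\ell$ in general before tackling the triality and $E_6$ cases.
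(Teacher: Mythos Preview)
The statement you are attempting to prove is a \emph{conjecture}, not a theorem: the paper does not prove it and explicitly presents it as an open problem due to Belkale--Kumar, with only the special cases of Theorem~\ref{clef'1} (the pairs $(sl(2n),sp(2n))$ and $(sl(2n+1),so(2n+1))$) and the pair $(\Spin(8),G_2)$ (due to Lee) settled. So there is no ``paper's own proof'' to compare against.

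Your proposal is not a proof but a research outline, and you are candid about this. The reduction via Borel--Weil to the nonvanishing of $(\iota^s)^*\sigma_G$ is correct and standard, but it is also essentially tautological: the existence of a nonzero $K$-invariant section of $\mathcal{L}_K(\lambda_K^{1*})\boxtimes\cdots\boxtimes\mathcal{L}_K(\lambda_K^{s*})$ is by Borel--Weil exactly the statement $(\lambda_K^1,\dots,\lambda_K^s)\in\hat{\Gamma}_s(K)$, so the ``main obstacle'' you isolate is the conjecture itself, not a strictly smaller subproblem. Your two proposed ingredients (i) and (ii) are vague as stated, and the well-covering machinery of Section~\ref{section8} is designed for the saturated cone $\Gamma(G,\hat{G})$, not for the unsaturated semigroup $\hat{\Gamma}_s$; it is precisely the passage from saturated to unsaturated that is hard here.

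There is also a misattribution: the osculating-flag/rational-normal-curve argument of Theorems~\ref{wilson1} and~\ref{newwilson2} is used in the paper to prove the \emph{eigencone} comparison (Theorem~\ref{woodrow1}), i.e., the saturated statement $\bar{\Gamma}_s(\frk)=\bar{\Gamma}_s(\frg)\cap(\frh_+^\frk)^s$, not the unsaturated Theorem~\ref{clef'1}. The latter (whose proof is in [BK$_2$] and not reproduced in this survey) requires additional input beyond properness of intersections. So your sentence ``A more concrete implementation, which is the one that actually works in the cases of Theorem~\ref{clef'1}\ldots'' overstates what the rational-normal-curve technique delivers.
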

\begin{remark} Lee showed in his thesis [Le] that the above conjecture is true for
the pair $($Spin $(8), G_2)$.
\end{remark}
We generalize Definition \ref{sfactor} in the following.
\begin{definition} \label{sfactor2} Let $G \subset \hat{G}$ be connected
reductive groups with the choice of $H, \hat{H}, B, \hat{B}$ as in the beginning
of Section \ref{section8}. An integer $d\geq 1$ is called a {\it saturation factor} for
the pair $(G, \hat{G})$,
 if for any $\hat{\lam} \in \hat{\Lambda}_+$ and $\lam \in \Lambda_+$
 such that

 (a) for all $t\in \hat{Z}\cap G$, $\lambda (t)\cdot \hat{\lam}(t)=1$, where
 $\hat{Z}$ is the center of $\hat{G}$, and

 (b) there exists $N>0$ such that $[V(N\lambda)\otimes
 \hat{V}(N\hat{\lambda})]^G\neq 0$,

 then we have $[V(d\lambda)\otimes
 \hat{V}(d\hat{\lambda})]^G\neq 0$.

 If we can take $d=1$, we say that
 the {\it saturation property holds for the pair $(G, \hat{G})$}.
 \end{definition}

 As proved by Pasquier-Ressayre [PS, Theorem 5], the pairs $(\Spin(2n-1), \Spin(2n));
 (\SL(3), G_2); (G_2, \Spin(7)); (\Spin(9), F_4); (F_4, E_6); (\Sp(2n), \SL(2n))$
 for any
 $2\leq n\leq 5$ have the saturation property.

 The following result is due to
 Haines-Kapovich-Millson [HKM, Corollary 3.4], though we give  a different proof (observed by  A. Berenstein)  reducing the problem to that of the saturation factor for 
$\hat{G}$.
 \begin{theorem}\label{hkm} Let $\hat{G}$ be any connected simple group
 and $G$ any Levi subgroup. Then, if  $d$ is a saturation factor for $\hat{G}$, then $d$ is also a saturation factor
 for the pair $(G, \hat{G})$. 

In particular, $k_{\hat{\frg}}^2$ is a saturation factor
 for the pair $(G, \hat{G})$,   where $\hat{\frg}$ is the Lie algebra of
 $\hat{G}$ and $k_{\hat{\frg}}$ is as defined in
 Theorem \ref{KM}.
 \end{theorem}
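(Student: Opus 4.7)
The plan is to reduce the saturation problem for the pair $(G,\hat{G})$ to the classical triple saturation problem for $\hat{G}$ alone, which the hypothesis provides. The idea (due to Berenstein) is to exploit the fact that, since $G$ is a Levi subgroup of $\hat{G}$, both groups share the maximal torus $\hat{H}$, and the weight lattice $\Lambda$ of $G$ coincides with the weight lattice $\hat{\Lambda}$ of $\hat{G}$. In particular, every $G$-dominant weight $\lambda$ is a weight of $\hat{G}$, and the center $\hat{Z}$ of $\hat{G}$ is automatically contained in $G$, so $\hat{Z}\cap G=\hat{Z}$.

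The key step is to attach to each $\lambda\in\Lambda_{+}$ a pair $\hat{\mu}_{1}=\hat{\mu}_{1}(\lambda)$, $\hat{\mu}_{2}=\hat{\mu}_{2}(\lambda)$ in $\hat{\Lambda}_{+}$, homogeneous in $\lambda$ (so that $\hat{\mu}_{i}(N\lambda)=N\hat{\mu}_{i}(\lambda)$), such that for every $\hat{\lambda}\in\hat{\Lambda}_{+}$,
$$[V(\lambda)\otimes\hat{V}(\hat{\lambda})]^{G}\neq 0\;\Longleftrightarrow\;\bigl[\hat{V}(\hat{\mu}_{1})\otimes\hat{V}(\hat{\mu}_{2})\otimes\hat{V}(\hat{\lambda})\bigr]^{\hat{G}}\neq 0,\qquad(\dagger)$$
and additionally $(\hat{\mu}_{1}+\hat{\mu}_{2})|_{\hat{Z}}=-\lambda|_{\hat{Z}}$. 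A natural candidate for $\hat{\mu}_{1}$ is a $\hat{W}$-translate $w\lambda^{*}$ lying in the $\hat{G}$-dominant chamber; the PRV theorem then realizes $V(\lambda^{*})$ inside $\hat{V}(\hat{\mu}_{1})|_{G}$ as a $G$-submodule. The second weight $\hat{\mu}_{2}$ is chosen as a $\hat{G}$-dominant `corrector' whose restriction to $G$ contributes the trivial $G$-summand needed to promote this one-sided embedding to the equivalence $(\dagger)$.

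Once $(\dagger)$ and the compatibility on $\hat{Z}$ are established, the theorem follows quickly. Assume $(\lambda,\hat{\lambda})$ satisfies conditions (a) and (b) of Definition \ref{sfactor2}. By (b), there exists $N\geq 1$ with $[V(N\lambda)\otimes\hat{V}(N\hat{\lambda})]^{G}\neq 0$; by $(\dagger)$ applied to $N\lambda$ and homogeneity, this gives $(N\hat{\mu}_{1},N\hat{\mu}_{2},N\hat{\lambda})\in\hat{\Gamma}_{3}(\hat{G})$, hence $(\hat{\mu}_{1},\hat{\mu}_{2},\hat{\lambda})\in\Gamma_{3}(\hat{G})$. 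Condition (a) combined with $(\hat{\mu}_{1}+\hat{\mu}_{2})|_{\hat{Z}}=-\lambda|_{\hat{Z}}$ yields $(\hat{\mu}_{1}+\hat{\mu}_{2}+\hat{\lambda})|_{\hat{Z}}=0$, i.e., $\hat{\mu}_{1}+\hat{\mu}_{2}+\hat{\lambda}$ lies in the root lattice $Q_{\hat{G}}$ of $\hat{G}$. Applying the hypothesis that $d$ is a saturation factor for $\hat{G}$, we deduce $(d\hat{\mu}_{1},d\hat{\mu}_{2},d\hat{\lambda})\in\hat{\Gamma}_{3}(\hat{G})$; then $(\dagger)$ yields $[V(d\lambda)\otimes\hat{V}(d\hat{\lambda})]^{G}\neq 0$, completing the proof. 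The `in particular' statement follows immediately from Theorem \ref{KM}.

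The main obstacle is constructing $\hat{\mu}_{1},\hat{\mu}_{2}$ so that $(\dagger)$ holds as a genuine equivalence. One direction — that a triple $\hat{G}$-invariant yields a $G$-invariant via the embedding $V(\lambda^{*})\hookrightarrow\hat{V}(\hat{\mu}_{1})|_{G}$ and the natural $G$-invariant pairing — is relatively formal. The converse direction is more delicate: it requires a Littlewood--Richardson-type decomposition of $\hat{V}(\hat{\mu}_{1})\otimes\hat{V}(\hat{\mu}_{2})$ restricted to $G$, identifying a $V(\lambda)\otimes V(\lambda^{*})$ summand whose invariant pairing couples with any $G$-invariant in $V(\lambda)\otimes\hat{V}(\hat{\lambda})$. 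Ensuring the central compatibility $(\hat{\mu}_{1}+\hat{\mu}_{2})|_{\hat{Z}}=-\lambda|_{\hat{Z}}$ while keeping both $\hat{\mu}_{i}$ dominant is a secondary bookkeeping matter.
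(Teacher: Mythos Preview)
Your high-level strategy matches the paper's (and you correctly attribute it to Berenstein): reduce the branching problem for $(G,\hat G)$ to a triple tensor product for $\hat G$ via an equivalence of the form $(\dagger)$, then invoke saturation for $\hat G$. The framework after $(\dagger)$ is fine. The gap is that you do not actually construct $\hat\mu_1,\hat\mu_2$ or prove $(\dagger)$; you only name it as ``the main obstacle'' and sketch a PRV-based heuristic. Two specific issues:

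First, your requirement that $\hat\mu_i$ depend on $\lambda$ alone (so that $\hat\mu_i(N\lambda)=N\hat\mu_i(\lambda)$) is probably too strong; at any rate you give no argument for it. The paper's construction allows the auxiliary weights to depend on $\hat\lambda$ as well, and only needs joint homogeneity under $(\lambda,\hat\lambda)\mapsto(N\lambda,N\hat\lambda)$, which is what the saturation argument actually uses. Second, your proposed route through PRV and ``a $\hat W$-translate $w\lambda^*$'' together with an unspecified corrector $\hat\mu_2$ is not worked out; the converse direction of $(\dagger)$ in particular is left as a wish.

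The paper's construction is explicit and different in flavor. Set $\omega_G=\sum_{\alpha_i\in\hat\Delta\setminus\Delta}\omega_i$ and take
\[
\hat\mu_1=m\,\omega_G,\qquad \hat\mu_2=-\hat w_o\bigl(m\,\omega_G-w_o\lambda\bigr),
\]
where $m=m_{\lambda,\hat\lambda}$ is any positive integer large enough that $e_i^{m+1}$ annihilates $\hat V(\hat\lambda)$ for every $\alpha_i\in\hat\Delta\setminus\Delta$ and that $m\,\omega_G-w_o\lambda\in\hat\Lambda_+$. One then proves not merely $(\dagger)$ but the \emph{dimension equality}
\[
\dim\bigl[V(\lambda)\otimes\hat V(\hat\lambda)\bigr]^G
=\dim\bigl[\hat V(\hat\lambda)\otimes\hat V(m\omega_G)\otimes\hat V\bigl(-\hat w_o(m\omega_G-w_o\lambda)\bigr)\bigr]^{\hat G},
\]
by (i) identifying $[V(\lambda)\otimes\hat V(\hat\lambda)]^G$ with the space of $G$-highest weight vectors of weight $-w_o\lambda$ in $\hat V(\hat\lambda)$, (ii) rewriting the Levi highest-weight condition as the nilpotency condition $e_i^{m\langle\omega_G,\alpha_i^\vee\rangle+1}\cdot v=0$ for all $\alpha_i\in\hat\Delta$, and (iii) invoking a known identity (from [K$_3$, Theorem~3.7]) equating this space with the triple $\hat G$-invariants. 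The scaling step works because one may take $m_{N\lambda,N\hat\lambda}=N\,m_{\lambda,\hat\lambda}$.
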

\begin{proof} Let $\omega_G:=\sum_{\alpha_i\in \hat{\Delta}\setminus\Delta}\,\omega_i$.
We first show that for any $\lambda\in \Lambda_+$ and $\hat{\lambda}\in \hat{\Lambda}_+$,
\beqn\label{neweq10.13}
\dim \bigl([V(\lambda)\otimes
 \hat{V}(\hat{\lambda})]^G\bigr)=\dim \bigl([\hat{V}(\hat{\lambda})\otimes \hat{V}(m\omega_G)\otimes 
\hat{V}(-\hat{w}_o(m\omega_G-{w}_o \lambda))]^{\hat{G}}\bigr),\eeqn
where $\hat{w}_o$ (resp. $w_o$) is the longest element of the Weyl group $\hat{W}$ of $\hat{G}$ (resp. $W$ of $G$) and $m=m_{\lambda,\hat{\lambda}}$ is 
any positive integer such that 
$e_i^{m+1}\cdot x=0$, for all $x\in \hat{V}(\hat{\lambda})$ and $\alpha_i\in \hat{\Delta}\setminus\Delta$ and such that 
$m\omega_G-w_o\lambda\in \hat{\Lambda}_+$ (where $e_i$ is the  root vector corresponding to the simple root $\alpha_i$) . 
To prove this, observe that (since $V(\lambda)^*\simeq 
V(-{w}_o\lambda)$)
\begin{align*} [V(\lambda)\otimes
 \hat{V}(\hat{\lambda})]^G &\simeq \Hom_G\bigl(V(-w_o\lambda), \hat{V}(\hat{\lambda})\bigr)\\
&\simeq \{v\in \hat{V}(\hat{\lambda})_{-w_o\lambda}:e_i\cdot v=0 \,\,\,\text{for all}\,\, \alpha_i\in \Delta\} \\
&\simeq \{v\in \hat{V}(\hat{\lambda})_{-w_o\lambda}:e_i^{m\langle \omega_G, \alpha_i^\vee\rangle +1} \cdot v=0 \,\,\,\text{for all}\,\, \alpha_i\in \hat{\Delta}\}.
\end{align*}

The last space has the same dimension as that of $[\hat{V}(\hat{\lambda})\otimes \hat{V}(m\omega_G)\otimes 
\hat{V}(-\hat{w}_o(m\omega_G-{w}_o \lambda))]^{\hat{G}}$ from [K$_3$, Theorem 3.7] .
This proves the identity \eqref{neweq10.13}. From the identity \eqref{neweq10.13},  the theorem follows easily by observing that 
$m_{N\lambda, N\hat{\lambda}}$ can be taken to be $Nm_{\lambda, \hat{\lambda}}$.
\end{proof}

\begin{remark} As shown by Roth [Ro] (also by Ressayre [R$_7$]), for a pair
 $(\lambda, \hat{\lambda})$
in any regular face (i.e., a face which intersects
$\Lambda_{++}\times \hat{\Lambda}_{++}$) of the cone $\Gamma (G, \hat{G})_\br$ (cf. Theorem \ref{ress2} for
the definition of  $\Gamma (G, \hat{G})_\br$), the dimension of the invariant
subspace $[V(\lambda)\otimes
 \hat{V}(\hat{\lambda})]^G$ is equal to a similar dimension for representations
 of Levi subgroups of $G$ and $\hat{G}$.
 \end{remark}

 We also recall the following `rigidity' result conjectured by Fulton and proved by
 Knutson-Tao-Woodward [KTW]. (Subsequently, geometric proofs were given by
Belkale [B$_4$] and Ressayre [R$_2$].)

\begin{theorem} \label{ktwfulton} Let
 $L=\GL(r)$ and let $\lambda,\mu,\nu\in \Lambda(H)_+$. Then,
if the dimension $\dim \bigl([V(\lambda)\otimes V(\mu)\otimes
V(\nu)]^{\SL(r)}\bigr) =1$, we have $\dim\bigl([V(n\lambda)\otimes
V(n\mu)\otimes V(n\nu)]^{\SL(r)}\bigr)=1$,
 for  every positive integer $n$.
\end{theorem}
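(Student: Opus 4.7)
The plan is to translate the invariant-theoretic statement into a question about scheme-theoretic multiplicities of intersections of Schubert varieties in a Grassmannian, and then to propagate the rigidity to all scalar multiples.

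First, using the classical identification \eqref{e8} between $\SL(r)$-invariants in tensor products of polynomial representations and Schubert intersection numbers, the hypothesis $\dim [V(\lambda)\otimes V(\mu)\otimes V(\nu)]^{\SL(r)} = 1$ is equivalent to the assertion that for subsets $I,J,K \subset \{1,\ldots,N\}$ of cardinality $r$ canonically associated to the partitions $\lambda,\mu,\nu$ (with $N$ large enough), the product $[X_{I'}]\cdot [X_{J'}]\cdot [X_K] = 1\cdot [X_e^{P_r}]$ in $H^*(\Gr(r,N))$. By Proposition \ref{fultonproper}, for flags $E^1_\bullet,E^2_\bullet,E^3_\bullet$ in general position the triple intersection $\OMM_{I'}(E^1_\bullet)\cap \OMM_{J'}(E^2_\bullet)\cap \OMM_K(E^3_\bullet)$ is a single reduced point in $\Gr(r,N)$. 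The conclusion for $(n\lambda,n\mu,n\nu)$ asserts the analogous intersection number equals $1$ in an appropriate (larger) Grassmannian where the scaled partitions live.

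Second, I would propagate this multiplicity-one rigidity from level $1$ to all levels $n$. A natural route is the GIT/geometric framework of the paper: the condition $\dim H^0(X,\elal(\lambda,\mu,\nu))^G = 1$ on $X = (G/B)^3$ is equivalent to the GIT quotient $X^{ss}(\elal)/\!/G$ being a single reduced point. If one can show reducedness, then $H^0(X,\elal^{\otimes n})^G = \bc$ for every $n\geq 1$, giving the theorem. To prove reducedness, I would use Ressayre's well-covering pair machinery (Definition \ref{wellcovering}): the multiplicity-one hypothesis, via Lemma \ref{lemma7.5ress}, manifests as a well-covered situation in which the isomorphism $G \times^{P(\delta)} C_+^o \simeq X^o$ identifies $G$-invariant sections with sections on the fiber $C$, which is itself a product of flag varieties for the Levi $L(\delta)$. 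An induction on the rank of $G$ then reduces to the trivial rank-zero case. Alternative approaches would be the combinatorial honeycomb proof of Knutson--Tao--Woodward, where multiplicity-one honeycombs are characterized by a rigidity that is manifestly stable under scaling, or Belkale's recursive geometric proof exploiting deformations of transverse Schubert intersections.

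The main obstacle is precisely the step from set-theoretic to scheme-theoretic rigidity under scaling. At level $1$, the intersection is a single reduced point by Kleiman transversality plus the cup-product computation. At level $n$, the partitions are encoded in a different Grassmannian, and it is not immediate that transversality survives; one must show that no tangential contact develops along the one-parameter deformation from $(\lambda,\mu,\nu)$ to $(n\lambda,n\mu,n\nu)$. Under the Ressayre approach this is handled by showing the well-covering property is preserved by scaling the linearization, so that the $G$-invariants on $X$ for $\elal^{\otimes n}$ pull back bijectively to $L(\delta)$-invariants on $C$ for the restricted line bundle, and the inductive hypothesis on smaller $L(\delta)$ then closes the argument.
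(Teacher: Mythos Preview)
The paper does not give its own proof of this theorem: it is stated as a result of Knutson--Tao--Woodward [KTW], with later geometric proofs by Belkale [B$_4$] and Ressayre [R$_2$] cited, and no argument is supplied. So there is no in-paper proof to compare against; your proposal should be judged against those references.

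Your outline correctly identifies the three known routes (KTW honeycombs, Belkale's geometry, Ressayre's GIT) and the translation via \eqref{e8} to Schubert intersection numbers is accurate. However, what you have written is a plan rather than a proof, and the central step is not actually carried out. The assertion that ``$\dim H^0(X,\elal)^G=1$ is equivalent to $X^{ss}/\!\!/G$ being a single reduced point'' is not correct as stated: knowing one graded piece of the invariant ring is one-dimensional says nothing about reducedness of its $\operatorname{Proj}$, and conversely the $\operatorname{Proj}$ being a reduced point does not by itself force every graded piece to be one-dimensional (one needs the invariant ring to be a polynomial ring in one variable, which is exactly the content of the theorem). In the Ressayre/Belkale approach the real work is showing that multiplicity one \emph{descends to the Levi}: one must prove that the induced invariant space on $C$ for $L(\delta)$ is again one-dimensional, which is a genuine argument (in Belkale's version, a tangent-space/deformation analysis; in Ressayre's, a careful study of how sections restrict along the well-covering pair). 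Your sentence ``An induction on the rank of $G$ then reduces to the trivial rank-zero case'' hides exactly this, and your final paragraph acknowledges the gap without closing it.

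In short: the strategy you describe is the right one and matches the cited geometric proofs, but the proposal as written does not contain the key lemma that makes the induction go through.
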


The direct generalization of the above theorem for an arbitrary connected reductive
group $L$ is false. However, Belkale-Kumar-Ressayre [BKR] proved the following
generalization using the deformed product.

\begin{theorem} \label{ktwfulton1} Let $G$ be any connected reductive group and let $P$ be any
standard parabolic subgroup with the Levi subgroup $L$ containing $H$. Then, for
any $w_1, \dots, w_s\in W^P$ such that
\beqn \label{bkre1}[X_{w_1}^P]\odot_0\dots \odot_0 [X_{w_s}^P]=[X_e^P]\in
H^{*}(G/P, \odot_0),
\eeqn
we have, for every positive integer $n$,
\beqn \label{bkre2} \dim \bigl(\bigl[V_L(n\chi_{w_1})\otimes \dots \otimes
V_L(n\chi_{w_s})\bigr]^{L^{ss}}\bigr)=1,
\eeqn
where $L^{ss}$ denotes the semisimple part $[L,L]$ of $L$, $V_L(\lambda)$
is the irreducible representation of $L$ with highest weight
 $\lambda$ and $\chi_w :=\rho -2\rho^L+w^{-1}\rho$ ($\rho$ and $\rho^L$ being
 the half sum of positive roots of $G$ and $L$ respectively).
\end{theorem}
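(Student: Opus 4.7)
The strategy is to convert both sides of~\eqref{bkre2} into geometric data on $(L/B_L)^s$ and then exploit the rigidity forced by the sharp $\odot_0$-condition.

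First, Proposition~\ref{T1} unpacks the hypothesis~\eqref{bkre1} into three assertions: $(w_1,\dots,w_s)$ is Levi-movable; the ordinary cup product satisfies $[X_{w_1}^P]\cdots[X_{w_s}^P]=[X_e^P]$ with coefficient exactly $1$; and the character $\bigl(\sum_j\chi_{w_j}\bigr)-\chi_1$ vanishes on every $x_i$ with $\alpha_i\in\Delta\setminus\Delta(P)$, hence on $\mathfrak z_L$. Each $\chi_{w_j}$ is $L$-dominant (checked directly from $\chi_w=\rho-2\rho^L+w^{-1}\rho$ together with $w\in W^P$), so by Borel--Weil and duality,
$$\dim\bigl[V_L(n\chi_{w_1})\otimes\cdots\otimes V_L(n\chi_{w_s})\bigr]^{L^{ss}}=\dim H^0\bigl((L/B_L)^s,\cl_n\bigr)^{L^{ss}},$$
where $\cl_n:=\cl_L(n\chi_{w_1})\boxtimes\cdots\boxtimes\cl_L(n\chi_{w_s})$.

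Next, the $P$-invariant section $\theta$ from~\eqref{determinant} restricts to a nonzero section $\hat\theta$ on $(L/B_L)^s$ by Lemma~\ref{movL} and Levi-movability. After absorbing a central twist (made consistent by the vanishing of $\sum_j\chi_{w_j}-\chi_1$ on $\mathfrak z_L$), $\hat\theta$ becomes a nonzero $L^{ss}$-invariant section of $\cl_1$, and its powers $\hat\theta^n$ are then nonzero $L^{ss}$-invariants of $\cl_n$ for every $n\ge 1$, yielding the lower bound $\dim[\,\cdots\,]^{L^{ss}}\ge 1$.

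The heart of the argument is the matching upper bound, for which the exact value $1$ of the cup product coefficient is essential. By Proposition~\ref{Firstnew}, this sharp condition means that for generic $(l_1,\dots,l_s)\in L^s$ the intersection $l_1\Phi^P_{w_1}\cap\cdots\cap l_s\Phi^P_{w_s}\subset G/P$ is scheme-theoretically the reduced point $\{e\}$. I plan to transfer this rigidity to $(L/B_L)^s$: at a generic point $x$ of $\{\hat\theta\neq 0\}$, one expects the $L^{ss}$-stabilizer to be finite and $L^{ss}\cdot x$ to be Zariski-open. Any $L^{ss}$-invariant section of $\cl_n$ is then determined on this open orbit by a single scalar (since $\hat\theta^n$ trivializes $\cl_n$ there), forcing it to equal $c\cdot\hat\theta^n$ and giving dimension exactly $1$.

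The main obstacle is executing this rigidity argument with precise bookkeeping of dimensions and stabilizers. A clean route is to apply Ressayre's well-covering pair framework (Definition~\ref{wellcovering} and Lemma~\ref{lemma7.5ress}) to the diagonal embedding $L\hookrightarrow L^s$ with the central OPS $\delta(t)=\prod_{\alpha_i\notin\Delta(P)}t^{x_i}$: under our hypothesis, ``Levi-movability with cup product coefficient $1$'' is exactly equivalent to being a well-covering pair, which produces an open immersion $L\times^{B_L}C_+^{o}\hookrightarrow (L/B_L)^s$ underlying the orbit description. Combined with Matsushima's theorem applied to a semistable point where $\hat\theta$ does not vanish, this should yield $\bigoplus_n H^0((L/B_L)^s,\cl_n)^{L^{ss}}\cong\mathbb C[\hat\theta]$, completing the proof of~\eqref{bkre2}.
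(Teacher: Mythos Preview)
The paper does not supply a proof of this theorem; it is stated and attributed to [BKR]. So there is no in-paper argument to compare against, and I evaluate your outline on its own terms.

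Your lower bound is sound. Levi-movability makes $\hat\theta:=\theta|_{(L/B_L)^s}$ nonzero (Lemma~\ref{movL}), and the vanishing of $\sum_j\chi_{w_j}-\chi_1$ on $\mathfrak z_L$ (from Proposition~\ref{T1}) lets you treat $\hat\theta$ as an $L^{ss}$-invariant section of $\cl_1$; its powers give $\dim\ge 1$ for every $n$.

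The upper bound, however, rests on a false premise. You assert that for generic $x\in\{\hat\theta\neq0\}$ the orbit $L^{ss}\cdot x$ is Zariski-open in $(L/B_L)^s$. A dimension count kills this: $\dim(L/B_L)^s=s\,|R_L^+|$ while $\dim L^{ss}=2|R_L^+|+\operatorname{rank}L^{ss}$, so for $s\ge3$ an open orbit forces $|R_L^+|\le\operatorname{rank}L^{ss}$, i.e.\ $L^{ss}$ is a product of $\SL(2)$'s. In any other case (e.g.\ $G=\SL(4)$, $P$ with Levi $\GL(3)\times\GL(1)$, $s=3$) no $L^{ss}$-orbit in $(L/B_L)^s$ is open, and the step ``an invariant section is determined on the open orbit, hence equals $c\,\hat\theta^n$'' collapses. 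Your fallback via well-covering pairs is also vacuous as written: the OPS $\delta(t)=\prod_{\alpha_i\notin\Delta(P)}t^{x_i}$ is \emph{central} in $L$, so for the diagonal pair $(L,L^s)$ one gets $P(\delta)=L$ and $\hat P(\delta)=L^s$; both quotients $L/P(\delta)$ and $L^s/\hat P(\delta)$ are single points, and Lemma~\ref{lemma7.5ress} has nothing to say.

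What ``cup product coefficient $1$'' genuinely buys is not an open orbit on $(L/B_L)^s$ but a birationality/irreducibility statement one level up. The argument in [BKR] works with the $P$-equivariant section $\theta$ on the larger space $(P/B_L)^s$ (equivalently, with an incidence correspondence over $(G/B)^s$); the coefficient being exactly $1$ forces the relevant divisor of $\theta$ to be irreducible (a birationality statement for the incidence map), and one deduces that every $L$-invariant section of $\cl_n$ must be a scalar multiple of $\theta^n$. That irreducibility is the substantive input you are missing, and it cannot be recovered from orbit geometry on $(L/B_L)^s$ alone.
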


\section{Deformed product and Lie algebra cohomology}

We continue to follow the same notation and assumptions from Section \ref{sec1}.
We relate the cohomology algebra $H^*(G/P)$ under the product $\odot_0$
with the Lie algebra cohomology of the nil-radical $\fru_P$ of the parabolic subalgebra
$\frp$.

For any Lie algebra $\fs$ and a subalgebra $\ft$, let $H^*(\fs ,\ft )$
be the Lie algebra cohomology of the pair $(\fs ,\ft )$ with
trivial coefficients.  Recall (cf. [K$_1$, Section 3.1]) that this is the cohomology of the cochain
complex
  \begin{align*}
C\u. (\fs ,\ft ) &= \{ C^p(\fs ,\ft )\}_{p\geq 0}, \quad\text{ where}\\
C^p(\fs ,\ft ) &:= \Hom_{\ft}\bigl( \wedge^p(\fs /\ft ), \bc \bigr).
  \end{align*}

 For any (positive) root $\beta\in
R^+$, let $y_{\beta}\in \fg_{\beta}$ be a nonzero
root vector  and let $y_{-\beta}\in\fg _{-\beta}$ be
the vector such that $\ip<y_{\beta}, y_{-\beta}> =1$ under the Killing
form. For any $w\in W^P$, let $\Phi_w := w^{-1}R^-\cap R^+ \subset R(\fu_P)$.  Then,
as it is well known,
\begin{equation}\sum_{\beta\in\Phi_w} \beta = \rho -w^{-1}\rho.
\end{equation}
In particular,  $\Phi_v =
\Phi_w$ iff $v=w$.
Let $\Phi_w = \{ \beta_1,\dots ,\beta_p\} \subset
R(\fu_P)$.  Set $y_w := y_{\beta_1}\wedge \cdots\wedge y_{\beta_p}\in
\wedge^p(\fu_P)$, determined up to a nonzero scalar multiple.  Then, up to scalar multiples,
$y_w$ is the unique weight vector of $\wedge (\fu_P)$ with weight $\rho
-w^{-1}\rho$ (cf. [Ko, Lemma 5.12]).  Similarly, we can define $y^-_w
 := y_{-\beta_1}\wedge \cdots\wedge y_{-\beta_p}\in
\wedge^p(\fu^-_P)$ of
weight $w^{-1}\rho -\rho$.

\vskip1ex
We recall the following fundamental result due to Kostant  [Ko].

\begin{theorem}\label{Th9.A}  For any standard parabolic subgroup $P$ of
$G$,
  \[
H^p (\fu_P) = \bigoplus_{\substack{ w\in W^P:\\ \ell (w)=p}} M_w,
  \]
 as $\fl_P$-modules, where $M_w$ is the unique irreducible $\fl_P$-submodule of $H^p(\fu_P)$ with
highest weight $w^{-1}\rho -\rho$ (which is $\fl_P$-dominant for any
$w\in W^P$). This has  a highest weight vector $\phi_w\in\wedge^p(\fu_P)^*$ defined
by $\phi_w(y_w)= 1$ and $\phi_w(y)=0$ for any weight
vector of $\wedge^p(\fu_P)$  of weight $\neq \rho-w^{-1}\rho$.

Similarly, for the opposite nil-radical $\fu^-_P$,
  \[
H^p(\fu^-_P) = \bigoplus_{\substack{ w\in W^P:\\ \ell (w)=p}} N_w,
  \]
 as $\fl_P$-modules, where $N_w$ is the unique  irreducible $\fl_P$-submodule of
$H^p(\fu_P^-)$ isomorphic
with the dual $M^*_w$ and it has a lowest weight vector
$\phi^-_w\in\wedge^p(\fu^-_P)^*$ defined by $\phi_w^-(y_w^-)= 1$ and $\phi_w^-(y)=0$ for any weight
vector of $\wedge^p(\fu_P^-)$  of weight $\neq w^{-1}\rho -\rho$.

Thus,
  \begin{align*}
[H^p(\fu_P)\otimes H^q(\fu^-_P)]^{\fl_P} &=0, \quad\text{unless $p=q$, and}\\
[H^p(\fu_P)\otimes H^p(\fu^-_P)]^{\fl_P} &\simeq \bigoplus_{\substack{ w\in
W^P:\\ \ell (w)=p}} \Bbb C\xi^w,
  \end{align*}
where $\xi^w\in [M_w\otimes N_w]^{\fl_P}$ is the unique element whose
$H$-equivariant projection to $(M_w)_{w^{-1}\rho -\rho} \otimes N_w$ is the
element $\phi_w\otimes\phi_w^-$, $(M_w)_{w^{-1}\rho -\rho}$ being the weight
space of $M_w$ corresponding to the weight $w^{-1}\rho -\rho$.  (Observe that
the ambiguity in the choice of $y_w$ disappears in the definition of
$\xi^w$ giving rise to a completely unique element.)
\end{theorem}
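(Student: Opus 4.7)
The plan is to prove Kostant's theorem by the Hodge-theoretic method of [Ko]. Equip $\fg$ with a compact real form so that the Killing form induces a positive-definite Hermitian inner product on $\wedge^\bullet \fu_P^*$; let $d$ denote the Chevalley--Eilenberg coboundary on $C^\bullet(\fu_P, \bc)$, let $d^*$ be its Hermitian adjoint, and set $L := dd^* + d^*d$. Since $L$ commutes with the action of (a compact form of) $\fl_P$, the standard Hodge argument gives an $\fl_P$-module isomorphism $H^p(\fu_P) \simeq \ker L \cap C^p(\fu_P, \bc)$.

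The core step is Kostant's identity for the Laplacian: on an $\fh$-weight vector $\omega \in C^p(\fu_P, \bc)$ of weight $\nu$ that is also an $\fl_P$-highest weight vector, one has
\begin{equation*}
2L\,\omega \;=\; \bigl(\ip<\nu + \rho, \nu + \rho> - \ip<\rho, \rho>\bigr)\,\omega .
\end{equation*}
Hence such an $\omega$ is harmonic exactly when $\nu + \rho$ lies in the $W$-orbit of $\rho$, i.e.\ $\nu = w^{-1}\rho - \rho$ for some $w \in W$; the additional requirement that $\nu$ be $\fl_P$-dominant forces $w \in W^P$. For each such $w$ the weight $\nu = w^{-1}\rho - \rho$ occurs in $\wedge^{\ell(w)}\fu_P^*$ with multiplicity exactly one, realized by the functional $\phi_w$ dual to $y_w$ (using that, up to scalar, $y_w$ is the unique vector of $\wedge \fu_P$ of weight $\rho - w^{-1}\rho$, cf.\ [Ko, Lemma 5.12]). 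Using \eqref{eqn1} one verifies directly that $\phi_w$ is $\fb_L$-fixed and therefore an $\fl_P$-highest weight vector. This produces the irreducible $\fl_P$-summand $M_w \subset H^{\ell(w)}(\fu_P)$ of highest weight $w^{-1}\rho - \rho$ with highest weight vector $\phi_w$, and shows that these summands exhaust $H^\bullet(\fu_P)$.

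The dual assertion for $\fu_P^-$ follows by the symmetric argument with the roles of highest and lowest weight interchanged, producing $N_w \simeq M_w^*$ with lowest-weight vector $\phi_w^-$ dual to $y_w^-$. For the $\fl_P$-invariants in $H^p(\fu_P) \otimes H^q(\fu_P^-)$, Schur's lemma forces the space to vanish unless some $M_w$ pairs with $N_w$, which requires $p = q = \ell(w)$; and in that case $[M_w \otimes N_w]^{\fl_P} \simeq \Hom_{\fl_P}(M_w, M_w) = \bc$ by Schur again. The canonical generator $\xi^w$ is then obtained by projecting onto the extremal weight line $\bc\phi_w \otimes \bc\phi_w^-$ and normalizing this projection to $\phi_w \otimes \phi_w^-$; the scalar ambiguity in the choice of $y_w$ (and correspondingly $y_w^-$) cancels in this ratio because $\phi_w(y_w) = \phi_w^-(y_w^-) = 1$ is scale-invariant under the pairing between $\wedge^p \fu_P$ and $\wedge^p \fu_P^*$.

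The main obstacle is verifying Kostant's Laplacian identity displayed above; this amounts to expressing $dd^* + d^*d$ in terms of the Casimir element of $\fg$ (and that of $\fl_P$) and manipulating a root-vector basis orthonormal under the Killing form, and it carries essentially all the Lie-theoretic content of the argument. Once that formula is in hand, the remainder reduces to the bijection $W^P \leftrightarrow$ (length-filtered subsets $\Phi_w \subset R(\fu_P)$), together with routine applications of Schur's lemma.
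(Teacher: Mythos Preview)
The paper does not prove this theorem at all: it is stated as a recalled ``fundamental result due to Kostant [Ko]'' and no proof is given beyond that citation. Your proposal correctly sketches Kostant's original Hodge-theoretic argument from [Ko]---the Laplacian $L=dd^*+d^*d$, its expression via the Casimir leading to the eigenvalue $\ip<\nu+\rho,\nu+\rho>-\ip<\rho,\rho>$ on an $\fl_P$-highest weight vector of weight $\nu$, the identification of harmonic highest weights with $w^{-1}\rho-\rho$ for $w\in W^P$, and the multiplicity-one statement for the weight $\rho-w^{-1}\rho$ in $\wedge\fu_P$. The deduction of the invariant space $[H^p(\fu_P)\otimes H^q(\fu_P^-)]^{\fl_P}$ via Schur's lemma is routine and exactly what one would do. So your outline is sound and faithful to the source the paper cites; there is simply nothing in the paper itself to compare it against.
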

The following theorem is due to Belkale-Kumar (cf. [BK$_1$, Theorem 43] for a proof).
  \begin{theorem}\label{liecohomology}  For any standard parabolic subgroup $P$ of $G$, there
is a graded algebra isomorphism
 $$\phi : \bigl( H^*(G/P, \bc ), \odot_0\bigr) \simeq \bigl[
H^*(\fu_P)\otimes H^*(\fu_P^-)\bigr]^{\fl_P}$$
such that, for any $w\in W^P$,
 \begin{equation}\label{eqn10.0} \phi\bigl(\epsilon_w^P\bigr) =
(-1)^{\frac{p(p-1)}{2}}\Bigl( \frac{i}{2\pi}\Bigr)^{p
}\ip<\rho
,\Phi_{w^{-1}}> \, \xi^{w} ,
\end{equation}
where $p:=\ell(w), \ip<\rho,\Phi_{w^{-1}}> :=\prod_{\al\in wR^-\cap R^+} \ip<\rho
,\al>$ (for any $w\in W$),  and we take the tensor product algebra structure on
the right side.
  \end{theorem}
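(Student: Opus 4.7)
The plan is to establish the isomorphism $\phi$ in three stages: first set up the underlying graded vector-space bijection via Kostant's theorem; second, show that the support condition defining $\odot_0$ corresponds to a central-character condition on the right-hand side; third, match the structure constants on this common support. For Stage 1, Theorem \ref{Th9.A} exhibits $\{\xi^w\}_{w\in W^P}$ as a homogeneous basis of $[H^*(\fu_P)\otimes H^*(\fu_P^-)]^{\fl_P}$ indexed by $W^P$, in bijection with the Poincar\'e-dual Schubert basis $\{\epsilon_w^P\}$ of $H^*(G/P,\bc)$. A graded linear isomorphism matching these bases (up to scalars) exists by the standard identification $H^*(G/P,\bc)\cong H^*(\fg,\fp)$ combined with the Hochschild--Serre decomposition induced by $\fg=\fu_P^-\oplus\fl_P\oplus\fu_P$. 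The precise scalar in (\ref{eqn10.0}) is pinned down by using normalized harmonic representatives on the compact dual: the sign $(-1)^{p(p-1)/2}$ comes from reordering root vectors in $y_w$, the factor $\langle\rho,\Phi_{w^{-1}}\rangle$ from evaluating $\phi_w$ on the harmonic representative of $\epsilon_w^P$, and the $(i/2\pi)^p$ from the K\"ahler--Chern normalization.

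For Stage 2, I expand $\xi^u\cdot\xi^v=\sum_{w\in W^P,\ \ell(w)=\ell(u)+\ell(v)} a_{u,v}^w\,\xi^w$ in the tensor-product algebra. The product is $\fl_P$-invariant and hence has $H$-weight $0$; projecting it into the summand $M_w\otimes N_w$ and then into the one-dimensional $\bc\phi_w\otimes\bc\phi_w^-$ extracts $a_{u,v}^w$ up to a known scalar, because $H$-weight considerations force the projection to land exactly in this extremal weight sector. The key observation is that $\phi_u\cdot\phi_v\in H^{\ell(u)+\ell(v)}(\fu_P)$ has weight $u^{-1}\rho+v^{-1}\rho-2\rho$, while the center $\frz(\fl_P)=\mathrm{span}\{x_i:\alpha_i\in\Delta\setminus\Delta(P)\}$ acts on the $\fl_P$-irreducible $M_w$ by the single character $(w^{-1}\rho-\rho)|_{\frz(\fl_P)}$; equating these central characters forces the $M_w$-component of $\phi_u\cdot\phi_v$ to vanish unless
$$(u^{-1}\rho+v^{-1}\rho-w^{-1}\rho-\rho)(x_i)=0\quad\text{for all }\alpha_i\in\Delta\setminus\Delta(P),$$
which is precisely the exponent-vanishing condition defining the support of $\odot_0$ in (\ref{10n}). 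The same argument applied to the lower-weight terms of $\xi^u$ and $\xi^v$ by $\fl_P$-equivariance shows that $a_{u,v}^w=0$ whenever the condition fails.

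For Stage 3, suppose the condition above holds. Then the weight $u^{-1}\rho+v^{-1}\rho-2\rho$ agrees with $w^{-1}\rho-\rho$ on $\frz(\fl_P)$, so the $M_w$-projection of $\phi_u\cdot\phi_v$ is a scalar multiple of the highest weight vector $\phi_w$ by $\fl_P$-equivariance. This scalar then identifies with the standard Schubert cup-product constant $d_{u,v}^w$ through the Poincar\'e duality pairing in $H^*(\fu_P)$ together with the parallel pairing in $H^*(\fu_P^-)$ via the duality $N_w\simeq M_w^*$. Altogether, this recovers the $\odot_0$ formula (\ref{10n}) once the scalar ratios from (\ref{eqn10.0}) are accounted for.

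The main obstacle is Stage 3, namely tracking the scalar factors precisely. A clean route is to first verify the normalization on the unit (where $\phi(\epsilon_e^P)=1=\xi^e$) and on the top degree class using the Poincar\'e pairing $\langle\epsilon_u^P,\epsilon_w^P\rangle=\delta_{u,w}$, and then extend to the general case by multiplicativity combined with the support analysis of Stage 2. The combinatorial factors $(-1)^{p(p-1)/2}$ and $\langle\rho,\Phi_{w^{-1}}\rangle$ are bookkept through the standard wedge-product formulae for root vectors, using the identity $\chi_w=\rho-2\rho^L+w^{-1}\rho$ from (\ref{eqn5}) to convert $H$-weights into the $\chi_w$-form used in the definition of $\odot_0$.
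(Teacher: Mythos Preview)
The paper does not actually give a proof of this theorem; it only cites [BK$_1$, Theorem 43]. So there is no in-paper proof to compare against, and I evaluate your proposal on its own terms.

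Your Stages 1 and 2 are reasonable. In particular, the central-character argument in Stage 2 is correct and does show that $a_{u,v}^w=0$ unless the $\odot_0$ exponent vanishes, which pins down the support.

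Stage 3, however, has a genuine gap, and you yourself flag it as the main obstacle. Two specific problems. First, the claim that ``the $M_w$-projection of $\phi_u\cdot\phi_v$ is a scalar multiple of the highest weight vector $\phi_w$ by $\fl_P$-equivariance'' is not justified: the full $H$-weight of $\phi_u\cdot\phi_v$ is $u^{-1}\rho+v^{-1}\rho-2\rho$, and equality with $w^{-1}\rho-\rho$ is only asserted on $\frz(\fl_P)$, not on all of $\fh$, so the projection need not land in the highest weight line. Second, and more seriously, your proposed route ``verify on the unit and top class, then extend by multiplicativity'' is circular: multiplicativity of $\phi$ is exactly the assertion to be proved, so it cannot be used to propagate the normalization. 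Nothing you have written supplies an independent mechanism forcing $c_u c_v\, a_{u,v}^w = c_w\, d_{u,v}^w$ on the common support.

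The missing idea is to produce $\phi$ as an honest algebra map \emph{a priori}, rather than to define it on a basis and then try to check multiplicativity coefficient by coefficient. Concretely: identify $H^*(G/P,\bc)$ with the relative Lie algebra cohomology $H^*(\fg,\fl_P)$ as graded algebras (via $K$-invariant forms on the compact dual), and observe that the cochain complex $C^*(\fg,\fl_P)=[\wedge^*\fu_P^*\otimes\wedge^*(\fu_P^-)^*]^{\fl_P}$ carries the \emph{same} wedge-product algebra structure as the cochain complex computing $[H^*(\fu_P)\otimes H^*(\fu_P^-)]^{\fl_P}$; only the differentials differ. One then uses Kostant's harmonic representatives to pass between the two cohomologies compatibly with the product, which is where the scalars $(-1)^{p(p-1)/2}(i/2\pi)^p\langle\rho,\Phi_{w^{-1}}\rangle$ actually arise. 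With multiplicativity secured this way, your Stage 2 support analysis then shows the map factors through $\odot_0$. As written, your proposal lacks this step and does not constitute a proof.
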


A proof of the following corollary due to Belkale-Kumar can be found in
[BK$_1$, Corollary 44].
\begin{corollary}  \label{leviprod} The product in $(H^*(G/B), \odot_0)$
is given by
\begin{align*}
\epsilon^B_u \odot_0 \epsilon_v^B &= 0,\,\text{if \,}\,\Phi_u\cap\Phi_v \neq \emptyset\\
&=0, \text{if\,}\,\Phi_u\cap\Phi_v =
\emptyset \,\text{and} \, \not\exists \,w\in W \,\text{with}\,\, \Phi_w = \Phi_u\sqcup\Phi_v \\
&=\frac {\ip<\rho, \Phi_{u^{-1}}> \ip<\rho, \Phi_{v^{-1}}>}{\ip<\rho, \Phi_{w^{-1}}>}\,
\epsilon_w^B,  \text{if \,$\Phi_u\cap\Phi_v =
\emptyset$ and  $ \exists \,w\in W$ with  $\Phi_w = \Phi_u\sqcup\Phi_v$}.
  \end{align*}
  \end{corollary}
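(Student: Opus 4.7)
The plan is to specialize Theorem \ref{liecohomology} to $P=B$, where $\fl_B=\fh$ is abelian, so each nonzero $\fh$-weight space in $\wedge^\bullet(\fu_B)$ is one-dimensional (distinct subsets $S\subset R^+$ give distinct weights $-\sum_{\beta\in S}\beta$). Hence in Kostant's decomposition (Theorem \ref{Th9.A}) each $M_w$ reduces to $\bc\phi_w$ and $N_w$ to $\bc\phi_w^-$, so $\xi^w=\phi_w\otimes\phi_w^-$, and the algebra isomorphism $\phi$ of Theorem \ref{liecohomology} reads
\[\phi(\epsilon_w^B)=(-1)^{\ell(w)(\ell(w)-1)/2}\left(\tfrac{i}{2\pi}\right)^{\ell(w)}\langle\rho,\Phi_{w^{-1}}\rangle\,\phi_w\otimes\phi_w^-.\]
The entire problem thus reduces to computing $\phi_u\wedge\phi_v$ in $H^*(\fu_B)$ and $\phi_u^-\wedge\phi_v^-$ in $H^*(\fu_B^-)$, after which the multiplicative structure of the tensor product algebra produces the constant in front of $\epsilon_w^B$.

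To carry out these wedge computations, I would fix a total order on $R^+$, set $y_w:=\wedge_{\beta\in\Phi_w}y_\beta$ in this order, and identify $\phi_w$ with the dual PBW basis element $y_{\Phi_w}^*\in\wedge^{\ell(w)}(\fu_B)^*$; this identification is forced because $\phi_w$ is characterized (up to scalar) as the unique weight cochain of weight $\rho-w^{-1}\rho$ and weights among PBW basis monomials have multiplicity one. Three cases then arise. If $\Phi_u\cap\Phi_v\neq\emptyset$, the cochain wedge $y_{\Phi_u}^*\wedge y_{\Phi_v}^*$ vanishes identically, so $\phi_u\cdot\phi_v=0$ already at cochain level, and analogously on $\fu_B^-$. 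If $\Phi_u\cap\Phi_v=\emptyset$ but $\Phi_u\sqcup\Phi_v\neq\Phi_w$ for every $w\in W$, the cochain $\phi_u\wedge\phi_v$ has weight $2\rho-u^{-1}\rho-v^{-1}\rho$, which does not appear in $H^*(\fu_B)$ by Kostant's theorem, forcing its cohomology class to vanish. If $\Phi_w=\Phi_u\sqcup\Phi_v$ for a necessarily unique $w\in W$, a direct shuffle calculation yields $\phi_u\wedge\phi_v=\varepsilon(u,v)\phi_w$, where $\varepsilon(u,v)=\pm 1$ is the shuffle sign reordering the concatenation of the ordered sets $\Phi_u,\Phi_v$ into the ordered $\Phi_w$, and identically $\phi_u^-\wedge\phi_v^-=\varepsilon(u,v)\phi_w^-$.

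In the nontrivial third case, graded commutativity of the tensor product algebra gives $\xi^u\cdot\xi^v=(-1)^{\ell(u)\ell(v)}\varepsilon(u,v)^2\,\xi^w=(-1)^{\ell(u)\ell(v)}\xi^w$. Combining with the scalar prefactors of $\phi(\epsilon_u^B)\cdot\phi(\epsilon_v^B)$, the signs telescope via
\[\tfrac{a(a-1)}{2}+\tfrac{b(b-1)}{2}+ab=\tfrac{(a+b)(a+b-1)}{2}\]
with $a=\ell(u)$, $b=\ell(v)$, $a+b=\ell(w)$, while the $(i/2\pi)$ powers add to $(i/2\pi)^{\ell(w)}$; matching the coefficient of $\xi^w$ on both sides and inverting the $\langle\rho,\Phi_{w^{-1}}\rangle$ factor coming from $\phi(\epsilon_w^B)$ yields exactly the stated formula. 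The principal obstacle will be the uniform sign bookkeeping: verifying that the shuffle sign $\varepsilon(u,v)$ on $\fu_B$ agrees with the one on $\fu_B^-$ under compatible orderings (so their product is $+1$ and not $-1$), and that the $(-1)^{p(p-1)/2}$ exponents telescope as claimed. Once a consistent ordering convention is fixed, this bookkeeping is mechanical, and the corollary follows immediately from Theorem \ref{liecohomology}.
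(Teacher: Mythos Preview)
Your approach is the intended one—specialize Theorem \ref{liecohomology} to $P=B$ and compute the product of the classes $\xi^w=\phi_w\otimes\phi_w^-$ directly—and the sign and scalar bookkeeping in your third case is exactly right. The paper does not spell out a proof but refers to [BK$_1$, Corollary 44], where the argument is precisely this specialization of [BK$_1$, Theorem 43].

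There is, however, one incorrect justification you should repair. The parenthetical claim that ``distinct subsets $S\subset R^+$ give distinct weights $-\sum_{\beta\in S}\beta$'' (equivalently, that all weight spaces in $\wedge^\bullet(\fu_B)$ are one-dimensional) is false: already in type $A_3$ the two-element subsets $\{\alpha_1,\alpha_2+\alpha_3\}$ and $\{\alpha_3,\alpha_1+\alpha_2\}$ have the same sum. What you actually need, and what suffices, is Kostant's multiplicity-one lemma for the \emph{specific} weights $\rho-w^{-1}\rho$: up to scalar, $y_w$ is the unique weight vector in all of $\wedge(\fu_B)$ of that weight. This is exactly [Ko, Lemma 5.12], quoted in the paper just before Theorem \ref{Th9.A}. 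From it you get that $\phi_w$ coincides with the dual basis element $y_{\Phi_w}^*$ as a cochain (and is automatically a cocycle, since this weight occurs in no other degree), and that if $\Phi_u\cap\Phi_v=\emptyset$ then $\Phi_u\sqcup\Phi_v$ can have weight $\rho-w^{-1}\rho$ only when it equals $\Phi_w$. With this one substitution, all three cases of your wedge computation go through unchanged.
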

  As shown by Dimitrov-Roth [DR$_1$, Theorem 9.1.2], for any classical $G$ or
  $G=G_2$, and any $u,v,w\in W$ such that $\Phi_w = \Phi_u\sqcup\Phi_v$, the
  structure constant
  $$\frac {\ip<\rho, \Phi_{u^{-1}}> \ip<\rho, \Phi_{v^{-1}}>}
  {\ip<\rho, \Phi_{w^{-1}}>}=1.$$

  \begin{remark} (a) The above result Theorem \ref{liecohomology} identifying
$H^*(G/P)$ under the deformed product with the Lie algebra cohomology
has crucially been used  (though for affine $G$) by Kumar in the solution of the Cachazo-Douglas-Seiberg-Witten conjecture
(cf. [K$_2$]).

(b)
   Evens-Graham
have realized the algebra   $\bigl(H^*(G/P), \odot_t\bigr) $ (for any value of
$t=(t_1, \dots, t_m) \in \Bbb C^m$, where $m:=|\Delta|-|\Delta(P)|$) as
the relative cohomology algebra $H^*(\mathfrak g_t, \mathfrak l_\Delta)$ for certain
Lie subalgbebras $\mathfrak g_t \supset \mathfrak l_\Delta$ of $\mathfrak g \times
\mathfrak g$ (cf. [EG$_1$]).

Let $J_t:=\{\alpha_q,
1\leq q\leq \ell: \alpha_q\in \Delta\setminus \Delta (P) \,\,\text{and}\,
t_q\neq 0\}$, $D_t:=\Delta (P) \cup J_t$ and
let $P_t\supset P$ be the parabolic subgroup of $G$ such that its Levi subgroup has
$D_t$ for its set of simple roots.

Now, Evens-Graham prove that the standard singular
 cohomology algebra $H^*(P_t/P)$ is isomorphic, as an algebra, to a certain
 graded subalgebra $A_t$ of $\bigl(H^*(G/P), \odot_t\bigr). $ Moreover, the algebra
 $\bigl(H^*(G/P_t), \odot_0\bigr) $ is isomorphic, as an algbera, with
 $\bigl(H^*(G/P), \odot_t\bigr)/I_+ $, where $I_+$ is the graded ideal of
$\bigl(H^*(G/P), \odot_t\bigr)$ generated by the positive degree elements in $A_t$
(cf. [EG$_2$]).

\end{remark}

\section{A restricted functoriality of the deformed product and a product formula}
Let the notation and assumptions be as in the beginning of Section \ref{section8}.
Take a $G$-dominant OPS
$\delta \in O(H)$. Thus, $P(\delta)$ is a standard parabolic since $\delta$ is
dominant for $G$. Moreover, the choice of the Borel subgroup $\hat{B}$ is made
so that $B \subset \hat{B}\supset \hat{H}$ and $\hat{B} \subset \hat{P}(\delta)$.
(Such a $\hat{B}$ depends upon the choice of $\delta$.) We have the embedding
$\iota: G/P(\delta) \to \G/\hat{P}(\delta)$. Define a $\bz[\tau]$-linear
product  $\odot^\delta$  (with single indeterminate $\tau$) in $H^*( G/P, \bz)\otimes_\bz\,\bz[\tau]$ by
$$\label{newEE'}
[X^P_u] \odot^\delta [X^P_v]=
\sum_{w\in W^{P}}
\tau^{(w^{-1}\rho-u^{-1}\rho -v^{-1}\rho -\rho)(\dot{\delta})}
\bigr)
c^w_{u,v} [X^{P}_w],
$$
where $P:=P(\delta)$ and $ c^w_{u,v}$ is given by
$$
[X^P_u]\cdot[X^P_v]=\sum_{w\in W^P} c^w_{u,v}[X^P_w].
$$
By  Corollary \ref{product}, the exponent $(w^{-1}\rho-u^{-1}\rho -v^{-1}\rho -\rho)(\dot{\delta})\geq 0$, whenever
$c^w_{u,v}\neq 0$,  since
$\dot{\delta}\in \sum_{\alpha_i\in \Delta \setminus \Delta(P)}\, \bz_+x_i.$

 Define a similar product, again denoted by  $\odot^\delta$, in
$H^*(\G/\hat{P}, \bz)\otimes_\bz\,\bz[\tau]$, where $\hat{P}:=\hat{P}(\delta)$.

In particular, we can specialize $\tau=0$ in the above product $ \odot^\delta$. Since
$\alpha_i(\dot{\delta}) >0$ for any $\alpha_i\in \Delta \setminus \Delta(P)$, it is
easy to see from Corollary \ref{product} that
\begin{equation}\label{eqn105} \bigl([X^P_u] \odot^\delta [X^P_v]\bigr)_{\tau=0}=
[X^P_u] \odot_0 [X^P_v].
\end{equation}
A similar result is true for the product $\odot^\delta$ in $H^*(\G/\hat{P}, \bz)$.
Let $\iota^*: H^*(\G/\hat{P}, \bz)\to  H^*( G/P, \bz)$ be the standard pull-back
map in cohomology.  Write
$$\iota^*([\hat{X}^{\hat{P}}_{\hat{w}}])= \sum_{w\in W^P}\,d_w^{\hat{w}}[X^P_w] .$$
Now, define a  $\bz[\tau]$-linear map
$$\theta^\delta : H^*(\G/\hat{P}, \bz)\otimes_\bz\,\bz[\tau]\to
H^*( G/P, \bz)\otimes_\bz\,\bz[\tau]$$
by $$\theta^\delta ([\hat{X}^{\hat{P}}_{\hat{w}}])=
\sum_{w\in W^P}\,\tau^{\chi_{w}(\dot{\delta})-\hat{\chi}_{\hat{w}}(\dot{\delta})}
\,d_w^{\hat{w}}[X^P_w] ,$$
where $\chi_w$ is given by the identity \eqref{eqn5}. By an argument similar to the
proof of Corollary \ref{product}, we can see that  if $d_w^{\hat{w}}\neq 0$, then
$\chi_{w}(\dot{\delta})-\hat{\chi}_{\hat{w}}(\dot{\delta}) \geq 0$. Thus, the map
$\theta^\delta$ is well defined.

Let $\theta^\delta_0:  H^*(\G/\hat{P}, \bz)\to  H^*( G/P, \bz)$ be the map obtained
by setting $\tau=0$ in the definition of  $\theta^\delta$. Let us express
$$\theta^\delta_0([\hat{X}^{\hat{P}}_{\hat{w}}])= \sum_{w\in W^P}\,c_w^{\hat{w}}[X^P_w] .$$
We have the following result due to Ressayre-Richmond [ReR, Theorem 1.1].
\begin{theorem} \label{rr}The map   $\theta^\delta_0: H^*(\G/\hat{P}, \bz)
\to H^*( G/P, \bz)$ is a graded algebra homomorphism with respect to the deformed
products on both the domain and the range. Moreover, it satisfies
$$c_w^{\hat{w}}=d_w^{\hat{w}}, \,\,\,\text{if}\,\,c_w^{\hat{w}}\neq 0.$$
\end{theorem}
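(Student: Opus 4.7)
The plan is to lift everything to the $\tau$-polynomial level, prove the algebra-homomorphism identity there (where it reduces to the ordinary ring homomorphism property of the pullback $\iota^*$ for the cup product), and then specialize $\tau=0$. The first step is to verify that $\theta^\delta$ itself is well-defined, that is, that $(\chi_w-\hat\chi_{\hat w})(\dot\delta)$ is a nonnegative integer whenever $d^{\hat w}_w\neq 0$. This will be parallel to the nonnegativity in Corollary~\ref{product}: express $d^{\hat w}_w$ geometrically via a transverse intersection of $X^P_w$ with a translate of $\iota^{-1}(\hat X^{\hat P}_{\hat w})$ and compute the Mumford index of $\delta$ on a suitable determinant line bundle using Proposition~\ref{propn14}(b) and Lemma~\ref{l1}.

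Next I will prove that the $\bz[\tau]$-linear map
\[\theta^\delta:\bigl(H^*(\hat G/\hat P,\bz)\otimes \bz[\tau],\,\odot^\delta\bigr)
\to \bigl(H^*(G/P,\bz)\otimes \bz[\tau],\,\odot^\delta\bigr)\]
is an algebra homomorphism. Expand both $\theta^\delta([\hat X^{\hat P}_{\hat u}]\odot^\delta[\hat X^{\hat P}_{\hat v}])$ and $\theta^\delta([\hat X^{\hat P}_{\hat u}])\odot^\delta\theta^\delta([\hat X^{\hat P}_{\hat v}])$ in the Schubert basis $\{[X^P_w]\}$. Because the root systems of the Levis $L$ and $\hat L$ vanish on $\dot\delta$, one has $\chi_u(\dot\delta)=(\rho+u^{-1}\rho)(\dot\delta)$ and $\hat\chi_{\hat u}(\dot\delta)=(\hat\rho+\hat u^{-1}\hat\rho)(\dot\delta)$, and a direct calculation then shows that on each side the exponent of $\tau$ in the coefficient of $[X^P_w]$ is independent of the intermediate summation variable and equals the common value
\[E_w \;=\; (\rho+w^{-1}\rho-2\hat\rho-\hat u^{-1}\hat\rho-\hat v^{-1}\hat\rho)(\dot\delta).\]
After factoring out $\tau^{E_w}$, the desired identity reduces to
\[\sum_{\hat w\in \hat W^{\hat P}}\hat c^{\hat w}_{\hat u,\hat v}\,d^{\hat w}_w
 \;=\; \sum_{u,v\in W^P} d^{\hat u}_u\,d^{\hat v}_v\,c^w_{u,v},\]
which is exactly the coefficient of $[X^P_w]$ in the standard identity $\iota^*([\hat X^{\hat P}_{\hat u}]\cdot [\hat X^{\hat P}_{\hat v}])=\iota^*([\hat X^{\hat P}_{\hat u}])\cdot \iota^*([\hat X^{\hat P}_{\hat v}])$, i.e.\ nothing but functoriality of cup product.

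Finally I will specialize $\tau=0$. By \eqref{eqn105} and its analogue on $\hat G/\hat P$ (which uses that $\dot\delta$ is $\hat G$-dominant, guaranteed by $\hat B\subset \hat P(\delta)$, so that $\dot\delta$ is a positive rational combination of the $\hat x_j$ for $\alpha_j\in \hat\Delta\setminus\hat\Delta(\hat P)$), the product $\odot^\delta$ specializes to $\odot_0$ on each side; and $\theta^\delta$ specializes to $\theta^\delta_0$ by construction. This yields the claimed graded algebra homomorphism $\theta^\delta_0:(H^*(\hat G/\hat P),\odot_0)\to(H^*(G/P),\odot_0)$. The second assertion is then immediate: the coefficient $c^{\hat w}_w$ of $[X^P_w]$ in $\theta^\delta_0([\hat X^{\hat P}_{\hat w}])$ equals $d^{\hat w}_w$ exactly when $(\chi_w-\hat\chi_{\hat w})(\dot\delta)=0$ and vanishes otherwise, so $c^{\hat w}_w=d^{\hat w}_w$ whenever $c^{\hat w}_w\neq 0$. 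The main obstacle will be Step~1, the well-definedness of $\theta^\delta$: once this nonnegativity is established, everything else is a matter of carefully matching exponents of $\tau$ and invoking the standard ring-homomorphism property of $\iota^*$.
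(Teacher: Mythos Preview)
Your proposal is correct and follows essentially the same approach as the paper: lift to the $\tau$-polynomial level, verify by an explicit exponent-matching calculation that $\theta^\delta$ is a $\bz[\tau]$-algebra homomorphism for $\odot^\delta$ (reducing to the ordinary ring-homomorphism property of $\iota^*$), and then specialize $\tau=0$ using \eqref{eqn105} and its analogue on $\hat G/\hat P$. The paper records precisely this outline, leaving the exponent computation you carry out as ``an explicit calculation''; your treatment of the well-definedness of $\theta^\delta$ via the argument of Corollary~\ref{product} is also exactly what the paper indicates.
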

\begin{proof} It is easy to see, by an explicit calculation, that $\theta^\delta$ is a graded
 $\bz[\tau]$-algebra homomorphism  with respect to the
products $\odot^\delta$ on both the domain and the range. From this and the
identity \eqref{eqn105}, the theorem follows immediately.
\end{proof}
\begin{remark}
 (1) As observed by Ressayre-Richmond [ReR, Lemma 3.3], it is easy to see that if
$G/P$ is cominuscule, then  $\theta^\delta_0 = \iota^*$. (Use the identity
\eqref{eqn5.2},
the definition of $\hat{\chi}_{\hat{w}}$ as in the identity \eqref{eqn5new} and
the nonnegativity of
$\chi_{w}(\dot{\delta})-\hat{\chi}_{\hat{w}}(\dot{\delta})$ if
$d_w^{\hat{w}}\neq 0$.)

(2) The map $\theta_0^\delta$  is partially computed for the pairs $(\SL(2), \SL(n)),
(\SL(n)\times \SL(n), \SL(n^2))$ and $(\SO(2n+1), \SL(2n+1))$ by Ressayre-Richmond
[ReR, $\S$4].

(3) Clearly, the conditions $(c_1)$ and $(c_2)$ in Theorem \ref{ress} can be replaced
by the condition
$$[X_w^{P(\delta_i)}]\cdot
\theta^{\delta_i}_0([\hat{X}^{\hat{P}(\delta_i)}_{\hat{w}}])=[X_e^{P(\delta_i)}]
\in H^*(G/P(\delta_i), \mathbb{Z}),$$
cf. [ReR, Theorem 5.1].
\end{remark}

{\it We follow the following notation and assumptions till the end of this section.}

Let $G\subset \G$ be connected reductive groups. Let ${B}\subset
G$ and $\hat{B}\subset \G$ be Borel subgroups, and
${H}\subset {B}$ and $\hat{H}\subset \hat{B}$ be maximal
tori. We assume that
$
{H}\subset \hat{H}
$
and there exists $x\in N(\hat{H})$  such that
$
{B}=x \hat{B} {x}^{-1}\cap G$, where $N(\hat{H})$ is the normalizer of $\hat{H}$ in
$\G$.

Let $\hat{B}\subset \hat{P}\subset \hat{Q}$ be (standard) parabolic subgroups in
$\G$. Define the standard parabolic subgroups in $G$:
$
{P} = x \hat{P} x^{-1}\cap G,\,\,
{Q} = x \hat{Q} x^{-1}\cap G.$

Define an embedding of the flag verieties
$$
f_2:G/{P}\hookrightarrow \G/\hat{P},\,\,
{g}{P}\mapsto x^{-1}{g}x\hat{P}
$$
and similarly
$f: G/{Q}\hookrightarrow
\G/\hat{Q}$. Then, we have a commutative diagram
\[
\xymatrix{
{Q}/{P}\ar@{^{(}->}[r]^{f_1}\ar[d] & \hat{Q}/\hat{P}\ar[d]\\
G/{P}\ar@{^{(}->}[r]^{f}\ar[d] & \G/\hat{P}\ar[d]\\
G/{Q}\ar@{^{(}->}[r]^{f_2} & \G/\hat{Q}
}
\]
where the vertical maps are the standard maps.
The Weyl group for $\G$ is denoted by $\hat{W}$ and similarly ${W}$ for
$G$.
Let ${\hat{w}}\in \hat{W}^{\hat{P}}$ be such that
\begin{equation}\label{rr1}
 \dim \G/\hat{P} - \ell({\hat{w}}) =\dim G/{P},\quad\text{and}
\,\,\dim \G/\hat{Q} - \ell({\hat{u}})=\dim G/{Q},
\end{equation}
where ${\hat{w}}={\hat{u}}{\hat{v}}$ is the unique decomposition with ${\hat{u}}\in
\hat{W}^{\hat{Q}}$ and ${\hat{v}}\in
\hat{W}^{\hat{P}}\cap \hat{W}_{\hat{Q}}$. Thus, we automatically get
\begin{equation}\label{rr1'}
\dim \hat{Q}/\hat{P}-\ell({\hat{v}})=\dim {Q}/{P}.
\end{equation}
Recall from Section \ref{section6} that ${\hat{\Phi}}^{\hat{P}}_{{\hat{w}}}$ is the shifted cell
${\hat{w}}^{-1}\hat{B}{\hat{w}}\hat{P}/\hat{P}\subset \G/\hat{P}$.
\begin{lemma} \label{rrlem1} For any $g=q\dot{{\hat{u}}}^{-1}$, with $q\in \hat{Q}$
and a representative
$\dot{{\hat{u}}}$ of ${\hat{u}}$,
$$g{\hat{C}}_{\hat{w}}^{\hat{P}}\cap \hat{Q}/\hat{P}= q{\hat{C}}_{\hat{v}}^{\hat{P}}.$$
\end{lemma}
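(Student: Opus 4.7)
The plan is to reduce the asserted equality to a normalized form by left-translation, then establish it via two Bruhat-theoretic inclusions. Since $q\in \hat{Q}$ acts on $\G/\hat{P}$ preserving $\hat{Q}/\hat{P}$, multiplying both sides on the left by $q^{-1}$ reduces the lemma to
$$\dot{\hat{u}}^{-1}\hat{C}_{\hat{w}}^{\hat{P}}\cap \hat{Q}/\hat{P}=\hat{C}_{\hat{v}}^{\hat{P}}.$$
First I would note that $\hat{C}_{\hat{v}}^{\hat{P}}=\hat{B}\hat{v}\hat{P}/\hat{P}\subset \hat{Q}/\hat{P}$, using $\hat{B}\subset \hat{Q}$, $\dot{\hat{v}}\in \hat{Q}$ (since $\hat{v}\in \hat{W}_{\hat{Q}}$), and $\hat{P}\subset \hat{Q}$.

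For the inclusion $\hat{C}_{\hat{v}}^{\hat{P}}\subset \dot{\hat{u}}^{-1}\hat{C}_{\hat{w}}^{\hat{P}}$, the plan is to invoke the Bruhat product formula: because $\hat{u}\in \hat{W}^{\hat{Q}}$ and $\hat{v}\in \hat{W}_{\hat{Q}}$ with $\hat{w}=\hat{u}\hat{v}$ and $\ell(\hat{w})=\ell(\hat{u})+\ell(\hat{v})$, one has $\hat{B}\hat{u}\hat{B}\hat{v}\hat{B}=\hat{B}\hat{w}\hat{B}$ and therefore $\hat{B}\hat{u}\hat{B}\hat{v}\hat{P}=\hat{B}\hat{w}\hat{P}$. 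It follows that $\dot{\hat{u}}\cdot \hat{B}\hat{v}\hat{P}\subset \hat{B}\hat{u}\hat{B}\hat{v}\hat{P}=\hat{B}\hat{w}\hat{P}$, i.e.\ $\dot{\hat{u}}\,\hat{C}_{\hat{v}}^{\hat{P}}\subset \hat{C}_{\hat{w}}^{\hat{P}}$, which is the desired inclusion.

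For the reverse inclusion, I would take $x\in \dot{\hat{u}}^{-1}\hat{C}_{\hat{w}}^{\hat{P}}\cap \hat{Q}/\hat{P}$. Since $\hat{Q}/\hat{P}$ is a partial flag variety of $\hat{L}(\hat{Q})$, the Bruhat decomposition supplies a unique $\hat{v}'\in \hat{W}_{\hat{Q}}\cap \hat{W}^{\hat{P}}$ with $x\in \hat{B}\hat{v}'\hat{P}/\hat{P}$. Then $\dot{\hat{u}}x\in \dot{\hat{u}}\hat{B}\hat{v}'\hat{P}/\hat{P}\cap \hat{B}\hat{w}\hat{P}/\hat{P}$, and the additivity of lengths $\ell(\hat{u}\hat{v}')=\ell(\hat{u})+\ell(\hat{v}')$ (which holds for any $\hat{u}\in \hat{W}^{\hat{Q}}$ and $\hat{v}'\in \hat{W}_{\hat{Q}}$) gives $\dot{\hat{u}}\hat{B}\hat{v}'\hat{P}\subset \hat{B}\hat{u}\hat{v}'\hat{P}$. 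The disjointness of Bruhat cells in $\G/\hat{P}$ then forces $\hat{u}\hat{v}'\hat{W}_{\hat{P}}=\hat{w}\hat{W}_{\hat{P}}=\hat{u}\hat{v}\hat{W}_{\hat{P}}$, so that $\hat{v}'\hat{W}_{\hat{P}}=\hat{v}\hat{W}_{\hat{P}}$, and since both $\hat{v},\hat{v}'\in \hat{W}^{\hat{P}}$ this compels $\hat{v}'=\hat{v}$ and hence $x\in \hat{C}_{\hat{v}}^{\hat{P}}$.

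The only substantive ingredient beyond bookkeeping is the length-additivity $\ell(\hat{u}\hat{v}')=\ell(\hat{u})+\ell(\hat{v}')$ for $\hat{u}\in \hat{W}^{\hat{Q}}$ and $\hat{v}'\in \hat{W}_{\hat{Q}}$, which in turn follows from the standard inversion-set decomposition $\hat{\Phi}_{\hat{u}\hat{v}'}=\hat{\Phi}_{\hat{v}'}\sqcup \hat{v}'^{-1}\hat{\Phi}_{\hat{u}}$. Everything else is a clean application of the product rule $\hat{B}\hat{x}\hat{B}\cdot \hat{B}\hat{y}\hat{B}=\hat{B}\hat{x}\hat{y}\hat{B}$ under length-additivity, together with the fact that $\hat{P}\subset \hat{Q}$ makes $\hat{B}\hat{v}\hat{P}\subset \hat{Q}$ for $\hat{v}\in \hat{W}_{\hat{Q}}$.
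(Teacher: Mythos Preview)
Your proof is correct but proceeds differently from the paper. The paper first rewrites the shifted cell explicitly as
\[
\hat{\Phi}^{\hat{P}}_{\hat{w}} = \hat{v}^{-1}\,\hat{U}^-_{\hat{R}_{\hat{u}}}\,\hat{v}\,\hat{U}^-_{\hat{R}_{\hat{v}}}\,\hat{P}/\hat{P}
\]
(where $\hat{R}_{\hat{w}}=\hat{R}^+\cap \hat{w}^{-1}\hat{R}^-$), so that $g\hat{C}_{\hat{w}}^{\hat{P}} = q\,\hat{U}^-_{\hat{R}_{\hat{u}}}\,\hat{v}\,\hat{U}^-_{\hat{R}_{\hat{v}}}\,\hat{P}/\hat{P}$; it then intersects with $\hat{Q}/\hat{P}$ by invoking the single structural fact $\hat{U}^-_{\hat{R}_{\hat{u}}}\cap \hat{Q}=(1)$, which kills the outer unipotent factor and leaves $q\hat{v}\hat{U}^-_{\hat{R}_{\hat{v}}}\hat{P}/\hat{P}=q\hat{C}_{\hat{v}}^{\hat{P}}$. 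Your argument avoids this explicit unipotent parametrization entirely, relying instead on the Bruhat product rule $\hat{B}\hat{u}\hat{B}\hat{v}'\hat{B}=\hat{B}\hat{u}\hat{v}'\hat{B}$ under length additivity together with disjointness of Bruhat cells. Your route is arguably more elementary and self-contained; the paper's route, on the other hand, yields the concrete cell factorization $\hat{\Phi}^{\hat{P}}_{\hat{w}} = \hat{v}^{-1}\hat{U}^-_{\hat{R}_{\hat{u}}}\hat{v}\hat{U}^-_{\hat{R}_{\hat{v}}}\hat{P}/\hat{P}$ as a byproduct, which is reused later (e.g.\ in the proof of Theorem~\ref{rrthm2}).
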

\begin{proof} Let $\hat{R}_{\hat{w}}:={\hat{R}}^+\cap {\hat{w}}^{-1}{\hat{R}}^-$, where ${\hat{R}}^+$ (resp. ${\hat{R}}^-$) is the set
of positive (resp. negative) roots of $\G$. Let ${\hat{U}}_{{\hat{R}}_{\hat{w}}}$
(resp.  ${\hat{U}}^-_{{\hat{R}}_{\hat{w}}}$)
be the unipotent subgroup
of the unipotent radical of $\hat{B}$ (resp. ${\hat{B}}^-$) such that its Lie
algebra has roots
 ${\hat{R}}_{\hat{w}}$ (resp. $-{\hat{R}}_{\hat{w}}$). Then, it is easy to see from [K$_1$, Lemma 1.3.14]
 that
 \begin{equation}\label{rr0}{\hat{\Phi}}^{\hat{P}}_{{\hat{w}}}={\hat{v}}^{-1}{\hat{U}}^-_{{\hat{R}}_{\hat{u}}}
 {\hat{v}}{\hat{U}}^-_{{\hat{R}}_{\hat{v}}}\hat{P}/\hat{P}.
 \end{equation}
 Also, it is easy to see that
 \begin{equation} \label{rreq0}
 {\hat{U}}^-_{{\hat{R}}_{\hat{u}}}\cap \hat{Q}=(1).
\end{equation}
 Thus, by the identities \eqref{rr0} and \eqref{rreq0},
 \begin{equation}\label{req1}
 g{\hat{C}}_{\hat{w}}^{\hat{P}}\cap \hat{Q}/\hat{P}=q{\hat{v}}{\hat{U}}^-_{{\hat{R}}_{\hat{v}}}
 \hat{P}/\hat{P}= q{\hat{C}}_{\hat{v}}^{\hat{P}}.
 \end{equation}
 This proves the lemma.
 \end{proof}

\begin{definition}\label{rrdefi1}
Define a subset $\mathscr{X}=\mathscr{X}_{{\hat{u}}}$ by
$$
\mathscr{X}=\{(\overline{{g}}, \overline{h})\in
G/{Q}\times \G/\hat{B}: f_2(\overline{{g}})\in h{\hat{C}}_{{\hat{u}}}^{\hat{Q}}\},
$$
where ${\hat{C}}_{{\hat{u}}}^{\hat{Q}}$ is the Schubert cell $\hat{B}{\hat{u}}\hat{Q}/\hat{Q}$ in
$\G/\hat{Q}$, and $\overline{{g}}$ denotes $gQ$ etc.
\end{definition}

Let ${Q}$ act on $\hat{Q}{\hat{u}}^{-1}\hat{B}/\hat{B}$ via
$
{q}\odot z = (x^{-1}{q}x)\cdot z.
$
\begin{lemma}\label{rrlem2}
There is an isomorphism
$$
  \mu:G\displaystyle{\times^{{Q}}}(\hat{Q}{\hat{u}}^{-1}\hat{B}/\hat{B})
  \xrightarrow{\sim}\mathscr{X},\,\,\,
 \mu
     [{g},z]=(\overline{{g}},(x^{-1}{g}x)\cdot z),\,\,\, \text{for}\,
     {g}\in G,\,\, z\in \hat{Q}{\hat{u}}^{-1}\hat{B}/\hat{B}.
$$
Thus, $\mathscr{X}$ is an irreducible smooth variety.
\end{lemma}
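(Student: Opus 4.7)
The plan is first to translate the incidence condition defining $\mathscr{X}$ into membership in a single $\hat{Q}$-orbit on $\hat{G}/\hat{B}$. By definition, $(\overline{g},\overline{h})\in \mathscr{X}$ means $x^{-1}gx\,\hat{Q}\in h\hat{B}\hat{u}\hat{Q}/\hat{Q}$, which is equivalent to $h^{-1}(x^{-1}gx)\in \hat{B}\hat{u}\hat{Q}$, i.e.
\[
(x^{-1}gx)^{-1}h\;\in\;\hat{Q}\hat{u}^{-1}\hat{B}.
\]
Since the right-hand side is a union of $\hat{B}$-cosets, the condition depends only on $\overline{h}=h\hat{B}$, and $\hat{Q}\hat{u}^{-1}\hat{B}/\hat{B}$ is the single $\hat{Q}$-orbit of $\hat{u}^{-1}\hat{B}$ in $\hat{G}/\hat{B}$ (with isotropy $\hat{Q}\cap \hat{u}^{-1}\hat{B}\hat{u}$); in particular it is a smooth, irreducible, locally closed subvariety of $\hat{G}/\hat{B}$.

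Next I would verify that $\mu$ descends to the twisted product. The $Q$-action $q\odot z=(x^{-1}qx)\cdot z$ on $\hat{Q}\hat{u}^{-1}\hat{B}/\hat{B}$ is well-defined precisely because the hypothesis $Q=x\hat{Q}x^{-1}\cap G$ forces $x^{-1}Qx\subset \hat{Q}$; and for any $q\in Q$,
\[
\mu[gq,\,q^{-1}\odot z]=\bigl(\overline{gq},\,(x^{-1}gqx)(x^{-1}q^{-1}x)z\bigr)=\bigl(\overline{g},\,(x^{-1}gx)z\bigr)=\mu[g,z],
\]
so $\mu$ factors through $G\times^{Q}(\hat{Q}\hat{u}^{-1}\hat{B}/\hat{B})$. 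Its image lies in $\mathscr{X}$: if $h=(x^{-1}gx)z$ then $(x^{-1}gx)^{-1}h=z\in \hat{Q}\hat{u}^{-1}\hat{B}/\hat{B}$, exactly the rewritten incidence condition.

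For surjectivity and an explicit inverse I would set
\[
\nu(\overline{g},\overline{h}):=\bigl[\,g,\;(x^{-1}gx)^{-1}h\hat{B}\,\bigr].
\]
The previous paragraph shows this lands in $G\times^{Q}(\hat{Q}\hat{u}^{-1}\hat{B}/\hat{B})$, and it is independent of the choice of $g$ in its $Q$-coset: replacing $g$ by $gq$ replaces $(x^{-1}gx)^{-1}h$ by $(x^{-1}q^{-1}x)(x^{-1}gx)^{-1}h=q^{-1}\odot \bigl((x^{-1}gx)^{-1}h\bigr)$, and by the defining relation of the twisted product $[gq,\,q^{-1}\odot w]=[g,w]$. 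A direct check then gives $\mu\circ\nu=\operatorname{id}_{\mathscr{X}}$ and $\nu\circ\mu=\operatorname{id}$. Both $\mu$ and $\nu$ are visibly morphisms, being built from multiplication, inversion by $x$, and the canonical quotient maps, so $\mu$ is an isomorphism of varieties.

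Finally, since $G\times^{Q}(\hat{Q}\hat{u}^{-1}\hat{B}/\hat{B})\to G/Q$ is a Zariski-locally trivial fiber bundle with smooth irreducible fiber over the smooth irreducible base $G/Q$, the source is smooth and irreducible, and hence so is $\mathscr{X}$. The only step requiring any care is the bookkeeping of the twisted $Q$-action on $\hat{Q}\hat{u}^{-1}\hat{B}/\hat{B}$; once one notes that $x^{-1}Qx\subset\hat{Q}$ (equivalent to the standing hypothesis $Q=x\hat{Q}x^{-1}\cap G$), everything else is formal manipulation of cosets, and there is no genuine geometric obstacle.
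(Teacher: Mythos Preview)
Your proof is correct and takes essentially the same approach as the paper's. The paper's proof is more terse: it observes that $(\overline{1},\overline{h})\in\mathscr{X}\Leftrightarrow h\in\hat{Q}\hat{u}^{-1}\hat{B}$ and that $(\overline{g},\overline{h})\in\mathscr{X}\Leftrightarrow(\overline{1},x^{-1}g^{-1}x\overline{h})\in\mathscr{X}$, then declares the isomorphism evident; you have simply unpacked these two lines into the explicit inverse $\nu$ and the well-definedness checks, which is exactly what ``from this it is easy to see'' is hiding.
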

\begin{proof}
For $\overline{h} \in \G/\hat{B}$, $(\overline{1},\overline{h})\in
\mathscr{X}\Leftrightarrow 1\in h \  \hat{B}{\hat{u}} \ \hat{Q}/\hat{Q}\Leftrightarrow h\in
\hat{Q}{\hat{u}}^{-1}\hat{B}$.
Moreover, $(\overline{{g}},\overline{h})\in
\mathscr{X}\Leftrightarrow (\bar{1}, x^{-1}{g}^{-1}x\overline{h})\in
\mathscr{X}$.
From this it is easy to see that $\mu$ is an isomorphism.
\end{proof}

\begin{definition}\label{rrdefi3} Let $\xi_{{\hat{u}}}:\hat{Q}{\hat{u}}^{-1}\hat{B}/\hat{B}
\to \hat{Q}/\hat{B}$ be
the map $q{\hat{u}}^{-1}\hat{B}\mapsto q\hat{B}$, for $q\in \hat{Q}$. This is well
defined since $({\hat{u}}^{-1}\hat{B}{\hat{u}})\cap \hat{Q}= ({\hat{u}}^{-1}\hat{B}{\hat{u}})\cap \hat{B}$
(and clearly $\hat{Q}$-equivariant).

Define a subset $\mathscr{X}^o\subset \mathscr{X}$ consisting of
$
(\overline{{g}},\overline{h})\in \mathscr{X}$ satisfying:

(a) $(x^{-1}{g}^{-1}xh{\hat{C}}_{\hat{w}}^{\hat{P}})\cap \hat{Q}/\hat{P}$ intersects
  $f_1({Q}/{P})$ in $\hat{Q}/\hat{P}$ transversally at every point of
  the intersection, and

(b)  $(\xi_{{\hat{u}}}(x^{-1}{g}^{-1}x\bar{h}){\hat{C}}_{\hat{v}}^{\hat{P}})\cap
  f_1({Q}/{P})=(\xi_{{\hat{u}}}(x^{-1}{g}^{-1}
  x\bar{h}){{\hat{X}}}_{{\hat{v}}}^{\hat{P}})\cap f_1({Q}/{P})$.

(Recall that since $(\overline{{g}},\overline{h})\in
\mathscr{X}$, we have
$
x^{-1}{g}^{-1}xh\in \hat{Q}{\hat{u}}^{-1}\hat{B}
$
by the proof of Lemma \ref{rrlem2}.
Moreover, by Lemma \ref{rrlem1},
   $(x^{-1} {g}^{-1} xh{\hat{C}}_{\hat{w}}^{\hat{P}})\cap
  \hat{Q}/\hat{P}=\xi_{{\hat{u}}}(x^{-1} {{g}^{-1}} x \bar{h}){\hat{C}}_{{\hat{v}}}^{\hat{P}}$;
  in particular, it is smooth.
\end{definition}

\begin{definition}\label{rrdefi4}Let ${Q}$ act on $\hat{Q}/\hat{B}$ via
${q}\odot z=(x^{-1}{q}x)\cdot z$.
Define
$
\xi:\mathscr{X}\to \mathcal{Z}:=
G\displaystyle{\times^{{Q}}}\hat{Q}/\hat{B}
$
by
$$
\xi (\mu[{g},z])=[{g},\xi_{{\hat{u}}}(z)],\quad\text{for}\quad
  {g}\in G, z\in \hat{Q}{\hat{u}}^{-1}\hat{B}/\hat{B}.
$$
\end{definition}

\begin{proposition}\label{rrprop5}
The subset $\mathscr{X}^{o}$ contains a dense open subset of
$\mathscr{X}$.
\end{proposition}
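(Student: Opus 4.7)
The plan is to transfer both conditions of Definition \ref{rrdefi3} into a single application of Kleiman's transversality theorem inside the smaller flag variety $\hat{Q}/\hat{P}$. Observe first that, because $\hat{v}\in\hat{W}_{\hat{Q}}$, both the shifted cell $\hat{C}^{\hat{P}}_{\hat{v}}=\hat{B}\hat{v}\hat{P}/\hat{P}$ and its closure $\hat{X}^{\hat{P}}_{\hat{v}}$ sit inside $\hat{Q}/\hat{P}$; the dimension identity \eqref{rr1'} gives
$$\dim \hat{C}^{\hat{P}}_{\hat{v}}+\dim f_1(Q/P)=\ell(\hat{v})+\dim Q/P=\dim \hat{Q}/\hat{P},$$
so any proper intersection of $\hat{C}^{\hat{P}}_{\hat{v}}$ (or $\hat{X}^{\hat{P}}_{\hat{v}}$) with $f_1(Q/P)$ inside $\hat{Q}/\hat{P}$ is $0$-dimensional. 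Note also that, since $\hat{B}\subset\hat{P}$, the shifted cell $\tilde{q}\hat{C}^{\hat{P}}_{\hat{v}}$ depends only on the coset $\tilde{q}\hat{B}\in\hat{Q}/\hat{B}$.

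I would then apply Theorem \ref{kleiman} to the transitive action of the connected group $\hat{Q}$ on the smooth variety $\hat{Q}/\hat{P}$, with the smooth subvarieties $\hat{C}^{\hat{P}}_{\hat{v}}$ and $f_1(Q/P)$; this produces a nonempty open subset $V\subset \hat{Q}$ such that for every $\tilde{q}\in V$ the intersection $\tilde{q}\hat{C}^{\hat{P}}_{\hat{v}}\cap f_1(Q/P)$ is proper (possibly empty), transverse at each of its points, and dense in $\tilde{q}\hat{X}^{\hat{P}}_{\hat{v}}\cap f_1(Q/P)$. Since both intersections are $0$-dimensional, density collapses to equality, so conditions (a) and (b) of Definition \ref{rrdefi3} are both secured by the membership $\tilde{q}\in V$. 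Now for any $(\bar{g},\bar{h})\in\mathscr{X}$, the inclusion $x^{-1}g^{-1}xh\in \hat{Q}\dot{\hat{u}}^{-1}\hat{B}$ (from the proof of Lemma \ref{rrlem2}) yields a factorization $x^{-1}g^{-1}xh=\tilde{q}\dot{\hat{u}}^{-1}b$ with $\tilde{q}\in\hat{Q}$ and $b\in\hat{B}$, whence $\xi_{\hat{u}}(x^{-1}g^{-1}x\bar{h})=\tilde{q}\hat{B}$; absorbing $b$ into $\hat{B}$ and invoking Lemma \ref{rrlem1} gives
$$(x^{-1}g^{-1}xh\,\hat{C}^{\hat{P}}_{\hat{w}})\cap\hat{Q}/\hat{P}=\tilde{q}\hat{C}^{\hat{P}}_{\hat{v}},$$
so that the conditions (a) and (b) of Definition \ref{rrdefi3} hold precisely when $\tilde{q}\in V$.

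The final step is to check that the set $\mathscr{X}':=\{(\bar{g},\bar{h})\in\mathscr{X}:\tilde{q}\in V\}$ is a dense open subset of $\mathscr{X}$. Under the isomorphism $\mu$ of Lemma \ref{rrlem2}, $\mathscr{X}'$ corresponds to $G\times^{Q}\xi_{\hat{u}}^{-1}(\pi(V))$, where $\pi:\hat{Q}\to \hat{Q}/\hat{B}$ is the canonical projection; since $\pi$ is an open map and $\xi_{\hat{u}}$ a morphism, $\mathscr{X}'$ is open in $\mathscr{X}$, and it is nonempty because any $\tilde{q}\in V$ yields the point $\mu[1,\tilde{q}\dot{\hat{u}}^{-1}\hat{B}]\in\mathscr{X}'$. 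Since $\mathscr{X}$ is irreducible by Lemma \ref{rrlem2}, $\mathscr{X}'$ is automatically dense, and by construction $\mathscr{X}'\subset\mathscr{X}^o$. The principal subtlety of the argument is the reduction, via Lemma \ref{rrlem1}, of the ambient intersection in the larger flag variety $\hat{G}/\hat{P}$ to a single Schubert cell sitting inside $\hat{Q}/\hat{P}$; it is this reduction that makes Kleiman's theorem applicable with the transitive action of $\hat{Q}$ and thereby delivers both conditions of Definition \ref{rrdefi3} at once.
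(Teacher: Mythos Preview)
Your argument is correct and follows essentially the same route as the paper: apply Kleiman's theorem to the transitive $\hat{Q}$-action on $\hat{Q}/\hat{P}$ to produce a good open locus, then pull it back through $\xi_{\hat{u}}$ using Lemma \ref{rrlem1}. The only point you glossed over is that the expression $G\times^{Q}\xi_{\hat{u}}^{-1}(\pi(V))$ requires $\xi_{\hat{u}}^{-1}(\pi(V))$ to be $Q$-stable under the $\odot$-action; the paper handles this by noting at the outset that the good locus in $\hat{Q}/\hat{B}$ can be taken stable under left multiplication by $x^{-1}Qx$ (since $f_1(Q/P)$ is), and you can fix your version the same way or simply replace $\pi(V)$ by its $(x^{-1}Qx)$-saturation, which is still open and still satisfies (a) and (b).
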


\begin{proof} By Theorem \ref{kleiman} and the identity \eqref{rr1'},
there exists a dense open subset  $V\subset \hat{Q}/\hat{B}$, which is
stable under the left multiplication by
$x^{-1}{Q}x$,  such that  for any
$\overline{q}\in V$

(a) $q{\hat{C}}_{\hat{v}}^{\hat{P}}\cap f_1(Q/{P})$ is a transverse
  intersection in $\hat{Q}/\hat{P}$ (at any point of the intersection), and

(b) $q{\hat{C}}_{\hat{v}}^{\hat{P}}\cap
  f_1({Q}/{P})=q{\hat{X}}_{{\hat{v}}}^{\hat{P}}\cap
  f_1({Q}/{P})$.

Now, $(\overline{{g}},\overline{h})\in \mathscr{X}$ belongs to
$\mathscr{X}^{o}$ if $\xi_{{\hat{u}}}(x^{-1}{g}^{-1}x\overline{h})\in
V$.
Thus, $\mathscr{X}^{o}$ contains a dense open subset of $\mathscr{X}$.
\end{proof}

Let $\sigma:\mathscr{X}\to \G/\hat{B}$ be the projection on the second factor.

\begin{lemma}\label{rrlem6}
Assume that $f_2^{*}[{{\hat{X}}}_{{\hat{u}}}^{\hat{Q}}]\neq 0 \in H^*(G/Q)$. Then, $\sigma$ is a
dominant morphism. Moreover,
$$
\dim \mathscr{X}=\dim \G/\hat{B}.
$$
\end{lemma}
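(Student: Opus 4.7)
The plan is first to compute $\dim \mathscr{X}$ directly via the identification of Lemma \ref{rrlem2}, and then to deduce dominance of $\sigma$ from Kleiman transversality combined with the cohomological non-vanishing hypothesis. The equality $\dim \mathscr{X}=\dim \G/\hat{B}$ turns out to be a direct root-theoretic calculation using (\ref{rr1}); dominance then reduces, via an expected-dimension count, to showing that a generic fiber of $\sigma$ is nonempty, which is precisely what Proposition \ref{fultonproper} delivers from the hypothesis $f_2^{*}[\hat{X}_{\hat{u}}^{\hat{Q}}]\neq 0$.

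For the dimension, Lemma \ref{rrlem2} gives
\[
\dim \mathscr{X}=\dim G/{Q}+\dim\bigl(\hat{Q}\hat{u}^{-1}\hat{B}/\hat{B}\bigr).
\]
The second summand is the dimension of the $\hat{Q}$-orbit through $\hat{u}^{-1}\hat{B}\in \G/\hat{B}$ under left translation, whose isotropy group is $\hat{Q}\cap \hat{u}^{-1}\hat{B}\hat{u}$; by the identity recorded in Definition \ref{rrdefi3} this equals $\hat{B}\cap \hat{u}^{-1}\hat{B}\hat{u}$, and a standard inversion-count yields $\dim(\hat{B}\cap \hat{u}^{-1}\hat{B}\hat{u})=\dim \hat{B}-\ell(\hat{u})$. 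Therefore $\dim\bigl(\hat{Q}\hat{u}^{-1}\hat{B}/\hat{B}\bigr)=\dim \hat{Q}/\hat{B}+\ell(\hat{u})$, and combining with the identity $\dim G/{Q}+\ell(\hat{u})=\dim \G/\hat{Q}$ from (\ref{rr1}) gives $\dim \mathscr{X}=\dim \G/\hat{Q}+\dim \hat{Q}/\hat{B}=\dim \G/\hat{B}$.

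For dominance, the shifted cell $h\hat{C}_{\hat{u}}^{\hat{Q}}$ depends only on the coset $\bar{h}\in \G/\hat{B}$, and the fiber $\sigma^{-1}(\bar{h})$ is naturally identified with $f_2^{-1}(h\hat{C}_{\hat{u}}^{\hat{Q}})\subset G/{Q}$. Since $\dim\hat{X}_{\hat{u}}^{\hat{Q}}=\ell(\hat{u})$, the identity (\ref{rr1}) gives expected dimension $\dim G/{Q}+\ell(\hat{u})-\dim \G/\hat{Q}=0$ for the intersection $f_2(G/{Q})\cap h\hat{X}_{\hat{u}}^{\hat{Q}}$ in $\G/\hat{Q}$. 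By Theorem \ref{kleiman}, for $h$ in a dense open subset of $\G$ this intersection is proper (hence finite) and $f_2^{-1}(h\hat{C}_{\hat{u}}^{\hat{Q}})$ is dense in, and thus equal to, $f_2^{-1}(h\hat{X}_{\hat{u}}^{\hat{Q}})$. The hypothesis $f_{2}^{*}[\hat{X}_{\hat{u}}^{\hat{Q}}]\neq 0$ together with Proposition \ref{fultonproper} forces this finite intersection to be nonempty for general $h$. Hence $\sigma$ has nonempty finite fibers over a dense open subset of $\G/\hat{B}$, so it is dominant.

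The only non-routine point is the passage from cohomological non-vanishing to geometric non-emptiness of the generic intersection; but this is exactly the content of Proposition \ref{fultonproper}, since the cycle class of $f_2^{-1}(h\hat{X}_{\hat{u}}^{\hat{Q}})$ represents $f_2^{*}[\hat{X}_{\hat{u}}^{\hat{Q}}]$ for generic $h$, and an empty cycle represents the zero class.
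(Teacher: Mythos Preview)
Your proof is correct. The dominance argument is essentially the same as the paper's: both identify the fiber $\sigma^{-1}(\bar{h})$ with $f_2^{-1}(h\hat{C}_{\hat{u}}^{\hat{Q}})$, invoke Kleiman transversality and the codimension condition (\ref{rr1}) to see that generic fibers are finite, and use the hypothesis $f_2^*[\hat{X}_{\hat{u}}^{\hat{Q}}]\neq 0$ to conclude they are nonempty.

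The difference lies in the dimension equality. The paper deduces $\dim\mathscr{X}=\dim\G/\hat{B}$ \emph{a posteriori} from dominance together with the finiteness of generic fibers, so both conclusions rest on the cohomological hypothesis. You instead compute $\dim\mathscr{X}$ directly from the fiber-bundle description of Lemma \ref{rrlem2}, using the identity $(\hat{u}^{-1}\hat{B}\hat{u})\cap\hat{Q}=(\hat{u}^{-1}\hat{B}\hat{u})\cap\hat{B}$ recorded in Definition \ref{rrdefi3} to get $\dim(\hat{Q}\hat{u}^{-1}\hat{B}/\hat{B})=\dim\hat{Q}/\hat{B}+\ell(\hat{u})$, and then invoking (\ref{rr1}). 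This has the small advantage that the dimension formula is established independently of the non-vanishing hypothesis (it needs only the numerical assumption (\ref{rr1})), whereas in the paper's argument the dimension equality and the dominance are entangled. The paper's route is shorter; yours separates the two conclusions more cleanly.
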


\begin{proof}
Since $\codim {\hat{C}}_{{\hat{u}}}^{\hat{Q}}=\dim G/{Q}$ (by assumption \eqref{rr1}) and
 $f_2^{*}([{{\hat{X}}}_{{\hat{u}}}^{\hat{Q}}])\neq 0$,
we get that   $h {\hat{C}}_{{\hat{u}}}^{\hat{Q}}\cap f(G/{Q})$ is a finite nonempty
subset for general  $h\in
\G$. Thus, the map $\sigma$ is dominant and on a dense open subset
of $\G/\hat{B}$, $\sigma$ has finite fibres. Thus,
$
\dim \G/\hat{B}=\dim \mathscr{X}.
$
\end{proof}

The following result, as well as Theorem \ref{rrthm2}, is due to Richmond [Ri$_2$]
(and also due to Ressayre [R$_6$]).
\begin{theorem}\label{rrthm7}
Let ${\hat{w}}\in \hat{W}^{\hat{P}}$ be such that it satisfies the condition \eqref{rr1}.
Write
$$
 f^{*}([{{\hat{X}}}_{{\hat{w}}}^{\hat{P}}])=d[pt] \in
  H^{*}(G/{P}),\,\,\,
f_2^{*}([{{\hat{X}}}_{{\hat{u}}}^{\hat{Q}}])=d_{1}[pt]\in
  H^{*}(G/{Q}),\,\,\,
 f_1^{*}([{{\hat{X}}}_{{\hat{v}}}^{\hat{P}}])=d_{2}[pt]\in
  H^{*}({Q}/{P}),
$$
for some integers $d,d_1,d_2$.
Then, $d=d_{1}d_{2}$.

(Observe that since ${\hat{v}}\in \hat{W}^{\hat{P}}\cap \hat{W}_{\hat{Q}}$,
${{\hat{X}}}_{{\hat{v}}}^{\hat{P}}\subset \hat{Q}/\hat{P}$.)
 \end{theorem}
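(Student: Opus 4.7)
The plan is to interpret each of $d, d_1, d_2$ as a transverse point-count via Kleiman's theorem (Theorem~\ref{kleiman}) and Proposition~\ref{fultonproper}, and then use the fiber-bundle structure of the projection $\hat\pi: \hat G/\hat P \to \hat G/\hat Q$ (covering the analogous $\pi: G/P \to G/Q$) to fibrate the $d$-point intersection into $d_1$ fibers, each of which contributes $d_2$ points. Concretely, since $\hat w = \hat u\hat v$ with $\hat v \in \hat W_{\hat Q}$, we have $\hat\pi(\hat C^{\hat P}_{\hat w}) \subset \hat C^{\hat Q}_{\hat u}$, and moreover, by Lemma~\ref{rrlem1}, once one fixes a basepoint $g = q\dot{\hat u}^{-1}$ of the $\hat u$-cell in $\hat G/\hat Q$, the intersection $g\hat C^{\hat P}_{\hat w} \cap \hat Q/\hat P$ equals a single shifted $\hat v$-cell $q\hat C^{\hat P}_{\hat v}$ inside the fiber $\hat Q/\hat P$. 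This is exactly the geometric content needed.

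First I would choose a generic $\bar h \in \hat G/\hat B$ and count $d$: for such $\bar h$, the set $h\hat C^{\hat P}_{\hat w} \cap f(G/P)$ consists of $d$ reduced points, by the codimension condition in \eqref{rr1} and Kleiman transversality. Applying $\hat\pi$ and using $\hat\pi \circ f = f_2 \circ \pi$, each of these $d$ points projects into $h\hat C^{\hat Q}_{\hat u} \cap f_2(G/Q)$, a transverse intersection of $d_1$ points (again for generic $h$). Thus the $d$ points partition over these $d_1$ base points, and it suffices to show that each fiber contributes exactly $d_2$.

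Next I would fix a base point $\bar g_Q$ in the $d_1$-point intersection. The fiber $\pi^{-1}(\bar g_Q) \cong Q/P$ embeds into $\hat\pi^{-1}(f_2(\bar g_Q)) \cong \hat Q/\hat P$ via a shift of $f_1: Q/P \hookrightarrow \hat Q/\hat P$. The relevant fiber intersection is $f^{-1}(h\hat C^{\hat P}_{\hat w}) \cap \pi^{-1}(\bar g_Q)$; after translating $h$ by $x^{-1}g^{-1}x$ (using the equivariance built into Lemma~\ref{rrlem2}), this becomes, via Lemma~\ref{rrlem1}, the intersection of the translated $\hat v$-cell $\xi_{\hat u}(x^{-1}g^{-1}x\bar h)\hat C^{\hat P}_{\hat v}$ with $f_1(Q/P)$ inside $\hat Q/\hat P$. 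This is the content of condition (a) in Definition~\ref{rrdefi3}. The point of $\mathscr{X}^o$, which is dense in $\mathscr{X}$ by Proposition~\ref{rrprop5}, is precisely to guarantee that this intersection is transverse and equal to the corresponding intersection with the closure $\hat X^{\hat P}_{\hat v}$. For a generic point of $\mathscr X^o$, that fiber count therefore equals the cohomological intersection number $f_1^*[\hat X^{\hat P}_{\hat v}] = d_2$.

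Finally, one puts this together using the map $\sigma: \mathscr{X} \to \hat G/\hat B$: by Lemma~\ref{rrlem6}, $\sigma$ is dominant of relative dimension zero, so $\sigma^{-1}(\bar h)$ has $d_1$ points for generic $\bar h$, and at each such point the fiber count is $d_2$, giving a total of $d_1 d_2$ transverse points in the intersection $h\hat C^{\hat P}_{\hat w} \cap f(G/P)$. Comparing with the count $d$ yields $d = d_1 d_2$. The main obstacle is the transversality bookkeeping in the last paragraph: one must ensure that for an open dense set of $\bar h \in \hat G/\hat B$, both the base intersection in $\hat G/\hat Q$ and, simultaneously, every one of the $d_1$ fiber intersections in the translated $\hat Q/\hat P$ are individually transverse, and that no extra contribution arises from points of $h\hat X^{\hat P}_{\hat w} \setminus h\hat C^{\hat P}_{\hat w}$ — this is exactly why Proposition~\ref{rrprop5} and conditions (a), (b) in the definition of $\mathscr X^o$ are set up, and the rest of the argument is then essentially a dimension count together with the identification in \eqref{req1}.
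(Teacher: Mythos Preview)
Your proposal is correct and follows essentially the same route as the paper: choose a generic $\bar h\in \hat G/\hat B$ so that both the $\hat w$- and $\hat u$-intersections are transverse and reduced and so that every $(\bar g,\bar h)\in\mathscr X$ already lies in $\mathscr X^o$ (this is the dimension argument via Lemma~\ref{rrlem6} and Proposition~\ref{rrprop5}), then fibrate the $d$-point set over the $d_1$ base points using Lemma~\ref{rrlem1} and count $d_2$ in each fiber. The one point you should add is the trivial degenerate case: Lemma~\ref{rrlem6} assumes $d_1\neq 0$, and when $d_1=0$ one checks directly that $d=0$ since $\hat\pi\bigl(h\hat C^{\hat P}_{\hat w}\cap f(G/P)\bigr)\subset h\hat C^{\hat Q}_{\hat u}\cap f_2(G/Q)$.
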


\begin{proof}
Assume first that $d_1\neq 0$.
Choose general elements $y=\overline{h}\in \G/\hat{B}$ such that
\begin{itemize}
\item[(a)] $h {\hat{C}}_{{\hat{u}}}^{\hat{Q}}\cap f_2(G/{Q})$ is a transverse
  intersection

\item[(b)] $h{\hat{C}}_{{\hat{u}}}^{\hat{Q}}\cap
  f_2(G/{Q})=h {\hat{X}}_{{\hat{u}}}^{\hat{Q}}\cap
  f_2(G/{Q})$

\item[(c)] $h{\hat{C}}_{\hat{w}}^{\hat{P}}\cap f(G/{P})$ is a transverse
  intersection

\item[(d)] $h{\hat{C}}_{\hat{w}}^{\hat{P}}\cap
  f(G/{P})=h{\hat{X}}_{{\hat{w}}}^{\hat{P}}\cap
  f(G/{P})$

\item[(e)] for all $\overline{{g}}\in G/{Q}$ such
  that
$
(\overline{{g}}, \overline{h})\in \mathscr{X},
  (\overline{{g}},\overline{h})\in \mathscr{X}^{o}.
$
\end{itemize}

We now show the existence of such $y$'s. Let $\mathfrak{V}$ be an open dense
subset of $\G/\hat{B}$ satisfying (a)-(d), which exists by Theorem \ref{kleiman}
and the assumption \eqref{rr1}.
 Take a dense open subset
$\hat{\mathscr{X}}^{o}$ of $\mathscr{X}$ contained in
$\mathscr{X}^{o}\cap {\sigma}^{-1}(\mathfrak{V})$. Then,
$$
\dim \bigl(\overline{\sigma(\mathscr{X}\backslash \hat{\mathscr{X}}^o)}\bigr)
\leq \dim
(\mathscr{X}-\hat{\mathscr{X}}^o)<\dim \mathscr{X}=\dim \G/\hat{B},
$$
where the last equality follows by Lemma \ref{rrlem6}.
Thus,
$
{\sigma}^{-1}\left(\overline{\sigma(\mathscr{X}\backslash \hat{\mathscr{X}}^o)}\right)
$
is a proper closed subset of $\mathscr{X}$. Take any $y'\in
\mathscr{X}\backslash\sigma^{-1}\left(\overline{\sigma(\mathscr{X}\backslash
  \hat{\mathscr{X}}^{o})}\right)$.
Then, clearly $y'\in \mathscr{X}^{o}$ and
$y=\sigma(y')\in \mathfrak{V}$. This proves the existence of $y$ satisfying
(a)-(e).

For any  $y=\overline{h}\in \G/\hat{B}$ satisfying the conditions (a) -(e),
$$
 d_{1}=|(h{\hat{C}}_{{\hat{u}}}^{\hat{Q}})\cap f_2(G/{Q})|,\,\,\,
 d=|(h{\hat{C}}_{{\hat{w}}}^{\hat{P}})\cap f(G/{P})|.$$
Moreover, under the projection map
$ (h{\hat{C}}_{\hat{w}}^{\hat{P}})\cap f(G/{P})\xrightarrow{\pi} (h{\hat{C}}_{\hat{u}}^{\hat{Q}})\cap f_2(G/{Q}),
$ for any $\overline{{g}}\in {f_2}^{-1}(h{\hat{C}}_{\hat{u}}^{\hat{Q}}\cap
f_2(G/{Q}))$,
$$
{\pi}^{-1}\left(f_2(\overline{{g}})\right) \simeq
(x^{-1}{g}^{-1}xh{\hat{C}}_{\hat{w}}^{\hat{P}} \cap \hat{Q}/\hat{P})\cap
f_1({Q}/{P})
= (\xi_{{\hat{u}}}(x^{-1}{g}^{-1}x\overline{h}){\hat{C}}_{\hat{v}}^{\hat{P}})\cap
f_1({Q}/{P}),
$$
where the last equality follows from Definition \ref{rrdefi3} and the
condition (e).  The last intersection is a transverse intersection in
 $\hat{Q}/\hat{P}$ and
$$
(\xi_{{\hat{u}}}({x}^{-1}{{g}}^{-1}x\overline{h}){\hat{X}}_{\hat{v}}^{\hat{P}})\cap
f_1({Q}/{P})=(\xi_{{\hat{u}}}(x^{-1}{g}^{-1}x\overline{h}){\hat{C}}_{\hat{v}}^{\hat{P}})\cap
f_1(Q/P),
$$
by the definition of ${\mathscr{X}}^{o}$. Thus,
$$
|\pi^{-1}(f_2(\overline{{g}}))|=d_{2}.
$$
This gives $d=d_{1}d_{2}$, proving the Theorem for the case
$d_{1}\neq 0$.

We finally show that if $d_{1}=0$, then $d=0$. For, if not, take general
$g\in \G$ such that
$ g{\hat{C}}_{\hat{w}}^{\hat{P}}\cap f(G/{P})$ is
    nonempty. Then,
$g{\hat{C}}_{\hat{u}}^{\hat{Q}}\cap f(G/{Q})$ is nonempty too since
$\pi(g{\hat{C}}_{\hat{w}}^{\hat{P}}\cap f(G/{P}))\subset g{\hat{C}}_{\hat{u}}^{\hat{Q}}\cap
  f(G/{Q}).$
This proves the theorem completely.
\end{proof}
A particular case of the Definition \ref{bknewdef} is the following.
\begin{definition}\label{rrdef5} Let ${\hat{w}}\in \hat{W}^{\hat{P}}$ be such that codim
${\hat{\Phi}}^{\hat{P}}_{{\hat{w}}}= \dim G/P$. Then, ${\hat{\Phi}}^{\hat{P}}_{{\hat{w}}}$ is said to be
{\it $L$-movable for the embedding} $f:G/P \to \G/\hat{P}$ if
$$T_{e}(G/{P})
\xrightarrow{{(df)_e}}\dfrac{T_{{\hat{e}}}(\G/\hat{P})}{T_{{\hat{e}}}({\hat{l}}{\hat{\Phi}}^{\hat{P}}_{{\hat{w}}})}$$
is an isomorphism for some ${\hat{l}}\in {\hat{L}}_{\hat{P}}$, where ${\hat{L}}_{\hat{P}}$ is
 the Levi
subgroup of $\hat{P}$ containing
$\hat{H}$.
\end{definition}
Recall the definition of the elements $x_i\in \frh$ from the equation \eqref{eqn0}.
\begin{theorem}\label{rrthm2}
Let ${\hat{w}}\in \hat{W}^{\hat{P}}$ be such that ${\hat{\Phi}}^{\hat{P}}_{{\hat{w}}}\subset \G/\hat{P}$ is
$L$-movable for the embedding $f:G/{P}\to \G/\hat{P}$ (in particular,
 $\codim {\hat{\Phi}}^{\hat{P}}_{{\hat{w}}}=\dim
G/{P}$). Assume further that
there exists a dominant regular element $y_{o}\in \mathfrak{z}(L_{{Q}})$
(i.e., an element of the form $y_{o}=\sum\limits_{\alpha_{i}\in
  \Delta (G)\backslash \Delta (Q)}\,r_{i}x_{i}$, \,
$r_{i}>0$) such that $\Ad(x^{-1})\cdot y_{o}\in \mathfrak{z}({\hat{L}}_{\hat{Q}})$, where
$\mathfrak{z}(L_{{Q}})$ denotes the Lie algebra of the center of the Levi
subgroup $L_{{Q}}$ of ${Q}$. Then, ${\hat{\Phi}}^{\hat{P}}_{{\hat{u}}}\subset \G/\hat{Q}$
is Levi movable for the embedding $f_2:G/{Q}\to \G/\hat{Q}$ and
${\hat{\Phi}}^{\hat{P}}_{{\hat{v}}}\subset \hat{Q}/\hat{P}$ is Levi movable for the embedding
$f_1:{Q}/{P}\to \hat{Q}/\hat{P}$.

In particular, the assumption (and hence the conclusion) of Theorem \ref{rrthm7}
is automatically satisfied under the above assumptions.
\end{theorem}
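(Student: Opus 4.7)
The strategy is to exploit $\operatorname{Ad}(x^{-1})y_o\in\mathfrak{z}(\hat{L}_{\hat{Q}})$ in order to split the hypothesised $L$-movability of $\hat{\Phi}^{\hat{P}}_{\hat{w}}$ along the projections $\pi:G/P\to G/Q$ and $\hat{\pi}:\hat{G}/\hat{P}\to\hat{G}/\hat{Q}$. Let $\delta\in O(H)$ be the OPS with derivative $y_o$; since $y_o=\sum_{\alpha_i\in\Delta\setminus\Delta(Q)}r_ix_i$ with $r_i>0$, $\delta$ is $G$-dominant and its Kempf parabolic is $Q$. The embedding $f(gP)=x^{-1}gx\hat{P}$ intertwines the left action of $\delta(t)$ on $G/P$ with the left action on $\hat{G}/\hat{P}$ of $\hat{\delta}(t):=x^{-1}\delta(t)x$, whose derivative is $\hat{y}_o:=\operatorname{Ad}(x^{-1})y_o$, lying in $\mathfrak{z}(\hat{L}_{\hat{Q}})$ by hypothesis.

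\textbf{Weight decomposition and cell analysis.} The $y_o$-weight on $T_e(G/P)$ of the root vector $e_{-\alpha}$ is $-\alpha(y_o)$, which vanishes iff $\alpha\in R(\fl_Q)$. Hence $T_e(G/P)_0=T_e(Q/P)$ and the nonzero-weight complement maps isomorphically via $d\pi$ onto $T_{eQ}(G/Q)$; the analogous description holds on $\hat{G}/\hat{P}$ using $\hat{y}_o$. From the factorisation
\[
\hat{\Phi}^{\hat{P}}_{\hat{w}}=\hat{v}^{-1}\hat{U}^-_{\hat{R}_{\hat{u}}}\hat{v}\,\hat{U}^-_{\hat{R}_{\hat{v}}}\hat{P}/\hat{P}
\]
arising in the proof of Lemma \ref{rrlem1}, together with the facts that $\hat{R}_{\hat{v}}$ consists of roots of the Levi of $\hat{Q}$ (since $\hat{v}\in\hat{W}_{\hat{Q}}$) and $\hat{R}_{\hat{u}}$ contains no such roots (since $\hat{u}\in\hat{W}^{\hat{Q}}$), I verify three cell-level identities: (i) $\hat{\delta}(t)\hat{\Phi}^{\hat{P}}_{\hat{w}}=\hat{\Phi}^{\hat{P}}_{\hat{w}}$, so that $\hat{l}\hat{\Phi}^{\hat{P}}_{\hat{w}}$ is $\hat{\delta}$-stable, as $\hat{l}\in\hat{L}_{\hat{P}}\subset\hat{L}_{\hat{Q}}$ commutes with $\hat{\delta}(t)$; (ii) $\hat{l}\hat{\Phi}^{\hat{P}}_{\hat{w}}\cap(\hat{Q}/\hat{P})=\hat{l}\hat{\Phi}^{\hat{P}}_{\hat{v}}$, by the argument of Lemma \ref{rrlem1} applied to $g=\hat{l}$ (using $\hat{U}^-_{\hat{R}_{\hat{u}}}\cap\hat{Q}=(1)$); and (iii) $\hat{\pi}(\hat{l}\hat{\Phi}^{\hat{P}}_{\hat{w}})=\hat{l}_2\hat{\Phi}^{\hat{Q}}_{\hat{u}}$ with $\hat{l}_2:=\hat{l}\hat{v}^{-1}\in\hat{L}_{\hat{Q}}$. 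A dimension count using $\ell(\hat{w})=\ell(\hat{u})+\ell(\hat{v})$ then forces the $\hat{y}_o$-weight-zero subspace of $T_{\hat{e}}(\hat{l}\hat{\Phi}^{\hat{P}}_{\hat{w}})$ to equal $T_{\hat{e}}(\hat{l}\hat{\Phi}^{\hat{P}}_{\hat{v}})$.

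\textbf{Reading off the two isomorphisms.} Since $(df)_e$ is $(y_o,\hat{y}_o)$-equivariant and $T_{\hat{e}}(\hat{l}\hat{\Phi}^{\hat{P}}_{\hat{w}})$ is $\hat{y}_o$-stable by (i), the given isomorphism splits into isomorphisms on the weight-zero and nonzero-weight summands. Using (ii) and the last sentence of the previous paragraph, the weight-zero piece reads
\[
T_e(Q/P)\xrightarrow{\sim}T_{\hat{e}}(\hat{Q}/\hat{P})\big/T_{\hat{e}}(\hat{l}\hat{\Phi}^{\hat{P}}_{\hat{v}}),
\]
the $L$-movability of $\hat{\Phi}^{\hat{P}}_{\hat{v}}$ for $f_1$ with Levi element $\hat{l}_1=\hat{l}$. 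Pushing the nonzero-weight summand forward by $d\hat{\pi}$ and invoking (iii) yields
\[
T_{eQ}(G/Q)\xrightarrow{\sim}T_{\hat{e}_{\hat{Q}}}(\hat{G}/\hat{Q})\big/T_{\hat{e}_{\hat{Q}}}(\hat{l}_2\hat{\Phi}^{\hat{Q}}_{\hat{u}}),
\]
the $L$-movability of $\hat{\Phi}^{\hat{Q}}_{\hat{u}}$ for $f_2$. The ``in particular'' clause then follows, since this second $L$-movability simultaneously delivers the dimension equality of condition \eqref{rr1} and the cohomological nonvanishing $f_2^*[\hat{X}^{\hat{Q}}_{\hat{u}}]=d_1[pt]$ with $d_1\neq 0$ (the standard equivalence as in Proposition \ref{T1}), so that the hypothesis of Theorem \ref{rrthm7} is satisfied. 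The main technical hurdle is the trio of cell-level identities (i)--(iii); once these are in hand, the graded splitting argument is essentially formal.
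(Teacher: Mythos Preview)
Your argument is correct and is a genuinely different route from the paper's.

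The paper sets up the same commutative diagram of maps $\varphi_1,\varphi,\varphi_2$ induced by $f_1,f,f_2$, but views it only as a diagram of $C$-modules, where $C=xZ(\hat L_{\hat Q})x^{-1}\cap H$. Rather than grading by $y_o$-weight, the paper argues numerically: it knows a priori that $\varphi_1$ is injective and $\varphi_2$ surjective, lets $M=\ker\varphi_2\subset T_e(G/Q)$, and writes $\beta$ for the sum of its $\mathfrak c$-weights. Comparing the $\mathfrak c$-characters of the two sides of $\varphi$ and of $\varphi_2$, together with the easy identities $\chi^P_{1|\mathfrak c}=\chi^Q_{1|\mathfrak c}$ and $\hat\chi^{\hat P}_{\hat w|\mathfrak c}=\hat\chi^{\hat Q}_{\hat u|\mathfrak c}$, yields $\beta(y_o)=0$. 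Since every $H$-weight of $T_e(G/Q)$ lies in $R^-_G\setminus R^-_{L_Q}$ and $y_o$ is regular dominant in $\mathfrak z(L_Q)$, all $y_o$-eigenvalues on $T_e(G/Q)$ are strictly negative, forcing $M=0$; then $\varphi_1$ is an isomorphism by dimension.

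Your approach instead makes the role of $y_o$ structural: you grade the entire diagram by the OPS $\delta=\exp(ty_o)$, verify the cell identities (i)--(iii) to see that $T_{\hat e}(\hat l\hat\Phi^{\hat P}_{\hat w})$ is $\hat y_o$-graded with the expected pieces, and then simply read off $\varphi_1$ and $\varphi_2$ as the degree-zero and nonzero-degree components of the given isomorphism $\varphi$. The two arguments are closely related---the paper's trace identity $\beta(y_o)=0$ is the numerical shadow of your graded splitting---but yours is more transparent about \emph{why} the hypothesis $\operatorname{Ad}(x^{-1})y_o\in\mathfrak z(\hat L_{\hat Q})$ is exactly what is needed (it is precisely the $\hat\delta$-stability in (i)). The paper's version, in return, needs only the containments $\hat\Phi^{\hat P}_{\hat v}\subset\hat\Phi^{\hat P}_{\hat w}$ and $\hat\pi(\hat\Phi^{\hat P}_{\hat w})\subset\hat v^{-1}\hat\Phi^{\hat Q}_{\hat u}$ rather than your sharper equalities, at the cost of the character bookkeeping.
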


\begin{proof}
Let $C$ be the group $x Z({\hat{L}}_{\hat{Q}})x^{-1}\cap {H}$, where $Z({\hat{L}}_{\hat{Q}})$
is the center of the Levi subgroup ${\hat{L}}_{\hat{Q}}$ of $\hat{Q}$. Observe that
$C\subset Z(L_{{Q}})$.

Take ${\hat{l}}\in {\hat{L}}_{\hat{P}}$
such that the  map $\varphi$ in the following big diagram is an isomorphism,
which is possible since
${\hat{\Phi}}^{\hat{P}}_{{\hat{w}}}$ is
$L$-movable for the embedding $f$. Define an action of $C$ on $\G/\hat{P}$ via
 $t\odot y=(x^{-1}tx) y$, for $y\in \G/\hat{P}$ and $t\in C$.
 The left multiplication map $\G/\hat{P}\to
\G/\hat{P}$, $y\mapsto {\hat{l}}y$ commutes with
the action of $C$ and hence we have a $C$-module isomorphism
$$
\dfrac{T_{{\hat{e}}}(\G/\hat{P})}{T_{{\hat{e}}}({\hat{\Phi}}^{\hat{P}}_{{\hat{w}}})}
\xrightarrow{\sim}\dfrac{T_{{\hat{e}}}(\G/\hat{P})}{T_{{\hat{e}}}({\hat{l}}{\hat{\Phi}}^{\hat{P}}_{{\hat{w}}})}.
$$
We have the following commutative diagram, where the maps $\varphi_1, \varphi,
 \varphi_2$ are  induced by the embeddings $f_1, f, f_2$ respectively. (By [K$_1$,
 Lemma 1.3.14], it is easy to see that ${\hat{\Phi}}^{\hat{P}}_{{\hat{v}}}\subset
 {\hat{\Phi}}^{\hat{P}}_{{\hat{w}}}$ and ${\hat{\Phi}}^{\hat{P}}_{{\hat{w}}}$ maps to
 ${{\hat{v}}}^{-1}{\hat{\Phi}}^{\hat{Q}}_{{\hat{u}}}$
 under the projection $\G/\hat{P}\to \G/\hat{Q}$.)
Moreover,  in the following diagram, all the modules are $C$-modules and all the
maps are $C$-module
maps, where the action of $C$ on the left vertical side of the diagram is induced from
the action of $C$ on $G/P$ via the left multiplication.
\[
\xymatrix{
T_{e}({Q}/{P})\ar@{^{(}->}[d]\ar@{^{(}->}[r]^{\varphi_{1}} &
\dfrac{T_{{\hat{e}}}(\hat{Q}/\hat{P})}{T_{{\hat{e}}}({\hat{l}}{\hat{\Phi}}^{\hat{P}}_{{\hat{v}}})}\ar[d]\ar@{}[r]|{\simeq} &
\dfrac{T_{{\hat{e}}}(\hat{Q}/\hat{P})}{T_{{\hat{e}}}({\hat{\Phi}}^{\hat{P}}_{{\hat{v}}})}\\
T_{e}(G/{P})\ar[d]\ar[r]^{\displaystyle{\mathop{\sim}^{\varphi}}}
& \dfrac{T_{{\hat{e}}}(\G/\hat{P})}{T_{{\hat{e}}}({\hat{l}}{\hat{\Phi}}^{\hat{P}}_{{\hat{w}}})}\ar@{->>}[d]\ar@{}[r]|{\simeq}
& \dfrac{T_{{\hat{e}}}(\G/\hat{P})}{T_{{\hat{e}}}({\hat{\Phi}}^{\hat{P}}_{{\hat{w}}})}\\
T_{e}(G/{Q})\ar@{->>}[r]^{\varphi_{2}} &
\dfrac{T_{{\hat{e}}}(\G/\hat{Q})}{T_{{\hat{e}}}({\hat{l}}{{\hat{v}}}^{-1}{\hat{\Phi}}^{\hat{Q}}_{{\hat{u}}})}\ar@{}[r]|{\simeq}
& \dfrac{T_{{\hat{e}}}(\G/\hat{Q})}{T_{{\hat{e}}}({\hat{\Phi}}^{\hat{Q}}_{{\hat{u}}})}
}
\]
By the identity \eqref{eqn5new},
the sum of the set of roots in $T_{{\hat{e}}}(\G/\hat{P})=-\chi^{\hat{P}}_{1}$ and the sum
of the set of roots in
$\dfrac{T_{{\hat{e}}}(\G/\hat{P})}{T_{{\hat{e}}}({\hat{\Phi}}^{\hat{P}}_{{\hat{w}}})}=-\chi^{\hat{P}}_{{\hat{w}}}$. Thus, from the
isomorphism $\varphi$, we get:
\begin{equation}\label{rreq2}
-{\chi^P_1}_{|\mathfrak{c}}=-{\hat{\chi}}^{\hat{P}}_{{\hat{w}}_{| \mathfrak{c}}},
\end{equation}
where $\hat{\chi}^{\hat{P}}_{{\hat{w}}_{|{\mathfrak{c}}}}$    refers to the twisted action
$\odot$, i.e., $\hat{\chi}^{\hat{P}}_{{\hat{w}}}(y)=\chi^{\hat{P}}_{{\hat{w}}}(\Ad x^{-1}\cdot y)$,
for
$y\in \mathfrak{c}$, where $\mathfrak{c}:=$ Lie $C$.

Let $M$ be the kernel of $\varphi_{2}$ and let $\beta$ be the sum of roots
in $M$. Then, from the surjective map $\varphi_{2}$, we get
\begin{equation}\label{rreq3}
-\chi_{1}^{{Q}}-\beta=-\hat{\chi}^{\hat{Q}}_{{\hat{u}}}\quad\text{restricted
  to ~ }\mathfrak{c}.
\end{equation}
But, it is easy to see that
\begin{equation}\label{rreq4}
\chi^{{P}}_{1|\mathfrak{c}} = \chi^{{Q}}_{1|\mathfrak{c}},
\quad\text{and}\,\,\,
\hat{\chi}^{\hat{P}}_{{\hat{w}}|\mathfrak{c}}=\hat{\chi}^{\hat{Q}}_{{\hat{u}}|\mathfrak{c}}.
\end{equation}
Thus, combining the equations \eqref{rreq2} - \eqref{rreq4}, we get $\beta_{|\mathfrak{c}}
\equiv 0$. In particular, $\beta(y_o)=0$. But since $y_o$ is a dominant regular
element of $\mathfrak{z}(L_{{Q}})$
and $\beta$ is a positive sum of roots in
$R^{-}_{G}\backslash R^{-}_{L_{Q}}$, this is possible only if $\beta=0$, i.e., $M$ is zero
dimensional. This shows that $M=0$ and hence $\varphi_{2}$ is an
isomorphism.
Since $\varphi_{1}$ is injective, by dimensional consideration,
$\varphi_{1}$ is an isomorphism as well. This proves the theorem.
\end{proof}
\begin{remark} As in [ReR, Lemma 3.4], by virtue of Theorem \ref{rrthm2}, the
multiplicative formula for the decomposition of structure constants as in
Theorem \ref{rrthm7} applies to all the structure constants associated to the
 homomorphism $\theta_0^\delta$ of Theorem \ref{rr}.
 \end{remark}
 Specializing Theorems \ref{rrthm7} and \ref{rrthm2} to the diagonal case, we immediately
 get the following. This result was obtained by Richmond for type $A$ flag varieties
 in [Ri$_1$, Theorem 3], for type $C$ flag varieties in his Ph. D. thesis, and in general
 in [Ri$_2$].
 \begin{corollary} \label{12.14} Let $G,B,H$ be as in Section \ref{sec1}; in particular, $G$ is
 a connected semisimple group. Let $B\subset P \subset Q$ be (standard) parabolic
 subgroups. Let $\{w_j\}_{1\leq j \leq s} \subset W^P$ be such that
 $\sum_{j=1}^s\,\codim X_{w_j}^P
 = \dim G/P$. Assume further that
 \begin{equation} \label{rr-1}
 \sum_{j=1}^s\,\codim X_{u_j}^Q
 = \dim G/Q.
 \end{equation}
 (and hence $\sum_{j=1}^s\,\codim_{Q/P} X_{v_j}^P
 = \dim Q/P$), where $w_j=u_jv_j$ is the unique decomposition with $u_j\in
 {W}^{{Q}}$ and $v_j\in
W^P\cap {W}_{Q}$.
 Write
 \begin{align*}
[X^P_{w_1}]\dots  [X^P_{w_s}] &= d[pt] \in  H^*(G/P),\\
[X^Q_{u_1}]\dots  [X^Q_{u_s}] &= d_1[pt] \in H^*(G/Q),\,\,\,\text{and}\\
[X^P_{v_1}]\dots  [X^P_{v_s}] &= d_2[pt] \in  H^*(Q/P).
\end{align*}
Then, $d=d_1d_2$.

If $\{w_j\}$ is $L_P$-movable for $G/P$, then $\{u_j\}$ (resp. $\{v_j\}$)  is
 $L_Q$-movable for $G/Q$ (resp.  $L_P$-movable for $Q/P$). In particular,
 \eqref{rr-1} is automatically satisfied.
 \end{corollary}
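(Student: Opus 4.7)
The plan is to deduce the corollary from Theorems~\ref{rrthm7} and~\ref{rrthm2} by specializing to the diagonal embedding, in the same spirit in which Lemma~\ref{ress1} specialized Theorem~\ref{ress} to recover Theorem~\ref{EVT}. Set $\hat{G}:=G^{s}$ with $G$ embedded diagonally, and take $\hat{B}:=B^{s}$, $\hat{H}:=H^{s}$, $\hat{P}:=P^{s}$, $\hat{Q}:=Q^{s}$, and $x:=1\in N(\hat{H})$. Then $f$, $f_{1}$, $f_{2}$ become the three diagonal embeddings $G/P\hookrightarrow(G/P)^{s}$, $Q/P\hookrightarrow(Q/P)^{s}$, $G/Q\hookrightarrow(G/Q)^{s}$, and setting $\hat{w}:=(w_{1},\dots,w_{s})\in(W^{P})^{s}=\hat{W}^{\hat{P}}$, the canonical factorization $\hat{w}=\hat{u}\hat{v}$ with $\hat{u}\in\hat{W}^{\hat{Q}}$ and $\hat{v}\in\hat{W}^{\hat{P}}\cap\hat{W}_{\hat{Q}}$ is just the componentwise factorization $\hat{u}=(u_{1},\dots,u_{s})$, $\hat{v}=(v_{1},\dots,v_{s})$.

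Under this dictionary, the standing codimension conditions \eqref{rr1} translate into $\sum_{j}\codim X_{w_{j}}^{P}=\dim G/P$ and $\sum_{j}\codim X_{u_{j}}^{Q}=\dim G/Q$, i.e., the assumed hypotheses of Corollary~\ref{12.14}, including \eqref{rr-1}. By the K\"unneth formula, $[\hat{X}_{\hat{w}}^{\hat{P}}]=[X_{w_{1}}^{P}]\times\cdots\times[X_{w_{s}}^{P}]$, and pullback along the diagonal converts exterior product into cup product, so $f^{*}[\hat{X}_{\hat{w}}^{\hat{P}}]=[X_{w_{1}}^{P}]\cdots[X_{w_{s}}^{P}]=d\,[pt]$, and likewise $f_{2}^{*}[\hat{X}_{\hat{u}}^{\hat{Q}}]=d_{1}[pt]$ and $f_{1}^{*}[\hat{X}_{\hat{v}}^{\hat{P}}]=d_{2}[pt]$. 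Theorem~\ref{rrthm7} then gives $d=d_{1}d_{2}$ directly.

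For the movability statement I first verify that the two notions of Levi-movability (Definition~\ref{D1} and Definition~\ref{rrdef5}) coincide in the diagonal case. Since $\hat{L}_{\hat{P}}=L_{P}^{s}$ acts componentwise and $T_{\hat{e}}(\hat{l}\,\hat{\Phi}_{\hat{w}}^{\hat{P}})=\bigoplus_{j}l_{j}T_{w_{j}}$ inside $\hat{T}=T^{\oplus s}$ for $\hat{l}=(l_{1},\dots,l_{s})$, a one-line dimension count using $\sum_{j}\codim X_{w_{j}}^{P}=\dim G/P$ shows that the diagonal map $T_{e}(G/P)\to\hat{T}/\bigoplus_{j}l_{j}T_{w_{j}}$ of Definition~\ref{rrdef5} is an isomorphism if and only if $\bigcap_{j}l_{j}T_{w_{j}}=0$, which by Proposition~\ref{Firstnew} is exactly transverse intersection at $e$, i.e., the Definition~\ref{D1} condition. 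I then apply Theorem~\ref{rrthm2} with $y_{o}:=\sum_{\alpha_{i}\in\Delta\setminus\Delta(Q)}x_{i}\in\mathfrak{z}(L_{Q})$; since $\Ad(x^{-1})=\mathrm{id}$ and $\mathfrak{z}(\hat{L}_{\hat{Q}})=\mathfrak{z}(L_{Q})^{s}$ contains the diagonally embedded $\mathfrak{z}(L_{Q})$, the hypothesis on $y_{o}$ is automatic. The conclusion of Theorem~\ref{rrthm2} gives $L$-movability of $\hat{\Phi}_{\hat{u}}^{\hat{Q}}$ for $f_{2}$ and of $\hat{\Phi}_{\hat{v}}^{\hat{P}}$ for $f_{1}$, which by the matching above translates back into $L_{Q}$-movability of $(u_{1},\dots,u_{s})$ for $G/Q$ and $L_{P}$-movability of $(v_{1},\dots,v_{s})$ for $Q/P$.

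The one point requiring care---and the main obstacle in making this argument fully self-contained---is the ``in particular \eqref{rr-1} is automatically satisfied'' clause: Theorem~\ref{rrthm2} as stated carries \eqref{rr1} for $\hat{u}$ as a standing hypothesis, which in the diagonal translation is precisely \eqref{rr-1}, so one cannot simply invoke the theorem without first knowing this equality. I would establish it by taking the transverse $0$-dimensional intersection $\bigcap_{j}l_{j}\Phi_{w_{j}}^{P}$ at $e$ provided by $L_{P}$-movability, pushing it forward under $\pi\colon G/P\twoheadrightarrow G/Q$ (using that $\pi(l_{j}\Phi_{w_{j}}^{P})\subseteq l_{j}\Phi_{u_{j}}^{Q}$ because $w_{j}=u_{j}v_{j}$ with $v_{j}\in W_{Q}$), and comparing dimensions via $\dim G/P=\dim G/Q+\dim Q/P$ together with Proposition~\ref{Firstnew} applied to $(u_{1},\dots,u_{s})$ in $G/Q$. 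This gives $\sum_{j}\codim X_{u_{j}}^{Q}=\dim G/Q$ and $\sum_{j}\codim_{Q/P}X_{v_{j}}^{P}=\dim Q/P$ simultaneously, at which point Theorem~\ref{rrthm2} applies and the corollary is complete.
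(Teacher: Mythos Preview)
Your approach is exactly the paper's: the corollary is the diagonal specialization $\hat G=G^s$ of Theorems~\ref{rrthm7} and~\ref{rrthm2}, and your dictionary (K\"unneth plus diagonal pullback turning exterior products into cup products, and the identification of Definition~\ref{rrdef5} with Definition~\ref{D1} via Proposition~\ref{Firstnew}) is the right one.

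There is one misreading you should correct. Theorem~\ref{rrthm2} does \emph{not} carry the second half of \eqref{rr1} (the condition on $\hat u$) as a hypothesis; its only hypotheses are $L$-movability of $\hat\Phi^{\hat P}_{\hat w}$ for $f$ and the existence of the dominant regular $y_o$. The last sentence of the theorem's statement says explicitly that ``the assumption (and hence the conclusion) of Theorem~\ref{rrthm7} is automatically satisfied,'' and the proof establishes this by showing $\varphi_2$ is an isomorphism, which forces $\dim G/Q=\codim\hat\Phi^{\hat Q}_{\hat u}$. In the diagonal translation this is precisely \eqref{rr-1}. So your last paragraph is unnecessary: once you check that $y_o=\sum_{\alpha_i\in\Delta\setminus\Delta(Q)}x_i$ lies in the diagonal copy of $\mathfrak z(L_Q)\subset\mathfrak z(L_Q)^s=\mathfrak z(\hat L_{\hat Q})$, Theorem~\ref{rrthm2} applies directly and delivers both the $L$-movability of $\{u_j\}$, $\{v_j\}$ and the equality \eqref{rr-1}. (Your ad hoc push-forward argument for \eqref{rr-1} is also shakier than you indicate: $\pi(\Phi^P_{w_j})=v_j^{-1}\Phi^Q_{u_j}$ rather than $\Phi^Q_{u_j}$ itself, and transversality of an intersection in $G/P$ does not descend to transversality of the projected intersection in $G/Q$ without further work---but you don't need it.)
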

\begin{remark}
 Knutson-Purbhoo [KP] have shown that for a $(d-1)$-step flag
variety  $G/P$ (for $G=\SL(n)$), any structure
constant for the deformed product $\odot_0$ is a product of $d(d-1)/2$
Littlewood-Richardson numbers. This refines the factorization into
$d-1$ factors as in Corollary \ref{12.14}.
\end{remark}

\section{Tables of the Deformed Product $\odot$ for the Groups of type $B_2$, $G_2$,
$B_3$ and $C_3$}
\label{exemples}

We give below  the multiplication tables under the deformed product
$\odot$  for $G/P$ for the  complex simple groups of type $B_2$, $G_2$,
$B_3$ and $C_3$ and maximal
parabolic subgroups $P$.  Since we are only considering maximal parabolics, we
have only one indeterminate, which we denote by ${\defpar}$.  We
let $r$,$s$ and $t$ be the simple reflections of any group of rank $3$
(and $r,s$ for the simple groups of rank $2$) associated to the
nodes from left to right of the Dynkin diagram following the
Bourbaki [Bo, Planche I - IX] convention (so $t$ corresponds to the long simple root in
the case of $C_3$ and the short simple root in the case of $B_3$).

These tables for rank $3$ groups are taken from [BK$_1$, $\S$ 10] and [KuLM, $\S$4].

{\bf Example 1. $G=B_2, P=P_1:$} In the following $b_1=\epsilon^P_r, b_2=
\epsilon^P_{sr},  b_3=\epsilon^P_{rsr}.$
\begin{center}
\begin{tabular} {|c|c|c|c|c|c|}
\hline

$H^*(G/P_1)$ & $b_1$ &  $b_2$ &  $b_3$   \\
\hline $b_1$ & $ 2b_2$ & $ b_3$ & $0$ \\ \hline
\end{tabular}
\end{center}

{\bf Example 2. $G=B_2, P=P_2:$} In the following $b_1=\epsilon^P_s, b_2=
\epsilon^P_{rs},  b_3=\epsilon^P_{srs}.$
\begin{center}
\begin{tabular} {|c|c|c|c|c|c|}
\hline

$H^*(G/P_2)$ & $b_1$ &  $b_2$ &  $b_3$   \\
\hline $b_1$ & $ {\defpar} b_2$ & $ b_3$ & $0$ \\ \hline
\end{tabular}
\end{center}

{\bf Example 3. $G=G_2, P=P_1:$} In the following $a_1=\epsilon^P_r, a_2=
\epsilon^P_{sr},  a_3=\epsilon^P_{rsr},  a_4=\epsilon^P_{srsr},  a_5=
\epsilon^P_{rsrsr}.$

\begin{center}
\begin{tabular} {|c|c|c|c|c|c|}
\hline

$H^*(G/P_1)$ & $a_1$ &  $a_2$ &  $a_3$ &  $a_4$ & $a_5$   \\
\hline $a_1$ & $ {\defpar}^2 a_2$ & $5{\defpar} a_3$ & ${\defpar}^2 a_4$ &  $a_5$
& $0$\\ \hline
$a_2$ & & $5 {\defpar}a_4$ & $a_5$ &  $0$ & $0$ \\ \hline

\end{tabular}
\end{center}

{\bf Example 4. $G=G_2, P=P_2:$} In the following $a_1=\epsilon^P_s, a_2=
\epsilon^P_{rs},  a_3=\epsilon^P_{srs},  a_4=\epsilon^P_{rsrs},  a_5=
\epsilon^P_{srsrs}.$

\begin{center}
\begin{tabular} {|c|c|c|c|c|c|}
\hline

$H^*(G/P_2)$ & $a_1$ &  $a_2$ &  $a_3$ &  $a_4$ & $a_5$   \\
\hline $a_1$ & $ 3 a_2$ & $2{\defpar} a_3$ & $3 a_4$ &  $a_5$
& $0$\\ \hline
$a_2$ & & $2 {\defpar}a_4$ & $a_5$ &  $0$ & $0$ \\ \hline

\end{tabular}
\end{center}

{\bf Example 5. $G=B_3, P=P_1:$} In the following $b_1=\epsilon^P_r, b_2=
\epsilon^P_{sr},  b_3=\epsilon^P_{tsr},  b_4=\epsilon^P_{stsr},  b_5=
\epsilon^P_{rstsr}.$
\begin{center}
\begin{tabular} {|c|c|c|c|c|c|}
\hline

$H^*(G/P_1)$ & $b_1$ &  $b_2$ &  $b_3$ &  $b_4$ & $b_5$   \\
\hline $b_1$ & $ b_2$ & $2 b_3$ & $b_4$ &  $b_5$ &  $0$ \\ \hline
$b_2$ & & $2b_4$ & $b_5$ &  $0$ & $0$ \\ \hline

\end{tabular}
\end{center}

{\bf Example 6. $G=B_3, P=P_2:$} In the following $b_1=\epsilon^P_s, b^{\prime}_2=
\epsilon^P_{rs},  b^{\prime \prime}_2=\epsilon^P_{ts},  b^{\prime}_3=\epsilon^P_{rts},  b^{\prime \prime}_3=
\epsilon^P_{sts}, b^{\prime}_4=\epsilon^P_{srts}, b^{\prime \prime}_4=\epsilon^P_{rsts},
b^{\prime}_5=\epsilon^P_{tsrts}, b^{\prime \prime}_5=\epsilon^P_{rstrs}, b_6=\epsilon^P_{rtsrts},
b_7=\epsilon^P_{srtsrts}.$
\begin{center}
\begin{tabular} {|c|c|c|c|c|c|}
\hline $H^*(G/P_2)$ & $b_1$ & $b_2^{\prime}$ & $b_2^{\prime
\prime}$ & $b_3^{\prime}$ & $b_3^{\prime \prime}$ \\ \hline

$b_1$ & $b_2^{\prime} + 2 b_2^{\prime \prime}$ & $2 b_3^{\prime}$
& $b_3^{\prime} + b_3^{\prime \prime}$ & $ 2{\defpar} b_4^{\prime} +{\defpar}
b_4^{\prime \prime}$ & $ {\defpar}b_4^{\prime} + 2{\defpar} b_4^{\prime \prime}$  \\
\hline

$b_2^{\prime}$ & & $2 {\defpar}b_4^{\prime}$ & ${\defpar}b_4^{\prime} + {\defpar}b_4^{\prime
\prime}$ & $2{\defpar} b_5^{\prime} +{\defpar} b_5^{\prime \prime}$
& $ {\defpar}b_5^{\prime \prime}$ \\
\hline

$b_2^{\prime \prime}$ & & & $ {\defpar}b_4^{\prime} + {\defpar}b_4^{\prime \prime}$
& ${\defpar}b_5^{\prime} + {\defpar}b_5^{\prime \prime}$ & ${\defpar} b_5^{\prime} + {\defpar}
b_5^{\prime \prime} $  \\ \hline

$b_3^{\prime }$ & & & & $ 2 {\defpar}b_6$ & ${\defpar}b_6$ \\ \hline

$b_3^{\prime \prime}$ & & & & & $ 2 {\defpar}b_6$ \\ \hline

\end{tabular}
\end{center}

\begin{center}
\begin{tabular} {|c|c|c|c|c|c|c|}
\hline $H^*(G/P_2)$ & $b_4^{\prime}$ & $b_4^{\prime \prime}$ &
$b_5^{\prime}$ & $b_5^{\prime \prime}$ & $b_6$ & $b_7$ \\ \hline

$b_1$ & $2 b_5^{\prime}+b_5^{\prime \prime}$ & $b_5^{\prime
\prime}$ & $b_6$ & $2 b_6$ & $b_7$ & $0$ \\ \hline

$b_2^{\prime}$ & $2 b_6$ & $0$ & $b_7$ & $0$ & $0$ & $0$ \\
\hline

$b_2^{\prime \prime}$ & $b_6$ & $b_6$ & $0$ & $b_7$ & $0$ & $0$ \\
\hline

$b_3^{\prime }$ & $b_7$ & $0$ & $0$ & $0$ & $0$ & $0$\\ \hline

$b_3^{\prime \prime}$ &$0$ & $b_7$ & $0$ & $0$ & $0$ & $0$
\\ \hline
\end{tabular}
\end{center}

{\bf Example 7. $G=B_3, P=P_3:$}
In the following $b_1=\epsilon^P_t, b_2=
\epsilon^P_{st},  b^{\prime}_3=\epsilon^P_{rst},  b^{\prime\prime}_3=\epsilon^P_{tst},
 b_4=
\epsilon^P_{trst}, b_5=\epsilon^P_{strst},  b_6=\epsilon^P_{tstrst}.$

\begin{center}
\begin{tabular} {|c|c|c|c|c|c|c|c|}
\hline

$H^*(G/P_3)$ & $b_1$ &  $b_2$ &  $b_3^{\prime}$ & $b_3^{\prime
\prime}$ & $b_4$ & $b_5$ &  $b_6$   \\ \hline $b_1$ & $ {\defpar}b_2$ & $
{\defpar}b_3^{\prime} + b_3^{\prime \prime}$ & $b_4$ & ${\defpar}b_4$ & ${\defpar}b_5$ &
$b_6$ & $0$ \\ \hline $b_2$ & & $2b_4$ & $b_5$ & ${\defpar}b_5$ & $b_6$ &
$0$ & $0$ \\ \hline $b_3^{\prime}$ & & & $0$ & $b_6$ & $0$ & $0$ &
$0$ \\ \hline $b_3^{\prime \prime}$ & & & & $0$ & $0$ & $0$ & $0$
\\ \hline

\end{tabular}
\end{center}

{\bf Example 8. $G=C_3, P=P_1:$} In the following $a_1=\epsilon^P_r, a_2=
\epsilon^P_{sr},  a_3=\epsilon^P_{tsr},  a_4=\epsilon^P_{stsr},  a_5=
\epsilon^P_{rstsr}.$

\begin{center}
\begin{tabular} {|c|c|c|c|c|c|}
\hline

$H^*(G/P_1)$ & $a_1$ &  $a_2$ &  $a_3$ &  $a_4$ & $a_5$   \\
\hline $a_1$ & $ a_2$ & ${\defpar} a_3$ & $a_4$ &  $a_5$ &  $0$ \\ \hline
$a_2$ & & ${\defpar}a_4$ & $a_5$ &  $0$ & $0$ \\ \hline

\end{tabular}
\end{center}
{\bf Example 9. $G=C_3, P=P_2:$} In the following $a_1=\epsilon^P_s, a^{\prime}_2=
\epsilon^P_{rs},  a^{\prime \prime}_2=\epsilon^P_{ts},  a^{\prime}_3=\epsilon^P_{rts},
 a^{\prime \prime}_3=
\epsilon^P_{sts}, a^{\prime}_4=\epsilon^P_{srts}, a^{\prime \prime}_4=\epsilon^P_{rsts},
a^{\prime}_5=\epsilon^P_{tsrts}, a^{\prime \prime}_5=\epsilon^P_{rstrs},
a_6=\epsilon^P_{rtsrts},
a_7=\epsilon^P_{srtsrts}.$
\begin{center}
\begin{tabular} {|c|c|c|c|c|c|}
\hline $H^*(G/P_2)$ & $a_1$ & $a_2^{\prime}$ & $a_2^{\prime
\prime}$ & $a_3^{\prime}$ & $a_3^{\prime \prime}$  \\ \hline

$a_1$ & $a_2^{\prime} + {\defpar}a_2^{\prime \prime}$ & ${\defpar}a_3^{\prime}$ &
$a_3^{\prime} + a_3^{\prime \prime}$ & $ 2{\defpar} a_4^{\prime} +{\defpar}
a_4^{\prime \prime}$ & $ {\defpar}a_4^{\prime} + 2{\defpar} a_4^{\prime \prime}$ \\
\hline

$a_2^{\prime}$ & & ${\defpar}^2a_4^{\prime}$ & ${\defpar}a_4^{\prime} + {\defpar}a_4^{\prime
\prime}$ & $ {\defpar}^2a_5^{\prime} + {\defpar}a_5^{\prime \prime}$ & $ {\defpar}a_5^{\prime
\prime}$  \\ \hline

$a_2^{\prime \prime}$ & & & $ 2 a_4^{\prime} + 2 a_4^{\prime
\prime}$ & ${\defpar}a_5^{\prime} + 2 a_5^{\prime \prime}$ & $ {\defpar}a_5^{\prime}
+ 2 a_5^{\prime \prime} $  \\ \hline

$a_3^{\prime }$ & & & & $ 2 {\defpar}a_6$ & ${\defpar}a_6$ \\ \hline

$a_3^{\prime \prime}$ & & & & & $ 2{\defpar} a_6$ \\ \hline

\end{tabular}
\end{center}

\begin{center}
\begin{tabular} {|c|c|c|c|c|c|c|}
\hline $H^*(G/P_2)$ & $a_4^{\prime}$ & $a_4^{\prime \prime}$ &
$a_5^{\prime}$ & $a_5^{\prime \prime}$ & $a_6$ & $a_7$ \\ \hline

$a_1$ & ${\defpar}a_5^{\prime}+a_5^{\prime \prime}$ & $a_5^{\prime \prime}$
& $a_6$ & ${\defpar}a_6$ & $a_7$ & $0$ \\ \hline

$a_2^{\prime}$ & ${\defpar}a_6$ & $0$ & $a_7$ & $0$ & $0$ & $0$ \\ \hline

$a_2^{\prime \prime}$ & $a_6$ & $a_6$ & $0$ & $a_7$ & $0$ & $0$ \\
\hline

$a_3^{\prime }$ & $a_7$ & $0$ & $0$ & $0$ & $0$ & $0$\\ \hline

$a_3^{\prime \prime}$ &  $0$ & $a_7$ & $0$ & $0$ & $0$ & $0$
\\ \hline
\end{tabular}
\end{center}

{\bf Example 10. $G=C_3, P=P_3:$}
In the following $a_1=\epsilon^P_t, a_2=
\epsilon^P_{st},  a^{\prime}_3=\epsilon^P_{rst},  a^{\prime\prime}_3=\epsilon^P_{tst},
 a_4=
\epsilon^P_{trst}, a_5=\epsilon^P_{strst},  a_6=\epsilon^P_{tstrst}.$
\begin{center}
\begin{tabular} {|c|c|c|c|c|c|c|c|}
\hline

$H^*(G/P_3)$ & $a_1$ &  $a_2$ &  $a_3^{\prime}$ & $a_3^{\prime
\prime}$ & $a_4$ & $a_5$ &  $a_6$   \\ \hline $a_1$ & $2 a_2$ & $2
a_3^{\prime} + a_3^{\prime \prime}$ & $a_4$ & $2a_4$ & $2a_5$ &
$a_6$ & $0$ \\ \hline $a_2$ & & $2a_4$ & $a_5$ & $2a_5$ & $a_6$ &
$0$ & $0$ \\ \hline $a_3^{\prime}$ & & & $0$ & $a_6$ & $0$ & $0$ &
$0$ \\ \hline $a_3^{\prime \prime}$ & & & & $0$ & $0$ & $0$ & $0$
\\ \hline

\end{tabular}
\end{center}
\begin{remark} (a) The deformed product $\odot_0$ for $\SO(8)/P$ for all the maximal
parabolic subgroups $P$ of $\SO(8)$ is determined in [KKM].

(b) The deformed product $\odot_0$ for $F_4/P$ and $E_6/P$ for all the maximal
parabolic subgroups $P$ is determined by B. Lee (cf. [Le])
\end{remark}

\section{An explicit determination of the eigencone  for the ranks 2 and 3 simple Lie
algebras}\label{sec10}

The results in this section are taken from [KLM$_1$, $\S$ 7] for the rank $2$ root systems
and [KuLM]  for the rank $3$ root systems.

In this section we describe the irredundant set of inequalities, given by Corollary
\ref{7.5}, determining the
eigencone $\bar{\Gamma}_3$ inside $\frh_+^3$ for the ranks $2$ and $3$ root
systems $A_2$, $B_2$, $G_2$, $A_3$, $B_3$ and $C_3$. Thus the following inequalities correspond to the
facets of  $\bar{\Gamma}_3$ which intersect the interior of  $\frh_+^3$.
In each of the three rank $2$ (resp. rank $3$) cases, there are $2$ (resp. $3$) standard maximal
parabolics, hence the system breaks up into two (resp. three) subsystems.

\subsection{The inequalities for $A_2$}
 The Weyl
chamber $\frh_+$ is given by
$$\frh_+ = \{(x,y,z) : x + y + z  = 0, x\geq y \geq z\}.$$

  We give below the inequalities  in terms of the triples
$(v_1,v_2,v_3) \in \frh_+^3$ with $v_j= (x_j,y_j,z_j), j
=1,2,3$.  We only give a set of inequalities up to the action of $S_3$. Thus,
to get the full set of inequalities,  we need to
symmetrize these with respect to the action of $S_3$ diagonally
permuting the variables $x_1, x_2,x_3; y_1, y_2,y_3; z_1, z_2,z_3$.
\subsubsection{The subsystem associated to $H^*(G/P_1)$ (before symmetrization)}
\begin{align*}
x_1 + z_2 + z_3 \leq 0 \\
y_1 + y_2 + z_3 \leq 0 .
\end{align*}
 These constitute $6$ inequalities after symmetrization.
\subsubsection{The subsystem associated to $H^*(G/P_2)$ (before symmetrization)}
\begin{align*}
z_1 + x_2 + x_3 \geq 0 \\
y_1 + y_2 + x_3 \geq 0 .
\end{align*}
 These constitute $6$ inequalities after symmetrization.

To summarize,  for $A_2$, these provide an irredundant set of  altogether
 $12$ inequalities.
 \subsection{The inequalities for $B_2$}
The Weyl chamber $\frh_+$ is given by the pairs $(x,y)$
of real numbers satisfying
$x\geq y  \geq 0.$

The inequalities will now be in terms of $(v_1,v_2,v_3) \in
\frh_+^3$ with $v_j = (x_j,y_j),j=1,2,3$. We will need to symmetrize the
inequalities
 with respect to the action of $S_3$ diagonally permuting the variables
$x_1, x_2,x_3; y_1, y_2,y_3$.
\subsubsection{The subsystem associated to $H^*(G/P_1)$ (before symmetrization)}
\begin{align*}
 x_1 \leq x_2 + x_3  \\
 y_1 \leq y_2 + x_3 .
 \end{align*}
After symmetrizing, we get $9$ inequalities.
\subsubsection{The subsystem associated to $H^*(G/P_2)$ (before symmetrization)}
\begin{align*}
 x_1 +y_1\leq x_2 + y_2+ x_3 +y_3 \\
 x_1+y_2 \leq y_1+ x_2+ x_3 + y_3 .
 \end{align*}
After symmetrizing, we get $9$ inequalities.

To summarize,  for $B_2$, these provide an irredundant set of  altogether
 $18$ inequalities.
 \subsection{The inequalities for $G_2$}
The Weyl chamber $\frh_+$ is given by the pairs $(x,y)$
of real numbers satisfying
$x\geq 0, y  \geq 0.$

The inequalities will now be in terms of $(v_1,v_2,v_3) \in
\frh_+^3$ with $v_j = (x_j,y_j),j=1,2,3$. We will need to symmetrize the
inequalities
 with respect to the action of $S_3$ diagonally permuting the variables
$x_1, x_2,x_3; y_1, y_2,y_3$.
\subsubsection{The subsystem associated to $H^*(G/P_1)$ (before symmetrization)}
\begin{align*}
 2x_1 +y_1\leq 2x_2 +y_2+ 2x_3+y_3  \\
 x_1+y_1 \leq x_2+y_2 + 2x_3 +y_3\\
 x_1\leq x_2 +2x_3 +y_3 .
 \end{align*}
After symmetrizing, we get $15$ inequalities.
\subsubsection{The subsystem associated to $H^*(G/P_2)$ (before symmetrization)}
\begin{align*}
 3x_1 +2y_1\leq 3x_2 + 2y_2+ 3x_3 +2y_3 \\
 3x_1+y_1 \leq 3x_2 + y_2+ 3x_3 +2y_3\\
 y_1\leq y_2+ 3x_3 +2y_3 .
 \end{align*}
After symmetrizing, we get $15$ inequalities.

To summarize,  for $G_2$, these provide an irredundant set of  altogether
 $30$ inequalities.
\subsection{The inequalities for $A_3$}
 The Weyl
chamber $\frh_+$ is given by
$$\frh_+ = \{(x,y,z,w) : x + y + z + w = 0, x\geq y \geq z \geq w \}.$$

  We give below the inequalities  in terms of the triples
$(v_1,v_2,v_3) \in \frh_+^3$ with $v_j= (x_j,y_j,z_j,w_j), j
=1,2,3$.  We only give a set of inequalities up to the action of $S_3$. Thus,
to get the full set of inequalities,  we need to
symmetrize these with respect to the action of $S_3$ diagonally
permuting the variables $x_1, x_2,x_3; y_1, y_2,y_3; z_1, z_2,z_3;
w_1, w_2,w_3$.
\subsubsection{The subsystem associated to $H^*(G/P_1)$ (before symmetrization)}
\begin{align*}
x_1 + w_2 + w_3 \leq 0 \\
y_1 + z_2 + w_3 \leq 0 \\
z_1 + z_2 + z_3 \leq 0 .
\end{align*}
 These constitute $10$ inequalities after symmetrization.
\subsubsection{The subsystem associated to $H^*(G/P_2)$ (before symmetrization)}
\begin{align*}
x_1 + y_1 + z_2 + w_2 + z_3 + w_3 \leq 0 \\
x_1 + z_1 + y_2 + w_2 + z_3 + w_3 \leq 0 \\
x_1 + w_1 + x_2 + w_2 + z_3 + w_3 \leq 0 \\
y_1 + z_1 + y_2 + z_2 + z_3 + w_3 \leq 0 \\
x_1 + w_1 + y_2 + w_2 + y_3 + w_3 \leq 0 \\
y_1 + z_1 + y_2 + w_2 + y_3 + w_3 \leq 0 .
\end{align*}
These constitute $21$ inequalities after symmetrization.
\subsubsection{The subsystem associated to $H^*(G/P_3)$
(before
symmetrization)}
\begin{align*}
x_1 + y_1 + z_1 + y_2 + z_2 + w_2 + y_3 + z_3 + w_3 \leq 0 \\
x_1 + y_1 + w_1 + x_2 + z_2 + w_2 + y_3 + z_3 + w_3 \leq 0 \\
x_1 + z_1 + w_1 + x_2 + z_2 + w_2 + x_3 + z_3 + w_3 \leq 0 .
\end{align*}
These constitute $10$ inequalities after
symmetrization.

These $41$ inequalities form an irredundant set to define
$\bar{\Gamma}_3(A_3)$ inside $\mathfrak{h}_+^3$.
\subsection{The inequalities for $B_3$}
The Weyl chamber $\frh_+$ is given by the triples $(x,y,z)$
of real numbers satisfying
$x\geq y \geq z \geq 0.$

The inequalities will now be in terms of $(v_1,v_2,v_3) \in
\frh_+^3$ with $v_j = (x_j,y_j,z_j),j=1,2,3$. We will need to symmetrize the
inequalities
 with respect to the action of $S_3$ diagonally permuting the variables
$x_1, x_2,x_3; y_1, y_2,y_3; z_1, z_2,z_3$.
\subsubsection{The subsystem associated to $H^*(G/P_1)$ (before symmetrization)}
\begin{align*}
 x_1 \leq x_2 + x_3  \\
 y_1 \leq y_2 + x_3 \\
 z_1 \leq z_2 + x_3  \\
 z_1 \leq y_2 + y_3 .
 \end{align*}
After symmetrizing, we get $18$ inequalities.
\subsubsection{The subsystem associated to $H^*(G/P_2)$ (before symmetrization)}
\begin{align*}
x_1 + y_1 & \leq x_2 + y_2 + x_3 + y_3 \\
x_1 + z_1 & \leq x_2 + z_2 + x_3 + y_3  \\
y_1 + z_1 & \leq y_2 + z_2 + x_3 + y_3  \\
x_1 + z_2 & \leq z_1 + x_2 + x_3 + y_3 \\
y_1 + z_2 & \leq z_1 + y_2 + x_3 + y_3\\
x_1 + y_2 & \leq y_1 + x_2 + x_3 + y_3 \\
y_1 + z_1 & \leq x_2 + z_2 + x_3 + z_3 \\
y_1 + z_2 & \leq z_1 + x_2 + x_3 + z_3  \\
x_1 + z_2 & \leq y_1 + x_2 + x_3 + z_3 .
\end{align*}
 We get $48$ inequalities after symmetrizing.
\subsubsection{The subsystem associated to $H^*(G/P_3)$ (before symmetrization)}
\begin{align*}
x_1 + y_1 + z_1 & \leq x_2 + y_2 + z_2 + x_3 + y_3 + z_3 \\
x_1 + y_1 + z_2 & \leq z_1 + x_2 + y_2 + x_3 + y_3 + z_3 \\
x_1 + z_1 + y_2 & \leq y_1 + x_2 + z_2 + x_3 + y_3 + z_3 \\
x_1 + y_2 + z_2 & \leq y_1 + z_1 + x_2 + x_3 + y_3 + z_3 \\
x_1 + y_2 + z_3 & \leq y_1 + z_1 + x_2 + z_2 + x_3 + y_3 .
\end{align*}
After symmetrizing we get $27$ inequalities.

To summarize,  for $B_3$, these provide an irredundant set of  altogether
 $93$ inequalities.
\subsection{The inequalities for $C_3$}

In this case the Weyl chamber $\frh_+$ is given by the triples
$(x,y,z)$ of real numbers satisfying
$x\geq y \geq z \geq 0.$
Here  $x,y,z$ are the coordinates relative to the standard basis
$\epsilon_1,\epsilon_2,\epsilon_3$ in the notation of
[Bo, pg. 254 - 255]. The inequalities will now be in
terms of $(v_1,v_2,v_3) \in \frh_+^3$ with $v_j =
(x_j,y_j,z_j),j=1,2,3$. We will need to symmetrize the
inequalities
 with respect to the action of $S_3$ diagonally permuting the variables
$x_1, x_2,x_3; y_1, y_2,y_3; z_1, z_2,z_3$.
\subsubsection{The subsystem associated to $H^*(G/P_1)$ (before symmetrization)}
\begin{align*}
 x_1 \leq x_2 + x_3  \\
 y_1 \leq y_2 + x_3  \\
 z_1 \leq z_2 + x_3 \\
 z_1 \leq y_2 + y_3 .
 \end{align*}
These give $18$ inequalities after
symmetrization.
\subsubsection{The subsystem associated to $H^*(G/P_2)$ (before symmetrization)}
\begin{align*}
x_1 + y_1 & \leq x_2 + y_2 + x_3 + y_3  \\
x_1 + z_1 & \leq x_2 + z_2 + x_3 + y_3  \\
y_1 + z_1 & \leq y_2 + z_2 + x_3 + y_3  \\
x_1 + z_2 & \leq z_1 + x_2 + x_3 + y_3 \\
y_1 + z_2 & \leq z_1 + y_2 + x_3 + y_3 \\
x_1 + y_2 & \leq y_1 + x_2 + x_3 + y_3  \\
y_1 + z_1 & \leq x_2 + z_2 + x_3 + z_3 \\
y_1 + z_2 & \leq z_1 + x_2 + x_3 + z_3  \\
x_1 + z_2 & \leq y_1 + x_2 + x_3 + z_3 .
\end{align*}
This subsystem  after
symmetrization consists of $48$ inequalities.
\subsubsection{The subsystem associated to $H^*(G/P_3)$ (before symmetrization)}
\begin{align*}
x_1 + y_1 + z_1 \leq x_2 + y_2 + z_2 + x_3 + y_3 + z_3 \\
x_1 + y_1 + z_2 \leq z_1 + x_2 + y_2 + x_3 + y_3 + z_3  \\
x_1 + z_1 + y_2 \leq y_1 + x_2 + z_2 + x_3 + y_3 + z_3  \\
x_1 + y_2 + z_2 \leq y_1 + z_1 + x_2 + x_3 + y_3 + z_3 \\
x_1 + y_2 + z_3 \leq y_1 + z_1 + x_2 + z_2 + x_3 + y_3 .
\end{align*}
This gives   $27$ inequalities after symmetrization.

 The $27$ inequalities above can be rewritten  in a very simple way.
 Let $S = \sum_{j=1}^3 x_j + y_j +z_j$. Then the $27$ inequalities
 are just the inequalities
 $$x_i + y_j + z_k \leq \frac{S}{2}, \  i,j,k = 1,2,3.$$

These $93$
inequalities form an irredundant set to define
$\bar{\Gamma}_3(C_3)\subset \frh_+^3$.

\begin{remark} The irredundant set of inequalities to define
$\bar{\Gamma}_3(D_4)\subset \frh_+^3$ is explicitly determined in [KKM, $\S$ 5].
\end{remark}

\newpage

\appendix

\section{Buildings and tensor product multiplicities\\ (by Michael Kapovich$^0$)}
\footnotetext{Partial  financial support for this work was provided by the NSF grants DMS-09-05802 and DMS-12-05312.}

The goal of this appendix is to explain connections between {\em metric geometry} (driven by notions such as {\em distance} and {\em curvature}) and the representation theory of complex semisimple Lie groups. The connections run through the theory of buildings. We will give sketches of proofs of results established in a sequence of papers 
\cite{KapovichLeebMillson1, KapovichLeebMillson2, KapovichLeebMillson3, KapovichMillson1, KapovichMillson2}  of the author and his collaborators: B. Leeb and J. J. Millson. (The results were further extended in the papers \cite{KKM}, \cite{HKM} and \cite{Berenstein-Kapovich}.)  We also refer the reader to the survey \cite{K2006} for a different take on these results  and the discussion of symmetric spaces and eigenvalue problems which we did not discuss here. Some of this theory should generalize in the context of Kac--Moody groups; we refer the reader to \cite{GR} for the first step in this direction. We refer the reader to the papers \cite{Fontaine-Kamnitzer, FKK, Kam, MV} for other developments connecting algebraic geometry of buildings and representation theory. 

\subsection{Notation}

Throughout, we let $F$ be a local field with discrete valuation and $\O\subset \K$ be the corresponding ring of integers; 
the reader can think of $F=\Q_p, \O=\Z_p$. Let $q$ denote the cardinality of the residue field of $F$ and let $\pi\in F$ be a uniformizer. 
Let ${\sf G}$ be a split semisimple algebraic group-scheme over $\Z$, let ${\sf G}^\vee$ be the Langlands' dual group scheme and set
$$
G={\sf G}(F), \quad G^\vee:= {\sf G}^\vee(\C).
$$

We will also fix a dual maximal tori $T\subset G, T^\vee \subset G^\vee$ and Borel subgroups $B, B^\vee$ normalized by these tori. These choices will allow us to talk about (dominant) weights of the group $G^\vee$ (more precisely, weights of $T^\vee$ positive with respect to $B^\vee$) etc. Let ${\sf U}\subset {\sf G}$ be the unipotent radical, set $U:={\sf U}(F)$.  

We let $\X_*(\sf{T})$,  $\X^*(\sf{T})$ denote the groups of cocharacters and characters of ${\sf T}$. The subgroup $K={\sf G}(\O)$ is a maximal compact subgroup of $G$. Lastly, let $W$ be the Weyl group of ${\sf G}$ corresponding to ${\sf T}$. 

The general theme of this appendix is that the representation theory of the group $G^\vee$ is governed by the geometry 
of the group $G$: This geometry will manifest itself through geometry (both metric and algebraic) 
of the Bruhat--Tits buildings associated with the group $G$.

Given the root system $R$ of the group $G$ (of rank $\ell$) we define the constant $k_R$ to the least common multiple of the numbers $a_1,\ldots, a_\ell$, where
$$
\theta=\sum_{i=1}^\ell a_i \al_i,
$$
with simple roots $\al_i$ of $R$ and the highest root $\theta$ of $R$. For instance, if $R$ is an irreducible root system, then $R$ has type $A$ if and only if $k_R=1$ and the largest value $k_R=60$ occurs for $R=E_8$.

\subsection{Buildings}\label{sec:buildings}

In this section we discuss axioms of (discrete) Euclidean (affine) buildings.  

\subsubsection{Coxeter complexes}

Let $A$ be the Euclidean $\ell$-dimensional space and let $W_{af}$ be a Coxeter group acting properly discontinuously, isometrically and faithfully on $A$, 
so that Coxeter generators of $W_{af}$ act as reflections on $A$. Note that the stabilizer $W_x$ of a point $x\in A$ in the group $W_{af}$ is a finite reflection group. We will assume that the group $W_{af}$ acts cocompactly on $A$, i.e., that $W_{af}$ is crystallographic. The pair $(A,W_{af})$ is a {\em Euclidean Coxeter complex}; the space $A$ is called {\em model apartment}. Fixed-point sets of reflections in $W_{af}$ are called {\em walls} in $A$. Let ${\mathcal W}$ denote the union of walls in $A$. Closures $a$ of components of the complement 
$$
A\setminus {\mathcal W}, 
$$
are {\em alcoves} in $A$; they are fundamental domains for the action of $W_{af}$ on $A$. 

A {\em half-apartment} in $A$ is a half-space bounded by a wall. The group $W_{af}$ splits as the semidirect product $\La \rtimes W$, where $\La$ is a group of translations in $\R^\ell$. Since $W_{af}$ was assumed to be crystallographic, it is associated to a root system $R$. Then, $\La$ is a lattice in $A$, the 
coroot lattice $Q(R^\vee)$; the finite reflection group $W$ is the stabilizer of a point $o\in A$. The normalizer of $W_{af}$ in the full group of translations of $A$ is the coweight lattice $P(R^\vee)$. 

We will fix a fundamental domain $\Delta$ (a {\em positive Weyl chamber}) for the action of $W$ on $A$, so that $\Delta$ is bounded by walls. We let $\Delta^*\subset \R^\ell$ denote the dual cone of $\Delta$:
$$
\Delta^*=\{v\in \R^\ell: \<v, u\>\ge 0, \forall u\in \Delta\}. 
$$
The cone $\Delta^*$ is spanned by the positive root vectors of $R$. 

 We define a partial order 
$$
u\le_{\Delta^*} v
$$
on $\R^\ell$ by requiring that
$$
v-u\in \Delta^*. 
$$

The Coxeter complex has natural structure of a regular cell complex, where facets are  alcoves and vertices are zero-dimensional intersections of walls. 
By abusing the terminology, we will refer to this cell complex as a Coxeter complex as well.

A vertex of the model Euclidean apartment $A$ is called {\em special} if its stabilizer in $W_{af}$ is isomorphic to $W$, i.e., is maximal possible. The root system $R$ is the product of root systems of type $A$, if and only if every vertex is special. The numbers $k_R$ are defined so that $k_R$ is the least natural number $n$ so that the image of  every vertex $v\in A$ under the scaling $x\mapsto nx$, is a special vertex of $A$. For instance, for the root system of type $A$, every vertex is special, so $k_R=1$.  

\subsubsection{Affine buildings}

A {\em space modeled on a Coxeter complex} $(A,W_{af})$ is a metric space $X$ together with a collection of isometric embeddings (``charts'') $\phi: A\to X$, so that transition maps between charts are restrictions of elements of $W_{af}$. The number $\ell$ is the {\em rank} of $X$. 
Images of charts are called {\em apartments} in $X$. Note that apartments are (almost never) open in $X$. By taking images of vertices, walls and half-apartments in $A$ under charts, we define vertices, walls and half-apartments in $A$. An isometry $g: X\to X$ is an {\em automorphism} of the building $X$ if for every pair of charts $\phi, \psi$, the composition $\psi^{-1}\circ g\circ \phi$ is the restriction of an element of $W_{af}$. Our definition of affine buildings follows \cite{Kleiner-Leeb}; equivalence of this definition to the more combinatorial one (which could be found e.g. in \cite{Ronan}) was established in \cite{Parreau}. Note that definition that we give below extends (by adding an extra axiom) to the case of non-discrete buildings, see \cite{Kleiner-Leeb} and \cite{Parreau}.

\begin{defn}
A (thick) {\em Euclidean (affine) building}  is a space modeled on a Euclidean Coxeter complex and satisfying three axioms listed below: 
\end{defn}

\medskip
{\bf A1 (Connectivity).} Every pair of points in $X$ belongs to a common apartment. 

\medskip
{\bf A2 (Curvature bounds).} We require $X$ to be a $CAT(0)$-metric space. (We will explain the definitions below.)

\medskip 
{\bf A3 (Thickness).} Every wall in $X$ is the intersection of (at least) three half-apartments. 

\medskip
This definition parallels the one of the symmetric space $G/K$ of a (connected) semisimple Lie group. The $CAT(0)$ condition is the analogue of the fact that symmetric spaces of noncompact type have sectional curvature $\le 0$.

The $CAT(0)$ condition was  first introduced by A.D.Alexandrov in 1950s. Informally, this condition means that {\em geodesic triangles in $X$ are thinner than the geodesic triangles in the Euclidean plane}. Below is the precise definition. 

A {\em geodesic segment} ${xy}$ in $X$ is an isometric (i.e., distance-preserving) embedding of an interval $[a,b]\subset \R$ into $X$; the points $x$, $y$ are the images of $a, b$ under this isometric embedding. We will orient the geodesic segment $xy$ from $x$ to $y$. Similarly, one defines geodesic rays as isometric maps $[0,\infty)\to X$. 

An (oriented) {\em geodesic triangle} $\tau=xyz$ in $X$ is a concatenation of three oriented geodesic segments ${xy}, {yz}, {zx}$, the {\em edges} of $\tau$; the points $x, y, z$ are the {\em vertices} of $\tau$. A {\em disoriented} geodesic triangle is formed by the segments $xy, yz, xz$. 
The side-lengths of a triangle $\tau$ are the lengths of its edges; they are denoted $|xy|, |yz|, |zx|$. Then the side-lengths of $\tau$ satisfy the triangle inequalities
$$
|xy|\le |yz|+|zx|. 
$$
The triangle inequalities above are necessary and sufficient for existence of a geodesic triangle in $\R^2$ with the given side-lengths. Then, in $\R^2$ there exists a {\em comparison triangle} for the triangle $\tau\subset X$, namely, a geodesic triangle $\tilde{\tau}$ with vertices $\tilde{x}, \tilde{y}, \tilde{z}$ whose side-lengths are the same as the side-lengths of $\tau$. Given any pair of points $p\in {xy}, q\in {yz}$, one defines the corresponding {\em comparison points} $\tilde{p}\in {\tilde{x} \tilde{y}}$ and  
$\tilde{q}\in {\tilde{y} \tilde{z}}$, so that
$$
|xp|= |\tilde{x} \tilde{p}|, \quad |yq|= |\tilde{y} \tilde{q}|. 
$$
Then the space $X$ is said to be $CAT(0)$ if 
for every geodesic triangle in $X$ we have:
$$
|pq|\le |\tilde{p} \tilde{q}|. 
$$
We refer the reader to \cite{Ballmann} for further details on $CAT(0)$ geometry.

\begin{ex}
Suppose that $X$ is a 1-dimensional Euclidean building. Then $X$ is a connected graph, whose vertices are the images of the walls in $A$ and all edges have the same length, which is the minimal distance between the walls in $A$. Thickness axiom is equivalent to the requirement that every vertex of $X$ has valence $\ge 3$. The curvature restriction is that the graph $X$ contains no circuits, i.e., $X$ is a tree.  
\end{ex}

Note that $X$ has natural structure of a cell complex, where 
cells are images of cells in $(A, W_{af})$. However, it is important to note that we consider {\em all} points of $X$, not just its vertices. 

\subsubsection{Chamber-valued distance function} 

Let $X$ be a Euclidean building. Our next goal is to define a 2-point invariant 
$d_\Delta(x,y)$ in $X$, taking values in the Weyl chamber $\Delta$. We first define 
$d_\Delta(x,y)$ for $x, y\in A$. We identify the affine space $A$ with the vector space $\R^\ell$ by declaring $o$ to the origin. Next, identify the directed segment $\ov{xy}$  with a vector $v$ in $\R^\ell$, then project $v$ to a vector $\bar{v}\in \Delta$ using the action of the group $W\subset W_{af}$. We declare $\bar{v}\in \Delta$ to be the {\em $\Delta$-valued distance} $d_\Delta(x,y)$ between the points $x, y\in A$. It is clear that $d_\Delta$ is $W_{af}$-invariant. 
Now, for a chart $\phi: A\to X$ we set 
$$
d_\Delta(\phi(x),\phi(y)):=d_\Delta(x,y). 
$$
Since every two points in $X$ belong to a common apartment and transition maps between charts in $X$ are restrictions of elements of $W_{af}$, it follows that we obtain a well-defined function $d_\Delta: X\times X\to \Delta$. Furthermore, if $g$ is an automorphism of $X$, then $g$ preserves $d_\Delta$.  

Note that, in general, the function $d_\Delta$ is not symmetric, however, 
$$d_\Delta(x,y)= -w_0 d_\Delta(y,x),$$ where 
$w_0\in W$ is the longest element of this finite Coxeter group. For an oriented geodesic segment ${xy}$ in $X$, we regard $d_\Delta(x,y)$ as the $\De$-valued lengths of ${xy}$. More generally, given a piecewise-geodesic path $p$ in $X$ (i.e., a concatenation of geodesic paths $p_i, i=1,\ldots,m$), 
we define the $\De$-length of $p$, denoted $length_\De(p)$ to be the sum  
$$
\sum_{i=1}^m length_\De(p_i) \in \Delta. 
$$

The ``metric space'' $(X, d_{\Delta})$ has interesting geometry. For instance, the {\em generalized triangle inequalities} for $X$ are necessary and sufficient conditions for existence of an oriented triangle in $X$ with the given $\De$-side lengths $(\la, \mu, \nu)\in \Delta^3$. A priori, it is far from clear why such consitions are given by linear inequalities. It was proven in \cite{KapovichLeebMillson1} that these necessary and sufficient conditions are exactly the inequalities defining the eigencone $C(H)$ for any (complex or real) semisimple Lie group $H$ whose Weyl group is isomorphic to $W$; below we will explain why this is true for the group $H=G^\vee$. 

Not much is known about this ``geometry'' beyond the generalized triangle inequalities. For instance, one can ask, to which extent, this geometry is ``nonpositively curved.'' Below is a partial result in this direction (reminiscent of the fact that the ordinary distance between geodesics in a $CAT(0)$ space is a convex function):

\begin{thm}
The $\Delta$-distance function between geodesics in $X$ is $\Delta^*$-convex. More precisely: Let $\ga_1(t), \ga_2(t)$ be geodesics in $X$ parameterized with the constant speed. Define the function 
$$
\varphi(t)=d_\Delta(\gamma_1(t), \gamma_2(t)). 
$$
Then for all $a, b$, and $t\in [0,1]$, 
$$
\varphi((1-t)a+ tb)\le_{\Delta^*}  (1-t) \varphi(a) + t \varphi(b). 
$$
\end{thm}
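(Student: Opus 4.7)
The plan is to reduce the vector-valued convexity to ordinary scalar convexity of $\omega\circ\varphi$ for each dominant weight $\omega\in\Delta$, handle the apartment-local case by a Finsler-norm computation, and deduce the general case by exploiting the $CAT(0)$ structure of $X$ together with the building axioms.

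Via the Killing-form identification of $\mathfrak{h}$ with $\mathfrak{h}^*$, the dual cone of $\Delta^*$ (spanned by the positive root vectors) is precisely the dominant chamber $\Delta$, and hence $u\leq_{\Delta^*} v$ if and only if $\omega(v-u)\geq 0$ for every $\omega\in\Delta$. Applied to $(1-t)\varphi(a)+t\varphi(b)-\varphi((1-t)a+tb)$, the conclusion becomes: for every fixed $\omega\in\Delta$, the scalar function $f_\omega(s):=\omega\bigl(d_\Delta(\gamma_1(s),\gamma_2(s))\bigr)$ is convex on the parameter interval in the usual sense. Using a chart to identify an apartment $A$ with $\R^\ell$, one has
$$
\omega(d_\Delta(x,y))=\|\ov{xy}\|_\omega, \qquad \|v\|_\omega:=\max_{w\in W}\omega(wv),
$$
where $\|\cdot\|_\omega$ is a $W$-invariant sublinear (and hence convex) norm on $\R^\ell$ whose value on $\ov{xy}$ is independent of the chart. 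Had $\gamma_1,\gamma_2$ lain in a common apartment, $s\mapsto\ov{\gamma_1(s)\gamma_2(s)}$ would be affine in $\R^\ell$, and convexity of $f_\omega=\|\cdot\|_\omega\circ(\text{affine})$ would be immediate. The real difficulty is that in general no single apartment contains both geodesics.

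Since $f_\omega$ is Lipschitz continuous (a linear functional composed with the $1$-Lipschitz function $d_\Delta$), by the standard dyadic density argument it suffices to prove midpoint convexity: writing $p_i:=\gamma_i(a)$, $q_i:=\gamma_i(b)$, $m_i:=\gamma_i(\tfrac{a+b}{2})$,
$$
\omega\bigl(d_\Delta(m_1,m_2)\bigr)\leq\tfrac12\omega\bigl(d_\Delta(p_1,p_2)\bigr)+\tfrac12\omega\bigl(d_\Delta(q_1,q_2)\bigr).
$$
The strategy is to view $(\gamma_1(s),\gamma_2(s))$ as a geodesic in the $CAT(0)$ product $X\times X$ (whose diagonal is a convex subset) and to show that the Finsler ``distance'' $\omega\circ d_\Delta$ from the diagonal is convex along this product geodesic. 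This reduces, via joint convexity in the two variables, to the following \emph{Finsler median inequality}: for any $x,y,z\in X$ with $m$ the midpoint of $[y,z]$,
$$
\omega(d_\Delta(x,m))\leq\tfrac12\bigl[\omega(d_\Delta(x,y))+\omega(d_\Delta(x,z))\bigr],
$$
which expresses convexity of $p\mapsto\omega(d_\Delta(x,p))$ along geodesics of $X$ for fixed $x$.

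The Finsler median inequality is the technical heart. Within an apartment containing $x,y,z$ it is immediate from convexity of $\|\cdot\|_\omega$. In general, pick an apartment $A\supset[y,z]$ (which exists by axiom \textbf{A1}) and consider the building retraction $r=r_{A,c}:X\to A$ based at an alcove $c\subset A$ containing $m$. This retraction is distance non-increasing, restricts to the identity on $A$, and preserves $d_\Delta$ from every point of the base alcove $c$; the crucial additional property one needs is the $\omega$-contractivity
$$
\omega\bigl(d_\Delta(r(x),q)\bigr)\leq\omega\bigl(d_\Delta(x,q)\bigr)\qquad\text{for all }q\in A,
$$
which reduces the triple $(x,y,z)$ to the apartment triple $(r(x),y,z)\subset A$ and renders the median inequality identical to the apartment-level convexity of $\|\cdot\|_\omega$. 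Establishing this $\omega$-contractivity of retractions is the main obstacle; it is the only point at which the argument goes beyond purely $CAT(0)$ geometry, and it depends essentially on the thickness axiom \textbf{A3} together with the compatibility of charts supplied by \textbf{A1}, which allow the retraction to be assembled alcove-by-alcove from isomorphisms of apartments.
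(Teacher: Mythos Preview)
The paper does not prove this theorem: it is stated in the appendix without proof, as a result imported from the Kapovich--Leeb--Millson papers cited there. So there is no in-text argument to compare against; what follows concerns your outline on its own.

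The reduction to scalar convexity of $f_\omega=\omega\circ\varphi$ for each $\omega\in\Delta$, the identification $\omega(d_\Delta(x,y))=\|\ov{xy}\|_\omega$ with the $W$-invariant seminorm $\|v\|_\omega=\max_{w\in W}\omega(wv)$, and the apartment-local case are all correct. There are, however, two genuine gaps. The first is in your Step~3: the passage from the one-variable ``Finsler median inequality'' (convexity of $p\mapsto\omega(d_\Delta(x,p))$ for fixed $x$) to the two-geodesic midpoint inequality is not routine. Separate convexity never implies joint convexity, and the diagonal-in-$X\times X$ reformulation does not help. The standard sandwich trick (insert the midpoint $n$ of $[p_1,q_2]$ and use the triangle inequality for $d_\omega:=\omega\circ d_\Delta$) requires the \emph{common-vertex} Busemann form
\[
\omega\bigl(d_\Delta(m_{xy},m_{xz})\bigr)\le\tfrac12\,\omega\bigl(d_\Delta(y,z)\bigr),
\]
not the fixed-vertex form you stated; for a general pseudometric these are not interchangeable. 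The second gap you flag yourself: the $\omega$-contractivity of $r_{A,c}$. It is true, and the argument is short once seen --- the retraction sends any geodesic $\gamma$ to a broken path in $A$ each of whose pieces shares the $\Delta$-direction of $\gamma$ (on every apartment through $c$, $r$ is a Coxeter-complex isomorphism), so sublinearity of $\|\cdot\|_\omega$ in $A$ makes $r$ $1$-Lipschitz for $d_\omega$. But note what this buys you: the triangle inequality for $d_\omega$ and the fixed-vertex median inequality. It does \emph{not} directly yield the common-vertex form, because after retracting one geodesic into the apartment of the other, the image is a broken path whose arc-length midpoint need not be the Euclidean midpoint, so the apartment-level linearity no longer applies. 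Closing that last step --- showing the folded path still satisfies the half-distance estimate, or bypassing it via the description of $d_\omega$ through Busemann functions of fixed type --- is where the actual content lies, and your outline stops short of it.
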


We now continue with definitions. A vertex in the  Euclidean building $X$ modeled on $(A,W_{af})$ is {\em special} if it is the image of a special vertex of the model apartment $A$ under a chart.  A triangle $\tau$ in $X$ is called {\em special} if its vertices are special vertices of $X$ and $\Delta$-side lengths are elements of $\Delta \cap P(R^\vee)$. 

We define 
$$
\widehat{\T}_{\la,\mu,\nu} 
$$
to be the space of oriented triangles in $X$ with the $\Delta$-valued side-lengths $\la, \mu, \nu$. Note that we do not require vertices of these triangles to be vertices of $X$. Similarly, we define
$$
\widehat{\T}^{sp}_{\la,\mu,\nu}\subset \widehat{\T}_{\la,\mu,\nu}
$$
to be the subset consisting of special triangles. 

\subsubsection{Spherical buildings at infinity} 

Spherical or {\em Tits} buildings are defined via axioms similar to the ones for Euclidean buildings, except the model space is no longer a Euclidean spaces equipped with an action of a reflection group, but a sphere equipped with an isometric action of a finite reflection group. A spherical building $Y$ is a cell complex 
whose faces are isometric to faces of the {\em spherical Weyl chamber} $\De_{sph}\subset S^{\ell-1}$, a fundamental domain for $W$. (The Weyl chamber $\De$ of $W$ is the Euclidean cone over $\De_{sph}$.) Then $Y$ admits a simplicial projection 
$\theta: Y\to \Delta_{sph}$. The image $\theta(\xi)$ is called the {\em type} of a point $\xi\in Y$. What will be most important for us is that 
every Euclidean building $X$ has the {\em ideal boundary} $Y=\tits X$ which has natural structure of a spherical building modeled on the sphere $S^{\ell -1}$ equipped with the action of the finite Weyl group $W$. Every geodesic ray $\rho: [0,\infty)\to X$ in $X$ determines a {\em point at infinity} $\rho(\infty)\in Y$. Two rays 
$\rho_1, \rho_2$ determine the same 
point if they are {\em asymptotic}, i.e., the distance function
$$
d(t)=|\rho_1(t) \rho_2(t)|
$$
is bounded. One of the key geometric property of $X$ that we need is that for two asymptotic rays, the distance function $d(t)$ is {\em non-increasing}. 
This monotonicity property is implied by the $CAT(0)$ property of $X$. 

The angular (Tits) metric on $Y$ is denoted $\angle$. For instance, if $X$ is a tree, then $Y$ has the discrete metric which takes only the values $0$ and $\pi$. If $X$ is the rank 2 affine building associated with the group $SL(3, F)$ ($W$ is the permutation group $S_3$ in this case), then 
$Y$ a metric bipartite graph where every edge has length $\pi/3$. Vertices of $Y$ are points and lines in the projective plane $P^2(F)$. Two vertices are connected by an edge iff they are incident. 

Similar construction works for nonpositively curved symmetric spaces $X$: Every such space admits the ideal boundary $\tits X$ which has natural structure of a spherical building. 

\subsection{Weighted configurations and stability} 

Suppose that $\xi_1,\ldots,\xi_n$ are points in a spherical building $Y$ equipped with {\em masses} $m_1,\ldots,m_n$, which are nonnegative numbers. Given such {\em weighted configuration} $\psi$ in $Y$  we define (see \cite{KapovichLeebMillson1}) the {\em slope function}
$$
\slope_\psi(\eta)= -\sum_{i=1}^n m_i \cos(\angle(\xi_i, \eta)). 
$$

\begin{defn} 
A weighted configuration $\psi$ is called {\em (metrically) semistable} if $\slope_\psi(\eta)\ge 0$ for every $\eta$.
\end{defn}

This condition is introduced in \cite{KapovichLeebMillson1} in order to characterize properness of certain functions on $X$, namely, {\em weighted Busemann functions} associated to $\psi$. These functions can be defined for more general finite measures on ideal boundaries of $CAT(0)$ spaces and they play important role in complex analysis (they were first introduced by Douady and Earle in the context of Teichm\"uller theory) and Riemannian geometry.

Note that a positive multiple of a semistable configuration is again semistable. For instance, if $X$ is a tree then $\psi$ is semistable if and only if the total mass of any point in $Y$ does not exceed half of the total mass
$$
\sum_{i=1}^n m_i
$$
of $\psi$. (If some points $\xi_i$ coincide, their masses, of course, add.) 

It turns out (see \cite{KapovichLeebMillson1}) that the metric notion of semistability is essentially equivalent to Mumford's definition, once we introduce an algebraic group acting on $Y$. 

\medskip 
\subsubsection{Gauss correspondence} 

Let $\Pi=x_1 x_2 x_3 \ldots x_n$ denote an {\em oriented geodesic polygon} in $X$ with vertices $x_i$ and edges $x_i x_{i+1}$, $i$ is taken modulo $n$. (The reader can assume that $n=3$ since we are primarily interested in triangles in $X$.) 
We then extend every edge $x_i x_{i+1}$ of $\Pi$ to a geodesic ray $\rho_i$ starting at $x_i$ and representing a point $\xi_i=\rho_i(\infty)\in Y$. We assign the weight $m_i=|x_i x_{i+1}|$ to $\xi_i$. The ray $\rho_i$ is non-unique, but this will not concern us; what's important is that the type $\theta(\xi_i)$ is well-defined (unless $m_i=0$). We note that $\theta(\xi_i)$ is the unit vector which has the same direction as  
$$
\la_i=d_\Delta(x_i, x_{i+1})\in \Delta. 
$$
Thus,
$$
\la_i= m_i \theta(\xi_i). 
$$

The multivalued map $Gauss: \Pi \mapsto \psi$ is the {\em Gauss correspondence}. (The picture defining this correspondence first appears in the letter from Gauss to Bolyai, see \cite{Gauss}.) The following is the key result relating polygons in $X$ and weighted configurations in 
$Y$. 

\begin{thm}
[\cite{KapovichLeebMillson1}]\label{thm:semistable} 
Every weighted configuration $\psi\in Gauss(\Pi)$ is semistable. 
Conversely, for every semistable weighted configuration $\psi$ in $Y$, there exists a polygon in $X$ (with the metric side-lengths $m_i$) so that $\psi\in Gauss(\Pi)$.  
\end{thm}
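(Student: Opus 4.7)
The plan is to use Busemann functions $b_\eta:X\to\R$ associated with ideal points $\eta\in Y=\tits X$ as the bridge between the metric geometry of a closed geodesic polygon $\Pi = x_1x_2\cdots x_n$ in $X$ and the angular geometry of its Gauss image $\psi=\{(\xi_i,m_i)\}$ in $Y$, where $\xi_i=\rho_i(\infty)$ and $m_i=|x_ix_{i+1}|$. For the forward implication I would first establish the fundamental slope inequality
\[
b_\eta(y)-b_\eta(x)\;\le\;-|xy|\cos\angle(\xi,\eta), \qquad x,y\in X,\ \eta\in Y,
\]
where $\xi\in Y$ is the ideal endpoint of the geodesic ray extending $\overline{xy}$ past $y$. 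This is short: the restriction of $b_\eta$ to the extending ray is a convex, $1$-Lipschitz function $[0,\infty)\to\R$ whose asymptotic slope equals $-\cos\angle(\xi,\eta)$, and for any convex $f$ with $f(0)=0$ the chord slope $f(t)/t$ is non-decreasing in $t$, hence bounded above by its asymptotic value. Applying this inequality to each edge of $\Pi$ (with $\xi=\xi_i$, length $m_i$) and summing, the left-hand side telescopes to $0$ because $\Pi$ is closed, and I obtain
\[
0\;\le\;-\sum_{i=1}^n m_i\cos\angle(\xi_i,\eta)\;=\;\slope_\psi(\eta)\quad\text{for every }\eta\in Y,
\]
which is exactly semistability of $\psi$.

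For the converse I would introduce the weighted Busemann function
\[
F_\psi\;:=\;\sum_{i=1}^n m_i\,b_{\xi_i}:\;X\to\R,
\]
which is convex and Lipschitz, with asymptotic slope in every direction $\eta\in Y$ equal precisely to $\slope_\psi(\eta)$. Semistability of $\psi$ thus says that $F_\psi$ has non-negative asymptotic slope in every direction, which for a (locally compact) Euclidean building forces $F_\psi$ to be proper and hence to attain its minimum at some $x^\ast\in X$. To extract a polygon from this minimizer, my plan is as follows: for a candidate basepoint $x_1\in X$, inductively define $x_{i+1}$ as the point at distance $m_i$ along the unique geodesic ray from $x_i$ representing $\xi_i$, and set $\Phi(x_1):=d_\Delta(x_1,x_{n+1})$ to measure the failure of the resulting broken geodesic to close up. The goal is to arrange $\Phi(x_1)=0$ by a suitable choice of $x_1$, and I would try to use the first-variation formula for $F_\psi$ at $x^\ast$--which says that no tangent direction at $x^\ast$ strictly decreases $\sum_i m_i b_{\xi_i}$--together with convexity of $F_\psi$ to produce such an $x_1$.

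The hardest step will be exactly this last one: turning a minimizer of $F_\psi$ into an honest closed polygon realizing $\psi$ as its Gauss image. The first variation at $x^\ast$ only records an infinitesimal inequality on the single tangent space $\Sigma_{x^\ast}X$, whereas closing a polygon with $n$ prescribed ideal directions and prescribed edge lengths is a global condition on $n$ points. To bridge this gap I would exploit the combinatorial structure of the Euclidean building--apartments, foldings, and thick walls--to deform the broken geodesic into an actual closed polygon, most likely by an induction on $n$ (peeling off one vertex at a time, using semistability of the reduced configuration and the polyhedral compactness of links to pass to the limit) combined with a continuity argument in the space of candidate polygons with prescribed $\Delta$-side lengths.
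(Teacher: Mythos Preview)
Your forward implication is correct and is essentially the standard argument: the Busemann slope inequality along each edge, summed over a closed polygon, telescopes to give $\slope_\psi(\eta)\ge 0$.

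For the converse, however, there is a genuine gap. The map you implicitly construct, $x_1\mapsto x_{n+1}$, is precisely the $1$-Lipschitz self-map
\[
\Phi_\psi \;=\; \phi_{\xi_n,m_n}\circ\cdots\circ\phi_{\xi_1,m_1}:X\to X
\]
of the paper, and a closed polygon with $\psi\in Gauss(\Pi)$ is exactly a fixed point of $\Phi_\psi$. Your plan is to locate such a fixed point at (or near) a minimizer $x^*$ of the weighted Busemann function $F_\psi$. This does not work even in the simplest nontrivial case. Take $X$ a tripod tree with center $c$ and three ends $\xi_1,\xi_2,\xi_3$, and let $m_1=m_2=m_3=m$. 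The configuration is semistable and $F_\psi$ is minimized precisely at $c$; but the broken path starting at $c$ ends at a point on the $\xi_3$-branch at distance $m$ from $c$, so $c$ is \emph{not} a fixed point of $\Phi_\psi$. The actual fixed point lies on the $\xi_3$-branch at distance $m/2$ from $c$, and nothing in the first variation of $F_\psi$ at $c$---which is symmetric in the three directions---singles out that branch or that distance. (Your properness claim also fails when $\psi$ is semistable but not stable: for two antipodal points of equal mass in a tree, $F_\psi$ is constant along the entire connecting geodesic.) The induction/continuity sketch you offer does not indicate where this extra fixed point is to come from.

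The paper's route for the converse is different and bypasses minimizers of $F_\psi$ altogether: one shows directly that $\psi$ is semistable if and only if the forward orbit $(\Phi_\psi^i(x))_{i\in\mathbb{N}}$ is bounded (Theorem~\ref{stabletofixed}, due to the authors in the locally compact case and to Balser in general). Since $\Phi_\psi$ is $1$-Lipschitz and $X$ is complete $CAT(0)$, bounded orbits force the existence of a fixed point (e.g.\ via circumcenters of orbit tails). This dynamical fixed-point argument is the missing idea; it replaces your minimizer approach rather than supplementing it.
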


Below we describe what is involved in proving the {\em hard} direction in this theorem, namely, the converse implication. (It is very instructive to see why the theorem holds in the rank 1 case.) 

Our goal is to ``invert Gauss maps'', i.e., given a semistable weighted configuration
$\psi$, we  would like to find a closed geodesic $n$-gon $\Pi$
so that $\psi\in Gauss(\Pi)$. The polygons $\Pi$ correspond to the fixed points
of a certain dynamical system on $X$ that we describe below.
For $\xi\in Y=\tits X$ and $t\ge 0$, define the map $\phi:= \phi_{\xi,t}: X\to X$
by sending $x$ to the point at distance $t$ from $x$
on the geodesic ray $x\xi$ starting from $x$ and asymptotic to $\xi$. Since $X$ is $CAT(0)$, the map $\phi$ is
1-Lipschitz  (recall that the distance function between asymptotic rays is decreasing). Then, given a weighted configuration
$\psi$ with non-zero total mass, define the map
\[
\Phi=\Phi_{\psi}: X\to X
\]
as the composition
$$
\phi_{\xi_n,m_n}\circ\dots\circ\phi_{\xi_1,m_1}.
$$
The fixed points of $\Phi$ are the first vertices of closed polygons $\Pi= x_1\ldots x_n$
so that $\psi\in Gauss(\Pi)$. Since the map $\Phi$ is 1-Lipschitz, and the space $X$ is complete 
$CAT(0)$, the map $\Phi$ has a fixed point if and only if the dynamical system $(\Phi^i)_{i\in \N}$ has a bounded orbit, see \cite{KapovichLeebMillson2}.
Of course, in general, there is no reason to expect that $(\Phi^i)_{i\in \N}$ has a bounded orbit: For instance, if the
configuration $\psi$ is supported at a single point, all orbits are unbounded. The following theorem was proven for locally compact buildings
in the original version of \cite{KapovichLeebMillson2} and by Andreas Balser \cite{Balser} in the general case:

\begin{thm}
\label{stabletofixed}
Suppose that $X$ is a Euclidean building. Then $\psi$ is semistable if and only if
$(\Phi^i)_{i\in \N}$ has a bounded orbit. \end{thm}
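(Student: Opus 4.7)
The plan is to reduce the equivalence to a single Busemann estimate linking the dynamics of $\Phi$ to the slope function of $\psi$: for every $\eta\in Y$ and every $x\in X$,
\begin{equation}\label{eq:busestate}
b_\eta(\Phi(x))-b_\eta(x)\;\ge\;\slope_\psi(\eta),
\end{equation}
where $b_\eta$ is the Busemann function based at $\eta$. I would establish \eqref{eq:busestate} by induction on $n$: the crucial step is that for a single push $y=\phi_{\xi,t}(x)$ the $CAT(0)$ first variation formula (equivalently, convexity and $1$-Lipschitzness of $b_\eta$ along the ray from $x$ toward $\xi$) yields $b_\eta(y)-b_\eta(x)\ge -t\cos\tangle(\xi,\eta)$, and summation over the $n$ factors of $\Phi$ gives \eqref{eq:busestate}.

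The direction ``bounded orbit $\Rightarrow$ semistable'' does not actually use \eqref{eq:busestate}: since $X$ is complete $CAT(0)$, any bounded subset has a unique circumcenter, so the closure of a bounded $\Phi$-orbit is $\Phi$-invariant and has a circumcenter fixed by the $1$-Lipschitz map $\Phi$. A fixed point $x_1$ of $\Phi$ produces a closed geodesic polygon $\Pi=x_1x_2\dots x_n$ with $\psi\in \mathrm{Gauss}(\Pi)$, and the first assertion of Theorem~\ref{thm:semistable} then delivers metric semistability of $\psi$.

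For the converse I plan to argue by contradiction, assuming $\psi$ is semistable while some orbit $y_i=\Phi^i(x_0)$ is unbounded. Subadditivity yields an asymptotic displacement $|\Phi|:=\lim_i d(x_0,y_i)/i\in[0,\infty)$. In the \emph{linear escape} case $|\Phi|>0$, standard $CAT(0)$ asymptotic theory extracts a direction $\eta\in Y$ along which $b_\eta(\Phi^i(x_0))-b_\eta(x_0)\sim -i\,|\Phi|$; iterating \eqref{eq:busestate} then forces $\slope_\psi(\eta)\le -|\Phi|<0$, contradicting semistability.

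The main obstacle is the \emph{sublinear escape} case $|\Phi|=0$ with $y_i$ still unbounded, and it is precisely here that the hypothesis that $X$ is a Euclidean building (rather than an arbitrary $CAT(0)$ space) enters essentially. My plan is to rescale by $r_i:=d(x_0,y_i)\to\infty$ and pass to an asymptotic cone $X_\omega=\lim_\omega(X,x_0,r_i^{-1}d)$ along a nonprincipal ultrafilter $\omega$; $X_\omega$ is again a Euclidean building of the same spherical type, carrying a limit configuration $\psi_\omega$ on $\partial_{Tits}X_\omega$ whose slope function is identified with that of $\psi$ under the canonical matching of spherical Weyl chambers, while the rescaled orbit of $x_0$ becomes a unit-speed ray in $X_\omega$. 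Applying the linear-escape argument inside $X_\omega$ produces $\eta_\omega\in\partial_{Tits}X_\omega$ with $\slope_{\psi_\omega}(\eta_\omega)<0$, and pulling $\eta_\omega$ back to $\eta\in Y$ of the same combinatorial type (with the same angles to each $\xi_i$) gives $\slope_\psi(\eta)<0$, the desired contradiction. The technical heart will be the verification that semistability and the slope function descend cleanly under asymptotic rescaling, which relies on the building structure in an essential way.
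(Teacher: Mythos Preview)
The paper does not actually prove this theorem; it merely cites \cite{KapovichLeebMillson2} (locally compact case) and Balser \cite{Balser} (general case). So there is no ``paper's own proof'' to compare with, and your proposal must stand on its own.

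Your central estimate \eqref{eq:busestate} has the inequality reversed. Along the unit-speed ray $\gamma$ from $x$ towards $\xi$, the Busemann function $b_\eta\circ\gamma$ is convex with asymptotic slope $-\cos\tangle(\xi,\eta)$; convexity of a one-variable function says the secant slope is \emph{at most} the asymptotic slope, hence
\[
b_\eta(\phi_{\xi,t}(x))-b_\eta(x)\;\le\;-t\cos\tangle(\xi,\eta),
\]
not $\ge$. The first variation formula does give a lower bound, but in terms of the \emph{Alexandrov} angle $\angle_x(\xi,\eta)$, which is $\le\tangle(\xi,\eta)$ and depends on the basepoint, so it does not sum to $\slope_\psi(\eta)$. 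A quick sanity check in a tripod tree: push a point on the $\xi_1$-branch distance $t<d$ towards $\xi_2$ and measure $b_{\xi_3}$; one gets an increment of $-t$, whereas your inequality would predict $\ge -t\cos\pi=+t$.

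With the correct sign, iterating gives $b_\eta(y_i)-b_\eta(x_0)\le i\,\slope_\psi(\eta)$, which immediately yields ``not semistable $\Rightarrow$ unbounded orbit'' (pick $\eta$ with $\slope_\psi(\eta)<0$), hence ``bounded orbit $\Rightarrow$ semistable''. Your circumcenter argument for that direction is fine too, but the Busemann route is shorter. However, the hard direction ``semistable $\Rightarrow$ bounded orbit'' no longer follows from your linear-escape argument: finding $\eta$ with $b_\eta(y_i)\sim -i|\Phi|$ and combining with the \emph{upper} bound $b_\eta(y_i)-b_\eta(x_0)\le i\,\slope_\psi(\eta)$ only gives $\slope_\psi(\eta)\ge -|\Phi|$, which is no contradiction.

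Your asymptotic-cone plan for the sublinear case is in the right spirit (it is indeed the mechanism Balser uses), but the claim that the rescaled orbit becomes a ``unit-speed ray'' in $X_\omega$ is not justified: since $\Phi$ has bounded displacement, rescaling by $r_i\to\infty$ makes $\Phi$ collapse to the identity on $X_\omega$, so there is no limit dynamics to which one can apply a linear-escape argument. The actual proofs instead work with the weighted Busemann function $B_\psi=\sum_i m_i b_{\xi_i}$: semistability is equivalent to $B_\psi$ having nonnegative asymptotic slope in every direction, and one shows (using the building structure, or ultralimits in the non-proper case) that this forces $B_\psi$ to be proper and that the $\Phi$-orbit stays in a sublevel set. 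You should redirect the argument along these lines rather than trying to extract a limiting direction from the orbit itself.
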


An algebraically inclined reader can ignore all the material in this section except for the following corollary, 
whose only known proof goes through the stability theory and Gauss correspondence described above: 

\begin{cor}
Let $X$ be a Euclidean buildng. Let $\la,\mu,\nu$ be dominant weights such that
$$
\la+\mu+\nu\in Q(R^\vee).
$$
Take $N\in {\mathbb N}$ and set
$$
(\la',\mu',\nu')=N(\la,\mu,\nu). 
$$
Suppose that the space $\widehat{T}_{\la',\mu',\nu'}\ne \emptyset$. 
Then there exists a triangle $\tau\in \widehat{T}_{\la,\mu,\nu}$ so that vertices of $\tau$ are vertices of $X$. 
\end{cor}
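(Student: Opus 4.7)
The plan is to apply the Gauss correspondence of Theorem~\ref{thm:semistable} to convert $\tau'$ into a semistable weighted configuration $\psi'$ on the Tits boundary $Y=\tits X$, rescale the masses by $1/N$ to obtain a semistable configuration $\psi$, invert the Gauss correspondence using the fixed-point theorem~\ref{stabletofixed} to produce a triangle $\tau\in\widehat{T}_{\la,\mu,\nu}$, and finally exploit the hypothesis $\la+\mu+\nu\in Q(R^\vee)$ to arrange the vertices of $\tau$ at vertices of $X$.

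First, the forward direction of Theorem~\ref{thm:semistable} applied to $\tau'$ yields a semistable weighted configuration $\psi'$ whose three atoms $\xi_1,\xi_2,\xi_3\in Y$ have types equal to the normalized directions of $\la,\mu,\nu$ (which coincide with the normalized directions of $N\la,N\mu,N\nu$) and carry masses $Nm_1,Nm_2,Nm_3$, where $m_i$ denotes the Euclidean length of the corresponding weight. Since the slope function $\slope_\psi(\eta)$ is homogeneous of degree one in the masses, the configuration $\psi$ with the same atoms but masses $m_1,m_2,m_3$ satisfies $\slope_\psi=\tfrac{1}{N}\slope_{\psi'}\ge 0$ and hence is semistable. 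By Theorem~\ref{stabletofixed} (combined with the converse direction of Theorem~\ref{thm:semistable}), the dynamical system $\Phi_\psi$ on $X$ has a bounded orbit; since $X$ is complete $CAT(0)$, the circumcenter of the closure of such an orbit is a fixed point $x_1$ of $\Phi_\psi$, and the closed triangle $\tau=x_1x_2x_3$ with $x_2=\phi_{\xi_1,m_1}(x_1)$ and $x_3=\phi_{\xi_2,m_2}(x_2)$ lies in $\widehat{T}_{\la,\mu,\nu}$.

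The final step is to arrange that $x_1,x_2,x_3$ all be vertices of $X$. The fixed-point set $F=\mathrm{Fix}(\Phi_\psi)\subset X$ is a nonempty closed convex subset. A careful analysis inside an apartment $A_0$ meeting $F$, exploiting the hypothesis $\la+\mu+\nu\in Q(R^\vee)$, allows one to choose $x_1\in F$ at a vertex of $X$: the unfolded restriction $\Phi_\psi|_{A_0}$ is a composition of three translations by $W$-conjugates of $\la,\mu,\nu$ whose sum vanishes, the compensating combinations are controlled by the coroot lattice, and this lattice hypothesis is precisely what lets a fixed point be shifted onto a vertex of the Coxeter complex of $A_0$. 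Since each displacement $\phi_{\xi_i,m_i}$ corresponds, in the appropriate apartment, to translation by a vector in the coweight lattice $P(R^\vee)$ (which normalizes the affine Weyl group), and such translations preserve the vertex structure of apartments, the remaining vertices $x_2,x_3$ then automatically land at vertices of $X$ as well.

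The main obstacle is the vertex-placement step for $x_1$. Its combinatorial heart is the interplay between the coweight lattice $P(R^\vee)$, in which the individual weights $\la,\mu,\nu$ live and which governs when a displacement sends vertices to vertices, and the smaller coroot lattice $Q(R^\vee)$, which is the translation lattice of the affine Weyl group $W_{af}$ and contains $\la+\mu+\nu$ by hypothesis. This distinction is precisely the one responsible for the saturation factor $k_\mathfrak{g}^2$ appearing in Theorem~\ref{introKM}, and the corollary is in effect the geometric/combinatorial heart of the Kapovich--Millson saturation-factor result.
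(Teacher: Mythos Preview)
Your first three paragraphs track the paper's intended argument exactly: the survey does not give a detailed proof of this corollary but states that its ``only known proof goes through the stability theory and Gauss correspondence described above,'' and later (in the proof of Theorem~\ref{thm:Hecke-sat}) uses precisely the scaling argument you outline --- pass from $\tau'$ to a semistable $\psi'$ via Theorem~\ref{thm:semistable}, divide the masses by $N$ (semistability is homogeneous in the masses), and invert the Gauss map. That much is fine.

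The gap is in your vertex-placement step. Two concrete problems:
\begin{itemize}
\item The assertion that ``the unfolded restriction $\Phi_\psi|_{A_0}$ is a composition of three translations by $W$-conjugates of $\la,\mu,\nu$'' is not justified and is false as stated: for a point $x\in A_0$ the ray $x\xi_i$ need not stay in $A_0$, so $\phi_{\xi_i,m_i}$ has no reason to restrict to a translation of $A_0$, and there is no single apartment in which the entire orbit of $\Phi_\psi$ unfolds to a straight-line walk.
\item Even granting that $F=\mathrm{Fix}(\Phi_\psi)$ is nonempty closed convex, you give no mechanism for producing a vertex of $X$ inside $F$. The phrase ``this lattice hypothesis is precisely what lets a fixed point be shifted onto a vertex'' is an assertion, not an argument; nothing you have written explains \emph{how} the condition $\la+\mu+\nu\in Q(R^\vee)$ gets used.
\end{itemize}

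What is correct is your observation that once $x_1$ is a vertex, the displacements by $W$-conjugates of $\la,\mu,\nu\in P(R^\vee)$ (taken inside suitable apartments) carry vertices to vertices, so $x_2,x_3$ are then automatically vertices. The missing piece is the existence of a fixed point at a vertex; the paper defers this to \cite{KapovichLeebMillson2}, and you should either cite that or supply an actual argument rather than a description of what such an argument would accomplish.
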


\subsection{Buildings and algebraic groups} 

The (so far, purely geometric) theory of buildings connects to the theory of algebraic groups as follows. 
Given a group $G={\sf G}(F)$ as above, Bruhat and Tits \cite{BT} associate with $G$ 
a Euclidean (Bruhat-Tits) building $X$, so that $G$ acts by automorphisms on $X$. 
the action $G\acts X$ is transitive on the set of apartments in $X$. For each 
apartment $A\subset X$ we define $G_A$ to be the stabilizer of $A$ in $G$. Then, the image of $G_A$ in $Aut(A)$ (under the 
restriction map) is equivariantly isomorphic to the group $L\rtimes W$, so that 
$$
Q(R^\vee) \subset L\rtimes W \subset W_{af}\subset P(R^\vee)\rtimes W  
$$  
and $L$ is the cocharacter lattice of ${\sf G}$. The maximal split torus $T$ in $G$ preserves one of the apartments $A\subset X$ and acts on $A$ as the lattice  $L$. 
Furthermore, the group $K$ stabilizes a certain special vertex $o\in X$; the $G$-orbit $Gr_G:= G\cdot o= G/K$ is called the 
{\em affine Grassmannian} of $G$. 

The spherical building $Y=\tits X$ 
is recovered from $G$ as follows: Stabilizers of faces of $X$ are parabolic subgroups of $G$; stabilizers of facets are conjugates of the Borel subgroup $B$. 
Thus, the space of facets in $Y$ is naturally identified with the quotient $G/B$, the set of $F$-points in the complete flag variety ${\sf G}/{\sf B}$. 

The group $G$ has the Cartan decomposition 
$$
G=K T_+ K,
$$
where $T_+$ is a subsemigroup in the torus $T$ consisting of elements of the form
$$
\chi(\pi), \quad \chi\in L_+,
$$
where $\pi\in F$ is a uniformizer (note that $\chi$'s are cocharacters of ${\sf T}$). Thus, we obtain the {\em Cartan projection}
$$
c: G\to L_+, \quad c(g)=\chi(\pi). 
$$
In the case $G=SL(\ell+1)$, the Cartan decomposition is just another interpretation of the Smith normal form for elements of $G$; thus, we will think of the vector 
$c(g)$ as the set of {\em invariant factors} of $g$.

We can now give a GIT interpretation of semistability for weighted configurations in $Y$. For simplicity, we assume that each point $\xi_i\in Y$ is a {\em regular point}, i.e., 
it belongs to a unique facet $\si_i$ of $Y$; thus, $\si_i\in G/B$. We will also assume that each vector 
$$
\la_i=m_i\theta(\xi_i) \in \Delta
$$
belongs to the lattice $L$. In particular, $\la_i$ determines a line bundle $\L_{\la_i}$ on $G/B$.  Then 

{\em A weighted configuration $\psi$ is metrically semistable if and only if the $n$-tuple of chambers 
$$
(\si_1,\ldots,\si_n)\in (G/B)^n$$
is semistable in the GIT sense with respect to the diagonal action of the $G$, where we use the tensor product of the line bundles  $\L_{\la_i}$ to define the polarization. 
}

A similar statement holds for general weighted configurations in $Y$, except we have to use product of (possibly) partial flag varieties. 

We now can give the first algebraic interpretation of oriented polygons $\Pi=x_1 \ldots x_n$ (with $x_1=o$) in $X$ whose vertices belong to $Gr_G$: Every  
polygon $\Pi$ with $\De$-side lengths $\la_1, \ldots, \la_n$ determines a tuple of elements $g_1, \ldots, g_n\in G$ such that 
$$
c(g_i)=\la_i, i=1,\ldots n
$$
and
$$
g_1 \cdots g_n=1.  
$$
Conversely, every tuple $(g_1,\ldots,g_n)$ determines a polygon as above. It turns out that instead of constructing polygons with vertices in $Gr_G$, it suffices to construct special polygons  $X$. We refer to \cite{KapovichLeebMillson2} and \cite{KapovichLeebMillson3} for the details. 

We then define two sets
$$
Hecke(G)\subset Tri(X),
$$
where $Hecke(G)$ consists of triples $(\la_1,\la_2,\la_3)\in L_+^3$ such that there exists a special oriented triangle in $X$ with $\De$-side lengths $\la_1, \la_2, \la_3$, while $Tri(X)$ consists of triples $(\la_1,\la_2,\la_3)\in \De^3$ such that there exists an oriented triangle in $X$ with $\De$-side lengths $\la_1, \la_2, \la_3$. (In the next section we will see why the latter set has the name {\em Hecke}.) For now, we just record the (easy) fact that 
$$
Hecke(G)\subset \{(\la_1, \la_2,\la_3): \la_1+ \la_2 +\la_3\in Q(R^\vee)\},
$$
see \cite{KapovichLeebMillson3} for two different proofs (geometric and algebraic). Observe also that, by considering disoriented special triangles in $X$, we can interpret the set $Hecke(G)$ as answering  the following algebraic problem:

\begin{itemize}
\item Given two sets of invariant factors $\la_1, \la_2 \in L_+$, describe possible invariant factors of the products $g_1 g_2$, where $c(g_1)=\la_1, c(g_2)=\la_2$. 
\end{itemize}

\medskip 
In this survey we discuss two ways in which special triangles in Euclidean buildings $X$ connect to the representation theory of the group $G^\vee$:

\begin{itemize}

\item Satake correspondence. 

\item Littelmann path model. 

\end{itemize}

\subsection{Hecke ringss, Satake transform and triangles in buildings}\label{sec:satake}

In this section we describe the Satake transform from the (spherical) 
Hecke ring of $G$ to the representation ring of $G^\vee$. We refer the reader to \cite{Gross} for more details.
(There are more general notions of Satake transform which apply to other discrete valued fields, like $\C((t))$; these generalizations require one to work with sheaves.)  
 
\subsubsection{Satake transform} 

Below we describe an integral transform $S$, the Satake transform,
from a ring ${\mathcal H}_G$ (spherical Hecke ring) of compactly supported, $K$-biinvariant functions on $G$ to the ring of 
left $K$-invariant, right $U$-invariant functions on $G$. The space of functions ${\mathcal H}_G$ is equipped with the convolution product
$$
f \star g (z) = \int_G f(x) \cdot g(x^{-1}z)dx$$
where $dx$ is the Haar measure on $G$ giving $K$ volume $1$. Then $({\mathcal H}_G, \star)$ 
is a commutative and associative ring. 

Let $\delta: B \to \mathbb{R}^*_+$ be the modular function of $B$, 
We may regard $\delta$ as a left $K$-invariant, right $U$-invariant function on $G$. 
By the Iwasawa decomposition for $G$, any such function is determined by its restriction to $T$.
We normalize the Haar measure $du$ on $U$ so that the open subgroup $K \cap U$
has measure $1$. For a compactly supported $K$-biinvariant function $f$ on $G$ we define its
{\em Satake transform} as a function $Sf(g)$ on $G$ given by
$$
Sf(g)= \delta(g)^{1/2} \cdot \int_U f(gu)du.
$$
(The reader can think of $S$ as a generalization of the Fourier transform.) 
Then $Sf$ is a left $K$-invariant, right $U$-invariant
function on $G$ with values in $\Z[q^{1/2}, q^{-1/2}]$; this function
is determined by its restriction to $T/T\cap K \cong \X_*(\underline{T})$.
Let $R(G^{\vee})\cong (\Z[\X_*({\sf T})])^W $ be the representation ring of $G^\vee$. 
Then: 
 
\begin{thm}
The image of $S$ lies in the subring
$$
(\Z[\X_*({\sf T})])^W \otimes \Z[q^{1/2},q^{-1/2}]$$
and $S$ defines a ring isomorphism
$$
S: \mathcal{H}_G \otimes \Z[q^{1/2},q^{-1/2}] \to (\Z[\X_*({\sf T})])^W \otimes \Z[q^{1/2},q^{-1/2}]\cong 
R(G^{\vee})\otimes \Z[q^{1/2},q^{-1/2}].
$$
\end{thm}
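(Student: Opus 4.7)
The plan is to pass through a natural basis on each side and then verify that $S$ is an algebra homomorphism which is upper-triangular with respect to these bases. On the Hecke side we use the Cartan decomposition $G = \bigsqcup_{\lambda \in \Lambda_+} K\pi^\lambda K$ (where $\pi^\lambda := \lambda(\pi)$ for $\lambda \in \X_*({\sf T})_+$), which provides the standard $\bz$-basis $\{c_\lambda\}_{\lambda \in \Lambda_+}$ of $\mathcal{H}_G$ consisting of characteristic functions of the double cosets. On the target side we use the basis $\{m_\lambda\}_{\lambda \in \Lambda_+}$ of $(\bz[\X_*({\sf T})])^W$ given by $W$-orbit sums, or equivalently the character basis $\{\chi_\lambda\}$ of $R(G^\vee)$ obtained via the Weyl character formula from the $m_\lambda$. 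Because both bases are indexed by $\Lambda_+$, it suffices to produce an explicit upper-triangular formula for $S(c_\lambda)$ in the $m_\mu$-basis and to show $S$ is a ring homomorphism; invertibility of the leading coefficient in $\bz[q^{\pm 1/2}]$ then yields the isomorphism statement after inverting $q^{1/2}$.

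First I would verify multiplicativity $S(f_1 \star f_2) = S(f_1) \star S(f_2)$ (where on the target the product is induced by convolution on $T/(T\cap K)\simeq \X_*({\sf T})$, which coincides with the group-ring product). This is a direct manipulation using the Iwasawa decomposition $G = U T K$, the fact that $\delta$ is a character of $B$, and Fubini: writing $z = tu$ with $t \in T$, $u \in U$ and inserting $f_2$ under the integral defining $Sf_1$, the factor $\delta(t)^{1/2}$ produces the symmetric kernel that lets the two transforms factor. Next I would establish $W$-invariance of $Sf$ for $f\in \mathcal{H}_G$: this is the classical argument that the Satake transform is independent of the choice of Borel subgroup containing ${\sf T}$, because replacing $B$ by $wBw^{-1}$ conjugates both the unipotent radical $U$ and the modular character $\delta$ compatibly, while $K$-biinvariance of $f$ absorbs the conjugation. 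Since the $W$-orbit of choices of $B$ acts on $T$ through the Weyl group, the function $Sf|_T$ is $W$-invariant.

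The core computation is the evaluation of $S(c_\lambda)(\pi^\mu)$ for $\mu \in \X_*({\sf T})$. By the Iwasawa decomposition this equals
\[
\delta(\pi^\mu)^{1/2}\cdot \operatorname{vol}\bigl(U \cap \pi^{-\mu} K\pi^\lambda K\bigr),
\]
and using the fact (a basic structural property of Bruhat--Tits buildings, equivalent to the convexity theorem of Kostant in this $p$-adic setting) that the intersection $K\pi^\lambda K \cap \pi^\mu U$ is non-empty precisely when $\mu$ lies in the convex hull of $W\cdot \lambda$, and equivalently $\mu \leq \lambda$ in the dominance order when $\mu$ is $W$-conjugate to a dominant weight $\mu'$ with $\mu' \leq \lambda$. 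The leading contribution comes from $\mu = \lambda$, where the intersection reduces to a single $U \cap \pi^{-\lambda} K \pi^\lambda$-coset and the modular factor produces $\delta(\pi^\lambda)^{1/2} = q^{\langle \rho,\lambda\rangle}$. Thus
\[
S(c_\lambda) = q^{\langle \rho,\lambda\rangle}\, m_\lambda + \sum_{\mu <\lambda,\ \mu\in \Lambda_+}\, a_{\lambda,\mu}(q)\, m_\mu,
\]
with $a_{\lambda,\mu}(q) \in \bz[q^{\pm 1/2}]$. This is the key step, and the main obstacle: the combinatorial bookkeeping for the volumes, the identification of which $\mu$'s occur, and the computation of the leading coefficient all require a careful analysis of the intersection pattern of Iwahori cells with $K$-double cosets (equivalently, of semi-infinite cells with spherical cells on the affine Grassmannian $Gr_G$).

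Finally, upper-triangularity of the matrix $(a_{\lambda,\mu}(q))$ with invertible diagonal entries $q^{\langle \rho,\lambda\rangle}$ (units in $\bz[q^{\pm 1/2}]$) implies that $S$, extended $\bz[q^{\pm 1/2}]$-linearly, sends the basis $\{c_\lambda\}$ to a $\bz[q^{\pm 1/2}]$-basis of $(\bz[\X_*({\sf T})])^W \otimes \bz[q^{\pm 1/2}]$, so $S$ is a $\bz[q^{\pm 1/2}]$-module isomorphism. Combined with the multiplicativity established above, this produces the asserted ring isomorphism $S : \mathcal{H}_G \otimes \bz[q^{\pm 1/2}] \xrightarrow{\sim} R(G^\vee)\otimes \bz[q^{\pm 1/2}]$. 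The identification $(\bz[\X_*({\sf T})])^W \simeq R(G^\vee)$ is the standard highest-weight theory for $G^\vee$: dominant cocharacters of ${\sf T}$ are dominant weights of ${\sf T}^\vee$, and the $W$-orbit sums are related to the irreducible characters of $G^\vee$ by the (triangular) Weyl character formula.
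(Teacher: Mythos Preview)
The paper does not actually prove this theorem: it is stated without proof in the appendix, with a reference to Gross~[Gro] for details. So there is no ``paper's own proof'' to compare against.

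That said, your outline is the standard route to the Satake isomorphism and is essentially what one finds in [Gro] or in Cartier's Corvallis article: multiplicativity via Iwasawa and Fubini, $W$-invariance from independence of the choice of Borel, and then the upper-triangularity
\[
S(c_\lambda) = q^{\langle \rho,\lambda\rangle} m_\lambda + \sum_{\mu<\lambda} a_{\lambda,\mu}(q)\, m_\mu
\]
with unit diagonal in $\bz[q^{\pm 1/2}]$, from which the isomorphism follows. One small caution: your phrasing of the support condition (``$\mu$ lies in the convex hull of $W\cdot\lambda$'') is the right geometric intuition, but to make the argument go through you only need the weaker and easier statement that $S(c_\lambda)(\pi^\mu)=0$ unless $\mu\le\lambda$, together with the explicit computation at $\mu=\lambda$. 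The full description of the support (and the positivity/integrality of the lower coefficients) is more delicate and is not required for the isomorphism statement itself.
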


\subsubsection{Connection to geometry}

How does Hecke ring relate to geometry of buildings? Functions on $G$ which are right-invariant under $K$ are the same thing as functions on the affine Grassmannian $Gr_G$, while $K$-biinvariant functions are the same thing as functions on the cone of dominant cocharacters $L^+\subset L=\X_*({\sf T})$, i.e., functions on the set
$$
\Delta\cap L\cdot o\subset A, 
$$
where $A$ is a model apartment of $X$ and $\Delta$ is the positive chamber corresponding to our choice of the Borel subgroup 
${\sf B}$. We let $x_\mu\in A$ denote the vertex corresponding to the image of $o$ under the translation of $A$ given by the cocharacter $\mu$. Thus, we can identify $K$-orbits in $Gr_G$ as ``spheres with fixed $\Delta$-radius'' ${\mathbb S}_\mu(o)$:  
$$
K\cdot x_\mu = {\mathbb S}_\mu(o)=\{x\in X^0: d_\Delta(o,x)= \mu\}, 
$$
where $X^0$ is the vertex set of $X$. More generally, for a vertex $u\in Gr_G$ we set 
$$
{\mathbb S}_\mu(u):= \{x\in X^0: d_\Delta(u,x)= \mu\}. 
$$
The affine Grassmannian $Gr_G$ has structure of $F_q$-points of an ind-scheme, 
where the ``spheres'' ${\mathbb S}_\mu(u)$ are algebraic subvarieties (see \cite{Haines}). The closures of these subraieties also have geometrically appealing interpretation as ``closed metric balls''
$$
\ol{{\mathbb S}_\mu(u)}= {\mathbb B}_\mu(u)=\{x\in Gr_G: d_\Delta(u,x)\le_{\De^*} \mu\}.  
$$

The cocharacters $\mu\in L$ define functions $c_\mu \in {\mathcal H}_G$, where $c_\mu|\Delta$ is the characteristic function of the singleton $\{x_\mu\}$. Since ${\mathcal H}_G$ consists of compactly supported functions, the functions $c_\mu$ form a basis in ${\mathcal H}_G$. In particular, we get the Hecke structure constants
$$
m_{\la,\mu}^\eta\in \Z_+,
$$
$$
c_\la \star c_\mu= \sum_{\eta} m_{\la,\mu}^\eta c_\eta. 
$$
Of course, these constants completely determine the ring ${\mathcal H}_G$. We next interpret the constants 
$m_{\la,\mu}^\eta$   in terms of geometry of the building $X$. 

\subsubsection{Spaces of special triangles} 

Fix vectors $\la, \mu, \eta \in L_+$. Then every {\em disoriented} special triangle $abc$ with vertices in $Gr_G\subset X$ and $\De$-side-lengths $d_\Delta(a,b)=\la, d_{\Delta}(b,c)=\mu, d_{\Delta}(ac)=\eta$ can be transformed (via an element of $G$) to a disoriented triangle of the form
$$
o y x_\eta,
$$
where the first and the last vertices are fixed and the vertex $y$ is variable.  We let 
$$
{\mathcal T}_{\la,\mu}^\eta(F_q)
$$
denote the space of such triangles. Similarly, for $\nu=\eta^*$ we define the space of {\em oriented} triangles
$$
{\mathcal T}_{\la,\mu,\nu}(F_q)
$$
of the form $o y x_{\eta}$ with the $\De$-side-lengths $\la, \mu, \nu$. Let $f(q)=m_{\la,\mu,\nu}(q)$ denote the  
cardinality of the latter set. 

Then (see \cite{KapovichLeebMillson3}): 

\begin{lem}
$m_{\la,\mu}^\eta$ is the cardinality of the set ${\mathcal T}_{\la,\mu}^\eta(F_q)$. 
\end{lem}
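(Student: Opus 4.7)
The plan is to derive the lemma as a direct unwinding of the convolution product in $\mathcal{H}_G$, using that the Haar measure on $G$ is normalized so that $K$ has volume $1$. First I would evaluate the convolution
\[
(c_\la\star c_\mu)(x_\eta)=\int_G c_\la(x)\,c_\mu(x^{-1}x_\eta)\,dx,
\]
and observe that, since $c_\mu$ is $K$-biinvariant and $c_\la$ is right-$K$-invariant, the integrand depends only on the coset $y=xK\in G/K=Gr_G$. With the chosen normalization, the integral therefore collapses to the cardinality
\[
(c_\la\star c_\mu)(x_\eta)\;=\;\#\bigl\{\,y=xK\in Gr_G\ :\ x\in Kx_\la K,\ x^{-1}x_\eta\in Kx_\mu K\,\bigr\}.
\]

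Next, I would translate the two double-coset conditions into statements about $\Delta$-distances in the building. Recall that for vertices $a=gK,\,b=hK\in Gr_G$, the $\Delta$-distance $d_\Delta(a,b)$ is the unique element $\kappa\in L_+$ with $g^{-1}h\in Kx_\kappa K$; this is just a reformulation of the Cartan decomposition $G=KT_+K$ together with the identification $c(g)=\chi$ when $g=k_1\,\chi(\pi)\,k_2$. Thus $x\in Kx_\la K\iff d_\Delta(o,y)=\la$ and $x^{-1}x_\eta\in Kx_\mu K\iff d_\Delta(y,x_\eta)=\mu$. Combining the two conditions gives exactly the disoriented special triangle $o\,y\,x_\eta$ with side-lengths $(\la,\mu,\eta)$, so the count above equals $\#\mathcal{T}_{\la,\mu}^\eta(F_q)$.

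Finally I would extract the coefficient. By definition of the structure constants,
\[
c_\la\star c_\mu\;=\;\sum_{\eta\in L_+}m_{\la,\mu}^\eta\,c_\eta,
\]
and evaluating both sides at $x_\eta$ gives $(c_\la\star c_\mu)(x_\eta)=m_{\la,\mu}^\eta$, since $c_{\eta'}(x_\eta)=\delta_{\eta,\eta'}$ for $\eta,\eta'\in L_+$ (distinct dominant cocharacters lie in distinct $K$-double cosets). Combining with the previous step yields $m_{\la,\mu}^\eta=\#\mathcal{T}_{\la,\mu}^\eta(F_q)$. A final sanity check is that both sides are finite: the sphere $\mathbb{S}_\la(o)$ is a finite set of vertices (the $K$-orbit $Kx_\la K/K$), so the triangle set is finite, matching the fact that $c_\la\star c_\mu$ is compactly supported.

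The only mild obstacle I anticipate is bookkeeping: making sure the Haar-measure normalization on $K$ interacts correctly with the identification of $G/K$ with the vertex set of $Gr_G$, and keeping straight the two roles of the symbol $x_\eta$ (as element of $T$ versus as vertex $x_\eta\cdot o$ of $Gr_G$); both are routine once one fixes the conventions from Section~\ref{sec:satake}.
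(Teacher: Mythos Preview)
Your argument is correct and is exactly the standard computation one would carry out. The paper does not actually supply a proof of this lemma; it only records the statement and refers the reader to \cite{KapovichLeebMillson3}. So there is no ``paper's own proof'' to compare against here, but your unwinding of the convolution---reducing the Haar integral to a sum over $G/K$ via right-$K$-invariance of the integrand, translating the double-coset membership conditions into the $\Delta$-distance constraints $d_\Delta(o,y)=\lambda$ and $d_\Delta(y,x_\eta)=\mu$, and then reading off the coefficient by evaluating at $x_\eta$---is precisely the argument the cited reference gives.
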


Thus, 
$$
Hecke(G):= \{(\la,\mu,\nu): m_{\la,\mu}^{\nu^*}\ne 0\},
$$
which explains the name: This set answer the problem of describing the weights $\eta=\nu^*$ which appear with nonzero coefficient in the expansion of the 
product $c_\la \star c_\mu$ in terms of the basis $\{c_\eta\}$ of the Hecke ring ${\mathcal H}$. 

Even though, $S$ is an isomorphism, the relation between the structure constants of Hecke ring of $G$ and the character ring of $G^\vee$ is somewhat indirect. Define 
$$
n_{\la,\mu,\nu}= \dim (V_\la\otimes V_\mu \otimes V_\nu)^{G^\vee}. 
$$
Let $\rho$ denote the half-sum of positive roots in $R^\vee$. 

\begin{thm}
[\cite{KapovichLeebMillson3}]\label{thm:rep->hecke}
$f(q)$ is a polynomial function of $q$ of degree $\le q^{\<\rho, \la+\mu+\nu\>}$
so that
$$
f(q)= n_{\la,\mu,\nu} q^{\<\rho, \la+\mu+\nu\>} +\hbox{~lower order terms}.
$$
In particular, if $n_{\la,\mu,\nu}\ne 0$ then $m_{\la,\mu,\nu}(q)\ne 0$ and, hence, 
${\mathcal T}_{\la,\mu,\nu}\ne \emptyset$. In other words, if
$$
V_\eta\subset (V_\la \otimes V_\mu)^{G^\vee}
$$
then there exists a triangles in $X$ with special vertices and $\Delta$-side lengths 
$\la, \mu, \nu=\eta^*$. 
\end{thm}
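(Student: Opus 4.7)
The plan is to reduce the counting of $f(q)$ to an identity in the spherical Hecke ring $\mathcal{H}_G$ and then transport it via the Satake isomorphism into the representation ring $R(G^\vee)$, where tensor multiplicities appear as structure constants. Concretely, by the Lemma immediately preceding this theorem, $f(q)=|{\mathcal T}_{\lambda,\mu,\nu}(F_q)|$ coincides with the Hecke structure constant $m_{\lambda,\mu}^{\eta}(q)$ for the value of $\eta$ dictated by closure of the oriented triangle $o\,y\,x_\eta$. The Hecke identity
$$
c_\lambda\star c_\mu\;=\;\sum_{\eta}m_{\lambda,\mu}^{\eta}(q)\,c_\eta
$$
is thus itself an identity of polynomials in $q$, and the integrality hypothesis $\lambda+\mu+\nu\in Q(R^\vee)$ is precisely what allows the relevant $c_\eta$ to appear with nonzero multiplicity.

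The central step is to apply the Satake isomorphism $S$ of Section~\ref{sec:satake}. The key technical input is the Macdonald--Langlands expansion
$$
S(c_\mu)\;=\;q^{\langle\rho,\mu\rangle}\,\chi_\mu\;+\;\sum_{\sigma<\mu}R_{\mu,\sigma}(q)\,\chi_\sigma,
$$
where each coefficient $R_{\mu,\sigma}(q)$ is a polynomial of $q$-degree strictly less than $\langle\rho,\mu\rangle$ (the $R_{\mu,\sigma}$ are in fact Kostka--Foulkes polynomials, though only the top term and the strict degree bound are needed here). Combined with the decomposition $\chi_\lambda\chi_\mu=\sum_\eta n_{\lambda,\mu}^\eta\chi_\eta$ in $R(G^\vee)$, where $n_{\lambda,\mu}^\eta=\dim\Hom_{G^\vee}(V_\eta,V_\lambda\otimes V_\mu)$, applying $S$ to both sides of the Hecke identity and expanding in the character basis yields an identity of Laurent polynomials in $q^{1/2}$.

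The crux is then a downward induction on $\eta$ in the dominance order, starting at the top element $\eta=\lambda+\mu$ (where matching leading terms immediately forces $m_{\lambda,\mu}^{\lambda+\mu}=1$). At each subsequent step, extracting the coefficient of $\chi_\eta$ and invoking the inductive hypothesis together with the strict degree bound $\deg R_{\eta',\eta}<\langle\rho,\eta'\rangle$ shows that contributions from terms indexed by $\eta'>\eta$ carry $q$-degree strictly smaller than $\langle\rho,\lambda+\mu\rangle$. Matching top $q$-degrees then isolates the asymptotic behavior of $m_{\lambda,\mu}^{\eta}(q)$, with leading coefficient $n_{\lambda,\mu}^\eta$; specializing $\eta$ to the value fixed by the triangle-closure convention and using $n_{\lambda,\mu}^{\nu^\ast}=n_{\lambda,\mu,\nu}$ yields the claimed polynomial form of $f(q)$ with leading coefficient $n_{\lambda,\mu,\nu}$. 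The ``in particular'' consequence---that $n_{\lambda,\mu,\nu}\ne 0$ implies ${\mathcal T}_{\lambda,\mu,\nu}\ne\emptyset$---then follows at once, since a polynomial in $q$ with strictly positive leading coefficient is strictly positive for sufficiently large $q$, producing triangles in the building attached to a local field of large enough residue cardinality.

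The principal obstacle is the Macdonald-type expansion of $S(c_\mu)$ with the sharp degree control on its coefficients. This rests either on a direct decomposition of $Kt_\mu K$ into $U$-orbits---parametrized combinatorially by Lusztig's $q$-analogue of Kostant's partition function---or on the Kato--Lusztig formula identifying the Satake basis in terms of Kazhdan--Lusztig basis elements of the affine Hecke algebra. Once this input is in place, the comparative asymptotic analysis is essentially formal, but the induction must be executed carefully so that lower-order contributions from terms with $\eta'>\eta$ do not inadvertently cancel the leading term.
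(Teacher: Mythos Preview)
The survey does not prove this theorem; it is quoted from [KLM$_3$] without argument, so there is no in-paper proof to compare against. Your approach---transporting the Hecke convolution identity through the Satake isomorphism, invoking the triangular change of basis $S(c_\mu)=q^{\langle\rho,\mu\rangle}\chi_\mu+(\hbox{lower terms})$ with strict degree control on the off-diagonal entries, and then running a downward induction on $\eta$ in the dominance order---is correct and is exactly the method of the cited reference.

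One point deserves care. Your final step concludes $f(q)>0$ only for sufficiently large $q$, and hence produces special triangles only in a building over a local field of large residue cardinality; but the theorem as stated concerns the fixed building $X$ attached to the given $F$. To close this gap one needs the further fact that $f$ has nonnegative coefficients as a polynomial in $q$, so that $f(q)>0$ for \emph{every} prime power once the leading coefficient is positive. This is established in [KLM$_3$] and also follows from purity in the geometric Satake framework (cf.\ [Ha$_1$]); alternatively, one may argue a posteriori that existence of a special triangle with given $\Delta$-side lengths depends only on the affine Coxeter data, as the path-model discussion later in the appendix shows.
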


Furthermore, if we replace the finite field $F_q$ with the algebraically closed field $\bar{F}_q$, then 
the  space of triangles ${\mathcal T}_{\la,\mu,\nu}(\bar{F}_q)$ in the corresponding affine Grassmannian becomes an algebraic variety of dimension 
$\le q^{\<\rho, \la+\mu+\nu\>}$ and the number $n_{\la,\mu,\nu}$ is the number of components of  ${\mathcal T}_{\la,\mu,\nu}(\bar{F}_q)$ which have the dimension 
$q^{\<\rho, \la+\mu+\nu\>}$ (see \cite{Haines}).

The reverse relation between triangles in $X$ and tensor product decomposition is more subtle:
For all simple complex Lie groups $G^\vee$ of non-simply laced type, there are examples where 
${\mathcal T}_{\la,\mu,\nu}$ is nonempty while $n_{\la,\mu,\nu}=0$ (see Theorem \ref{thm:examples}). 

We define the semigroup
$$
Rep(G^\vee):=  \{(\la,\mu,\nu): n_{\la,\mu,\nu}\ne 0\}. 
$$
(Note that $Hecke(G)$ need not be a semigroup, see \cite{KapovichMillson1}.) Thus, we have the inclusions
$$
Rep(G^\vee)\subset Hecke(G)\subset (L_+)^3 \cap \{\la+\mu+\nu\in Q(R^\vee)\}. 
$$

\subsection{Littelmann path model}

Let $A$, $W_{af}$, $R$, $G^\vee$, etc. be as in section \ref{sec:buildings}. 
Littelmann, in the series of papers \cite{littelmann1, littelmann2, littelmann3} defined a {\em path model} for the representation ring of the group $G^\vee$. The key to this model is the notion of {\em LS paths} in $A$. Below, we will give a definition of LS paths following \cite{KapovichMillson2}. This definition is essentially equivalent to Littelmann's definition (one difference is that we do not insist on the end-points of the path being in the coroot lattice); however, we do not explain the action of root operators on LS paths.  
 
 \subsubsection{Hecke and LS paths} 
 
Every LS path is a piecewise-linear path $p: [0,1]\to A$ in $A$ satisfying several conditions. For every $t\in [0,1]$ we define two derivatives $p'_-(t), p'_+(t)$: These are the derivatives on the left and on then right respectively. We will assume that 
$p(t)$ has constant speed, i.e., the Euclidean norm $|p'(t)|$ is constant; in particular, 
$$
|p'_-(t)|=|p'_+(t)|, \quad \forall t. 
$$ 
For every $t$ we also have the finite Coxeter group $W_{p(t)}$, the stabilizer of $p(t)$ in $W_{af}$.  

\medskip 
{\bf Axiom 1 (``Billiardness'').} The path $p$ is a {\em billiard path}: For every $t\in [0,1]$, the vectors $p'_-(t), p'_+(t)$ belong to the same $W_{p(t)}$-orbit, i.e., there exists $w\in W_{p(t)}$ so that $w(p'_-(t))= p'_+(t)$. 

For instance, if $w$ is a single reflection in a wall $H$ passing through $p(t)$, then the above condition simply says that the path $p$ bends at the point $t$ according to the rules of  optics (i.e., by the reflection in the wall $H$, the ``mirror'' or the ``side of the billiard table''). 

\medskip 
Since $W_{p(t)}$ is generated by reflections, Axiom 1 implies that  $w\in W_{p(t)}$ can be factored as a product of affine reflections
$$
w=\tau_{k}\circ \ldots \circ \tau_{1}
$$
in the group $W_{p(t)}$, where the derivative of each $\tau_{i}$ is a reflection $\tau_{\beta_i}\in W$ corresponding to a positive root $\beta_i\in R$. In particular, we obtain a {\em chain} of vectors
$$
u_0:= p_-'(t), u_1:= \tau_{\beta_1}(u_0), \ldots , u_k= p'_+(t)= \tau_{\beta_k}(u_{k-1}).  
$$

\begin{defn}
Let $W'\subset W$ be a reflection subgroup. A finite sequence of vectors $u_0,\ldots,u_k$ in $\R^\ell$ is called a {\em positive $W'$-chain} from $u_0$ to $u_k$ 
if for each $i\ge 1$ there exists a reflection $\tau_{\beta_{i}}\in W'$ (corresponding to a positive root $\beta_{i}$) such that $\tau_{\beta_{i}}(u_{i-1})=u_{i}$ and 
$$
u_{i}\ge_{\Delta^*} u_{i-1},
$$  
i.e., $u_{i}-u_{i-1}$ is a positive multiple of $\beta_{i}$. In particular, 
$$
u_k\ge_{\Delta^*} u_0.
$$
A positive $W'$-chain is called $W$-maximal if it cannot be refined to a larger positive $W$-chain from 
$u_0$ to $u_k$. 
\end{defn}

Clearly, every positive $W$-chain can be refined to a positive $W$-chain which is maximal. However, this is not the case for arbitrary positive $W'$-chains where $W'\ne W$. For the group $W_{p(t)}$ we let $W_{p(t)}'$ denote the subgroup of $W$ consisting of derivatives (i.e., linear parts) of elements of $W_{p(t)}$.

\medskip 
{\bf Axiom 2 (``Positivity'').} A billiard path $p$ is called {\em positively folded} (or {\em Hecke}) path, if for every $t$ there exists a positive $W_{p(t)}'$-chain 
from $p'_-(t)$ to $p'_+(t)$.

\medskip
Geometrically speaking, positivity of the path $p$ means that at each break-point, the derivative $p'_+(t)\in T_{p(t)}(A)\cong \R^\ell$ is obtained from $p'_-(t)\in T_{p(t)}(A)$ by applying a sequence of reflections fixing $p(t)$, so that each reflection moves the corresponding vectors $u_{i-1}$ further towards the positive chamber $\De$. 

\medskip 
{\bf Axiom 3 (``Maximality'').} A positive $W_{p(t)}'$-chain in Axiom 2 can be found, which is  $W$-maximal. 

\medskip
From the geometric viewpoint, this is a strange axiom: It is defined in terms of inability to further refine positive $W_{p(t)}'$-chains  
{\em even if we are allowed to use reflections in $W$ which need not be reflections fixing $p(t)$ and, hence, have nothing to do with the fold made by the path $p$ at the point $p(t)$.}  

Note that Axiom 3 is satisfied automatically at each point $p(t)$ which is a special vertex of the apartment $A$.

\begin{defn}
A piecewise-linear path in $A$ is called an {\em LS-path} if it satisfies Axioms 1, 2 and 3. A path satisfying Axioms 1 and 2 is called a {\em Hecke path}. 
\end{defn}

\subsubsection{Littelmann's path model for tensor product multiplicities} 

Given points $x, y\in A$ and a vector $\mu\in \De$, 
one considers the collection $LS_{x,y,\mu}$ of LS paths $p$ in $A$ connecting $x$ to $y$, so that $length_\De( p )=\mu$. Similarly, one defines the set of Hecke paths $Hecke_{x,y,\mu}$ of Hecke paths connecting $x$ to $y$. 

If $x, y$ belong to $\De$, we consider the subset $LS^+_{x,y,\mu}\subset LS_{x,y,\mu}$ consisting of {\em positive paths}, i.e., paths 
whose image is contained in $\De$. For a weight $\eta\in L$ we let $x_\eta\in A$ denote the point so that $\ov{o x_\eta}=\be$. 
Given a weight $\ga\in L_+$ we let $V_\ga$ denote the (finite-dimensional) irreducible representation of $G^\vee$ with the highest weight $\ga$. 

\begin{thm}
[P. Littelmann, \cite{littelmann2}] \label{LT1}
Let $\la, \mu\in L_+\subset P(R^\vee)$ be weights for the group $G^\vee$. Then
$$
V_\la \otimes V_\mu = \bigoplus_{\eta\in L_+} n_{\la,\mu}^\eta V_\eta, 
$$
where the multiplicity $n_{\la,\mu}^\eta$ equals the cardinality of $LS^+_{x_\la, x_\eta, \mu}$.  
\end{thm}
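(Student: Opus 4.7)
My plan is to follow Littelmann's original crystal-theoretic approach. The idea is to make the set of LS paths into a combinatorial model of the crystal basis of $V_\mu$, and then exploit Kashiwara's tensor product rule together with the concatenation operation on paths. First I would introduce root operators $e_\al,f_\al$ (one pair for each simple root $\al$ of the root system of $G^\vee$) on the set of piecewise-linear paths $\pi\colon [0,1]\to A$ starting at $0$, modulo reparametrization. The recipe is the standard one: set $h_\al(t):=\al(\pi(t))$, let $m_\al:=\min_t h_\al(t)$; if $m_\al\le -1$, cut $[0,1]$ at the appropriate times where $h_\al$ reaches values $\le m_\al+1$ and reflect the intermediate segment in the corresponding $\al$-wall to produce $e_\al\pi$ (and dually for $f_\al$). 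I would then verify that these operators preserve the set of LS paths of fixed shape $\mu$, and give it the structure of a normal Kashiwara crystal.

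\textbf{Character formula for $V_\mu$.} Next I would prove, for each dominant $\mu\in L_+$, that the set $\mathcal{B}(\mu)$ of LS paths from $0$ of $\Delta$-length $\mu$ is a single connected crystal whose unique highest-weight element is the straight-line path $\pi_\mu$ from $0$ to $x_\mu$, and that
\[
\ch V_\mu \;=\; \sum_{\pi\in\mathcal{B}(\mu)} e^{\pi(1)}.
\]
The key steps are: (i) showing that the Hecke/maximality axioms defining LS paths are precisely what is needed to make $\mathcal{B}(\mu)$ closed under $e_\al,f_\al$; (ii) establishing that the only positive LS path in $\mathcal{B}(\mu)$, i.e.\ the only one whose image lies in $\Delta$, is $\pi_\mu$ itself, which identifies the unique highest-weight element; (iii) invoking the uniqueness of a connected normal crystal with a given highest weight to identify $\mathcal{B}(\mu)$ with Kashiwara's crystal $B(\mu)$ and thereby recover the character formula.

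\textbf{Tensor product via concatenation.} For the tensor product decomposition I would use the path concatenation $\pi_1*\pi_2$. The general principle, which I would verify explicitly using the definition of the root operators, is that the root operators on $\pi_\la * \pi$, with $\pi\in\mathcal{B}(\mu)$, satisfy exactly Kashiwara's tensor product rule applied to the crystal $\mathcal{B}(\la)\otimes\mathcal{B}(\mu)$, where $\pi_\la$ plays the role of the highest-weight vector $v_\la$. Consequently the multiplicity of $V_\eta$ inside $V_\la\otimes V_\mu$ equals the number of highest-weight vectors of weight $\eta$ in this tensor crystal, and a vector $\pi_\la*\pi$ is highest weight of weight $\eta$ if and only if the concatenated path lies in the dominant chamber $\Delta$ and ends at $x_\eta$. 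Translating by $x_\la$ so that the path starts at $x_\la$, this is exactly the set $LS^+_{x_\la,x_\eta,\mu}$, giving $n_{\la,\mu}^\eta=|LS^+_{x_\la,x_\eta,\mu}|$.

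\textbf{Main obstacle.} The hard part is the first step: verifying that LS paths, as defined axiomatically via the billiardness, positivity, and maximality of positive $W$-chains at each folding point, are stable under the root operators $e_\al,f_\al$ and that the resulting crystal has a unique highest-weight element $\pi_\mu$. This requires delicate combinatorics, because applying a root operator can introduce new folding points and change the local chain structure; one must check that maximality of positive chains is preserved, a step that is not geometric but combinatorial. Once this is done, the Weyl-character identity and the tensor product rule follow from general crystal theory, and the identification of highest-weight vectors with paths in $\De$ yields the stated formula for $n_{\la,\mu}^\eta$.
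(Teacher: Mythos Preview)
The paper does not give its own proof of this theorem: it is stated as a result of Littelmann with a citation to \cite{littelmann2} and then used as a black box in the subsequent discussion of Hecke and Littelmann triangles. So there is no argument in the paper to compare against.

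Your sketch is essentially Littelmann's original proof strategy and is correct in outline: introduce root operators on piecewise-linear paths, show they preserve the set of LS paths of a given shape and endow it with a normal crystal structure whose unique highest-weight element is the straight path $\pi_\mu$, identify this crystal with $B(\mu)$ to obtain the character formula, and then use that concatenation of paths realizes the tensor product of crystals so that highest-weight elements in $\mathcal{B}(\la)\otimes\mathcal{B}(\mu)$ correspond to concatenated paths lying in $\Delta$. Your identification of the main technical difficulty---stability of the axiomatically defined LS paths under the root operators, particularly preservation of the maximality axiom at new break points---is accurate; this is indeed the delicate combinatorial core of Littelmann's paper. One small caveat: Littelmann's original argument does not literally go through Kashiwara's crystal theory as a black box (the crystal axioms are verified internally via the path model), so if you intend to invoke uniqueness of normal crystals you should either check the full list of Kashiwara axioms or, closer to Littelmann, prove the character formula directly via an isomorphism with the Demazure-type character and then deduce the tensor product rule from concatenation.
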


We will call the ``broken triangle'' in $\De$ which is the concatenation of the geodesic segment 
${o x_\la}$,  piecewise-geodesic path $p\in LS^+_{x_\la, x_\nu, \mu}$ and the 
geodesic segment ${x_\eta o}$, a {\em Littelmann triangle}.  Similarly, we define {\em Hecke triangles} by replacing LS paths with Hecke paths. 

\begin{rem}
The spaces of Littelmann and Hecke triangles are invariant under scaling by natural numbers.  
\end{rem}

\subsubsection{1-skeleton paths} 

In \cite{KapovichMillson2}, in order to get a better connection between the Littelmann path model and triangles in buildings, we had to 
modify slightly the concept of LS paths. (This modification is actually a special case of a more general class of paths defined by Littelmann earlier in terms of root operators.) 
Namely, we will have to relax Axiom 1 in the definition (and, accordingly modify Axiom 2). 
Let $\varpi_1,\ldots,\varpi_\ell$ denote the fundamental weights of $R$. Then every 
positive weight $\mu$ of $G$ is the sum
$$
\mu= \sum_{i=1}^\ell \mu_i, \quad \mu_i=c_i \varpi_i, \quad c_i\in \Z_+,\quad i=1,\ldots,\ell. 
$$ 
We then obtain a {\em model path} $p_\mu$ in $\De$ as the concatenation of the geodesic segments $p_i$ connecting $o$ to 
$x_\mu$, where each $p_i$ is a translate of the geodesic segment $o x_{\mu_i}$. Thus, $p_\mu$ is contained in the 1-skeleton of the Coxeter cell complex. Using paths $p_\mu$ as a model, one defines {\em generalized Hecke and LS paths}: These are paths $p$ in the 1-skeleton of the Coxeter complex,  where each $p$ is a concatenation of Hecke (resp. LS) paths $p_1,\ldots p_\ell$, so that
$$
length_\De(p_i)= \mu_i=c_i \varpi_i. 
$$ 
In addition, generalized Hecke and LS paths have to satisfy certain {\em positive-folding} condition at each end-point of $p_i, i=1,\ldots,\ell-1$ (the positivity condition is the same for Hecke and LS paths). We refer to \cite{KapovichMillson2} and \cite{GL2} for the precise definition.

We let $Hecke^1_{x,y,\mu}$ and $LS^1_{x,y,\mu}$ denote the set of generalized Hecke and LS paths  connecting $x$ to $y$. We also define sets $Hecke^{1,+}_{x,y,\mu}$ and $LS^{1,+}_{x,y,\mu}$ of generalized Hecke and LS paths contained in $\De$. It is proven in \cite{KapovichMillson2} that the set 
$$
\bigcup_{y} LS^1_{o,y,\mu}$$ 
coincides with the set of paths in $A$ obtained from $p_\mu$ by applying root operators. 
Thus, Littelmann's proof of Theorem \ref{LT1} goes through in the case of generalized LS paths and we obtain 

\begin{thm}
[\cite{KapovichMillson2}] \label{T2}
Cardinalities of $LS^+_{x_\la, x_{\eta}, \mu}$ and $LS^{1,+}_{x_\la, x_{\eta}, \mu}$ are the same and equal $n_{\la,\mu}^{\eta}$. 
\end{thm}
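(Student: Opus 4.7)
The plan is to adapt Littelmann's original argument establishing Theorem \ref{LT1} to the wider class of generalized LS paths, using the fact (quoted from [KapovichMillson2]) that $\bigcup_y LS^1_{o,y,\mu}$ is exactly the orbit of the model path $p_\mu$ under the root operators. The core input is Littelmann's independence theorem: whenever $\pi$ is a piecewise-linear path in $A$ starting at $o$, ending in the coroot lattice, and contained in the dominant chamber $\De$, the formal character $\sum_{\pi'} e^{\pi'(1)}$ summed over the orbit of $\pi$ under the root operators depends only on the endpoint $\pi(1)$, and equals $\operatorname{ch} V_{\pi(1)}$ when $\pi(1)$ is dominant.

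First I would verify that the model path $p_\mu$ satisfies the hypothesis of this independence theorem. By construction $p_\mu$ is a concatenation of straight segments which are translates of $\overline{o\, x_{\mu_i}}$ with $\mu_i = c_i\varpi_i$; since each partial sum $\mu_1+\cdots+\mu_j$ is a dominant weight and the segments point from the origin into $\De$, the image of $p_\mu$ lies entirely in $\De$ and ends at $x_\mu$. Hence Littelmann's theorem yields that the weighted sum over the root-operator orbit of $p_\mu$ equals $\operatorname{ch} V_\mu$. Since this orbit equals $\bigcup_y LS^1_{o,y,\mu}$, we obtain the character identity
\[
\operatorname{ch} V_\mu = \sum_{p\in\bigcup_y LS^1_{o,y,\mu}} e^{p(1)}.
\]

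Next I would invoke Littelmann's concatenation (tensor product) rule: for any two paths $\pi_1, \pi_2$ starting at $o$ with $\pi_1\subset\De$, the root operators act on the concatenation $\pi_1 * \pi_2$ via the rightmost factor, and the multiplicity of $V_\eta$ in $V_{\pi_1(1)}\otimes V_{\pi_2(1)}$ equals the number of elements $\pi'_2$ in the root-operator orbit of $\pi_2$ such that $\pi_1 * \pi'_2$ stays in $\De$ and ends at $x_\eta$. Applying this with $\pi_1$ the straight segment $\overline{o\, x_\la}$ and $\pi_2 = p_\mu$, and using the identification of the orbit of $p_\mu$ with $\bigcup_y LS^1_{o,y,\mu}$ once more, the condition ``$\pi_1 * \pi'_2 \subset \De$ and ends at $x_\eta$'' translates precisely into membership in $LS^{1,+}_{x_\la, x_\eta, \mu}$. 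This gives $|LS^{1,+}_{x_\la, x_\eta, \mu}| = n_{\la,\mu}^\eta$, and combined with Theorem \ref{LT1} it yields the asserted equality of the two cardinalities.

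The main obstacle will be to check carefully that the root operators $e_\alpha, f_\alpha$ do preserve the class of generalized LS paths, i.e.\ that they send concatenations of LS pieces of $\De$-lengths $c_i\varpi_i$ with the prescribed positive-folding condition at the junctions to paths of the same structural type. This is where the definition of generalized LS paths needs to be engineered to be compatible with the root operators; the positivity condition imposed at the junction points of the pieces $p_i$ is exactly what is needed for this compatibility, and verifying it amounts to a local computation at each break point, checking both the billiard/positivity axioms inside each piece and the junction condition between consecutive pieces. Once this compatibility is in place, the orbit identification $\{\text{root-operator orbit of } p_\mu\} = \bigcup_y LS^1_{o,y,\mu}$ follows by induction on the number of applications of root operators, and the two paragraphs above supply the rest.
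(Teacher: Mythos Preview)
Your proposal is correct and follows essentially the same approach as the paper. The paper's argument is even terser: it records the key fact (proven in \cite{KapovichMillson2}) that $\bigcup_y LS^1_{o,y,\mu}$ is exactly the root-operator orbit of the model path $p_\mu$, and then simply asserts that ``Littelmann's proof of Theorem \ref{LT1} goes through in the case of generalized LS paths''; what you have written is a correct unpacking of that sentence via the independence theorem and the concatenation rule, and the obstacle you identify (stability of the class of generalized LS paths under root operators) is precisely the content of the quoted orbit identification.
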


One of the key advantages of generalized Hecke paths is the following:

\begin{lem}\label{lem:scaling}
If $p$ is a generalized Hecke path, then $k_R\cdot p$ is a generalized LS path. 
\end{lem}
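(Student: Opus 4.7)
The plan is to exploit the fact that a generalized Hecke path $p$ is by construction contained in the 1-skeleton of the affine Coxeter cell complex of $A$, so that every break point of $p$ is a vertex of this complex, together with the defining property of the constant $k_R$: multiplication by $k_R$ sends every vertex of $A$ to a special vertex. Since the three axioms (Billiardness, Positivity, Maximality) are conditions at each break point $p(t)$ of the path, it will suffice to verify them at each break point of the scaled path $k_R \cdot p$.

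First I would check that $k_R \cdot p$ still lies in the 1-skeleton of the Coxeter complex. Since the walls of $A$ are the affine hyperplanes $\{\alpha = n\}$ for roots $\alpha\in R$ and integers $n$, and multiplication by $k_R$ sends such a hyperplane to $\{\alpha = k_R n\}$, scaling by $k_R$ carries walls to walls and hence intersections of walls to intersections of walls. Thus $k_R\cdot p$ is a piecewise-linear path in the 1-skeleton whose break points are vertices of the Coxeter complex, and in fact special vertices by the defining property of $k_R$. Next, Axioms 1 and 2 are preserved trivially under scaling: after reparametrization with constant speed, the unit directions $p'_-(t), p'_+(t)$ are unchanged, and any positive $W'_{p(t)}$-chain connecting them remains a positive chain connecting the corresponding one-sided derivatives of $k_R\cdot p$ at its break points (since the finite reflection group $W$ and the cone $\Delta^*$ do not change under scaling by $k_R>0$). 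The analogous remark handles the positive-folding condition required at the concatenation points between the sub-paths $p_i$ in the definition of a generalized path.

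The heart of the argument is verifying Axiom 3 at each break point of $k_R\cdot p$. Let $t\in[0,1]$ be such a break point and set $v:= k_R\cdot p(t)$. Since $v$ is a special vertex of $A$, its isotropy in $W_{af}$ is conjugate to the full finite Weyl group $W$, so the linear part $W'_v$ equals $W$. Thus the positive $W'_{p(t)}$-chain supplied by Axiom 2 for $p$ gives, after passing to $k_R\cdot p$, a positive $W'_v = W$-chain from $p'_-(t)$ to $p'_+(t)$. Every positive $W$-chain can be refined to a $W$-maximal positive $W$-chain between the same endpoints (just keep inserting positive refinements until none is available), and such a refinement is exactly what Axiom 3 requires.

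I expect the routine verifications of Axioms 1, 2 and of the connection-point conditions to be unproblematic; the main obstacle is really a definitional one, namely a careful reading of Littelmann's notion of generalized Hecke/LS path so as to confirm that (a) all break points, including the connection points between the pieces $p_i$, are vertices of the Coxeter complex, and (b) the positive-folding condition at those connection points is of the same ``positive $W'$-chain'' type as in Axiom 2 and hence, once $W'$ is upgraded to $W$ after scaling, can be refined to a $W$-maximal chain in exactly the same manner. Once this is cleanly set up, the lemma falls out immediately from the scaling property of $k_R$.
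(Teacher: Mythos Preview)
Your proposal is correct and follows essentially the same approach as the paper's proof: scaling preserves the generalized Hecke property, break points of a generalized Hecke path lie at vertices of $A$, and after scaling by $k_R$ these become special vertices, at which the Maximality Axiom holds automatically. The paper's argument is terser---it simply invokes the earlier remark that Axiom~3 is automatic at special vertices---whereas you spell out why (namely $W'_v=W$ there, and any positive $W$-chain refines to a $W$-maximal one), but the substance is the same.
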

\proof Scaling by any natural number $k$ sends $Hecke^1_{x,y,\mu}$ to $Hecke^1_{kx,ky,k\mu}$. Moreover, since break-points of every path $p\in Hecke^1_{x,y,\mu}$ 
are at vertices of $A$, the break-points of $k\cdot p$ are at special vertices of $A$. Therefore, the path $k_R\cdot p$ satisfies the Maximality Axiom and, hence, is a generalized LS-path. \qed 

\begin{rem}
Already for $R=A_2$, there are Hecke paths which are not LS paths, even though $k_R=1$. 
\end{rem}

\subsection{Path model connection of triangles in buildings and tensor product multiplicities}
\label{sec:LS}

Pick a special vertex in $X$ which is the image of $o\in A$ under a chart; by abusing the notation we will again denote this vertex of $X$ by $o$. We then have the natural projection $\P_\Delta: X\to \Delta$, 
$$
x\mapsto d_\Delta(o,x). 
$$
It is easy to see that $\P_\Delta$ sends each geodesic path $\tilde{p}$ in $X$ to a piecewise-geodesic path $p$ in $\De$ so that
$$
length_\De({p})= length_\De(\tilde{p}). 
$$
Also, the image of every geodesic path $\tilde{p}=ox$ under $\P_\Delta$ is again a geodesic. With a bit more care, one proves that for every oriented geodesic 
triangle $\tau=xyz$, so that $x$ is a vertex of $X$, one can choose a special vertex $o\in X$ so that the paths $\P_\Delta(xy)$ and 
$\P_\Delta(zx)$ are still geodesic (see \cite{KapovichMillson2}). We will refer to such $\si=\P_\Delta(\tau)$ as a {\em broken triangle}. 

We now can state the key results connecting geodesics in $X$ and the path model(s) in $A$:

\begin{thm}
[\cite{KapovichMillson2}] \label{thm:unfolding}
1. For every geodesic path $\tilde{p}$ in $X$, its projection $p=\P_\De(\tilde{p})$ is a Hecke path. 

2. Conversely, every Hecke path $p$ in $\De$ is the $\P_\De$-projection of a geodesic path in $X$.  

3. Let $\phi: A\to X$ be a chart and let $p_\mu\subset A$ be a model generalized Hecke path. Then $\P_\De(\phi(p_\mu))$ is a generalized Hecke path in $\De$.  
\end{thm}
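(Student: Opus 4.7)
The plan is to analyze the map $\P_\Delta$ as a composition of two retractions: first, a retraction $\rho_{C,o}: X \to A_0$ onto a fixed apartment $A_0$ through $o$, based at a chamber $C$ at infinity lying in the image $\partial_\infty \De \subset \partial_\infty A_0$; and second, the quotient map $A_0 \to \De$ by the action of $W$ fixing $o$. The map $\rho_{C,o}$ preserves $d_\Delta(o,\cdot)$, and by the $CAT(0)$ axiom $A2$ it is $1$-Lipschitz. The composition with $A_0 \to \De$ then equals $\P_\Delta$, so every segment $oy$ in $X$ on which $\P_\Delta$ is geodesic also maps isometrically under $\rho_{C,o}$ to a geodesic in $A_0$ landing inside a single Weyl chamber based at $o$.

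\textbf{Part (1):} First I would reduce to showing that $\rho_{C,o}(\tilde p)$ is a Hecke path in $A_0$ relative to the wall structure, since projection to $\De$ preserves Axioms 1 and 2. The image is piecewise linear, with possible break points at times when $\tilde p(t)$ lies in a wall of $X$ (i.e.\ in a positive-dimensional intersection of apartments). At such a break point $\tilde p(t_0)$, pick apartments $A^-, A^+$ containing short initial and terminal subsegments of $\tilde p$; by axiom $A1$ combined with the sector-based retraction, the transition from $A^-$ to $A^+$ is realized by an element $w$ of the stabilizer of $\tilde p(t_0)$ in $W_{af}$, whose linear part lies in $W'_{p(t_0)}$. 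So $p = \P_\Delta(\tilde p)$ is automatically a billiard path; the content is to show \emph{positivity} of the fold. For this, I would factor $w$ as a product of reflections crossing walls that $\tilde p$ actually crosses when passing from $A^-$ to $A^+$; because $\rho_{C,o}$ is based at a chamber in $\partial_\infty \De$, each such reflection contributes a \emph{positive} jump in the $\Delta^*$-order of the derivative, giving the required positive $W'_{p(t_0)}$-chain from $p'_-(t_0)$ to $p'_+(t_0)$.

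\textbf{Part (2):} Given a Hecke path $p$ in $\De$, I would build a geodesic $\tilde p$ in $X$ with $\P_\Delta \circ \tilde p = p$ inductively along the break points $0 = t_0 < t_1 < \cdots < t_m = 1$ of $p$. On each interval $[t_{i-1}, t_i]$ the piece is a straight segment $[x_{i-1}, x_i]$, which I realize inside a chosen apartment $A^{(i)}$ of $X$ mapped by $\rho_{C,o}$ onto $A_0$ via a product $w^{(i)} \in W_{af}$ of reflections in walls separating $A^{(i)}$ from $A_0$. The positivity data at $t_i$ says that $p'_+(t_i)$ is obtained from $p'_-(t_i)$ by a positive $W'_{p(t_i)}$-chain; by the thickness axiom $A3$, applied recursively to each wall in this chain, I can find an apartment $A^{(i+1)}$ branching off $A^{(i)}$ at $\tilde p(t_i)$ such that the next segment of $p$, viewed inside $A^{(i+1)}$, continues the previous one \emph{straight} in $X$. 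The $CAT(0)$ condition $A2$ then guarantees that the concatenation obtained this way is actually a local geodesic and hence a geodesic.

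\textbf{Part (3):} The chart $\phi$ is an isometry onto an apartment, and $p_\mu$ is a concatenation of straight segments. By Part (1) applied to each segment $\phi(p_i)$, the image $\P_\Delta(\phi(p_i))$ is a Hecke path. At the joins one must verify the generalized positivity condition of \cite{KapovichMillson2}; this is handled by observing that each intermediate vertex $\phi(x_{\mu_1} + \cdots + \mu_i)$ is a special vertex of $X$ (since $p_\mu$ lies in the $1$-skeleton and has breaks only at coweight vertices), so Axiom $3$ is automatic there and the required chain from $p'_-$ to $p'_+$ is supplied by the argument of Part (1).

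The main obstacle will be Part (1), specifically producing a positive (and $W$-maximal) chain realizing the wall-crossing element $w$ at each break point; this requires a careful choice of decomposition of $w$ into reflections compatible with the orientation determined by $C \in \partial_\infty \De$, and uses in an essential way that the reference chamber at infinity is dominant.
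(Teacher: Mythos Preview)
The paper does not actually supply a proof of this theorem: it is quoted from \cite{KapovichMillson2} and then used as a black box (with the remark that parts of the argument were later simplified in \cite{GGPR} and extended in \cite{GL2}). So there is no in-paper proof to compare against; I will comment on your proposal directly.

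The overall architecture --- analyze $\P_\Delta$ through a retraction onto a fixed apartment, study the fold at each break point for Part~1, and use the thickness axiom to ``unfold'' for Part~2 --- is indeed the shape of the argument in the cited source. Two concrete problems, however, undercut the proposal as written.

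First, your identification of $\P_\Delta$ with a composition through a \emph{chamber-at-infinity} retraction is not correct as stated. The map $x\mapsto d_\Delta(o,x)$ is realized by the retraction onto $A_0$ centered at an \emph{alcove containing the vertex $o$}; that is the retraction which preserves $d_\Delta(o,\cdot)$. A sector retraction based at $C\in\partial_\infty\Delta$ does not in general preserve $\Delta$-distance from $o$, and the positivity in Axiom~2 is tied to the orientation coming from $o$, not from an asymptotic direction. Since the entire content of Part~1 is the production of a \emph{positive} $W'_{p(t)}$-chain at each break, getting the retraction (and hence the orientation) wrong is fatal; your sketch of ``each reflection contributes a positive jump'' presupposes exactly the compatibility that has to be proved.

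Second, your Part~3 contains a genuine error. You assert that the break points $\phi(x_{\mu_1+\cdots+\mu_i})$ of the model path are special vertices ``since $p_\mu$ lies in the $1$-skeleton and has breaks only at coweight vertices.'' Outside type $A$ this is false: coweight-lattice points are vertices of the Coxeter complex, but not special vertices in general --- that is precisely why $k_R>1$, and why the paper needs Lemma~\ref{lem:scaling} to pass from generalized Hecke to generalized LS paths by rescaling. Moreover, Axiom~3 (maximality) is irrelevant here: the conclusion of Part~3 is only that the projection is a generalized \emph{Hecke} path, not a generalized LS path. What must be checked at the joins is the separate positive-folding condition for generalized paths alluded to in the text, and that is not supplied by any specialness claim.
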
 

\begin{rem}
1. Some of the arguments in \cite{KapovichMillson2} were simplified in \cite{GGPR}. 

2. In \cite{KapovichMillson2} we could not prove that every generalized Hecke path in $\De$ can be unfolded to a model generalized Hecke path in an apartment in $X$. 
This was accomplished later on by Gaussent and Littelmann in \cite{GL2}. 
\end{rem}

We can now apply these results to triangles in $X$. First of all, if $\si\subset \Delta$ is a Littelmann triangle with the side-lengths $\la, \mu, \nu\in \Delta$, then $\si$ is also a Hecke triangle. In view of Part 2 of Theorem \ref{thm:unfolding}, 
the broken triangle $\si$ can be unfolded to an (oriented) geodesic triangle $\tau\subset X$ whose vertices are 
in $Gr_G$ and whose $\De$-side lengths are still $\la, \mu, \nu$. This gives an alternative proof of the inclusion
$$
Rep(G^\vee)\subset Hecke(G). 
$$
(The first proof was based on the Satake transform, see section \ref{sec:satake}.) 

The second corollary is a {\em saturation theorem} for the set $Hecke(G)$. Suppose that $\tau=xyz$ is an oriented geodesic triangle in $X$  (with the $\De$-side lengths $\la, \mu, \nu$), whose vertices are vertices of $X$. Then multiplication by $k=k_R$ sends the {\em broken triangle} $\si:=\P_\Delta(\tau)$ to a new broken triangle $\si'=k \si$, whose vertices are special vertices of $A$. The broken side of $\si'$ is still a Hecke path, thus, 
the new broken triangle $\si'$ is a Hecke triangle in $A$. In view of Part 2 of Theorem \ref{thm:unfolding}, 
the Hecke triangle $\si'$ can be unfolded to a special geodesic triangle 
$\tau'$ in $X$ whose $\De$-side lengths are $k\la, k\mu, k\nu$. Furthermore: 

\begin{thm}[\cite{KapovichMillson2}] 
\label{thm:Hecke-sat}
Suppose that $(\la,\mu,\nu)\in (L_+)^3$ and $\la+\mu+\nu\in Q(R^\vee)$. Then
$$
\exists N\in \N, N(\la,\mu,\nu)=(\la',\mu',\nu')\in Hecke(G) \Rightarrow k_R(\la,\mu,\nu)\in Hecke(G)
$$
\end{thm}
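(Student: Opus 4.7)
The plan combines two ingredients already assembled in the excerpt. First, the Corollary immediately following Theorem~\ref{stabletofixed} (derived from the Gauss correspondence and the stability Theorem~\ref{thm:semistable}): if $\lambda+\mu+\nu\in Q(R^\vee)$ and some positive multiple $N(\lambda,\mu,\nu)$ lies in $Tri(X)$, then $(\lambda,\mu,\nu)$ itself is realized by an oriented triangle $\tau\subset X$ all of whose vertices lie in $X^0$. Second, the projection/unfolding argument spelled out in the paragraph preceding the statement of Theorem~\ref{thm:Hecke-sat}: homothety by the factor $k_R$ upgrades any such triangle with vertices in $X^0$ to a \emph{special} triangle with $\De$-side-lengths multiplied by $k_R$. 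Chaining these two steps is the entire proof.

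Concretely, $N(\lambda,\mu,\nu)\in Hecke(G)\subset Tri(X)$ gives $\widehat{\T}_{N\lambda,N\mu,N\nu}\neq\emptyset$, and together with $\lambda+\mu+\nu\in Q(R^\vee)$ the Corollary furnishes $\tau=xyz\in\widehat{\T}_{\lambda,\mu,\nu}$ whose three vertices lie in $X^0$, though generally not in $Gr_G$. Choosing a special basepoint $o\in X$ adapted to $\tau$ as in the paragraph preceding Theorem~\ref{thm:unfolding}, the $\De$-projection $\si:=\P_\De(\tau)\subset\De$ is a broken triangle in $\De$ with two honest geodesic sides and one piecewise-geodesic ``broken'' side $p$ of $\De$-length $\mu$. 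By Theorem~\ref{thm:unfolding}(1), $p$ is a Hecke path in $A$.

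Now rescale by $k_R$. Homothety preserves the Hecke-path axioms (billiardness, positive folding, $W$-maximality all concern only the directions $p'_\pm(t)$), so the middle side $k_R\cdot p$ of $k_R\cdot\si$ is again a Hecke path. By the defining property of $k_R$, every vertex of the affine Coxeter complex is carried by the $k_R$-homothety into the coweight lattice $P(R^\vee)$ of special vertices; in particular every breakpoint and every vertex of $k_R\cdot\si$ is now special in $A$, and its endpoints lie in $L\cap\De$. Applying Theorem~\ref{thm:unfolding}(2), the broken triangle $k_R\cdot\si$ unfolds to a genuine oriented geodesic triangle $\tau'\subset X$ whose three vertices lie in $Gr_G$ and whose $\De$-side-lengths are $(k_R\lambda,\,k_R\mu,\,k_R\nu)$ (the third entry using $-w_0(k_R\nu^*)=k_R\nu$). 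This exhibits $k_R(\lambda,\mu,\nu)\in Hecke(G)$.

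The step I expect to be the principal obstacle is the middle breakpoint of $\si$ coming from the vertex $y$ of $\tau$: $y$ is only a vertex of $X$, not a special one, so the corresponding break of $\si$ sits at a non-special point of $A$, and Theorem~\ref{thm:unfolding}(2) cannot be applied at scale $1$ to produce a triangle in $Gr_G$. The integer $k_R$ is defined precisely so that homothety by it simultaneously pulls every vertex of the affine Coxeter complex into $P(R^\vee)$ while leaving the direction-based Hecke-path axioms intact, and this is what forces $k_R$ (rather than a smaller constant) to be the saturation factor produced by this geometric method.
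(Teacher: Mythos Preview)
Your proof is correct and follows essentially the same route as the paper's: use the Corollary after Theorem~\ref{stabletofixed} (equivalently, the Gauss/stability argument of Theorem~\ref{thm:semistable}) to descend from $N(\la,\mu,\nu)\in Hecke(G)$ to a triangle $\tau\in\widehat{\T}_{\la,\mu,\nu}$ with vertices in $X^0$, project to $\De$, rescale by $k_R$ so that all breakpoints become special, and unfold via Theorem~\ref{thm:unfolding}(2) to a special triangle. One small inaccuracy in your parenthetical: $W$-maximality (Axiom~3) is \emph{not} purely a condition on the directions $p'_\pm(t)$, since it involves the stabilizer $W'_{p(t)}$ of the breakpoint in $W_{af}$; this does not affect your argument, however, because only the Hecke-path axioms (billiardness and positive folding) are needed here, and those are indeed scale-invariant.
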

\proof By assumption, there exists an oriented geodesic triangle $\tau'$ in $X$ with $\De$-side lengths $(\la',\mu',\nu')$. Then every $\psi\in Gauss(\tau')$ is semistable (Theorem \ref{thm:semistable}). By the same theorem, since semistability is preserved by scaling, there exists an oriented triangle $\tau\in \T_{\la,\mu,\nu}$, whose vertices are vertices of $X$. Thus, for $k=k_R$, the broken triangle $\si'=k(\P_\Delta(\tau))$ is a Hecke triangle. Hence, by Theorem \ref{thm:unfolding} (Part 2), this Hecke triangle can be unfolded to a geodesic triangle 
$$
\tau''\in \T^{sp}_{k\la,k\mu,k\nu}. 
$$
Hence, 
$$
k(\la,\mu,\nu)\in Hecke(G). \qed 
$$

Similarly, we obtain 

\begin{thm}
[\cite{KapovichMillson2}] \label{thm:hecke->rep}
$$
k_R\cdot Hecke(G)\subset Rep(G^\vee). 
$$
\end{thm}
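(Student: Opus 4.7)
The plan is to convert a special triangle in $X$ witnessing $(\la,\mu,\nu) \in Hecke(G)$ into a positive LS path in the Weyl chamber $\De$, and then invoke Littelmann's path model (Theorem \ref{LT1}) to produce a nonzero $G^\vee$-tensor product multiplicity for the scaled weights $k_R\la,\, k_R\mu,\, k_R\nu$. The factor $k_R$ enters at exactly one point: it is precisely what is needed to upgrade a positively folded (Hecke) path to an LS path, by forcing all break-points to be special vertices.

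Given $(\la,\mu,\nu) \in Hecke(G)$, choose a special oriented triangle $\tau = xyz$ in $X$ with $\De$-side lengths $\la, \mu, \nu$, and take $o := x$ as the basepoint for the projection $\P_\De = d_\De(o, \cdot) : X \to \De$. Since $xy$ starts at $o$ and $zx$ ends at $o$, their $\P_\De$-images are honest geodesic segments from $o$ to $x_\la := \P_\De(y)$ and from $x_\eta := \P_\De(z)$ to $o$ respectively, where $\eta := \nu^*$. By part (1) of Theorem \ref{thm:unfolding}, the middle image $p := \P_\De(yz)$ is a Hecke path from $x_\la$ to $x_\eta$ of $\De$-length $\mu$, and $p$ is positive since $\P_\De$ takes values in $\De$ by construction.

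Now set $p' := k_R \cdot p$, a path from $x_{k_R\la}$ to $x_{k_R\eta}$ of $\De$-length $k_R\mu$. The key observation is that $p'$ is a positive LS path. Indeed, all break-points of the Hecke path $p$ lie at vertices of the Coxeter complex $(A, W_{af})$, and by the very definition of $k_R$ the scaling by $k_R$ sends every vertex of $A$ to a special vertex. Hence all break-points of $p'$ are at special vertices, so the Maximality Axiom (Axiom 3) is automatic at each of them (and at smooth points it is trivial); Axioms 1 and 2, together with positivity, are manifestly preserved under positive scaling. Thus $p' \in LS^+_{x_{k_R\la},\, x_{k_R\eta},\, k_R\mu}$, and Theorem \ref{LT1} gives
\begin{equation*}
n^{k_R \eta}_{k_R \la,\, k_R \mu} \;=\; \bigl| LS^+_{x_{k_R\la},\, x_{k_R\eta},\, k_R\mu} \bigr| \;\geq\; 1.
\end{equation*}
Since $\eta^* = \nu$, this is equivalent to $(V_{k_R\la} \otimes V_{k_R\mu} \otimes V_{k_R\nu})^{G^\vee} \neq 0$, i.e., $k_R (\la, \mu, \nu) \in Rep(G^\vee)$.

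The step I expect to require the most care is the first one: verifying that with $o = x$ the projection $\P_\De(\tau)$ really is a broken triangle in $\De$ with two geodesic sides and a Hecke middle side, so that Theorem \ref{thm:unfolding}(1) applies directly. Once this geometric input is in place, the remainder of the proof is essentially bookkeeping about what happens to the axioms defining LS paths under scaling by $k_R$, combined with Littelmann's Theorem \ref{LT1}.
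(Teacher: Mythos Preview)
Your argument has a genuine gap at the step where you assert that ``all break-points of the Hecke path $p$ lie at vertices of the Coxeter complex $(A, W_{af})$.'' This is \emph{not} true for ordinary Hecke paths obtained as $\P_\De$-projections of geodesics. A break-point of a Hecke path must lie on a wall (so that $W_{p(t)}$ is nontrivial), but it need not be a vertex of $A$. Indeed, the paper remarks immediately after Lemma~\ref{lem:scaling} that ``already for $R=A_2$, there are Hecke paths which are not LS paths, even though $k_R=1$''; combined with Theorem~\ref{thm:unfolding}(2) (every Hecke path is the projection of some geodesic), this shows that scaling an ordinary Hecke path by $k_R$ does not in general produce an LS path. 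So the passage from $p$ to an LS path $p'=k_R\cdot p$ breaks down.

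This is precisely why the paper does \emph{not} project the geodesic $yz$ directly. Instead, it first replaces $yz$ by the model \emph{generalized} Hecke path $p_\mu$ inside an apartment $A'$ containing $y$ and $z$, and then projects that via $\P_\De$; by Theorem~\ref{thm:unfolding}(3) the image is a generalized Hecke path. The whole point of the 1-skeleton construction is that generalized Hecke paths have all their break-points at vertices by design, so Lemma~\ref{lem:scaling} applies and $k_R\cdot p$ is a generalized LS path. One then concludes via Theorem~\ref{T2} (the version of Littelmann's theorem for generalized LS paths) rather than Theorem~\ref{LT1}. Your overall architecture --- special triangle $\to$ broken triangle in $\De$ $\to$ scale by $k_R$ $\to$ invoke the path model --- is the right one, but the middle object must be a generalized Hecke path, not an ordinary one, for the scaling step to work.
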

\proof Suppose $(\la,\mu,\nu)\in Hecke(G)$. Take a special triangle $\tau=xyz\subset X$ with the $\De$-side lengths $\la,\mu,\nu$, where $x=o$. Next, consider an apartment 
$A'=\phi(A)\subset X$ containing $y, z$ and replace the geodesic $yz$ with the model generalized Hecke path $p_\mu$ in $\Delta'\subset A'$, connecting $y$ to $z$ and having the $\De$-length $\mu$. Here $\Delta'\subset A'$ is a Weyl chamber with the tip $y$ containing the point $z$. The result is a ``broken triangle'' $\Pi\subset X$ (actually, $\Pi$ is a polygon but we prefer to think of the concatenation $p_\mu$ as a broken side of a triangle). Now, projecting $\Pi$ to $\De$ via $\P_\Delta$ results in a generalized Hecke triangle $\Si$ with two geodesic sides $x x_\la, x_\nu x$ as before and the broken side $\P_\Delta(p_\mu)$ which is a generalized Hecke path $p$. Scaling by $k=k_R$ sends 
$p$ to a generalized LS path $k\cdot p$ (see Lemma \ref{lem:scaling}). Thus, the rescaled polygon 
$k\cdot \P_\Delta(\Si)$ is a generalized Littelmann triangle with the $\De$-side-lengths $k \la, k \mu, k\nu$. Hence, 
$$
n_{\la',\mu',\nu'}\ne 0
$$
where $(\la',\mu',\nu')=k(\la,\mu,\nu)$. \qed 

\begin{cor} 
1. $k_R\cdot Hecke(G)\subset Rep(G^\vee) \subset Hecke(G)$. 

2. For a root system $R$ of type $A$, $Rep(G^\vee) = Hecke(G)$. 
\end{cor}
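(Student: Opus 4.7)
The plan is to assemble the corollary directly from two results already established in the appendix: Theorem \ref{thm:hecke->rep} and Theorem \ref{thm:rep->hecke}. No new geometric work should be needed; the issue is only to check that these two inclusions fit together and that the constant $k_R$ specializes correctly in type $A$.

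First I would write down the inclusion $Rep(G^\vee)\subset Hecke(G)$. This is the content of Theorem \ref{thm:rep->hecke}: if $n_{\la,\mu,\nu}\ne 0$, then the polynomial $m_{\la,\mu,\nu}(q)$ has positive leading coefficient, hence is nonzero, and so $(\la,\mu,\nu)\in Hecke(G)$ by the definition of $Hecke(G)$ in terms of nonvanishing of Hecke structure constants (equivalently, nonemptiness of $\mathcal{T}_{\la,\mu,\nu}$). Alternatively, one may cite the path-model proof already indicated in Section \ref{sec:LS}, which unfolds a Littelmann triangle in $\De$ into a special geodesic triangle in $X$ via part 2 of Theorem \ref{thm:unfolding}.

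Next I would record the inclusion $k_R\cdot Hecke(G)\subset Rep(G^\vee)$, which is exactly Theorem \ref{thm:hecke->rep}. Combining these two inclusions yields part 1 of the corollary:
\[
k_R\cdot Hecke(G)\subset Rep(G^\vee)\subset Hecke(G).
\]

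Finally, for part 2 I would observe that if $R$ is of type $A$ then every vertex of the model apartment $A$ is special, so by the definition of $k_R$ given in Section \ref{sec:buildings} we have $k_R=1$. Substituting this into the chain of inclusions from part 1 collapses it to $Hecke(G)\subset Rep(G^\vee)\subset Hecke(G)$, giving $Rep(G^\vee)=Hecke(G)$. There is no real obstacle here: the corollary is a formal consequence of the two preceding theorems, and the only point that deserves explicit mention is the identification $k_R=1$ for type $A$, which is immediate from the definition of $k_R$ as the least common multiple of the coefficients of the highest root in the basis of simple roots (all equal to $1$ in type $A$).
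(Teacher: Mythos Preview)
Your proposal is correct and matches the paper's treatment exactly: the corollary is stated without proof in the paper, as it follows immediately from Theorem \ref{thm:rep->hecke} (or the path-model argument in Section \ref{sec:LS}) for the inclusion $Rep(G^\vee)\subset Hecke(G)$, Theorem \ref{thm:hecke->rep} for $k_R\cdot Hecke(G)\subset Rep(G^\vee)$, and the observation $k_R=1$ in type $A$.
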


Theorem \ref{thm:unfolding} was improved by Gaussent and Littelmann as follows: 

\begin{thm}
[S. Gaussent, P. Littelmann, \cite{GL2}] Every generalized Hecke path $p$ in $\De$ with $length_\De( p)=\mu$, there exists a chart $\phi: A\to X$, so 
that $p=\P_\De(\phi(p_\mu))$. 
\end{thm}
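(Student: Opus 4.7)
My plan is to proceed by induction on the number of genuine folds of the generalized Hecke path $p$, i.e.\ the parameters $t \in [0,1]$ at which $p'_-(t) \neq p'_+(t)$. In the base case, $p$ has no folds, so (up to translation) it coincides with the model path $p_\mu$ itself, and any chart $\phi : A \to X$ through the chosen special basepoint does the job; Part~3 of Theorem~\ref{thm:unfolding} then yields $\P_\De(\phi(p_\mu)) = p$. For the inductive step, I would focus on the \emph{last} fold of $p$, located at some parameter $t_0$. Axiom~2 (Positivity) supplies a positive $W'_{p(t_0)}$-chain from $p'_-(t_0)$ to $p'_+(t_0)$, realized by a sequence of reflections $\tau_{\beta_1}, \ldots, \tau_{\beta_k}$ whose composition I call $\tau$.

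Using this, I would define a straightened path $p^\flat$ which agrees with $p$ on $[0,t_0]$ and which, on $[t_0,1]$, is obtained from $p|_{[t_0,1]}$ by applying the affine transformation fixing $p(t_0)$ with linear part $\tau^{-1}$. A direct check shows $p^\flat$ remains a generalized Hecke path of $\De$-length $\mu$ but has strictly fewer folds, so by the inductive hypothesis there is a chart $\phi^\flat : A \to X$ with $\P_\De(\phi^\flat(p_\mu)) = p^\flat$. I would then construct the desired $\phi$ from $\phi^\flat$ by successively ``re-folding'' across the $k$ walls $\widetilde{H}_j := \phi^\flat(H_j)$, where $H_j \subset A$ is the wall through $p_\mu^{-1}(p(t_0))$ corresponding to $\tau_{\beta_j}$. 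At each wall, Axiom~A3 (Thickness) provides an alternate half-apartment on the far side of $\widetilde{H}_j$, and Axiom~A1 (Connectivity) lets me glue this half-apartment to the existing image of $\phi^\flat$ along $\widetilde{H}_j$ to produce a new chart which agrees with $\phi^\flat$ on one side of $H_j$ but sends the other side to a new half-apartment of $X$. Iterating through $j=1,\ldots,k$ yields a chart $\phi$ whose image of $p_\mu|_{[0,t_0]}$ is unchanged and whose image of $p_\mu|_{[t_0,1]}$ is bent by exactly $\tau$, so that $\P_\De(\phi(p_\mu)) = p$ by construction and Theorem~\ref{thm:unfolding}(3).

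The hard part will be ensuring that the successive re-foldings assemble into a single genuine isometric chart $\phi : A \to X$ defined on the whole model apartment, rather than only on the tube around the image of $p_\mu$. This requires making the choice of new half-apartment across each $\widetilde{H}_j$ \emph{compatibly} with all previous choices, so that on the overlaps the transition maps are restrictions of elements of $W_{af}$. The natural tool here, and the one invoked by Gaussent--Littelmann, is the retraction of the building onto an apartment based at a chamber at infinity $\sigma \in \tits X$: choosing $\sigma$ in the appropriate Weyl chamber relative to $p(t_0)$ ensures that the positivity of the chain $\tau_{\beta_1},\ldots,\tau_{\beta_k}$ (as opposed to an arbitrary factorization) corresponds precisely to moving \emph{toward} $\sigma$ at each step, which in turn rigidifies the global choice of new half-apartments. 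A secondary check, needed to close the induction, is that after each re-folding step the modified path retains the positive folding condition at the junctions $t_1 < \cdots < t_\ell$ between the constituent Hecke subpaths $q_1, \ldots, q_\ell$---this follows because the re-folding affects only directions at $t_0$ and respects the straight-line structure elsewhere.
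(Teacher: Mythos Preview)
The paper does not prove this theorem. It is stated as a result of Gaussent--Littelmann with the citation \cite{GL2}, immediately after the remark that ``In \cite{KapovichMillson2} we could not prove that every generalized Hecke path in $\De$ can be unfolded to a model generalized Hecke path in an apartment in $X$. This was accomplished later on by Gaussent and Littelmann in \cite{GL2}.'' So there is no in-paper proof to compare your proposal against; the survey simply imports the statement.

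On the substance of your proposal: the high-level strategy (induct on the number of folds, straighten the last one, unfold by induction, then re-fold inside the building using thickness) is the natural picture, and your closing paragraph correctly identifies retractions based at a chamber at infinity as the organizing device. But there is a genuine gap at the step ``A direct check shows $p^\flat$ remains a generalized Hecke path of $\De$-length $\mu$.'' It does not. The affine map with linear part $\tau^{-1}$ applied to $p|_{[t_0,1]}$ will in general move that tail \emph{out of} $\Delta$: the positivity of the chain $u_0 \le_{\Delta^*} \cdots \le_{\Delta^*} u_k$ is exactly the statement that $p'_+(t_0)$ points farther into $\Delta$ than $p'_-(t_0)$, so undoing it points the tail toward the walls of $\Delta$. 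Once $p^\flat$ leaves $\Delta$, your inductive hypothesis (which is stated only for generalized Hecke paths \emph{in} $\Delta$) no longer applies. You also need $p^\flat$ to land in the 1-skeleton and to satisfy the junction positive-folding conditions after the transformation; neither is automatic, and the latter can fail when $t_0$ lies inside one of the constituent subpaths $q_i$ rather than at a junction.

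A second issue is the re-folding when the chain has length $k>1$. You propose to bend across the walls $H_1,\ldots,H_k$ one at a time, but after the first bend the image of $p_\mu|_{[t_0,1]}$ lies in a \emph{new} apartment, and the wall you next need to bend across is the image of $H_2$ under that new chart, not $\phi^\flat(H_2)$. Keeping track of this tower of apartments so that the composite is still a single chart $A\to X$ is exactly the content of the gallery/retraction machinery in \cite{GL2}; it is not a routine iteration of Axiom~A3. Your sketch gestures at this but does not supply the mechanism.
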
 

Using this result, they computed in \cite{GL2} the structure constants $m_{\la,\mu}^\eta$ for the spherical Hecke ring ${\mathcal H}_G$ using the path model based on generalized Hecke paths simplifying the earlier work by C.~Schwer \cite{Schwer}. 

\subsection{Saturation Theorems and Conjectures}

Recall that semigroup $Rep(G^\vee)$ is contained in the {\em eigencone} $C=C(G^\vee)$ which is the set of positive real linear combinations of elements of $Rep(G^\vee)$. 
In particular,
$$
(\la,\mu,\nu)\in (L_+)^3\cap C \iff \exists N>0, (N\la,N\mu,N\nu)\in Rep(G^\vee). 
$$
We define the semigroup
$$
C_L= C \cap (L_+)^3\cap  \{\la+\mu+\nu\in Q(R^\vee)\}
$$
containing  $Rep(G^\vee)$. Recall also that
$$
Hecke(G)= \{(\la,\mu,\nu): \widehat{\T}^{sp}_{\la,\mu,\nu}\ne \emptyset\} \subset Tri(X)= \{(\la,\mu,\nu): \widehat{\T}_{\la,\mu,\nu}\ne \emptyset\}.  
$$
In view of Theorem  \ref{thm:semistable}, the set $Tri(X)$ is stable under scaling (since scaling preserves semistability).

\begin{thm}
[\cite{KapovichLeebMillson3, KapovichMillson2}] \label{thm:main}

\begin{enumerate}
\item $$
 k_R C_L \subset Hecke(G) \subset C_L  .  
$$ 
\item 
$$
k_R Hecke(G)\subset Rep(G) \subset Hecke(G).  
$$
\item $$k_R^2 C_L \subset Rep(G^\vee).$$ 
\end{enumerate}
\end{thm}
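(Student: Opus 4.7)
The three assertions are not independent: the third is an immediate consequence of the first two, since
\[
k_R^2\, C_L \;=\; k_R\,(k_R\, C_L) \;\subset\; k_R\, Hecke(G) \;\subset\; Rep(G^\vee)
\]
by the second inclusion of (1) followed by the first inclusion of (2). So the work is concentrated in (1) and (2), and both essentially repackage machinery already assembled earlier in the appendix.

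For (2), the inclusion $Rep(G^\vee)\subset Hecke(G)$ is Theorem \ref{thm:rep->hecke}: via the Satake transform the polynomial $m_{\la,\mu,\nu}(q)$ has leading coefficient $n_{\la,\mu,\nu}$, so $n_{\la,\mu,\nu}\neq 0$ forces $\mathcal{T}_{\la,\mu,\nu}(F_q)\neq\emptyset$ and hence $(\la,\mu,\nu)\in Hecke(G)$. The inclusion $k_R\,Hecke(G)\subset Rep(G^\vee)$ is Theorem \ref{thm:hecke->rep}; the argument I would reproduce is: start from a special triangle $\tau=xyz$ in $X$ with $\Delta$-side lengths $(\la,\mu,\nu)$, pick an apartment $A'$ through $y$ and $z$, replace the edge $yz$ by the model generalized Hecke path $p_\mu$ in $A'$, and project the resulting broken triangle by $\P_\Delta$ to $\Delta$. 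Theorem \ref{thm:unfolding} guarantees that this projection is a generalized Hecke path in $\Delta$, Lemma \ref{lem:scaling} shows that its $k_R$-scaling is a generalized LS path, and Theorem \ref{T2} then gives $n_{k_R\la,k_R\mu,k_R\nu}\neq 0$.

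For (1), the inclusion $Hecke(G)\subset C_L$ is essentially unpacking definitions: a special triangle forces $\la,\mu,\nu\in L_+$; the very existence of the triangle exhibits $(\la,\mu,\nu)\in Tri(X)$, which under the KLM identification of $Tri(X)$ with $C\cap \Delta^3$ (the generalized triangle inequalities coinciding with the eigencone inequalities) places $(\la,\mu,\nu)$ in $C$; and the congruence $\la+\mu+\nu\in Q(R^\vee)$ for triples in $Hecke(G)$ is the result of \cite{KapovichLeebMillson3}. The non-trivial direction $k_R\,C_L\subset Hecke(G)$ is Theorem \ref{thm:Hecke-sat}: given $(\la,\mu,\nu)\in C_L$, the same KLM identification produces some triangle $\tau\in\widehat{\mathcal{T}}_{\la,\mu,\nu}$; the corollary following Theorem \ref{stabletofixed}, whose proof invokes the Gauss correspondence Theorem \ref{thm:semistable} together with the $1$-Lipschitz dynamical system $\Phi_\psi$, then upgrades $\tau$ (using the hypothesis $\la+\mu+\nu\in Q(R^\vee)$) to a triangle $\tau'\in\widehat{\mathcal{T}}_{\la,\mu,\nu}$ whose three vertices are vertices of $X$. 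Finally, the defining property of $k_R$---as the smallest natural number carrying every vertex of the model apartment $A$ to a special vertex---ensures that $k_R\tau'$ is a special triangle with $\Delta$-side lengths $k_R(\la,\mu,\nu)$, whence $k_R(\la,\mu,\nu)\in Hecke(G)$.

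\textbf{Main obstacle.} The hardest ingredient, despite being quoted as Theorem \ref{thm:hecke->rep}, is the path-model step in (2): one must control the bending of $\P_\Delta(\phi(p_\mu))$ so that after rescaling by the integer $k_R$ the break-points land at special vertices of $A$ and the Billiardness, Positivity, and Maximality axioms of an LS path are all simultaneously verified. This is where the combinatorics of the highest root enters and where the constant $k_R$ (rather than a larger integer) becomes the natural choice. The rescaling step in (1) is combinatorially elementary once $k_R$ is in hand, but is precisely what pins the final saturation constant in (3) at $k_R^2$ rather than something larger; any improvement of Theorem \ref{thm:main}(3) below $k_R^2$ would require either bypassing the intermediate Hecke ring or sharpening the unfolding of Hecke paths directly into LS paths without loss.
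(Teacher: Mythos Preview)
Your overall architecture matches the paper: (3) is obtained from (1) and (2) exactly as you write, and both inclusions in (2) are simply Theorems \ref{thm:rep->hecke} and \ref{thm:hecke->rep}. The forward inclusion $k_R C_L\subset Hecke(G)$ in (1) is Theorem \ref{thm:Hecke-sat}, as you say.

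There is one genuine difference. For the inclusion $Hecke(G)\subset C_L$, you invoke the identification $Tri(X)=C(G^\vee)$ from \cite{KapovichLeebMillson1} directly: a special triangle sits in $Tri(X)$, hence in $C$. The paper instead deduces this inclusion \emph{from part (2)}: since $k_R\,Hecke(G)\subset Rep(G^\vee)\subset C_L$ and $C$ is a cone, dividing by $k_R$ gives $Hecke(G)\subset\tfrac{1}{k_R}C_L\subset C$; combining with the elementary containment $Hecke(G)\subset (L_+)^3\cap\{\la+\mu+\nu\in Q(R^\vee)\}$ yields $Hecke(G)\subset C_L$. Your route is valid but leans on the full geometric result of \cite{KapovichLeebMillson1}; the paper's route is internal to the appendix and makes explicit that (1) and (2) are logically intertwined rather than independent.

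One expository inaccuracy: in your last step for $k_R C_L\subset Hecke(G)$ you write that ``$k_R\tau'$ is a special triangle.'' The building $X$ admits no dilation, so you cannot scale a triangle in $X$. The actual mechanism (as in the proof of Theorem \ref{thm:Hecke-sat}) is to project $\tau'$ to the broken triangle $\sigma=\P_\Delta(\tau')\subset\Delta$, scale $\sigma$ by $k_R$ inside the Euclidean chamber so its break points become special vertices, and then unfold the resulting Hecke triangle back into $X$ via Theorem \ref{thm:unfolding}(2). You clearly know this project--scale--unfold routine from your account of (2), so this is phrasing rather than a real gap.
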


\proof The only result that we did not yet explain is the inclusion $Hecke(G) \subset C_L$. Note that 
$$
k_R Hecke(G)\subset Rep(G)\subset C_L. 
$$ 
Since $C$ is a cone, 
$$
Hecke(G)\subset \frac{1}{k_R} C_L\subset C. 
$$
On the other hand,
$$
Hecke(G)\subset L^3\cap \{\la+\mu+\nu\in Q(R^\vee)\}. 
$$
\qed 

The inclusions (1) in this theorem are strengthened to

\begin{thm}
[\cite{KapovichLeebMillson1}] $C(G^\vee)=Tri(X)$. 
\end{thm}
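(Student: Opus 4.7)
The plan is to prove both inclusions $C(G^\vee)\subset Tri(X)$ and $Tri(X)\subset C(G^\vee)$ separately, combining the integral-level results of Theorem~\ref{thm:main} with closure and density arguments. For the first inclusion, I will observe that $Rep(G^\vee)\subset Hecke(G)\subset Tri(X)$, where the first containment is Theorem~\ref{thm:main}(2) and the second is tautological since any special triangle is a triangle. Since $C(G^\vee)$ is by definition the closure of $\mathbb{R}_{\geq 0}\cdot Rep(G^\vee)$, it will suffice to check that $Tri(X)$ is a closed convex cone in $\Delta^3$. Scale-invariance is immediate from Theorem~\ref{thm:semistable}, since scaling a semistable weighted configuration preserves semistability; convexity follows by superposition of semistable configurations and re-application of the converse direction of Theorem~\ref{thm:semistable}; and closure follows from continuity of the slope function (the defining inequalities for semistability are closed).

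For the reverse inclusion, since $C(G^\vee)$ is closed, it suffices to establish it on rational points. Given rational $(\lambda,\mu,\nu)\in Tri(X)$, I will choose $N\in\mathbb{N}$ so that $N(\lambda,\mu,\nu)\in (L_+)^3$ with $N(\lambda+\mu+\nu)\in Q(R^\vee)$; scale-invariance keeps the scaled triple in $Tri(X)$. The problem thus reduces to showing
\[
Tri(X)\cap (L_+)^3\cap\{\lambda+\mu+\nu\in Q(R^\vee)\}\subset C_L.
\]
For such an integral triple, I will invoke the Gauss correspondence (Theorem~\ref{thm:semistable}) to produce a semistable weighted configuration $\psi$ in the spherical Tits boundary $Y=\partial_{Tits}X$ with weights $|\lambda|,|\mu|,|\nu|$ and types along $\lambda,\mu,\nu$. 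Via the GIT dictionary between metric semistability of $\psi$ and GIT semistability of chamber tuples in $(G/B)^3$ under the diagonal $G$-action, the Hilbert--Mumford criterion evaluated along one-parameter subgroups of $T$ (computed using Lemma~\ref{l1}) converts semistability of $\psi$ into a finite system of linear inequalities of the form $\sum_j\lambda_j(w_j x_P)\leq 0$ indexed by maximal parabolics $P$ and Weyl elements $w_j$ subject to a Schubert nonvanishing condition. This is precisely the Berenstein--Sjamaar system characterizing $\overline{\Gamma}_3(\mathfrak{g}^\vee)=C(G^\vee)$ by Theorem~\ref{thm1} and Corollary~\ref{eigen}, so the desired containment follows.

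The central difficulty will be showing that the Hilbert--Mumford inequalities arising from destabilizing one-parameter subgroups at the ideal boundary $Y$ of the building of $G=\mathsf{G}(F)$ genuinely coincide with the Berenstein--Sjamaar/Belkale--Kumar inequalities defining the eigencone $C(G^\vee)$ of the complex Langlands dual group. Although both systems depend only on the common Weyl group $W$ and its system of parabolic subgroups, one must carefully identify the Kempf parabolic of $G$ attached to a destabilizing OPS with the corresponding standard parabolic of $G^\vee$ appearing in Theorem~\ref{thm1}, and use the cocharacter/character duality $X_*(\mathsf{T})=X^*(\mathsf{T}^\vee)$ to match up the two linear pairings (building evaluations on one side, fundamental-weight pairings on the other). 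Once this matching is verified, $Tri(X)$ and $C(G^\vee)$ are cut out by the same linear inequalities in $\Delta^3$ and hence coincide.
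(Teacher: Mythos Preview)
Your argument for the inclusion $C(G^\vee)\subset Tri(X)$ is fine and essentially matches the paper's: both use $Rep(G^\vee)\subset Hecke(G)\subset Tri(X)$ together with scale-invariance of $Tri(X)$ (from Theorem~\ref{thm:semistable}), closedness, and density of rationals in $C$. (Your aside about convexity of $Tri(X)$ via ``superposition of semistable configurations'' is not needed for this direction, and is not obviously correct as stated, but it is harmless.)

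For the reverse inclusion $Tri(X)\subset C(G^\vee)$, the paper takes a much shorter and more self-contained route than yours. Given a rational triple in $Tri(X)$, one scales so that it lies in $(L_+)^3$ with sum in $Q(R^\vee)$; then the Corollary to Theorem~\ref{stabletofixed} together with Theorem~\ref{thm:Hecke-sat} puts a further multiple into $Hecke(G)$, and Theorem~\ref{thm:hecke->rep} puts yet another multiple into $Rep(G^\vee)\subset C$. Since $C$ is a cone, the original triple lies in $C$. Thus the reverse inclusion on rational points follows purely from Theorem~\ref{thm:main}, with no direct comparison of inequality systems at all; the only residual input is density of rational points in $Tri(X)$.

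Your proposed route has a genuine gap at exactly the point you flag as ``the central difficulty.'' The semistability inequalities arising from the spherical building $\partial_{Tits}X$ are indexed by Schubert nonvanishing conditions in the cohomology of $G/P$ for $G=\mathsf{G}(F)$, whereas the Berenstein--Sjamaar system of Theorem~\ref{thm1}/Corollary~\ref{eigen} for $C(G^\vee)$ is indexed by Schubert nonvanishing in $G^\vee/P^\vee$. When $\mathsf{G}$ is not self-dual (e.g.\ types $B$ versus $C$), these Schubert calculi are genuinely different, and the claim that the two inequality systems cut out the same cone is not a formality of cocharacter/character duality or parabolic matching---it is precisely the nontrivial content that the eigencone depends only on the Weyl group. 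That is essentially the theorem you are trying to prove (and is what \cite{KapovichLeebMillson1} establishes by direct geometric arguments). The paper's indirect argument sidesteps this entirely because the Satake transform and the path model used in Theorem~\ref{thm:main} already encode the passage between $G$ and $G^\vee$.
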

\proof This theorem is proven in \cite{KapovichLeebMillson1} by direct geometric arguments; here we will present 
an indirect partial proof using the results that we explained so far.  Let us first verify the inclusion $C\cap (\Q \otimes L)^3\subset Tri(X)$. 
Since both sets are stable under rational scaling, it suffices to consider a triple $(\la,\mu,\nu)\in N\cdot C_L$ for large $N$, i.e.,   
$$
(\la,\mu,\nu)\in Rep(G^\vee) \subset Hecke(G)\subset Tri(X). 
$$ 
Note, furthermore, that in view of local compactness of $X$ (or, by appealing to projections of triangles in $X$ to $\Delta$), the set $Tri(X)$ is closed. 
Since rational triples are dense in $C$, we obtain the inclusion
$$
C\subset Tri(X). 
$$
In the same fashion one proves that
$$
Tri(X)\cap (L\otimes \Q)^3\subset C. 
$$
One can finish a proof by arguing that rational points are dense in $Tri(X)$. This, of course, follows from the results of \cite{KapovichLeebMillson1}, where it is proven by $Tri(X)$ is a rational cone. One can also give an alternative argument using root operators acting on generalized LS paths following the arguments used in \cite{KapovichMillson2}. \qed 
 
As a corollary, we obtain:

\begin{thm}
[Saturation Theorem, \cite{KapovichMillson2}]
If $\la+\mu+\nu\in Q(R^\vee)$ and $\la, \mu, \nu$ are dominant weights of $G^\vee$ such that   
$$
\exists N>0 \quad (V_{N\la} \otimes V_{N\mu} \otimes V_{N\nu})^{G^\vee}\ne 0 $$
then
$$ 
(V_{k\la} \otimes V_{k\mu} \otimes V_{k\nu})^{G^\vee}\ne 0. 
$$
for $k=k_R^2$. In particular, for $R=A_\ell$, $k_R=1$ and we recover the Saturation Theorem of Knutson and Tao \cite{KnutsonTao}. 
\end{thm}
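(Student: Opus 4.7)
The plan is to deduce the Saturation Theorem as a direct consequence of Theorem \ref{thm:main}(3), namely the chain of inclusions
\[
k_R^2\, C_L \subset k_R\, \mathrm{Hecke}(G) \subset \mathrm{Rep}(G^\vee) \subset \mathrm{Hecke}(G) \subset C_L,
\]
which has already been established via the path model and Gauss correspondence. The only task is to verify that the triple $(\lambda,\mu,\nu)$ in the hypothesis lies in $C_L$.

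First I would translate the hypothesis $[V_{N\lambda}\otimes V_{N\mu}\otimes V_{N\nu}]^{G^\vee}\neq 0$ into the statement $(N\lambda, N\mu, N\nu)\in \mathrm{Rep}(G^\vee)$. Since $\mathrm{Rep}(G^\vee)$ is contained in the eigencone $C = C(G^\vee)$, and $C$ is a cone (invariant under positive rational rescaling), dividing by $N$ yields $(\lambda,\mu,\nu)\in C$. The dominance hypothesis gives $(\lambda,\mu,\nu)\in (L_+)^3$, and the assumption $\lambda+\mu+\nu \in Q(R^\vee)$ then places the triple in
\[
C_L = C \cap (L_+)^3 \cap \{\lambda+\mu+\nu\in Q(R^\vee)\}.
\]

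Next I would apply the inclusion $k_R^2\, C_L \subset \mathrm{Rep}(G^\vee)$ from Theorem \ref{thm:main}(3) to conclude that $(k\lambda, k\mu, k\nu)\in \mathrm{Rep}(G^\vee)$ for $k = k_R^2$, which is exactly the desired non-vanishing $[V_{k\lambda}\otimes V_{k\mu}\otimes V_{k\nu}]^{G^\vee}\neq 0$. For the special case $R=A_\ell$ I would note that $k_R=1$ (since every vertex of the $\tilde A_\ell$-Coxeter complex is special, as all coefficients $a_i$ of the highest root equal $1$), so the theorem specializes to the Knutson--Tao saturation property for $\mathrm{SL}(n)$.

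The substantive content of the argument is of course already buried in Theorem \ref{thm:main}, whose hardest ingredient is the inclusion $k_R\cdot\mathrm{Hecke}(G)\subset \mathrm{Rep}(G^\vee)$: given a special geodesic triangle $\tau=xyz$ in $X$ with $\Delta$-side-lengths $(\lambda,\mu,\nu)$, one replaces the side $yz$ by the model $1$-skeleton Hecke path $p_\mu$ in an apartment containing $y,z$, projects the resulting broken triangle to $\Delta$ via $\P_\Delta$, and invokes Lemma \ref{lem:scaling} (scaling by $k_R$ converts a generalized Hecke path to a generalized LS path) together with Theorem \ref{T2} identifying counts of positive generalized LS paths with tensor product multiplicities. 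The only other input used is that $C=\mathrm{Tri}(X)$, so that dominant rational points of $C$ rescale into $\mathrm{Rep}(G^\vee)$; this in turn rests on the stability/Gauss correspondence of Theorem \ref{thm:semistable}. Given these, the saturation theorem itself is a one-line consequence, and there is no remaining obstacle beyond reassembling the implications.
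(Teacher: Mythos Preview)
Your proposal is correct and takes essentially the same approach as the paper: both deduce the result immediately from Theorem~\ref{thm:main}, by showing $(\lambda,\mu,\nu)\in C_L$ and then invoking $k_R^2\,C_L\subset \mathrm{Rep}(G^\vee)$. The paper's proof, after stating this, unpacks the chain slightly differently---passing through $\mathrm{Hecke}(G)$ and the Gauss correspondence step by step rather than naming $C_L$ explicitly---but this is only a cosmetic difference in presentation, and your identification of the key ingredients (Theorem~\ref{thm:semistable}, Theorem~\ref{thm:Hecke-sat}, Theorem~\ref{thm:hecke->rep}, Lemma~\ref{lem:scaling}) matches the paper's.
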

\proof This theorem follows immediately from the combination of the results in Theorem \ref{thm:main}. It is useful, however, sketch the overall 
argument. Let $N>0$ be such that
$$
(N\la, N\mu, N\nu)\in Rep(G^\vee). 
$$ 
By the inclusion
$$
Rep(G^\vee)\subset Hecke(G)
$$
 (proven either using Satake correspondence as in section \ref{sec:satake} or via LS path model as in section \ref{sec:LS}), 
$$
(N\la, N\mu, N\nu)\in Hecke(G). 
$$
Let $\tau$ be a special oriented triangle in $X$ with the $\Delta$-side lengths $(N\la, N\mu, N\nu)$. Then, by Theorem \ref{thm:semistable}, every weighted configuration 
$$
\psi\in Gauss(\tau)
$$
is semistable. Since semistability is preserved by scaling, the weighted configuration
$$
\frac{1}{N}\psi
$$
is still stable. Thus, by Theorem \ref{thm:semistable}, there exists an oriented triangle in $X$ with the $\Delta$-side lengths $(N\la, N\mu, N\nu)$, whose vertices are vertices  of $X$ (note the condition 
$\la+\mu+\nu\in \Q(R^\vee)$). Therefore, by Theorem \ref{thm:Hecke-sat}, 
$$
(\la', \mu', \nu')= k_R(\la, \mu, \nu)\in Hecke(G). 
$$
Lastly, by Theorem \ref{thm:hecke->rep}, 
$$
k_R^2(\la,\mu,\nu)=k_R(\la', \mu', \nu')\in Rep(G^\vee). \qed 
$$

\bigskip 
It is, then, natural to ask to what extent the ``saturation factors'' $k_R$ and $k_R^2$ are needed in the above results.

\begin{thm}\label{thm:examples}
Let $R$ be a non-simply laced root system. Then there are triples $(\la,\mu,\nu)$ so that 
$$
(\la,\mu,\nu)\in C_L$$
but
$$
(\la, \mu,\nu)\notin Rep(G^\vee)$$
i.e.,
$$
(V_{\la} \otimes V_{\mu} \otimes V_{\nu})^{G^\vee}=0. 
$$
Moreover, in these examples, the triple $(\la,\mu,\nu)$ belongs to $Hecke(G)$.  
\end{thm}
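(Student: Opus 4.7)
The plan is to produce the required triples first for the two rank 2 non-simply laced cases $B_2$ and $G_2$ via explicit computation, and then to propagate each such failure to every higher-rank non-simply laced root system by means of a standard Levi reduction. In particular, the statement reduces to exhibiting, for each non-simply laced irreducible $R$, a single triple $(\lambda,\mu,\nu)$ with the three required properties.

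I would first handle the rank 2 cases directly. For $G^\vee=\Spin(5)$ (of type $B_2$, so $G=\Sp(4)$) and for $G^\vee$ of type $G_2$, Kapovich-Millson [KM$_1$, Theorems 5.3 and 6.1] have computed both the tensor product semigroup $\hat{\Gamma}_3(G^\vee)$ and the saturated tensor cone $\Gamma_3(G^\vee)$ explicitly, and a direct inspection of the resulting inequalities exhibits dominant weight triples $(\lambda_0,\mu_0,\nu_0)$ with $\lambda_0+\mu_0+\nu_0\in Q(R^\vee)$ for which $(\lambda_0,\mu_0,\nu_0)\in C_L$ but $(V_{\lambda_0}\otimes V_{\mu_0}\otimes V_{\nu_0})^{G^\vee}=0$. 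For the claim $(\lambda_0,\mu_0,\nu_0)\in Hecke(G)$ in these rank 2 cases, one exhibits the required disoriented special triangle directly inside an apartment of the Bruhat-Tits building $X$ of $G$, using the rank 2 affine Weyl group combinatorics; alternatively, by Theorem \ref{thm:main}(1) we have $k_R\cdot C_L\subset Hecke(G)$, so one can choose the starting triple so that the scaling factor is not needed.

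For a higher-rank non-simply laced irreducible root system $R$, the Dynkin diagram necessarily contains a multiple edge. I would choose two adjacent simple roots $\alpha_i,\alpha_j$ of unequal length spanning a rank 2 sub-root system $R'\subset R$ of type $B_2$, and let $L^\vee\subset G^\vee$ denote the corresponding standard Levi subgroup. Given the failing triple $(\lambda_0,\mu_0,\nu_0)$ for $L^\vee$, I would extend it to a triple $(\lambda,\mu,\nu)$ of $G^\vee$-dominant weights by setting $\langle\lambda,\alpha_k^\vee\rangle=\langle\mu,\alpha_k^\vee\rangle=\langle\nu,\alpha_k^\vee\rangle=0$ for all $k\notin\{i,j\}$. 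Since $Q(R'^\vee)\subset Q(R^\vee)$, the condition $\lambda+\mu+\nu\in Q(R^\vee)$ is preserved. Membership of $(\lambda,\mu,\nu)$ in $C_L$ follows from the rank 2 fact combined with the inclusion $C(L^\vee)\subset C(G^\vee)$ on the image of $\mathfrak{h}^*_{L^\vee}$, and membership in $Hecke(G)$ follows because an apartment of the rank 2 building of $L$ embeds as a flat totally geodesic subcomplex of an apartment of the full building $X$, so the rank 2 special triangle serves as a witness in type $R$ as well.

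The main obstacle, and the step I expect to require the most care, is proving that $(V_\lambda\otimes V_\mu\otimes V_\nu)^{G^\vee}=0$. I would address this by combining the branching rule for $V_\lambda|_{L^\vee}$ (which, by our choice of vanishing on outside coroots, contains $V_{L^\vee}(\lambda_0)$ as the unique highest component of multiplicity one, generated by the highest weight line) with an explicit analysis of the action of the unipotent radicals $\mathfrak{u}_P^{\pm}$ of the parabolic with Levi $L^\vee$ on the zero weight space, to project any hypothetical nonzero $G^\vee$-invariant onto a nonzero $L^\vee$-invariant in $V_{L^\vee}(\lambda_0)\otimes V_{L^\vee}(\mu_0)\otimes V_{L^\vee}(\nu_0)$. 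This would contradict the rank 2 construction and complete the argument. The subtlety is that other $L^\vee$-components in the branching can, in principle, host $L^\vee$-invariants; one must verify that these cannot account for a $G^\vee$-invariant whose image in the top component is zero, which should follow from the fact that the image of a $G^\vee$-invariant under the projection onto the top $L^\vee$-isotypic part is determined by its component in a distinguished weight space preserved by $\mathfrak{u}_P^-$.
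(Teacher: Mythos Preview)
Your approach differs substantially from the paper's. The paper attempts no Levi reduction: after citing [KLM$_3$] for $B_2$ and $G_2$, it exhibits for each of $B_\ell$ ($\ell\ge 3$), $C_\ell$, and $F_4$ an explicit triple of the form $(\lambda,\lambda,\lambda)$ with $\lambda$ a specific fundamental weight lying in the root lattice. Membership in $Hecke(G)$ then follows from a general fact (Appendix to [Ha$_2$]): for $\lambda$ in the root lattice, $(\lambda,\lambda,\lambda)\in Hecke(G)$ automatically. Non-membership in $Rep(G^\vee)$ is proved by a direct LS-path analysis: $\lambda$ is chosen so that the open segment $o x_\lambda$ meets exactly one affine wall, whence any candidate path in $LS^+_{x_\lambda,x_\lambda,\lambda}$ has a single break, the positivity axiom forces the break to occur at $\tfrac12 x_\lambda$, and there the maximality axiom fails. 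For $F_4$ the non-membership is checked by computer (LiE).

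Your Levi-reduction strategy has a genuine gap at precisely the step you flag. You want to deduce $(V_\lambda\otimes V_\mu\otimes V_\nu)^{G^\vee}=0$ from the rank~$2$ vanishing by projecting a hypothetical $G^\vee$-invariant onto the top $L^\vee$-isotypic piece $V_{L^\vee}(\lambda_0)\otimes V_{L^\vee}(\mu_0)\otimes V_{L^\vee}(\nu_0)$. But a $G^\vee$-invariant is merely $L^\vee$-invariant, and under the $Z(L^\vee)$-grading of $V_\lambda\otimes V_\mu\otimes V_\nu$ its top component may vanish while lower components still assemble into a nonzero $G^\vee$-invariant; nothing in your sketch excludes this. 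The phrase about ``a distinguished weight space preserved by $\fru_P^-$'' is not an argument: annihilation by $\fru_P^+$ together with vanishing top component does not force the whole vector to vanish, since $\fru_P^-$ moves components downward in the grading, not upward. The reduction theorems of Roth and Ressayre that do equate invariant dimensions with Levi data require the triple to lie on a \emph{regular} face of the eigencone, which you neither verify nor can expect for an arbitrary extension-by-zeros of a rank~$2$ example. There is also a secondary issue with your $Hecke$ argument: special vertices in the apartment for $L$ (stabilizer $\cong W_L$) need not be special in the apartment for $G$ (stabilizer $\cong W_G$), so a special triangle for $L$ is not automatically a witness in $Hecke(G)$.
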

\proof It is convenient to switch now the notation from $R^\vee$ to $R$ and from $G^\vee$ to $G$. 

In \cite{KapovichLeebMillson3} we constructed examples of such triples for $R=B_2=C_2$ and $R=G_2$. 
Below we will explain how to generalize these examples to the case $R=F_4, R=C_\ell, R=B_\ell$. In each case, we will use a triple of weights 
$(\la,\la,\la)$, where $\la$ is one of the fundamental weights.  In all cases we will choose $\la$ which belongs to the root lattice, and, hence, the condition
$$
3\la\in Q(R)
$$
(necessary for $(\la,\la,\la)\in Hecke$) is trivially satisfied. Thus, $(\la,\la,\la)$ belongs to $Hecke$, see Appendix to \cite{Haines2}.

\medskip
We now specify the weight $\la$: 

\begin{enumerate}
\item For $R=F_4$ we take $\la=\varpi_2$ (note that $\varpi_3$ does not give an example as $(\varpi_3,\varpi_3,\varpi_3)\in Rep(F_4)$). The proof in this case is 
an unilluminating computation using the LiE program for tensor product decomposition. 

\item For $R=C_{2m}$ we take $\la=\varpi_{\ell}$ (the longest fundamental weight), while for $R=C_{2m+1}$ we will take  
$\la=\varpi_{\ell-1}$ (the next to the longest fundamental weight). 

\item For $R=B_\ell$, $\ell>2$, we take $\la=\varpi_{1}$. 
\end{enumerate}

Note that for the root systems of type $B$ and $C$, we have chosen $\la$ so that for the point $x=x_\la$, 
the interior of the segment $ox$, intersects affine walls in exactly one point (the mid-point). 
We will give a proof that $(\la,\la,\la)\notin Rep$ for $R=C_\ell$ since the $B_\ell$ case is done by the same method. 

Consider first the case when $\ell$ is even. Then 
$$
\la=(1,\ldots,1)
$$
in the Bourbaki coordinates. Suppose that $(\la,\la,\la)\in Rep(Sp(\ell))$. Let $x=x_\la=(1,\ldots,1)$. Since the interior of the segment $ox$ intersects only one wall, every (positive) LS path $p$ connecting $x$  to itself has exactly one break point, a point $y\in \Delta$. Set $\mu=\ov{yx}$. Then 
$$
\mu\in W\cdot \frac{1}{2} \la. 
$$
Since $y\in \Delta$, we conclude that 
$$
y=(\frac{3}{2},\ldots, \frac{3}{2}, \frac{1}{2},\ldots, \frac{1}{2}). 
$$
The path $p$ is the concatenation of the segments $xy$ and $yx$. We claim that unless $y=(\frac{1}{2},\ldots, \frac{1}{2})$, 
the path $p$ is not a Hecke path. Indeed, in order for $p$ to be a Hecke path we would need at least
$$
\ov{xy}\le_{\De^*} \ov{yx} \iff 2\mu=(-1,\ldots, -1, 1,\ldots, 1)\in \Delta^*. 
$$
The latter is clearly false, unless the vector $-2\mu$ has only positive coordinates, i.e., $y=\frac{1}{2}x$. In the latter case, however, the path $p$ is Hecke but not LS, analogously to \cite{KapovichMillson1}: The Maximality Axiom is violated. 

Suppose now that $\ell$ is odd. Then 
$$
\la=(1,\ldots,1,0). 
$$
Again, every positive LS path connecting $x$ to itself has exactly one break, at a point $y\in \Delta$. Thus, the point $y$ has to be of the form
$$
(\frac{3}{2},\ldots, \frac{3}{2}, \frac{1}{2},\ldots, \frac{1}{2}, 0)
$$
(as the last coordinate has to be nonnegative). Now, the argument is exactly the same as in the even case. \qed 

\medskip 
On the other hand, all known examples fail for simply-laced groups. Furthermore, in all known examples, at least one weight is singular. 

\begin{conj}
[Saturation Conjecture] 1. If $R$ is a simply-laced root system, then 
$$
(\la,\mu,\nu)\in C_L \iff (V_{\la} \otimes V_{\mu} \otimes V_{\nu})^{G^\vee}\ne 0. 
$$

2. In general,  
$$
(\la,\mu,\nu)\in C_L  \Rightarrow (V_{2\la} \otimes V_{2\mu} \otimes V_{2\nu})^{G^\vee}\ne 0. 
$$

3. If $\la, \mu, \nu$ are regular weights then
$$
(\la,\mu,\nu)\in C_L \iff (V_{\la} \otimes V_{\mu} \otimes V_{\nu})^{G^\vee}\ne 0.
$$
\end{conj}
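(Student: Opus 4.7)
The overall strategy is to work within the framework developed in the appendix: attack the conjecture by sharpening the two scaling factors of $k_R$ that appear in Theorem \ref{thm:main}, one arising in the inclusion $C_L \subset \frac{1}{k_R}\,\mathrm{Hecke}(G)$ and the other in $\mathrm{Hecke}(G) \subset \frac{1}{k_R}\,\mathrm{Rep}(G^\vee)$. Concretely, given $(\la,\mu,\nu)\in C_L$, Theorem \ref{thm:semistable} together with the convexity results on $\De$-valued distances produce an oriented triangle $\tau\subset X$ with $\De$-side lengths $(\la,\mu,\nu)$; the first factor of $k_R$ enters when one forces the vertices of $\tau$ to be special vertices of $X$, and the second enters when one applies Lemma \ref{lem:scaling} to upgrade the generalized Hecke path $\P_\De(\phi(p_\mu))$ to a generalized LS path. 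The plan is to eliminate these two factors in the three distinct settings of the conjecture.

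For Part (1), the simply-laced case, I would first handle Part (3) as a stepping stone: when $\la,\mu,\nu$ are all regular, the stabilizers $W_{p(t)}$ along the broken side of the projected triangle are trivial except possibly at isolated break points, so the Maximality Axiom (Axiom 3) is automatic at every point where $p$ does not actually bend. I would then show that at an honest break point $p(t)$ the positivity condition together with regularity forces the local $W'_{p(t)}$-chain to be $W$-maximal, using the simply-laced hypothesis to exclude the phenomena visible in the $B_2,G_2,F_4,C_\ell,B_\ell$ counterexamples of Theorem \ref{thm:examples}. Once Part (3) is in place, I would attempt a deformation argument: given an arbitrary singular $(\la,\mu,\nu)\in C_L$, approximate by regular triples inside $C_L$ living in the root lattice coset of $(\la,\mu,\nu)$, apply Part (3), and pass to the limit using semicontinuity of $\dim(V_\la\otimes V_\mu\otimes V_\nu)^{G^\vee}$ via specialization of paths. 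For type $D_n$ a case-by-case refinement of the $D_4$ argument from \cite{KKM} via the deformed product $\odot_0$ (Theorem \ref{EVT}) would serve as independent verification; for $E_6,E_7,E_8$ one would use the Lie algebra cohomology description of $\odot_0$ (Theorem \ref{liecohomology}) together with the tables produced by Lee (cf. Section \ref{exemples}) to reduce to a finite combinatorial check.

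For Part (2), I would combine the existing results that $2$ is a saturation factor for $B_\ell, C_\ell, D_\ell$ (Belkale-Kumar, Sam) with a refined analysis for the exceptional groups. The case $G_2$ is already established with factor $2$ in \cite{KM$_1$}; the remaining problem is $F_4$ and, conditionally, the simply-laced exceptional groups. For $F_4$, the approach would be to examine carefully which break points of a generalized Hecke path can fail the Maximality Axiom: a factor of $2$, rather than the full $k_{F_4}=12$, should suffice because $2$ is the smallest integer clearing the coweight lattice of $F_4$ modulo the coroot lattice.  The main obstacle throughout is the same one: in the simply-laced types $D_n$ ($n\geq 5$) and $E_6,E_7,E_8$, the coroot lattice is a proper sublattice of the coweight lattice, so not every vertex of $X$ is special, and there is no known way to guarantee that a triangle in $\mathrm{Hecke}(G)$ can be realized by a triangle with \emph{genuinely} special vertices without a nontrivial scaling. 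Removing this last scaling is essentially equivalent to the conjecture itself, and is the step where I would expect the current techniques — buildings, Hecke paths, and the deformed product — to need genuine new input, possibly via the MV-cycle interpretation of tensor multiplicities or via Ressayre's well-covering pair machinery (Theorem \ref{ress2}) applied to the diagonal embedding $G^\vee\hookrightarrow (G^\vee)^{s-1}$.
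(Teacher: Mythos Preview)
The statement you are attempting to prove is a \emph{conjecture}: the paper presents it as the open Saturation Conjecture and gives no proof. There is nothing to compare your attempt to, because the paper's ``proof'' does not exist. What the paper does establish is the weaker Theorem \ref{thm:main} (with factors $k_R$ and $k_R^2$) and the negative result Theorem \ref{thm:examples} showing that Part 1 fails in every non-simply-laced type; the conjecture itself remains open.

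Your proposal is not a proof but a programme, and several of its steps have genuine gaps. For Part (3), regularity of the endpoints $\la,\mu,\nu$ says nothing about whether the projected Hecke path $\P_\De(\tilde p)$ stays in the interior of $\De$ or avoids walls of $A$; the break points and their stabilisers $W_{p(t)}$ depend on the path, not on the side-lengths, so your claim that the Maximality Axiom is automatic away from break points does not follow. For Part (1), the deformation/semicontinuity argument is backwards: the dimension $\dim(V_\la\otimes V_\mu\otimes V_\nu)^{G^\vee}$ is upper-semicontinuous in the wrong direction for what you need --- passing to a singular limit can only make the invariant space larger or keep it the same if you stay in a flat family, but here you are varying discrete weights, and nonvanishing for nearby regular triples gives no information at the limit. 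For Part (2), your heuristic that ``$2$ clears the coweight lattice modulo the coroot lattice for $F_4$'' is incorrect: $F_4$ has trivial fundamental group, so those lattices coincide, yet $k_{F_4}=12$; the constant $k_R$ is about which vertices of the affine Coxeter complex are special, not about lattice indices. Finally, you yourself concede in the last paragraph that removing the remaining scaling ``is essentially equivalent to the conjecture itself,'' which is an admission that the outline is circular at the crucial step.
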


We refer the reader to \cite{KapovichMillson1} for more detailed discussion of the semigroup $Rep(G^\vee)$ and the set $Hecke(G)$. 

\newpage

\bibliographystyle{plain}
\def\noopsort#1{}

\vskip5ex

\noindent
Address: Shrawan Kumar,\\
Department of Mathematics,\\
University of North Carolina,\\
Chapel Hill, NC  27599--3250 \\
\noindent
email: shrawan@email.unc.edu

\bigskip 

\noindent
Address: Michael Kapovich,\\
Department of Mathematics,\\
University of California,\\
Davis, CA 95616 \\
\noindent
email: kapovich@math.ucdavis.edu

\end{document}